\numberwithin{equation}{section}
\theoremstyle{plain}
\newtheorem{main}{Theorem}
\newtheorem{mcor}[main]{Corollary}
\newtheorem{theorem}{Theorem}[section]
\newtheorem{claim}[theorem]{Claim}
\newtheorem{lemma}[theorem]{Lemma}
\newtheorem{proposition}[theorem]{Proposition}
\newtheorem{corollary}[theorem]{Corollary}
\theoremstyle{definition}
\newtheorem{definition}[theorem]{Definition}
\newtheorem*{definition*}{Definition}
\newtheorem{example}[theorem]{Example}
\newtheorem{notation}[theorem]{Notation}
\newtheorem{remark}[theorem]{Remark}
\newtheorem{fact}[theorem]{Fact}
\theoremstyle{plain}
\newcommand{\calp}{\mathcal{P}}
\newcommand{\st}{\mathrm{st}}
\newcommand{\lk}{\mathrm{lk}}
\title[Rigidity for graph product von Neumann algebras]{Rigidity  for graph product von Neumann algebras}
\author{\ Camille Horbez and Adrian Ioana}
\date{\today}
\begin{document}

\maketitle

\begin{abstract}
We establish rigidity theorems for graph product von Neumann algebras $M_\Gamma=*_{v,\Gamma}M_v$ associated to finite simple graphs $\Gamma$ and families of tracial von Neumann algebras $(M_v)_{v\in\Gamma}$. We consider the following three broad classes of vertex algebras: diffuse, diffuse amenable, and II$_1$ factors. In each of these three regimes, we exhibit a large class of graphs $\Gamma,\Lambda$ for which the following holds: any isomorphism $\theta$ between $M_\Gamma$ and $N_\Lambda$ ensures the existence of a graph isomorphism $\alpha:\Gamma\to\Lambda$, and tight relations between $\theta(M_v)$ and $N_{\alpha(v)}$ for every vertex $v\in\Gamma$, ranging from strong intertwining in both directions (in the sense of Popa), to unitary conjugacy in some cases.  

Our results lead to a wide range of applications to the classification of graph product von Neumann algebras and the calculation of their symmetry groups. First, we obtain general classification theorems for von Neumann algebras of right-angled Artin groups and of graph products of ICC groups. We also provide a new family of II$_1$ factors with trivial fundamental group, including all graph products of II$_1$ factors over graphs with girth at least $5$ and no vertices of degree $0$ or $1$. Finally, we compute the outer automorphism group of certain graph products of II$_1$ factors.
\end{abstract}

\setcounter{tocdepth}{1}
\tableofcontents

\section{Introduction and statement of the main results}

\subsection{Introduction}
Von Neumann algebras arise naturally from a variety of dynamical, group theoretic, geometric, and combinatorial data via several constructions, including crossed products (e.g.,  group and group measure space von Neumann algebras), tensor products, free products, and more generally graph products, which encompass the latter two. Von Neumann algebras often forget the data from which they are constructed. This lack-of-rigidity phenomenon is best illustrated by Connes' work \cite{Co76}  which implies that all II$_1$ factors arising from amenable data via the crossed product construction are isomorphic.

Over the past two decades, Popa's deformation/rigidity theory has led to major advances in the classification of nonamenable von Neumann algebras, see the surveys \cite{Po07,Va10,Io18}. 
Much of this progress has been focused on proving rigidity results  which show that the initial data can sometimes be reconstructed from the isomorphism class of the von Neumann algebras. These results are usually accompanied by computations of the symmetries, e.g., fundamental groups or outer automorphism groups, of the von Neumann algebras considered.

The goal of this article is to broaden the scope of rigidity for von Neumann algebras to cover those which arise as graph products.  
 Originally defined in the context of groups by Green in \cite{Gr90}, the graph product construction was extended to the setting of von Neumann algebras in \cite{Ml04,SW16,CF17}. At the group-theoretic level, given a finite simple graph $\Gamma$ and a group $G_v$ for every vertex $v\in\Gamma$, the \emph{graph product} $*_{v,\Gamma}G_v$ is the group obtained from the free product of the groups $G_v$ by adding as only extra relations that $G_v$ and $G_w$ commute whenever $v$ and $w$ are adjacent. The operator algebraic notion is similar:
 consider a finite simple graph $\Gamma$ and a von Neumann algebra $M_v$, for every vertex $v\in\Gamma$. Then the  {\it graph product von Neumann algebra} associated to this data, 
 denoted $*_{v,\Gamma}M_v$ or simply $M_\Gamma$, is generated by $(M_v)_{v\in\Gamma}$ in such a way any two  ``vertex"  algebras $M_v$ and $M_w$ are in a tensor product or free product position, depending on whether $v$ and $w$ are connected or not; see  Section~\ref{graph_vN} for more background.
 If  $(M_v)_{v\in\Gamma}$ is a family of tracial von Neumann algebras, as we assume in this article, then so is their graph product $M_\Gamma$. 
 Graph products are a common generalization of tensor products and free products: $M_\Gamma$ is  equal to $\overline{\otimes}_{v\in\Gamma}M_v$, if $\Gamma$ is a complete graph, and to $*_{v\in\Gamma}M_v$, if $\Gamma$ has no edges. 
  The graph product construction  
   for von Neumann algebras is compatible with that for groups: given a graph product group $G_\Gamma=*_{v,\Gamma}G_v$, we have an identification of group von Neumann algebras $\text{L}(G_\Gamma)=*_{v,\Gamma}\text{L}(G_v)$.

The structure and rigidity of graph products has a long history in group theory. Graph products generalize two emblematic classes of groups: right-angled Coxeter groups $\mathcal W_\Gamma$ (when all vertex groups are $\mathbb{Z}/2\mathbb{Z}$) and right-angled Artin groups $A_\Gamma$ (when all vertex groups are $\mathbb{Z}$). Geometric group theorists have extensively exploited their actions on $\mathrm{CAT}(0)$ cube complexes and negatively curved spaces \cite{Dav98} in order to derive algebraic information, such as the computation of their automorphism groups, 
see e.g.\ \cite{GM19,Gen24}. Classification and rigidity theorems for right-angled Artin groups from the viewpoint of quasi-isometry, including \cite{BKS08,Hua17,Hua18}, have later inspired similar classification results in measure equivalence, first for right-angled Artin groups \cite{HH22,HHI23,HH23,HH24}, then in the general context of graph products \cite{EH24}. 
Such measure equivalence classification results together with the uniqueness of Cartan subalgebras \cite{CKE24} allow to classify II$_1$ factors associated to probability measure preserving actions of graph products. 

   In recent years, following the 
 work of Caspers and Fima \cite{CF17}, there has been a surge of interest in the structural and rigidity properties of graph product von Neumann algebras. There is now a growing literature on the topic which includes  \cite{CdSS18,Ca20,CdSHJKEN23,Bo24,BC24,BCC24,CdSHJKEN24,CKE24, DKE24a,DKE24b,Oy24,Bo25,CC25,CDD25a,CDD25b,KEP25,DV25}. The unifying theme of these works is understanding to what extent properties of the graph $\Gamma$ and the vertex algebras $(M_v)_{v\in\Gamma}$ 
are retained by the
graph product algebra $M_\Gamma$. Ideally, one would like to show that, under minimal assumptions on the graph and the vertex algebras, $M_\Gamma$ remembers the isomorphism classes of both $\Gamma$ and $(M_v)_{v\in\Gamma}$. Several rigidity results in this direction have been obtained recently in \cite{BCC24,Bo25,CC25,CDD25a,CDD25b,DV25}. These results require that the vertex algebras $(M_v)_{v\in\Gamma}$ belong to specific classes of II$_1$ factors. As such, despite this progress, the rigidity problem for graph products associated to arbitrary vertex algebras remained wide open.

\subsection{Rigidity results}\label{rigidity_results} We make progress on this problem here by establishing rigidity theorems for graph products of arbitrary tracial von Neumann algebras, with vertex algebras in one of the following three classes: diffuse, diffuse amenable, and II$_1$ factors (see Theorems \ref{arbitrary}-\ref{factors'}).  
In novel fashion, these results cover non-factorial vertex algebras (Theorems \ref{arbitrary} and \ref{amenable}) and give significantly stronger conclusions than previously known in the case of II$_1$ factor vertex algebras (Theorems \ref{factors} and \ref{factors'}). 
A key finding is that the graph product von Neumann algebras associated to many graphs and factorial vertex algebras are more rigid than both tensor products and free products, see the comments after Theorem \ref{factors'}.

Our general goal is to obtain results of the following form, for each of the three aforementioned classes of vertex algebras: under suitable assumptions on the graphs $\Gamma,\Lambda$, an isomorphism $M_\Gamma\cong N_\Lambda$ forces the existence of a graph isomorphism $\alpha:\Gamma\to\Lambda$, and of a tight relationship (that will be specified) between the vertex subalgebras $M_v$ and $N_{\alpha(v)}$. In general, some basic assumptions on the graphs are required, as a von Neumann algebra can sometimes split as a graph product in two different ways. A basic example where this happens is if $M$ splits as 
a graph product over $\Gamma$, in such a way that the vertex algebra $M_v$ associated to some vertex $v\in\Gamma$ is itself a graph product over a graph $\Upsilon_v$. By ``replacing'' $v$ by $\Upsilon_v$ (i.e., connecting every vertex $w$ that is adjacent to $v$ in $\Gamma$, to every vertex in $\Upsilon_v$), we get a new graph $\Lambda$ such that $M$ splits as a graph product over $\Lambda$. In order to reach a canonical decomposition as a graph product, it is therefore natural to consider minimal such decompositions. To this end, we will often need to make assumptions that ensure that every connected component of $\Gamma$ is \emph{strongly reduced} in the sense of \cite[Definition~6.1]{EH24}, meaning that no proper subgraph can be collapsed to a vertex to give a new graph product decomposition (see Section~\ref{graph_notions} for the formal definition).

In order to state our results, we first need to recall some graph-theoretic 
terminology. 
For a finite simple graph $\Gamma$, we denote still by $\Gamma$ the set of its vertices. We say that $\Gamma$ contains a square (as a full subgraph) 
if it contains a  cycle  $v_1,v_2,v_3,v_4$ such that $v_1$ is not adjacent to $v_3$ and $v_2$ is not adjacent to $v_4$.
The {\it link} of a vertex $v\in\Gamma$, denoted by $\text{lk}(v)$, is the set of all vertices in $\Gamma$ that are adjacent to $v$. The {\it star} of $v$ is given by $\text{st}(v)=\{v\}\cup\text{lk}(v)$.
We say that  $\Gamma$ is {\it transvection-free}  if there are no distinct vertices $v,v'\in\Gamma$ such that $\text{lk}(v)\subset\text{st}(v')$. This terminology comes from the fact that if $\text{lk}(v)\subset\text{st}(v')$, for $v\not=v'$, then the right-angled Artin group $A_\Gamma$ admits an infinite-order automorphism, called a {\it transvection}, which sends $v$ to $vv'$ (after identifying $v,v'$ with generators of the associated vertex groups, isomorphic to $\mathbb {Z}$, of $A_\Gamma$), and is the identity on all other vertices.

\subsection*{Arbitrary vertex algebras}
The following is our first main result. 
Given two finite simple graphs $\Gamma$ and $\Lambda$, and families of tracial von Neumann algebras $(M_v,\tau_v)_{v\in\Gamma}$ and $(N_w,\tau_w)_{w\in\Lambda}$, we denote by
$M_\Gamma=*_{v,\Gamma}M_v$ and $N_\Lambda=*_{w,\Lambda}N_w$ the associated  graph product von Neumann algebras.

\begin{main}\label{arbitrary}
Let $\Gamma$, $\Lambda$ be  two  finite simple graphs which are transvection-free, do not contain a square,  and are not reduced to a vertex.
Let $(M_v,\tau_v)_{v\in\Gamma}$ and $(N_w,\tau_w)_{w\in\Lambda}$ be families of diffuse tracial von Neumann algebras.

If $\theta:M_\Gamma\rightarrow N_\Lambda$ is any $*$-isomorphism, then the graphs $\Gamma$ and $\Lambda$ are isomorphic and there exists a graph isomorphism $\alpha:\Gamma\rightarrow\Lambda$ such that $\theta(M_v)\prec^s_{N_{\Lambda}}N_{\alpha(v)}$ and $N_{\alpha(v)}\prec_{N_\Lambda}^s\theta(M_v)$, for every $v\in\Gamma$.
\end{main}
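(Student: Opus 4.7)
The plan is to first locate the image $\theta(M_v)$ of each vertex algebra inside a canonical subalgebra of $N_\Lambda$, then extract the graph isomorphism $\alpha$ from these locations, and finally upgrade to the two-sided strong intertwining claimed in the theorem.

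\textbf{Step 1 (Canonical placement).} For each $v\in\Gamma$, the subalgebra $M_v\subset M_\Gamma$ is diffuse and commutes with the diffuse subalgebra $M_{\lk(v)}$; diffuseness of $M_{\lk(v)}$ follows from $\Gamma$ being transvection-free and not reduced to a vertex, which prevents $\lk(v)$ from being too small. The first and main technical step is to prove a canonical location result: any diffuse subalgebra $P\subset N_\Lambda$ whose relative commutant $P'\cap N_\Lambda$ is diffuse must intertwine, in Popa's sense, into some star subalgebra $N_{\st(w)}$. Applied to $P=\theta(M_v)$, with diffuse commutant $\theta(M_{\lk(v)})$, this yields a vertex $\alpha(v)\in\Lambda$ with $\theta(M_v)\prec_{N_\Lambda}N_{\st(\alpha(v))}$. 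The no-square hypothesis on $\Lambda$ should enter here to keep the graph product sufficiently ``tree-like'' around each vertex so that a single star can be isolated.

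\textbf{Step 2 (The bijection $\alpha$ and edge preservation).} Running the symmetric argument with $\theta^{-1}$ produces a candidate inverse $\beta:\Lambda\to\Gamma$. To show $\beta=\alpha^{-1}$, I invoke the transvection-free hypothesis on both graphs, which guarantees that a vertex is uniquely determined among its neighbors by the containment pattern of its star and link; this is precisely what is needed to pin $\alpha(v)$ down unambiguously from the output of Step 1. For edge preservation, if $v,v'\in\Gamma$ are adjacent then $\theta(M_v)$ and $\theta(M_{v'})$ commute, and the no-square hypothesis on $\Lambda$ (which forbids two non-adjacent vertices from sharing two common non-adjacent neighbors) forces $\alpha(v)$ and $\alpha(v')$ to be adjacent. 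Thus $\alpha:\Gamma\to\Lambda$ is a graph isomorphism.

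\textbf{Step 3 (From $N_{\st(\alpha(v))}$ down to $N_{\alpha(v)}$, and strong intertwining).} At this point one has $\theta(M_v)\prec_{N_\Lambda}N_{\st(\alpha(v))}$, but the theorem demands $\theta(M_v)\prec^s_{N_\Lambda}N_{\alpha(v)}$. Using the tensor decomposition $N_{\st(\alpha(v))}=N_{\alpha(v)}\,\overline{\otimes}\,N_{\lk(\alpha(v))}$, together with the fact that (by Step 2 applied to every vertex of $\lk(v)$) the image $\theta(M_{\lk(v)})$ lies essentially in the $N_{\lk(\alpha(v))}$ factor, a standard tensor-splitting lemma forces $\theta(M_v)$ into the other factor $N_{\alpha(v)}$. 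The upgrade from $\prec$ to $\prec^s$ proceeds by rerunning this argument uniformly in every non-zero corner, using diffuseness of $M_v$ to rule out atomic obstructions. Applying the whole procedure to $\theta^{-1}$ yields the reverse strong intertwining $N_{\alpha(v)}\prec^s_{N_\Lambda}\theta(M_v)$.

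\textbf{Main obstacle.} The principal difficulty is Step 1: proving a canonical placement for diffuse subalgebras with diffuse commutants in a graph product over \emph{arbitrary} diffuse vertex algebras, with no factoriality or amenability hypothesis. Standard tools such as Popa's s-malleable deformations combined with amenable absorption, or solidity-type arguments, are not directly applicable in this generality. A new deformation/rigidity argument tailored to graph products, exploiting the no-square hypothesis to ensure enough ``local freeness'' between vertex algebras, will be needed; this is where the main innovation of the paper over the previous factorial-case literature must be concentrated. Step 2 is also somewhat delicate, as the interplay of transvection-freeness and no square has to be pushed carefully to rule out every ambiguous placement.
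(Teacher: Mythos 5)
The central gap is your Step 1. The canonical placement you posit --- that every diffuse subalgebra $P\subset N_\Lambda$ with diffuse relative commutant satisfies $P\prec_{N_\Lambda}N_{\st(w)}$ for some $w$ --- is not provable in this generality and is essentially false: when $P$ and $P'\cap N_\Lambda$ are both amenable (which is exactly the situation for $P=\theta(M_v)$ with $M_v=\text{L}(\mathbb Z)$, a case the theorem must cover), all the available amalgamated-free-product technology only yields that $P\vee(P'\cap N_\Lambda)$ is amenable relative to link algebras (the third alternative in Theorem~\ref{relamen}), never an intertwining; amenable subalgebras simply cannot be located. What the paper locates instead is the \emph{nonamenable} algebra $M_{\st(v)}=M_v\vee M_{\lk(v)}$ (nonamenable with no amenable direct summand because transvection-freeness forces $\st(v)$ not to be a clique), via the structural theorem for commuting subalgebras (Theorem~\ref{commute}, Corollary~\ref{join_embed}), which is built from Vaes' AFP theorem applied to the decompositions $N_\Lambda=N_{\Lambda\setminus\{w\}}*_{N_{\lk(w)}}N_{\st(w)}$ together with the intersection property for relative amenability --- no new deformation tailored to square-freeness is needed. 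The no-square hypothesis enters only combinatorially, through Lemma~\ref{classA}: maximal join subgraphs of $\Lambda$ are stars and links $\lk(w)$ are irreducible. The descent from the star to the vertex (your Step 3) is then not a ``standard tensor-splitting lemma'': at that stage one has only intertwining, not unitary conjugacy, and the separation of $\theta(M_v)$ from $\theta(M_{\lk(v)})$ is again done with the commuting-subalgebra theorem (Corollary~\ref{tensor_dec}) plus irreducibility of $\lk(w)$ and a normalizer argument exploiting transvection-freeness.

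A second, equally serious omission is the non-factorial bookkeeping, which is the paper's main technical novelty. Since the $N_w$ are not factors, $\theta(M_{\st(v)})$ need not have a hull in $N_\Lambda$, so the location result only holds \emph{piecewise}: one gets a partition of unity $\{p_{v,w}\}_{w\in\Lambda}$ in $\mathcal Z(\theta(M_{\st(v)})'\cap N_\Lambda)$ with $\theta(M_{\st(v)})p_{v,w}\prec^s N_{\st(w)}q_{w,v}$ (Propositions~\ref{Zariski_hull} and~\ref{structure_of_aut}). Consequently no single vertex $\alpha(v)$ is available at the point where your Step 2 wants to define it, and ``rerunning the argument in every corner'' in Step 3 is precisely the part that requires the paper's machinery with central projections (Corollaries~\ref{right_corner} and~\ref{center_corner}, Lemma~\ref{proof_of_3}) and the final conjugacy criterion (Proposition~\ref{conjugacy_criterion}), which shows only at the very end that for each $v$ a single $w$ carries the full identity, whence both the graph isomorphism and the two-sided $\prec^s$. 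Your proposed order --- graph isomorphism first, vertex-level intertwining afterwards --- cannot be carried out, because extracting $\alpha$ needs the full vertex-level partition-of-unity data as input.
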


Here, we write $P\prec^s_M Q$, for von Neumann subalgebras $P,Q$ of a tracial von Neumann algebra ($M,\tau)$, to mean that a corner of $Pp'$ embeds into $Q$ inside $M$ in the sense of Popa \cite{Po06b}, for every nonzero projection $p'\in P'\cap M$ (see Theorem \ref{intertwine}, and Remark~\ref{strong_intertwining} below for some consequences).

Theorem \ref{arbitrary} covers a large class of graphs, which includes all finite simple graphs $\Gamma$ and $\Lambda$ having girth at least $5$ (i.e., contain no triangles or squares)
 and no vertices of valence $0$ or $1$ (i.e., contain no isolated vertices or leaves). Indeed, these assumptions imply that $\Gamma$ and $\Lambda$ are transvection-free, see Remark \ref{basic_graph_facts}(1),  in addition to containing no squares and not being reduced to a vertex.
 In particular, Theorem \ref{arbitrary} applies when $\Gamma=C_m$ and $\Lambda=C_n$ are the cycle graphs on $m,n\geq 5$ vertices.

We continue with three remarks on the statement of Theorem \ref{arbitrary}.

\begin{remark}\label{strong_intertwining}
Consider the following condition from Theorem \ref{arbitrary}:
 ($\star$) $\theta(M_v)\prec^s_{N_{\Lambda}}N_{\alpha(v)}$ and $N_{\alpha(v)}\prec_{N_\Lambda}^s\theta(M_v)$, for every $v\in\Gamma$. This condition implies
 that $M_v$ has certain properties (e.g., is amenable or of type I) if and only if $N_{\alpha(v)}$ does.
 In fact, much more is true, and $M_v$
 and $N_{\alpha(v)}$ are strongly related, as follows: there are nonzero projections $p_v\in M_v,q_v\in N_{\alpha(v)}$ and a unital embedding $p_vM_vp_v\subset q_vN_{\alpha(v)}q_v$ which has finite index 
 (see Lemma \ref{interwining_to_isomorphism}(2)).
     
     If all the vertex algebras in Theorem \ref{arbitrary} are II$_1$ factors, then $(\star)$ implies 
     that $M_v,N_{\alpha(v)}$ are stably isomorphic, for every $v\in\Gamma$ (cf. \cite{BCC24} 
     or Lemma~\ref{interwining_to_isomorphism}(1)). 
     Recall that two II$_1$ factors $M, N$ are {\it stably isomorphic} if $M$ is isomorphic to $N^t$, for some $t>0$. 
     Here,  the {\it amplification} $N^t$ of $N$ is defined as the isomorphism class of $p(N\overline{\otimes}\mathbb B(\ell^2))p$, for a projection $p\in N\overline{\otimes}\mathbb B(\ell^2)$ with $(\tau\otimes\text{Tr})(p)=t$, where $\tau$ and $\text{Tr}$ are the canonical traces of $N$ and $\mathbb B(\ell^2)$, respectively.
     As we will see in Theorem~\ref{factors'}, under an additional assumption on the graphs, we will prove that $M_v,N_{\alpha(v)}$ are in fact isomorphic, for every $v\in\Gamma$. 

\end{remark}

\begin{remark}\label{obstruct}
We discuss the necessity of the assumptions made on the graphs $\Gamma,\Lambda$ in Theorem~\ref{arbitrary}.
\begin{enumerate}
\item The assumptions on $\Gamma,\Lambda$ are sufficient to ensure that each connected component of $\Gamma,\Lambda$ is strongly reduced and not reduced to one vertex (see Remark~\ref{basic_graph_facts}(2)). Therefore, they cannot be dropped from the statement. For a simple example, let $C_4=\{v_1,v_2,v_3,v_4\}$ be a square with its vertices listed in order. Since $M_{C_4}$ is equal to $(M_{v_1}*M_{v_3})\overline{\otimes}(M_{v_2}*M_{v_4})$, it can be realized as a graph product algebra over the segment $C_2$ with vertex algebras  $M_{v_1}*M_{v_3}$ and $M_{v_2}*M_{v_4}$. We do not know, however, if the square-freeness assumption can be weakened to only assuming that every connected component of $\Gamma,\Lambda$ is strongly reduced; see Theorem~\ref{factors} below where we solve this question when all vertex algebras are II$_1$ factors.
\item We also emphasize the necessity of the transvection-freeness assumption in order to achieve the entire conclusion of the theorem (that is, not merely a graph isomorphism $\Gamma\cong\Lambda$, but one which is compatible with $\theta$ in the precise sense given by the statement). Indeed, suppose $v,v'\in\Gamma$ are distinct vertices with $\text{lk}(v)\subset\text{st}(v')$. If $M_v=M_{v'}=\text{L}(\mathbb Z)$, then there is an automorphism $\theta$ of $M_\Gamma$ satisfying $\theta(M_v)\nprec_{M_\Gamma}M_w$, for every  $w\in\Gamma$. For this fact and more  general conditions implying the existence of such an automorphism $\theta$, see Corollary \ref{obstruction2}. 
\end{enumerate}
\end{remark}

\begin{remark}
It is open to what extent Theorem \ref{arbitrary} holds if the vertex algebras are not diffuse, and notably if they are finite dimensional. 
To illustrate this point, in the context of Theorem \ref{arbitrary}, assume that $\Gamma=C_m$, $\Lambda=C_n$, for $m,n\geq 5$, and $M_v=N_w=\text{L}(\mathbb Z/2\mathbb Z)$, for every $v\in\Gamma,w\in\Lambda$. Then $M_\Gamma=\text{L}(\mathcal W_m)$ and $N_\Lambda=\text{L}(\mathcal W_n)$, where $\mathcal W_m,\mathcal W_n$ are the right-angled Coxeter groups with defining graphs $C_m,C_n$. Moreover, $\mathcal W_m$ and $\mathcal W_n$ are cocompact Fuchsian groups. 
The classification of von Neumann algebras of Fuchsian groups is a longstanding open problem, see \cite{dHV95}. 
\end{remark}
 
\subsection*{Amenable vertex algebras}
When the vertex algebras are further assumed to be amenable, we can remove the square-freeness assumption from Theorem~\ref{arbitrary} and obtain the following statement.

\begin{main}\label{amenable}
Let $\Gamma$ and $\Lambda$ be  two  finite simple graphs which are transvection-free. Let $(M_v,\tau_v)_{v\in\Gamma}$ and $(N_w,\tau_w)_{w\in\Lambda}$ be families of amenable diffuse  tracial von Neumann algebras. 

If $\theta:M_\Gamma\rightarrow N_\Lambda$ is any $*$-isomorphism, then the graphs $\Gamma$ and $\Lambda$ are isomorphic and there exists a graph isomorphism $\alpha:\Gamma\rightarrow\Lambda$ such that $\theta(M_v)\prec^s_{N_{\Lambda}}N_{\alpha(v)}$ and $N_{\alpha(v)}\prec_{N_\Lambda}^s\theta(M_v)$, for every $v\in\Gamma$.
\end{main}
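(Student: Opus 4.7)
My plan is to run the template of the proof of Theorem~\ref{arbitrary}, replacing the square-freeness hypothesis by amenability arguments at the critical localization step. Let $\theta:M_\Gamma\to N_\Lambda$ be a $*$-isomorphism.

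\emph{Step 1 (Localization).} For each $v\in\Gamma$, the first task is to show that $\theta(M_v)$ strongly intertwines into a ``small'' subalgebra of $N_\Lambda$ coming from a clique of $\Lambda$: there exists a clique $K_v\subset\Lambda$ with $\theta(M_v)\prec^s_{N_\Lambda}N_{K_v}$. The underlying structural fact is that in a graph product of diffuse amenable algebras, the maximal amenable ``building blocks'' are precisely the clique subalgebras $N_K=\overline{\otimes}_{w\in K}N_w$, since a free product of two diffuse tracial algebras is never amenable. Thus the amenable subalgebra $\theta(M_v)\subset N_\Lambda$ must localize onto such a clique. This step is where amenability does the work that square-freeness did in Theorem~\ref{arbitrary}: it prevents $\theta(M_v)$ from diffusing across a square of $\Lambda$.

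\emph{Step 2 (Refinement to a single vertex).} Having located $\theta(M_v)$ inside a clique subalgebra $N_{K_v}$, the next step is to sharpen the intertwining to one into a single vertex algebra. Here transvection-freeness of $\Lambda$ becomes decisive: analyzing the relative commutant of $\theta(M_v)$ inside $N_\Lambda$, which contains $\theta(M_{\mathrm{lk}(v)})$, together with a parallel localization for neighboring vertices, would pin down a unique $\alpha(v)\in K_v$ with $\theta(M_v)\prec^s_{N_\Lambda}N_{\alpha(v)}$. Any ambiguity here would force two distinct vertices of $\Lambda$ to satisfy $\mathrm{lk}(w)\subset\mathrm{st}(w')$, contradicting the hypothesis.

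\emph{Step 3 (Symmetry and graph isomorphism).} Running Steps 1--2 on $\theta^{-1}$ yields a map $\beta:\Lambda\to\Gamma$ with $\theta^{-1}(N_w)\prec^s_{M_\Gamma} M_{\beta(w)}$. Combining both one-sided intertwinings via Lemma~\ref{interwining_to_isomorphism} forces $\beta=\alpha^{-1}$, so $\alpha$ is a bijection. Finally, $\alpha$ preserves adjacency: in a graph product, two vertex subalgebras commute if and only if the corresponding vertices are adjacent, and this commutation transfers across $\theta$ through the two-sided strong intertwining (after passing to appropriate corners).

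\emph{Main obstacle.} The crux is Step 1 in the presence of squares. If $\Lambda$ contains an induced $4$-cycle $w_1w_2w_3w_4$, then $N_{\{w_1,\dots,w_4\}}$ admits the alternative decomposition $(N_{w_1}*N_{w_3})\,\overline{\otimes}\,(N_{w_2}*N_{w_4})$, and one must rule out $\theta(M_v)$ sitting diagonally inside this tensor product without localizing at any single $w_i$. Amenability handles the local obstruction because each tensor factor is a nonamenable free product of diffuse algebras. Promoting this local observation to a uniform localization principle inside all of $N_\Lambda$, and splicing it with the transvection-freeness analysis of Step 2, is where the technical work concentrates.
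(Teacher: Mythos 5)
There is a genuine gap, and it sits exactly where you locate the crux: Step 1. The justification you give — that because free products of diffuse tracial algebras are nonamenable, the amenable subalgebra $\theta(M_v)$ "must localize onto a clique" — is false as a general principle. Amenability of a subalgebra does not yield intertwining into the amenable "building blocks" of the ambient algebra: already in $N_{w_1}\ast N_{w_3}$ with $N_{w_1},N_{w_3}$ diffuse amenable, the diffuse abelian subalgebra generated by a product $u_1u_3$ of Haar unitaries $u_i\in N_{w_i}$ does not intertwine into $N_{w_1}$ or $N_{w_3}$ (this is the "diagonal" phenomenon you mention, and nothing in your argument rules it out for $\theta(M_v)$). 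Your closing sentence concedes that promoting the "local observation" to a uniform localization principle is the technical heart, but no mechanism is offered, so the proof does not go through.

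What actually makes the theorem work in the paper is that one never tries to localize the amenable algebra $\theta(M_v)$ directly. Since $v$ is untransvectable, $M_{\mathrm{lk}(v)}$ is nonamenable, and it is the pair of commuting subalgebras $\theta(M_v),\theta(M_{\mathrm{lk}(v)})$ — equivalently $\theta(M_\Sigma)$ for a maximal join $\Sigma\supset\mathrm{st}(v)$ — that is localized, via the commuting-subalgebras results (Corollary~\ref{join_embed}, Proposition~\ref{structure_of_aut}), into maximal join subalgebras $N_\Delta$ of $N_\Lambda$, up to a partition of the identity in $\mathcal Z(M_\Sigma'\cap N_\Lambda)$ (the vertex algebras are not factors, so these projections cannot be avoided). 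Amenability of the vertex algebras enters only afterwards, in Proposition~\ref{clique_factor}, to match the maximal clique factors of $\Sigma$ and $\Delta$; and when $\Sigma$ has trivial clique factor — precisely the situation created by squares, which your Step 1 cannot handle — the paper runs an induction on $|\Gamma|$ (Proposition~\ref{vertex_to_clique}), using the unique prime factorization theorem (Theorem~\ref{prime_factorization}) to split the tensor decompositions of $M_\Sigma$ and $N_\Delta$ into matching irreducible components and descend to a strictly smaller graph. Finally, the passage from "clique" to "vertex" in your Step 2 is not just transvection-freeness of $\Lambda$: one needs the collapsibility criterion (Proposition~\ref{collapsible}) to show the target clique is collapsible, and only then does the hypothesis force it to be a single vertex. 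None of these ingredients — the use of the nonamenable commutant, the projection bookkeeping, the induction with unique prime factorization, and the collapsibility step — appear in your proposal, so the argument as written has a missing core rather than a repairable slip.
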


Theorem \ref{amenable} in particular applies when all the vertex algebras are separable abelian and diffuse, i.e., isomorphic to $\text{L}(\mathbb Z)$. This case of Theorem \ref{amenable} leads to a classification result for von Neumann algebras of right-angled Artin groups, see Corollary~\ref{RAAGs}.  

Theorem \ref{amenable} also provides a partial classification of graph product II$_1$ factors, where all the vertex algebras are the hyperfinite II$_1$ factor. We state this result below as Corollary \ref{hyperfinite}, and compare it with a similar result obtained recently and independently in \cite{CC25}.  

Note that Theorem~\ref{amenable} does not require the graphs the $\Gamma,\Lambda$ to be strongly reduced, but imposes a strong assumption on the vertex algebras instead. The transvection-freeness assumption cannot be dropped from the statement, as the conclusion is false without this assumption by Remark \ref{obstruct}(3).

\begin{remark}
    We note that most finite graphs are transvection-free, in the following precise sense, and thus are covered by Theorem \ref{amenable}. Consider the Erd\"{o}s-R\'{e}nyi $G(n,p)$ model of random graphs on $n$ vertices, where each edge is included with probability $p=p(n)\in (0,1)$ independently of all other edges. If $p\in (0,1)$ is a constant independent of $n$, then almost surely as $n\rightarrow\infty$, any graph $\Gamma\in G(n,p)$ is transvection-free \cite{CF12}. More generally, it was shown in \cite{Da12} that the same holds  if $p(n)$ essentially belongs to the range $(q(n),1-q(n))$, where $q(n)=n^{-1}(\log(n)+\log(\log(n)))$.
\end{remark}

\subsection*{II$_1$ factor vertex algebras}
Our next goal is to strengthen Theorem \ref{arbitrary} when the vertex algebras are II$_1$ factors, as follows.

\begin{main}\label{factors}
Let $\Gamma,\Lambda$  be two finite simple graphs. 
Assume that every connected component of $\Gamma$ and $\Lambda$ is strongly reduced, transvection-free, and not reduced to a vertex. Let $(M_v)_{v\in\Gamma}$ and $(N_w)_{w\in\Lambda}$ be families of II$_1$ factors, and let $t>0$. 

 If $\theta:M_\Gamma\rightarrow N_\Lambda^t$ is any $*$-isomorphism, then the graphs $\Gamma$ and $\Lambda$ are isomorphic and there exists a graph isomorphism $\alpha:\Gamma\rightarrow\Lambda$ such that $\theta(M_v)\prec^s_{N_{\Lambda}^t}N_{\alpha(v)}^t$ and $N_{\alpha(v)}^t\prec_{N_\Lambda^t}^s\theta(M_v)$, for every $v\in\Gamma$.
 Moreover, the II$_1$ factors $M_v$ and $N_{\alpha(v)}$ are stably isomorphic, for every $v\in\Gamma$.
\end{main}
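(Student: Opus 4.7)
The plan is to adapt the framework of Theorem \ref{arbitrary} to II$_1$ factor vertex algebras, with two main new ingredients: an amplification-absorption step that handles the parameter $t$, and a refined intertwining step that bypasses the square-freeness requirement. Under the hypotheses, each connected component of $\Gamma$ and $\Lambda$ is strongly reduced and not reduced to a single vertex, so the graph product of II$_1$ factors over each component is itself a II$_1$ factor; in particular $M_\Gamma$ and $N_\Lambda$ decompose as free products (indexed by connected components) of II$_1$ factors, and any $*$-isomorphism between them must respect this free-product decomposition. This reduces the problem to connected $\Gamma,\Lambda$. To absorb the amplification, fix an arbitrary vertex $w_0\in\Lambda$ and rewrite $N_\Lambda^t \cong *_{w,\Lambda}N_w'$ where $N_{w_0}' = N_{w_0}^t$ and $N_w' = N_w$ for $w\neq w_0$, using the standard fact that amplification of a graph product of II$_1$ factors can be realized by amplifying a single vertex factor.

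Next, for each $v\in\Gamma$ I would establish the strong intertwining $\theta(M_v)\prec^s_{N_\Lambda^t} N_{\alpha(v)}^t$ for some $\alpha(v)\in\Lambda$, using the same vertex-location machinery underlying Theorem \ref{arbitrary}, which yields an intertwining of $\theta(M_v)$ into a subalgebra associated to a subset of $\Lambda$. The crucial improvement here is that, for II$_1$ factor vertex algebras, one can upgrade an intertwining into a ``composite'' block arising from a square (such as $(N_{w_1}*N_{w_3})\,\overline{\otimes}\,(N_{w_2}*N_{w_4})$) to an intertwining into a single vertex factor, because a II$_1$ factor cannot split compatibly along such a non-trivial tensor-free structure without producing central projections, which it lacks. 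The strongly reduced assumption on connected components then guarantees that $\alpha$ is well-defined and injective (no proper subgraph absorbs several vertex algebras), and applying the same reasoning to $\theta^{-1}$ yields surjectivity. To see that $\alpha$ preserves edges, observe that if $v, v'$ are adjacent in $\Gamma$ then $M_v, M_{v'}$ commute, so the mutual strong intertwining forces $N_{\alpha(v)}$ and $N_{\alpha(v')}$ to essentially commute in $N_\Lambda$; as non-adjacent vertex algebras are in free-product position and hence do not have a commuting corner, this forces adjacency of $\alpha(v), \alpha(v')$, and the transvection-free hypothesis rules out any degenerate coincidence.

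The final stable-isomorphism conclusion follows from the mutual strong intertwining $\theta(M_v)\prec^s N_{\alpha(v)}^t$ and $N_{\alpha(v)}^t \prec^s \theta(M_v)$ via Lemma \ref{interwining_to_isomorphism}(1), exactly as flagged in Remark \ref{strong_intertwining}. The main obstacle I expect is the refined intertwining step in the middle paragraph: removing the no-square assumption requires careful control over how $\theta(M_v)$ can sit inside a graph product whose graph contains squares, and the argument must genuinely exploit factoriality — the analogous statement is explicitly left open for arbitrary vertex algebras in Remark \ref{obstruct}(1). Technically, I expect this to proceed via a dichotomy — either $\theta(M_v)$ localizes to a single vertex factor, or a non-trivial central projection arises, contradicting the II$_1$ factor hypothesis — combined with the strongly reduced assumption to pin down $\alpha$ uniquely on the nose.
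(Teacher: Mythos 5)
There are genuine gaps, both in the setup and in the central step. First, your two reduction moves are not available. The claim that $N_\Lambda^t\cong *_{w,\Lambda}N'_w$ with $N'_{w_0}=N_{w_0}^t$ (absorbing the amplification into one vertex) is false in general: this identity holds for tensor products, but already for a free product one has $(N_1*N_2)^t\not\cong N_1^t*N_2$ (compressions of free products are governed by Dykema--R\u{a}dulescu type formulas, not by amplifying one leg), and a graph product is a tensor product over $\st(w_0)$ only when $\Lambda=\st(w_0)$. The paper instead keeps the amplification external, working inside $P=\mathbb M_n(\mathbb C)\otimes N_\Lambda$ with a projection (Notation~\ref{notation}). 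Likewise, your reduction to connected graphs assumes that any $*$-isomorphism ``must respect'' the free product decomposition indexed by connected components; for arbitrary II$_1$ factor legs this is precisely a Kurosh-type free product rigidity statement that is not known (the introduction stresses that all such results require strong assumptions on the factors), and in fact it is part of what the theorem itself is proving. The actual proof never reduces to the connected case: connectedness enters only through the strongly reduced hypothesis on components, via Lemma~\ref{lemma:collapsible-combinatorial}.

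Second, the mechanism you propose for bypassing square-freeness does not work. Since all the algebras in play are factors, no ``central projections'' can arise, so the dichotomy ``either $\theta(M_v)$ localizes at a vertex or a central projection appears'' has no content; factoriality alone does not prevent $\theta(M_v)$ from intertwining into a composite block such as $N_{w_1}*N_{w_3}$ inside $(N_{w_1}*N_{w_3})\overline{\otimes}(N_{w_2}*N_{w_4})$. What the paper actually does (Theorem~\ref{factors1} and Proposition~\ref{induction_on_graphs}) is: (i) show that a two-sided intertwining between $\theta(M_v)$ and $N_\Delta^t$ with $|\Delta|\ge 2$ would force $\Delta$ to be collapsible (Proposition~\ref{collapsible}, itself a nontrivial argument), hence by strong reducedness a union of connected components (Lemma~\ref{lemma:collapsible-combinatorial}), leading to the contradiction $M_{\st(v)}\prec_{M_\Gamma}M_v$; and (ii) locate vertex algebras by an induction on $|\Gamma|$: locate maximal join subalgebras via the commuting-subalgebra results, upgrade the mutual intertwining to unitary conjugacy $M_\Sigma=N_\Delta^t$ (Theorem~\ref{uni_conj}), then match irreducible components and clique factors using strong primeness/unique prime factorization (Theorem~\ref{prime_factorization}, Lemma~\ref{str_prime}) and recurse into the component containing $v$. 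None of these ingredients — the collapsibility criterion, the induction, the unitary conjugacy upgrade, the prime factorization step — appears in your outline, and your edge-preservation argument (``no commuting corner in free position'') would also need the normalizer control of Proposition~\ref{normalizer} to be made precise. Only your final step is on target: the stable isomorphism of $M_v$ and $N_{\alpha(v)}$ does follow from the two-sided strong intertwining via Lemma~\ref{interwining_to_isomorphism}(1), exactly as in the paper.
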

As explained in Remark~\ref{obstruct}(1), Theorem~\ref{factors} covers a larger class of graphs than Theorem~\ref{arbitrary}. Note that if every connected component of a graph $\Gamma$ is strongly reduced, then the only collapsible subgraphs of $\Gamma$ are unions of its connected components.

\begin{remark}
It is worth comparing Theorem \ref{factors} to some related results  obtained recently by Borst, Caspers and Chen in \cite{BCC24} and by Drimbe and Vaes in \cite{DV25}. 

Specifically, \cite[Theorem A]{BCC24} established the same conclusion as Theorem \ref{factors} when $t=1$, the graphs $\Gamma,\Lambda$ are \emph{rigid} (i.e., there are no distinct vertices $v,v'\in\Gamma$ with $\lk(v)\subset\lk(v')$), and the vertex II$_1$ factors $(M_v)_{v\in\Gamma},(N_w)_{w\in\Lambda}$ are nonamenable with strong property (AO) (see \cite[Definition 6.4]{BCC24}). 
The article \cite{DV25} studies a new equivalence notion, of being W$^*$-correlated, for II$_1$ factors. In \cite[Theorem 6.1]{DV25}, the authors prove a rigidity statement for W$^*$-correlated graph products $M_\Gamma, N_\Lambda$ when $\Gamma,\Lambda$ are rigid (possibly infinite) graphs and the vertex II$_1$ factors $(M_v)_{v\in\Gamma},(N_w)_{w\in\Lambda}$ are nonamenable and stably solid in the sense of \cite[Definition 3.1]{DV25}. 
 When $M_\Gamma, N_\Lambda$ are moreover isomorphic, a sharper result is obtained in \cite[Theorem C]{DV25}, which shows that the conclusion of Theorem \ref{factors} holds in this case. This generalizes \cite[Theorem A]{BCC24} to infinite graphs, for slightly different classes of II$_1$ factors (see the discussion after \cite[Theorem C]{DV25}); subsequently, in an updated version of \cite{BCC24}, Theorem A therein was also shown to hold for infinite graphs. 

Since transvection-free graphs are rigid, \cite[Theorem~A]{BCC24} and \cite[Theorem C]{DV25} apply to larger classes of graphs than Theorem \ref{factors}. Moreover, these results cover infinite graphs, which we do not treat in this work. We note that Theorem \ref{factors} is false for various rigid graphs, including (finite or infinite) complete graphs. As such, we have to require additional conditions on the graphs (see Remark \ref{assumptions Thm factors}).
On the other hand, unlike \cite{BCC24,DV25} which only cover certain  nonamenable vertex II$_1$ factors, we impose no restrictions on the vertex II$_1$ factors. Indeed, a central finding in the present paper is that, for large classes of graphs, rigidity holds with no assumption on the vertex II$_1$ factors. 

\end{remark}

\begin{remark}\label{assumptions Thm factors}
    The assumptions in Theorem \ref{factors} cannot be dropped.
\begin{enumerate}  
\item As explained at the beginning of Section~\ref{rigidity_results}, the assumption that the graphs are strongly reduced is needed.
\item The assumption that the graphs are transvection-free is also needed. In the context of Theorem \ref{factors}, assume that there exist distinct vertices $v,v'\in\Gamma$ such that $\text{lk}(v)\subset\text{lk}(v')$. Suppose additionally that $M_v$ is a free product of diffuse tracial von Neumann algebras (e.g., if $M_v=\text{L}(\mathbb F_n)$, for $n\geq 2$). 
Then there is an automorphism $\theta$ of $M_\Gamma$ such that $\theta(M_v)\nprec_{M_\Gamma}M_{w}$, for every $w\in\Gamma$, see Corollary \ref{obstruction2}, which shows that the conclusion of Theorem \ref{factors} fails in this case. 
\end{enumerate}
\end{remark}

Theorem \ref{factors} in particular applies to all finite simple graphs with girth at least $5$ and no vertices of valence $0$ or $1$. 
For graphs in this class, we improve considerably the conclusion of Theorem \ref{factors}. In the following statement, saying that $\Gamma$ contains \emph{no separating star} means that for every $v\in\Gamma$, the full subgraph spanned by the vertices in $\Gamma\setminus\st(v)$ is connected. 
This assumption is natural: when $\Gamma$ has a separating star, every graph product group over $\Gamma$ has \emph{partial conjugations}, i.e.\ automorphisms given by conjugating every vertex group from one connected component of $\Gamma\setminus\st(v)$ by an element in the vertex group associated to $v$.

\begin{main}\label{factors'}
Let $\Gamma,\Lambda$ be two finite simple graphs of girth at least $5$, which contain no vertices of valence $0$ or $1$. Let $(M_v)_{v\in\Gamma}$ and $(N_w)_{w\in\Lambda}$ be families of II$_1$ factors. Let $t>0$, and let $\theta:M_\Gamma\to N_\Lambda^t$ be a $*$-isomorphism. 

Then $t=1$, and there exist a graph isomorphism $\alpha:\Gamma\rightarrow\Lambda$ and   unitaries $(u_v)_{v\in\Gamma}\subset N_\Lambda$ such that $\theta(M_v)=u_vN_{\alpha(v)}u_v^*$, for every $v\in\Gamma$.

If moreover $\Gamma$ contains no separating star, then one can choose $u_v$ to be independent of $v\in\Gamma$.
\end{main}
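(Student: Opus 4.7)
The plan is to build on Theorem \ref{factors} and strengthen its conclusion under the extra girth and valence hypotheses. Note first that a graph of girth at least $5$ with no vertex of valence $0$ or $1$ is automatically transvection-free (Remark \ref{basic_graph_facts}(1)), has strongly reduced connected components (the absence of triangles, squares and leaves forbids any nontrivial collapse), and is not reduced to a single vertex. Hence Theorem \ref{factors} applies and yields a graph isomorphism $\alpha:\Gamma\to\Lambda$ together with the two-sided strong intertwinings $\theta(M_v)\prec^s_{N_\Lambda^t}N_{\alpha(v)}^t$ and $N_{\alpha(v)}^t\prec^s_{N_\Lambda^t}\theta(M_v)$ for every $v\in\Gamma$; in particular $M_v$ and $N_{\alpha(v)}$ are stably isomorphic.

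The first main step is to upgrade these intertwinings into genuine unitary conjugacy: for each $v$, find a unitary $u_v\in N_\Lambda^t$ with $\theta(M_v)=u_v N_{\alpha(v)}^t u_v^*$. The two-sided strong intertwining, via Lemma \ref{interwining_to_isomorphism}, produces a finite-index intermediate algebra between corners of $\theta(M_v)$ and $N_{\alpha(v)}^t$. To turn this into equality one has to identify the quasi-normalizer of each vertex algebra $N_w$ inside $N_\Lambda$ and rule out proper finite-index extensions. The key input is that girth at least $5$, together with the absence of leaves and isolated vertices, forces the quasi-normalizer of $N_w$ to coincide with $N_{\st(w)}$, and moreover that $N_w$ admits no proper finite-index extension inside $N_{\st(w)}$ that commutes with $N_{\lk(w)}$; combining this maximality with the intertwining in both directions yields the desired unitary conjugacy. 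Once this is done, the family $(u_v N_{\alpha(v)}^t u_v^*)_{v\in\Gamma}$ realizes a second graph-product decomposition of $N_\Lambda^t$ over $\Lambda$; comparing it with the canonical graph-product decomposition of $N_\Lambda\,\overline{\otimes}\,\mathbb B(\ell^2)$ and using uniqueness of minimal graph-product decompositions, the girth hypothesis prevents the amplification parameter from being absorbed into any single vertex algebra, thereby forcing $t=1$.

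To obtain the last assertion, assume $\Gamma$ has no separating star. Fix a vertex $v_0$ and, by replacing $\theta$ with $\mathrm{Ad}(u_{v_0}^*)\circ\theta$, arrange that $\theta(M_{v_0})=N_{\alpha(v_0)}$. For any vertex $v$ adjacent to $v_0$, the subalgebra $\theta(M_v)=u_v N_{\alpha(v)}u_v^*$ must commute with $N_{\alpha(v_0)}$, which by a standard commutant computation in the graph product forces $u_v$ to lie in $\mathcal N_{N_\Lambda}(N_{\alpha(v)})\cdot N_{\lk(\alpha(v_0))}$; this allows one to absorb $u_v$ into a uniform unitary without disturbing $\theta(M_{v_0})$. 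For a vertex $v$ not adjacent to $v_0$, propagate the equality along a path in $\Gamma\setminus\st(v_0)$, which exists by the no-separating-star hypothesis. Iterating over all vertices produces a single unitary $u\in N_\Lambda$ with $\theta(M_v)=u N_{\alpha(v)}u^*$ for every $v\in\Gamma$.

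The main obstacle is the upgrade from strong intertwining to genuine unitary conjugacy in the second step: it requires a precise description of the quasi-normalizer of vertex algebras inside graph products, and an essential use of the girth hypothesis to exclude nontrivial finite-index extensions of a vertex algebra within its star. This is also where the amplification parameter $t$ is pinned down to $1$, so any weakening of the girth or valence assumption would compromise both conclusions simultaneously.
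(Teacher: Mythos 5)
The reduction to Theorem \ref{factors} in your first paragraph is exactly what the paper does, and the verification that girth $\geq 5$ plus no valence-$0$ or valence-$1$ vertices gives transvection-freeness and strongly reduced components is correct. The gap is in your second step, which is the heart of the theorem. The two-sided strong intertwining only yields, via Lemma \ref{interwining_to_isomorphism}(1), a unitary conjugating $M_v$ onto an \emph{amplification} $N_{\alpha(v)}^{s_v}$ inside a suitable corner, with $s_v>0$ unknown. No local analysis at a single vertex can force $s_v=1$: the quasi-normalizer of $N_w$ is indeed $N_{\st(w)}$ (Proposition \ref{normalizer}), but there is no statement of the form ``$N_w$ admits no proper finite-index extension inside $N_{\st(w)}$ commuting with $N_{\lk(w)}$'' that would pin down the amplification -- the possible discrepancy is not a finite-index extension of $N_w$ but a rescaling of the tensor decomposition $N_{\st(w)}=N_w^{s}\overline{\otimes}N_{\lk(w)}^{1/s}$, which exists for every $s$. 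Likewise your appeal to ``uniqueness of minimal graph-product decompositions'' to get $t=1$ is circular: the triviality of $\mathcal F(M_\Gamma)$ (Corollary \ref{symmetry_groups}) is a \emph{consequence} of this theorem, and the unique prime factorization results available (Theorem \ref{prime_factorization}) only determine tensor factors up to amplification, which is precisely the ambiguity to be removed.

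What the paper actually does at this point is a global cycle argument, and nothing in your proposal replaces it. One first builds, for each $v$, a partial isometry $\delta_v$ (with matrix amplifications and projections $p_v,q_v$ recording the unknown parameters) implementing $M_v^{t_1}\cong N_{\alpha(v)}$ at the level of stars (Claim \ref{delta_v}); then for adjacent $v,v'$ one factorizes $\delta_{v'}^*\delta_v=\alpha_{v',v}^*\alpha_{v,v'}$ with factors in the link algebras (Claim \ref{deco}, using Lemma \ref{conjugacy_in_tensors}); finally, multiplying these identities around a cycle of \emph{minimal} length and projecting via the word combinatorics of $\mathcal W_\Gamma$ (the inclusion $\mathcal W_{\lk(v_0)}\cap\mathcal W_{\lk(v_1)}\cdots\mathcal W_{\lk(v_{n-1})}\subset\mathcal W_{v_1}\mathcal W_{v_{n-1}}$, which is where girth $\geq 5$ and the absence of leaves are used) together with the free-product partial isometry Lemma \ref{partial_isometry} forces the relevant projections to be $1$, whence $t=1$ and genuine unitary conjugacy. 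Your sketch of the ``moreover'' part has the same defect in miniature: commutation with $N_{\alpha(v_0)}$ only places $u_vN_{\alpha(v)}u_v^*$ inside $N_{\lk(\alpha(v_0))}$, and turning this into a factorization $u_v=u\beta_v\gamma_v$ requires the cocycle-type scalar identities along cycles (Claims \ref{invariance}--\ref{almost_the_end}), the Servatius-type argument giving $\delta_v\in M_{\st(v)}$, and the free-product splitting Corollary \ref{splitting}; ``absorbing $u_v$ and propagating along a path'' does not address the consistency of the local corrections around cycles, which is where the no-separating-star hypothesis genuinely enters.
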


Traditionally, the most studied classes of graph product von Neumann algebras have been those associated to complete graphs (i.e., tensor products) or edgeless graphs (i.e., free products). Starting with the pioneering works of Ozawa and Popa \cite{OP04} and Ozawa \cite{Oz06},  various classes of II$_1$ factors have been shown to admit unique decompositions as tensor products of prime II$_1$ factors (see, e.g., \cite{OP04,Pe09,CS13,HI17,CdSS18,DHI19,Is19,IM22}) or, respectively, as free products of II$_1$ factors (see, e.g., \cite{Oz06,IPP08,Pe09,CH10,Io15,HU16,Dr23,DKE24a}). However, these results always require strong assumptions on the II$_1$ factors involved.  
Again, Theorem~\ref{factors'} exhibits a stronger rigidity phenomenon for graph products over a large family of graphs, in that no assumption on the factorial vertex algebras is required. We note that a counterpart of this phenomenon for measure equivalence has been established recently in \cite{EH24}, 
which exhibits a stronger rigidity phenomenon for graph products in measure equivalence than those for free products \cite{AG12} or direct products \cite{MS06,Sa09}.

The unique prime factorization results mentioned above show that for certain  II$_1$ factors and any $*$-isomorphism  $\theta:\overline{\otimes}_{i=1}^mM_i\rightarrow N:=\overline{\otimes}_{j=1}^nN_j$, there exist a bijection $\alpha:\{1,\dots,m\}\rightarrow\{1,\dots,n\}$, a unitary $u\in N$   and $t_1,\dots,t_m>0$ such that  $t_1\dots t_m=1$ and $\theta(M_i)=uN_{\alpha(i)}^{t_i}u^*$,  for every $1\leq i\leq m$. In turn,  unique free product factorization results give instances  when any $*$-isomorphism $\theta:*_{i=1}^mM_i\rightarrow N:=*_{j=1}^nN_j$ forces the existence of a bijection $\alpha:\{1,\dots,m\}\rightarrow\{1,\dots,n\}$ and unitaries $(u_i)_{i=1}^m\subset N$ such that $\theta(M_i)=u_iN_{\alpha(i)}u_i^*$, for every $1\leq i\leq m$. In general, these results cannot be improved to show that $t_i=1$, for every $1\leq i\leq m$, and, respectively, that $u_i$ can be chosen independently of $1\leq i\leq m$. In contrast, the moreover assertion of Theorem \ref{factors'} shows that such an enhanced rigidity statement does hold for graph products over many graphs. This is a key fact, which allows to compute symmetry groups of certain graph product II$_1$ factors, see Corollary~\ref{symmetry_groups}.

\subsection{Applications} We now present several applications of our main results.

\subsection*{Classification results}

We begin with some classification results, starting with two consequences of Theorem \ref{amenable}.
Recall that the right-angled Artin group $A_\Gamma$ is the graph product over $\Gamma$ where all vertex groups are isomorphic to $\mathbb{Z}$. Recall also that two discrete groups $G$ and $H$ are said to be {\it W$^*$-equivalent} if they have isomorphic group von Neumann algebras, $\text{L}(G)\cong\text{L}(H)$.

\begin{mcor}\label{RAAGs}
Let $\Gamma$ and $\Lambda$ be  two finite simple graphs which are transvection-free. 

Then the right-angled Artin groups $A_\Gamma$ and $A_\Lambda$ are W$^*$-equivalent if and only if they are isomorphic.
\end{mcor}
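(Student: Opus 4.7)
The forward implication is immediate: if $A_\Gamma\cong A_\Lambda$ as groups then their group von Neumann algebras are $*$-isomorphic. So the content is the converse, and my plan is simply to view it as a direct consequence of Theorem \ref{amenable}.

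Concretely, suppose $\theta:\mathrm{L}(A_\Gamma)\to\mathrm{L}(A_\Lambda)$ is a $*$-isomorphism. Using the compatibility of graph products of groups with graph products of von Neumann algebras recalled in the introduction, one has the graph product decompositions
\[
\mathrm{L}(A_\Gamma)=*_{v,\Gamma}\mathrm{L}(\mathbb{Z})\qquad\text{and}\qquad \mathrm{L}(A_\Lambda)=*_{w,\Lambda}\mathrm{L}(\mathbb{Z}),
\]
so that all vertex algebras are copies of $\mathrm{L}(\mathbb{Z})$, which is amenable and diffuse. Since $\Gamma$ and $\Lambda$ are assumed transvection-free, Theorem~\ref{amenable} applies and produces a graph isomorphism $\alpha:\Gamma\to\Lambda$.

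It then suffices to observe that the existence of \emph{any} graph isomorphism $\alpha:\Gamma\to\Lambda$ already yields a group isomorphism $A_\Gamma\cong A_\Lambda$: in the graph product group construction $A_\Gamma=*_{v,\Gamma}\mathbb{Z}$, identical vertex groups and isomorphic defining graphs force the graph products to be isomorphic via the map sending the canonical generator $v$ of $A_\Gamma$ to the canonical generator $\alpha(v)$ of $A_\Lambda$, since the defining relations (only the commutation relations coming from edges) are preserved by $\alpha$. Thus $A_\Gamma\cong A_\Lambda$.

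In this corollary there is essentially no obstacle beyond invoking Theorem \ref{amenable}: the intertwining conclusion $\theta(\mathrm{L}(\mathbb{Z}))\prec^s_{\mathrm{L}(A_\Lambda)}\mathrm{L}(\mathbb{Z})$ is not even needed to deduce W$^*$-equivalence implies isomorphism, since the group structure on $A_\Gamma$ and $A_\Lambda$ is determined by the unlabeled defining graph once all vertex groups are the same copy of $\mathbb{Z}$. The only nontrivial ingredient one is relying on is that Theorem \ref{amenable} returns a graph isomorphism at all, which is precisely why the transvection-freeness hypothesis (needed for Theorem \ref{amenable}, and also known to be necessary already at the level of the groups, cf.\ Remark \ref{obstruct}) must be kept.
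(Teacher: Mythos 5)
Your proposal is correct and follows essentially the same route as the paper: both directions are handled by noting the forward implication is trivial and then invoking Theorem~\ref{amenable} for the graph products $*_{v,\Gamma}\mathrm{L}(\mathbb{Z})\cong*_{w,\Lambda}\mathrm{L}(\mathbb{Z})$ to obtain a graph isomorphism $\Gamma\cong\Lambda$, which immediately gives $A_\Gamma\cong A_\Lambda$. The paper's proof is exactly this, with the additional (unneeded here) remark that by \cite{Dr87} the isomorphism of the groups is in turn equivalent to the isomorphism of the graphs.
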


In turn, this is equivalent to the graphs $\Gamma,\Lambda$ being isomorphic \cite{Dr87}. Corollary \ref{RAAGs} provides a von Neumann algebraic analogue of the main theorem of \cite{HH22} (see also \cite{HH24}) for measure equivalence. Specifically, it was shown in \cite[Theorem 1]{HH22} that if $\Gamma,\Lambda$
are transvection-free finite simple graphs which additionally have no separating stars 
(an additional assumption that cannot be dropped in the context of measure equivalence), then $A_\Gamma$ and $A_\Lambda$ are measure equivalent if and only if they are isomorphic. 

\begin{remark}
It is open to what extent Corollary~\ref{RAAGs} holds without the transvection-freeness assumption. Some assumptions are needed to account for the following examples of pairs of W$^*$-equivalent right-angled Artin groups, and the famous (still unsolved) free group factor isomorphism problem: 
\begin{enumerate}
   
    \item If $\Gamma=K_m$ and $\Lambda=K_n$ are complete graphs on $m,n\geq 1$ vertices, then $A_\Gamma\cong\mathbb Z^m$ and $A_\Lambda\cong\mathbb Z^n$ are W$^*$-equivalent. 
    \item If $\Gamma=\overline{K}_m$ and $\Lambda=\overline{K}_n$ are edgeless graphs on $m,n\geq 2$ vertices, then $A_\Gamma\cong\mathbb F_m$ 
     and $A_\Lambda\cong\mathbb F_n$  are W$^*$-equivalent if and only if the free group factors $\text{L}(\mathbb F_m)$ and $\text{L}(\mathbb F_n)$ are isomorphic.
    \item If 
    $\Gamma=K_{m,m'}$ is a complete bipartite graph, for some $m,m'\geq 2$, then $A_\Gamma\cong \mathbb F_m\times\mathbb F_{m'}$, and therefore $\text{L}(A_\Gamma)\cong\text{L}(\mathbb F_m)\overline{\otimes}\text{L}(\mathbb F_{m'})$. 
      On the other hand, as shown in \cite{Ra94}, the isomorphism class of $\text{L}(\mathbb F_m)\overline{\otimes}\text{L}(\mathbb F_{m'})$ only depends on $(m-1)(m'-1)$. Consequently, if $\Lambda=K_{n,n'}$, for some
      $n,n'\geq 2$ satisfying $(m-1)(m'-1)=(n-1)(n'-1)$, then $A_\Gamma$ and $A_\Lambda$ are W$^*$-equivalent. In particular $A_{K_{3,3}}$ and $A_{K_{2,5}}$ are W$^*$-equivalent (cf.\ \cite{CC25}).
\end{enumerate}

Considering these facts, the best one can hope for in Corollary \ref{RAAGs} is to relax the transvection-freeness assumption by assuming instead that the graphs have no collapsible subgraphs of at least $2$ vertices which are either complete or edgeless. While this remains open,
 we note that our techniques still enable us to cover a few cases beyond the transvection-free case (see Example~\ref{ex:raag}).
\end{remark}

Our next corollary deals with the case when all the vertex algebras are the hyperfinite II$_1$ factor.

\begin{mcor}\label{hyperfinite}
    Let $\Gamma$ and $\Lambda$ be two finite simple graphs which are transvection-free. For every $v\in\Gamma$ and $w\in\Lambda$, let $M_v=N_w=R$, where $R$ denotes the hyperfinite II$_1$ factor.

    Then $M_\Gamma$ and $N_\Lambda$ are isomorphic if and only if the graphs $\Gamma$ and $\Lambda$ are isomorphic.
\end{mcor}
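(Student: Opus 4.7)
The plan is to observe that Corollary~\ref{hyperfinite} is an essentially immediate specialization of Theorem~\ref{amenable} to the case where every vertex algebra equals the hyperfinite II$_1$ factor $R$. So the only genuinely nontrivial content is the reverse implication: given a $*$-isomorphism $\theta:M_\Gamma\to N_\Lambda$, produce a graph isomorphism $\alpha:\Gamma\to\Lambda$.

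For the forward implication, I would argue by universal property of the graph product construction that any graph isomorphism $\alpha:\Gamma\to\Lambda$ induces a $*$-isomorphism $M_\Gamma\to N_\Lambda$. Since $M_v=N_{\alpha(v)}=R$ for every $v$, the identity maps $R\to R$ assembled along $\alpha$ furnish trace-preserving embeddings $M_v\hookrightarrow N_{\alpha(v)}\subset N_\Lambda$ whose images satisfy the required commutation/free-independence pattern (edges in $\Gamma$ are exactly edges in $\Lambda$ under $\alpha$). This yields a $*$-homomorphism $M_\Gamma\to N_\Lambda$; applying the same argument to $\alpha^{-1}$ produces the inverse, so the map is an isomorphism.

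For the reverse implication, I would invoke Theorem~\ref{amenable} directly. The hyperfinite II$_1$ factor $R$ is diffuse and amenable, so both families $(M_v)_{v\in\Gamma}$ and $(N_w)_{w\in\Lambda}$ consist of diffuse amenable tracial von Neumann algebras; by assumption $\Gamma$ and $\Lambda$ are transvection-free. All hypotheses of Theorem~\ref{amenable} are thus satisfied, and any $*$-isomorphism $\theta:M_\Gamma\to N_\Lambda$ forces the existence of a graph isomorphism $\alpha:\Gamma\to\Lambda$. The strong-intertwining part of the conclusion of Theorem~\ref{amenable} is not needed here (and is, in any case, automatic since $M_v=N_{\alpha(v)}=R$).

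In short, there is no main obstacle in the proof of Corollary~\ref{hyperfinite} once Theorem~\ref{amenable} is available; all the real work has been done upstream, and the corollary is obtained by reading off the graph-isomorphism conclusion of that theorem in the special case of constant vertex algebra $R$.
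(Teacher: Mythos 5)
Your proposal is correct and follows essentially the same route as the paper: the nontrivial direction is an immediate specialization of Theorem~\ref{amenable} (the paper formally passes through Theorem~\ref{amenable_gen}(2) via Corollary~\ref{hyperfinite2}, which for transvection-free graphs is the same statement), and the easy direction is the standard universal-property argument. One tiny caveat: your parenthetical claim that the strong intertwining $\theta(M_v)\prec^s_{N_\Lambda}N_{\alpha(v)}$ is ``automatic'' because both algebras equal $R$ is not accurate (it concerns their relative position inside $N_\Lambda$, not their abstract isomorphism class), but since you correctly note it is not needed, this does not affect the proof.
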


If $\Gamma$ and $\Lambda$ are allowed to have transvections, 
but no collapsible clique on at least two vertices, and if $M_\Gamma\cong N_\Lambda$, then we can show that the subgraphs of $\Gamma$ and $\Lambda$ spanned by their untransvectable vertices are isomorphic, see Corollary~\ref{hyperfinite2}. This sharpens the main theorem of Caspers and Chen \cite{CC25} when the graphs $\Gamma$ and $\Lambda$ are finite, and removes their assumption of H-rigidity of the graphs. In fact, Corollary~\ref{hyperfinite2} highlights the \emph{untransvectable subgraph} as a new $W^*$-invariant that extends the notion of an \emph{internal subgraph} from \cite{CC25}. On the other hand, we note that the results of \cite{CC25} also apply to some infinite graphs, while ours do not. 

We say that two discrete ICC groups $G$ and $H$ are {\it stably W$^*$-equivalent} if their group II$_1$ factors are stably isomorphic, $\text{L}(G)\cong\text{L}(H)^t$, for some $t>0$. Recall that a group $G$ is  called \text{ICC} (standing for \emph{infinite conjugacy classes}) if the conjugacy class of every non-trivial element of $G$ is infinite. A non-trivial discrete group $G$ is ICC if and only if $\text{L}(G)$ is a II$_1$ factor.

Theorem \ref{factors'} leads to the following broad W$^*$-equivalence classification result for graph products.

\begin{mcor}\label{ICC}
Let $\Gamma,\Lambda$ be two finite simple graphs of girth at least $5$, which contain no vertices of valence $0$ or $1$. Let $(G_v)_{v\in\Gamma}$ and $(H_w)_{w\in\Lambda}$ be families of ICC groups. Let  $G_\Gamma=*_{v,\Gamma}G_v$ and $H_\Lambda=*_{w,\Lambda}H_w$ be the associated graph product groups.
Then the following conditions are equivalent:
\begin{enumerate}
    \item  $G_\Gamma$ and $H_\Lambda$  are W$^*$-equivalent.
    \item  $G_\Gamma$ and $H_\Lambda$  are stably W$^*$-equivalent.
    \item There exists a graph isomorphism $\alpha:\Gamma\rightarrow\Lambda$ such that $G_v$ is W$^*$-equivalent to $H_{\alpha(v)}$, for every $v\in\Gamma$.
\end{enumerate} 
\end{mcor}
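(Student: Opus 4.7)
The plan is to derive Corollary \ref{ICC} as a quick consequence of Theorem \ref{factors'}, applied to the group von Neumann algebras. Set $M_v:=\mathrm{L}(G_v)$ for $v\in\Gamma$ and $N_w:=\mathrm{L}(H_w)$ for $w\in\Lambda$. Since each $G_v,H_w$ is ICC, each $M_v,N_w$ is a $\mathrm{II}_1$ factor. The compatibility between graph products of groups and graph products of von Neumann algebras recalled in the introduction gives $\mathrm{L}(G_\Gamma)=M_\Gamma$ and $\mathrm{L}(H_\Lambda)=N_\Lambda$. The graphs $\Gamma,\Lambda$ satisfy the hypotheses of Theorem \ref{factors'}.

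For (1)$\Rightarrow$(2) there is nothing to prove (take $t=1$). For (2)$\Rightarrow$(3), suppose that $\theta:\mathrm{L}(G_\Gamma)\to\mathrm{L}(H_\Lambda)^t$ is a $*$-isomorphism for some $t>0$. View this as a $*$-isomorphism $\theta:M_\Gamma\to N_\Lambda^t$ and apply Theorem \ref{factors'}. It yields $t=1$, together with a graph isomorphism $\alpha:\Gamma\to\Lambda$ and unitaries $u_v\in N_\Lambda$ such that $\theta(M_v)=u_vN_{\alpha(v)}u_v^*$, for every $v\in\Gamma$. In particular, $\mathrm{L}(G_v)=M_v\cong N_{\alpha(v)}=\mathrm{L}(H_{\alpha(v)})$ as von Neumann algebras, showing that $G_v$ and $H_{\alpha(v)}$ are W$^*$-equivalent; this is exactly (3).

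Finally, for (3)$\Rightarrow$(1), choose $*$-isomorphisms $\pi_v:\mathrm{L}(G_v)\to\mathrm{L}(H_{\alpha(v)})$ for every $v\in\Gamma$. Since graph product von Neumann algebras depend functorially on the vertex algebras and the underlying graph (see Section~\ref{graph_vN}), the collection $(\pi_v)_{v\in\Gamma}$ together with $\alpha$ induces a $*$-isomorphism $*_{v,\Gamma}\,\mathrm{L}(G_v)\cong *_{w,\Lambda}\,\mathrm{L}(H_w)$, i.e.\ $\mathrm{L}(G_\Gamma)\cong\mathrm{L}(H_\Lambda)$. This gives (1) and completes the cycle.

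There is no substantive obstacle here: once Theorem \ref{factors'} is available, the entire argument is formal. The only point worth double-checking is that the $t=1$ conclusion of Theorem \ref{factors'} is what collapses the \emph{a priori} weaker condition (2) to (1); without it, the statement would not follow so cleanly. Everything else reduces to the ICC hypothesis (to ensure factoriality of the vertex algebras) and to the functoriality of the graph product construction.
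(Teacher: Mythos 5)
Your proposal is correct and follows essentially the same route as the paper: reduce to $\mathrm{L}(G_\Gamma)=M_\Gamma$, $\mathrm{L}(H_\Lambda)=N_\Lambda$ with II$_1$ factor vertex algebras via the ICC assumption, apply Theorem \ref{factors'} for (2)$\Rightarrow$(3), and close the cycle with the trivial implications. The only minor quibble is your aside about $t=1$ being what "collapses (2) to (1)": in fact the equivalence is already closed by (3)$\Rightarrow$(1) via functoriality, and what (2)$\Rightarrow$(3) really needs from Theorem \ref{factors'} is the unitary conjugacy $\theta(M_v)=u_vN_{\alpha(v)}u_v^*$ rather than the value of $t$ — but this does not affect the validity of your argument.
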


This result echoes the main theorem of \cite{EH24}, where an analogous statement was proved in the setting of measure equivalence.

\subsection*{Calculations of symmetry groups} 

Let $M$ be a II$_1$ factor. Then $\mathcal F(M)=\{t>0\mid M^t\cong M\}$ is a multiplicative subgroup of $\mathbb R_+^*$, called the {\it fundamental group} of $M$ \cite{MvN43}.
We denote by $\text{Out}(M)=\text{Aut}(M)/\text{Inn}(M)$ the {\it outer automorphism group} of $M$, where $\text{Aut}(M)$  and $\text{Inn}(M)$ are the group of all and, respectively, inner automorphisms of $M$ (i.e.\ given by the conjugation by a unitary). We denote by $\varepsilon:\text{Aut}(M)\rightarrow\text{Out}(M)$ the quotient homomorphism.

In the group-theoretic setting, there has been a lot of work around the description of the automorphism group of a graph product, first for right-angled Artin groups \cite{Se89,La95}, then in a more general setting, see most notably \cite{GM19,Gen24}. In particular, in \cite[Corollary~C]{GM19}, Genevois and Martin give a complete description of the automorphism group of any graph product over a graph of girth at least $5$ with no separating star. We will now provide a version of their result in the context of graph products of II$_1$ factors.

Assume that $M=M_\Gamma$ is the graph product von Neumann algebra associated to a finite simple graph $\Gamma$ and a family of tracial von Neumann algebras $(M_v,\tau_v)_{v\in\Gamma}$. 
Then $\mathrm{Aut}(M_\Gamma)$ has a natural subgroup consisting of local automorphisms, that we now describe.
Denote by $\text{Aut}(\Gamma)$ the group of automorphisms of $\Gamma$, i.e., of all bijections $\sigma$ of $\Gamma$ such that both $\sigma$ and $\sigma^{-1}$ preserve adjacency of vertices.
We denote by $\text{Aut}(\Gamma; M_\Gamma)$ the subgroup of $\sigma\in\text{Aut}(\Gamma)$ such that $M_v\cong M_{\sigma(v)}$, for every $v\in\Gamma$. 

Following \cite{GM19,CDD25a}, an automorphism $\theta\in\text{Aut}(M_\Gamma)$ is called  {\it local} if there exists $\sigma\in\text{Aut}(\Gamma)$ such that $\theta(M_v)=M_{\sigma(v)}$, for every $v\in\Gamma$.
The set of local automorphisms of $M_\Gamma$ forms a subgroup of $\text{Aut}(M_\Gamma)$ which we denote by $\text{Aut}_{\text{loc}}(M_\Gamma)$.  
The map $\theta\mapsto\sigma$ then yields a natural onto homomorphism $\mathrm{Aut}_{\mathrm{loc}}(M_\Gamma)\to\mathrm{Aut}(\Gamma;M_\Gamma)$, whose kernel is isomorphic to $\oplus_{v\in\Gamma}\mathrm{Aut}(M_v)$. After fixing a vertex $v$ in each $\mathrm{Aut}(\Gamma;M_\Gamma)$-orbit, and an automorphism $\theta_{wv}:M_w\to M_v$ for every $w\in \mathrm{Aut}(\Gamma;M_\Gamma)\cdot v$ (with $\theta_{vv}=\mathrm{id}$), we get a section $\mathrm{Aut}(\Gamma;M_\Gamma)\to\mathrm{Aut}_{\mathrm{loc}}(M_\Gamma)$. Altogether this shows that 

\begin{equation}\label{aut_loc}\mathrm{Aut}_{\mathrm{loc}}(M_\Gamma)\cong\left(\bigoplus_{v\in\Gamma}\mathrm{Aut}(M_v)\right)\rtimes\mathrm{Aut}(\Gamma;M_\Gamma).\end{equation}

Theorem \ref{factors'} allows us to give the first calculations of symmetry groups of graph product von Neumann algebras with arbitrary II$_1$ factor vertex algebras.
Specifically, applying Theorem \ref{factors'} to $N_\Lambda=M_\Gamma$ gives that $\mathcal F(M_\Gamma)=\{1\}$ and moreover that $\text{Aut}(M_\Gamma)=\text{Aut}_{\text{loc}}(M_\Gamma)\text{Inn}(M_\Gamma)$, if $\Gamma$ has no separating stars. By combining these facts with the isomorphism in \eqref{aut_loc}, we derive the following corollary 
(see Section~\ref{applications} for the proof).

\begin{mcor}\label{symmetry_groups}

Let $\Gamma$ be a finite simple graph of girth at least $5$, which contains no vertices of valence $0$ or $1$.   
Let $(M_v)_{v\in\Gamma}$ be a family of II$_1$ factors.
Then $\mathcal F(M_\Gamma)=\{1\}$.

Moreover, if $\Gamma$ has no separating stars, 
then  $$\emph{Out}(M_\Gamma)\cong \left(\bigoplus_{v\in\Gamma}\emph{Aut}(M_v)\right)\rtimes\emph{Aut}(\Gamma;M_\Gamma).$$

\end{mcor}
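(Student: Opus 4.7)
The plan is to specialize Theorem \ref{factors'} to the diagonal case $\Lambda=\Gamma$ with $N_w=M_w$ for every $w$, and then combine the resulting rigidity with the abstract description of $\text{Aut}_{\text{loc}}(M_\Gamma)$ from \eqref{aut_loc}. For the fundamental group, any $t\in\mathcal{F}(M_\Gamma)$ yields a $*$-isomorphism $\theta:M_\Gamma\to M_\Gamma^t$, and Theorem \ref{factors'} directly forces $t=1$; hence $\mathcal{F}(M_\Gamma)=\{1\}$.

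For the outer automorphism group, assume additionally that $\Gamma$ has no separating star. Given $\theta\in\text{Aut}(M_\Gamma)$, the moreover assertion of Theorem \ref{factors'} (with $t=1$) produces a graph isomorphism $\alpha:\Gamma\to\Gamma$ together with a \emph{single} unitary $u\in M_\Gamma$, independent of $v$, such that $\theta(M_v)=uM_{\alpha(v)}u^*$ for every $v\in\Gamma$. Since $\theta$ then restricts to an isomorphism $M_v\cong M_{\alpha(v)}$ for each $v$, we have $\alpha\in\text{Aut}(\Gamma;M_\Gamma)$, so that $\theta_0:=\text{Ad}(u^*)\circ\theta$ lies in $\text{Aut}_{\text{loc}}(M_\Gamma)$. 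This shows $\text{Aut}(M_\Gamma)=\text{Inn}(M_\Gamma)\cdot\text{Aut}_{\text{loc}}(M_\Gamma)$, equivalently that the restriction $\varepsilon|_{\text{Aut}_{\text{loc}}(M_\Gamma)}$ of the quotient map surjects onto $\text{Out}(M_\Gamma)$.

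To conclude via \eqref{aut_loc} I still need to verify that this restriction is injective, equivalently that $\text{Aut}_{\text{loc}}(M_\Gamma)\cap\text{Inn}(M_\Gamma)=\{\text{id}\}$. This is the main obstacle: given $\text{Ad}(u)\in\text{Aut}_{\text{loc}}(M_\Gamma)$ with associated graph automorphism $\sigma$, one has $uM_vu^*=M_{\sigma(v)}$ for every $v$, and must deduce $u\in\mathbb{C}1$ (so $\sigma=\text{id}$). My plan is to attack this via a normalizer/intertwiner calculation inside the graph product: under the girth $\geq 5$ and valence $\geq 2$ hypotheses, the (quasi-)normalizer of each vertex subalgebra $M_v$ inside $M_\Gamma$ should be controlled by $M_{\text{st}(v)}$, so that a suitable power $u^k$ (with $k$ the order of $\sigma$) lies in $\bigcap_v M_{\text{st}(v)}$. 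Since no vertex of $\Gamma$ can be adjacent to all other vertices — such a vertex, together with any edge in $\Gamma$, would create a triangle contradicting girth $\geq 5$, and removing all its incidences would leave a graph with a vertex of valence at most $1$ — this intersection collapses to $\mathbb{C}1$. Refining the argument to handle $u$ itself (rather than a power) and then combining the resulting isomorphism $\varepsilon|_{\text{Aut}_{\text{loc}}(M_\Gamma)}:\text{Aut}_{\text{loc}}(M_\Gamma)\to\text{Out}(M_\Gamma)$ with \eqref{aut_loc} delivers the stated decomposition of $\text{Out}(M_\Gamma)$.
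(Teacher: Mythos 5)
Your overall route is the same as the paper's: specialize Theorem \ref{factors'} to $N_\Lambda=M_\Gamma$ to get $\mathcal F(M_\Gamma)=\{1\}$ and $\mathrm{Aut}(M_\Gamma)=\mathrm{Inn}(M_\Gamma)\cdot\mathrm{Aut}_{\mathrm{loc}}(M_\Gamma)$, and then reduce the computation of $\mathrm{Out}(M_\Gamma)$ to showing $\mathrm{Aut}_{\mathrm{loc}}(M_\Gamma)\cap\mathrm{Inn}(M_\Gamma)=\{\mathrm{id}\}$ and invoking \eqref{aut_loc}. The first two steps are correct and exactly as in the paper.

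The gap is in the injectivity step, which you yourself flag as unfinished. Your plan is to take $\theta=\mathrm{Ad}(u)$ with $uM_vu^*=M_{\sigma(v)}$, pass to the power $u^k$ ($k$ the order of $\sigma$) to get $u^k\in\bigcap_v M_{\mathrm{st}(v)}=\mathbb C1$, and then ``refine'' to handle $u$ itself. As stated this does not conclude: $u^k\in\mathbb C1$ only says $\mathrm{Ad}(u)$ has finite order, not that $\mathrm{Ad}(u)=\mathrm{id}$, and no refinement of the power trick is supplied. The missing observation — which is how the paper argues — is that $\sigma$ is forced to be trivial from the start: $uM_vu^*=M_{\sigma(v)}$ gives $M_v\prec_{M_\Gamma}M_{\sigma(v)}$, and Lemma \ref{basic}(2) (intertwining of vertex algebras detects containment of vertices) yields $\sigma(v)=v$ for every $v$. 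Hence $u$ itself normalizes each $M_v$, so Proposition \ref{normalizer}(1) gives $u\in M_{\mathrm{st}(v)}$ for every $v$, and therefore $u\in M_{\bigcap_v\mathrm{st}(v)}$. Your combinatorial argument that $\bigcap_v\mathrm{st}(v)=\emptyset$ (no vertex is adjacent to all others, by girth $\geq 5$ together with valence $\geq 2$) is fine and matches the paper's, so $u\in\mathbb C1$ and $\theta=\mathrm{id}$. With this correction the proof is complete; without it, the key isomorphism $\varepsilon|_{\mathrm{Aut}_{\mathrm{loc}}(M_\Gamma)}:\mathrm{Aut}_{\mathrm{loc}}(M_\Gamma)\to\mathrm{Out}(M_\Gamma)$ is not established.
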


The first class of II$_1$ factors with trivial fundamental group was given by Popa in his breakthrough work \cite{Po06a}. Subsequently, several additional such classes were found, including notably all crossed products  associated to free ergodic probability measure preserving actions of $\mathbb F_n, 2\leq n<\infty$ \cite{PV14}.  
Starting with \cite{IPP08}, the outer automorphism group has been calculated for a number of families of II$_1$ factors, see \cite{PV22,CIOS23,Va24} and the references therein for some more recent advances. 

Corollary \ref{symmetry_groups} provides a new family of II$_1$ factors with trivial fundamental group which have a remarkably simple algebraic description. In particular, Corollary \ref{symmetry_groups} implies that if one places arbitrary II$_1$ factors $M_1,\dots,M_n$, for any $n\geq 5$, on the vertices of the cycle graph $C_n$ on $n$ vertices, then the associated graph product II$_1$ factor has trivial fundamental group. 

Corollary \ref{symmetry_groups} also shows that for graphs $\Gamma$ as in its statement, the outer automorphism group of $M_\Gamma$ is completely determined by the automorphism groups of the vertex II$_1$ factors $(M_v)_{v\in\Gamma}$. This represents an analogue of a group-theoretic result of Genevois and Martin \cite[Corollary C]{GM19}. 

Prior to Corollary \ref{symmetry_groups}, the only known calculations of symmetry groups of graph product II$_1$ factors associated to non-complete graphs  were given by Chifan, Davis and Drimbe in \cite{CDD25a}. Specifically, the conclusion of Corollary~\ref{symmetry_groups} was shown to hold for specific graphs $\Gamma$  (called simple cycles of cliques) and certain  II$_1$ factors $(M_v)_{v\in\Gamma}$ arising from property (T) groups and satisfying unique prime factorization.
Corollary \ref{symmetry_groups} vastly extends this phenomenon to a fairly large class of graphs and arbitrary vertex II$_1$ factors.

 \begin{remark}
Assume that the vertex II$_1$ factors $(M_v)_{v\in\Gamma}$ are equal to a fixed II$_1$ factor $M$. Then Corollary \ref{symmetry_groups} gives that $\text{Out}(M_\Gamma)$ is isomorphic to the generalized wreath product group $\text{Aut}(M)\wr_{\Gamma}\text{Aut}(\Gamma)=(\bigoplus_{\Gamma}\text{Aut}(M))\rtimes\text{Aut}(\Gamma)$. 
If $\text{Aut}(M)$ is connected with respect to pointwise $\|\cdot\|_2$-convergence (e.g., if $\text{Out}(M)=\{1\}$), then the connected component, denoted $\text{Out}(M_\Gamma)^0$, of the identity in $\text{Out}(M_\Gamma)$ is equal to $\bigoplus_\Gamma \text{Aut}(M)$. This implies that the automorphism group $\text{Aut}(\Gamma)$ of $\Gamma$ can be recovered from $M_\Gamma$ as the quotient group $\text{Out}(M_\Gamma)/\text{Out}(M_\Gamma)^0$.
\end{remark}

\subsection*{Unique prime factorization}
A II$_1$ factor $M$ is called {\it prime} if it cannot be written as a tensor product of two II$_1$ factors. Assume that $M$ is not prime, but can be decomposed as a tensor product of prime II$_1$ factors, $M=M_1\overline{\otimes}\cdots\overline{\otimes}M_n$. We say that $M$ has a {\it unique prime factorization} if any tensor product decomposition $M=P_1\overline{\otimes}P_2$ is, modulo amplifications, unitary conjugacy and permutations of the indices, 
of the form $P_1=\overline{\otimes}_{i=1}^mM_i$ and $P_2=\overline{\otimes}_{i=m+1}^nM_i$, for some $1\leq m\leq n$. 

A finite simple graph $\Gamma$ is called \emph{irreducible} if it does not split as the join $\Gamma=\Gamma_1\circ\Gamma_2$ of two proper subgraphs, meaning that any two vertices $v_1\in\Gamma_1$ and $v_2\in\Gamma_2$ are adjacent.  
Every finite simple graph $\Gamma$ decomposes in a unique way (up to permuting the factors) as a join $\Gamma=\Gamma_1\circ\dots\circ\Gamma_n$ of irreducible subgraphs, called the irreducible components of $\Gamma$. Given a family of vertex II$_1$ factors $(M_v)_{v\in\Gamma}$, 
such a join decomposition induces a tensor product decomposition $M_\Gamma=M_{\Gamma_1}\overline\otimes\dots\overline\otimes M_{\Gamma_n}$.

As shown in \cite[Theorem B]{BCC24}, if $|\Gamma|\geq 2$  and $\Gamma$ is irreducible, then $M_\Gamma$ is a prime II$_1$ factor. Therefore, if every irreducible component $\Gamma_i$ of $\Gamma$  satisfies $|\Gamma_i|\geq 2$, 
then $M_\Gamma$ decomposes as a tensor product of prime II$_1$ factors, 
$M_\Gamma=M_{\Gamma_1}\overline\otimes\dots\overline\otimes M_{\Gamma_n}$.
By \cite[Theorem D]{CdSS18}, this prime factorization is unique, provided that $(M_v)_{v\in\Gamma}$ are group II$_1$ factors. The same conclusion, under some additional conditions, was shown to hold in \cite[Theorem C]{BCC24} whenever $(M_v)_{v\in\Gamma}$ are nonamenable and satisfy strong property (AO).
As a byproduct of our techniques (and as a crucial ingredient in the proof of our main theorems), we establish uniqueness of this prime factorization for arbitrary vertex II$_1$ factors.

\begin{main}\label{prime_factorization}
Let $\Gamma$ be a finite simple graph such that $|\Gamma|\geq 2$ and no vertex of $\Gamma$ is adjacent to every other vertex. Let $(M_v,\tau_v)_{v\in\Gamma}$ be a collection of diffuse tracial von Neumann algebras and $M_\Gamma=*_{v,\Gamma}M_v$ the associated graph product II$_1$ factor.  Let  $\Gamma_1,\dots,\Gamma_n$ be the irreducible components of $\Gamma$. 

Then $M_{\Gamma_i}$ is a prime II$_1$ factor, for every $1\leq i\leq n$, and the following hold:

\begin{enumerate}
\item If $M_\Gamma=P_1\overline{\otimes}P_2$, for some II$_1$ factors $P_1, P_2$, then there exist a partition $I_1\sqcup I_2=\{1,\dots,n\}$ and a decomposition $M_\Gamma=P_1^t\overline{\otimes}P_2^{1/t}$, for some $t>0$, such that $P_1^t=\overline{\otimes}_{i\in I_1}M_{\Gamma_i}$ and $P_2^{1/t}=\overline{\otimes}_{i\in I_2}M_{\Gamma_i}$, modulo conjugacy by a unitary element in $M_\Gamma$. 
\item If $M_\Gamma=P_1\overline{\otimes}\dots\overline{\otimes}P_m$, for some $m\geq n$ and II$_1$ factors $P_1,\dots,P_m$, then $m=n$ and there exists a decomposition $M_\Gamma=P_1^{t_1}\overline{\otimes}\dots\overline{\otimes}P_n^{t_n}$ for some $t_1,\dots,t_n>0$ with $t_1\dots t_n=1$ such that, after permutation of indices and unitary conjugacy, 
$M_{\Gamma_i}=P_i^{t_i}$, for every $1\leq i\leq n$.
\item In (2), the assumption that $m\geq n$ can be omitted if $P_i$ is prime, for every $1\leq i\leq n$. 
\end{enumerate}
\end{main}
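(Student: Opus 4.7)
The plan is to deduce the whole theorem from a single dichotomy applied repeatedly. First, each $M_{\Gamma_i}$ is prime by \cite[Theorem~B]{BCC24}: the hypothesis that no vertex of $\Gamma$ is adjacent to every other vertex forces $|\Gamma_i|\geq 2$ for every irreducible component $\Gamma_i$ (a singleton irreducible component would consist of a vertex adjacent to all of $\Gamma\setminus\Gamma_i$, hence to every other vertex of $\Gamma$). Combined with irreducibility of each $\Gamma_i$, this places us in the setting of BCC24's primeness theorem, and in particular each $M_{\Gamma_i}$ is a II$_1$ factor, so the tensor decomposition $M_\Gamma=\overline{\otimes}_{i=1}^n M_{\Gamma_i}$ has factorial relative commutants $M_{\Gamma_i}'\cap M_\Gamma=\overline{\otimes}_{j\neq i}M_{\Gamma_j}$.

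For part (1), suppose $M_\Gamma=P_1\overline{\otimes}P_2$. Fix $i\in\{1,\ldots,n\}$ and apply Popa's intertwining dichotomy to the subfactor $M_{\Gamma_i}\subset M_\Gamma$: either $M_{\Gamma_i}\prec_{M_\Gamma}P_1$, or a corner of $P_2$ embeds into $M_{\Gamma_i}'\cap M_\Gamma=\overline{\otimes}_{j\neq i}M_{\Gamma_j}$. Running the symmetric dichotomy on both $P_1$ and $P_2$ and using primeness of $M_{\Gamma_i}$ to rule out the ``diagonal'' scenario where $M_{\Gamma_i}$ spreads across both tensor factors, one extracts a clean alternative $M_{\Gamma_i}\prec_{M_\Gamma}P_1$ or $M_{\Gamma_i}\prec_{M_\Gamma}P_2$ (exclusively). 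This yields a partition $\{1,\ldots,n\}=I_1\sqcup I_2$. Lemma~\ref{interwining_to_isomorphism}(1) then upgrades each intertwining into a stable isomorphism of $M_{\Gamma_i}$ onto a subfactor of the corresponding $P_k$. Since the $(M_{\Gamma_i})_{i\in I_k}$ pairwise commute in $M_\Gamma$ and their join has relative commutant $\overline{\otimes}_{j\in I_{3-k}}M_{\Gamma_j}$, a commutant-matching argument produces $t>0$ and a unitary $u\in M_\Gamma$ with $u\bigl(\overline{\otimes}_{i\in I_1}M_{\Gamma_i}\bigr)u^*=P_1^t$ and $u\bigl(\overline{\otimes}_{i\in I_2}M_{\Gamma_i}\bigr)u^*=P_2^{1/t}$.

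Part (2) then follows by induction on $m$: applying part (1) to $M_\Gamma=P_1\overline{\otimes}(P_2\overline{\otimes}\cdots\overline{\otimes}P_m)$ yields a partition $I_1\sqcup J=\{1,\ldots,n\}$ and an identification (up to unitary conjugacy and amplification) of $P_2\overline{\otimes}\cdots\overline{\otimes}P_m$ with $\overline{\otimes}_{j\in J}M_{\Gamma_j}$. Applying the inductive hypothesis to this smaller decomposition refines $J$ into $m-1$ nonempty blocks, one for each remaining $P_k$. Altogether $\{1,\ldots,n\}$ is partitioned into $m$ nonempty blocks; the hypothesis $m\geq n$ forces each block to be a singleton, which gives $m=n$, the required permutation of indices, and the amplifications $t_1,\ldots,t_n$ with $t_1\cdots t_n=1$. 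For part (3), primeness of every $P_k$ directly prevents any block from containing two or more irreducible components, since a multi-element block would express $P_k$ as a nontrivial tensor product of II$_1$ factors; thus the hypothesis $m\geq n$ becomes superfluous.

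The main obstacle is the first step of part (1): turning the asymmetric intertwining dichotomy into the clean alternative $M_{\Gamma_i}\prec P_1$ or $M_{\Gamma_i}\prec P_2$. The raw dichotomy allows a mixed outcome in which $M_{\Gamma_i}$ has nontrivial ``shadow'' on both $P_1$ and $P_2$, and the paper's combinatorial control of graph product subalgebras must be combined with primeness of $M_{\Gamma_i}$ to eliminate this. A secondary obstacle is correctly tracking the amplifications throughout the commutant-matching and the induction, which is precisely the role of the finite-index intertwining theory encoded in Lemma~\ref{interwining_to_isomorphism}.
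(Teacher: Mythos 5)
Your proposal has a genuine gap at exactly the step you flag as ``the main obstacle,'' and it cannot be closed with the tools you cite. Ordinary primeness of $M_{\Gamma_i}$ does not yield the clean alternative $M_{\Gamma_i}\prec_{M_\Gamma}P_1$ or $M_{\Gamma_i}\prec_{M_\Gamma}P_2$: excluding the ``diagonal'' position of $M_{\Gamma_i}$ inside $P_1\overline{\otimes}P_2$ is precisely the content of Isono's notion of \emph{strong primeness}, which is a stronger requirement than primeness (primeness only says that $M_{\Gamma_i}$ itself admits no tensor decomposition; it says nothing about how $M_{\Gamma_i}$ can sit inside the larger tensor product $M_{\Gamma_i}\overline{\otimes}(\overline{\otimes}_{j\neq i}M_{\Gamma_j})=P_1\overline{\otimes}P_2$). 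There is also no general Popa-type dichotomy of the form you invoke (``either $M_{\Gamma_i}\prec P_1$ or a corner of $P_2$ embeds into $M_{\Gamma_i}'\cap M_\Gamma$''), so your ``symmetric dichotomy plus primeness rules out the mixed outcome'' has no actual mechanism behind it. The paper devotes Section~5 to supplying exactly this missing ingredient: Theorem~\ref{strongly_prime} shows each $M_{\Gamma_i}$ is \emph{strongly} prime, and its proof does not pass through primeness at all. Instead, one adds a cone vertex $v$ with $M_v=N$, views $M_{\Gamma_i}\overline{\otimes}N$ as a graph product over $\Gamma_i\circ\{v\}$, and applies Corollary~\ref{tensor_dec}, the structural result on commuting subalgebras of graph products, which rests on Theorem~\ref{commute} and hence on the amalgamated free product technology of \cite{Io15,Va13}.

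Two further problems. First, you cite \cite[Theorem~B]{BCC24} for primeness of $M_{\Gamma_i}$, but that theorem requires the vertex algebras to be II$_1$ factors, whereas Theorem~\ref{prime_factorization} only assumes them diffuse and tracial; even the primeness assertion therefore needs the paper's Theorem~\ref{strongly_prime}, which covers non-factorial vertex algebras. Second, Lemma~\ref{interwining_to_isomorphism}(1) is not the right upgrading device: it concerns vertex algebras $M_v$, $N_w$ and exploits the star/link structure of the graph, and does not apply to the pair $M_{\Gamma_i}$, $P_k$. The correct tools, and the ones the paper uses, are Lemma~\ref{tensor_product} (from \cite{OP04}) together with Lemmas~4.1 and~4.3 of \cite{Is20}, which convert strong primeness of each $M_{\Gamma_i}$ into assertions (1)--(3) by the standard argument (intertwine each $M_{\Gamma_i}$ into some $P_j$, combine the $i$'s in each block, take relative commutants via Proposition~\ref{results}(1), and conjugate with Lemma~\ref{tensor_product}(2)). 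Once strong primeness is available, your inductive scheme for (2) and (3) is essentially this standard route and is fine.
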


\subsection{Comments on the proofs of the main results.}
We now describe, briefly and informally, some of the techniques and ideas used in the proofs of our main theorems.

\subsubsection*{Parabolic hulls of subalgebras}

Zimmer's cocycle superrigidity theorem \cite{Zi84} for higher-rank lattices crucially exploits the notion of the \emph{algebraic hull} of a measurable cocycle, which one can informally think of  
as the smallest algebraic subgroup into which the cocycle can be conjugated. Parabolic subgroups of graph product groups, i.e., subgroups associated to subgraphs of the underlying graph, play a similarly important role, first highlighted by Tits in the context of Coxeter and Artin groups, and studied specifically for graph products in \cite{AM}. For measure equivalence, the notion of parabolic support of a measured groupoid equipped with a cocycle taking values in a graph product is a central tool in \cite{HH22,EH24}. 
While finding an appropriate von Neumann algebraic analogue for the algebraic hull in the setting of higher-rank lattices remains a challenging task, we establish here a systematic framework, which further develops ideas from \cite{BCC24} 
(see also \cite{BC24,CKE24,CDD25a}), for studying parabolic hulls of subalgebras of graph product von Neumann algebras.

Consider a graph product von Neumann algebra $M_\Gamma$ associated to a finite simple graph $\Gamma$ and a collection of diffuse tracial von Neumann algebras $(M_v)_{v\in\Gamma}$.
For any full subgraph $\Sigma\subset\Gamma$, 
we have an embedding $M_\Sigma\subset M_\Gamma$. Moreover, the von Neumann algebra $\mathcal N_{M_\Gamma}(M_\Sigma)''$ generated by the normalizer of $M_{\Sigma}$ in $M_\Gamma$ is equal to $M_{\Sigma\cup\Sigma^\perp}$, 
where $\Sigma^{\perp}\subset\Gamma$ is the subgraph spanned by all vertices of $\Gamma\setminus\Sigma$ that are adjacent to all vertices of $\Sigma$. Any von Neumann subalgebra $Q\subset M_\Gamma$ whose relative commutant $Q'\cap M_\Gamma$ is a factor has a \emph{parabolic hull}, i.e., there is a smallest subgraph $\Sigma\subset\Gamma$ such that $Q\prec^s_{M_\Gamma}M_{\Sigma}$.  
The hull behaves well with respect to normalizers: if $M_\Sigma$ is the hull of $Q$, then  $\mathcal{N}_{M_\Gamma}(Q)''$ 
has a corner that embeds in $M_{\Sigma\cup\Sigma^{\perp}}$. 
In general, when $Q'\cap M_\Gamma$ is not a factor, there is a finite partition $\{p_i\}_{i\in I}$ of the identity in $Q'\cap M_\Gamma$ such that each $Qp_i$ has a hull.
These facts and several other (see Section~\ref{intertwining_results}) are crucial in order to implement the following general strategy. 

\subsubsection*{General strategy for proving Theorems~\ref{arbitrary}, \ref{amenable} and ~\ref{factors}} Let $P$ be a von Neumann algebra which splits into two ways as a graph product, $P=M_\Gamma=N_\Lambda$.
For von Neumann subalgebras $Q_1,Q_2\subset P$, we write for simplicity $Q_1\sim Q_2$ to indicate that $Q_1\prec_P^sQ_2$ and $Q_2\prec_P^sQ_1$.
Our ultimate goal is to prove that for every vertex $v\in\Gamma$, there exists a vertex $w\in\Lambda$ such that $M_v\sim N_w$ -- once this is done, a simple combinatorial argument ensures that the map $v\mapsto w$ yields a graph isomorphism. For certain well-chosen subgraphs $\Sigma\subset\Gamma$, we will find a subgraph $\Delta\subset\Lambda$ such that $M_\Sigma\sim N_\Delta$ until we narrow down to the case where $\Sigma$ is a vertex. 

The graph product structure of $P$ induces many amalgamated free product decompositions, e.g., $P=M_{\Gamma\setminus\{v\}}*_{M_{\text{lk}(v)}}M_{\text{st}(v)}$, for every $v\in\Gamma$. This fact allows us to use \cite{Io15,Va13} to derive strong information on commuting subalgebras:  commuting nonamenable subalgebras of $P$ are essentially located in a subalgebra associated to a join subgraph or an isolated vertex of $\Gamma$. This result, proved in Section~\ref{sec:commuting}, 
enables us to achieve the above goal when $\Sigma\subset\Gamma$ is a maximal join subgraph. More precisely, if all vertex algebras are factors (which entails that $M_\Sigma$ has a hull in $P=N_\Lambda$), 
it follows that $M_\Sigma\sim N_\Delta$, for some maximal join subgraph $\Delta\subset\Lambda$.  In general, 
for non-factorial vertex algebras (when $M_\Sigma$ does not always have a hull), the situation is necessarily more complicated.
More precisely, we find a partition of identity $\{p_{\Sigma,\Delta}\}_{\Delta}$ in the center of $M_\Sigma'\cap P$ indexed by the maximal join subgraphs $\Delta\subset\Lambda$ such that $M_\Sigma p_{\Sigma,\Delta}\sim N_\Delta q_{\Delta,\Sigma}$, for some projection $q_{\Delta,\Sigma}\in \mathcal Z(N_\Delta'\cap P)$.

In the context of Theorem~\ref{arbitrary} (proved in Section~\ref{sec:arbitrary}), the graphs $\Gamma,\Lambda$ are assumed transvection-free and square-free, so their maximal join subgraphs are exactly the stars of vertices. Starting with any vertex $v\in\Gamma$, we find a partition of identity $\{p_{v,w}\}_w$ in $\mathcal Z(M_{\text{st}(v)}'\cap P)$ indexed by vertices $w\in\Lambda$ such that $M_{\st(v)}p_{v,w}\sim N_{\st(w)}q_{w,v}$, for some projection $q_{w,v}\in \mathcal Z(N_{\text{st}(w)}'\cap P)$.
 Using the tensor product decomposition $M_{\st(v)}=M_v\overline\otimes M_{\lk(v)}$ we then deduce that $M_vp_{v,w}\sim N_wq_{w,v}$, for every $v\in\Gamma, w\in\Lambda$. 
 At this point, a general conjugacy criterion established in Section \ref{sec:tools} implies that for every $v\in\Gamma$, there is $w\in\Lambda$ such that $p_{v,w}=q_{w,v}=1$ and therefore $M_v\sim N_w$, as desired.

The ideas above are further exploited in the proofs of Theorems~\ref{amenable} and~\ref{factors}, where we take advantage of the additional assumptions imposed on the vertex algebras in order to make the conditions on the graphs less restrictive. For instance, in Theorem~\ref{amenable}, the graph $\Gamma$ is allowed to contain squares, so it is no longer true that all of its maximal join subgraphs $\Sigma\subset\Gamma$ are stars. But since the vertex algebras are assumed amenable, we can now distinguish subalgebras associated to stars from other subalgebras associated to maximal join subgraphs, using the fact that the former contain an amenable factor in their tensor product decomposition, while the latter do not. Further,
when $\Sigma$ is a maximal join subgraph which is not a star,
by using our unique prime factorization result (Theorem \ref{prime_factorization})
 we can locate the tensor factors of $M_\Sigma$ (which arise from the irreducible components of $\Sigma$). These ideas are exploited in Section~\ref{sec:amenable} via an induction argument on the cardinality of $\Gamma$ to establish Theorem~\ref{amenable}.
A variation of this strategy is then used in Section~\ref{sec:factors} to prove Theorem~\ref{factors}.

Our strategy (locating subalgebras of maximal join subgraphs $\Sigma$ of $\Gamma$, their factors, and finally vertices) parallels the one used for measure equivalence in \cite{EH24}. However, carrying the strategy in the context of von Neumann algebras is a completely different task. 
Indeed, as explained above, we work in the framework of Popa's intertwining-by-bimodules theory. Moreover, for 
 general diffuse (not necessarily factorial) vertex algebras,
 the relative commutant $M_\Sigma'\cap P$ is not always a factor and 
  we therefore have to analyze projections in its center.  Handling this issue is one of the main technical novelties of the paper, 
  and is precisely what made it possible to obtain the first rigidity results in the non-factorial regime.

\subsubsection*{A word on  the proof of Theorem~\ref{factors'}} 

The passage from Theorem~\ref{factors} to Theorem~\ref{factors'}, i.e., from a stable to an actual isomorphism of the vertex von Neumann algebras, is particularly delicate and requires additional ideas. 
An analogous passage from stable  
to usual orbit equivalence of the vertex groups was performed in \cite{EH24}. However, the proof therein exploits the action of the graph product on its right-angled building, for which there is no von Neumann algebraic analogue. 
In order to discuss the proof of Theorem \ref{factors'} in more detail, assume for simplicity that we are in the above setting. Theorem \ref{factors} then implies the existence of a graph isomorphism $\alpha:\Gamma\rightarrow\Lambda$ such that $M_v$ and $N_{\alpha(v)}$ are stably isomorphic, for every $v\in\Gamma$. Specifically, for every $v\in\Gamma$, there are a unitary $u_v\in P$ and a decomposition $M_{\text{st}(v)}=M_v^{t_v}\overline{\otimes}M_{\text{lk}(v)}^{1/t_v}$, for some $t_v>0$,  such that $u_vM_v^{t_v}u_v^*=N_{\alpha(v)}$.
By using this relation and  that $N_{\alpha(v)}$ and $N_{\alpha(v')}$ commute, we obtain that $u_v^*u_{v'}$ has a  special decomposition for adjacent vertices $v,v'\in\Gamma$. 
This decomposition and the identity $(u_{v_0}^*u_{v_1})(u_{v_1}^*u_{v_2})\cdots (u_{v_{n-1}}^*u_{v_n})=1$ along cycles $v_0,v_1,\dots,v_n=v_0$  in $\Gamma$
are then combined to show that $t_v=1$, and thus derive the main assertion of Theorem \ref{factors'}. 
Assuming moreover that $\Gamma$ has no separating stars, additional work allows us to prove that $u_v$ decomposes as $u_v=u\beta_v\gamma_v$, for a unitary $u\in P$ independent of $v$, and unitaries $\beta_v\in M_v,\gamma_v\in M_{\text{lk}(v)}$. This shows that $uM_vu^*=u_vM_vu_v^*=N_{\alpha(v)}$ and finishes the proof.

\subsection*{Acknowledgments.} 
The first-named author would like to thank Cyril Houdayer for useful discussions in early stages of this project. The authors are also grateful to Martijn Caspers and Stefaan Vaes for helpful comments.

The first-named author was funded by the European Union (ERC, Artin-Out-ME-OA, 101040507). Views and opinions expressed are however those of the authors only and do not necessarily reflect those of the European Union or the European Research Council. Neither the European Union nor the granting authority can be held responsible for them.
The second-named author was supported in part by the NSF grants FRG-DMS-1854074,  DMS-2153805 and DMS-2451697.
\section{Preliminaries}

\subsection{Tracial von Neumann algebras}\label{basic_facts}  We recall some basic terminology regarding tracial von Neumann algebras, and refer to \cite{AP} for additional information.
A {\it tracial von Neumann algebra} is a pair $(M,\tau)$ consisting of a von Neumann algebra $M$ and a faithful normal tracial state $\tau:M\rightarrow\mathbb C$.

Let $(M,\tau)$ be a tracial von Neumann algebra.
For $x\in M$, we denote by $\|x\|$ the {\it operator norm} of $x$ and by $\|x\|_2:=\sqrt{\tau(x^*x)}$ the {\it $2$-norm} of $x$. We denote by $\mathcal Z(M)=\{x\in M\mid xy=yx,\forall y\in M\}$ the {\it center} of $M$, by $\mathcal U(M)$ the {\it unitary group} of $M$ and by $(M)_1=\{x\in M\mid\;\|x\|\leq 1\}$ the {\it unit ball} of $M$. We denote by $\text{L}^2(M)$ the Hilbert space obtained by completing $M$ with respect to $\|\cdot\|_2$, or, equivalently, the scalar product $\langle x,y\rangle=\tau(y^*x)$. We consider the standard  embeddings $M\subset\mathbb B(\text{L}^2(M))$ and $M\subset\text{L}^2(M)$, and the anti-unitary $J:\text{L}^2(M)\rightarrow\text{L}^2(M)$ given by $J(x)=x^*$. 
We say that $M$ is {\it diffuse} if it admits no nonzero minimal projections.

Let $P\subset M$ be a von Neumann subalgebra, which we will always assume to be unital. We denote by $\text{E}_P:M\rightarrow P$ the {\it conditional expectation} onto $P$, by $P'\cap M$ 
the {\it relative commutant} 
of $P$ in $M$, and by $\mathcal N_M(P)=\{u\in\mathcal U(M)\mid uPu^*=P\}$ the {\it normalizer} of $P$ in $M$. The {\it quasi-normalizer} of $P$ in $M$, denoted $\mathcal Q\mathcal N_M(P)$, is the set of $x\in M$ for which there exist $x_1,\dots,x_n\in M$, for some $n\geq 1$, such that $xP\subset \sum_{i=1}^nPx_i$ and $Px\subset \sum_{i=1}^nx_iP$. We clearly have that $\mathcal N_M(P)\subset \mathcal Q\mathcal N_M(P)$.
For a subset $S\subset M$, we denote by $\text{sp}(S)$ the linear span of $S$ and by $\overline{\text{sp}}(S)$ the $\|\cdot\|_2$-closure of $\text{sp}(S)$.

Given $S\subset \mathbb B(\mathcal H)$, for a Hilbert space $\mathcal H$,  we denote by $S'=\{x\in\mathbb B(\mathcal H)\mid xy=yx,\forall y\in Y\}$ the {\it commutant} of $S$. If $1\in S$ and $S$ is closed under adjoint, then $S''=(S')'$ is the von Neumann algebra generated by $S$ by von Neumann's bicommutant theorem. If $P,Q\subset \mathbb B(\mathcal H)$ are von Neumann subalgebras, we denote by $P\vee Q:=(P\cup Q)''$ the von Neumann algebra generated by $P$ and $Q$. For subsets $S_1,\dots,S_n\subset\mathbb B(\mathcal H)$ we denote $S_1+\dots+S_n=\{x_1+\dots+x_n\mid x_1\in S_1,\dots,x_n\in S_n\}$ and $S_1\cdots S_n=\{x_1\cdots x_n\mid x_1\in S_1,\dots,x_n\in S_n\}$. Hereafter, all $*$-homomorphisms $\theta:P\rightarrow Q$ between von Neumann algebras $P$ and $Q$ are assumed unital and normal.

Let $(N,\tau)$ be a tracial von Neumann algebra. The \emph{opposite} von Neumann algebra $N^{\text{op}}$ is $N$ as a vector space, with the same involution and trace, but multiplication defined by $x\cdot y:=yx$. An $M$-$N$-{\it bimodule} is a Hilbert space $\mathcal H$ endowed with normal $*$-homomorphisms $\pi: M\rightarrow \mathbb B(\mathcal H)$ and $\rho:N^{\text{op}}\rightarrow \mathbb B(\mathcal H)$ such that $\rho(N^{\text{op}})\subset\pi(M)'$. For $x\in M,y\in N$ and $\xi\in\mathcal H$, we let $x\xi y:=\pi(x)\rho(y)\xi$. 

For a von Neumann subalgebra $Q\subset M$, we denote by $e_Q:\text{L}^2(M)\rightarrow\text{L}^2(Q)$ the orthogonal projection. {\it Jones' basic construction} $\langle M,e_Q\rangle$ is  defined as $JQ'J=(M\cup\{e_Q\})''\subset\mathbb B(\text{L}^2(M))$. It admits a faithful semi-finite trace given by $\text{Tr}(xe_Q y)=\tau(xy)$, for every $x,y\in M$. We denote by L$^2(\langle M,e_Q\rangle)$ the associated Hilbert space endowed with the natural $M$-bimodule structure. The inclusion $Q\subset M$ has {\it finite index} if $\text{Tr}(1)<\infty$. Note that this notion depends on the choice of $\tau$. 

Let $v\in M$, $\mathcal H\subset \text{L}^2(M)$ the $\|\cdot\|_2$-closure of $vQ$ and $e$ the orthogonal projection onto $\mathcal H$. Since $\mathcal H$ is a right $Q$-module, $e\in\mathbb B(\text{L}^2(M))$ commutes with $JQJ$ and thus belongs to $\langle M,e_Q\rangle$. Moreover, let $s\in Q$ be the support projection of $\text{E}_Q(v^*v)$. Consider the map $T:s\text{L}^2(Q)\rightarrow\mathcal H$ given by $T(\xi)=v\xi$. Then $T$ is a right $Q$-modular, injective, bounded operator with dense image. 
This implies that the right $Q$-modules $s\text{L}^2(Q)$ and $\mathcal H$ are isomorphic (with an isomorphism given by the partial isometry in the polar decomposition of $T$).
Hence, $\text{Tr}(e)=\dim\mathcal H_Q=\dim s\text{L}^2(Q)_Q=\tau(s)\leq 1$.

A tracial von Neumann algebra $(M,\tau)$ is called {\it amenable} if there exists a net $\xi_n\in \text{L}^2(M)\otimes \text{L}^2(M)$ such that $\langle x\xi_n,\xi_n\rangle\rightarrow\tau(x)$ and $\|x\xi_n-\xi_nx\|_2\rightarrow 0$, for every $x\in M$. Let $P\subset pMp$, where $p\in M$ is a projection, and $Q,R\subset M$ be von Neumann subalgebras. Following Ozawa and Popa \cite[Section~2.2]{OP10} we say that $P$ is  {\it amenable relative to $Q$ inside $M$} if there exists a net $\xi_n\in p\text{L}^2(\langle M,e_Q\rangle)p$ such that $\langle x\xi_n,\xi_n\rangle\rightarrow\tau(x)$, for every $x\in pM p$ and $\|y\xi_n-\xi_ny\|_2\rightarrow 0$, for every $y\in P$. If $P\subset Q$, then $P$ is clearly amenable relative to $Q$ inside $M$; see Proposition \ref{results}(4) for a strengthening of this fact. We recall that by \cite[Proposition 2.4(3)]{OP10} if $P$ is amenable relative to $Q$ inside $M$ and $Q$ is amenable relative to $R$ inside $M$, then $P$ is amenable relative to $R$ inside $M$. 
We also note that if $P$ is amenable relative to $Q$ inside $M$, then so is $P\oplus\mathbb C(1-p)$. 

\subsection{Popa's intertwining-by-bimodules theory}

We next recall from  \cite [Theorem 2.1 and Corollary 2.3]{Po06b} Popa's influential {\it intertwining-by-bimodules} theory.
\begin{theorem}[\!\!\cite{Po06b}]
\label{intertwine} Let $(M,\tau)$ be a tracial von Neumann algebra and $P\subset p_0M p_0, Q\subset q_0Mq_0$ be von Neumann subalgebras. Let $\mathcal U\subset\mathcal U(P)$ be a subgroup such that $\mathcal U''=P$.
Then the following conditions are equivalent:

\begin{enumerate}

\item There exist projections $p\in P, q\in Q$, a nonzero partial isometry $v\in M$ with $v^*v\leq p$ and $vv^*\leq q$, and a  
$*$-homomorphism $\theta:pP p\rightarrow qQ q$   such that $\theta(x)v=vx$, for every $x\in pP p$.

\item There is no net $(u_n)\subset \mathcal U$ satisfying $\|{\rm  E}_Q(x^*u_ny)\|_2\rightarrow 0$, for every $x,y\in p_0Mq_0$.

\end{enumerate}

\end{theorem}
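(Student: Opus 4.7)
The plan is to prove this via Popa's classical variational argument in Jones' basic construction $\langle M,e_Q\rangle$, with its canonical semifinite trace $\mathrm{Tr}$ satisfying $\mathrm{Tr}(xe_Qy)=\tau(xy)$ for $x,y\in M$. The direction \textbf{(1)} $\Rightarrow$ \textbf{(2)} is straightforward: given $(v,\theta)$ with $\theta(x)v=vx$ for $x\in pPp$, the adjoint identity gives $vxv^*=\theta(x)vv^*\in Q$, so applying this with $x=pup$ yields $E_Q(vuv^*)=\theta(pup)vv^*$ for every $u\in\mathcal U$. A short trace computation using $v^*v\leq p$ and $vv^*\neq 0$ shows $\|E_Q(vu_nv^*)\|_2$ cannot tend to $0$ along any net $(u_n)\subset\mathcal U$, which rules out such a net as in condition \textbf{(2)} (testing with $x=y=v^*\in p_0Mq_0$).

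For the substantial direction \textbf{(2)} $\Rightarrow$ \textbf{(1)}, let $K$ be the weakly closed convex hull of the set $\{ue_Qu^*:u\in\mathcal U\}$ inside the real Hilbert space $L^2(\langle M,e_Q\rangle,\mathrm{Tr})_{\mathrm{sa}}$; all generators are positive with $\mathrm{Tr}(ue_Qu^*)=\tau(p_0q_0)<\infty$, so $K$ is a bounded, closed, convex set in a Hilbert space and contains a unique element $a$ of minimal $L^2$-norm. Since conjugation by each $u\in\mathcal U$ preserves $K$ (by $\mathrm{Tr}$-invariance) and the $L^2$-norm, uniqueness forces $uau^*=a$ for all $u\in\mathcal U$; combined with $\mathcal U''=P$ this gives $a\in P'\cap\langle M,e_Q\rangle$, $a\geq 0$, and $\mathrm{Tr}(a)\leq\tau(p_0q_0)$.

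The crux is proving $a\neq 0$. If $a=0$, then finite convex combinations $\sum_j\lambda_ju_je_Qu_j^*$ achieve arbitrarily small $L^2$-norm; pairing against vectors $ye_Qx^*\in L^2(\langle M,e_Q\rangle)$ for $x,y\in p_0Mq_0$ and using $\langle ue_Qu^*,ye_Qx^*\rangle_{\mathrm{Tr}}=\tau(E_Q(x^*uy)u^*)$, one gets $\sum_j\lambda_j\|E_Q(x^*u_jy)\|_2^2$ arbitrarily small on every finite test family. A Hahn--Banach / maximality argument over cofinal finite subsets of $p_0Mq_0\times p_0Mq_0$ then extracts an individual net $(u_n)\subset\mathcal U$ along which $\|E_Q(x^*u_ny)\|_2\to 0$ for all $x,y\in p_0Mq_0$, contradicting the hypothesis. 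Once $a\neq 0$ is established, a nonzero spectral projection $f$ of $a$ lies in $P'\cap\langle M,e_Q\rangle$ with $\mathrm{Tr}(f)<\infty$; by the structure of finite-trace projections in the basic construction (cf.\ the preliminary discussion around $e=$ projection onto $\overline{vQ}$), $f$ is the projection of $L^2(M)$ onto $\overline{vQ}^{\,\|\cdot\|_2}$ for some $v\in M$, and the commutation $[f,P]=0$ translates into the desired $*$-homomorphism $\theta:pPp\to qQq$ with $\theta(x)v=vx$, where $p$ and $q$ are the support projections of $v^*v$ and $vv^*$. The main obstacle is the separation step producing $a\neq 0$: extracting an \emph{individual} bad net $(u_n)$ from the geometric information that $0\in K$ -- i.e., converting on-average smallness on arbitrary finite test families into uniform smallness along a single net -- is the technical heart of the theorem.
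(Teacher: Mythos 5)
This theorem is not proved in the paper: it is quoted verbatim from Popa \cite{Po06b}, so your proposal has to be measured against Popa's standard argument, whose skeleton you do reproduce correctly (basic construction, convex hull of $\{ue_Qu^*\}$, unique element of minimal $\|\cdot\|_{2,\mathrm{Tr}}$-norm, $\mathcal U$-invariance, spectral projection). The problem is that the step you yourself identify as the technical heart is wrong as written. First, the pairing identity is miscomputed: $\langle ue_Qu^*,ye_Qx^*\rangle_{\mathrm{Tr}}=\mathrm{Tr}(ue_Qu^*xe_Qy^*)=\tau\big(u\,E_Q(u^*x)\,y^*\big)$, not $\tau(E_Q(x^*uy)u^*)$, and in any case neither expression equals $\|E_Q(x^*uy)\|_2^2$, so smallness of convex combinations of these (complex, possibly cancelling) pairings does not give smallness of $\sum_j\lambda_j\|E_Q(x^*u_jy)\|_2^2$. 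More structurally, pairings of $ue_Qu^*$ against vectors or against elements of $L^2(\langle M,e_Q\rangle)$ only control \emph{one-sided} quantities of the type $\|E_Q(u^*x)\|_2$, and the one-sided condition is strictly weaker than the two-variable condition in (2): for $M=\mathrm{L}(\mathbb F_2)$, $Q=\mathrm{L}(\langle a\rangle)$, $P=\mathrm{L}(\langle b^{-1}ab\rangle)$ and $u_n=u_{b^{-1}a^nb}\in\mathcal U(P)$ one has $\|E_Q(u_n^*x)\|_2\to 0$ for every $x\in M$, yet $\|E_Q(u_b\,u_n\,u_{b^{-1}})\|_2=\|E_Q(u_{a^n})\|_2=1$ (and indeed $P\prec_MQ$ here). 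Popa's actual extraction uses the identity $\|E_Q(x^*uy)\|_2^2=\mathrm{Tr}\big((xe_Qx^*)\,u(ye_Qy^*)u^*\big)=\langle (xe_Qx^*)\,u\,\widehat{y},\,u\,\widehat{y}\rangle$, i.e.\ both test elements must enter through finite-trace positive elements $xe_Qx^*$, $ye_Qy^*$ (or through the vector states at $\widehat{y}$ applied to compressions by $x$), and only then can on-average smallness over a finite family of \emph{pairs} be converted into a single net indexed by $(F,\varepsilon)$. As it stands, your argument would only prove the (false) one-sided variant of the theorem.

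The easy direction and the endgame also have gaps. For (1)$\Rightarrow$(2), testing the single pair $x=y=v^*$ does not work: $E_Q(vu_nv^*)=\theta(pu_np)E_Q(vv^*)$, and $pu_np$ can vanish for unitaries $u_n\in\mathcal U$ (e.g.\ any $u$ conjugating $p$ onto a projection orthogonal to $p$), so $\|E_Q(vu_nv^*)\|_2$ can be identically zero along perfectly admissible nets; one must instead use that $\overline{\mathrm{sp}}\,PvQ$ (after the usual central-support bookkeeping in $P$) is a nonzero $P$-$Q$-bimodule that is finitely generated as a right $Q$-module, and test the net against a finite right-$Q$-basis, which is what rules out simultaneous decay of \emph{all} pairs. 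Finally, a nonzero finite-trace projection $f\in P'\cap\langle M,e_Q\rangle$ is not in general the projection onto a singly generated module $\overline{vQ}$; it yields a $P$-$Q$ sub-bimodule of $L^2(p_0Mq_0)$ of finite right $Q$-dimension, and producing from it the data $(p,q,v,\theta)$ of (1) requires the finitely-generated-module and bounded-vector (spectral cutting of $|\xi|$) argument, exactly the kind of work recalled in the paper's Proposition \ref{approx}; this cannot be compressed into ``the commutation $[f,P]=0$ translates into $\theta$''. A minor slip: $\mathrm{Tr}(ue_Qu^*)=\tau(p_0)$, not $\tau(p_0q_0)$.
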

If the equivalent conditions (1)-(2) in Theorem \ref{intertwine} hold true,  we write $P\prec_{M}Q$.
Moreover, if $P p'\prec_{M}Q$ for every nonzero projection $p'\in P'\cap p_0Mp_0$, we write $P\prec^s_{M}Q$.

\begin{remark}\label{elementary_facts}
We record here several elementary facts which will be used throughout the paper and whose proofs are left to the reader.  In the context of Theorem \ref{intertwine}, let $p\in P,p'\in P'\cap p_0Mp_0,q\in Q$ and $q'\in Q'\cap q_0Mq_0$ be nonzero projections. Let $(\widetilde M,\widetilde\tau)$ be a tracial von Neumann algebra which contains $M$ such that ${\widetilde\tau}_{|M}=\tau$. Then the following hold:

\begin{enumerate}
\item If $P\prec_MQ$ and $P\nprec_M Qq'$, then $P\prec_MQ(q_0-q')$.
\item If $pPpp'\prec_M Q$ or $P\prec_M qQqq'$, then $P\prec_MQ$ (see \cite[Lemma 3.4]{Va08}).
\item If $P\prec_M^sQ$, then $pPpp'\prec_M^s Q$.
\item If $P\prec_M^s Q$, then $P\prec_{\widetilde M}^sQ$ (see \cite[Remark 2.2]{DHI19}).
\item\label{central_support_right} If $Pp'\nprec_M Q$, then $Pe\nprec_MQ$, where $e\in\mathcal Z(P'\cap p_0Mp_0)$ is the central support of $p'$.
\item If $P\nprec_M qQq$, then $P\nprec_M Qf$, where $f\in\mathcal Z(Q)$ is the central support of $q$.
\item\label{inherit_to_subalg} If $P\prec_M^s Q$ and $P_0\subset P$ is a von Neumann subalgebra, then $P_0\prec_M^s Q$.  
(To see this, let $r\in P_0'\cap p_0Mp_0$ be a nonzero projection. Then by (3) we have $P_0r\prec_M P$ and since $P\prec_M^sQ$, \cite[Lemma 3.7]{Va08} implies that $P_0r\prec_MQ$.)
\item\label{sum_1} If $q'_1,q'_2\in \mathcal Z(Q'\cap q_0Mq_0)$ are nonzero projections, and if $P\prec_M Q(q_1'\vee q_2')$, then $P\prec_M Qq_1'$ or $P\prec_M Qq_2'$.
\item\label{sum_2} If $p'_1,p'_2\in \mathcal Z(P'\cap p_0Mp_0)$ are nonzero projections, and if $Pp'_1\prec^s_MQ$ and $Pp'_2\prec^s_MQ$, then $P(p'_1\vee p'_2)\prec^s_M Q$.
\end{enumerate}
\end{remark}

\begin{remark}\label{finite_index}
We also record a useful fact connecting intertwining and finite index inclusions.
    Let $(M,\tau)$ be a tracial von Neumann algebra and $P\subset M$ a von Neumann subalgebra with $M\prec_MP$. Then there is a nonzero projection $p'\in P'\cap M$ such that the inclusion $Pp'\subset p'Mp'$ has finite index. Indeed, since $M\prec_M P$,  \cite[Theorem 2.1]{Po06b} gives a nonzero projection $f\in M'\cap \langle M,e_P\rangle$ with $\text{Tr}(f)<\infty$. 
    Since $M'=JMJ$ and $\langle M,e_P\rangle=JP'J$, we get that $M'\cap \langle M,e_P\rangle=JMJ\cap JP'J=J(P'\cap M)J$.
    Writing $f=Jp'J$, for a projection $p'\in P'\cap M$, we get  $\text{dim}(\text{L}^2(M)p')_P=\text{Tr}(Jp'J)<\infty$. 
    After replacing $p'$ with $p'z$,  for a projection $z\in\mathcal Z(P)$,
    we may moreover assume that $\text{L}^2(M)p'$ is a finitely generated right $P$-module    (see \cite[Lemma A.1]{Va07}).
   Therefore, $\text{L}^2(p'Mp')$ is a finitely generated right $Pp'$-module, which by \cite[Lemma A.1]{Va08} gives that  $Pp'\subset p'Mp'$ has finite index.   

  Since $Pp'$ is isomorphic to $Pz$, where $z\in\mathcal Z(P)$ is the support projection of $\text{E}_P(p')$, we get a $*$-homomorphism $\alpha:Pz\rightarrow p'Mp'$ such that the inclusion $\alpha(Pz)\subset p'Mp'$ has finite index.
\end{remark}

For further reference, we recall several useful facts from the literature.

\begin{proposition}\label{results}
Let $(M,\tau)$ be a tracial von Neumann algebra. Let $P\subset p_0Mp_0, Q\subset q_0Mq_0, R\subset rMr$ and $S_1,\dots,S_k \subset q_0Mq_0$ be von Neumann subalgebras. Then the following hold:
\begin{enumerate}
\item If $P\prec_M Q$, then $Q'\cap q_0Mq_0\prec_M P'\cap p_0Mp_0$.
\item  If $P\prec_M Q$ and $Q\prec_M^s R$, then $P\prec_M R$.
\item If $P\prec_M Q$, then there is a nonzero projection $z\in\mathcal N_{p_0Mp_0}(P)'\cap p_0Mp_0$ such that $Pz\prec_M^sQ$.
More precisely, if $Pp'\prec^s_MQ$, for a nonzero projection $p'\in P'\cap p_0Mp_0$, then $Pz\prec_M^sQ$, where $z\in\mathcal N_{p_0Mp_0}(P)'\cap p_0Mp_0$ is the smallest projection such that $p'\leq z$.
\item  If $P\prec_M^sQ$ and $q_0=1$,
then $P$ is amenable relative to $Q$ inside $M$.
\item  Assume that $P\nprec_M S_i$, for every $1\leq i\leq k$. Then there exists a net $(u_n)\subset\mathcal U(P)$ satisfying $\|{\rm  E}_{S_i}(x^*u_ny)\|_2\rightarrow 0$, for every $x,y\in p_0Mq_0$ and $1\leq i\leq k$.
\item Assume that $P\prec_MQ$, $S_i\subset Q$ and $P\nprec_MS_i$, for every $1\leq i\leq k$. Then there exist projections $p\in P, q\in Q$, a nonzero partial isometry $v\in qM p$  and a $*$-homomorphism $\theta:pPp\rightarrow qQq$ such that $\theta(x)v=vx$ and $\theta(pPp)\nprec_QS_i$, for every $x\in pPp$ and $1\leq i\leq k$.
\end{enumerate}
\end{proposition}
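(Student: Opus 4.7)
The plan is to prove each of the six items as a direct consequence of Theorem~\ref{intertwine}, often by citing standard references; items (1), (2), and (4) are essentially \cite{Po06b,OP10,Va08}, so I would quote them and sketch briefly, while items (3), (5), and (6) deserve short self-contained arguments.

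For (1), I would use the Hilbert bimodule reformulation: $P \prec_M Q$ produces a nonzero $pPp$-$Q$-subbimodule $\mathcal H \subset \text{L}^2(p_0 M q_0)$ finitely generated as a right $Q$-module. Applying the anti-unitary $J$ (and using $JPJ \subset P'$, $JQJ \subset Q'$) sends $\mathcal H$ to a $(Q' \cap q_0 M q_0)$-$(P' \cap p_0 M p_0)$-subbimodule of $\text{L}^2(q_0 M p_0)$ which is finitely generated as a right $(P' \cap p_0 M p_0)$-module, yielding the desired intertwining. Item (2) is \cite[Lemma 3.7]{Va08}: by contradiction, if $P \nprec_M R$, the net $(u_n) \subset \mathcal U(P)$ from Theorem~\ref{intertwine}(2) combined with the intertwining partial isometry for $P \prec_M Q$ produces a net in $Q$ whose $R$-expectation does not vanish, contradicting $Q \prec^s_M R$. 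Item (4) follows from \cite[Proposition 2.4]{OP10} applied to each projection cutoff of $P$, which I would cite directly.

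For (3), I would argue as follows. Define $\mathcal S = \{p' \in P' \cap p_0 M p_0 : Pp' \prec^s_M Q\}$. First, $\mathcal S$ is nonempty whenever $P \prec_M Q$: take a Zorn-maximal projection $r \in P' \cap p_0 M p_0$ with $Pr \nprec_M Q$ and set $p' = p_0 - r$; maximality combined with Remark~\ref{elementary_facts}(\ref{sum_1}) forces $Pp' \prec^s_M Q$. Second, $\mathcal S$ is closed under arbitrary suprema by Remark~\ref{elementary_facts}(\ref{sum_2}). Third, $\mathcal S$ is invariant under the normalizer action $p' \mapsto u p' u^*$ for $u \in \mathcal N_{p_0Mp_0}(P)$, since $uPu^* = P$ yields $P(up'u^*) = u(Pp')u^* \prec^s_M Q$. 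Given $p' \in \mathcal S$, the smallest projection $z \in \mathcal N_{p_0Mp_0}(P)' \cap p_0Mp_0$ dominating $p'$ is precisely $\bigvee_{u \in \mathcal N_{p_0Mp_0}(P)} u p' u^*$, which lies in $\mathcal S$ by the three properties above; this gives $Pz \prec^s_M Q$.

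For (5), the plan is to reduce the $k$-fold joint non-intertwining to a single non-intertwining in an amplified algebra: embed the $S_i$ block-diagonally into $\text{M}_k(\mathbb C) \otimes q_0 M q_0$ as a single subalgebra $S$, observe that $P \nprec_M S_i$ for every $i$ is equivalent to $P \nprec_{\text{M}_k(\mathbb C) \otimes M} S$, and apply Theorem~\ref{intertwine}(2) in this amplification to produce the desired joint net. Item (6) then combines (5) with Theorem~\ref{intertwine}(1): starting from an intertwining pair $(v, \theta)$ coming from $P \prec_M Q$, one uses the joint net from (5) to cut down $\theta$ as in the standard argument of \cite[Lemma 2.4]{IPP08} so that $\theta(pPp)$ retains the joint non-intertwining into each $S_i$. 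The main technical obstacle I foresee lies in (5) and (6): verifying cleanly that the diagonal amplification transports the trace-normalized intertwining criterion, and then ensuring that the cutoff argument simultaneously preserves the partial isometry relation $\theta(x)v = vx$ and the $k$ non-intertwining conditions on $\theta(pPp)$.
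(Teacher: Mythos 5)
Your overall plan matches the paper's, which simply cites the literature: (1),(2) are \cite[Lemmas 3.5 and 3.7]{Va08}, (3),(4) are \cite[Lemmas 2.4(3) and 2.6(3)]{DHI19}, (5) is the argument of \cite[Theorem 4.3]{IPP08} (see also \cite[Remark 2.3]{DHI19}), and (6) is \cite[Remark 3.8]{Va08} and \cite[Lemma 5.4]{BCC24}. Where you go beyond citation, two pieces are sound in outline: your matrix-amplification proof of (5) (block-diagonal $S=\oplus_i e_{ii}\otimes S_i$ inside $\mathbb M_k(\mathbb C)\otimes M$) is a legitimate alternative to the averaging argument of \cite{IPP08}, once you check the converse direction, namely that $e_{11}\otimes P\prec_{\mathbb M_k(\mathbb C)\otimes M}S$ forces $P\prec_M S_i$ for some $i$ (cut the intertwiner by the central projections $e_{ii}\otimes q_0$ of $S$). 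For (6), the obstacle is not quite the one you name: the delicate point when composing the two intertwiners $(\theta,v)$ and $(\Phi,w)$ is ensuring $wv\neq 0$, and the standard fix is to replace $\theta_0$ by $\theta_0(\cdot)r$ with $r$ the support projection of $\mathrm{E}_Q(vv^*)$ before arguing by contradiction; this is exactly the device used in the paper's proof of Proposition \ref{cutting_down}(2), and your outline stops short of it. Also, for (4) the right reference is \cite[Lemma 2.6(3)]{DHI19}; \cite[Proposition 2.4]{OP10} only gives permanence properties of relative amenability, not the implication $\prec^s\Rightarrow$ relative amenability.

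There are two genuine gaps. First, your sketch of (1) does not work as stated: applying $J$ to a $P$-$Q$-subbimodule $\mathcal H\subset\mathrm{L}^2(p_0Mq_0)$ produces a $Q$-$P$-bimodule (left and right actions are swapped), not a $(Q'\cap q_0Mq_0)$-$(P'\cap p_0Mp_0)$-bimodule; $JPJ$ lies in the commutant of $M$ on $\mathrm{L}^2(M)$, not in $P'\cap p_0Mp_0$, and finiteness of the right $Q$-dimension becomes finiteness of the left $Q$-dimension, which gives no control of the right $(P'\cap p_0Mp_0)$-dimension. The commutant intertwining statement genuinely requires the basic-construction argument of \cite[Lemma 3.5]{Va08}, so here the citation, not the sketch, is carrying the proof. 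Second, in (3) the remarks you invoke do not do the work assigned to them: Remark \ref{elementary_facts}(8) is about cutting $Q$ on the right by central projections of $Q'\cap q_0Mq_0$ and says nothing about the source side, and Remark \ref{elementary_facts}(9) only treats two \emph{central} projections of $P'\cap p_0Mp_0$ and the relation $\prec^s$. Consequently neither the chain condition needed to run Zorn on $\{r:Pr\nprec_MQ\}$, nor the closure of $\mathcal S$ under arbitrary suprema of the generally non-commuting, non-central projections $up'u^*$, is justified. Both facts are true, but they need an argument: the chain condition follows from the bimodule characterization of intertwining (a nonzero finite-right-$Q$-dimensional subbimodule for $Pr$ survives left cutting by $r_i$ for $i$ large), and the closure under suprema requires either comparison of projections inside $P'\cap p_0Mp_0$, or first passing to central supports (so that the normalizer conjugates become central in $P'\cap p_0Mp_0$) and then a maximal orthogonal family argument as in Remark \ref{rk:maximal-projections}, which is the route of \cite[Lemma 2.4]{DHI19}. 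With those repairs your scheme for (3) is correct, including the identification of the smallest projection $z\in\mathcal N_{p_0Mp_0}(P)'\cap p_0Mp_0$ above $p'$ with $\bigvee_{u}up'u^*$.
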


\begin{proof}
For (1) and (2), see Lemmas 3.5 and 3.7 from \cite{Va08}, respectively.  For (3) and (4), see Lemmas 2.4(3) and 2.6(3) from \cite{DHI19}, respectively. For (5), see the proof of \cite[Theorem~4.3]{IPP08} and \cite[Remark~2.3]{DHI19}. For (6), see \cite[Remark~3.8]{Va08} and \cite[Lemma~5.4]{BCC24}.
\end{proof}

We will also need the following result, the second part of which is a variation of Proposition \ref{results}(6).
While this result is elementary, and likely known to experts, we provide a proof for completeness.

\begin{proposition}\label{cutting_down}
Let $(M,\tau)$ be a tracial von Neumann algebra. Let $P\subset p_0Mp_0, Q\subset q_0Mq_0$ and $S_1,\dots, S_k\subset Q$ be von Neumann subalgebras. Let $p'\in P'\cap p_0Mp_0$ and $q'\in Q'\cap q_0Mq_0$ be nonzero projections.
Then the following hold:
\begin{enumerate}
\item $Pp'\prec_M Qq'$ if and only if there exist projections $p\in P,q\in Q$, a nonzero partial isometry $v\in qq'Mpp'$ and a $*$-homomorphism $\theta:pPp\rightarrow qQq$ such that $\theta(x)v=vx$, for all $x\in pPp$.
\item Assume that $Pp'\prec_MQq'$ and $Pp'\nprec_M S_i$, for every $1\leq i\leq k$. Then there exist projections $p\in P,q\in Q$, a nonzero partial isometry $v\in qq'Mpp'$ and a $*$-homomorphism $\theta:pPp\rightarrow qQq$ such that $\theta(x)v=vx$ for every $x\in pPp$, and 
 $\theta(pPp)\nprec_QS_i$ for every $1\leq i\leq k$.
\end{enumerate}
\end{proposition}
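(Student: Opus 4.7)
The plan is to prove part (1) directly from Theorem \ref{intertwine} via a dictionary between projections of the compressed algebras $Pp', Qq'$ and projections of $P, Q$, and then to derive part (2) by applying Proposition \ref{results}(6) with the subalgebras $S_iq' \subset Qq'$.

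For the forward direction of part (1), I would apply Theorem \ref{intertwine} to $Pp', Qq' \subset M$ to obtain projections $p_1 \in Pp'$, $q_1 \in Qq'$, a partial isometry $w$, and a $*$-homomorphism $\theta_0: p_1(Pp')p_1 \to q_1(Qq')q_1$. Every projection in $Pp'$ has the form $pp'$ for a projection $p \in P$, which can be chosen so that $p \leq z_P$, the central support of $p'$ in $P$; similarly for $Qq'$. With this choice, the compressions $pPp \to (pPp)p' = p_1(Pp')p_1$ and $qQq \to (qQq)q' = q_1(Qq')q_1$, $x \mapsto xp'$ and $y \mapsto yq'$, are $*$-isomorphisms, and transporting $\theta_0$ across them yields $\theta: pPp \to qQq$ with $v := w \in qq'Mpp'$; the intertwining $\theta(x)v = vx$ follows from $q'v = v = vp'$. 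Conversely, given the data $(p, q, v, \theta)$, I would first replace $p, q$ by $pz_P, qz_Q$, which does not alter $pp', qq', v$, or the intertwining relation; after this reduction $x \mapsto xp'$ and $y \mapsto yq'$ are $*$-isomorphisms on $pPp, qQq$, so $\theta_0(xp') := \theta(x)q'$ is a well-defined $*$-homomorphism satisfying $\theta_0(y)v = vy$, and Theorem \ref{intertwine} concludes $Pp' \prec_M Qq'$.

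For part (2), I would first observe that $Pp' \nprec_M S_iq'$: otherwise, applying part (1) with $S_i$ in place of $Q$ would yield $Pp' \prec_M S_i$, contradicting the hypothesis. Proposition \ref{results}(6), applied to $Pp' \prec_M Qq'$ with subalgebras $S_iq' \subset Qq'$, then produces data $(p_1, q_1, w, \theta_0)$ satisfying $\theta_0(p_1(Pp')p_1) \nprec_{Qq'} S_iq'$, which translates via part (1) to $(p, q, v, \theta)$ with $\theta(pPp)q' \nprec_{Qq'} S_iq'$. To deduce $\theta(pPp) \nprec_Q S_i$, I would argue by contradiction: if $\theta(pPp) \prec_Q S_i$ via some data $(r, t, u, \alpha)$ in $Q$, lift $r$ to a projection $\tilde p \leq pz_P$ in $pPp$ with $\theta(\tilde p) = r$ (using that $\theta(pPp)$ is the image of $pPp \cdot e$ for some central projection $e$), form $\beta := \alpha \circ \theta|_{\tilde pP\tilde p}$, and take the polar decomposition of $uv$ to obtain a partial isometry $w_0 \in tq'Mpp'$; combined with the induced $*$-homomorphism $\hat\beta: (\tilde pP\tilde p)p' \to tS_it$ (well defined since $\tilde p \leq z_P$), the tuple $(\tilde pp', t, w_0, \hat\beta)$ would witness $Pp' \prec_M S_i$, contradicting the hypothesis.

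The main technical obstacle is ensuring that the composed intertwiner $uv$ in the last step is nonzero, since degenerate choices with $uq' = 0$ (and hence $uv = 0$) are possible for arbitrary $(r, t, u, \alpha)$. Overcoming this requires selecting the intertwining data for $\theta(pPp) \prec_Q S_i$ so that it interacts nontrivially with $q'$, exploiting that $v$ has full left support inside $qq'$ and that $\theta(pPp) \subset qQq$ is compatible with $q' \in Q'$; equivalently, one can iterate the composition-and-reduction scheme underlying Proposition \ref{results}(6) until such a nondegenerate witness is produced.
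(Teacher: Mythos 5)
Your part (1) is essentially the paper's argument: the same dictionary between corners of $Pp',Qq'$ and corners of $P,Q$, implemented through the support projections of $\mathrm{E}_P(p')$ and $\mathrm{E}_Q(q')$ (what you call central supports), so that step is fine.

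Part (2), however, has a genuine gap, and it sits exactly at the point you flag as "the main technical obstacle." Your contradiction scheme needs every (or at least some exhibitable) witness $(r,t,u,\alpha)$ of $\theta(pPp)\prec_Q S_i$ to compose nontrivially with $v$, and for the $\theta$ you produce this is simply not guaranteed: the image $\theta(pPp)$ may sit partly over a piece of $Q$ that $\mathrm{E}_Q(vv^*)$ does not see, an intertwining into $S_i$ can be carried entirely by that piece, and then every witness satisfies $uv=0$, so nothing transfers back to $Pp'\prec_M S_i$. Note also that the relevant nondegeneracy is measured by $\mathrm{E}_Q(vv^*)$, not by $q'$: the left support of $v$ is $vv^*$, which lies in $\theta(pPp)'\cap qq'Mqq'$ rather than in $Qq'$, so "$v$ has full left support inside $qq'$" is neither arrangeable in general nor what is needed. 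The missing idea is a normalization of $\theta$ \emph{before} running the contradiction: starting from the data $(p,q,v,\theta_0)$ of part (1), let $r$ be the support projection of $\mathrm{E}_Q(vv^*)$; since $vv^*$ commutes with $\theta_0(pPp)$, one has $r\in\theta_0(pPp)'\cap qQq$, and one replaces $\theta_0$ by $\theta(\cdot)=\theta_0(\cdot)r$, which still intertwines because $rv=v$. After this, for an arbitrary witness $(e,f,\Phi,w)$ of $\theta(pPp)\prec_Q S_i$ one has $w=w\theta(e)$ with $\theta(e)\le r$, so $wv=0$ would force $w\mathrm{E}_Q(vv^*)=\mathrm{E}_Q(wvv^*)=0$, hence $wr=0$ and $w=0$, a contradiction; thus $wv\neq 0$, and its polar part together with $\Phi\circ\theta$ witnesses $Pp'\prec_M S_i$ via part (1), contradicting the hypothesis. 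Your proposed remedies do not substitute for this: "iterating the composition-and-reduction scheme" does not help because in the contradiction step you must handle an arbitrary witness of $\theta(pPp)\prec_Q S_i$, not one of your choosing; and your detour through Proposition \ref{results}(6) applied to the compressed algebras only yields $\theta(pPp)q'\nprec_{Qq'}S_iq'$, which is strictly weaker than the required $\theta(pPp)\nprec_Q S_i$ (an embedding of $\theta(pPp)$ into $S_i$ inside $Q$ may live over the part of $Q$ killed by multiplication by $q'$), and upgrading it runs into the same obstruction. The paper avoids the detour altogether: it takes the data from part (1), performs the support-projection normalization above, and then shows directly, adapting the argument of \cite[Remark~3.8]{Va08}, that $Pp'\nprec_M S$ forces $\theta(pPp)\nprec_Q S$ for any $S\subset Q$.
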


\begin{proof} 
(1) Assume that there exist projections $p\in P,q\in Q$, a nonzero partial isometry $v\in qq'Mpp'$ and a $*$-homomorphism $\theta:pPp\rightarrow qQq$ such that $\theta(x)v=vx$, for all $x\in pPp$.
Since $p'\in P'$, we have $\text{E}_P(p')\in\mathcal Z(P)$, so the support projection $s$ of $E_P(p')$ also belongs to $\mathcal Z(P)$, 
and it satisfies $p'\le s$. 
Let $p_1=ps$, which belongs to $pPp$. Then the map $\alpha:p_1Pp_1\rightarrow  pPpp'$ given by $\alpha(x)=xp'$ is a $*$-isomorphism. 
Put $q_1=\theta(\alpha^{-1}(pp'))=\theta(p_1)\in qQq$ and let $\Psi:pPpp'\rightarrow q_1Qq_1q'$ be the $*$-homomorphism given by $\Psi(x)=\theta(\alpha^{-1}(x))q'$.
Then for every $y\in pPpp'$ we have $$\Psi(y)v=\theta(\alpha^{-1}(y))q'v=\theta(\alpha^{-1}(y))v=v\alpha^{-1}(y)=v(p'\alpha^{-1}(y))=v(p'y)=vy.$$
In particular, $(q_1q')v=\Psi(pp')v=v(pp')=v$.
Since $v\in  q_1q'Mpp'$, it follows that $Pp'\prec_MQq'$. 

Conversely, assume that $Pp'\prec_MQq'$. Then we can find projections $p\in P,q\in Q$, a nonzero partial isometry $v\in qq'Mpp'$ and $*$-homomorphism $\Psi:pPpp'\rightarrow qQqq'$ such that $\Psi(x)v=vx$, for every $x\in pPpp'$. Let $t\in\mathcal Z(Q)$ be the support projection of $\text{E}_Q(q')$. Let $q_1=qt$, which belongs to $qQq$. As before, the map $\beta:q_1Qq_1\rightarrow qQqq'$ given by $\beta(x)=xq'$ is a $*$-isomorphism. If $x\in q_1Qq_1$, then $xv=xq'v=\beta(x)v$. Thus, $\beta^{-1}(y)v=yv$, for every $y\in qQqq'$.
Define a $*$-homomorphism $\theta:pPp\rightarrow q_1Qq_1$ by letting $\theta(x)=\beta^{-1}(\Psi(xp'))$. Then for every $x\in pPp$, we have $$\theta(x)v=\beta^{-1}(\Psi(xp'))v=\Psi(xp')v=vxp'=vp'x=vx.$$ 
Since $q'\leq t$, we get $q_1q'v=qtq'v=qq'v=v$. Thus, $v\in q_1q'Mpp'$, which finishes the proof.

(2) Since $Pp'\prec_MQq'$, by (1) there exist projections $p\in P,q\in Q$, a nonzero partial isometry $v\in qq'Mpp'$ and a $*$-homomorphism $\theta_0:pPp\rightarrow qQq$ such that $\theta_0(x)v=vx$, for all $x\in pPp$.
Since $vv^*\in \theta_0(pPp)'\cap qMq$, the support projection $r$ of $\text{E}_Q(vv^*)$ belongs to $\theta_0(pPp)'\cap qQq$. Define $\theta:pPp\rightarrow rQr$ by letting $\theta(x)=\theta_0(x)r$. Since $rv=v$, we get $\theta(x)v=vx$, for every $x\in pPp$. Additionally, we have $v\in rq'Mpp'$.

In order to finish the proof, it  suffices to show that if  $S\subset Q$ is a von Neumann subalgebra with $Pp'\nprec_MS$, then $\theta(pPp)\nprec_QS$. We do this by adapting the argument from \cite[Remark~3.8]{Va08}. Assume by contradiction that $\theta(pPp)\prec_QS$. Then there exist projections $e\in pPp, f\in S$, a $*$-homomorphism $\Phi:\theta(ePe)\rightarrow fSf$ and a nonzero partial isometry $w\in fQ\theta(e)$ such that $\Phi(y)w=wy$, for every $y\in \theta(ePe)$. Define $\rho:ePe\rightarrow fSf$ by letting $\rho(x)=\Phi(\theta(x))$. Then $\rho$ is a $*$-homomorphism such that for every $x\in ePe$ we have
\begin{equation}\label{rho}\rho(x)(wv)=\Phi(\theta(x))wv=w\theta(x)v=(wv)x.\end{equation}
In particular, we get that $wve=\rho(e)wv=fwv=wv$. By combining this with the fact that $fw=w$ and $vp'=v$, we deduce that $f(wv)(ep')=f(wve)p'=f(wv)p'=(fw)(vp')=wv$. We claim that $wv\not=0$. Otherwise, if $wv=0$, then $wvv^*=0$, hence $w\text{E}_Q(vv^*)=\text{E}_Q(wvv^*)=0$, which implies that $wr=0$. Since $\theta_0(e)\leq r$, we would get that $w=w\theta(e)=wr\theta(e)=0$, which is false. Let $\xi\in M$ be the partial isometry in the polar decomposition of $wv$. Since $wv\not=0$ and $wv\in fM(ep')$, we also get that $\xi\not=0$ and $\xi\in fM(ep')$. Moreover, \eqref{rho} implies that $\rho(x)\xi=\xi x$, for every $x\in ePe$. Altogether, (1) implies that $Pp'\prec_M S$, which gives the desired contradiction.
\end{proof}

\subsection{Results on intertwiners} In the context of Theorem \ref{intertwine}, we have $P\prec_M Q$ if and only if $Pv\subset \sum_{i=1}^kv_iQ$, for some  $v\in p_0M$, $v_1,\dots,v_k\in M$ and $k\geq 1$ (see \cite[Proposition C.1]{Va07}). 
In this section, we record two results on such ``intertwiners" $v$. 

\begin{proposition}\label{approx}
 Let $(M,\tau)$ be a tracial von Neumann algebra and $P\subset pM p, Q\subset M$ be von Neumann subalgebras. Let $S$ and $S_0$ be the sets of $v\in pM$ for which there exist $v_1,\dots,v_k\in M$, for some $k\geq 1$, such that $Pv\subset \sum_{i=1}^kv_iQ$ and, respectively, $(P)_1v\subset \sum_{i=1}^kv_i(Q)_1$. 
 
Then $S_0$ is $\|\cdot\|_2$-dense in $S$.
\end{proposition}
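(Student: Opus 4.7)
Given $v \in S$ with $Pv \subset \sum_{i=1}^k v_i Q$, set $\mathcal K := \overline{PvQ}^{\|\cdot\|_2}$, a $P$-invariant right $Q$-Hilbert sub-module of $L^2(M)$ of $Q$-dimension at most $k$ (since $\mathcal K \subset \overline{\sum_{i=1}^k v_i Q}^{\|\cdot\|_2}$). The plan is to construct a Pimsner-Popa-type basis of $\mathcal K$ inside $M$ and use it to $\|\cdot\|_2$-approximate $v$ by elements of $S_0$.

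I would inductively build $w_1, w_2, \ldots \in \mathcal K \cap M$ satisfying the orthogonality $\mathrm{E}_Q(w_i^* w_j) = \delta_{ij} p_j$ for projections $p_j \in Q$, as follows. At step $j$, given the residual right $Q$-sub-module $\mathcal K_{j-1} := \mathcal K \ominus \bigoplus_{i<j} w_i L^2(Q)$, if $\mathcal K_{j-1} \neq 0$ pick a nonzero $\xi \in \mathcal K_{j-1} \cap M$ (such an element exists because $\mathcal K_{j-1}$ has positive $Q$-dimension and inherits a dense family of bounded vectors from $\mathcal K$) and set
\[
w_j := \xi\, r \,\bigl( r \, \mathrm{E}_Q(\xi^* \xi)\, r \bigr)^{-1/2} \in M, \qquad r := 1_{[1/n,\infty)}\bigl(\mathrm{E}_Q(\xi^* \xi)\bigr) \in Q,
\]
with $n$ large enough that $r \neq 0$. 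A direct computation gives $\mathrm{E}_Q(w_j^* w_j) = r =: p_j \in Q$ (a projection), and the fact that $w_j \in \xi Q \subset \mathcal K_{j-1}$ enforces $\mathrm{E}_Q(w_i^* w_j) = 0$ for all $i < j$. Since $\sum_j \tau(p_j) \leq \dim_Q \mathcal K < \infty$, the process yields a (possibly countable) basis with $\mathcal K = \bigoplus_j w_j L^2(Q)$ and the Parseval-type expansion $\xi = \sum_j w_j\, \mathrm{E}_Q(w_j^* \xi)$, convergent in $\|\cdot\|_2$, for every $\xi \in \mathcal K$. Defining the truncations $v_N := \sum_{j \leq N} w_j\, \mathrm{E}_Q(w_j^* v) \in M$, one has $v_N \to v$ in $\|\cdot\|_2$ as $N \to \infty$.

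To show $v_N \in S_0$, I would apply the same basis construction to the smaller $P$-invariant sub-bimodule $\mathcal K' := \overline{Pv_N Q}^{\|\cdot\|_2} \subset \mathcal K$, obtaining a \emph{finite} basis $w'_1, \ldots, w'_m \in M$. For any $x \in (P)_1$, the expansion $xv_N = \sum_{\ell=1}^m w'_\ell \,\mathrm{E}_Q((w'_\ell)^* xv_N)$ gives coefficients with $\|\mathrm{E}_Q((w'_\ell)^* xv_N)\| \leq \|w'_\ell\|\, \|xv_N\| \leq \|w'_\ell\|\, \|v_N\|$. Setting $v'_\ell := \|v_N\|\, \|w'_\ell\|\, w'_\ell \in M$, we obtain $(P)_1 v_N \subset \sum_\ell v'_\ell\, (Q)_1$, so $v_N \in S_0$, and density follows. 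The main technical obstacle is to ensure the \emph{finiteness} of the basis for $\mathcal K'$, since the general construction may yield a countable basis. I expect this to be handled by exploiting that $v_N$ lives in the finite direct sum $\bigoplus_{j \leq N} w_j L^2(Q)$ (bounding $\dim_Q \mathcal K'$) together with a maximality choice of $\xi$ at each induction step to uniformly lower-bound $\tau(p_j)$, forcing termination in finitely many steps.
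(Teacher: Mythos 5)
Your overall strategy (a Pimsner--Popa basis of $\mathcal K=\overline{PvQ}$, right-module truncations $v_N\to v$, then a finite basis of $\mathcal K'=\overline{Pv_NQ}$ to control the left action of $(P)_1$) is a legitimate alternative route, and several pieces are fine: the inductive construction of the countable basis $(w_j)$ with $\mathrm{E}_Q(w_i^*w_j)=\delta_{ij}p_j$, the fact that the residuals retain dense bounded vectors, the convergence $v_N\to v$, and the final operator-norm estimates showing that a \emph{finite} bounded basis of $\mathcal K'$ would indeed give $v_N\in S_0$. The gap is exactly at the point you flag, and your proposed fix does not close it. Finiteness of $\dim_Q$ does not imply finite generation of a right $Q$-module when $Q$ is not a factor: for $Q$ with diffuse center (say $Q=L^\infty[0,1]$) the module $\bigoplus_n 1_{[0,2^{-n}]}L^2(Q)$ has $Q$-dimension $1$ but is not finitely generated, since a module generated by $m$ vectors is isomorphic to $q(\mathbb C^m\otimes L^2(Q))$ and has center-valued dimension bounded by $m$. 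So $\mathcal K'$ may simply fail to be finitely generated, in which case no maximality choice of $\xi$ can force termination of the basis construction; and note that $\mathcal K'$ is \emph{not} contained in $\bigoplus_{j\le N}w_jL^2(Q)$, because the left $P$-action does not preserve that right-module truncation, so the fact that $v_N$ lies in a finite sum only gives $\dim_Q\mathcal K'\le k$, which you already had and which is not enough.

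This is precisely the difficulty the paper's proof is organized around. It first invokes \cite[Lemma A.1]{Va07} to produce a central projection $z_\varepsilon\in\mathcal Z(Q)$ with $\tau(z_\varepsilon)\ge 1-\varepsilon$ such that $\mathcal Hz_\varepsilon$ is finitely generated as a right $Q$-module (this right cut by a central projection is the reason the statement is a density statement), and then, instead of truncating $v$ along a right-module basis, it encodes the finitely generated module by an intertwiner $\xi\in p(\mathbb M_{1,m}(\mathbb C)\otimes L^2(M))q$ with $x\xi=\xi\theta(x)$ for a $*$-homomorphism $\theta:P\to q(\mathbb M_m(\mathbb C)\otimes Q)q$, and achieves boundedness by cutting $\xi$ with spectral projections of $|\xi|$, which commute with $\theta(P)$. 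That compatibility of the cut with the left $P$-action is what makes the cut entries lie in $S_0$, with right coefficients bounded uniformly over all $x\in(P)_1$; your truncation $v_N$ has no such compatibility, which is why you are forced to the unsupported finite-generation claim for $\mathcal K'$. Your argument does go through essentially as written when $Q$ is a factor (where finite $\dim_Q$ does imply finite generation), but that is not sufficient here, where non-factorial $Q$ is the relevant case; to repair it in general you would need to insert the central cut $z_\varepsilon$ and an argument making the cut compatible with the left $P$-action, at which point you are reproducing the paper's proof.
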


\begin{proof} This result is extracted from \cite{Po06b,Va07}. We follow closely the proof of \cite[Lemma~D.3]{Va07}. 
We aim to show that  $S\subset S_1$, where $S_1\subset\text{L}^2(M)$ denotes the $\|\cdot\|_2$-closure of $S_0$.

To this end, let $v\in S$, $\varepsilon>0$ and $\mathcal H\subset\text{L}^2(M)$ be the $P$-$Q$-bimodule obtained as the $\|\cdot\|_2$-closure of the linear span of  $PvQ$. Let $v_1,\dots,v_k\in M$ such that  $Pv\subset \sum_{i=1}^kv_iQ$ and denote by $\mathcal H_i\subset\text{L}^2(M)$ the right $Q$-module obtained as the $\|\cdot\|_2$-closure of $v_iQ$, for every $1\leq i\leq k$. 
Let $e$ and $e_i$ be the orthogonal projections from $\text{L}^2(M)$ onto $\mathcal H$ and, respectively, $\mathcal H_i$, for every $1\leq i\leq k$.
Then $e,e_i\in\langle M,e_Q\rangle$ and since  $PvQ\subset \sum_{i=1}^k v_iQ$, we get that $e\leq\vee_{i=1}^ke_i$.  Thus, $\text{Tr}(e)\leq\sum_{i=1}^k\text{Tr}(e_i)$, where $\text{Tr}:\langle M,e_Q\rangle\rightarrow\mathbb C$ is the usual semi-finite trace. Since  $\mathcal H_i$ is isomorphic as a right $Q$-module to $s_i\text{L}^2(Q)$, where $s_i\in Q$ is the support projection of $\text{E}_Q(v_i^*v_i)$, we have $\text{Tr}(e_i)\leq 1$, for every $1\leq i\leq k$ (see Section \ref{basic_facts}). Hence, $\text{Tr}(e)\leq k<\infty$, and
\cite[Lemma A.1]{Va07}  gives a projection $z_\varepsilon\in\mathcal Z(Q)$ with $\tau(z_\varepsilon)\geq 1-\varepsilon$ such that $\mathcal Hz_\varepsilon$ is a finitely generated right $Q$-module. 

The proof of \cite[Lemma D.3]{Va07} implies the existence of a projection $q\in\mathbb M_m(\mathbb C)\otimes Q$, for some  $m\geq 1$, a $*$-homomorphism $\theta:P\rightarrow q(\mathbb M_m(\mathbb C)\otimes Q)q$ and $\xi=(\xi_1,\dots,\xi_m)\in p(\mathbb M_{1,m}(\mathbb C)\otimes\text{L}^2(M))q$ such that $x\xi=\xi\theta(x)$, for every $x\in P$, and $\mathcal Hz_\varepsilon$ is generated by the entries $\xi_1,\dots,\xi_m$ of $\xi$ as a right $Q$-module. Then $|\xi|\in \text{L}^2(q(\mathbb M_m(\mathbb C)\otimes M)q)$ commutes with $\theta(P)$. For $\ell\geq 1$, consider the spectral projection $r_\ell={\bf 1}_{[0,\ell]}(|\xi|)$ of $|\xi|$. Then $r_\ell\in\theta(P)'\cap q(\mathbb M_m(\mathbb C)\otimes Q)q$ and therefore $\xi_\ell=\xi r_\ell\in  p(\mathbb M_{1,m}(\mathbb C)\otimes M)q$ satisfies $x\xi_\ell=\xi_\ell\theta(x)$, for every $x\in P$. If we write $\xi_\ell=(\xi_{\ell,1},\dots,\xi_{\ell,m})$, then  $\xi_{\ell,1},\dots,\xi_{\ell,m}\in S_0$. Since $\|\xi_\ell-\xi\|_2\rightarrow 0$, we get that $\|\xi_{\ell,i}-\xi_i\|_2\rightarrow 0$ and thus $\xi_i\in S_1$, for every $1\leq i\leq m$.

Since $S_1$ is a right $Q$-module, we conclude that $\mathcal Hz_\varepsilon\subset S_1$. Since this holds for every $\varepsilon>0$ and $\tau(z_\varepsilon)\rightarrow 1$, as $\varepsilon\rightarrow 0$, it follows that $\mathcal H\subset S_1$. Hence, $v\in S_1$, as desired.
\end{proof}

\begin{corollary}\label{space_of_intertwiners}
 Let $(M,\tau)$ be a tracial von Neumann algebra and $P\subset pM p, Q\subset M$ be von Neumann subalgebras. Let $S$  be the set of $v\in pM$ such that $Pv\subset \sum_{i=1}^k v_iQ$, for some $k\geq 1$ and $v_1,\dots,v_k\in M$. Assume that $\mathcal H_0\subset pM$ is a vector subspace and $(u_n)\subset\mathcal U(P)$ is a net such that $\mathcal H=\overline{\mathcal H_0}^{\|\cdot\|_2}$ is a $P$-$Q$-sub-bimodule of $\emph{L}^2(M)$ and $\|\emph{E}_Q(a^*u_nb)\|_2\rightarrow 0$, for every $a\in \mathcal H_0$ and $b\in M$.
 
Then $S\subset\emph{L}^2(M)\ominus\mathcal H$.
\end{corollary}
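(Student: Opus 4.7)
The plan is to reduce to showing $\tau(a^*v)=0$ for arbitrary $v\in S_0$ and $a\in\mathcal H_0$, where $S_0$ is the subset of $S$ defined by the \emph{bounded} intertwining condition $(P)_1 v\subset\sum_{i=1}^k v_i(Q)_1$. Once this is done, $S_0\subset \mathrm{L}^2(M)\ominus\mathcal H$ since $\mathcal H_0$ is $\|\cdot\|_2$-dense in $\mathcal H$, and then $S\subset \mathrm{L}^2(M)\ominus\mathcal H$ follows from Proposition~\ref{approx}, which gives that $S_0$ is $\|\cdot\|_2$-dense in $S$.

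The key move is to position $u_n$ correctly so as to apply the hypothesis. Given $v\in S_0$, pick $v_1,\dots,v_k\in M$ with $(P)_1 v\subset \sum_{i=1}^k v_i(Q)_1$. Since $u_n^*\in \mathcal U(P)\subset (P)_1$, I obtain elements $c_{i,n}\in (Q)_1$ with $u_n^*v=\sum_{i=1}^k v_ic_{i,n}$, equivalently $v=u_n\sum_{i=1}^k v_ic_{i,n}$. The crucial point is that inverting the relation places $u_n$ (not $u_n^*$) in the position expected by the assumption on the net.

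For $a\in\mathcal H_0$, I then compute
\begin{equation*}
\tau(a^*v)=\sum_{i=1}^k \tau(a^*u_nv_ic_{i,n})=\sum_{i=1}^k \tau\bigl(c_{i,n}\,\mathrm E_Q(a^*u_nv_i)\bigr),
\end{equation*}
using the trace identity $\tau(bx)=\tau(b\,\mathrm E_Q(x))$ for $b\in Q$. The Cauchy--Schwarz inequality combined with $\|c_{i,n}\|_2\le 1$ gives
\begin{equation*}
|\tau(a^*v)|\le \sum_{i=1}^k\|\mathrm E_Q(a^*u_nv_i)\|_2.
\end{equation*}
Since the hypothesis applied to the pairs $(a,v_i)\in \mathcal H_0\times M$ forces each term on the right-hand side to tend to zero, I conclude $\tau(a^*v)=0$, which finishes the argument.

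There is no real obstacle beyond spotting that one must invoke Proposition~\ref{approx} to arrange \emph{bounded} coefficients $c_{i,n}\in (Q)_1$: without this, the Cauchy--Schwarz step would have no control over $\|c_{i,n}\|_2$ and the estimate would collapse. The other delicate point, namely the need to use $u_n^*$ rather than $u_n$ in the intertwining decomposition of $v$ so that $u_n$ ends up flanked by elements of $\mathcal H_0$ and $M$ as required by the assumption, is a one-line adjustment.
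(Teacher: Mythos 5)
Your argument is correct, and it takes a slightly different route from the paper's. The paper first projects: it lets $e$ be the orthogonal projection of $\mathrm{L}^2(M)$ onto $\mathcal H$, uses the $P$-$Q$-bimodularity of $e$ to transfer the bounded intertwining relation to $w=e(v)$ and $w_i=e(v_i)$ (so that $u_nw=\sum_i w_ix_{i,n}$ with $x_{i,n}\in(Q)_1$), and then shows $\|w\|_2^2=\sum_i\langle u_nw,w_ix_{i,n}\rangle\to 0$ using the same conditional-expectation estimate you use, concluding $w=0$. You instead never project: you exploit unitarity to flip the relation, applying the bounded intertwining to $u_n^*$ to write $v=u_n\sum_i v_ic_{i,n}$, and then test orthogonality of $v$ directly against $a\in\mathcal H_0$ via $|\tau(a^*v)|\le\sum_i\|\mathrm{E}_Q(a^*u_nv_i)\|_2\to 0$. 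Both proofs share the essential skeleton (reduction to bounded intertwiners via Proposition~\ref{approx}, the trace/$\mathrm{E}_Q$ identity, and Cauchy--Schwarz against unit-ball coefficients), but your version is a little leaner: it avoids the bimodular-projection step entirely and in fact never uses the hypothesis that $\mathcal H$ is a $P$-$Q$-sub-bimodule, only the density of $\mathcal H_0$ in $\mathcal H$ and the decay of $\|\mathrm{E}_Q(a^*u_nb)\|_2$; the paper's version is the one naturally dictated by the bimodule framework in which the corollary is applied later. Your two flagged subtleties (bounded coefficients from Proposition~\ref{approx}, and placing $u_n$ rather than $u_n^*$ between $\mathcal H_0$ and $M$) are exactly the right ones, and the concluding closure argument ($S_0$ dense in $S$, $\mathrm{L}^2(M)\ominus\mathcal H$ closed) is fine.
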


\begin{proof}
By Proposition \ref{approx}, in order to prove the conclusion it suffices to show that if $v\in pM$ satisfies $(P)_1v\subset \sum_{i=1}^k v_i(Q)_1$, for some $k\geq 1$ and $v_1,\dots,v_k\in M$, then $v\in\text{L}^2(M)\ominus\mathcal H$. Let $e:\text{L}^2(M)\rightarrow\mathcal H$ be the orthogonal projection onto $\mathcal H$. Denote $w=e(v)$ and $w_i=e(v_i)$, for $1\leq i\leq k$. Since $\mathcal H$ is a $P$-$Q$-bimodule, $e$ is a $P$-$Q$-bimodular map. Using this fact we get that $(P)_1w\subset \sum_{i=1}^k w_i(Q)_1$.
Hence, for every $n$, we can find $x_{1,n},\dots,x_{k,n}\in (Q)_1$ such that $u_nw=\sum_{i=1}^kw_i x_{i,n}$.
Therefore,  \begin{equation}\label{sum}\text{$\|w\|_2^2=\langle u_nw,u_nw\rangle=\sum_{i=1}^k\langle u_nw,w_ix_{i,n}\rangle$, for every $n$}.\end{equation}
On the other hand, we claim that \begin{equation}\label{ortho} \text{$\langle u_nb,ax_{i,n}\rangle\rightarrow 0$, for every $a\in\mathcal H$, $b\in\text{L}^2(M)$ and $1\leq i\leq k$.}\end{equation}
To this end, we may assume that $a\in \mathcal H_0$ and $b\in M$. If $1\leq i\leq k$, then $x_{i,n}\in (Q)_1$ and thus 
$$|\langle u_nb,ax_{i,n}\rangle|=|\tau(a^*u_nbx_{i,n}^*)|=|\tau(\text{E}_Q(a^*u_nbx_{i,n}^*))|=|\tau(\text{E}_Q(a^*u_nb)x_{i,n}^*)|\leq\|\text{E}_Q(a^*u_nb)\|_2.$$ Since $\|\text{E}_Q(a^*u_nb)\|_2\rightarrow 0$, this  proves \eqref{ortho}.
Finally, combining \eqref{sum} and \eqref{ortho} implies that $w=0$ and thus $v\in\text{L}^2(M)\ominus\mathcal H$.
\end{proof}

\subsection{Results on tensor products}
We continue with two  results on tensor product II$_1$ factors. 

\begin{lemma}[\!\!\cite{OP04}]\label{tensor_product}
Let $M_1,M_2,N_1,N_2$ be II$_1$ factors such that $P=M_1\overline{\otimes}M_2=N_1\overline{\otimes}N_2$ and $M_1\prec_PN_1$. 
Then the following hold:
\begin{enumerate}
    \item  There exist a decomposition $P=M_1^s\overline{\otimes}M_2^{1/s}$, for some $s>0$, and a unitary $v\in P$ such that $vM_1^sv^*\subset N_1$. 
    \item Assume that  $N_1\prec_PM_1$ or $N_1$ is prime. 
Then there exist a decomposition $P=M_1^t\overline{\otimes}M_2^{1/t}$, for some $t>0$, and a unitary $u\in P$ such that $uM_1^tu^*=N_1$ 
and $uM_2^{1/t}u^*=N_2$.
\end{enumerate}

\end{lemma}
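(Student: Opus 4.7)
The plan is to extract from Popa's intertwining-by-bimodules characterization (Theorem~\ref{intertwine}) a partial embedding on cutdowns, and then to promote it to a unital embedding between suitable amplifications by absorbing the supports of the partial isometry into the $M_2$ and $N_2$ tensor factors. Concretely, from $M_1\prec_P N_1$ we obtain projections $p\in M_1$, $q\in N_1$, a nonzero partial isometry $v\in P$ with $v^*v\le p$, $vv^*\le q$, and a unital $*$-homomorphism $\theta:pM_1p\to qN_1q$ with $\theta(x)v=vx$ for all $x\in pM_1p$. The relation $\theta(x)v=vx$ implies that $v^*v$ commutes with $pM_1p$, so $v^*v\in (pM_1p)'\cap pPp$; using the tensor decomposition $P=M_1\mathbin{\overline{\otimes}}M_2$ and factoriality of $M_1$, one has $M_1'\cap P=1\mathbin{\overline{\otimes}} M_2$, hence $v^*v\in p\mathbin{\overline{\otimes}} M_2$. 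Symmetrically, $vv^*\in q\mathbin{\overline{\otimes}} N_2$.

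The next step is to absorb these supports into the tensor structure. Since $M_2,N_2$ are II$_1$ factors, I can choose projections $p_2\in M_2$, $q_2\in N_2$ such that $p\otimes p_2$ is Murray--von Neumann equivalent to $v^*v$ in $p\mathbin{\overline{\otimes}} M_2$, and $q\otimes q_2$ is equivalent to $vv^*$ in $q\mathbin{\overline{\otimes}} N_2$, with $\tau(p\otimes p_2)=\tau(q\otimes q_2)$. Multiplying $v$ by the appropriate partial isometries in $1\mathbin{\overline{\otimes}} M_2$ and $1\mathbin{\overline{\otimes}} N_2$ replaces $v$ by a partial isometry whose initial and final projections are of product form. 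Setting $s:=\tau_{M_1}(p)^{-1}$ and identifying
$(p\otimes p_2)P(p\otimes p_2)\cong M_1^s\mathbin{\overline{\otimes}} M_2^{1/s}$ and similarly on the $N$-side, one can then rescale everything (matrix-amplifying if $s<1$) to realize $v$ as a genuine unitary $v\in P$ with $v M_1^s v^*\subset N_1$, establishing (1). The main technical obstacle is the bookkeeping for these rescalings: one has to verify carefully that the traces on both sides match up and that the amplification indices $s$ and $1/s$ on the two tensor factors are really forced to be reciprocal.

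For part (2), under the assumption $N_1\prec_P M_1$, I apply (1) in both directions to get amplification parameters $s,s'>0$ and unital embeddings $M_1^s\hookrightarrow N_1$, $N_1^{s'}\hookrightarrow M_1$ (each up to unitary conjugacy). Composing yields a unital embedding $M_1^{ss'}\hookrightarrow M_1$; since any unital embedding $M_1^t\hookrightarrow M_1$ between II$_1$ factors forces $t\le 1$, applying this to both composites gives $ss'=1$, and both embeddings must then be surjective. Hence $vM_1^sv^*=N_1$, and passing to relative commutants in $P$ (using $M_1'\cap P=M_2$ and $N_1'\cap P=N_2$) yields $vM_2^{1/s}v^*=N_2$, with $t:=s$ and $u:=v$.

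For the alternative case where $N_1$ is prime, part (1) gives $vM_1^sv^*\subset N_1$; taking commutants in $P$ produces $vM_2^{1/s}v^*\supset N_2$. Consider $Q:=(vM_1^sv^*)'\cap N_1$; it is a II$_1$ subfactor (or general von Neumann subalgebra) of $N_1$ commuting with $vM_1^sv^*$. The key step is to argue that $vM_1^sv^*\vee Q=N_1$ and that the join is a tensor product decomposition $N_1=vM_1^sv^*\mathbin{\overline{\otimes}} Q$; this uses that $N_2\subset vM_2^{1/s}v^*$ together with the factoriality and trace constraints inside the two decompositions of $P$. Primeness of $N_1$ then forces $Q=\mathbb{C}$, so $vM_1^sv^*=N_1$, reducing to the previous case. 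The main obstacle in this branch is establishing the tensor factorization $N_1=vM_1^sv^*\mathbin{\overline{\otimes}} Q$ rigorously; this is where primeness interacts nontrivially with the two competing tensor decompositions of $P$, and is the step most likely to require additional care in the formal write-up.
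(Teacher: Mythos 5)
There are genuine gaps, the most serious being in part (2), case $N_1\prec_P M_1$. Your argument there rests on the claim that a unital embedding $M_1^{t}\hookrightarrow M_1$ between II$_1$ factors forces $t\le 1$, and that $ss'=1$ then forces surjectivity. Both statements are false in general: for the hyperfinite II$_1$ factor $R$, or $\text{L}(\mathbb F_\infty)$, or any II$_1$ factor with nontrivial fundamental group, one has unital embeddings $M_1^t\hookrightarrow M_1$ for arbitrarily large $t$ (indeed $R^t\cong R$ for all $t>0$), and a trace-preserving unital endomorphism of a II$_1$ factor need not be onto (e.g.\ $x\mapsto x\otimes 1$ under an isomorphism $R\cong R\,\overline{\otimes}\,R$). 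So neither $ss'=1$ nor surjectivity of your two embeddings follows, and this branch of your proof collapses. The paper argues structurally instead: from (1) one has $uM_1^su^*\subset N_1\subset uM_1^su^*\,\overline{\otimes}\,uM_2^{1/s}u^*$, so by a splitting lemma (\cite{Is20}, Lemma 3.3, of Ge--Kadison type) $N_1=uM_1^su^*\,\overline{\otimes}\,R$ with $R=(uM_1^su^*)'\cap N_1$ a factor; under either hypothesis ($N_1\prec_P M_1$ or $N_1$ prime) $R$ is shown to be finite-dimensional, $R\cong\mathbb M_\ell(\mathbb C)$, and one concludes with $t=\ell s$.

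The same splitting is also the unresolved point in your prime branch: you flag the factorization $N_1=vM_1^sv^*\,\overline{\otimes}\,Q$ as needing care but do not establish it, and it is precisely the cited lemma. Moreover, primeness does not force $Q=\mathbb C$ as you assert; it only rules out $Q$ being a II$_1$ factor, and since $Q$ is a factor it may well be $\mathbb M_\ell(\mathbb C)$ with $\ell\ge 2$, in which case $vM_1^sv^*\subsetneq N_1$ and one must still enlarge the amplification to $t=\ell s$ and adjust the unitary -- this is the paper's final step, missing from your write-up. Finally, in your proof of part (1) (which the paper simply quotes from \cite{OP04}, Proposition 12) the claim that $vv^*\in q\,\overline{\otimes}\,N_2$ is unjustified: $vv^*$ lies in $\theta(pM_1p)'\cap qPq$, and since $\theta(pM_1p)$ is merely a subfactor of $qN_1q$, this relative commutant is in general much larger than $\mathbb Cq\otimes N_2$ and need not consist of product-form elements at all. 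Controlling it is the substantive content of Ozawa--Popa's argument, so your absorption step on the $N$-side does not go through as written.
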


\begin{proof}
(1) 
This part follows from \cite[Proposition 12]{OP04}.

(2) By part (1), there exists a decomposition $P=M_1^s\overline{\otimes}M_2^{1/s}$, for some $s>0$, and a unitary $v\in P$ such that $vM_1^sv^*\subset N_1$.
Then 
$N_1=vM_1^sv^*\overline{\otimes}R$, where $R=(vM_1^sv^*)'\cap N_1$ is a factor, 
see, e.g., \cite[Lemma~3.3]{Is20}. If $N_1$ is prime or $N_1\prec_P M_1$, then $R$ is finite dimensional. Thus, $R\cong\mathbb M_\ell(\mathbb C)$, for some $\ell\geq 1$. Letting $t=\ell s$, we get that $uM_1^tu^*=N_1$, for some unitary $u\in P$. 
The fact that $uM_2^{1/t}u^*=N_2$ now follows by taking relative commutants.
\end{proof}

Later on, we will also need the following lemma.

\begin{lemma}\label{conjugacy_in_tensors}
Let $M_1,M_2$ be II$_1$ factors, $p,p'\in M_1,q,q'\in M_2$ nonzero projections and $v\in M_1\overline{\otimes}M_2$  a partial isometry. Assume that $v^*v=p\otimes q, vv^*=p'\otimes q'$ and $v(pM_1p\otimes q)v^*\subset p'M_1p'\otimes q'$.

Then $\tau(p)\leq \tau(p')$. In particular, if $v(pM_1p\otimes q)v^*=p'M_1p'\otimes q'$, then $\tau(p)=\tau(p')$.

Moreover, if $\tau(p)=\tau(p')$, then $v=\omega\otimes \eta$, for some  partial isometries $\omega\in M_1$ and $\eta\in M_2$ such that $\omega^*\omega=p,\omega\omega^*=p',\eta^*\eta=q$ and $\eta\eta^*=q'$.
\end{lemma}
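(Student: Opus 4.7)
The strategy is to extract a partial isometry in $M_1$ alone from $v$ by expanding $v$ along the $M_2$ factor. The inclusion $v(pM_1p\otimes q)v^*\subset p'M_1p'\otimes q'$ allows one to define a unital $*$-homomorphism $\phi\colon pM_1p\to p'M_1p'$ via $v(x\otimes q)v^* = \phi(x)\otimes q'$, equivalently $(\phi(x)\otimes q')v = v(x\otimes q)$ for every $x\in pM_1p$. I would then choose an orthonormal basis $(m_i)_{i\in I}$ of the Hilbert subspace $q'L^2(M_2)q$ and expand $v = \sum_i c_i\otimes m_i$ with $L^2$-coefficients $c_i\in p'L^2(M_1)p$. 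Since $q'm_i=m_i=m_iq$, equating coefficients in the intertwining relation yields $\phi(x)c_i = c_ix$ for every $i$ and every $x\in pM_1p$. Note that $v\neq 0$ (as $v^*v=p\otimes q$ is nonzero), so at least one $c_i$ is nonzero.

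The heart of the argument is then the analysis of each nonzero $c_i$. Taking adjoints of $\phi(x)c_i = c_ix$ gives $c_i^*\phi(x) = xc_i^*$, and combining the two shows that $c_i^*c_i\in L^1(pM_1p)$ commutes with $pM_1p$. Factoriality of $pM_1p$ thus forces $c_i^*c_i = \lambda_ip$ for some $\lambda_i>0$, so $c_i$ is bounded, and polar decomposition gives $c_i = \sqrt{\lambda_i}\,u_i$ for a partial isometry $u_i\in M_1$ with $u_i^*u_i = p$ satisfying $\phi(x)u_i = u_ix$. Setting $p''_i := u_iu_i^*$ and multiplying by $u_i^*$ on the right, we obtain $\phi(x)p''_i = u_ixu_i^*$, which shows: $p''_i\leq p'$ (since $c_ic_i^*\in p'M_1p'$), $p''_i$ commutes with $\phi(pM_1p)$, and $x\mapsto u_ixu_i^*$ is a unital $*$-isomorphism $pM_1p\to p''_iM_1p''_i$. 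Therefore $\tau(p) = \tau(p''_i)\leq\tau(p')$, proving the first assertion. When the inclusion is an equality, $\phi$ is surjective, so $p''_i\in\mathcal Z(p'M_1p') = \mathbb Cp'$; since $p''_i\neq 0$, $p''_i = p'$ and $\tau(p) = \tau(p')$.

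For the moreover, assume $\tau(p) = \tau(p')$. For every nonzero $c_i$, $\tau(p''_i) = \tau(p) = \tau(p')$ combined with $p''_i\leq p'$ in a II$_1$ factor forces $p''_i = p'$, so $u_iu_i^* = p'$ and $\phi = \mathrm{Ad}(u_i)$. Given two nonzero $c_i, c_j$ with associated partial isometries $u_i, u_j$, the element $w := u_i^*u_j\in pM_1p$ satisfies $wxw^* = x$ for all $x\in pM_1p$ (from both $u_i, u_j$ implementing $\phi$) and $ww^* = p$ (taking $x = p$); a short factoriality argument using $\mathcal Z(pM_1p) = \mathbb Cp$ then gives $w\in\mathbb Cp$, so $u_j$ is a unit scalar multiple of $u_i$. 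Hence all nonzero $c_i$ are scalar multiples of a single partial isometry $\omega\in M_1$, and $v = \omega\otimes\eta$ with $\eta := \sum_i\mu_i m_i\in L^2(M_2)$. Since $v$ is bounded and $\omega\neq 0$, $\eta\in M_2$, and comparing $v^*v = p\otimes q$ with $(\omega\otimes\eta)^*(\omega\otimes\eta) = p\otimes\eta^*\eta$ yields $\eta^*\eta = q$; similarly $\eta\eta^* = q'$.

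The main technical obstacle will be the passage from the $L^2$-coefficient $c_i$ to a bounded partial isometry $u_i\in M_1$, together with the uniqueness (modulo scalars) of such $u_i$. Both rely crucially on the factoriality of $pM_1p$ (to identify $c_i^*c_i$ as a scalar and to identify the central element $w$ as a scalar), while $p'M_1p'$ being a factor is invoked in the equality case to deduce $p''_i = p'$.
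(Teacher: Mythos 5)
Your proof is correct, and its core mechanism is the same as the paper's: the paper also defines $\theta(x)\otimes q'=v(x\otimes q)v^*$, extracts a nonzero element $w=\mathrm{E}_{M_1\otimes 1}(v(1\otimes\zeta^*))\in p'M_1p$ with $wx=\theta(x)w$ (this is exactly one of your coefficients $c_i$, obtained by pairing $v$ against a single $\zeta\in q'M_2q$ rather than a whole orthonormal basis), and uses $\mathcal Z(pM_1p)=\mathbb Cp$ to turn it into a partial isometry $\omega$ with $\omega^*\omega=p$ and $\omega\omega^*\leq p'$, giving $\tau(p)\leq\tau(p')$. You diverge in the moreover part: the paper chooses a partial isometry $\delta\in M_2$ with $\delta^*\delta=q$, $\delta\delta^*=q'$ (using $\tau(q)=\tau(q')$ and factoriality of $M_2$) and computes that $(\omega\otimes\delta)^*v$ lies in $(pM_1p\otimes q)'\cap(pM_1p\overline{\otimes}qM_2q)=p\otimes qM_2q$, so that $v=\omega\otimes\delta\rho$ with $\rho\in\mathcal U(qM_2q)$ and boundedness of the second tensor leg comes for free; you instead prove that any two partial isometries implementing the (now surjective) $\phi$ agree up to a unimodular scalar, reassemble $\eta$ as an $L^2$-sum of basis vectors, and then need the small extra observation that this $L^2$-element is actually bounded — justifiable, e.g., since $\tau(p)\eta=\mathrm{E}_{1\otimes M_2}((\omega^*\otimes 1)v)\in M_2$ — before reading off $\eta^*\eta=q$ and $\eta\eta^*=q'$. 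Your handling of the equality statement via surjectivity of $\phi$ and $\mathcal Z(p'M_1p')=\mathbb Cp'$ is likewise a legitimate alternative to simply applying the inequality to $v^*$. Both routes rest on the same ingredients (factoriality of the corner algebras and the trace), so the difference is one of bookkeeping rather than substance; the paper's relative-commutant step is slightly slicker at the end, while yours is more hands-on but needs the boundedness remark spelled out.
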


\begin{proof}
Let $\theta:pM_1p\rightarrow p'M_1p'$ be a unital $*$-homomorphism such that $v(x\otimes q)v^*=\theta(x)\otimes q'$, for every $x\in pM_1p$.
Since $v\in (p'\otimes q')(M_1\overline{\otimes}M_2)(p\otimes q)$, we can find $\zeta\in q'M_2q$ such that $w:=\text{E}_{M_1\otimes 1}(v(1\otimes \zeta^*))\in p'M_1p$ is nonzero.
Since $v(x\otimes q)=(\theta(x)\otimes q')v$, we deduce that $wx=\theta(x)w$, for every $x\in pM_1p$.
Then  $w^*w\in \mathcal Z(pM_1p)=\mathbb Cp$.  Let $\alpha>0$ such that $w^*w=\alpha p$. Then $\omega=\alpha^{-\frac{1}{2}}w$ is a partial isometry such that $\omega^*\omega=p$ and $\omega\omega^*\leq p'$. This proves that $\tau(p)\leq\tau (p')$. 

To prove the second assertion, assume that $\tau(p)=\tau(p')$. Then $\tau(q)=\tau(q')$, $\omega\omega^*=p'$ and $\theta(x)=\omega x\omega^*$, for every $x\in pM_1p$. 
Let $\delta\in M_2$ be a partial isometry such that $q=\delta^*\delta$ and $q'=\delta\delta^*$. Then $v(x\otimes q)v^*=\theta(x)\otimes q'=(\omega\otimes\delta)(x\otimes q)(\omega\otimes\delta)^*$, for every $x\in pM_1p.$ Thus, we get that $(\omega\otimes\delta)^*v\in (pM_1p\otimes q)'\cap (pM_1p\overline{\otimes}qM_2q)=p\otimes qM_2q$. 
Let $\rho\in \mathcal U(qM_2q)$ such that $(\omega\otimes\delta)^*v=p\otimes\rho$. Then $v=\omega\otimes\delta\rho$, and letting $\eta=\delta\rho$ proves the second assertion.
\end{proof}

\subsection{Results on free products}
We continue with the following result which will be used in the proof of Theorem \ref{factors'}. 
\begin{lemma}\label{partial_isometry}
Let  $(M,\tau)$ be the free product of two tracial von Neumann algebras $(M_1,\tau_1)$ and $(M_2,\tau_2)$. Let $\zeta\in M$ be a partial isometry such that $\zeta\in\overline{\emph{sp}}(M_1M_2)$, $\zeta\zeta^*\in M_1$ and $\zeta^*\zeta\in M_2$.

Then $\zeta$ is a unitary and $\zeta=\zeta_1\zeta_2$, for some unitaries $\zeta_1\in M_1$ and $\zeta_2\in M_2$.
\end{lemma}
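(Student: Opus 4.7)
The plan is to exploit the orthogonal decomposition of $\overline{\text{sp}}(M_1 M_2) \subset L^2(M)$ induced by the reduced-word structure of the free product:
\[
\overline{\text{sp}}(M_1 M_2) = \mathbb{C}\cdot 1 \oplus L^2(M_1^\circ) \oplus L^2(M_2^\circ) \oplus \bigl(L^2(M_1^\circ) \otimes L^2(M_2^\circ)\bigr),
\]
where $M_i^\circ := \ker\tau_i$. I would decompose $\zeta = \alpha + \beta + \gamma + \delta$ accordingly, with $\alpha \in \mathbb{C}$, $\beta \in M_1^\circ$, $\gamma \in M_2^\circ$, and $\delta \in M$ lying in the last summand, then read off relations by projecting the identities $\zeta\zeta^* \in M_1$ and $\zeta^*\zeta \in M_2$ onto the various summands of $L^2(M)$. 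Projecting $\zeta\zeta^*$ onto $L^2(M_2^\circ)$ gives, via freeness, $(\gamma+\alpha)(\gamma+\alpha)^* = \tau_2(\gamma\gamma^*) + |\alpha|^2$, a scalar; since coisometries in a finite tracial von Neumann algebra are unitaries, this yields $\gamma+\alpha = c_2 v_2$ for some $v_2 \in \mathcal{U}(M_2)$ and $c_2 \ge 0$. Symmetrically, $\beta+\alpha = c_1 v_1$ with $v_1 \in \mathcal{U}(M_1)$, and in particular $\alpha = c_1\tau_1(v_1) = c_2\tau_2(v_2)$.

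Projecting $\zeta\zeta^* \in M_1$ and $\zeta^*\zeta \in M_2$ onto $L^2(M_1^\circ) \otimes L^2(M_2^\circ)$ next yields linear equations
\[
(\mathrm{id} \otimes S)(\delta) = -\beta \otimes \gamma^*, \qquad (T \otimes \mathrm{id})(\delta) = -\beta^* \otimes \gamma,
\]
with $S(y) = \overline{c_2}(y v_2^*)^\circ$ on $L^2(M_2^\circ)$ and $T(x) = \overline{c_1}(v_1^* x)^\circ$ on $L^2(M_1^\circ)$. In the main case $\alpha \ne 0$, both $\tau_i(v_i) \ne 0$, so $S$ and $T$ are invertible; the two equations force $\delta$ to be rank-one, and matching constants gives $\delta = \beta\gamma/\alpha$, so
\[
\zeta = \tfrac{1}{\alpha}(\alpha+\beta)(\alpha+\gamma) = \tfrac{c_1 c_2}{\alpha}\, v_1 v_2.
\]
The partial-isometry condition then forces $|c_1 c_2/\alpha| = 1$, giving the splitting $\zeta = \zeta_1\zeta_2$ with $\zeta_1 = (c_1c_2/\alpha)v_1 \in \mathcal{U}(M_1)$ and $\zeta_2 = v_2 \in \mathcal{U}(M_2)$.

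The residual case $\alpha = 0$ splits into subcases. If $c_1, c_2 > 0$, then $\tau_2(v_2) = 0$, so the range of $S$ equals $\{v_2^*\}^\perp \subset L^2(M_2^\circ)$; this excludes $-c_1\overline{c_2}\,v_1 \otimes v_2^*$ from the range of $\mathrm{id}\otimes S$, contradicting the hypotheses, so this subcase does not occur. If exactly one $c_i$ vanishes, say $c_1 = 0$ so $\beta = 0$, then $\ker(S) = \mathbb{C} v_2$ gives $\delta = \eta \otimes v_2$ with $\eta = \delta v_2^* \in M_1^\circ$, so $\zeta = (c_2+\eta)v_2$; then $\zeta^*\zeta \in M_2$ combined with freeness forces $(c_2+\eta)^*(c_2+\eta)$ to be a scalar, and the partial-isometry condition makes $c_2+\eta$ unitary. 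Finally, when $c_1 = c_2 = 0$, one has $\zeta = \delta$; a Schmidt decomposition $\delta = \sum_k \sigma_k a_k \otimes b_k$ with orthonormal systems $\{a_k\} \subset L^2(M_1^\circ)$, $\{b_k\} \subset L^2(M_2^\circ)$, combined with the vanishing of the $L^2(M_1^\circ) \otimes L^2(M_2^\circ) \otimes L^2(M_1^\circ)$-component of $\zeta\zeta^*$, forces $b_k b_k^* = 1$ while $b_k b_l^* = 0$ for $k \ne l$; the diagonal makes each $b_k$ unitary, but the off-diagonal relation is impossible between nonzero unitaries, so only one Schmidt term survives. A symmetric use of $\zeta^*\zeta \in M_2$ then makes $a_1$ unitary, and $\zeta$ splits as required after absorbing $\sigma_1 \in \{0,1\}$ (forced by the partial-isometry condition). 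The hard part throughout is the bookkeeping in the $\alpha = 0$ regime, where each subcase must be resolved by a careful range/kernel analysis of the operators $S$ and $T$.
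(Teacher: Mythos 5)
Your argument is correct in its overall structure, and it is a genuinely different route from the paper's. The paper fixes an orthonormal basis $(\eta_i)_{i\in I}\subset M_2$ of $L^2(M_2)$, writes $\zeta=\sum_i\xi_i\eta_i$ with $\xi_i=\mathrm{E}_{M_1}(\zeta\eta_i^*)\in M_1$, and uses the pairwise orthogonality of the subspaces $\overline{\eta_i^*(M_1\ominus\mathbb{C}1)\eta_j}$ together with $\zeta^*\zeta\in M_2$ to conclude that every $\xi_i^*\xi_j$ is a scalar; a single nonzero $\xi_{i_0}$ then normalizes to a unitary $\zeta_1\in M_1$, all coefficients are multiples of it, and $\zeta\zeta^*\in M_1$ plus the partial-isometry hypothesis make $\zeta_2$ unitary. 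This is a one-shot argument with no case analysis, and every object manipulated is bounded because the coefficients are conditional expectations. Your route instead decomposes $\zeta$ according to word type, shows $\alpha+\beta$ and $\alpha+\gamma$ are scalar multiples of unitaries, and resolves $\delta$ via the operators $S,T$; I checked the two linear equations, the invertibility of $S,T$ when $\alpha\neq0$, the formula $\delta=\beta\gamma/\alpha$, the impossibility of the subcase $\alpha=0$, $c_1,c_2>0$ (the range of $\mathrm{id}\otimes S$ is orthogonal to $L^2(M_1^\circ)\otimes\mathbb{C}v_2^*$), and the kernel argument when exactly one $c_i$ vanishes; all of this is sound, and it buys explicit formulas for $\zeta$ in terms of its corner components.

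The one place where your write-up has a real soft spot is the subcase $\alpha=c_1=c_2=0$, which is in fact the generic case (e.g.\ $\zeta=\zeta_1\zeta_2$ with $\tau_1(\zeta_1)=\tau_2(\zeta_2)=0$ lands there). The Schmidt vectors $a_k,b_k$ are only $L^2$-vectors, not elements of $M_1,M_2$, so the products $b_kb_l^*$ live a priori only in $L^1(M_2)$ (as products of operators affiliated with $M_2$), and the expansion of $\zeta\zeta^*$ as a double series whose ``$1$--$2$--$1$ component'' is $\sum_{k,l}\sigma_k\sigma_l\,a_k(b_kb_l^*)^\circ a_l^*$ involves rearranging products of unbounded elements and identifying components of an $L^2$ vector through terms that are not individually in $L^2$. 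The conclusion $b_kb_l^*=\delta_{kl}1$ is true and the step can be repaired (pair $\zeta\zeta^*$ against bounded words $xyx'$ with $x,x'\in M_1^\circ$, $y\in M_2^\circ$, pass to $L^1$-limits of $\zeta_N\zeta_M^*$ and use noncommutative H\"older, or better, replace the Schmidt decomposition in this case by an expansion over an orthonormal basis of $L^2(M_2^\circ)$ consisting of elements of $M_2$ with coefficients $\mathrm{E}_{M_1}(\zeta\eta_i^*)\in M_1$, which is exactly the paper's device and makes everything bounded). So this is a gap of rigor rather than a failing step, but as written it hides precisely the technical work that the paper's choice of expansion is designed to avoid.
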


\begin{proof}
Let $(\eta_i)_{i\in I}\subset M_2$ be an orthonormal basis for $\text{L}^2(M_2)$. Since $\zeta\in \overline{\text{sp}}(M_1M_2)$, we have  $\zeta=\sum_{i\in I}\xi_i\eta_i$, where $\xi_i=\text{E}_{M_1}(\zeta\eta_i^*)$,  for every $i\in I$. Thus,  $\zeta^*\zeta=\sum_{i,j\in I}\eta_i^*\xi_i^*\xi_j\eta_j.$

Fix $i,j\in I$. We define $e_{i,j}$ to be the orthogonal projection from $\text{L}^2(M)$ onto the $\|\cdot\|_2$-closure of $\eta_i^*(M_1\ominus\mathbb C)\eta_j$. The free product structure implies that
 $\eta_i^*(M_1\ominus\mathbb C)\eta_j\perp \eta_{i'}^*(M_1\ominus\mathbb C)\eta_{j'}$ if $(i',j')\not=(i,j)$. From this we derive that
  $$\text{$e_{i,j}(\eta_{i'}^*\xi_{i'}^*\xi_{j'}\eta_{j'})=\delta_{((i',j'),(i,j))}\eta_i^*(\xi_i^*\xi_j-\tau(\xi_i^*\xi_j))\eta_j$, for every $i',j'\in I$}.$$
This implies that $e_{i,j}(\zeta^*\zeta)=\eta_i^*(\xi_i^*\xi_j-\tau(\xi_i^*\xi_j))\eta_j$. However, since $\zeta^*\zeta\in M_2$, we have $e_{i,j}(\zeta^*\zeta)=0$. Thus, we get that $\eta_i^*(\xi_i^*\xi_j-\tau(\xi_i^*\xi_j))\eta_j=0$.  Since $\eta_i,\eta_j$ are nonzero, and $\xi_i^*\xi_j-\tau(\xi_i^*\xi_j)\in M_1\ominus \mathbb C1$, it follows that 
\begin{equation}\label{xi_ixi_j}\text{$\xi_i^*\xi_j=\tau(\xi_i^*\xi_j)$, for every $i,j\in I$.}
\end{equation}
Since $\zeta\not=0$, there exists $i_0\in I$ such that $\xi_{i_0}\not=0$. Since $\xi_{i_0}^*\xi_{i_0}=\tau(\xi_{i_0}^*\xi_{i_0})\not=0$, we get that $\zeta_1:=\|\xi_{i_0}\|_2^{-1}\xi_{i_0}\in M_1$ is a unitary. Further, \eqref{xi_ixi_j} implies that for every $i\in I$, we can find $c_i\in\mathbb C$ such that $\xi_i=c_i\zeta_1$. Thus, $\zeta=\sum_{i\in I}\xi_i\eta_i=\sum_{i\in I}c_i\zeta_1\eta_i=\zeta_1\zeta_2,$ where $\zeta_2=\sum_{i\in I}c_i\eta_i\in M_2$. 

Finally, since $\zeta_1\in M_1, \zeta_2\in M_2$, $\zeta\zeta^*\in M_1$ and $\zeta\zeta^*=\zeta_1(\zeta_2\zeta_2^*)\zeta_1^*$, it follows that $\zeta_2\zeta_2^*\in\mathbb C1$.
Since $\zeta$ is a partial isometry, we also get that $\zeta\zeta^*$ and hence $\zeta_2\zeta_2^*$ is a projection. Thus, we derive that $\zeta_2\zeta_2^*=1$, and therefore $\zeta_2$ is a unitary. This finishes the proof of the lemma.
\end{proof}

\begin{corollary}\label{splitting}

    Let $M=(M_1*M_2)\overline{\otimes}M_3$, where $(M_k,\tau_k),1\leq k\leq 3,$ are tracial von Neumann algebras.  Let $u_1,u_2\in\mathcal U(M)$ such that $u_1\in M_1\overline{\otimes}M_3, u_2\in M_2\overline{\otimes}M_3$ and $u_1u_2\in (M_1*M_2)\overline{\otimes}\mathbb C1$. 

    Then there exist unitaries $\zeta_1\in M_1,\zeta_2\in M_2$ and $\zeta_3\in M_3$ such that $u_1=\zeta_1\otimes \zeta_3$ and $u_2=\zeta_2\otimes\zeta_3^*$. In particular, $u_1u_2\in(\mathcal U(M_1)\mathcal U(M_2))\otimes 1$.
\end{corollary}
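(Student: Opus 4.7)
My plan is to expand $u_1$ and $u_2$ in orthonormal bases of the free factors $L^2(M_1)$ and $L^2(M_2)$, use the hypothesis on $u_1u_2$ to extract pairwise constraints on the $M_3$-coefficients, and exploit finiteness of $M_3$ to collapse both $u_1$ and $u_2$ to pure tensors.

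Fix orthonormal bases $(f_i)_{i\in I}\subset M_1$ and $(g_j)_{j\in J}\subset M_2$ of $L^2(M_1)$ and $L^2(M_2)$ with $f_0=g_0=1$. Under the identification $L^2(M)=L^2(M_1*M_2)\overline{\otimes}L^2(M_3)$, write $u_1=\sum_i f_i\otimes b_i$ and $u_2=\sum_j g_j\otimes d_j$ with uniquely determined $b_i,d_j\in M_3$; the hypotheses $u_1\in M_1\overline{\otimes}M_3$ and $u_2\in M_2\overline{\otimes}M_3$ ensure these expansions only involve $f_i$'s and $g_j$'s respectively. The family $(f_ig_j)_{(i,j)\in I\times J}$ is orthonormal in $L^2(M_1*M_2)$: each $f_ig_j$ is an alternating word of length $0$, $1$, or $2$, and these lie in distinct summands of the standard free-product word decomposition.

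Expanding $u_1u_2=\sum_{i,j}f_ig_j\otimes b_id_j$ and comparing with the constraint $u_1u_2\in(M_1*M_2)\otimes\mathbb{C}1$, the orthonormality above forces $b_id_j\in\mathbb{C}1$ for every $(i,j)\in I\times J$. Since $u_1u_2$ is a unitary, hence nonzero, there exist $i_0,j_0$ with $b_{i_0}d_{j_0}=c\cdot 1$, $c\neq 0$. The element $b_{i_0}$ is then right-invertible in the finite von Neumann algebra $M_3$, hence also left-invertible, with $b_{i_0}^{-1}=c^{-1}d_{j_0}$. For each $i\in I$, $b_i=(b_id_{j_0})d_{j_0}^{-1}\in\mathbb{C}b_{i_0}$, and similarly $d_j\in\mathbb{C}d_{j_0}$ for each $j\in J$. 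Writing $b_i=\lambda_ib_{i_0}$ and $d_j=\mu_jd_{j_0}$, the expansions collapse to pure tensors $u_1=\xi_1\otimes b_{i_0}$ and $u_2=\xi_2\otimes d_{j_0}$ with $\xi_1\in L^2(M_1)$, $\xi_2\in L^2(M_2)$; boundedness of $u_1,u_2$ and non-vanishing of $b_{i_0},d_{j_0}$ force $\xi_1\in M_1$ and $\xi_2\in M_2$.

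Finally, unitarity of $u_1$ reads $\xi_1\xi_1^*\otimes b_{i_0}b_{i_0}^*=1$ in $M_1\overline{\otimes}M_3$, so $\xi_1\xi_1^*=t\cdot 1$ and $b_{i_0}b_{i_0}^*=t^{-1}\cdot 1$ for some $t>0$; together with the analogous identity for $\xi_1^*\xi_1$, rescaling by $\sqrt{t}$ yields $u_1=\zeta_1\otimes\widetilde\zeta_3$ with $\zeta_1\in\mathcal{U}(M_1)$ and $\widetilde\zeta_3\in\mathcal{U}(M_3)$. The same argument gives $u_2=\widetilde\zeta_2\otimes\widetilde\zeta_3'$ with $\widetilde\zeta_2\in\mathcal{U}(M_2)$ and $\widetilde\zeta_3'\in\mathcal{U}(M_3)$. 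The hypothesis then forces $\widetilde\zeta_3\widetilde\zeta_3'\in\mathbb{C}1$, hence $\widetilde\zeta_3'=e^{i\theta}\widetilde\zeta_3^*$; absorbing $e^{i\theta}$ into $\widetilde\zeta_2$ produces the desired decomposition $u_1=\zeta_1\otimes\zeta_3$ and $u_2=\zeta_2\otimes\zeta_3^*$ with $\zeta_3:=\widetilde\zeta_3$. The main obstacle, I expect, is the passage from the bilinear relations $b_id_j\in\mathbb{C}1$ to one-dimensionality of the spans of $(b_i)$ and $(d_j)$: this requires excluding the degenerate case where \emph{all} products $b_id_j$ vanish, which is precisely where the non-vanishing of $u_1u_2$ and the finiteness of $M_3$ both become essential.
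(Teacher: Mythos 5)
Your argument is correct, and it takes a genuinely different route from the paper. The paper first applies $\mathrm{E}_{(M_1*M_2)\overline{\otimes}\mathbb C1}$ to write $u_1u_2=u\otimes 1$ with $u\in\overline{\mathrm{sp}}(M_1M_2)$, then invokes Lemma \ref{partial_isometry} (the statement about a partial isometry $\zeta\in\overline{\mathrm{sp}}(M_1M_2)$ with $\zeta\zeta^*\in M_1$, $\zeta^*\zeta\in M_2$) to factor $u=\zeta_1\zeta_2$, and finally uses $(M_1\overline{\otimes}M_3)\cap(M_2\overline{\otimes}M_3)=\mathbb C1\overline{\otimes}M_3$ to produce $\zeta_3$ from the identity $(\zeta_1^*\otimes 1)u_1=(\zeta_2\otimes 1)u_2^*$. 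You instead expand both $u_1$ and $u_2$ with $M_3$-valued coefficients, use freeness to see that the products $b_id_j$ are the Fourier coefficients of $u_1u_2$ along the orthonormal family $(f_ig_j)$, conclude that all $b_id_j$ are scalars, and then use the finiteness of $M_3$ (a right-invertible element is invertible) to collapse each coefficient family to a line; this yields the pure-tensor form of $u_1$ and $u_2$ directly, bypassing Lemma \ref{partial_isometry} and the intersection argument entirely. What each buys: the paper's proof is modular (it reuses a lemma that is also needed elsewhere in the proof of Theorem \ref{factors'}) and avoids any appeal to direct finiteness, while yours is self-contained, simultaneously recovers the splitting of $u_1$ and $u_2$ in one pass, and isolates a nice intermediate rigidity fact (one-dimensionality of the coefficient spaces), at the cost of heavier bookkeeping with the double expansion. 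Two small points of rigor you should tighten when writing this up: the identity $u_1u_2=\sum_{i,j}f_ig_j\otimes b_id_j$ is an iterated $\|\cdot\|_2$-limit and should be justified by pairing against fixed vectors $f_{i_0}g_{j_0}\otimes\xi$, using that left multiplication by the bounded operator $u_1$ and right multiplication by bounded elements are $\|\cdot\|_2$-continuous; and the orthonormality of $(f_ig_j)$ is not because distinct pairs lie in distinct summands of the word decomposition (all pairs with $i,j\neq 0$ lie in the same summand $\overline{\mathring{M}_1\mathring{M}_2}$), but because that summand is canonically $\overline{\mathring{M}_1}\otimes\overline{\mathring{M}_2}$, or equivalently by the direct trace computation $\tau(g_{j'}^*f_{i'}^*f_ig_j)=\tau(f_{i'}^*f_i)\tau(g_jg_{j'}^*)$ via freeness. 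Also, in the passage from $u_1=\xi_1\otimes b_{i_0}$ to $\xi_1\in M_1$, the relevant input is the invertibility of $b_{i_0}$ (which you had already established), not merely its non-vanishing: one multiplies by $1\otimes b_{i_0}^{-1}$ and observes that the resulting vector $\xi_1\otimes\widehat 1$ is the image of the bounded element $u_1(1\otimes b_{i_0}^{-1})$, which is fixed by $\mathrm{E}_{M_1\otimes 1}$.
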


\begin{proof}
   Since $\text{E}_{(M_1*M_2)\overline{\otimes}\mathbb C1}((M_1\overline{\otimes}M_3)(M_2\overline{\otimes}M_3))\subset\overline{\text{sp}}(M_1M_2)\otimes\mathbb C1$,
   we derive that 
   $$u_1u_2=\text{E}_{(M_1*M_2)\overline{\otimes}\mathbb C1}(u_1u_2)\in \overline{\text{sp}}(M_1M_2)\otimes\mathbb C1.$$
   Therefore, if $u\in\mathcal U(M_1*M_2)$ is such that $u_1u_2=u\otimes 1$, then $u\in\overline{\text{sp}}(M_1M_2)$.
   By applying Lemma \ref{partial_isometry}, we derive that $u=\zeta_1\zeta_2$, for some unitaries $\zeta_1\in M_1$ and $\zeta_2\in M_2$. Hence, we have  $u_1u_2=(\zeta_1\zeta_2)\otimes 1$ and thus $(\zeta_1^*\otimes 1)u_1=(\zeta_2\otimes 1)u_2^*$. Since $(\zeta_1^*\otimes 1)u_1\in M_1\overline{\otimes}M_3$, $(\zeta_2\otimes 1)u_2^*\in M_2\overline{\otimes}M_3$ and $(M_1\overline{\otimes}M_3)\cap (M_2\overline{\otimes}M_3)=\mathbb C1\overline{\otimes}M_3$, it follows that we can find $\zeta_3\in M_3$ such that $$(\zeta_1^*\otimes 1)u_1=(\zeta_2\otimes 1)u_2^*=1\otimes\zeta_3.$$
   In conclusion,  $u_1=\zeta_1\otimes\zeta_3$ and $u_2=\zeta_2\otimes\zeta_3^*$, which finishes the proof.
\end{proof}

\subsection{Graphs}\label{graph_notions} A graph $\Gamma$ is called {\it simple} if it is undirected and contains no loops and no multiple edges between any pair of vertices.
We write $v\in\Gamma$ to mean that $v$ is a {\it vertex} of $\Gamma$. We say that two vertices $v,w\in\Gamma$ are {\it adjacent} if they share an edge.
We write $\Lambda\subset\Gamma$ to mean that $\Lambda$ is a {\it full subgraph} of $\Gamma$: the vertices of $\Lambda$ are a subset of the vertices of $\Gamma$ and two vertices in $\Lambda$ are adjacent if and only if they are adjacent in $\Gamma$.

The {\it link} and {\it star} of a vertex $v\in\Gamma$ are defined by
$$\text{$\text{lk}(v)=\{w\in\Gamma\mid v$ and $w$ are adjacent$\}$\;\; and \;\; $\text{st}(v)=\{v\}\cup\text{lk}(v)$.}$$
For a full subgraph $\Lambda\subset\Gamma$, we define $$\text{$\Lambda^\perp=\bigcap_{v\in\Lambda}\text{lk}(v)=\{w\in\Gamma\mid w$ is adjacent to every vertex  $v\in\Lambda\}$}.$$ 
We make the convention that $\emptyset^\perp=\Gamma$ and note that $\text{lk}(v)=\{v\}^\perp$, for every vertex $v\in\Gamma$.

A {\it join decomposition} $\Gamma=\Gamma_1\circ\Gamma_2$ of a graph $\Gamma$ means that $\Gamma_1,\Gamma_2\subset\Gamma$ are disjoint full subgraphs such that $\Gamma=\Gamma_1\cup \Gamma_2$ and $\Gamma_2=\Gamma_1^\perp$.
We say that $\Gamma$ is a {\it join graph} if it admits a join decomposition $\Gamma=\Gamma_1\circ\Gamma_2$, with $\Gamma_1$ and $\Gamma_2$ nonempty. We say that $\Gamma$ is {\it irreducible} if it is not a join graph.
A graph $\Gamma$ is called a {\it clique} if it is a complete graph. 
The {\it maximal clique factor} of $\Gamma$  is the clique  $\Gamma_0\subset\Gamma$ whose vertices are 
those $v\in\Gamma$ such that $\Gamma=\text{st}(v)$. Note that  $\Gamma=\Gamma_0\circ \Gamma_0^\perp$ and $\Gamma_0$ is the maximal clique with this property (which may be empty). 
Moreover, $\Gamma_0^\perp$ has an empty maximal clique factor, and decomposes as $\Gamma_0^\perp=\Gamma_1\circ\cdots\circ\Gamma_n$, where $\Gamma_i$ is an irreducible graph with at least two vertices, for every $1\leq i\leq n$. 

Following \cite[Definition 6.1]{EH24}, we say that $\Lambda$ is a {\it collapsible} full subgraph of $\Gamma$ if $\text{st}(v)\cap (\Gamma\setminus\Lambda)=\Lambda^\perp$, for every $v\in\Lambda$.
A graph $\Gamma$ is called {\it strongly reduced} if it contains no proper collapsible full subgraph $\Lambda\subsetneq\Gamma$ on at least two vertices. It is \emph{clique-reduced} if it contains no proper collapsible complete full subgraph on at least two vertices \cite[Section~9.1]{EH24}.

A vertex $v\in\Gamma$ is \emph{untransvectable} if there is no vertex $w\neq v$ in $\Gamma$ such that $\lk(v)\subset\st(w)$. A finite simple graph is \emph{transvection-free} if all its vertices are untransvectable. In general, we denote by $\Gamma^u$ the full subgraph of $\Gamma$ whose vertices are the untransvectable vertices of $\Gamma$.

\begin{remark}\label{basic_graph_facts} In this remark, we justify two basic graph-theoretic claims made in the introduction.

\begin{enumerate}
    \item Let $\Gamma$ be a finite simple graph with girth at least $5$ and no vertices of valence less than $2$. 
    Then $\Gamma$ is transvection-free. Assume by contradiction that $\text{lk}(v)\subset\text{st}(v')$, for $v,v'\in\Gamma$ with $v\not=v'$.
Since $|\text{lk}(v)|\geq 2$, we can choose distinct vertices $v_1,v_2\in \text{lk}(v)$. Then $v_1,v_2\in\text{st}(v')$.
If $v'$ is equal to $v_1$ or $v_2$, 
 then $v,v_1,v_2$ is a triangle. Otherwise, the distinct vertices $v,v_1,v',v_2$ are connected in this order. Either case contradicts the fact that $\Gamma$ has girth at least $5$.

 \item Let $\Gamma$ be a finite simple graph which is transvection-free, contains no squares.  
 Then every connected component of $\Gamma$ is strongly reduced. To see this, consider a component $\Gamma_0$ of $\Gamma$ and a collapsible full subgraph $\Lambda\subset\Gamma_0$ with $|\Lambda|\geq 2$.
Note that $\Lambda$ is not a clique, for otherwise we would have that $\text{st}(v)=\Lambda\cup\Lambda^\perp$, for every $v\in\Lambda$, which contradicts the assumption that $\Gamma$ is transvection-free since $|\Lambda|\geq 2$.

 If $\Lambda^\perp=\emptyset$, then since $\Lambda$ is collapsible, we get that $\Lambda$ is disconnected from $\Gamma_0\setminus\Lambda$. Since $\Gamma_0$ is connected, we deduce that $\Lambda=\Gamma_0$, as desired. 
 Thus, it remains to argue that we cannot have $\Lambda^\perp\not=\emptyset$. If $\Lambda^\perp\not=\emptyset$, then $\Lambda^\perp$ is not a clique, for otherwise we could have that $\text{st}(v)\subset\Lambda\cup\Lambda^\perp\subset\text{st}(w)$, for every $v\in\Lambda$ and $w\in\Lambda^\perp$, which again contradicts the assumption that $\Gamma$ is transvection-free.
Finally, since both $\Lambda$ and $\Lambda^\perp$ are not cliques we can find non-adjacent vertices $v_1,v_2\in\Lambda$ and $w_1,w_2\in\Lambda^\perp$. But then the vertices $v_1,w_1,v_2,w_2$ determine a square, which is a contradiction.
\end{enumerate}
\end{remark}

We record the following basic fact about collapsible subgraphs.

\begin{lemma}\label{lemma:collapsible-combinatorial}
    Let $\Lambda$ be a finite simple graph, and assume that every connected component of $\Lambda$ is strongly reduced. Let $\Delta\subset\Lambda$ be a collapsible subgraph with $|\Delta|\ge 2$. 

    Then $\Delta$ is a union of connected components of $\Lambda$.
\end{lemma}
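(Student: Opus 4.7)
The plan is to fix an arbitrary vertex $v\in\Delta$, let $\Lambda_0$ be the connected component of $\Lambda$ containing $v$, and show that $\Lambda_0\subseteq\Delta$. The argument splits according to whether $\Delta^\perp$, computed in the ambient graph $\Lambda$, is empty.

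First I would handle the case $\Delta^\perp=\emptyset$ directly: the collapsibility condition says that for every $w\in\Delta$, $\st(w)\cap(\Lambda\setminus\Delta)=\Delta^\perp=\emptyset$, so every neighbor of $w$ lies in $\Delta$. Starting from $v$ and propagating along edges (using that $\Lambda_0$ is connected) then gives $\Lambda_0\subseteq\Delta$, as required.

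Next I would treat the case $\Delta^\perp\neq\emptyset$. Any vertex in $\Delta^\perp$ is adjacent to every vertex of $\Delta$ and in particular to at least one vertex in each connected component of $\Lambda$ that meets $\Delta$; since a single vertex cannot be adjacent to vertices in two distinct components, $\Delta$ must lie entirely inside one component, which by construction has to be $\Lambda_0$. Now the key reduction: I would verify that $\Delta$ is collapsible as a full subgraph of $\Lambda_0$. For every $w\in\Delta$, $\st_{\Lambda_0}(w)=\st_{\Lambda}(w)$ since all neighbors of $w$ in $\Lambda$ lie in $\Lambda_0$; likewise $\Delta^\perp$ computed in $\Lambda_0$ agrees with $\Delta^\perp$ computed in $\Lambda$, because any vertex adjacent to all of $\Delta\subseteq\Lambda_0$ must itself lie in $\Lambda_0$. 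Hence the collapsibility identity for $\Delta\subseteq\Lambda$ transfers verbatim to $\Delta\subseteq\Lambda_0$. Since $\Lambda_0$ is strongly reduced and $|\Delta|\geq 2$, the definition of strongly reduced forces $\Delta=\Lambda_0$, again giving $\Lambda_0\subseteq\Delta$.

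Running this argument for every $v\in\Delta$ exhibits $\Delta$ as a union of connected components of $\Lambda$. The only subtle point (and in some sense the main obstacle) is making sure that the perp operation behaves correctly when restricting from $\Lambda$ to $\Lambda_0$; this is why it is convenient to first rule out the case where $\Delta$ straddles several components via the elementary observation that $\Delta^\perp$ must then be empty. Everything else is a direct application of the definitions and of the strong reducedness hypothesis.
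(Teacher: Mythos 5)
Your proof is correct and rests on the same ingredients as the paper's: when $\Delta^{\perp}\neq\emptyset$ the subgraph $\Delta$ is confined to a single component, collapsibility transfers verbatim to that component, and strong reducedness forces $\Delta$ to equal it, while an edge from $\Delta$ into its complement inside a component is governed by $\Delta^{\perp}$ via the collapsibility identity. The only difference is organizational: the paper runs one contradiction argument (a component properly meeting $\Delta$ yields such an edge, forcing $\Delta^{\perp}\neq\emptyset$ and then a proper collapsible subgraph of a strongly reduced component), whereas you split directly on whether $\Delta^{\perp}$ is empty, using neighbor-closedness of $\Delta$ in the empty case; the mathematical content is the same.
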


\begin{proof}
Let $\Omega$ be a connected component of $\Delta$, and assume towards a contradiction that $\Omega\cap\Delta\neq\emptyset$ and $\Omega\cap\Delta\neq\Omega$. Then $\Omega\cap\Delta$ and $\Omega\setminus (\Delta\cap\Omega)$ are two non-empty subgraphs of the connected graph $\Omega$, so there must be an edge joining $\Omega\cap\Delta$ to $\Omega\setminus (\Delta\cap\Omega)$. In other words, we can find $w\in \Omega\cap\Delta$ such that $\lk(w)\cap (\Omega\setminus\Delta)\neq\emptyset$. We have \[\lk(w)\cap (\Omega\setminus (\Delta\cap\Omega))=\lk(w)\cap(\Lambda\setminus\Delta)=\Delta^{\perp},\] where the first equality comes from the fact that $\Omega$ is the connected component of $\Lambda$ containing $w$, and the second from the fact that $\Delta$ is a collapsible subgraph of $\Lambda$. It follows that $\Delta^{\perp}\neq\emptyset$, which forces $\Delta$ to be contained in $\Omega$. It follows that $\Delta$ is a proper collapsible subgraph of $\Omega$ with $|\Delta|\ge 2$, contradicting the fact that $\Omega$ is strongly reduced.     
\end{proof}

\subsection{Graph product von Neumann algebras}\label{graph_vN} Let $\Gamma$ be a finite simple graph and $(M_v,\tau_v)_{v\in\Gamma}$ be a collection of tracial von Neumann algebras. 
 In order to recall the definition of the graph product von Neumann algebra $M_\Gamma$  \cite{Ml04,CF17} we label (following \cite{BC24,BCC24}) alternating words in $(M_v)_{v\in\Gamma}$ using elements of the right-angled Coxeter group $\mathcal W_\Gamma$ associated to $\Gamma$. 
 
 Recall that $\mathcal W_\Gamma$   is the group generated by the vertex set of $\Gamma$, subject to the relations $v^2=e$, for every $v\in\Gamma$, and $vw=wv$, for every adjacent vertices $v,w\in\Gamma$. 
Equivalently, $\mathcal W_\Gamma$ is the graph product group over $\Gamma$ where each vertex group is equal to $\mathbb Z/2\mathbb Z$. For a full subgraph $\Lambda\subset\Gamma$, we consider the natural embedding $\mathcal W_\Lambda\subset\mathcal W_\Gamma$.
For ${\bf v}\in\mathcal W_\Gamma$, let $|{\bf v}|$ be the length of ${\bf v}$ with respect to the set of generators $\Gamma\subset\mathcal W_\Gamma$.  A  word  ${\bf v}=v_1\cdots v_n$, with $v_1,\dots,v_n\in\Gamma$, is said to be {\it reduced} if $|{\bf v}|=n$. We say that ${\bf v}\in \mathcal W_\Gamma$ {\it starts} (respectively, {\it ends}) with a letter $a\in\Gamma$ if $|a{\bf v}|<|{\bf v}|$ (respectively, $|{\bf v}a|<|{\bf v}|$). For $v\in\Gamma$, let $\mathring{M}_v=\{x\in M_v\mid\tau_v(x)=0\}$.

The \emph{graph product von Neumann algebra} $(M_\Gamma,\tau)=*_{v\in\Gamma}(M_v,\tau_v)$ is the unique (up to isomorphism) tracial von Neumann algebra with the following properties:
\begin{itemize}
\item[(a)] $M_\Gamma$ is generated by copies of $(M_v)_{v\in\Gamma}$ such that $\tau_{|M_v}=\tau_v$, for every $v\in\Gamma$;
\item[(b)] $M_v$ and $M_w$ commute, whenever $v,w\in\Gamma$ are adjacent vertices;
\item[(c)] $\tau(x_1\cdots x_n)=0$, for every $n\geq 1$ and every $x_1\in\mathring{M}_{v_1},\dots,x_n\in\mathring{M}_{v_n}$, where $v_1,\dots,v_n\in\Gamma$ are any vertices such that $v_1\cdots v_n$ is a reduced word in $\mathcal W_\Gamma$.
\end{itemize}

The reader is referred to \cite[Proposition~2.22]{CF17} for the equivalence of this definition with the other usual one given as an operator algebra on a graph product of Hilbert spaces.

\begin{remark} We record here some  properties of $M_\Gamma$ (see \cite[Remark~3.23 and Theorem~3.26]{CF17}):
\begin{enumerate}
\item Whenever $\Lambda\subset\Gamma$ is a full subgraph, we have a natural embedding $M_\Lambda\subset M_\Gamma$, given by identifying $M_\Lambda$ with the von Neumann subalgebra of $M_\Gamma$ generated by $\{M_v\}_{v\in\Lambda}$. 
\item If $v,w\in\Gamma$ are distinct vertices then the von Neumann algebra $M_v\vee M_w$ is canonically isomorphic to $M_v\overline{\otimes}M_w$, if $v$ and $w$ are adjacent, and to $M_v*M_w$, otherwise.

\item If $v\in\Gamma$, then $M_\Gamma=M_{\Gamma\setminus\{v\}}*_{M_{\text{lk}(v)}}M_{\text{st}(v)}$ and $M_{\text{st}(v)}=M_{\text{lk}(v)}\overline{\otimes}M_v.$ 
\item The graph product constructions for  groups and von Neumann algebras are compatible. For a collection $(G_v)_{v\in\Gamma}$ of discrete groups, let $G_\Gamma=*_{v,\Gamma}G_v$ the associated graph product group. If $M_v=\text{L}(G_v)$, for every $v\in\Gamma$, then we have a canonical isomorphism $\text{L}(G_{\Gamma})\cong M_\Gamma$.
\end{enumerate}
\end{remark}

Next, we recall a useful decomposition of  $\text{L}^2(M_\Gamma)$.
Let $\mathring{M}_{e}=\mathbb C1$, where $e\in \mathcal W_\Gamma$ is the identity element. For ${\bf v}\in\mathcal W_\Gamma$, let ${\bf v}=v_1\cdots v_n$ be a reduced decomposition, for $v_1,\dots,v_n\in\Gamma$. We define $$\mathring{M}_{\bf{v}}:=\mathring{M}_{v_1}\cdots\mathring{M}_{v_n}=\{x_1\cdots x_n\mid x_1\in \mathring{M}_{v_1},\dots,x_n\in\mathring{M}_{v_n}\}.$$ Then $\mathring{M}_{\bf v}$ does not depend on the choice of the reduced decomposition of ${\bf v}$. Indeed, by the normal form theorem for graph products \cite{Gr90}, 
any other reduced decomposition of ${\bf v}$ is obtained by 
repeating finitely many times the operation of swapping $v_i$ and $v_{i+1}$, for some $i$ with $v_i\in\text{lk}(v_{i+1})$, in which case $M_{v_i}$ and $M_{v_{i+1}}$ commute by (b). 
Moreover, $\mathring{M}_{\bf u}\perp\mathring{M}_{\bf v}$, for distinct ${\bf u},{\bf v}\in\mathcal W_\Gamma$ and $\bigoplus_{{\bf v}\in\mathcal W_\Gamma}\text{sp}(\mathring{M}_{\bf v})$ 
is $\|\cdot\|_2$-dense in $\text{L}^2(M_\Gamma)$, as a consequence of (c) and (a), respectively. In other words, if  ${e}_{\bf v}:\text{L}^2(M_\Gamma)\rightarrow \overline{\text{sp}}(\mathring{M}_{\bf v})$ denotes the orthogonal projection, for every ${\bf v}\in\mathcal W_\Gamma$, then $\sum_{{\bf v}\in\mathcal W_\Gamma}{e}_{\bf v}=1.$

The rest of this subsection is devoted to basic results about graph product von Neumann algebras. 

\begin{lemma}\label{intersections}
Assume the above setting. Let $n\geq 1$ and $\Gamma_0,\Gamma_1,\dots,\Gamma_n\subset\Gamma$ be full subgraphs. 
For  $S\subset M_\Gamma$, we denote by $\emph{sp}(S)$ the linear span of $S$ and by $\overline{\emph{sp}}(S)$ its $\|\cdot\|_2$-closure.
Then we have

\begin{enumerate}
\item $\emph{sp}(M_{v_1}\cdots M_{v_n})=\emph{sp}(\{\mathring{M}_{\bf v}\mid {\bf v}\in\mathcal W_{v_1}\cdots\mathcal W_{v_n}\})$, for every $v_1,\dots,v_n\in\Gamma$.
\item $\overline{\emph{sp}}(M_{\Gamma_1}\cdots M_{\Gamma_n})=\overline{\emph{sp}}(\{\mathring{M}_{\bf v}\mid {\bf v}\in\mathcal W_{\Gamma_1}\cdots\mathcal W_{\Gamma_n}\})$.
\item $\emph{E}_{M_{\Gamma_0}}(\overline{\emph{sp}}(M_{\Gamma_1}\cdots M_{\Gamma_n}))=\overline{\emph{sp}}(\{\mathring{M}_{\bf v}\mid {\bf v}\in\mathcal W_{\Gamma_0}\cap\mathcal W_{\Gamma_1}\cdots\mathcal W_{\Gamma_n}\})$.
\end{enumerate}
\end{lemma}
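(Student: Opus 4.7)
The plan is to prove the three statements in order, concentrating the combinatorial work in (1), from which (2) and (3) then follow by standard density and orthogonality arguments respectively. Throughout, I will make systematic use of the decomposition $M_v=\mathbb{C}1\oplus\mathring{M}_v$ at each vertex, the mutual orthogonality of the family $(\mathring{M}_{\bf v})_{{\bf v}\in\mathcal{W}_\Gamma}$ inside $\mathrm{L}^2(M_\Gamma)$, and the normal form theorem for graph products, equivalently the deletion condition for the right-angled Coxeter group $\mathcal{W}_\Gamma$.

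For (1), I would first establish the sub-claim that for every vertex sequence $w_1,\dots,w_k\in\Gamma$, $\mathring{M}_{w_1}\cdots\mathring{M}_{w_k}$ lies in $\mathrm{sp}(\{\mathring{M}_{\bf u}:{\bf u}\in\mathcal{W}_{w_1}\cdots\mathcal{W}_{w_k}\})$. I would prove this by induction on $k$: if $w_1\cdots w_k$ is a reduced word in $\mathcal{W}_\Gamma$ then $\mathring{M}_{w_1}\cdots\mathring{M}_{w_k}=\mathring{M}_{w_1\cdots w_k}$ by definition; otherwise the deletion condition yields indices $j<j'$ with $w_j=w_{j'}$ and $w_\ell\in\mathrm{lk}(w_j)$ for all $j<\ell<j'$, so $\mathring{M}_{w_j}$ commutes past $\mathring{M}_{w_{j+1}},\dots,\mathring{M}_{w_{j'-1}}$, and I then split $\mathring{x}_j\mathring{x}_{j'}\in M_{w_j}=\mathbb{C}1\oplus\mathring{M}_{w_j}$ into its scalar and traceless parts. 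This expresses the original product as a sum of two shorter products of lengths $k-2$ and $k-1$ to which the inductive hypothesis applies, and one checks that all words ${\bf u}$ that arise still lie in $\mathcal{W}_{w_1}\cdots\mathcal{W}_{w_k}$. The inclusion $\subset$ in (1) then follows by expanding each $x_i\in M_{v_i}$ as $\tau(x_i)1+\mathring{x}_i$ in a product $x_1\cdots x_n$ and applying the sub-claim to each resulting term. Conversely, for $\supset$, any ${\bf v}=\epsilon_1 v_1\cdots\epsilon_n v_n\in\mathcal{W}_{v_1}\cdots\mathcal{W}_{v_n}$ admits, by iterated deletion, a reduced expression $v_{j_1}\cdots v_{j_\ell}$ with $\ell=|{\bf v}|$ and $\{j_1<\cdots<j_\ell\}\subset\{i:\epsilon_i=1\}$; hence $\mathring{M}_{\bf v}=\mathring{M}_{v_{j_1}}\cdots\mathring{M}_{v_{j_\ell}}\subset M_{v_{j_1}}\cdots M_{v_{j_\ell}}\subset\mathrm{sp}(M_{v_1}\cdots M_{v_n})$ after inserting $1$ in the unused slots.

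For (2), I would bootstrap from (1) by approximation. The $*$-algebra $\mathrm{sp}(\{\mathring{M}_{\bf w}:{\bf w}\in\mathcal{W}_{\Gamma_i}\})$ is $\|\cdot\|_2$-dense in $M_{\Gamma_i}$, so by Kaplansky density every $x_i\in M_{\Gamma_i}$ is the SOT-limit of a uniformly bounded net $(x_i^\alpha)$ in that $*$-algebra, and joint continuity of multiplication on bounded sets in SOT (equivalently $\|\cdot\|_2$) gives $x_1^{\alpha_1}\cdots x_n^{\alpha_n}\to x_1\cdots x_n$ in $\|\cdot\|_2$. Expanding each approximating product as a finite linear combination of terms $y_1\cdots y_n$ with $y_i\in\mathring{M}_{{\bf w}_i}$, ${\bf w}_i\in\mathcal{W}_{\Gamma_i}$, choosing reduced expressions ${\bf w}_i=u^{(i)}_1\cdots u^{(i)}_{k_i}$ with $u^{(i)}_j\in\Gamma_i$, and applying (1) to the concatenated sequence, each term lies in $\mathrm{sp}(\{\mathring{M}_{\bf v}:{\bf v}\in\mathcal{W}_{\Gamma_1}\cdots\mathcal{W}_{\Gamma_n}\})$; closing in $\|\cdot\|_2$ gives the inclusion $\subset$. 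The reverse inclusion $\supset$ follows from the $\supset$ direction of (1) applied to the same concatenated sequence: each $\mathring{M}_{\bf v}$ for ${\bf v}={\bf w}_1\cdots{\bf w}_n\in\mathcal{W}_{\Gamma_1}\cdots\mathcal{W}_{\Gamma_n}$ sits in $\mathrm{sp}(M_{u^{(1)}_1}\cdots M_{u^{(n)}_{k_n}})\subset\mathrm{sp}(M_{\Gamma_1}\cdots M_{\Gamma_n})$.

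For (3), I would use the fact that $\mathrm{E}_{M_{\Gamma_0}}$ is the orthogonal projection of $\mathrm{L}^2(M_\Gamma)$ onto $\mathrm{L}^2(M_{\Gamma_0})=\overline{\bigoplus_{{\bf u}\in\mathcal{W}_{\Gamma_0}}\mathring{M}_{\bf u}}^{\|\cdot\|_2}$; mutual orthogonality of the $\mathring{M}_{\bf v}$'s then forces $\mathrm{E}_{M_{\Gamma_0}}$ to act as the identity on $\mathring{M}_{\bf v}$ when ${\bf v}\in\mathcal{W}_{\Gamma_0}$ and by zero otherwise. Applying this to the closed subspace $\overline{\mathrm{sp}}(M_{\Gamma_1}\cdots M_{\Gamma_n})$ described by (2) yields (3) immediately. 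The main obstacle, and the only genuinely nontrivial step, is the combinatorial analysis in (1), specifically the recursive handling of non-reduced vertex sequences via commutation and the deletion condition in $\mathcal{W}_\Gamma$; once (1) is in place, parts (2) and (3) reduce to routine density and orthogonality considerations.
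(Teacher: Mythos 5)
Your proposal is correct and follows essentially the same route as the paper: the same inductive sub-claim that $\mathring{M}_{w_1}\cdots\mathring{M}_{w_k}$ lies in the span of the $\mathring{M}_{\bf u}$ with ${\bf u}\in\mathcal W_{w_1}\cdots\mathcal W_{w_k}$ (handled via the deletion/commutation step and splitting $M_{w_j}=\mathbb C1\oplus\mathring{M}_{w_j}$), the same reduction of both inclusions in (1), and the same density-plus-orthogonality deductions for (2) and (3). The only difference is cosmetic: you invoke Kaplansky density explicitly to justify the approximation of products in (2), a point the paper asserts more tersely as $\|\cdot\|_2$-density of spans of vertex-algebra products.
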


\begin{proof}
We will first prove by induction that for every $n\geq 1$ we have 
\begin{equation}\label{induct}\text{$\mathring{M}_{v_1}\cdots \mathring{M}_{v_n}\subset\text{sp}(\{\mathring{M}_{\bf v}\mid {\bf v}\in\mathcal W_{v_1}\cdots\mathcal W_{v_n}\})$, for every $v_1,\dots,v_n\in\Gamma$.}
\end{equation}
For $n=1$, this is obvious since $v_1\in\mathcal W_{v_1}$. Assume that \eqref{induct} holds for some $n\geq 1$ and let $v_1,\dots,v_n,v_{n+1}\in\Gamma$. If ${\bf v}=v_1\cdots v_{n+1}$ is a reduced word in $\mathcal W_\Gamma$, then $\mathring{M}_{v_1}\cdots\mathring{M}_{v_{n+1}}=\mathring{M}_{\bf v}$ by the definition of $\mathring{M}_{\bf v}$. Since ${\bf v}\in\mathcal W_{v_1}\cdots\mathcal W_{v_{n+1}}$, \eqref{induct} holds in this case. We can therefore assume that ${\bf v}$ is not a reduced word. This means that we can find $1\leq i<j\leq n+1$ such that $v_i=v_j$ and $v_k\in\text{lk}(v_i)$, for every $i<k<j$. Thus, $\mathring{M}_{v_i}=\mathring{M}_{v_j}$ commutes with $\mathring{M}_{v_k}$, for every $i<k<j$.  Using this fact, that $\mathring{M}_{v_i}\mathring{M}_{v_i}\subset M_{v_i}$ and $M_{v_i}=\mathring{M}_{v_i}+\mathbb C1$, we derive that \begin{equation}\label{mathring}\mathring{M}_{v_1}\cdots\mathring{M}_{v_{n+1}}\subset\mathring{M}_{v_1}\cdots\mathring{M}_{v_{i-1}}M_{v_i}\mathring{M}_{v_{i+1}}\cdots\mathring{M}_{v_{j-1}}\mathring{M}_{v_{j+1}}\cdots\mathring{M}_{v_n}\subset S_1+S_2,\end{equation}
with \[\begin{array}{l}S_1= \mathring{M}_{v_1}\cdots\mathring{M}_{v_{i+1}}\cdots\mathring{M}_{v_{j-1}}\mathring{M}_{v_{j+1}}\cdots\mathring{M}_{v_{n+1}},  \\ S_2=\mathring{M}_{v_1}\cdots\mathring{M}_{v_{i-1}}\mathring{M}_{v_{i+1}}\cdots\mathring{M}_{v_{j-1}}\mathring{M}_{v_{j+1}}\cdots\mathring{M}_{v_{n+1}}.\end{array}\]
By the induction hypothesis, we get that $S_1\subset\text{sp}(\{\mathring{M}_{\bf v}\mid {\bf v}\in\mathcal V_1\})$ and $S_2\subset\text{sp}(\{\mathring{M}_{\bf v}\mid {\bf v}\in\mathcal V_2\})$, where $\mathcal V_1=\mathcal W_{v_1}\cdots\mathcal W_{v_{j-1}}\mathcal W_{v_{j+1}}\cdots\mathcal W_{v_{n+1}}$ and $\mathcal V_2=\mathcal W_{v_1}\cdots\mathcal W_{v_{i-1}}\mathcal W_{v_{i+1}}\cdots\mathcal W_{v_{j-1}}\mathcal W_{v_{j+1}}\cdots\mathcal W_{v_{n+1}}.$
Since $\mathcal V_1,\mathcal V_2\subset \mathcal W_{v_1}\cdots\mathcal W_{v_{n+1}}$, we get that $S_1+S_2\subset \text{sp}(\{\mathring{M}_{\bf v}\mid {\bf v}\in\mathcal W_{v_1}\cdots\mathcal W_{v_{n+1}}\})$, which by \eqref{mathring} implies that  $\mathring{M}_{v_1}\cdots\mathring{M}_{v_{n+1}}\subset \text{sp}(\{\mathring{M}_{\bf v}\mid {\bf v}\in\mathcal W_{v_1}\cdots\mathcal W_{v_{n+1}}\})$. This finishes the proof of \eqref{induct}.

We are now ready to prove assertions (1)-(3).

(1) Let $v_1,\dots,v_n\in\Gamma$.
If ${\bf v}\in\mathcal W_{v_1}\cdots\mathcal W_{v_n}\setminus\{e\}$, then it follows by induction on $n$ that ${\bf v}$ admits a reduced decomposition ${\bf v}=v_{i_1}\cdots v_{i_k}$, for some $1\leq k\leq n$ and some $1\leq i_1<i_2<\dots<i_k\leq n$. Therefore, $\mathring{M}_{\bf v}=\mathring M_{v_{i_1}}\cdots\mathring{M}_{v_{i_k}}\subset M_{v_1}\cdots M_{v_n}$, which justifies the inclusion $\supset$ from (1). 

For the opposite inclusion, note that \begin{equation}\label{span}\text{sp}(M_{v_1}\cdots M_{v_n})=\mathbb C1+\text{sp}(\{\mathring{M}_{v_{i_1}}\cdots\mathring{M}_{v_{i_k}}\mid 1\leq k\leq n,1\leq i_1<\dots<i_k\leq n\}).\end{equation}
By \eqref{induct} we get that $\mathring{M}_{v_{i_1}}\cdots \mathring{M}_{v_{i_k}}\subset\text{sp}(\{\mathring{M}_{\bf v}\mid {\bf v}\in\mathcal W_{v_{i_1}}\cdots\mathcal W_{v_{i_k}}\})\subset\text{sp}(\{\mathring{M}_{\bf v}\mid {\bf v}\in\mathcal W_{v_1}\cdots\mathcal W_{v_n}\})$. In combination with \eqref{span}, the inclusion $\subset$ from (1) follows.

(2) Let ${\bf v}\in\mathcal W_{\Gamma_1}\cdots\mathcal W_{\Gamma_n}$. Write ${\bf v}=v_{1,1}\cdots v_{1,k_1}\cdots v_{n,1}\cdots v_{n,k_n}$, where $v_{i,1},\dots,v_{i,k_i}\in\Gamma_i$, for every $1\leq i\leq n$. The proof of (1) gives that $\mathring{M}_{\bf v}\subset M_{v_{1,1}}\cdots M_{v_{1,k_1}}\cdots M_{v_{n,1}}\cdots M_{v_{n,k_n}}$. Since $M_{v_{i,1}}\cdots M_{v_{i,k_i}}\subset M_{\Gamma_i}$, for every $1\leq i\leq n$, we get  that $\mathring{M}_{\bf v}\subset M_{\Gamma_1}\cdots M_{\Gamma_n}$. This implies the inclusion $\supset$ from (2). 

To justify the opposite inclusion, for $1\leq i\leq n$,  let $v_{i,1},\dots,v_{i,k_i}\in \Gamma_i$. 
By  (1), we have $$M_{v_{1,1}}\cdots M_{v_{1,k_1}}\cdots M_{v_{n,1}}\cdots M_{v_{n,k_n}}\subset \text{sp}(\{\mathring{M}_{\bf v}\mid {\bf v}\in\mathcal W_{v_{1,1}}\cdots\mathcal W_{v_{1,k_1}}\cdots\mathcal W_{v_{n,1}}\cdots\mathcal W_{v_{n,k_n}}\}).$$ Since $\mathcal W_{v_{i,1}}\cdots\mathcal W_{v_{i,k_i}}\subset\mathcal W_{\Gamma_i}$, for every $1\leq i\leq n$, we derive that $$M_{v_{1,1}}\cdots M_{v_{1,k_1}}\cdots M_{v_{n,1}}\cdots M_{v_{n,k_n}}\subset \text{sp}(\{{\mathring M}_{\bf v}\mid {\bf v}\in \mathcal W_{\Gamma_1}\cdots\mathcal W_{\Gamma_n}\}).$$

Since the span of $M_{v_{1,1}}\cdots M_{v_{1,k_1}}\cdots M_{v_{n,1}}\cdots M_{v_{n,k_n}}$, ranging over all $v_{i,1},\dots,v_{i,k_i}\in\Gamma_i$, for every $1\leq i\leq n$, is $\|\cdot\|_2$-dense in $\text{sp}(M_{\Gamma_1}\cdots M_{\Gamma_n})$, the inclusion $\subset$ from (2) follows.

(3) This assertion follows readily by using (2) and the fact that if ${\bf v}\in\mathcal W_\Gamma$, then $\text{E}_{M_{\Gamma_0}}(\mathring{M}_{\bf v})$ is equal to $\mathring{M}_{\bf v}$, if ${\bf v}\in\mathcal W_{\Gamma_0}$, and to $\{0\}$, if ${\bf v}\not\in\mathcal W_{\Gamma_0}$.
\end{proof}

\begin{lemma}\label{basic}
Assume the above setting and additionally that $M_v$ is diffuse, for every $v\in\Gamma$. Let $\Lambda, \Lambda'\subset\Gamma$ be full  subgraphs.
Then the following hold: 
\begin{enumerate}
\item  $M_\Lambda'\cap M_\Gamma=\mathcal Z(M_\Lambda)\overline{\otimes}M_{\Lambda^\perp}$ and $\mathcal Z(M_\Lambda)=\overline{\otimes}_{v\in C}\mathcal Z(M_v)$, where $C$ is  the maximal clique factor of $\Lambda$. 
Moreover, if $M_v$ is a II$_1$ factor, for every $v\in\Lambda$, then $M_\Lambda$ is a II$_1$ factor and $M_\Lambda'\cap M_\Gamma=M_{\Lambda^\perp}$. 
\item $M_\Lambda\prec_{M_\Gamma}M_{\Lambda'}$ if and only if $\Lambda\subset\Lambda'$.
\item If $\Lambda$ is not a clique, then $M_\Lambda$ has no amenable direct summand.
\end{enumerate}
\end{lemma}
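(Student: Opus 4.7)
My plan is to prove all three assertions using the orthogonal decomposition $\mathrm{L}^2(M_\Gamma)=\bigoplus_{\mathbf{v}\in\mathcal W_\Gamma}\overline{\mathring M_{\mathbf v}}^{\|\cdot\|_2}$ coming from the graph-product normal form, Lemma~\ref{intersections}, and the diffuseness of the vertex algebras.

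For part~(1), the inclusion $\mathcal Z(M_\Lambda)\overline\otimes M_{\Lambda^\perp}\subset M_\Lambda'\cap M_\Gamma$ is immediate: every vertex of $\Lambda^\perp$ is adjacent to every vertex of $\Lambda$, so $M_{\Lambda\cup\Lambda^\perp}=M_\Lambda\overline\otimes M_{\Lambda^\perp}$ inside $M_\Gamma$, and $\mathcal Z(M_\Lambda)$ clearly commutes with $M_\Lambda$. For the reverse inclusion, decompose $\Lambda=C\circ\Lambda_0$ with $C$ the maximal clique factor, so $M_\Lambda=M_C\overline\otimes M_{\Lambda_0}$ and $\mathcal Z(M_C)=\overline\otimes_{v\in C}\mathcal Z(M_v)$ by the clique tensor structure. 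The key intermediate step is to show that $M_{\Lambda_0}$ is a factor: for $z\in\mathcal Z(M_{\Lambda_0})$ expanded as $z=\sum_{\mathbf w\in\mathcal W_{\Lambda_0}}z_{\mathbf w}$, the equation $uzu^*=z$ for Haar unitaries $u\in M_v$ (one for each $v\in\Lambda_0$), combined with a case analysis of how conjugation acts on each $\overline{\mathring M_{\mathbf w}}$ via the normal form theorem, forces $z_{\mathbf w}=0$ for every reduced $\mathbf w\ne e$; this uses that the emptiness of the maximal clique factor of $\Lambda_0$ guarantees some letter of $\mathbf w$ fails to commute with some $v\in\Lambda_0$, so conjugation maps $z_{\mathbf w}$ orthogonally off itself. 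Applying the same argument to $x\in M_\Lambda'\cap M_\Gamma$, using Haar unitaries in each $M_v$ for $v\in\Lambda$, localizes $x\in M_C\overline\otimes M_{\Lambda^\perp}$; since $x$ additionally commutes with $M_C\subset M_\Lambda$, we conclude $x\in M_C'\cap(M_C\overline\otimes M_{\Lambda^\perp})=\mathcal Z(M_C)\overline\otimes M_{\Lambda^\perp}=\mathcal Z(M_\Lambda)\overline\otimes M_{\Lambda^\perp}$. The ``moreover'' part is then immediate: when every $M_v$ is a II$_1$ factor, $\mathcal Z(M_v)=\mathbb C$, so $\mathcal Z(M_\Lambda)=\mathbb C$ and $M_\Lambda'\cap M_\Gamma=M_{\Lambda^\perp}$.

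For part~(2), the ``if'' direction is trivial. For ``only if'', pick $v\in\Lambda\setminus\Lambda'$ and choose $u_n\in\mathcal U(M_v)$ with $u_n\to 0$ weakly (using diffuseness). I will verify condition~(2) of Theorem~\ref{intertwine}: $\|\mathrm E_{M_{\Lambda'}}(x^*u_ny)\|_2\to 0$ for all $x,y\in M_\Gamma$. By density one reduces to $x,y$ being basic reduced products of elements in the various $M_w$. Splitting $u_n=\tau(u_n)\cdot 1+(u_n-\tau(u_n))$, the scalar piece contributes a term vanishing with $\tau(u_n)$. For the traceless piece, Lemma~\ref{intersections}(3) implies that $\mathrm E_{M_{\Lambda'}}$ only retains contributions indexed by words in $\mathcal W_{\Lambda'}$; since $v\notin\Lambda'$, a normal-form reduction analysis shows that surviving terms arise only from cancellations that eliminate the $v$-letter through an adjacent occurrence of $v$, and each such cancellation produces a scalar factor of the form $\langle u_n,\cdot\rangle$ tending to zero.

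Part~(3) follows readily from~(1). Since $\Lambda$ is not a clique, $C\subsetneq\Lambda$ and $\Lambda_0=\Lambda\setminus C$ has at least two vertices with empty maximal clique factor, hence there exist non-adjacent $v,w\in\Lambda_0$. By~(1), $M_\Lambda=M_C\overline\otimes M_{\Lambda_0}$ with $M_{\Lambda_0}$ a factor, so $\mathcal Z(M_\Lambda)=\mathcal Z(M_C)\otimes 1$ and any nonzero direct summand has the form $pM_C\overline\otimes M_{\Lambda_0}$ for some projection $p\in\mathcal Z(M_C)$. Amenability of this summand would pass via the trace-expectation on the first factor to $M_{\Lambda_0}$, and then via the conditional expectation from the full-subgraph inclusion $\{v,w\}\subset\Lambda_0$ to $M_v*M_w$, contradicting the nonamenability of the free product of two diffuse tracial von Neumann algebras.

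The main obstacle is the combinatorial bookkeeping in parts~(1) and~(2): identifying exactly which components $x_{\mathbf v}$ are forced to vanish by commutation with Haar unitaries in $M_v$, and tracking surviving contributions in $\mathrm E_{M_{\Lambda'}}(x^*u_ny)$ after normal-form reductions. Both require careful use of the graph-product normal form theorem together with the action of $\mathring M_v$ on the word components $\overline{\mathring M_{\mathbf v}}$.
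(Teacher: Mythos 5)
Your proof is correct in substance, but it takes a genuinely different route from the paper, which handles this lemma mostly by citation: part (1) is quoted from \cite[Corollaries 3.28 and 3.29]{CF17}; part (2) is reduced to the fact that $M_v\prec_{M_\Gamma}M_{\Lambda'}$ forces $v\in\Lambda'$, which the paper proves by writing $M_\Gamma=M_{\Gamma\setminus\{v\}}*_{M_{\mathrm{lk}(v)}}M_{\mathrm{st}(v)}$, noting $M_v\nprec_{M_{\mathrm{st}(v)}}M_{\mathrm{lk}(v)}$ by diffuseness, and invoking \cite[Theorem 1.1]{IPP08}; and part (3) is the one-line remark that non-adjacent $w,w'\in\Lambda$ give a nonamenable II$_1$ subfactor $M_w*M_{w'}\subset M_\Lambda$, with no appeal to (1). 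You instead re-prove (1) directly from the word decomposition, using weakly null unitaries in each diffuse $M_v$ to localize relative commutants (in effect re-deriving the cited corollaries of \cite{CF17}), and you prove (2) by exhibiting a weakly null net $(u_n)\subset\mathcal U(M_v)\subset\mathcal U(M_\Lambda)$ for some $v\in\Lambda\setminus\Lambda'$ and checking Popa's criterion; the estimate you need is exactly what Proposition \ref{expectation} packages (applied with $\Gamma_1=\{v\}$, $\Gamma_2=\Lambda'$, the only surviving factor being $\tau(a_l^*u_nb_l)\to 0$), so your route is fully compatible with the paper's toolkit and buys self-containedness at the cost of the normal-form bookkeeping you flag, while the paper buys brevity by outsourcing to \cite{CF17}, \cite{IPP08} and \cite{BCC24}. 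Two cautions. First, in (1) your intermediate claim that conjugation by a Haar unitary sends $z_{\mathbf w}$ ``orthogonally off itself'' is not literally true word-by-word: if the reduced word $\mathbf{w}$ starts or ends with the letter $v$, the traceless parts of the boundary products return a nonzero component to $\mathring{M}_{\mathbf w}$. The correct formulation, which is evidently what you intend, is that $\mathrm{L}^2(M_\Gamma)\ominus\mathrm{L}^2(M_{\mathrm{st}(v)})$ is a multiple of the coarse $M_v$-bimodule, in which a weakly null net of unitaries annihilates any conjugation-invariant vector; with that phrasing the localization $x\in M_{C\cup\Lambda^\perp}$ and the factoriality of $M_{\Lambda_0}$ go through as you describe. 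Second, your part (3) depends on part (1), whereas the paper's does not; the direct argument (a nonzero amenable central cut of $M_\Lambda$ would contain an isomorphic copy of the nonamenable factor $M_w*M_{w'}$) avoids that dependency.
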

\begin{proof}
For part (1), see \cite[Corollaries 3.28 and 3.29]{CF17}. Part (2) is a consequence of the following fact: if $v\in\Gamma$ and $M_v\prec_{M_\Gamma}M_{\Lambda'}$, then $v\in\Lambda'$. For a proof of this fact, see \cite[Proposition~5.9]{BCC24}. This fact can be proved directly as follows. Assume that $v\not\in\Lambda'$. Since $M_v$ is diffuse and $M_{\text{st}(v)}=M_{\text{lk}(v)}\overline{\otimes}M_v$,  we clearly have that $M_v\nprec_{M_{\text{st}(v)}}M_{\text{lk}(v)}$. Since $M_\Gamma=M_{\Gamma\setminus\{v\}}*_{M_{\text{lk}(v)}}M_{\text{st}(v)}$, \cite[Theorem 1.1]{IPP08} implies that $M_v\nprec_{M_\Gamma}M_{\Gamma\setminus\{v\}}$ and hence $M_v\nprec_{M_\Gamma}M_{\Lambda'}$.
For part (3), note that as $\Lambda$ is not a clique, it contains two vertices $w,w'$ which are not adjacent. Hence, $M_w*M_{w'}\subset M_\Lambda$. Since $M_w$ and $M_{w'}$ are diffuse, $M_w*M_{w'}$ is a nonamenable II$_1$ factor and thus $M_\Lambda$ has no amenable direct summand.
\end{proof}

Note that part (2) is also a consequence of a more general result (Proposition~\ref{intertwiners}) that we prove in the next section.

For further reference, we also recall the following key calculation of conditional expectations in graph product von Neumann algebras.
For a reduced word ${\bf v}=v_1\cdots v_n\in\mathcal W_\Gamma$,  we define, abusing notation, $\text{lk}({\bf v})=\bigcap_{i=1}^n\text{lk}(v_i).$

\begin{proposition}
\label{expectation}  
Assume the above setting. Let $\Gamma_1,\Gamma_2\subset\Gamma$ be full subgraphs,  ${\bf u},{\bf v}\in\mathcal W_\Gamma$, $a\in\mathring{M}_{\bf u}$ and $b\in\mathring{M}_{\bf v}$.
Consider reduced decompositions ${\bf u}={\bf u}_l{\bf u}_c{\bf u}_r$ and ${\bf v}={\bf v}_l{\bf v}_c{\bf v}_r$, where ${\bf u}_l,{\bf v}_l\in\mathcal W_{\Gamma_1}$, ${\bf u}_r,{\bf v}_r\in\mathcal W_{\Gamma_2}$, and ${\bf u}_c,{\bf v}_c$ do not start with a letter from $\Gamma_1$ and do not end with a letter from $\Gamma_2$.
Write $a=a_la_ca_r$, $b=b_lb_cb_r$, where $a_l\in\mathring{M}_{{\bf u}_l},a_c\in\mathring{M}_{{\bf u}_c},a_r\in\mathring{M}_{{\bf u}_r}$ and $b_l\in\mathring{M}_{{\bf v}_l},b_c\in\mathring{M}_{{\bf v}_c}, b_r\in\mathring{M}_{{\bf v}_r}$.

Then for every $x\in M_\Gamma$ we have that $$\emph{E}_{M_{\Gamma_2}}(a^*\emph{E}_{M_{\Gamma_1}}(x)b)=\tau(a_c^*b_c)a_r^*\emph{E}_{M_{\Gamma_1\cap\Gamma_2\cap\emph{lk}({\bf u}_c)}}(a_l^*xb_l)b_r.$$

 \end{proposition}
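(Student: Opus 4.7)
The plan is to reduce, by bilinearity and $\|\cdot\|_2$-continuity of multiplication and of the conditional expectations, to the case $x=\xi\in\mathring{M}_{\mathbf{w}}$ for a single element $\mathbf{w}\in\mathcal{W}_\Gamma$. When $\mathbf{w}\notin\mathcal{W}_{\Gamma_1}$, one has $\text{E}_{M_{\Gamma_1}}(\xi)=0$, so the left-hand side vanishes; since $a_l,b_l\in M_{\Gamma_1}$, one also has $\text{E}_{M_{\Gamma_1}}(a_l^*\xi b_l)=a_l^*\text{E}_{M_{\Gamma_1}}(\xi)b_l=0$, and the tower identity $\text{E}_{M_{\Gamma_1\cap\Gamma_2\cap\lk(\mathbf{u}_c)}}=\text{E}_{M_{\Gamma_1\cap\Gamma_2\cap\lk(\mathbf{u}_c)}}\circ\text{E}_{M_{\Gamma_1}}$ makes the right-hand side vanish as well. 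I may therefore assume $\mathbf{w}\in\mathcal{W}_{\Gamma_1}$, and hence $\text{E}_{M_{\Gamma_1}}(\xi)=\xi$.

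Since $a_r^*,b_r\in M_{\Gamma_2}$, they can be pulled out of $\text{E}_{M_{\Gamma_2}}$, and on setting $z:=a_l^*\xi b_l\in M_{\Gamma_1}$ the identity boils down to the core claim
\begin{equation*}
\text{E}_{M_{\Gamma_2}}(a_c^*\,z\,b_c)=\tau(a_c^*b_c)\,\text{E}_{M_{\Gamma_1\cap\Gamma_2\cap\lk(\mathbf{u}_c)}}(z),
\end{equation*}
which I would establish for every $z\in M_{\Gamma_1}$. By linearity and density I may further take $z\in\mathring{M}_{\mathbf{w}'}$ with $\mathbf{w}'\in\mathcal{W}_{\Gamma_1}$, and test the resulting identity of elements in $M_{\Gamma_2}$ against $c\in\mathring{M}_{\mathbf{w}''}$ for $\mathbf{w}''\in\mathcal{W}_{\Gamma_2}$; the task thus becomes the evaluation of the single trace $\tau(c^*a_c^*zb_c)$.

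By Lemma~\ref{intersections}(1), the product $c^*a_c^*zb_c\in\mathring{M}_{(\mathbf{w}'')^{-1}}\cdot\mathring{M}_{\mathbf{u}_c^{-1}}\cdot\mathring{M}_{\mathbf{w}'}\cdot\mathring{M}_{\mathbf{v}_c}$ expands as a sum of elements in various $\mathring{M}_{\mathbf{t}}$, $\mathbf{t}\in\mathcal{W}_\Gamma$, and its trace equals the coefficient of $\mathbf{t}=e$. The standing hypotheses that $\mathbf{u}_c,\mathbf{v}_c$ do not start with a letter of $\Gamma_1$ and do not end with a letter of $\Gamma_2$ imply that both words $\mathbf{u}_c\mathbf{w}''$ and $\mathbf{w}'\mathbf{v}_c$ are already reduced in $\mathcal{W}_\Gamma$ (no interface cancellation is possible, since $\Gamma_2$-letters of $\mathbf{w}''$ cannot match the non-$\Gamma_2$ last letter of $\mathbf{u}_c$, and similarly for $\mathbf{w}'\mathbf{v}_c$). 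Combined with the parabolic intersection identity $\mathcal{W}_{\Gamma_1}\cap\mathcal{W}_{\Gamma_2}=\mathcal{W}_{\Gamma_1\cap\Gamma_2}$, an inspection of the possible reductions shows that the $e$-coefficient vanishes unless $\mathbf{u}_c=\mathbf{v}_c$ and $\mathbf{w}'=\mathbf{w}''\in\mathcal{W}_{\Gamma_1\cap\Gamma_2\cap\lk(\mathbf{u}_c)}$; in that surviving case, $\mathbf{w}'$ commutes with the entire $\mathbf{u}_c$-block, so $c^*a_c^*zb_c=c^*z\cdot a_c^*b_c$, whose trace factors as $\tau(c^*z)\,\tau(a_c^*b_c)$. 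Summing these contributions over $\mathbf{w}',\mathbf{w}''$ then recovers the right-hand side of the core identity.

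The main obstacle is the combinatorial bookkeeping in the previous paragraph: a rigorous verification that no other $\mathbf{t}\in\mathcal{W}_\Gamma$-block of $c^*a_c^*zb_c$ contributes to the $e$-coefficient. I expect to handle this by induction on $|\mathbf{u}_c|+|\mathbf{v}_c|$, peeling off one outer letter of $\mathbf{u}_c$ or $\mathbf{v}_c$ at a time and iterating the elementary identity $\text{E}_{M_v}(xy)=\tau(xy)$ for $x,y\in\mathring{M}_v$. The base case $\mathbf{u}_c=\mathbf{v}_c=e$ reduces to the standard fact that $\text{E}_{M_{\Gamma_2}}|_{M_{\Gamma_1}}=\text{E}_{M_{\Gamma_1\cap\Gamma_2}}$, itself a direct consequence of the parabolic intersection identity.
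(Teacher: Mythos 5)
Your reductions are all correct: passing to $x\in\mathring{M}_{\bf w}$ by linearity and $\|\cdot\|_2$-continuity, disposing of the case ${\bf w}\notin\mathcal W_{\Gamma_1}$ via the tower property, pulling $a_r^*,b_r$ out of $\text{E}_{M_{\Gamma_2}}$, and testing the resulting core claim $\text{E}_{M_{\Gamma_2}}(a_c^*zb_c)=\tau(a_c^*b_c)\,\text{E}_{M_{\Gamma_1\cap\Gamma_2\cap\lk({\bf u}_c)}}(z)$ against $c\in\mathring{M}_{{\bf w}''}$, ${\bf w}''\in\mathcal W_{\Gamma_2}$. But these steps are the routine part; the entire content of the proposition is concentrated in the one statement you do not prove, namely that $\tau(c^*a_c^*zb_c)$ vanishes unless ${\bf u}_c={\bf v}_c$ and ${\bf w}'={\bf w}''\in\mathcal W_{\Gamma_1\cap\Gamma_2\cap\lk({\bf u}_c)}$, with the value $\tau(c^*z)\tau(a_c^*b_c)$ in the surviving case. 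You assert this after ``an inspection of the possible reductions'' and then explicitly defer the verification, so the argument is incomplete exactly where the difficulty lies. Your ``no interface cancellation'' observations only control the junctions between consecutive blocks of $({\bf w}'')^{-1}{\bf u}_c^{-1}{\bf w}'{\bf v}_c$; they do not rule out the relevant phenomena, namely cancellations between non-adjacent blocks after commutation moves (a letter deep inside ${\bf w}''$ can only meet a letter of ${\bf w}'$ if it commutes with all of ${\bf u}_c$ — this is precisely where the $\lk({\bf u}_c)$ condition must enter), nor do they account for the fact that the expansion into $\mathring{M}_{\bf t}$-components is not purely group-theoretic: each time two letters over the same vertex meet one must split $\mathring{M}_v\mathring{M}_v\subset\mathbb C1\oplus\mathring{M}_v$ and follow both branches, and the ``$e$-coefficient'' is the outcome of iterating such splittings. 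Controlling which iterated splittings can reach the trivial word is the actual theorem. Moreover the elementary identity you propose to iterate, $\text{E}_{M_v}(xy)=\tau(xy)$ for $x,y\in\mathring{M}_v$, is false as written (here $xy\in M_v$, so $\text{E}_{M_v}(xy)=xy$); what is needed is the decomposition $xy=\tau(xy)1+(xy-\tau(xy)1)$ together with careful bookkeeping of where each resulting term can project.

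For context, the paper itself does not prove this proposition: it quotes it from \cite{BC24} (Coxeter vertex algebras), \cite{CdSHJKEN24} (general case), with the present formulation taken from \cite{BCC24}. The proofs in those references consist precisely of the word-combinatorial induction that you label ``the main obstacle'' and leave as an expectation. As it stands, your proposal correctly reduces the proposition to that combinatorial core, but does not prove it.
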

 
 Proposition \ref{expectation} was proved in \cite[Lemma 3.4]{BC24} when $M_v=\text{L}(\mathbb Z/2\mathbb Z)$ for every $v\in\Gamma$. It was established in general in  \cite[Lemma 3.17]{CdSHJKEN24}, where it is stated in an equivalent form. The above  formulation of Proposition \ref{expectation} is given in \cite[Proposition 5.1]{BCC24}.
 
 In the next section, we establish several intertwining results for graph product von Neumann algebras. We derive a first such result here as an immediate consequence of Proposition \ref{expectation}.
 
 \begin{lemma}\label{descend}
Assume the above setting.
Let $\Gamma_1,\Gamma_2\subset\Gamma$ be full subgraphs such that $\Gamma_2\subset\Gamma_1$. 
Let $P\subset pM_{\Gamma_1}p$ be a von Neumann subalgebra. Assume that $P\prec_{M_\Gamma}^sM_{\Gamma_2}$.

Then $P\prec_{M_{\Gamma_1}}^sM_{\Gamma_2}$.
\end{lemma}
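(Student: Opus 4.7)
The plan is to prove the contrapositive: assume there is a nonzero projection $p'\in P'\cap pM_{\Gamma_1}p$ with $Pp'\nprec_{M_{\Gamma_1}}M_{\Gamma_2}$, and deduce $Pp'\nprec_{M_\Gamma}M_{\Gamma_2}$. Since $p'$ also lies in $P'\cap pM_\Gamma p$, this contradicts the hypothesis $P\prec^s_{M_\Gamma}M_{\Gamma_2}$. By Proposition~\ref{results}(5) applied inside $M_{\Gamma_1}$, we get a net $(u_n)\subset\mathcal U(Pp')$ with $\|\text{E}_{M_{\Gamma_2}}(x^*u_ny)\|_2\to 0$ for all $x,y\in p'M_{\Gamma_1}$. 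The goal is to upgrade this convergence to all $x,y\in p'M_\Gamma$ and invoke Theorem~\ref{intertwine}.

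The crucial tool is Proposition~\ref{expectation}, which reduces the computation of a conditional expectation against $u_n$ inside $M_\Gamma$ to the same computation inside $M_{\Gamma_1}$. Concretely, for ${\bf u},{\bf v}\in\mathcal W_\Gamma$ and $\xi\in\mathring{M}_{\bf u}$, $\eta\in\mathring{M}_{\bf v}$ with reduced decompositions $\xi=\xi_l\xi_c\xi_r$ and $\eta=\eta_l\eta_c\eta_r$ relative to the pair $(\Gamma_1,\Gamma_2)$, the fact that $u_n\in M_{\Gamma_1}$ gives $\text{E}_{M_{\Gamma_1}}(u_n)=u_n$. Combined with the identity $\Gamma_1\cap\Gamma_2=\Gamma_2$ (since $\Gamma_2\subset\Gamma_1$), the proposition yields
$$\text{E}_{M_{\Gamma_2}}(\xi^*u_n\eta)=\tau(\xi_c^*\eta_c)\,\xi_r^*\,\text{E}_{M_{\Gamma_2\cap\text{lk}({\bf u}_c)}}(\xi_l^*u_n\eta_l)\,\eta_r.$$
Since $M_{\Gamma_2\cap\text{lk}({\bf u}_c)}\subset M_{\Gamma_2}$, the tower property and contractivity of conditional expectations give
$$\|\text{E}_{M_{\Gamma_2}}(\xi^*u_n\eta)\|_2\le |\tau(\xi_c^*\eta_c)|\,\|\xi_r\|_\infty\|\eta_r\|_\infty\,\|\text{E}_{M_{\Gamma_2}}(\xi_l^*u_n\eta_l)\|_2,$$
and the last factor tends to $0$ because $\xi_l,\eta_l\in M_{\Gamma_1}$.

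By linearity the convergence $\|\text{E}_{M_{\Gamma_2}}(x^*u_ny)\|_2\to 0$ holds for $x,y$ in the $*$-algebraic span $\mathcal A$ of $\{\mathring{M}_{\bf v}\}_{{\bf v}\in\mathcal W_\Gamma}$, which consists of bounded elements of $M_\Gamma$ and is $\|\cdot\|_2$-dense in $M_\Gamma$. A standard Kaplansky density approximation then extends this to arbitrary $x,y\in p'M_\Gamma$: given $x\in p'M_\Gamma$, pick $\tilde x\in\mathcal A$ with $\|\tilde x\|_\infty\le \|x\|_\infty$ and $\|\tilde x-x\|_2$ small, similarly for $y$, and use $\|u_n\|_\infty\le 1$ together with the bound $\|\text{E}_{M_{\Gamma_2}}(a^*u_nb)\|_2\le\|a\|_2\|b\|_\infty$ to control the approximation error uniformly in $n$. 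This yields $Pp'\nprec_{M_\Gamma}M_{\Gamma_2}$, the desired contradiction.

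The main technical point is ensuring the formula from Proposition~\ref{expectation} cleanly reduces matters to $M_{\Gamma_1}$; this is where $\Gamma_2\subset\Gamma_1$ enters via $\Gamma_1\cap\Gamma_2=\Gamma_2$. The final Kaplansky density step is routine for Popa's intertwining theory.
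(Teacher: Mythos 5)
Your proof is correct and follows essentially the same route as the paper: argue by contradiction, take a net $(u_n)\subset\mathcal U(Pp')$ witnessing $Pp'\nprec_{M_{\Gamma_1}}M_{\Gamma_2}$, and use Proposition~\ref{expectation} (with $\mathrm{E}_{M_{\Gamma_1}}(u_n)=u_n$ and $\Gamma_1\cap\Gamma_2=\Gamma_2$) to upgrade the mixing-type convergence from $M_{\Gamma_1}$ to all of $M_\Gamma$, contradicting $P\prec^s_{M_\Gamma}M_{\Gamma_2}$. The only cosmetic difference is your explicit Kaplansky-density step, which is slight overkill: since $\|u_n\|\leq 1$, plain $\|\cdot\|_2$-density of the span of the $\mathring{M}_{\bf v}$'s already suffices, as in the paper.
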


\begin{proof}
Assume by contradiction that the conclusion fails. Then we can find a nonzero projection $p'\in P'\cap pM_{\Gamma_1}p$ such that $Pp'\nprec_{M_{\Gamma_1}}M_{\Gamma_2}$.  Hence we can find a net $(u_n)\subset\mathcal U(Pp')$ such that $\|\text{E}_{M_{\Gamma_2}}(a^*u_nb)\|_2\rightarrow 0$, for every $a,b\in M_{\Gamma_1}$.  We claim that $\|\text{E}_{M_{\Gamma_2}}(a^*u_nb)\|_2\rightarrow 0$, for every $a,b\in M_{\Gamma}$. Assuming the claim, we get that $Pp'\nprec_{M_\Gamma}M_{\Gamma_2}$, which contradicts the fact that  $P\prec_{M_\Gamma}^sM_{\Gamma_2}$.

In order to prove the claim, we may assume that $a\in\mathring{M}_{\bf u}$ and $b\in\mathring{M}_{\bf v}$, for some ${\bf u},{\bf v}\in\mathcal W_\Gamma$. Proposition \ref{expectation}   gives $a_l,b_l\in M_{\Gamma_1}$ and $C>0$ such that $\|\text{E}_{M_{\Gamma_2}}(a^*\text{E}_{M_{\Gamma_1}}(x)b)\|_2\leq C\|\text{E}_{M_{\Gamma_2}}(a_l^*xb_l)\|_2$, for every $x\in M_\Gamma$. Since $a_l,b_l\in M_{\Gamma_1}$ we get that $\|\text{E}_{M_{\Gamma_2}}(a_l^*u_nb_l)\|_2\rightarrow 0$. Since all $u_n$ belong to $Pp'$, which is contained in $ M_{\Gamma_1}$, we derive that  $\|\text{E}_{M_{\Gamma_2}}(a^*u_nb)\|_2=\|\text{E}_{M_{\Gamma_2}}(a^*\text{E}_{M_{\Gamma_1}}(u_n)b)\|_2\rightarrow 0$, which proves the claim.
\end{proof}

\section{Intertwining results for graph product von Neumann algebras and hulls}\label{intertwining_results}

This section is devoted to several intertwining results for graph product von Neumann algebras, and introduces the central notion of the \emph{hull} of a subalgebra in a graph product von Neumann algebra. Several of the results obtained in this section develop and expand results from \cite[Section~5]{BCC24} in a more systematic way. 
We first introduce some notation that we use throughout this section. 

\begin{notation}\label{setup}
Let $\Gamma$  be a finite simple graph and $(M_v,\tau_v)_{v\in\Gamma}$ a collection of tracial von Neumann algebras. Let $M_\Gamma=*_{v,\Gamma}M_v$ the associated graph product von Neumann algebra and $p\in M_\Gamma$ a projection. 
\end{notation}

 \subsection{A general intertwining result}
We start with the following crucial result from which we will deduce a number of corollaries.

 \begin{proposition}\label{intertwiners}
Assume the setting from Notation~\ref{setup}. Let $\Gamma_1,\Gamma_2\subset\Gamma$ be full subgraphs, and assume that $p\in M_{\Gamma_1}$. Let $P\subset pM_{\Gamma_1} p$ be a von Neumann subalgebra such that $P\nprec_{M_{\Gamma_1}}M_{\Gamma_1'}$, for every full subgraph $\Gamma_1'\subsetneq\Gamma_1$. 
\begin{enumerate}
\item If $P\prec_{M_\Gamma} M_{\Gamma_2}$, then $\Gamma_1\subset\Gamma_2$. 
\item Additionally, if $v\in pM_\Gamma\setminus\{0\}$ is such that $Pv\subset \sum_{i=1}^k v_iM_{\Gamma_2}$ for some $v_1,\dots,v_k\in M_\Gamma$, then $v$ belongs to the $\|\cdot\|_2$-closure of the linear span of $M_{\Gamma_1^\perp}M_{\Gamma_2}$.
\end{enumerate}
\end{proposition}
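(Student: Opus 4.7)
The plan is to combine the hypothesis that $P$ does not intertwine into any proper $M_{\Gamma_1'}$ ($\Gamma_1'\subsetneq\Gamma_1$) with the explicit conditional expectation formula in Proposition~\ref{expectation} and the intertwiner-location Corollary~\ref{space_of_intertwiners}. Since there are only finitely many full subgraphs $\Gamma_1'\subsetneq\Gamma_1$, Proposition~\ref{results}(5) applied inside $M_{\Gamma_1}$ produces a net $(u_n)\subset\mathcal U(P)$ such that $\|\mathrm E_{M_{\Gamma_1'}}(x^*u_ny)\|_2\to 0$ for every $x,y\in pM_{\Gamma_1}$ and every such $\Gamma_1'$.

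The core calculation runs as follows. For basic elements $a\in\mathring M_{\mathbf u}$ and $b\in\mathring M_{\mathbf v}$, with canonical decompositions $\mathbf u=\mathbf u_l\mathbf u_c\mathbf u_r$ and $\mathbf v=\mathbf v_l\mathbf v_c\mathbf v_r$ relative to $(\Gamma_1,\Gamma_2)$, the identity $\mathrm E_{M_{\Gamma_1}}(u_n)=u_n$ combined with Proposition~\ref{expectation} yields
\[
\mathrm E_{M_{\Gamma_2}}(a^*u_nb)=\tau(a_c^*b_c)\,a_r^*\,\mathrm E_{M_\Delta}(a_l^*u_nb_l)\,b_r,\qquad \Delta=\Gamma_1\cap\Gamma_2\cap\lk(\mathbf u_c).
\]
Either $\Delta\subsetneq\Gamma_1$, in which case the inner expectation vanishes in $\|\cdot\|_2$ along the net (after absorbing $p$ into $a_l,b_l$ via $u_n=pu_np$); or $\Delta=\Gamma_1$, which forces $\Gamma_1\subset\Gamma_2$ together with $\mathbf u_c\in\mathcal W_{\Gamma_1^\perp}$, and then the commutation $[\mathcal W_{\Gamma_1^\perp},\mathcal W_{\Gamma_1}]=1$ combined with $\mathcal W_{\Gamma_1}\subset\mathcal W_{\Gamma_2}$ rewrites $\mathbf u=\mathbf u_c(\mathbf u_l\mathbf u_r)\in\mathcal W_{\Gamma_1^\perp}\mathcal W_{\Gamma_2}$.

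For part~(1), assume for contradiction that $\Gamma_1\not\subset\Gamma_2$. Then the second alternative above never occurs, so $\|\mathrm E_{M_{\Gamma_2}}(a^*u_nb)\|_2\to 0$ for all basic $a,b$, and bilinearly---using the bound $\|\mathrm E_{M_{\Gamma_2}}(y^*u_nb)\|_2\le\|y\|_2\|b\|_\infty$ to pass from finite linear combinations to arbitrary elements---for all $a,b\in M_\Gamma$. Since $pL^2(M_\Gamma)$ is a $P$-$M_{\Gamma_2}$-sub-bimodule with dense bounded part $pM_\Gamma$, Corollary~\ref{space_of_intertwiners} forces every intertwiner $v\in pM_\Gamma$ satisfying $Pv\subset\sum v_iM_{\Gamma_2}$ to lie in $L^2(M_\Gamma)\ominus pL^2(M_\Gamma)$, hence $v=0$. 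But Theorem~\ref{intertwine} applied to $P\prec_{M_\Gamma}M_{\Gamma_2}$ yields such a nonzero $v$, a contradiction.

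For part~(2), (1) gives $\Gamma_1\subset\Gamma_2$. A short combinatorial check (using $[\mathcal W_{\Gamma_1^\perp},\mathcal W_{\Gamma_1}]=1$ and $\mathcal W_{\Gamma_1}\subset\mathcal W_{\Gamma_2}$) shows that the ``bad'' set $\mathcal W_\Gamma\setminus\mathcal W_{\Gamma_1^\perp}\mathcal W_{\Gamma_2}$ is invariant under left multiplication by $\mathcal W_{\Gamma_1}$ and right multiplication by $\mathcal W_{\Gamma_2}$. Consequently the orthogonal complement $L^2(M_\Gamma)\ominus\overline{\mathrm{sp}}(M_{\Gamma_1^\perp}M_{\Gamma_2})$ is a direct sum of $M_{\Gamma_1}$-$M_{\Gamma_2}$-sub-bimodules, so its intersection with $pL^2(M_\Gamma)$ is a $P$-$M_{\Gamma_2}$-sub-bimodule $\mathcal H$ with dense bounded subspace $\mathcal H_0=p\cdot\mathrm{sp}\{\mathring M_{\mathbf u}:\mathbf u\text{ bad}\}$. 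The case analysis ensures $\|\mathrm E_{M_{\Gamma_2}}(a^*u_nb)\|_2\to 0$ for $a\in\mathcal H_0$ and $b\in M_\Gamma$, so Corollary~\ref{space_of_intertwiners} places $v$ in $pL^2(M_\Gamma)\ominus\mathcal H\subset\overline{\mathrm{sp}}(M_{\Gamma_1^\perp}M_{\Gamma_2})$. The main obstacle is the combinatorial identification: recognizing both that the only way the expectation formula fails to vanish along the net forces $\mathbf u\in\mathcal W_{\Gamma_1^\perp}\mathcal W_{\Gamma_2}$, and that (assuming $\Gamma_1\subset\Gamma_2$) badness is a double-coset invariant---which is precisely what endows $\mathcal H$ with the bimodule structure required to invoke Corollary~\ref{space_of_intertwiners}.
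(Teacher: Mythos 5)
Your proposal is correct and follows essentially the same route as the paper: the same net from Proposition~\ref{results}(5), the same vanishing dichotomy extracted from Proposition~\ref{expectation} (with $p$ absorbed into $a_l,b_l$), and the same application of Corollary~\ref{space_of_intertwiners} to the subspace spanned by the $\mathring{M}_{\bf u}$ with ${\bf u}\notin\mathcal W_{\Gamma_1^\perp}\mathcal W_{\Gamma_2}$ for part~(2). The only cosmetic difference is in part~(1), where you conclude via Corollary~\ref{space_of_intertwiners} applied to $\mathcal H=pL^2(M_\Gamma)$ instead of the paper's direct estimate $\|v\|_2^2\le\sum_i\|\mathrm{E}_{M_{\Gamma_2}}(v_i^*u_nv)\|_2$, which is the same computation repackaged; likewise, the bimodule property of your ``bad'' subspace can be obtained a bit more cheaply by noting that $\overline{\mathrm{sp}}(M_{\Gamma_1^\perp}M_{\Gamma_2})$ is an $M_{\Gamma_1}$-$M_{\Gamma_2}$-bimodule (using commutation and $M_{\Gamma_1}\subset M_{\Gamma_2}$) and passing to its orthocomplement.
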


In order to motivate Proposition \ref{intertwiners}, we note that it is a von Neumann algebraic analogue of the following group-theoretic fact. Let $G_\Gamma=*_{v,\Gamma}G_v$ be the graph product group associated to a collection $(G_v)_{v\in\Gamma}$ of discrete groups. Let $H\subset G_{\Gamma_1}$ be a subgroup which is not contained in any subgroup of $G_{\Gamma_1}$ of the form $hG_{\Gamma_1'}h^{-1}$, for some proper full subgraph $\Gamma_1'\subsetneq\Gamma_1$ and $h\in G_{\Gamma_1}$. If $H$ is contained in $gG_{\Gamma_2}g^{-1}$, for some $g\in G_\Gamma$, then $\Gamma_1\subset\Gamma_2$ and $g$ belongs to $G_{\Gamma_1^\perp}G_{\Gamma_2}$. This group-theoretic fact can be deduced for instance from the results in \cite[Section~3]{AM}.

\begin{proof} 
In order to prove the conclusion, it suffices to show that 
if there exists  $v\in pM_\Gamma\setminus\{0\}$ such that $Pv\subset \sum_{i=1}^k v_iM_{\Gamma_2}$, for some $v_1,\dots,v_k\in M_\Gamma$, then $\Gamma_1\subset\Gamma_2$ and $v\in\overline{\text{sp}}(M_{\Gamma_1^\perp}M_{\Gamma_2})$. By using Proposition \ref{approx} we may assume that $v$ satisfies the stronger condition that  $(P)_1v\subset \sum_{i=1}^kv_i(M_{\Gamma_2})_1$. By assumption we have $P\nprec_{M_{\Gamma_1}}M_{\Gamma_1'}$, for every full subgraph $\Gamma_1'\subsetneq\Gamma_1$.  Therefore, Proposition~\ref{results}(5) gives a net $(u_n)\subset\mathcal U(P)$ such that for every $x,y\in pM_{\Gamma_1}$ and every full subgraph $\Gamma_1'\subsetneq\Gamma_1$, one has $\|\mathrm{E}_{M_{\Gamma_1'}}(x^*u_ny)\|_2\rightarrow 0$.

\begin{claim}\label{weakly} 
The following hold.
\begin{enumerate}
    \item If $\Gamma_1\not\subset\Gamma_2$, then $\|\emph{E}_{M_{\Gamma_2}}(a^*u_nb)\|_2\rightarrow 0$, for every $a,b\in M_\Gamma$.
    \item 
Let ${\bf u}\in\mathcal W_\Gamma$ and consider a reduced decomposition ${\bf u}={\bf u}_l{\bf u}_c{\bf u}_r$, where ${\bf u}_l\in\mathcal W_{\Gamma_1}$, ${\bf u}_r\in\mathcal W_{\Gamma_2}$, and ${\bf u}_c$ does not start with a letter from $\Gamma_1$ and does not end with a letter from $\Gamma_2$. 
If ${\bf u}_c\not\in\mathcal W_{\Gamma_1^\perp}$, then  $\|\emph{E}_{M_{\Gamma_2}}(a^*u_nb)\|_2\rightarrow 0$, for every $a\in \mathring{M}_{\bf u}$ and $b\in M_\Gamma$.
\end{enumerate}
\end{claim}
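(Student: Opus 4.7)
The plan is to apply Proposition~\ref{expectation} directly with $x=u_n$, exploiting that $u_n\in P\subset pM_{\Gamma_1}p$ gives $\mathrm{E}_{M_{\Gamma_1}}(u_n)=u_n$. Both statements are of the form $\|\mathrm{E}_{M_{\Gamma_2}}(a^*u_nb)\|_2\to 0$, and the operator $b\mapsto \mathrm{E}_{M_{\Gamma_2}}(a^*u_nb)$ on $\mathrm{L}^2(M_\Gamma)$ has norm at most $\|a\|_\infty$ uniformly in $n$, with a symmetric statement in $a$. Since $\bigoplus_{{\bf v}\in\mathcal W_\Gamma}\mathring{M}_{\bf v}$ is $\|\cdot\|_2$-dense in $\mathrm{L}^2(M_\Gamma)$, I reduce each statement to the case where $a\in\mathring{M}_{\bf u}$ and $b\in\mathring{M}_{\bf v}$ are bounded elements, for some ${\bf u},{\bf v}\in\mathcal W_\Gamma$.

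With such $a,b$, write reduced decompositions ${\bf u}={\bf u}_l{\bf u}_c{\bf u}_r$ and ${\bf v}={\bf v}_l{\bf v}_c{\bf v}_r$ as in Proposition~\ref{expectation}, and correspondingly $a=a_la_ca_r$, $b=b_lb_cb_r$. Applied with $x=u_n$, Proposition~\ref{expectation} gives
\[\mathrm{E}_{M_{\Gamma_2}}(a^*u_nb)=\tau(a_c^*b_c)\,a_r^*\,\mathrm{E}_{M_{\Gamma_1\cap\Gamma_2\cap\mathrm{lk}({\bf u}_c)}}(a_l^*u_nb_l)\,b_r.\]
Since $u_n=pu_np$ and $a_l,b_l\in M_{\Gamma_1}$, I may replace $a_l,b_l$ by $pa_l,pb_l\in pM_{\Gamma_1}$. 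The whole argument therefore reduces to showing that $\Gamma_1\cap\Gamma_2\cap\mathrm{lk}({\bf u}_c)$ is a proper full subgraph of $\Gamma_1$ in the relevant cases: once that is checked, the defining property of the net $(u_n)$ (produced by applying Proposition~\ref{results}(5) to the finite collection of proper full subgraphs of $\Gamma_1$) yields $\|\mathrm{E}_{M_{\Gamma_1\cap\Gamma_2\cap\mathrm{lk}({\bf u}_c)}}(a_l^*u_nb_l)\|_2\to 0$, and the displayed identity delivers the conclusion.

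For~(2), the hypothesis ${\bf u}_c\notin\mathcal W_{\Gamma_1^\perp}$ means some letter $w$ of ${\bf u}_c$ fails to be adjacent to some $v\in\Gamma_1$ (possibly $v=w$, since simple graphs have no loops). Hence $v\in\Gamma_1\setminus\mathrm{lk}({\bf u}_c)$, giving $\Gamma_1\cap\mathrm{lk}({\bf u}_c)\subsetneq\Gamma_1$, and a fortiori $\Gamma_1\cap\Gamma_2\cap\mathrm{lk}({\bf u}_c)\subsetneq\Gamma_1$. For~(1), if ${\bf u}_c\neq{\bf v}_c$ then $a_c\in\mathring{M}_{{\bf u}_c}$ and $b_c\in\mathring{M}_{{\bf v}_c}$ lie in orthogonal summands of $\mathrm{L}^2(M_\Gamma)$, so $\tau(a_c^*b_c)=0$ and the displayed identity is trivially zero; if instead ${\bf u}_c={\bf v}_c$, then either ${\bf u}_c\notin\mathcal W_{\Gamma_1^\perp}$ (reducing to the analysis of~(2)), or ${\bf u}_c\in\mathcal W_{\Gamma_1^\perp}$, in which case $\Gamma_1\subset\mathrm{lk}({\bf u}_c)$, so $\Gamma_1\cap\Gamma_2\cap\mathrm{lk}({\bf u}_c)=\Gamma_1\cap\Gamma_2$, a strictly smaller subgraph of $\Gamma_1$ precisely because $\Gamma_1\not\subset\Gamma_2$. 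Either way the required properness holds.

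The only real content beyond unwinding Proposition~\ref{expectation} is the combinatorial observation that the subgraph $\Gamma_1\cap\Gamma_2\cap\mathrm{lk}({\bf u}_c)$ sits strictly inside $\Gamma_1$ in every case that matters; I do not anticipate a serious obstacle, as the density reduction, the uniform boundedness of the relevant conditional expectations, and the orthogonality $\mathring{M}_{{\bf u}_c}\perp\mathring{M}_{{\bf v}_c}$ for ${\bf u}_c\neq{\bf v}_c$ are all standard, while the non-intertwining hypothesis on $P$ is exactly tailored to control the conditional expectation that surfaces.
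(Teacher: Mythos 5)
Your proof is correct and follows essentially the same route as the paper: reduce by density to $a\in\mathring{M}_{\bf u}$, $b\in\mathring{M}_{\bf v}$, apply Proposition~\ref{expectation} with $x=u_n$ (using $\mathrm{E}_{M_{\Gamma_1}}(u_n)=u_n$ and $u_n=pu_np$), and observe that $\Gamma_1'=\Gamma_1\cap\Gamma_2\cap\mathrm{lk}({\bf u}_c)\subsetneq\Gamma_1$ so that the defining property of the net $(u_n)$ applies. The only difference is that your case analysis in (1) is superfluous: when $\Gamma_1\not\subset\Gamma_2$ one has $\Gamma_1'\subset\Gamma_1\cap\Gamma_2\subsetneq\Gamma_1$ outright, with no need to distinguish ${\bf u}_c={\bf v}_c$ or ${\bf u}_c\in\mathcal W_{\Gamma_1^\perp}$, which is how the paper argues.
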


\begin{proof}[Proof of Claim \ref{weakly}] 
We prove both assertions at the same time. The linear span of $\{\mathring{M}_{\bf v} \mid {\bf v}\in\mathcal W_\Gamma\}$ is $\|\cdot\|_2$-dense in $M_\Gamma$, so we may assume that $b\in\mathring{M}_{\bf v}$, for some ${\bf v}\in\mathcal W_\Gamma$. When proving~(1), we may also assume that $a\in\mathring{M}_{\bf u}$ for some ${\bf u}\in\mathcal W_\Gamma$ (when proving (2) the element $\bf u$ is given). Consider reduced decompositions ${\bf u}={\bf u}_l{\bf u}_c{\bf u}_r$ and ${\bf v}={\bf v}_l{\bf v}_c{\bf v}_r$, where ${\bf u}_l,{\bf v}_l\in\mathcal W_{\Gamma_1}$, ${\bf u}_r,{\bf v}_r\in\mathcal W_{\Gamma_2}$, and ${\bf u}_c,{\bf v}_c$ do not start with a letter from $\Gamma_1$ and do not end with a letter from $\Gamma_2$. Assuming either that $\Gamma_1\not\subset\Gamma_2$ (for (1)) or that ${\bf u}_c\not\in\mathcal{W}_{\Gamma_1^{\perp}}$ (for (2)), our aim is to prove that $\|\mathrm{E}_{M_{\Gamma_2}}(a^*u_nb)\|_2\rightarrow 0$, for every $a\in \mathring{M}_{\bf u}$ and every $b\in\mathring{M}_{\bf v}$.

Write $a=a_la_ca_r$, $b=b_lb_cb_r$, where $a_l\in\mathring{M}_{{\bf u}_l},a_c\in\mathring{M}_{{\bf u}_c},a_r\in\mathring{M}_{{\bf u}_r}$ and $b_l\in\mathring{M}_{{\bf v}_l},b_c\in\mathring{M}_{{\bf v}_c}, b_r\in\mathring{M}_{{\bf v}_r}$. Let $\Gamma_1'=\Gamma_1\cap\Gamma_2\cap\text{lk}({\bf u}_c)$. Since $u_n\in M_{\Gamma_1}$, Proposition \ref{expectation} gives  \begin{equation}\label{expe}\text{$\text{E}_{M_{\Gamma_2}}(a^*u_nb)=\tau(a_c^*b_c)a_r^*\text{E}_{M_{\Gamma_1'}}(a_l^*u_nb_l)b_r$, for every $n$}.\end{equation} 

If $\Gamma_1\not\subset\Gamma_2$, then $\Gamma_1'\subsetneq\Gamma_1$. If ${\bf u}_c\not\in\mathcal W_{\Gamma_1^\perp}$, then $\Gamma_1\not\subset\text{lk}({\bf u}_c)$ and thus $\Gamma_1'\subsetneq\Gamma_1$. So in either case, we have $\Gamma_1'\subsetneq\Gamma_1$. Since $a_l,b_l\in M_{\Gamma_1}$, we get that $\|\text{E}_{M_{\Gamma_1'}}(a_l^*u_nb_l)\|_2=\|\text{E}_{M_{\Gamma_1'}}((pa_l)^*u_n(pb_l))\|_2\rightarrow 0$. Together with 
\eqref{expe} we deduce that $\|\text{E}_{M_{\Gamma_2}}(a^*u_nb)\|_2\rightarrow 0$, which finishes the proof of the claim.
\end{proof}
We are now ready to prove the conclusion. 
First, assume by contradiction that $\Gamma_1\not\subset\Gamma_2$.
 If $u\in\mathcal U(P)$, then  $uv=\sum_{i=1}^kv_ix_i$, for some  $x_1,\dots,x_k\in (M_{\Gamma_2})_1$.
 Since for every $1\leq i\leq k$ we have $\langle uv,v_ix_i\rangle=\tau(x_i^*v_i^*uv)=\tau(\text{E}_{M_{\Gamma_2}}(x_i^*v_i^*uv))=\tau(x_i^*\text{E}_{M_{\Gamma_2}}(v_i^*uv))$, 
 we deduce that
 \begin{equation}\label{norm_of_v}
\|v\|_2^2=\langle uv,uv\rangle=\sum_{i=1}^k\Re\langle uv,v_ix_i\rangle=\sum_{i=1}^k\Re\tau(x_i^*\text{E}_{M_{\Gamma_2}}(v_i^*uv))\leq\sum_{i=1}^k\|\text{E}_{M_{\Gamma_2}}(v_i^*uv)\|_2.
\end{equation}
As $\Gamma_1\not\subset\Gamma_2$, Claim \ref{weakly}(1) gives  $\|\text{E}_{M_{\Gamma_2}}(a^*u_nb)\|_2\rightarrow 0$, for every $a,b\in M_\Gamma$. So, $\|\text{E}_{M_{\Gamma_2}}(v_i^*u_nv)\|_2\rightarrow 0$, for every $1\leq i\leq k$. Since $v\not=0$ and $(u_n)\subset\mathcal U(P)$, this contradicts \eqref{norm_of_v} and proves that $\Gamma_1\subset\Gamma_2$.

Second, let $\mathcal H_0$ be the linear span of $\{\mathring{M}_{\bf u}\mid {\bf u}\in\mathcal W_\Gamma\setminus \mathcal W_{\Gamma_1^\perp}\mathcal W_{\Gamma_2}\}$, and put $\mathcal H=\overline{\mathcal H_0}^{\|\cdot\|_2}$. We claim that 
\begin{equation}\label{mixing_rel}\text{$\|\text{E}_{M_{\Gamma_2}}(a^*u_nb)\|_2\rightarrow 0$, for every $a\in \mathcal H_0$ and $b\in M_\Gamma$.}
\end{equation}
To prove this, we may assume that $a\in \mathring{M}_{\bf u}$, for some ${\bf u}\in\mathcal W_\Gamma\setminus \mathcal W_{\Gamma_1^\perp}\mathcal W_{\Gamma_2}$. Consider a reduced decomposition ${\bf u}={\bf u}_l{\bf u}_c{\bf u}_r$, where ${\bf u}_l\in\mathcal W_{\Gamma_1}$, ${\bf u}_r\in\mathcal W_{\Gamma_2}$ and ${\bf u}_c$ does not start with a letter from $\Gamma_1$ and does not end with a letter from $\Gamma_2$. Since ${\bf u}\not\in \mathcal W_{\Gamma_1^\perp}\mathcal W_{\Gamma_2}$ and $\Gamma_1\subset\Gamma_2$, we get that ${\bf u}_c\not\in\mathcal W_{\Gamma_1^\perp}$. 
Thus, \eqref{mixing_rel} follows from Claim \ref{weakly}(2).

Finally, \eqref{mixing_rel} gives that $\|\text{E}_{M_{\Gamma_2}}(a^*u_nb)\|_2\rightarrow 0$, for every $a\in p\mathcal H_0$ and $b\in M_\Gamma$. 
Since $\mathcal H$ is an $M_{\Gamma_1}$-$M_{\Gamma_2}$-bimodule,  $p\mathcal H=\overline{p\mathcal H_0}^{\|\cdot\|_2}$ is a $P$-$M_{\Gamma_2}$-bimodule, so Corollary \ref{space_of_intertwiners} implies that $v\in \text{L}^2(M_\Gamma)\ominus p\mathcal H$. Since $v\in pM_\Gamma$, we have $v\in [(1-p)\mathcal H]^\perp$, 
hence we conclude that $v\in\text{L}^2(M_\Gamma)\ominus\mathcal H$.
Noticing that $\text{L}^2(M_\Gamma)\ominus\mathcal H$ is equal to the $\|\cdot\|_2$-closure of the linear span of $\{\mathring{M}_{\bf u}\mid {\bf u}\in \mathcal W_{\Gamma_1^\perp}\mathcal W_{\Gamma_2}\}$ and thus to the $\|\cdot\|_2$-closure of the linear span of
$M_{\Gamma_1^\perp}M_{\Gamma_2}$ by Lemma \ref{intersections}(2), this finishes the proof.
\end{proof}

\subsection{The hull of a subalgebra}

The following definition, central to our work, is inspired by Zimmer's notion of the algebraic hull of a cocycle with values into an algebraic group (see \cite[Section~9.2]{Zi84}); 
see also \cite[Section~3.3]{HH22}, where a related notion of parabolic support was introduced for measured groupoids equipped with cocycles towards right-angled Artin groups.

\begin{definition}
Assume the setting from Notation \ref{setup}.
We say that $M_\Lambda$, for a full subgraph $\Lambda\subset\Gamma$, is the (von Neumann algebraic) {\it hull} of a von Neumann subalgebra $P\subset pM_\Gamma p$ if $P\prec_{M_\Gamma}^sM_{\Lambda}$ and $P\nprec_{M_{\Gamma}}M_{\Lambda'}$, for every full subgraph $\Lambda'\subsetneq\Lambda$.
\end{definition}

We now address the existence and uniqueness of the hull of a von Neumann subalgebra of $pM_\Gamma p$. The uniqueness of the hull is a consequence of Corollary~\ref{intersection} below. In general, a von Neumann subalgebra $P\subset pM_\Gamma p$ need not have a hull. However, Proposition~\ref{Zariski_hull} below shows that the hull exists provided that $P'\cap pM_\Gamma p$ is a factor. In general, there is a partition $\{p_\Lambda\}_\Lambda$ of $p$ in $\mathcal Z(P'\cap pM_\Gamma p)$ such that $Pp_\Lambda$ has hull $M_\Lambda$. 

We start with the following consequence of Proposition \ref{intertwiners}, which justifies the uniqueness of the hull.

\begin{corollary}\label{intersection}
Assume the setting from Notation \ref{setup}.
Let $P\subset pM_\Gamma p$ be a von Neumann subalgebra.  
Let $\Gamma_1,\Gamma_2\subset\Gamma$ be full subgraphs.

If $P\prec_{M_\Gamma}^sM_{\Gamma_1}$ and $P\prec_{M_\Gamma}^sM_{\Gamma_2}$, then $P\prec_{M_\Gamma}^sM_{\Gamma_1\cap\Gamma_2}$. 
\end{corollary}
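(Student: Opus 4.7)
My plan is to argue by contradiction. Assume there is a nonzero projection $p'\in P'\cap pM_\Gamma p$ with $Pp'\nprec_{M_\Gamma}M_{\Gamma_1\cap\Gamma_2}$, while Remark \ref{elementary_facts}(3) ensures $Pp'\prec^s_{M_\Gamma}M_{\Gamma_i}$ for $i=1,2$. Since $\Gamma_1$ is finite, I would pick a full subgraph $\Delta\subset\Gamma_1$ of minimal cardinality such that $Pp'\prec_{M_\Gamma}M_\Delta$; by minimality, $Pp'\nprec_{M_\Gamma}M_{\Delta'}$ for every proper subgraph $\Delta'\subsetneq\Delta$. Note that $\Delta\not\subset\Gamma_2$, since otherwise $M_\Delta\subset M_{\Gamma_1\cap\Gamma_2}$ and we would contradict the standing assumption immediately.

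Next I would apply Proposition \ref{cutting_down}(2) to the intertwining $Pp'\prec_{M_\Gamma}M_\Delta$, with the $S_i$'s ranging over the finitely many $M_{\Delta'}$ for $\Delta'\subsetneq\Delta$. This produces projections $p_0\in P$, $q\in M_\Delta$, a nonzero partial isometry $v\in qM_\Gamma p_0p'$, and a $*$-homomorphism $\theta:p_0Pp_0\to qM_\Delta q$ satisfying $\theta(x)v=vx$ for every $x\in p_0Pp_0$, with the additional property that $Q:=\theta(p_0Pp_0)\subset qM_\Delta q$ verifies $Q\nprec_{M_\Delta}M_{\Delta'}$ for every $\Delta'\subsetneq\Delta$. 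Thus $Q$ satisfies the source hypothesis of Proposition \ref{intertwiners}(1), with ambient subgraph $\Delta$.

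The crux of the argument is to check that $Q\prec_{M_\Gamma}M_{\Gamma_2}$. From the relation $\theta(x)v=vx$ one derives that $e:=v^*v$ commutes with $p_0Pp_0$, belongs to $(p_0Pp_0p')'\cap p_0p'M_\Gamma p_0p'$, and that $v^*Qv=(p_0Pp_0p')e$, realizing $v^*Qv$ as a corner of $p_0Pp_0p'$ by the commuting projection $e$. Since $Pp'\prec^s_{M_\Gamma}M_{\Gamma_2}$, two applications of Remark \ref{elementary_facts}(3) descend this to $v^*Qv\prec_{M_\Gamma}M_{\Gamma_2}$. Left-multiplying a resulting intertwiner by $v$ (which acts isometrically on $eL^2(M_\Gamma)$ since $v$ is a partial isometry with $v^*v=e$) and combining with $vv^*\in Q'\cap qM_\Gamma q$, I can transport the intertwining to $Q\cdot(vw)\subset\sum (vw_i)M_{\Gamma_2}$ for suitable nonzero $vw$, yielding $Q\prec_{M_\Gamma}M_{\Gamma_2}$.

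Finally, applying Proposition \ref{intertwiners}(1) to $Q\subset qM_\Delta q$ with target $M_{\Gamma_2}$ yields $\Delta\subset\Gamma_2$, which contradicts what was noted at the end of the first paragraph. I expect the main technical obstacle to be the third paragraph, namely the bookkeeping required to transport the $\prec^s$ information through the corner cut by $v$: one needs to correctly identify $v^*Qv$ as a cornered subalgebra of $p_0Pp_0p'$, verify the commuting projection $v^*v$ lies in the right relative commutant so that Remark \ref{elementary_facts}(3) applies, and then ensure that no information is lost when pushing the intertwiner back up by left-multiplication by $v$.
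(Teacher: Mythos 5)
Your argument is correct, and it shares the paper's skeleton: fix a nonzero $p'\in P'\cap pM_\Gamma p$, choose a minimal subgraph of $\Gamma_1$ ($\Delta$ for you, $\Lambda_1$ in the paper) into which the relevant corner of $P$ embeds, run Proposition \ref{cutting_down}(2) to produce $\theta$, $v$ and $Q=\theta(p_0Pp_0)$ with $Q\nprec_{M_\Delta}M_{\Delta'}$ for all $\Delta'\subsetneq\Delta$, and finish with Proposition \ref{intertwiners}(1). Where you genuinely diverge is in how the $\Gamma_2$-side information gets attached to $Q$: the paper first intertwines into $M_{\Gamma_2}$ via a second homomorphism $\theta_2$ and partial isometry $v_2$, cuts by the projection $q'$ with $v_2^*v_2=q_2q'$, and composes, setting $v=v_1v_2^*$ so that $\theta_1(x)v=v\theta_2(x)$, which exhibits $Qv\subset vM_{\Gamma_2}$ directly; you use only the one intertwining and instead transport $P\prec^s_{M_\Gamma}M_{\Gamma_2}$ through the corner. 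That transport, which you flagged as the delicate point, does go through: $e=v^*v\in(p_0Pp_0p')'\cap p_0p'M_\Gamma p_0p'$ and $v^*Qv=(p_0Pp_0p')e$, so two applications of Remark \ref{elementary_facts}(3) give $(p_0Pp_0p')e\prec_{M_\Gamma}M_{\Gamma_2}$; then taking a nonzero $w=ew$ with $((p_0Pp_0)e)w\subset\sum_i w_iM_{\Gamma_2}$ (cf.\ \cite[Proposition C.1]{Va07}), one gets $\theta(x)(vw)=v(xw)\in\sum_i (vw_i)M_{\Gamma_2}$ for all $x\in p_0Pp_0$, with $vw\in\theta(p_0)M_\Gamma$ and $\|vw\|_2=\|w\|_2\neq 0$, hence $Q\prec_{M_\Gamma}M_{\Gamma_2}$, and Proposition \ref{intertwiners}(1) yields the contradiction $\Delta\subset\Gamma_2$. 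The trade-off between the two routes: yours is slightly lighter (a single application of Proposition \ref{cutting_down}(2), no bookkeeping of a second homomorphism and its support projection), but it consumes the full strength of $\prec^s$ on the $\Gamma_2$-side and produces only the abstract relation $Q\prec_{M_\Gamma}M_{\Gamma_2}$, whereas the paper's composed partial isometry $v_1v_2^*$ satisfies the explicit relation $\theta_1(x)v=v\theta_2(x)$, whose location can then be analyzed via Proposition \ref{intertwiners}(2) — exactly the extra structure the paper reuses in the refinement Corollary \ref{right_corner}, where your shortcut as stated would not suffice.
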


Corollary \ref{intersection} generalizes \cite[Proposition 5.9]{BCC24} which implies the case $\Gamma_1=\{v\}$, for $v\in\Gamma$.

\begin{proof} 
We will prove the following claim.

\begin{claim}
    Let $Q\subset pM_\Gamma p$ be a von Neumann subalgebra, and let $\Lambda_1,\Lambda_2\subset\Gamma$ be two full subgraphs. Assume that $Q\prec^s_{M_\Gamma}M_{\Lambda_1}$ and $Q\prec^s_{M_\Gamma}M_{\Lambda_2}$, and that for every proper full subgraph $\Lambda'_1\subsetneq\Lambda_1$, one has $Q\not\prec_{M_\Gamma}M_{\Lambda'_1}$.
    Then $\Lambda_1\subset\Lambda_2$.
\end{claim}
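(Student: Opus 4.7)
My plan is to apply Proposition~\ref{results}(6) to locate a ``minimal'' intertwiner of a corner of $Q$ into $M_{\Lambda_1}$, to transfer the hypothesis $Q\prec^s_{M_\Gamma} M_{\Lambda_2}$ to the image of this intertwiner, and finally to invoke Proposition~\ref{intertwiners}(1) on that image. Since there are only finitely many proper full subgraphs $\Lambda_1'\subsetneq\Lambda_1$, and $Q\nprec_{M_\Gamma} M_{\Lambda_1'}$ for each of them by hypothesis, Proposition~\ref{results}(6) (applied with $P=Q$, target $M_{\Lambda_1}$, and $S_i$ ranging over $\{M_{\Lambda_1'}:\Lambda_1'\subsetneq\Lambda_1\}$) yields projections $p_0\in Q$ and $q_0\in M_{\Lambda_1}$, a nonzero partial isometry $v\in q_0 M_\Gamma p_0$, and a $*$-homomorphism $\theta\colon p_0Qp_0\to q_0M_{\Lambda_1}q_0$ satisfying $\theta(x)v=vx$ for every $x\in p_0Qp_0$, and $\theta(p_0Qp_0)\nprec_{M_{\Lambda_1}} M_{\Lambda_1'}$ for every $\Lambda_1'\subsetneq\Lambda_1$.

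The crucial step is to show that $\theta(p_0Qp_0)\prec_{M_\Gamma} M_{\Lambda_2}$. I first ``reverse'' the intertwiner: setting $s=v^*v$, one checks (from $\theta(x)v=vx$ and its adjoint) that $s\in (p_0Qp_0)'\cap p_0 M_\Gamma p_0$, and that the formula $\rho(\theta(x)):=xs$ defines a normal $*$-homomorphism $\rho\colon \theta(p_0Qp_0)\to (p_0Qp_0)s$, well-defined since $\theta(x)=0$ forces $vx=0$, hence $sx=v^*vx=0$. A direct computation gives $\rho(\theta(x))v^*=xsv^*=xv^*=v^*\theta(x)$, so $v^*$ is a nonzero partial isometry intertwining $\rho$. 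By Theorem~\ref{intertwine} this yields $\theta(p_0Qp_0)\prec_{M_\Gamma} (p_0Qp_0)s$, which upgrades via Remark~\ref{elementary_facts}(2) (applied with $q=p_0$, $q'=s$) to $\theta(p_0Qp_0)\prec_{M_\Gamma} p_0Qp_0$. On the other hand, Remark~\ref{elementary_facts}(3) applied with $p=p_0\in Q$ gives $p_0Qp_0\prec^s_{M_\Gamma} M_{\Lambda_2}$ from the hypothesis $Q\prec^s_{M_\Gamma} M_{\Lambda_2}$. Proposition~\ref{results}(2) now composes these two to yield $\theta(p_0Qp_0)\prec_{M_\Gamma} M_{\Lambda_2}$.

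Finally, $\theta(p_0Qp_0)$ is a von Neumann subalgebra of $q_0M_{\Lambda_1}q_0$ with $q_0\in M_{\Lambda_1}$, it satisfies $\theta(p_0Qp_0)\nprec_{M_{\Lambda_1}} M_{\Lambda_1'}$ for every proper $\Lambda_1'\subsetneq\Lambda_1$, and it intertwines into $M_{\Lambda_2}$ inside $M_\Gamma$. Proposition~\ref{intertwiners}(1), applied with $\Gamma_1=\Lambda_1$ and $\Gamma_2=\Lambda_2$, then delivers the desired inclusion $\Lambda_1\subset\Lambda_2$. The main obstacle is the reverse-intertwining step above; its correctness hinges on the well-definedness of $\rho$ and on reading Popa's criterion symmetrically with $v^*$ in place of $v$, both of which reduce to bookkeeping with the identity $\theta(x)v=vx$.
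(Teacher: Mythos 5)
Your proof is correct, but the mechanism in the middle is genuinely different from the paper's. Both arguments end the same way: produce an intertwiner of a corner of $Q$ into $M_{\Lambda_1}$ whose image $R=\theta(p_0Qp_0)$ satisfies $R\nprec_{M_{\Lambda_1}}M_{\Lambda_1'}$ for all proper $\Lambda_1'\subsetneq\Lambda_1$, show $R\prec_{M_\Gamma}M_{\Lambda_2}$, and invoke Proposition~\ref{intertwiners}. The paper gets the middle step by first taking an explicit intertwiner $(\theta_2,v_2)$ of $Q$ into $M_{\Lambda_2}$, cutting down by the projection $q'$ coming from $v_2^*v_2$, applying Proposition~\ref{cutting_down}(2) inside that corner to produce $(\theta_1,v_1)$ into $M_{\Lambda_1}$, and then checking that $v=v_1v_2^*$ is a nonzero intertwiner realizing $Rv\subset vM_{\Lambda_2}$; this requires the hypothesis $Q\prec^s_{M_\Gamma}M_{\Lambda_1}$ (to cut down by $q'$) but only plain intertwining into $M_{\Lambda_2}$. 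You instead reverse the $\Lambda_1$-intertwiner: your map $\rho(\theta(x))=xs$ is exactly $\rho(y)=v^*yv$, it is well defined (your kernel check handles non-injectivity of $\theta$), normal and unital onto $(p_0Qp_0)s$, and $v^*$ intertwines it, giving $R\prec_{M_\Gamma}p_0Qp_0$; then transitivity (Proposition~\ref{results}(2)) through $p_0Qp_0\prec^s_{M_\Gamma}M_{\Lambda_2}$ yields $R\prec_{M_\Gamma}M_{\Lambda_2}$. So you use only plain intertwining into $M_{\Lambda_1}$ but the full strong intertwining into $M_{\Lambda_2}$ — a different, and here slightly more economical, use of the hypotheses, which also lets you avoid Proposition~\ref{cutting_down}(2) in favor of Proposition~\ref{results}(6). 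One remark: the paper's more explicit composition of intertwiners is not gratuitous — it is the version that carries over to Corollary~\ref{right_corner}, where one must track both the corner $e\in M_{\Gamma_2}'\cap M_\Gamma$ and the location of the intertwiner in $\overline{\mathrm{sp}}(M_{\Lambda_1^\perp}M_{\Lambda_2})$, information your softer argument does not provide; but for the claim as stated your route is complete and correct.
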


Before proving the claim, let us first explain how the corollary follows. Let $P\subset pM_\Gamma p$ be as in the statement, and let $p'\in P'\cap pM_\Gamma p$ be a nonzero projection. Let $\Lambda_1\subset\Gamma_1$ be a minimal subgraph such that $Pp'\prec_{M_\Gamma}M_{\Lambda_1}$ (so that $Pp'\not\prec_{M_\Gamma}M_{\Lambda'_1}$ for every proper full subgraph $\Lambda'_1\subsetneq\Lambda_1$). 
Then we can find a nonzero projection $p''\in P'\cap pM_\Gamma p$ such that
$p''\leq p'$ and $Pp''\prec^s_{M_\Gamma}M_{\Lambda_1}$. 
We also have $Pp''\prec^s_{M_\Gamma}M_{\Gamma_2}$. Applying the claim to $Q=Pp''$ (with $\Lambda_2=\Gamma_2$), we deduce that $\Lambda_1\subset\Gamma_2$, so $\Lambda_1\subset\Gamma_1\cap\Gamma_2$. We thus have $Pp''\prec_{M_{\Gamma}}M_{\Gamma_1\cap\Gamma_2}$, in particular $Pp'\prec_{M_\Gamma}M_{\Gamma_1\cap\Gamma_2}$. Since this is true for any $p'\in P'\cap pM_\Gamma p$, we have proved that $P\prec^s_{M_\Gamma}M_{\Gamma_1\cap\Gamma_2}$, as desired.

We now prove the above claim. Since $Q\prec_{M_\Gamma}M_{\Lambda_2}$, we have projections $q_2\in Q,r_2\in M_{\Lambda_2}$, a nonzero partial isometry $v_2\in r_2M_\Gamma q_2$ and a $*$-homomorphism $\theta_2:q_2Qq_2\rightarrow r_2M_{\Lambda_2}r_2$ such that 
\begin{enumerate}
\item $\theta_2(y)v_2=v_2y$, for every $y\in q_2Qq_2$.  
\end{enumerate}
Since $v_2^*v_2\in (q_2Qq_2)'\cap q_2M_\Gamma q_2=(Q'\cap pM_\Gamma p)q_2$, we can find a projection $q'\in Q'\cap pM_\Gamma p$ such that $v_2^*v_2=q_2q'$. Let $r=v_2v_2^*$. Then $r\in\theta_2(q_2Qq_2)'\cap r_2M_{\Gamma}r_2$ and $r\leq r_2$.

Since $q_2Qq_2q'\prec_{M_\Gamma}M_{\Lambda_1}$ and $q_2Qq_2q'\nprec_{M_\Gamma}M_{\Lambda_1'}$, for every full subgraph $\Lambda_1'\subsetneq\Lambda_1$, by Proposition~\ref{cutting_down}(2) we can find projections $q_1\in q_2Qq_2$, $r_1\in M_{\Lambda_1}$, a nonzero partial isometry $v_1\in r_1M_\Gamma q_1q'$ and a $*$-homomorphism $\theta_1:q_1Qq_1\rightarrow r_1M_{\Lambda_1}r_1$ such that 
\begin{itemize}
\item[(2)] $\theta_1(x)v_1=v_1x$, for every $x\in q_1Qq_1$, and 
\item [(3)] $\theta_1(q_1Qq_1)\nprec_{M_{\Lambda_1}}M_{\Lambda_1'}$, for every proper full subgraph $\Lambda_1'\subsetneq\Lambda_1$.
\end{itemize}

Letting $v=v_1v_2^*$ and using (1) and (2), we get that 
\begin{equation}\text{$\theta_1(x)v=v\theta_2(x)$, for every $x\in q_1Qq_1$.}\end{equation}
Since $vv^*=v_1(v_2^*v_2)v_1^*=v_1(q_2q')v_1^*=v_1v_1^*\not=0$, we get that $v\not=0$.  Let $R:=\theta_1(q_1Qq_1)\subset r_1M_{\Lambda_1}r_1$. Then $Rv\subset vM_{\Lambda_2}$ and  $R\nprec_{M_{\Lambda_1}}M_{\Lambda_1'}$, for every full subgraph $\Lambda_1'\subsetneq\Lambda_1$,  by (3). Proposition~\ref{intertwiners} thus implies that $\Lambda_1\subset\Lambda_2$, as desired.
\end{proof}

As explained above, we now address the existence of the hull.

\begin{proposition}\label{Zariski_hull}
Assume the setting from Notation \ref{setup}.
Let $P\subset pM_\Gamma p$ be a von Neumann subalgebra. 
For every full subgraph $\Lambda\subset\Gamma$, let $p_\Lambda\in\mathcal N_{pM_\Gamma p}(P)'\cap pM_{\Gamma}p$ be the maximal projection such that $Pp_\Lambda\prec^s_{M_{\Gamma}}M_{\Lambda}$ and $Pp_\Lambda\nprec_{M_{\Gamma}}M_{\Lambda'}$, for every full subgraph $\Lambda'\subsetneq\Lambda$.

Then $\sum_{\text{$\Lambda$}} p_\Lambda=p$. 

Moreover, if $\Lambda\subset\Gamma$ is a full subgraph, then $Pp_\Lambda\nprec_{M_{\Gamma}}M_{\Lambda'}$, for every full subgraph $\Lambda'\subset\Gamma$ with  $\Lambda\not\subset\Lambda'$.
\end{proposition}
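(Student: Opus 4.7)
The plan is to construct the $p_\Lambda$'s from an auxiliary family and then verify the partition, maximality, and \emph{moreover} properties. For each full subgraph $\Lambda\subset\Gamma$, I would first set $q_\Lambda$ to be the maximal projection in $\mathcal{N}_{pM_\Gamma p}(P)'\cap pM_\Gamma p$ satisfying $Pq_\Lambda\prec^s_{M_\Gamma}M_\Lambda$. Since every unitary in $P'\cap pM_\Gamma p$ normalizes $P$, one has $\mathcal{N}_{pM_\Gamma p}(P)'\cap pM_\Gamma p\subset\mathcal{Z}(P'\cap pM_\Gamma p)$, so we are taking suprema of projections in a commutative finite von Neumann algebra. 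Combining Remark~\ref{elementary_facts}(9) with a standard trace argument shows that the set $\{q:Pq\prec^s_{M_\Gamma}M_\Lambda\}$ is closed under arbitrary suprema, so $q_\Lambda$ exists. Note that $q_\Gamma=p$ and that $\Lambda\mapsto q_\Lambda$ is monotone. Combining Remark~\ref{elementary_facts}(3), Corollary~\ref{intersection} and the maximality of $q_\Lambda$ then yields the key meet-homomorphism identity $q_{\Lambda_1\cap\Lambda_2}=q_{\Lambda_1}q_{\Lambda_2}$ for all full subgraphs $\Lambda_1,\Lambda_2\subset\Gamma$.

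I would then set $p_\Lambda:=q_\Lambda-\bigvee_{v\in\Lambda}q_{\Lambda\setminus\{v\}}$, which is a projection in $\mathcal{N}(P)'\cap pM_\Gamma p$ below $q_\Lambda$, so $Pp_\Lambda\prec^s_{M_\Gamma}M_\Lambda$ by Remark~\ref{elementary_facts}(3). Pairwise orthogonality of the $p_\Lambda$'s follows from the meet-homomorphism identity: if $\Lambda_1\neq\Lambda_2$, one may pick $v\in\Lambda_1\setminus\Lambda_2$ (or vice versa), and then $p_{\Lambda_1}p_{\Lambda_2}\leq q_{\Lambda_1\cap\Lambda_2}\leq q_{\Lambda_1\setminus\{v\}}$, which is absorbed in the subtraction defining $p_{\Lambda_1}$. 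A direct induction on $|\Lambda|$ then yields $q_\Lambda=\sum_{\Lambda'\subset\Lambda}p_{\Lambda'}$, using orthogonality and the identity $\bigvee_{v\in\Lambda}q_{\Lambda\setminus\{v\}}=\bigvee_{\Lambda'\subsetneq\Lambda}q_{\Lambda'}=\sum_{\Lambda'\subsetneq\Lambda}p_{\Lambda'}$. Specializing $\Lambda=\Gamma$ gives $\sum_\Lambda p_\Lambda=p$. To verify that $p_\Lambda$ is indeed the \emph{maximal} projection in $\mathcal{N}(P)'\cap pM_\Gamma p$ satisfying the two defining conditions, I take any such $q$: maximality of $q_\Lambda$ gives $q\leq q_\Lambda$, and if $qq_{\Lambda'}\neq 0$ for some $\Lambda'\subsetneq\Lambda$, then Remark~\ref{elementary_facts}(3) gives $Pqq_{\Lambda'}\prec^s_{M_\Gamma}M_{\Lambda'}$, whence Remark~\ref{elementary_facts}(2) forces $Pq\prec_{M_\Gamma}M_{\Lambda'}$, contradicting the second condition on $q$.

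For the \emph{moreover} assertion, suppose for contradiction that $Pp_\Lambda\prec_{M_\Gamma}M_{\Lambda'}$ for some $\Lambda'\subset\Gamma$ with $\Lambda\not\subset\Lambda'$. Applying Proposition~\ref{results}(3) to the inclusion $Pp_\Lambda\subset p_\Lambda M_\Gamma p_\Lambda$ and using the identification $(Pp_\Lambda)'\cap p_\Lambda M_\Gamma p_\Lambda=(P'\cap pM_\Gamma p)p_\Lambda$, one obtains a nonzero projection $p''\in P'\cap pM_\Gamma p$ with $p''\leq p_\Lambda$ and $Pp''\prec^s_{M_\Gamma}M_{\Lambda'}$; moreover $Pp''\prec^s_{M_\Gamma}M_\Lambda$ by Remark~\ref{elementary_facts}(3). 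The precise version of Proposition~\ref{results}(3), applied to $p''$, produces a single smallest projection $z\in\mathcal{N}(P)'\cap pM_\Gamma p$ containing $p''$ which then satisfies both $Pz\prec^s_{M_\Gamma}M_\Lambda$ and $Pz\prec^s_{M_\Gamma}M_{\Lambda'}$. Corollary~\ref{intersection} gives $Pz\prec^s_{M_\Gamma}M_{\Lambda\cap\Lambda'}$, so $z\leq q_{\Lambda\cap\Lambda'}$ by maximality. Since $\Lambda\cap\Lambda'\subsetneq\Lambda$, the central projections $p_\Lambda$ and $q_{\Lambda\cap\Lambda'}$ are orthogonal by construction, and so $p''\leq p_\Lambda\wedge z\leq p_\Lambda\wedge q_{\Lambda\cap\Lambda'}=0$, a contradiction. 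The main technical obstacle is precisely this careful passage between $\prec$ and $\prec^s$ via sub-projections and their normalizer envelopes, which is why the maximality in the statement is phrased with respect to $\mathcal{N}(P)'\cap pM_\Gamma p$ rather than the larger algebra $P'\cap pM_\Gamma p$.
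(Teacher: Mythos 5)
Your argument is correct, but it takes a genuinely different route from the paper's. The paper works directly with the maximal projections $p_\Lambda$ of the statement: pairwise orthogonality is obtained from Corollary~\ref{intersection} much as in your meet identity, but the identity $\sum_\Lambda p_\Lambda=p$ is proved by contradiction — if $q=p-\sum_\Lambda p_\Lambda\neq 0$, one picks $\Lambda$ of minimal cardinality with $Pq\prec_{M_\Gamma}M_\Lambda$ and uses Proposition~\ref{results}(3), together with the observation that the resulting projection commutes with $\mathcal N_{pM_\Gamma p}(P)$, to contradict the maximality of $p_\Lambda$ — and the moreover part follows from Corollary~\ref{intersection} and the defining property of $p_\Lambda$. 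You instead introduce the auxiliary maximal projections $q_\Lambda$ for the single condition $Pq_\Lambda\prec^s_{M_\Gamma}M_\Lambda$, establish the meet-homomorphism identity $q_{\Lambda_1\cap\Lambda_2}=q_{\Lambda_1}q_{\Lambda_2}$, and define $p_\Lambda=q_\Lambda-\bigvee_{v\in\Lambda}q_{\Lambda\setminus\{v\}}$, so the partition comes out of a direct induction rather than a minimality-plus-contradiction argument; as a bonus you record the finer identity $q_\Lambda=\sum_{\Lambda'\subset\Lambda}p_{\Lambda'}$, which the paper does not state. Both proofs rest on the same two pillars — Corollary~\ref{intersection} and the ``more precisely'' clause of Proposition~\ref{results}(3) — so neither is more elementary; yours is more constructive and lattice-theoretic, the paper's is shorter and avoids having to identify a constructed projection with the maximal one.

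That identification is the one place where your write-up, read linearly, is incomplete: when you assert that your $p_\Lambda$ is the maximal projection satisfying the two defining conditions, you only verify that it dominates every projection satisfying them, not that it satisfies the second condition itself, namely $Pp_\Lambda\nprec_{M_\Gamma}M_{\Lambda'}$ for all $\Lambda'\subsetneq\Lambda$; without this, you would only know that the statement's $p_\Lambda$ is bounded above by yours, which does not yet give $\sum_\Lambda p_\Lambda=p$ for the statement's projections. This is not a real gap, because your own argument for the moreover assertion (Proposition~\ref{results}(3) applied in the corner, the upgrade to a projection in $\mathcal N_{pM_\Gamma p}(P)'\cap pM_\Gamma p$, and the orthogonality of $p_\Lambda$ with $q_{\Lambda\cap\Lambda'}$) proves exactly this missing property when specialized to $\Lambda'\subsetneq\Lambda$; you should simply run that argument (or point to it) before claiming the identification with the $p_\Lambda$ of the statement.
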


The moreover part of the statement ensures that if $P$ admits a hull, 
of the form $M_\Lambda$ for some full subgraph $\Lambda$ of $\Gamma$, 
and $P\prec_{M_\Gamma}M_{\Lambda'}$, for a full subgraph $\Lambda'\subset\Gamma$, then $\Lambda\subset\Lambda'$.

Before proving this result, we justify in the following remark that the projection $p_\Lambda$ is well-defined.

\begin{remark}\label{rk:maximal-projections}
Let $(M,\tau)$ be a tracial von Neumann algebra and $P\subset pMp, Q, R_1,\dots, R_k\subset qMq$ be von Neumann subalgebras. By Zorn's lemma we can find a maximal, with respect to inclusion, family of pairwise orthogonal projections $\{z_i\}_{i\in I}\subset\mathcal N_{pMp}(P)'\cap pMp$ such that  for every $i\in I$ we have $Pz_i\prec_M^sQ$ and $Pz_i\nprec_MR_j$, for every $1\leq j\leq k$. 
Put $z=\sum_{i\in I}z_i$.  Then $z\in\mathcal N_{pMp}(P)'\cap pMp$, $Pz\prec_M^sQ$ and $Pz\nprec_MR_j$, for every $1\leq j\leq k$. Using that  $\mathcal N_{pMp}(P)'\cap pMp$ is abelian as it is contained in $\mathcal Z(P'\cap pMp)$, it follows that $z$ is the maximal projection with these properties.

\end{remark}

\begin{proof}[Proof of Proposition~\ref{Zariski_hull}]
We first claim that if $\Lambda_1,\Lambda_2\subset\Gamma$ are distinct full subgraphs, then $p_{\Lambda_1}p_{\Lambda_2}=0$. Assume by contradiction that  $p'=p_{\Lambda_1}p_{\Lambda_2}\not=0$. Then $Pp'\prec^s_{M_{\Gamma}}M_{\Lambda_1}$ and 
$Pp'\prec^s_{M_{\Gamma}}M_{\Lambda_2}$. 
Corollary~\ref{intersection} implies that $Pp'\prec^s_{M_\Gamma}M_{\Lambda_1\cap\Lambda_2}$. Hence $Pp_{\Lambda_1}\prec_{M_\Gamma}M_{\Lambda_1\cap\Lambda_2}$ and $Pp_{\Lambda_2}\prec_{M_\Gamma}M_{\Lambda_1\cap\Lambda_2}$. The definitions of $p_{\Lambda_1}$ and $p_{\Lambda_2}$ imply that $\Lambda_1=\Lambda_1\cap\Lambda_2=\Lambda_2$,
which gives a contradiction and proves the claim.

Next, we claim that $\sum_{\text{$\Lambda$}} p_\Lambda=p$. Assume by contradiction that
  $q=p-\sum_\Lambda p_\Lambda\in \mathcal N_{pM_\Gamma p}(P)'\cap pM_{\Gamma}p$ is nonzero. 
Let $\Lambda\subset\Gamma$ be a full subgraph of minimal cardinality with $Pq\prec_{M_\Gamma}M_\Lambda$.  Proposition \ref{results}(3) gives a nonzero projection $r\in\mathcal N_{qM_\Gamma q}(Pq)'\cap qM_\Gamma q$ such that $Pr\prec_{M}^sM_\Lambda$. If $u\in\mathcal N_{pM_\Gamma p}(P)$, then $uq\in\mathcal N_{qM_\Gamma q}(Pq)$ and so $ur=(uq)r=r(uq)=r(qu)=ru$. This shows that $r\in\mathcal N_{pM_\Gamma p}(P)'\cap pM_\Gamma p$. As $Pr\nprec_{M_\Gamma}M_{\Lambda'}$, for every full subgraph $\Lambda'\subsetneq\Lambda$, and $r\leq p-p_\Lambda$, this contradicts the maximality of $p_\Lambda$. 

Finally, to prove the moreover assertion assume that $Pp_\Lambda\prec_{M_{\Gamma}}M_{\Lambda'}$, for a full subgraph $\Lambda'\subset\Gamma$. 
As above, Proposition \ref{results}(3)  gives a nonzero projection $s\in  \mathcal N_{pM_\Gamma p}(P)'\cap pM_{\Gamma}p$ with $s\leq p_\Lambda$ and $Ps\prec_{M_\Gamma}^sM_{\Lambda'}$.  Since $s\leq p_\Lambda$, we have $Ps\prec_{M_\Gamma}^sM_{\Lambda}$. Thus, Proposition \ref{intersection} gives that $Ps\prec_{M_\Gamma}^sM_{\Lambda\cap\Lambda'}$ and hence $Pp_\Lambda\prec_{M_\Gamma}M_{\Lambda\cap \Lambda'}$. The definition of $p_\Lambda$ implies that $\Lambda\cap\Lambda'=\Lambda$ and hence $\Lambda\subset\Lambda'$, as desired.
\end{proof}

\subsection{Normalizers}

We continue with consequences of Proposition \ref{intertwiners} concerning normalizers inside graph product von Neumann algebras. 
The first assertion of the following proposition is a direct consequence of Proposition~\ref{intertwiners} in the case where $\Gamma_1=\Lambda$ and $\Gamma_2=\Lambda\cup\Lambda^\perp$. Both assertions are also consequences of \cite[Proposition~5.8(2)]{BCC24} -- we omit the proof of the second, as we will prove a stronger version in Proposition~\ref{normalizer'} below.

\begin{proposition}[\!\!{\cite[Proposition 5.8]{BCC24}}]\label{normalizer}
Assume the setting from Notation \ref{setup}.
Let $P\subset pM_\Gamma p$ be a von Neumann subalgebra and $\Lambda\subset\Gamma$ a full subgraph. 
Then we have:
\begin{enumerate}
\item Assume that $p\in M_\Lambda$, $P\subset pM_\Lambda p$ and $P\nprec_{M_{\Lambda}}M_{\Lambda'}$, for every proper full subgraph $\Lambda'\subsetneq\Lambda$.  If $v\in pM_\Gamma$ satisfies $Pv\subset \sum_{i=1}^kv_iM_{\Lambda\cup\Lambda^\perp}$, for some $k\geq 1$ and $v_1,\dots,v_k\in M_\Gamma$, then $v\in M_{\Lambda\cup\Lambda^\perp}$. In particular,
 $(\mathcal Q\mathcal N_{pM_\Gamma p}(P))''\subset M_{\Lambda\cup\Lambda^\perp}$.
\item Assume that $P\prec_{M_{\Gamma}}^s M_\Lambda$ and that $P\nprec_{M_{\Gamma}}M_{\Lambda'}$, for every full subgraph $\Lambda'\subsetneq\Gamma$. Then $\mathcal N_{pM_\Gamma p}(P)''\prec_{M_\Gamma}^sM_{\Lambda\cup\Lambda^\perp}$. \qedhere
\end{enumerate}
\end{proposition}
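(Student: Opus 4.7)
The plan is to deduce both assertions from Proposition~\ref{intertwiners}(2), which is tailor-made to analyze intertwiners into parabolic subalgebras.

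For assertion (1), I would apply Proposition~\ref{intertwiners}(2) directly with $\Gamma_1=\Lambda$ and $\Gamma_2=\Lambda\cup\Lambda^\perp$. The non-intertwining hypothesis on $P$ in $M_\Lambda$ is exactly what Proposition~\ref{intertwiners} requires, and the inclusion $Pv\subset\sum_{i=1}^k v_iM_{\Lambda\cup\Lambda^\perp}$ is the assumption on $v$. The conclusion is that $v$ lies in the $\|\cdot\|_2$-closure of the linear span of $M_{\Lambda^\perp}M_{\Lambda\cup\Lambda^\perp}$. Since $\Lambda^\perp\subset\Lambda\cup\Lambda^\perp$, we have $M_{\Lambda^\perp}\subset M_{\Lambda\cup\Lambda^\perp}$, so this linear span already sits inside $M_{\Lambda\cup\Lambda^\perp}$, and its $\|\cdot\|_2$-closure inside $\mathrm{L}^2(M_\Gamma)$ equals $\mathrm{L}^2(M_{\Lambda\cup\Lambda^\perp})$. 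Because $v\in pM_\Gamma$ is a bounded element whose $\mathrm{L}^2$-class lies in $\mathrm{L}^2(M_{\Lambda\cup\Lambda^\perp})$, it must in fact belong to $M_{\Lambda\cup\Lambda^\perp}$.

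For the quasi-normalizer inclusion, I would take $x\in\mathcal{QN}_{pM_\Gamma p}(P)$ and pick $x_1,\dots,x_n\in M_\Gamma$ with $Px\subset\sum_{i=1}^n x_iP$. Using $P\subset pM_\Lambda p\subset M_{\Lambda\cup\Lambda^\perp}$, this rewrites as $Px\subset\sum_{i=1}^n x_i M_{\Lambda\cup\Lambda^\perp}$, and the preceding paragraph applied to $v=x$ yields $x\in M_{\Lambda\cup\Lambda^\perp}$. Hence $\mathcal{QN}_{pM_\Gamma p}(P)\subset M_{\Lambda\cup\Lambda^\perp}$, and passing to the double commutant gives the stated inclusion.

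For assertion (2), the strategy is to reduce to (1) after transferring $P$ into an isomorphic copy inside $M_\Lambda$. Since $P\prec^s_{M_\Gamma}M_\Lambda$ while $P$ does not intertwine into any strictly smaller subalgebra $M_{\Lambda'}$ (with $\Lambda'\subsetneq\Lambda$), Proposition~\ref{results}(6) produces projections $p_0\in P$, $q_0\in M_\Lambda$, a nonzero partial isometry $w\in q_0M_\Gamma p_0$ and a $*$-homomorphism $\theta\colon p_0Pp_0\to q_0M_\Lambda q_0$ with $\theta(x)w=wx$ for every $x\in p_0Pp_0$ and $\theta(p_0Pp_0)\nprec_{M_\Lambda}M_{\Lambda'}$ for every $\Lambda'\subsetneq\Lambda$. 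For any $u\in\mathcal{N}_{pM_\Gamma p}(P)$ the element $wuw^*$ quasi-normalizes $R:=\theta(p_0Pp_0)\subset q_0M_\Lambda q_0$, so part (1) applied to $R$ places $wuw^*$ inside $M_{\Lambda\cup\Lambda^\perp}$. Assembling these intertwiners and invoking Theorem~\ref{intertwine} yields $\mathcal{N}_{pM_\Gamma p}(P)''\prec_{M_\Gamma}M_{\Lambda\cup\Lambda^\perp}$; the strengthening to $\prec^s$ then follows by running the same argument after cutting down by arbitrary projections in the relative commutant, exactly as in the maximality argument of Proposition~\ref{Zariski_hull}.

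The main technical obstacle is assertion (2): the passage through the partial isometry $w$ requires careful bookkeeping of corners, and the promotion from $\prec$ to $\prec^s$ relies on the maximality analysis of Proposition~\ref{Zariski_hull}. These points are precisely what will be streamlined in the stronger Proposition~\ref{normalizer'} mentioned in the excerpt, which is why only (1) is recorded here in detail.
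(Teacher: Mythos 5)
Your treatment of part (1) is exactly the paper's: Proposition~\ref{intertwiners}(2) with $\Gamma_1=\Lambda$, $\Gamma_2=\Lambda\cup\Lambda^\perp$, plus the observation that $\overline{\mathrm{sp}}(M_{\Lambda^\perp}M_{\Lambda\cup\Lambda^\perp})\subset \mathrm{L}^2(M_{\Lambda\cup\Lambda^\perp})$ and that a bounded element of $M_\Gamma$ whose $\mathrm{L}^2$-class lies in $\mathrm{L}^2(M_{\Lambda\cup\Lambda^\perp})$ equals its conditional expectation and hence lies in $M_{\Lambda\cup\Lambda^\perp}$; the deduction for the quasi-normalizer is also the intended one. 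This part is correct.

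For part (2) your strategy (transfer $P$ into $M_\Lambda$ via an intertwiner $w$ with $\theta(p_0Pp_0)\nprec_{M_\Lambda}M_{\Lambda'}$, push normalizing unitaries through $w$, apply part (1), then cut by projections to get $\prec^s$) is the same as the paper's proof of the stronger Proposition~\ref{normalizer'}, but it has a genuine gap at its central step: the assertion that $wuw^*$ quasi-normalizes $R=\theta(p_0Pp_0)$ is given no justification and is not true in general. Unwinding it, what you need is that $p_0up_0$ quasi-normalizes $p_0Pp_0$ (or at least that $R(wuw^*)\subset\sum_j v_jM_{\Lambda\cup\Lambda^\perp}$, which amounts to an algebraic inclusion $Pp_0\subset\sum_j y_j\,p_0Pp_0$ with finitely many $y_j$). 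For a non-factorial $P$ — and the vertex algebras here are arbitrary diffuse tracial algebras, so $P$ can be far from a factor — a corner of a normalizing unitary need not quasi-normalize the corresponding corner of $P$: this is precisely the delicate point that Popa's argument in \cite{Po06b} (Lemma 3.5) handles by cutting with the central projections $z={\bf 1}_{[\frac1m,\infty)}(\mathrm{E}_{\mathcal Z(P)}(q))$, written as $z=\sum_{i=1}^m w_iw_i^*$ with $w_i\in P$, $w_i^*w_i\le q$, showing that $qzuqz\in\mathcal{QN}_{qM_\Gamma q}(qPq)$, conjugating by the intertwiner, and then letting $z$ increase to the central support of $q$ to conclude $wuw^*\in M_{\Lambda\cup\Lambda^\perp}$ in the limit. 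This is exactly what the proof of Proposition~\ref{normalizer'} in the paper does, and without it your step fails. A secondary, more minor point: the promotion from $\prec$ to $\prec^s$ is not obtained from the maximality argument of Proposition~\ref{Zariski_hull} (which only produces some corner on which $\prec^s$ holds); one must fix an arbitrary nonzero projection $p'\in\mathcal N_{pM_\Gamma p}(P)'\cap pM_\Gamma p$, use $Pp'\prec_{M_\Gamma}M_\Lambda$ (available from the $\prec^s$ hypothesis) together with Proposition~\ref{cutting_down}(2) to choose the intertwiner with right support under $p'$, and check $vv^*\in M_{\Lambda\cup\Lambda^\perp}$, so that the resulting intertwining witnesses $\mathcal N_{pM_\Gamma p}(P)''p'\prec_{M_\Gamma}M_{\Lambda\cup\Lambda^\perp}$ for every such $p'$.
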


For further use, we establish the following strengthening of Proposition \ref{normalizer}(2).
\begin{proposition}\label{normalizer'}
Assume the setting from Notation \ref{setup}.
Let $P\subset pM_\Gamma p$ be a von Neumann subalgebra, $\Lambda\subset\Gamma$ a full subgraph and $e\in M_{\Lambda\cup\Lambda^\perp}'\cap M_\Gamma$ a nonzero projection. Assume that $P\prec_{M_\Gamma}^sM_{\Lambda}e$ and $P\nprec_{M_{\Gamma}}M_{\Lambda'}$, for every proper full subgraph $\Lambda'\subsetneq\Lambda$.

Then $\mathcal N_{pM_\Gamma p}(P)''\prec^s_{M_\Gamma}M_{\Lambda\cup\Lambda^\perp}e$. \end{proposition}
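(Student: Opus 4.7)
The plan is to mimic the proof of Proposition~\ref{normalizer}(2), carrying the projection $e$ through every step by means of Proposition~\ref{cutting_down}(2). Fix a nonzero projection $r \in N' \cap pM_\Gamma p$, where $N := \mathcal{N}_{pM_\Gamma p}(P)''$, and set $\Sigma := \Lambda \cup \Lambda^\perp$. I aim to show $Nr \prec_{M_\Gamma} M_\Sigma e$. Since $r \in N' \subset P'$, Remark~\ref{elementary_facts}(3) gives $Pr \prec^s_{M_\Gamma} M_\Lambda e$, while Remark~\ref{elementary_facts}(2) gives $Pr \nprec_{M_\Gamma} M_{\Lambda'}$ for every proper $\Lambda' \subsetneq \Lambda$. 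Applying Proposition~\ref{cutting_down}(2) with $Q = M_\Lambda$ and $q' = e$ (valid because $e \in M_\Sigma' \cap M_\Gamma \subset M_\Lambda' \cap M_\Gamma$), and the $S_i$ ranging over $M_{\Lambda'}$ for proper $\Lambda' \subsetneq \Lambda$, then yields projections $q_1 \in P$, $q_2 \in M_\Lambda$, a nonzero partial isometry $w \in q_2 e \cdot M_\Gamma \cdot q_1 r$, and a $*$-homomorphism $\theta \colon q_1 Pq_1 \to q_2 M_\Lambda q_2$ with $\theta(x)w = wx$ for $x \in q_1 Pq_1$, such that $P_0 := \theta(q_1 Pq_1)$ satisfies $P_0 \nprec_{M_\Lambda} M_{\Lambda'}$ for every proper $\Lambda' \subsetneq \Lambda$. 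The crucial gain from the choice $q' = e$ is that $w = ew$.

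The core observation is the following $w$-trick: every $v \in pM_\Gamma$ satisfying $Pv \subset \sum_i c_i M_\Lambda e$ for some finite family $c_1,\dots,c_k \in M_\Gamma$ must satisfy $wv \in M_\Sigma e$. Indeed, $P_0(wv) = w \cdot (q_1 Pq_1) v \subset w \sum_i c_i M_\Lambda e \subset \sum_i (w c_i) M_\Sigma$, so Proposition~\ref{normalizer}(1) applied to $P_0 \subset q_2 M_\Lambda q_2$ (whose minimality hypothesis is built into the construction) forces $wv \in M_\Sigma$. Specializing the containment $Pv \subset \sum_i c_i M_\Lambda e$ at the unit of $P$ shows $v \in M_\Gamma \cdot e$, whence $wv = (wv)e$; combined with $e \in M_\Sigma' \cap M_\Gamma$ this yields $wv \in M_\Sigma \cap M_\Gamma e = eM_\Sigma e = M_\Sigma e$. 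The space $\mathcal{V}' := \{v \in pM_\Gamma : Pv \subset \sum_i c_i M_\Lambda e \text{ for some } c_i \in M_\Gamma\}$ is closed under left multiplication by $\mathcal{N}_{pM_\Gamma p}(P)$, because $P(uv) = (uPu^*)(uv) = uPv$ for $u$ normalising $P$. Thus, for any nonzero intertwiner $v_0 \in rM_\Gamma \cap \mathcal{V}'$ one obtains $w \mathcal{N}_{pM_\Gamma p}(P) v_0 \subset M_\Sigma e$, and by $\|\cdot\|_2$-continuity $wNv_0 \subset L^2(M_\Sigma)e$.

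The remaining task is to upgrade this pointwise estimate into an actual intertwining $Nr \prec_{M_\Gamma} M_\Sigma e$ by exhibiting a nonzero $N$-$M_\Sigma e$-sub-bimodule of $L^2(rM_\Gamma)$ that is finitely generated as a right $M_\Sigma e$-module. This is the main technical obstacle, since the $w$-trick only controls the ``$w^*w$-piece'' of a candidate sub-bimodule and says nothing about its orthogonal complement. I plan to overcome this by combining two ideas. First, the strong intertwining $Pr \prec^s_{M_\Gamma} M_\Lambda e$ together with Zorn's lemma produces a family of partial isometries $\{w_\alpha\}$ as in Proposition~\ref{cutting_down}(2), with mutually orthogonal source projections, that collectively cover $r$; the $w$-trick applied to each $w_\alpha$ then controls the corresponding piece of any intertwiner. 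Second, because $\Sigma = \Lambda \cup \Lambda^\perp$ carries the join structure $\Lambda \circ \Lambda^\perp$, any $y \in M_{\Lambda^\perp}$ commutes with both $M_\Lambda$ and $e$, so $y \cdot M_\Lambda e = M_\Lambda e \cdot y$; consequently every finitely generated right $M_\Lambda e$-sub-bimodule is automatically a right $M_\Sigma e$-sub-module with the same generators, which upgrades the relevant Hilbert modules from $M_\Lambda e$ to $M_\Sigma e$ without loss of finite generation. Together these ingredients produce the desired sub-bimodule and, $r$ being arbitrary, give the conclusion $N \prec^s_{M_\Gamma} M_\Sigma e$.
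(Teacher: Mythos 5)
Your first half is sound and parallels the paper: applying Proposition~\ref{cutting_down}(2) with $q'=e$ to get $w\in q_2e\,M_\Gamma q_1r$ and $\theta$ with $\theta(q_1Pq_1)\nprec_{M_\Lambda}M_{\Lambda'}$, and then the observation (via Proposition~\ref{normalizer}(1)) that $w$ pushes full-$P$ intertwiners into $M_{\Lambda\cup\Lambda^\perp}e$, are both correct. The problem is the last step, which is where the whole content of the proposition lies, and which you yourself flag as ``the main technical obstacle'': you never convert the one-sided containments $wuv_0\in M_{\Lambda\cup\Lambda^\perp}e$ (for $u\in\mathcal N_{pM_\Gamma p}(P)$) into $\mathcal N_{pM_\Gamma p}(P)''r\prec_{M_\Gamma}M_{\Lambda\cup\Lambda^\perp}e$. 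Since $w$ sits to the \emph{left} of $u$, the map $u\mapsto wuv_0$ is neither a $*$-homomorphism nor left $N$-modular, so it produces no homomorphism from a corner of $N$ and no left-$N$-invariant subspace; and containment of vectors in $\mathrm{L}^2(M_{\Lambda\cup\Lambda^\perp})e$ is far weaker than membership in a finitely generated right module. Your two proposed repairs do not close this. A Zorn family $\{w_\alpha\}$ with orthogonal source projections covering $r$ does not help, because the projections $w_\alpha^*w_\alpha$ are not $N$-invariant, so the ``pieces'' they cut out of a candidate bimodule are not $N$-$M_{\Lambda\cup\Lambda^\perp}e$-sub-bimodules and cannot be patched. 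And the claim that ``every finitely generated right $M_\Lambda e$-sub-bimodule is automatically a right $M_{\Lambda\cup\Lambda^\perp}e$-module with the same generators'' is false as stated: a right $M_\Lambda e$-module need not be invariant under right multiplication by $M_{\Lambda^\perp}$ (the commutation $yM_\Lambda e=M_\Lambda e\,y$ only says the $M_{\Lambda\cup\Lambda^\perp}e$-module it \emph{generates} is finitely generated), and in any case the missing structure is on the left ($N$-invariance plus finite right dimension simultaneously), which is exactly what has not been produced.

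The paper's proof fills precisely this gap by conjugating rather than multiplying on one side: for a normalizing unitary $u$ it considers $v(qzuqz)v^*$ with $z=\sum_i w_iw_i^*\in\mathcal Z(P)$ (the cut-down by $z$ is the device that passes from the corner $qPq$, on which $\theta$ is defined, to the full normalizer), notes that $qzuqz$ quasi-normalizes $qPq$, hence $v(qzuqz)v^*$ quasi-normalizes $\theta(qPq)$ and lies in $M_{\Lambda\cup\Lambda^\perp}$ by Proposition~\ref{normalizer}(1); letting $z$ increase to the central support of $q$ gives $vuv^*\in M_{\Lambda\cup\Lambda^\perp}$ for all such $u$. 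This yields a genuine $*$-homomorphism $\rho=\mathrm{Ad}(v)$ from a corner of $\mathcal N_{pM_\Gamma p}(P)''p'$ into $tM_{\Lambda\cup\Lambda^\perp}t$ with $t=vv^*\le re\le e$, and $v$ itself is the intertwining partial isometry, so the conclusion follows from the intertwining criterion with the projection $e$ carried along. If you want to salvage your approach, you essentially need this conjugation step (e.g.\ applying your $w$-trick to corner intertwiners such as $w^*$ after the $z$-cut-down), at which point you recover the paper's argument; as written, your proposal does not prove the statement.
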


\begin{proof} 
We adapt the proof of \cite[Proposition 5.8(2)]{BCC24}, following closely the proofs of \cite[Lemma 3.5]{Po06b} and \cite[Lemma 9.4]{Io15}.
Let $p'\in \mathcal N_{pM_\Gamma p}(P)'\cap pM_\Gamma p$ be a nonzero projection. Since $Pp'\prec_{M_\Gamma}M_\Lambda e$ and $P\nprec_{M_{\Gamma}}M_{\Lambda'}$, for every $\Lambda'\subsetneq\Lambda$, by Proposition \ref{cutting_down}(2), we can find projections $q\in P$, $r\in M_\Lambda$, a nonzero partial isometry $v\in reM_\Gamma qp'$ and a $*$-homomorphism $\theta:qPq\rightarrow rM_\Lambda r$ such that 
\begin{itemize}
\item $\theta(x)v=vx$, for every $x\in qPq$, and \item $\theta(qPq)\nprec_{M_\Lambda}M_{\Lambda'}$, for every proper full subgraph $\Lambda'\subsetneq\Lambda$.  
\end{itemize}

Let $u\in\mathcal N_{pM_\Gamma p}(P)$. 
Let $z\in\mathcal Z(P)$ be a projection such that $z=\sum_{i=1}^m w_iw_i^*$, for some partial isometries $w_i\in P$ satisfying $w_i^*w_i\leq q$, for every $1\leq i\leq m$. For $m\geq 1$, this condition is equivalent to having $z\leq m\text{E}_{\mathcal Z(P)}(q)$ and thus
 holds for $z={\bf 1}_{[\frac{1}{m},\infty)}(\text{E}_{\mathcal Z(P)}(q))$.
Then $$qzuqz(qPq)\subset qzuP=qzPu=qPzu\subset\sum_{i=1}^m(qPw_i)(w_i^*u)\subset \sum_{i=1}^m qPq(w_i^*u), $$ and similarly $(qPq)qzuqz\subset\sum_{i=1}^muw_i(qPq)$.
Thus, we have that $qzuqz\in \mathcal Q\mathcal N_{qM_\Gamma q}(qPq)$.

Next, if $x\in \mathcal Q\mathcal N_{qM_\Gamma q}(qPq)$, then $vxv^*\in \mathcal Q\mathcal N_{rM_\Gamma r}(\theta(qPq))$. 
Hence $v(qzuqz)v^*\in \mathcal Q\mathcal N_{rM_\Gamma r}(\theta(qPq))$. 
On the other hand, by Proposition \ref{normalizer}(1), we have that $\mathcal Q\mathcal N_{rM_\Gamma r}(\theta(qPq))\subset M_{\Lambda\cup\Lambda^\perp}$. Thus, we derive that $v(qzuqz)v^*\in M_{\Lambda\cup\Lambda^\perp}$. Since the projection $z$ can be taken to approximate arbitrarily well the central support of $q$ in $P$, we conclude that $vuv^*=v(quq)v^*\in M_{\Lambda\cup\Lambda^\perp}$. Since this holds for every $u\in\mathcal N_{pM_\Gamma p}(P)$, we deduce that  $t:=vv^*\in M_{\Lambda\cup\Lambda^\perp}$ and $v\mathcal N_{pM_\Gamma p}(P)''v^*\subset tM_{\Lambda\cup\Lambda^\perp}t$. Since $p_1:=v^*v\in (qPq)'\cap qM_{\Gamma}q$ and $p_1\leq p'$, we have $p_1\in \mathcal N_{pM_\Gamma p}(P)''p'$. 
Therefore, the map $\rho:p_1(\mathcal N_{pM_\Gamma p}(P)''p')p_1\to t M_{\Lambda\cup\Lambda^\perp}t$ sending $x$ to $vxv^*$
is a $*$-homomorphism which satisfies $\rho(x)v=vx$, for every $x\in p_1(\mathcal N_{pM_\Gamma p}(P)''p')p_1$, and we have $vv^*=t$ and $v^*v=p_1$. As $t\leq re\leq e$, we also have $t=te\in M_{\Lambda\cup\Lambda^\perp}e$.
Hence, $\mathcal N_{pM_\Gamma p}(P)''p'\prec_{M_\Gamma}M_{\Lambda\cup\Lambda^\perp}e$, which finishes the proof.
\end{proof}

\subsection{Additional consequences of Proposition \ref{intertwiners}}
In the proofs of Theorems \ref{arbitrary} and \ref{amenable}, given a von Neumann subalgebra $P\subset pM_\Gamma p$, we will need to analyze not only the full subgraphs $\Lambda\subset\Gamma$ such that $P\prec_{M_\Gamma}M_\Lambda$, but also the projections $s\in M_\Lambda'\cap M_\Gamma$ such that $P\prec_{M_\Gamma}M_{\Lambda} s$. We continue with two corollaries of Proposition \ref{intertwiners} dealing with such projections.

\begin{corollary}\label{right_corner}
Assume the setting from Notation \ref{setup}.
Let $P\subset pM_\Gamma p$ be a von Neumann subalgebra, $\Gamma_1,\Gamma_2\subset\Gamma$ full  subgraphs with $\Gamma_1\subset\Gamma_2$, and $e\in M_{\Gamma_2}'\cap M_\Gamma$ a projection. Assume that $P\prec_{M_\Gamma}^sM_{\Gamma_1}$ and $P\prec_{M_\Gamma}^sM_{\Gamma_2}e$.

Then $P\prec_{M_\Gamma}^sM_{\Gamma_1}e$.
\end{corollary}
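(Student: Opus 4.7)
My plan is to adapt the argument used for Corollary~\ref{intersection}, carrying the corner projection $e$ through every stage. Fix a nonzero projection $p'\in P'\cap pM_\Gamma p$; it suffices to establish $Pp'\prec_{M_\Gamma} M_{\Gamma_1}e$. Choose $\Lambda\subset\Gamma_1$ of minimal cardinality such that $Pp'\prec_{M_\Gamma}M_\Lambda$ (which exists by the first hypothesis). Applying Proposition~\ref{cutting_down}(2) with $S_i=M_{\Lambda'}$ for $\Lambda'\subsetneq\Lambda$ yields projections $q\in P$, $r\in M_\Lambda$, a nonzero partial isometry $v_1\in rM_\Gamma qp'$, and a $*$-homomorphism $\theta:qPq\to rM_\Lambda r$ with $\theta(x)v_1=v_1 x$ for every $x\in qPq$, such that $R:=\theta(qPq)$ satisfies $R\nprec_{M_\Lambda}M_{\Lambda'}$ for every $\Lambda'\subsetneq\Lambda$. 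By Proposition~\ref{intertwiners}(1), this entails $R\nprec_{M_\Gamma}M_{\Lambda'}$, hence also $R\nprec_{M_\Gamma}M_{\Lambda'}e$, for every $\Lambda'\subsetneq\Lambda$.

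Next, I transfer the second hypothesis to $R$. Setting $q'=v_1^*v_1\in(qPq)'\cap qM_\Gamma q$ and $t=v_1v_1^*\in R'\cap rM_\Gamma r$, the map $xq'\mapsto v_1xv_1^*$ is a $*$-isomorphism of corners $qPqq'\cong Rt$. Since $q'\in(qPqp')'\cap p'M_\Gamma p'$, Remark~\ref{elementary_facts}(3) combined with $P\prec^s_{M_\Gamma}M_{\Gamma_2}e$ gives $qPqq'\prec_{M_\Gamma}M_{\Gamma_2}e$; conjugating Popa's criterion test elements by $v_1$ (replacing $\tilde a,\tilde b\in q'M_\Gamma e$ by $v_1\tilde a,v_1\tilde b\in tM_\Gamma e$) transfers this to $R\prec_{M_\Gamma}M_{\Gamma_2}e$.

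Now I apply Proposition~\ref{results}(6) to $R\prec_{M_\Gamma}M_{\Gamma_2}e$ with $S_i=M_{\Lambda'}e$ for $\Lambda'\subsetneq\Lambda$, yielding projections $r_0\in R$, $s\in M_{\Gamma_2}e$, a nonzero partial isometry $w\in sM_\Gamma r_0$ and a $*$-homomorphism $\psi:r_0Rr_0\to s(M_{\Gamma_2}e)s$ with $\psi(y)w=wy$ and $\psi(r_0Rr_0)\nprec_{M_{\Gamma_2}e}M_{\Lambda'}e$ for every $\Lambda'\subsetneq\Lambda$. Setting $v=w^*\in r_0M_\Gamma es$, this reads $r_0Rr_0\cdot v\subset vM_{\Gamma_2}e$ and $v=ve$. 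Since $r_0Rr_0\nprec_{M_\Lambda}M_{\Lambda'}$ for $\Lambda'\subsetneq\Lambda$ (by Remark~\ref{elementary_facts}(2)), Proposition~\ref{intertwiners}(2) places $v\in\overline{\mathrm{sp}}(M_{\Lambda^\perp}M_{\Gamma_2})$, and combined with $v=ve$ we get $v\in\overline{\mathrm{sp}}(M_{\Lambda^\perp}M_{\Gamma_2}e)$.

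The final step is to convert this into the desired intertwining $R\prec_{M_\Gamma}M_\Lambda e$. Since $M_{\Lambda^\perp}$ commutes with $R\subset M_\Lambda$ and $M_{\Lambda\cup\Lambda^\perp}=M_\Lambda\overline{\otimes}M_{\Lambda^\perp}$, the $M_{\Lambda^\perp}$-factor of $v$ may be stripped to yield a new intertwiner: using the Fourier-type description supplied by Lemma~\ref{intersections} together with the identities $\mathcal W_{\Lambda\cup\Lambda^\perp}\subset \mathcal W_{\Lambda^\perp}\mathcal W_{\Gamma_2}$ and $v=ve$, one shows that the orthogonal projection of $v$ onto the $R$-$M_\Lambda e$-bimodule generated by $v$ inside $L^2(M_\Gamma)$ is nonzero and provides a partial isometry $v'$ with $r_0Rr_0\cdot v'\subset v'M_\Lambda e$. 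Consequently $R\prec_{M_\Gamma}M_\Lambda e\subset M_{\Gamma_1}e$, and reversing the corner transfer together with Remark~\ref{elementary_facts}(2) gives $Pp'\prec_{M_\Gamma}M_{\Gamma_1}e$, as desired. The main obstacle I anticipate is precisely this last step, as extracting a nonzero $M_\Lambda e$-valued intertwiner from the $M_{\Gamma_2}e$-valued one requires combining the commutativity of $M_{\Lambda^\perp}$ with $R$, the constraint $v=ve$, and the graph-theoretic structure of $\overline{\mathrm{sp}}(M_{\Lambda^\perp}M_{\Gamma_2})$.
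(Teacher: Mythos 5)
Your setup (minimal $\Lambda\subset\Gamma_1$, Proposition~\ref{cutting_down}(2), transferring the hypothesis $P\prec^s_{M_\Gamma}M_{\Gamma_2}e$ through the corner isomorphism, and Proposition~\ref{intertwiners}(2) to place the intertwiner $v$ in $\overline{\mathrm{sp}}(M_{\Lambda^\perp}M_{\Gamma_2})$ with $v=ve$) follows the same lines as the paper's proof of Corollary~\ref{right_corner}. But the decisive final step is not proved, and the device you propose for it does not work. The $R$-$M_\Lambda e$-bimodule generated by $v$ is $\overline{\mathrm{sp}}(RvM_\Lambda e)$, and since $r_0v=v=ve$ this bimodule already contains $v$; its orthogonal projection applied to $v$ returns $v$ itself, and in any case membership of some vector in such a bimodule gives no intertwining relation of the form $r_0Rr_0\,v'\subset v'M_\Lambda e$. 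So nothing in your argument actually attaches the projection $e$ to the $M_\Lambda$-side. The mechanism the paper uses is different and is the real content of the proof: for $u\in\mathcal U(M_{\Lambda_1^\perp})$ one sets $\xi_u=\mathrm{E}_{M_{\Lambda_2}}(u^*v)r$, where $r$ is the left support of the intertwiner into $M_{\Lambda_2}$ whose range sits under $e$; using that the $M_{\Lambda_1}$-valued homomorphism commutes with $u$ and that both homomorphisms take values in $M_{\Lambda_2}$, one gets $\theta_1(x)\xi_u=\xi_u\theta_2(x)$, one finds $u$ with $\xi_u\neq 0$, and then $w=\xi_uv_2$ is shown (with a separate non-vanishing argument) to intertwine $\rho(x)=\theta_1(x)e$ with the \emph{original} algebra, yielding $Q\prec_{M_\Gamma}M_{\Lambda_1}e$ directly. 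None of this appears in your sketch, and it cannot be replaced by a projection onto a bimodule.

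There is a second, structural problem caused by your order of operations. You perform the $M_\Lambda$-intertwining first (from $P$, via $v_1$) and the $M_{\Gamma_2}e$-intertwining second (from $R=\theta(qPq)$, via $w$), and you plan to prove the intermediate statement $R\prec_{M_\Gamma}M_\Lambda e$ and then ``reverse the corner transfer''. Even granting that intermediate statement, the reversal needs $Rt\prec_{M_\Gamma}M_\Lambda e$ with $t=v_1v_1^*\in R'\cap rM_\Gamma r$, and $\prec$ does not pass to corners: the intertwiner you would obtain for $R$ could be supported orthogonally to $t$, in which case composing with $v_1^*$ gives $0$. This is exactly the support-matching issue the paper's proof is engineered to avoid: it does the $M_{\Gamma_2}e$-intertwining first, directly from the original algebra, and then cuts the second ($M_{\Lambda_1}$-)intertwining by the right support $q'$ of the first intertwiner, which guarantees $v=v_1v_2^*\neq 0$ and, at the end, $\xi_uv_2\neq 0$. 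To salvage your route you would have to cut your second application of Proposition~\ref{cutting_down}(2) by $t$ and then essentially reproduce the paper's conditional-expectation argument; as written, the proof has a genuine gap at its main step.
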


The proof of Corollary~\ref{right_corner} elaborates on the proof of Corollary~\ref{intersection} that it extends.

\begin{proof}
We will establish the following claim.

\begin{claim}
    Let $Q\subset pM_\Gamma p$ be a von Neumann subalgebra, and let $\Lambda_1,\Lambda_2\subset\Gamma$ be two full subgraphs. Let $e\in M'_{\Lambda_2}\cap M_\Gamma$ be a projection. Assume that $Q\prec^s_{M_\Gamma}M_{\Lambda_1}$ and $Q\prec^s_{M_\Gamma}M_{\Lambda_2}e$, and that for every proper full subgraph $\Lambda'_1\subsetneq\Lambda_1$, one has $Q\not\prec_{M_\Gamma}M_{\Lambda'_1}$.

    Then $\Lambda_1\subset\Lambda_2$ and $Q\prec_{M_{\Gamma}}M_{\Lambda_1}e$.
\end{claim}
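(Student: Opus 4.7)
The plan is to mirror the proof of Corollary~\ref{intersection}, constructing a compatible pair of intertwiners whose product relates $\theta_1$ and $\theta_2$, and then applying Proposition~\ref{intertwiners}; the novelty is that we must also carry the projection $e$ through the construction to obtain $Q\prec_{M_\Gamma}M_{\Lambda_1}e$.

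First, I apply Proposition~\ref{cutting_down}(1) to the intertwining $Q\prec_{M_\Gamma}M_{\Lambda_2}e$ (which follows from the strong intertwining hypothesis) to obtain projections $q_2\in Q$, $r_2\in M_{\Lambda_2}$, a nonzero partial isometry $v_2\in r_2 e M_\Gamma q_2$, and a $*$-homomorphism $\theta_2:q_2Qq_2\to r_2M_{\Lambda_2}r_2$ with $\theta_2(x)v_2=v_2 x$. Let $q'\in Q'\cap pM_\Gamma p$ be such that $v_2^*v_2=q_2 q'$; since $e$ commutes with $r_2\in M_{\Lambda_2}$, we have $v_2=ev_2$, i.e.\ $v_2\in eM_\Gamma$. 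Using the strong intertwining $Q\prec^s_{M_\Gamma}M_{\Lambda_1}$ together with Remark~\ref{elementary_facts}, one has $q_2Qq_2\,q'\prec_{M_\Gamma}M_{\Lambda_1}$, and the minimality hypothesis on $\Lambda_1$ ensures that $q_2Qq_2\,q'\not\prec_{M_\Gamma}M_{\Lambda_1'}$ for every proper full subgraph $\Lambda_1'\subsetneq\Lambda_1$. Proposition~\ref{cutting_down}(2) applied with $S_i=M_{\Lambda_1'}$ then yields projections $q_1\in q_2Qq_2$, $r_1\in M_{\Lambda_1}$, a nonzero partial isometry $v_1\in r_1 M_\Gamma q_1 q'$, and a $*$-homomorphism $\theta_1:q_1Qq_1\to r_1M_{\Lambda_1}r_1$ with $\theta_1(x)v_1=v_1 x$ and $\theta_1(q_1Qq_1)\not\prec_{M_{\Lambda_1}}M_{\Lambda_1'}$ for every $\Lambda_1'\subsetneq\Lambda_1$.

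Next, I set $v:=v_1 v_2^*$. Exactly as in the proof of Corollary~\ref{intersection}, the fact that $q_1\le q_2$ gives $vv^*=v_1v_1^*\ne 0$, so $v\ne 0$, and one has $\theta_1(x)v=v\theta_2(x)$ for $x\in q_1Qq_1$, together with $v\in r_1M_\Gamma r_2 e$. Applying Proposition~\ref{intertwiners}(1) to $R:=\theta_1(q_1Qq_1)\subset r_1M_{\Lambda_1}r_1$ and the relation $Rv\subset vM_{\Lambda_2}$ delivers $\Lambda_1\subset\Lambda_2$, which is the first part of the claim. In particular $e\in M_{\Lambda_1}'\cap M_\Gamma$, so $e$ commutes with $M_{\Lambda_1}$ and hence with $\theta_1(q_1Qq_1)$.

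It remains to upgrade this to $Q\prec_{M_\Gamma}M_{\Lambda_1}e$. The natural candidate intertwiner is $ev_1\in r_1 e M_\Gamma q_1 q'$: the commutation $[e,\theta_1(x)]=0$ for $x\in q_1Qq_1$ yields $\theta_1(x)(ev_1)=ev_1\cdot x$, so provided that $ev_1\ne 0$, Proposition~\ref{cutting_down}(1) gives $Qq'\prec_{M_\Gamma}M_{\Lambda_1}e$, and Remark~\ref{elementary_facts} then yields $Q\prec_{M_\Gamma}M_{\Lambda_1}e$, completing the proof. The main obstacle is thus to verify that $ev_1\ne 0$. My plan is to exploit the identity $v_1=vv_2$, which follows from $v_1v_2^*v_2=v_1 q_2q'=v_1$ (using $q_1\le q_2$), combined with Proposition~\ref{intertwiners}(2) (which locates $v$ in $\overline{\mathrm{sp}}(M_{\Lambda_1^\perp}M_{\Lambda_2})$) and the identity $v_2=ev_2$. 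The delicate point, which is the hard part of the argument, is that $M_{\Lambda_1^\perp}$ need not commute with $e$ when $\Lambda_1^\perp\not\subset\Lambda_2$, so one must use the graph-product structure of the relevant elements to enforce a nonzero $e$-component of $v_1$; once this is resolved, the corollary then follows by running the claim on each corner of $P$ cut by projections in $\mathcal Z(P'\cap pM_\Gamma p)$, as in the proof of Corollary~\ref{intersection}.
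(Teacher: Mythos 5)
Your construction of $v_1,v_2$, the product $v=v_1v_2^*$, and the application of Proposition~\ref{intertwiners} to obtain $\Lambda_1\subset\Lambda_2$ all match the paper's argument. The gap is in the upgrade to $Q\prec_{M_\Gamma}M_{\Lambda_1}e$: your proof hinges on $ev_1\neq 0$, which you explicitly leave unresolved ("once this is resolved..."), and this is exactly the point where the real work lies. Worse, the nonvanishing of $ev_1$ is not just unproved but can genuinely fail for the intertwiners the construction hands you. Since $v_1=vv_2$ and $v=vr$ with $r=v_2v_2^*\le e$, one has $ev_1=0$ if and only if $ev=0$, and nothing forces $ev\neq 0$: $v_1$ is produced by Proposition~\ref{cutting_down}(2) from the intertwining $q_2Qq_2q'\prec_{M_\Gamma}M_{\Lambda_1}$, which knows nothing about $e$. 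A concrete instance: take $\Gamma$ two adjacent vertices $a,b$ with $M_a$ diffuse, $\Lambda_1=\Lambda_2=\{a\}$, $e=1\otimes e_b$, $p=1\otimes e_b'$ with $e_b\perp e_b'$, and $Q=M_a\otimes e_b'$. All hypotheses of the claim hold, and a legitimate output of the construction is $v_2=1\otimes w_b$ (with $w_b^*w_b=e_b'$, $w_bw_b^*\le e_b$) and $v_1=1\otimes e_b'$ with $\theta_1(x\otimes e_b')=x\otimes 1$; then $ev_1=0$ and even $ev=0$, so your candidate intertwiner vanishes even though the conclusion $Q\prec_{M_\Gamma}M_ae$ is true.

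The paper circumvents this by building a different intertwiner rather than cutting $v_1$ by $e$. Using Proposition~\ref{intertwiners}(2), $v$ lies in $\overline{\mathrm{sp}}\{uM_{\Lambda_2}\mid u\in\mathcal U(M_{\Lambda_1^\perp})\}$, so there is a unitary $u\in\mathcal U(M_{\Lambda_1^\perp})$ with $\xi_u:=\mathrm{E}_{M_{\Lambda_2}}(u^*v)\,r\neq 0$ (this is checked via $\tau(\xi_u v^*u)=\|\mathrm{E}_{M_{\Lambda_2}}(u^*v)\|_2^2$, using $rv^*=v^*$). One then verifies $\theta_1(x)\xi_u=\xi_u\theta_2(x)$, and since $\xi_u\in M_{\Lambda_2}r$ with $r\le e$ and $e\in M_{\Lambda_2}'\cap M_\Gamma$, the relation $e\xi_u=\xi_u$ comes for free; setting $w=\xi_u v_2$ and $\rho(x)=\theta_1(x)e$ gives $\rho(x)w=wx$ with $\rho(q_1)w=\xi_uv_2\neq 0$, yielding $Q\prec_{M_\Gamma}M_{\Lambda_1}e$. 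In your toy example this $w$ is a nonzero multiple of $1\otimes w_b$, precisely where $ev_1=0$. So to complete your proof you should replace the candidate $ev_1$ by such a conditional-expectation-based intertwiner; as written, the argument is incomplete at its essential step.
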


Before proving the claim, let us first explain how the corollary follows. Let $P\subset pM_\Gamma p$ be as in the statement, and let $p'\in P'\cap pM_\Gamma p$. Let $\Lambda_1\subset\Gamma_1$ be a minimal subgraph such that $Pp'\prec_{M_\Gamma}M_{\Lambda_1}$. Then we can find a non-zero projection $p''\in P'\cap pM_\Gamma p$ with $p''\leq p'$ such that $Pp''\prec^s_{M_\Gamma}M_{\Lambda_1}$. We also have $Pp''\prec^s_{M_\Gamma}M_{\Gamma_2}e$. By choice of $\Lambda_1$, for every proper full subgraph $\Lambda'_1\subsetneq\Lambda_1$, we have $Pp''\not\prec_{M_\Gamma}M_{\Lambda'_1}$.  Applying the claim to $Q=Pp''$ (with $\Lambda_2=\Gamma_2$), we deduce that $Pp''\prec_{M_{\Gamma}}M_{\Lambda_1}e$, in particular $Pp'\prec_{M_\Gamma}M_{\Lambda_1}e$. Since this is true for any $p'\in P'\cap pM_\Gamma p$, we have proved that $P\prec^s_{M_\Gamma}M_{\Lambda_1}e$, in particular $P\prec^s_{M_{\Gamma}}M_{\Gamma_1}e$, as desired.

We now prove the above claim.
Since $Q\prec_{M_\Gamma}M_{\Lambda_2}e$, Proposition \ref{cutting_down}(1) provides projections $q_2\in Q,r_2\in M_{\Lambda_2}$, a nonzero partial isometry $v_2\in r_2eM_\Gamma q_2$ and a $*$-homomorphism $\theta_2:q_2Qq_2\rightarrow r_2M_{\Lambda_2}r_2$ such that 
\begin{enumerate}
\item $\theta_2(y)v_2=v_2y$, for every $y\in q_2Qq_2$.  
\end{enumerate}
Since $v_2^*v_2\in (q_2Qq_2)'\cap q_2M_\Gamma q_2=(Q'\cap pM_\Gamma p)q_2$, we can find a projection $q'\in Q'\cap pM_\Gamma p$ such that $v_2^*v_2=q_2q'$. Let $r=v_2v_2^*$. Then $r\in\theta_2(q_2Qq_2)'\cap r_2M_{\Gamma}r_2$ and $r\leq r_2e$.

We have $q_2Qq_2q'\prec_{M_\Gamma}M_{\Lambda_1}$ and $q_2Qq_2q'\nprec_{M_\Gamma}M_{\Lambda_1'}$, for every full subgraph $\Lambda_1'\subsetneq\Lambda_1$. Therefore, Proposition~\ref{cutting_down}(2) yields projections $q_1\in q_2Qq_2$, $r_1\in M_{\Lambda_1}$, a nonzero partial isometry $v_1\in r_1M_\Gamma q_1q'$ and a $*$-homomorphism $\theta_1:q_1Qq_1\rightarrow r_1M_{\Lambda_1}r_1$ such that 
\begin{itemize}
\item[(2)] $\theta_1(x)v_1=v_1x$, for every $x\in q_1Qq_1$, and 
\item [(3)] $\theta_1(q_1Qq_1)\nprec_{M_{\Lambda_1}}M_{\Lambda_1'}$, for every full subgraph $\Lambda_1'\subsetneq\Lambda_1$.
\end{itemize}

Let $v=v_1v_2^*$. Using (1) and (2), we get that 
\begin{equation}\label{theta12}\text{$\theta_1(x)v=v\theta_2(x)$, for every $x\in q_1Qq_1$.}\end{equation}
Since $vv^*=v_1(v_2^*v_2)v_1^*=v_1(q_2q')v_1^*=v_1v_1^*\not=0$, we get that $v\not=0$.  Let $R:=\theta_1(q_1Qq_1)\subset r_1M_{\Lambda_1}r_1$. Then $Rv\subset vM_{\Lambda_2}$ and  $R\nprec_{M_{\Lambda_1}}M_{\Lambda_1'}$, for every full subgraph $\Lambda_1'\subsetneq\Lambda_1$,  by (3). Proposition ~\ref{intertwiners} thus implies that $\Lambda_1\subset\Lambda_2$, and that $v$ lies in the $\|\cdot\|_2$-closure of the linear span of $M_{\Lambda_1^\perp}M_{\Lambda_2}$. 
Since every element of $M_{\Lambda_1^\perp}$ is a sum of four unitaries, it follows that $v$ belongs to the $\|\cdot\|_2$-closure of the linear span of $\{uM_{\Lambda_2}\mid u\in\mathcal U(M_{\Lambda_1^\perp})\}$. 

For $u\in\mathcal U(M_{\Lambda_1^\perp})$, define $\xi_u=\text{E}_{M_{\Lambda_2}}(u^*v)r$. 
Let $x\in q_1Qq_1$. By using \eqref{theta12} and that $u\in M_{\Lambda_1}'\cap M_{\Gamma}$ we get that  $\theta_1(x)(u^*v)=(u^*v)\theta_2(x)$. Applying $\text{E}_{M_{\Lambda_2}}$ and using that $\Lambda_1\subset\Lambda_2$, hence $M_{\Lambda_1}\subset M_{\Lambda_2}$, gives that $\theta_1(x)\text{E}_{M_{\Lambda_2}}(u^*v)=\text{E}_{M_{\Lambda_2}}(u^*v)\theta_2(x)$.
Since $r\in\theta_2(q_2Qq_2)'\cap r_2M_{\Gamma}r_2$, we deduce that 
\begin{equation}\label{xi_u}\text{$\theta_1(x)\xi_u=\xi_u\theta_2(x)$, for every $x\in q_1Qq_1$ and $u\in\mathcal U(M_{\Lambda_1^\perp})$.}\end{equation}

We next claim that there exists $u\in\mathcal U(M_{\Lambda_1^\perp})$ such that $\xi_u\not=0$. 
Since $v\not=0$ and $v$ belongs to the $\|\cdot\|_2$-closure of the linear span of $\{uM_{\Lambda_2}\mid u\in\mathcal U(M_{\Lambda_1^\perp})\}$, we can find $u\in\mathcal U(M_{\Lambda_1^\perp})$ such that $\text{E}_{M_{\Lambda_2}}(u^*v)\not=0$.  Since $v^*v=v_2(v_1^*v_1)v_2^*\leq v_2v_2^*=r$, we get that $vr=v$ and so $rv^*=v^*$. Thus,  $$\tau(\xi_uv^*u)=\tau(\text{E}_{M_{\Lambda_2}}(u^*v)rv^*u)=
\tau(\text{E}_{M_{\Lambda_2}}(u^*v)v^*u)=\|\text{E}_{M_{\Lambda_2}}(u^*v)\|_2^2\not=0,$$ and therefore $\xi_u\not=0$.

Let $u\in\mathcal U(M_{\Lambda_1^\perp})$ such that $\xi_u\not=0$. 
Since $e\in M_{\Lambda_2}'\cap M_{\Gamma}$ and $r\leq r_2e\leq e$, we get that $e\xi_u=\xi_u$.
By using this fact,  \eqref{xi_u} and (1), we derive that 
\begin{equation}\label{final_inter}
\text{$(\theta_1(x)e)(\xi_uv_2)=(\theta_1(x)\xi_u)v_2=(\xi_u\theta_2(x))v_2=(\xi_uv_2)x$, for every $x\in q_1Qq_1$.}
\end{equation}
Let $w=\xi_uv_2$ and define a $*$-homomorphism $\rho:q_1Qq_1\rightarrow r_1M_{\Lambda_1}r_1e$ by $\rho(x)=\theta_1(x)e$. Then \eqref{final_inter} implies that $\rho(x)w=wx$, for every $x\in q_1Qq_1$. Thus, in order to conclude that $Q\prec_{M_\Gamma}M_{\Lambda_1}e$ and  finish the proof, it remains to argue that 
$\rho(q_1)wq_1\not=0$, i.e.\ that $\rho(q_1)w\not=0$. 

  Note that since $vv^*=v_1v_1^*\leq r_1$, we have $r_1v=v$.
Since $r_1\in M_{\Lambda_1}\subset M_{\Lambda_2}$, $u\in M_{\Lambda_1^\perp}$ and $r_1v=v$, we  get that $r_1\xi_u=r_1\text{E}_{M_{\Lambda_2}}(u^*v)r=\text{E}_{M_{\Lambda_2}}(r_1u^*v)r=\text{E}_{M_{\Lambda_2}}(u^*r_1v)r=\xi_u$.
By using this fact we get that $\rho(q_1)w=\theta_1(q_1)(e\xi_u)v_2=r_1\xi_uv_2=\xi_uv_2$. 
Finally, since $\xi_ur=\xi_u$ and $v_2v_2^*=r$, we get that $(\xi_uv_2)(\xi_uv_2)^*=\xi_u(v_2v_2^*)\xi_u^*=\xi_ur\xi_u^*=\xi_u\xi_u^*$. Since $\xi_u\not=0$, we derive that $\xi_uv_2\not=0$ and thus $\rho(q_1)w\not=0$, which finishes the proof.
\end{proof}

\begin{corollary}\label{center_corner}
Assume the setting from Notation \ref{setup}.
Let $P\subset pM_\Gamma p$ be a von Neumann subalgebra, $\Lambda\subset\Gamma$ be a full  subgraph and $z\in M_{\Lambda\cup\Lambda^\perp}'\cap M_\Gamma$ be a nonzero projection.
 Assume that $P\prec^s_{M_\Gamma}M_{\Lambda}z$ and $P\nprec_{M_{\Gamma}}M_{\Lambda'}$, for every full subgraph $\Lambda'\subsetneq\Lambda$.

Then $P\nprec_{M_\Gamma}M_{\Lambda}(1-z)$.
\end{corollary}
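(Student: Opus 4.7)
The plan is to argue by contradiction, combining Propositions~\ref{results}, \ref{cutting_down}, and \ref{intertwiners}. Assume $P\prec_{M_\Gamma}M_\Lambda(1-z)$. By Proposition~\ref{results}(3), there is a nonzero projection $z'\in\mathcal N_{pM_\Gamma p}(P)'\cap pM_\Gamma p$ (hence $z'\in P'\cap pM_\Gamma p$) with $Pz'\prec^s_{M_\Gamma}M_\Lambda(1-z)$. The hypothesis $P\prec^s_{M_\Gamma}M_\Lambda z$ together with Remark~\ref{elementary_facts}(3) then gives $Pz'\prec^s_{M_\Gamma}M_\Lambda z$, while Remark~\ref{elementary_facts}(2) ensures $Pz'\nprec_{M_\Gamma}M_{\Lambda'}$ for every $\Lambda'\subsetneq\Lambda$.

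Next I would extract compatible intertwining data from these two strong intertwinings by sequencing two applications of Proposition~\ref{cutting_down}(2). First, applied to $Pz'\prec_{M_\Gamma}M_\Lambda(1-z)$ with $S_{\Lambda'}=M_{\Lambda'}$, it yields projections $p_2\in P$, $q_2\in M_\Lambda$, a nonzero partial isometry $v_2\in q_2(1-z)M_\Gamma p_2z'$, and a $*$-homomorphism $\theta_2:p_2Pp_2\to q_2M_\Lambda q_2$ satisfying $\theta_2(x)v_2=v_2x$ for $x\in p_2Pp_2$ and $\theta_2(p_2Pp_2)\nprec_{M_\Lambda}M_{\Lambda'}$. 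The intertwining relation forces $v_2^*v_2\in\tilde P'\cap p_2z'M_\Gamma p_2z'$, where $\tilde P:=p_2(Pz')p_2$. Since $\tilde P\prec^s_{M_\Gamma}M_\Lambda z$ by inheritance (Remark~\ref{elementary_facts}(7)), strong intertwining gives $\tilde P(v_2^*v_2)\prec_{M_\Gamma}M_\Lambda z$. A second application of Proposition~\ref{cutting_down}(2) then produces a projection $p_1\in\tilde P$ (which lies below $p_2z'$, so that $p_1Pp_1=p_1\tilde Pp_1$), a projection $q_1\in M_\Lambda$, a nonzero partial isometry $v_1\in q_1zM_\Gamma p_1(v_2^*v_2)$, and a $*$-homomorphism $\theta_1:p_1Pp_1\to q_1M_\Lambda q_1$ with $\theta_1(x)v_1=v_1x$ and $R_1:=\theta_1(p_1Pp_1)\nprec_{M_\Lambda}M_{\Lambda'}$ for $\Lambda'\subsetneq\Lambda$. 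The crucial feature of this sequencing is $v_1^*v_1\leq v_2^*v_2$, so setting $W:=v_2v_1^*$ yields $W^*W=v_1v_2^*v_2v_1^*=v_1v_1^*\neq 0$, and hence $W\neq 0$.

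A short computation using $\theta_i(x)v_i=v_ix$ and its adjoint yields $\theta_2(x)W=W\theta_1(x)$ for every $x\in p_1Pp_1$; taking adjoints and setting $v:=W^*$ gives $R_1v\subset vM_\Lambda$. Since $R_1\subset q_1M_\Lambda q_1$ satisfies $R_1\nprec_{M_\Lambda}M_{\Lambda'}$ for $\Lambda'\subsetneq\Lambda$, and $v\in q_1M_\Gamma\setminus\{0\}$, Proposition~\ref{intertwiners}(2) (applied with $\Gamma_1=\Gamma_2=\Lambda$) implies that $v$ lies in the $\|\cdot\|_2$-closure of $\text{sp}(M_{\Lambda^\perp}M_\Lambda)$, which coincides with $\text{L}^2(M_{\Lambda\cup\Lambda^\perp})$; intersecting with $M_\Gamma$ yields $v\in M_{\Lambda\cup\Lambda^\perp}$. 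Since $z\in M_{\Lambda\cup\Lambda^\perp}'\cap M_\Gamma$, $v$ commutes with $z$. But $W\in q_2(1-z)M_\Gamma q_1z$ forces $Wz=W$ and $zW=0$, and taking adjoints gives $zv=v$ and $vz=0$, which together with $vz=zv$ forces $v=0$, contradicting $W\neq 0$. The main technical obstacle is ensuring $W\neq 0$, and this is precisely why the two invocations of Proposition~\ref{cutting_down}(2) must be sequenced in this order, using strong intertwining to cut down by $v_2^*v_2$ before extracting $v_1$.
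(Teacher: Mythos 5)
Your proof is correct and follows essentially the same route as the paper's: you extract the two intertwinings in an order ensuring the right support of $v_1$ lies under $v_2^*v_2$ (the paper does this via Proposition~\ref{cutting_down}(1) followed by (2), cutting by a projection $q'\in P'\cap pM_\Gamma p$ with $v_2^*v_2=p_2q'$, rather than by $v_2^*v_2$ itself), conclude $v=v_1v_2^*\neq 0$ lies in $M_{\Lambda\cup\Lambda^\perp}$ from the non-embedding condition (the paper cites Proposition~\ref{normalizer}(1), which is the same statement you re-derive from Proposition~\ref{intertwiners}(2)), and get the contradiction from the supports of $v$ sitting under $z$ and $1-z$. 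The only nitpicks are notational/expository: $\theta_2$ is defined on $p_2Pp_2$, not on $p_1Pp_1\subset\tilde{P}$, so your relation should be read as $\theta_2(y)W=W\theta_1(x)$ for any $y\in p_2Pp_2$ with $yz'=x$ (which is well posed since $v_2z'=v_2$, $z'v_1^*=v_1^*$, and suffices for $R_1v\subset vM_\Lambda$), and before the second application of Proposition~\ref{cutting_down}(2) you should record that $\tilde{P}(v_2^*v_2)\nprec_{M_\Gamma}M_{\Lambda'}$, which is immediate from Remark~\ref{elementary_facts}(2) exactly as in your treatment of $Pz'$.
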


\begin{proof}
Assume by contradiction that $P\prec_{M_\Gamma}M_\Lambda (1-z)$.  By Proposition \ref{cutting_down}(1) we find projections $p_2\in P,q_2\in M_\Lambda$, a nonzero partial isometry $v_2\in q_2(1-z)M_\Gamma p_2$ and a $*$-homomorphism $\theta_2:p_2Pp_2\rightarrow q_2M_\Lambda q_2$ such that $\theta_2(x)v_2=v_2x$, for every $x\in p_2Pp_2$. 
Since $v_2^*v_2\in (p_2Pp_2)'\cap p_2M_\Gamma p_2=(P'\cap pM_\Gamma p)p_2$, we can find a projection $q'\in P'\cap pM_\Gamma p$ such that $v_2^*v_2=p_2q'$. 

As $P\prec^s_{M_\Gamma}M_{\Lambda}z$ and $P\nprec_{M_{\Gamma}}M_{\Lambda'}$, 
for every full subgraph $\Lambda'\subsetneq\Lambda$, we have $p_2Pp_2q'\prec_{M_\Gamma}M_{\Lambda}z$ and  $p_2Pp_2q'\nprec_{M_{\Gamma}}M_{\Lambda'}$, 
for every full subgraph $\Lambda'\subsetneq\Lambda$.
Proposition \ref{cutting_down}(2) gives projections $p_1\in p_2Pp_2$, $q_1\in M_{\Lambda}$, a nonzero partial isometry $v_1\in q_1zMp_1q'$ and a $*$-homomorphism $\theta_1:p_1Pp_1\rightarrow q_1M_\Lambda q_1$ satisfying $\theta_1(x)v_1=v_1x$, for every $x\in p_1Pp_1$, and $Q:=\theta_1(p_1Pp_1)\nprec_{M_{\Lambda}}M_{\Lambda'}$, for every full subgraph $\Lambda'\subsetneq\Lambda$.

Let $v=v_1v_2^*$. then $\theta_1(x)v=v\theta_2(x)$, for every $x\in p_1Pp_1$. Hence, we have $Qv\subset vM_\Lambda$. Since $vv^*=v_1(v_2^*v_2)v_1^*=v_1(p_2q')v_1^*=v_1v_1^*\not=0$, we get that $v\not=0$.
Since $Q\nprec_{M_{\Lambda}}M_{\Lambda'}$, for every full subgraph $\Lambda'\subsetneq\Lambda$, by applying Proposition \ref{normalizer}(1) we conclude that $v\in M_{\Lambda\cup\Lambda^\perp}$.
On the other hand, we have $vv^*=v_1v_1^*\leq z$ and $v^*v\leq v_2v_2^*\leq 1-z$. Since  $z\in M_{\Lambda\cup\Lambda^\perp}'\cap M_\Gamma$, $v$ and $z$ commute and thus $v=zv(1-z)=z(1-z)v=0$, which gives the desired contradiction.
\end{proof}

\subsection{A unitary conjugacy result}
We continue with a result improving \cite[Theorem 5.11]{BCC24}.

\begin{theorem}\label{uni_conj}
Assume the setting from Notation \ref{setup}.
Let $\Lambda\subset\Gamma$ be a full subgraph and $P\subset p(\mathbb M_n(\mathbb C)\otimes M_\Gamma)p$ be a von Neumann subalgebra, for some $n\geq 1$. Assume that $P\prec_{\mathbb M_n({\mathbb C})\otimes M_\Gamma}^s\mathbb M_n(\mathbb C)\otimes M_{\Lambda}$ and $P\nprec_{\mathbb M_n(\mathbb C)\otimes M_{\Gamma}}\mathbb M_n(\mathbb C)\otimes M_{\Lambda'}$, for every full subgraph $\Lambda'\subsetneq\Lambda$. Assume also that $M_{\Lambda\cup\Lambda^\perp}$ is a II$_1$ factor.

Then there exists a unitary $u\in \mathbb M_n(\mathbb C)\otimes M_\Gamma$ such that 
\begin{itemize}
\item $uPu^*\subset \mathbb M_n(\mathbb C)\otimes M_{\Lambda\cup\Lambda^\perp}$;
\item $uPu^*\prec^s_{\mathbb M_n(\mathbb C)\otimes M_{\Lambda\cup\Lambda^\perp}}\mathbb M_n(\mathbb C)\otimes M_\Lambda$;
\item if in addition $P$ and  $P'\cap p(\mathbb M_n(\mathbb C)\otimes M_\Gamma)p$ are factors, then there are a unitary $u'\in \mathbb M_n(\mathbb C)\otimes M_{\Lambda\cup\Lambda^\perp}$ and a decomposition $M_{\Lambda\cup\Lambda^\perp}=M_\Lambda^t\overline{\otimes}M_{\Lambda^\perp}^{1/t}$, for some $t>0$,  such that $u'uPu^*u'^*\subset \mathbb M_n(\mathbb C)\otimes M_\Lambda^t$. \qedhere
\end{itemize}
\end{theorem}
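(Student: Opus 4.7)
My plan is to strengthen the argument of \cite[Theorem~5.11]{BCC24}, which conjugates $P$ into a corner of $\mathbb M_n(\mathbb C)\otimes M_{\Lambda\cup\Lambda^\perp}$, by producing a single partial isometry $V\in\mathbb M_n(\mathbb C)\otimes M_\Gamma$ with $V^*V=p$ and $VV^*\in\mathbb M_n(\mathbb C)\otimes M_{\Lambda\cup\Lambda^\perp}$ that realizes a global intertwining, and then extending $V$ to a unitary using the finiteness of $\mathbb M_n(\mathbb C)\otimes M_\Gamma$. To construct $V$, Proposition~\ref{cutting_down}(2), applied to the hypotheses $P\prec^s\mathbb M_n(\mathbb C)\otimes M_\Lambda$ and $P\not\prec\mathbb M_n(\mathbb C)\otimes M_{\Lambda'}$ for every proper $\Lambda'\subsetneq\Lambda$, provides on each corner $Pq'$ (with $q'\in P'\cap p(\mathbb M_n(\mathbb C)\otimes M_\Gamma)p$) a partial isometry $v$ and a unital $*$-homomorphism $\theta\colon q'Pq'\to vv^*(\mathbb M_n(\mathbb C)\otimes M_\Lambda)vv^*$ with $\theta(x)v=vx$, whose image does not intertwine into any proper $\mathbb M_n(\mathbb C)\otimes M_{\Lambda'}$. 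I would iterate this via a Zorn's lemma argument, building a maximal family $(v_i,\theta_i)$ with mutually orthogonal right supports $p_i:=v_i^*v_i\in P'\cap p(\mathbb M_n(\mathbb C)\otimes M_\Gamma)p$. Simultaneously, exploiting that $\mathbb M_n(\mathbb C)\otimes M_{\Lambda\cup\Lambda^\perp}$ is a II$_1$ factor, the candidate partial isometry at each step can be replaced by $z_iv_i$ for a suitable unitary $z_i\in\mathcal U(\mathbb M_n(\mathbb C)\otimes M_{\Lambda\cup\Lambda^\perp})$ chosen so that the shifted left supports $\tilde q_i:=z_iv_iv_i^*z_i^*$ are pairwise orthogonal; this is always possible because $\mathrm{tr}(v^*v)$ fits inside $\mathrm{tr}(1-\sum_j\tilde q_j)$ by the trace inequality. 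Maximality together with $P\prec^s\mathbb M_n(\mathbb C)\otimes M_\Lambda$ then forces $\sum p_i=p$.

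\textbf{Extension to a unitary and first two bullets.} Setting $V:=\sum z_iv_i$, the orthogonality of right and of shifted left supports yields $V^*V=p$, $q:=VV^*=\sum\tilde q_i\in\mathbb M_n(\mathbb C)\otimes M_{\Lambda\cup\Lambda^\perp}$, and for each $x\in P$,
\[VxV^*=\sum_i z_i\theta_i(xp_i)z_i^*\in q(\mathbb M_n(\mathbb C)\otimes M_{\Lambda\cup\Lambda^\perp})q.\]
Since $V$ realizes the equivalence $p\sim q$ inside the finite von Neumann algebra $\mathbb M_n(\mathbb C)\otimes M_\Gamma$, also $1-p\sim 1-q$, yielding a partial isometry $w$ with $w^*w=1-p$ and $ww^*=1-q$. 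The element $u:=V+w$ is then a unitary in $\mathbb M_n(\mathbb C)\otimes M_\Gamma$ with $uxu^*=VxV^*\in\mathbb M_n(\mathbb C)\otimes M_{\Lambda\cup\Lambda^\perp}$ for every $x\in P$, giving the first bullet. The second bullet follows because the $\tilde q_i$ lie in $(uPu^*)'\cap q(\mathbb M_n(\mathbb C)\otimes M_{\Lambda\cup\Lambda^\perp})q$ and each $\tilde q_iuPu^*\tilde q_i=z_i\theta_i(Pp_i)z_i^*$ is intertwined back into $\mathbb M_n(\mathbb C)\otimes M_\Lambda$ by the partial isometry $z_i^*\tilde q_i$; a standard argument using the partition $\{\tilde q_i\}$ of $q$ upgrades this pointwise intertwining to the required $\prec^s$ inside $\mathbb M_n(\mathbb C)\otimes M_{\Lambda\cup\Lambda^\perp}$.

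\textbf{Third bullet and main obstacle.} For the third bullet, the join decomposition $\Lambda\cup\Lambda^\perp=\Lambda\circ\Lambda^\perp$ yields $M_{\Lambda\cup\Lambda^\perp}=M_\Lambda\overline{\otimes}M_{\Lambda^\perp}$, and since $M_{\Lambda\cup\Lambda^\perp}$ is a II$_1$ factor both tensor factors are II$_1$ factors by Lemma~\ref{basic}. Under the factor hypotheses on $P$ and $P'\cap p(\mathbb M_n(\mathbb C)\otimes M_\Gamma)p$, which pass to $uPu^*$ and its relative commutant in $upu^*(\mathbb M_n(\mathbb C)\otimes M_{\Lambda\cup\Lambda^\perp})upu^*$, I would apply an amplification-style version of Lemma~\ref{tensor_product}(1) to the II$_1$ factor $uPu^*$ sitting inside $\mathbb M_n(\mathbb C)\otimes(M_\Lambda\overline{\otimes}M_{\Lambda^\perp})$ together with $uPu^*\prec^s_{\mathbb M_n(\mathbb C)\otimes M_{\Lambda\cup\Lambda^\perp}}\mathbb M_n(\mathbb C)\otimes M_\Lambda$. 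This produces a decomposition $M_{\Lambda\cup\Lambda^\perp}=M_\Lambda^t\overline{\otimes}M_{\Lambda^\perp}^{1/t}$ and a unitary $u'\in\mathbb M_n(\mathbb C)\otimes M_{\Lambda\cup\Lambda^\perp}$ with $u'uPu^*u'^*\subset\mathbb M_n(\mathbb C)\otimes M_\Lambda^t$. The main obstacle is the Zorn's lemma construction of $V$: maintaining orthogonality of both right and shifted left supports while verifying that the piecewise intertwining data assemble into a coherent global $*$-homomorphism $\Theta\colon P\to q(\mathbb M_n(\mathbb C)\otimes M_{\Lambda\cup\Lambda^\perp})q$ with $\Theta(x)V=Vx$, which crucially uses both the $\prec^s$ hypothesis and the factoriality of $M_{\Lambda\cup\Lambda^\perp}$.
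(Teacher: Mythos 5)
There is a genuine gap at the heart of your construction, and it is exactly the step you yourself flag as ``the main obstacle''. Proposition~\ref{cutting_down}(2) (or Proposition~\ref{results}(6)) does not give a $*$-homomorphism defined on $q'Pq'=Pq'$ for $q'\in P'\cap p(\mathbb M_n(\mathbb C)\otimes M_\Gamma)p$, as you write; it gives $\theta$ defined only on a corner $p_1Pp_1$ cut by a projection $p_1\in P$ (together with a projection in the relative commutant). Consequently, for a general $x\in P$ the element $v x v^*$ is not of the form $\theta(\cdot)vv^*$, and your formula $VxV^*=\sum_i z_i\theta_i(xp_i)z_i^*$ does not make sense as written. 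Summing the local intertwiners therefore does not by itself produce a coherent global $\Theta$ with $\Theta(x)V=Vx$; passing from an intertwining of a corner $p_1Pp_1$ to a conjugation of $Pz$ for a suitable central projection is precisely the nontrivial point, and it is where the factoriality of $M_{\Lambda\cup\Lambda^\perp}$ must be used. The paper does not build a single intertwiner $V$ at all: it runs a maximality argument over projections $r\in\mathcal Z(P'\cap pM_\Gamma p)$ for which a conjugating unitary into $M_{\Lambda\cup\Lambda^\perp}$ exists, and, if $r\neq p$, uses the corner intertwining together with Fact~\ref{conjugacy_fact} (closure of the relevant family of projections under central supports in $P$ and in $P'\cap pM_\Gamma p$, and under orthogonal sums, all relying on $M_{\Lambda\cup\Lambda^\perp}$ being a II$_1$ factor) and Proposition~\ref{normalizer}(1) (to place $vv^*$ in $M_{\Lambda\cup\Lambda^\perp}$) to enlarge $r$, a contradiction. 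Without an argument of this type your Zorn construction does not close.

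Two further points. For the second bullet, once $uPu^*\subset\mathbb M_n(\mathbb C)\otimes M_{\Lambda\cup\Lambda^\perp}$ is known, the statement follows directly from Lemma~\ref{descend} applied to the hypothesis $P\prec^s_{\mathbb M_n(\mathbb C)\otimes M_\Gamma}\mathbb M_n(\mathbb C)\otimes M_\Lambda$ (which is invariant under unitary conjugation in the ambient algebra); your partition-based ``standard argument'' only controls corners cut by the $\tilde q_i$, not arbitrary projections in the relative commutant, so as written it does not yield $\prec^s$. For the third bullet, the assertion that factoriality of $P'\cap p(\mathbb M_n(\mathbb C)\otimes M_\Gamma)p$ ``passes to'' the relative commutant of $uPu^*$ inside the corner of $\mathbb M_n(\mathbb C)\otimes M_{\Lambda\cup\Lambda^\perp}$ is not automatic (a subalgebra of a factor need not be a factor); one needs Proposition~\ref{normalizer}(1), via the non-embedding into the $M_{\Lambda'}$ for $\Lambda'\subsetneq\Lambda$, to see that the relative commutant taken in $\mathbb M_n(\mathbb C)\otimes M_\Gamma$ already lies in $\mathbb M_n(\mathbb C)\otimes M_{\Lambda\cup\Lambda^\perp}$, after which Lemma~\ref{tensor_product}(1) applies as you intend.
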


In the proof of Theorem \ref{uni_conj}, we will assume that $n=1$. The fact that one can reduce to this case is a consequence of the following remark.

\begin{remark}\label{amplify_results}
Let $(N,\tau)$ be any tracial von Neumann algebra. Then the results proved so far in this section apply to von Neumann subalgebras $P\subset p(M_\Gamma\overline{\otimes}N)p$, where in all of the statements  we replace $M_{\Gamma'}$ by $M_{\Gamma'}\overline{\otimes}N$, for every full subgraph $\Gamma'\subset\Gamma$. This is because we can realize  $M_\Gamma\overline{\otimes}N$ as the graph product von Neumann algebra $M_{\widetilde\Gamma}$, where $\widetilde\Gamma=\Gamma\circ\{v\}$, for a vertex $v$ with $M_v=N$.

In particular, the above results apply when $P\subset p(M_\Gamma\otimes\mathbb M_k(\mathbb C))p$, for some $k\geq 1$. Note that if  $\Gamma'\subset\Gamma$ is a full subgraph, then $P\prec_{M_\Gamma\otimes\mathbb M_k(\mathbb C)}M_{\Gamma'}\otimes\mathbb M_k(\mathbb C)$ if and only if $P\prec_{M_\Gamma\otimes\mathbb M_k(\mathbb C)}M_{\Gamma'}\otimes\mathbb C1$.
\end{remark}

To prove Theorem \ref{uni_conj}, we will use the following fact. The proof of this fact is standard and is left to the reader (see, e.g., the proof of \cite[Theorem 5.1]{IPP08}).

\begin{fact}\label{conjugacy_fact}
Let $(M,\tau)$ be a tracial von Neumann algebra, $P\subset pMp$ a von Neumann subalgebra and $Q\subset M$ be a von Neumann subalgebra which is a II$_1$ factor. 
Let $\mathcal F$ be the family of  projections $p'\in P'\cap pMp$ for which there is a unitary $u\in M$ such that $uPp'u^*\subset Q$.
Then the following hold:
\begin{enumerate}
    \item If $p_0\in P$ is a projection for which there is a unitary $v\in M$ such that $vp_0Pp_0v^*\subset Q$, then the central support of $p_0$ in $P$ belongs to $\mathcal F$.
     \item If $p'\in\mathcal F$, then the central support of $p'$ in $P'\cap pMp$ belongs to $\mathcal F$.
    \item If $\{p_i'\}_{i\in I}\subset\mathcal F$ is a family of pairwise orthogonal projections, then $\sum_{i\in I}p_i'\in\mathcal F$.
\end{enumerate}
\end{fact}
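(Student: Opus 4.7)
The plan is to establish (3) first; then (1) and (2) follow by running essentially the same construction on suitable central-support decompositions. Two facts about the II$_1$ factor $Q$ will be used throughout. First, any two projections of $Q$ with the same trace are equivalent via a partial isometry in $Q$. Second, since such partial isometries lie in $M$, any two projections of $Q$ with equal $\tau$-traces have equal center-valued traces in $M$ and are therefore Murray--von Neumann equivalent in $M$; this is the key point that allows the extension of partial isometries of $M$ to unitaries of $M$, even though $M$ itself need not be a factor.

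For (3), let $\{p_i'\}_{i\in I}\subset\mathcal F$ be pairwise orthogonal, with unitaries $u_i\in M$ satisfying $u_iPp_i'u_i^*\subset Q$, and set $q_i:=u_ip_i'u_i^*$. This is the unit of the subalgebra $u_iPp_i'u_i^*\subset Q$, hence a projection in $Q$ with $\tau(q_i)=\tau(p_i')$. As $\sum_i\tau(q_i)\le 1$ and $Q$ is a II$_1$ factor, choose pairwise orthogonal projections $\tilde q_i\in Q$ with $\tau(\tilde q_i)=\tau(q_i)$ and partial isometries $\alpha_i\in Q$ with $\alpha_i^*\alpha_i=q_i$ and $\alpha_i\alpha_i^*=\tilde q_i$. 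Then $w_i:=\alpha_iu_ip_i'$ is a partial isometry in $M$ with $w_i^*w_i=p_i'$, $w_iw_i^*=\tilde q_i$, and $w_iPp_i'w_i^*=\alpha_i(u_iPp_i'u_i^*)\alpha_i^*\subset Q$. Since the sources $\{p_i'\}$ and the ranges $\{\tilde q_i\}$ are each pairwise orthogonal, the SOT-sum $w:=\sum_iw_i$ is a well-defined partial isometry in $M$ with $w^*w=p':=\sum_ip_i'$, $ww^*=\tilde q:=\sum_i\tilde q_i\in Q$, and a short orthogonality computation gives $wPp'w^*=\sum_iw_iPp_i'w_i^*\subset Q$. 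To upgrade $w$ to a unitary, note that each $p_i'$ is equivalent in $M$ to $q_i$ via $u_i$, and $q_i$ is equivalent in $M$ to $\tilde q_i$ via $\alpha_i\in Q\subset M$, so $p'$ and $\tilde q$ share the same center-valued trace in $M$; hence so do $1-p'$ and $1-\tilde q$, and they admit a partial isometry $s\in M$ with $s^*s=1-p'$ and $ss^*=1-\tilde q$. Setting $u:=w+s$ and using $sp'=0=p's^*$, one computes $uPp'u^*=wPp'w^*\subset Q$, so $p'\in\mathcal F$.

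For (1), with $p_0\in P$ and $v\in\mathcal U(M)$ such that $vp_0Pp_0v^*\subset Q$, decompose the central support $z_0$ of $p_0$ in $P$ as $z_0=\sum_jf_j$ with $f_j=\omega_j\omega_j^*$ for partial isometries $\omega_j\in P$ whose sources $e_j:=\omega_j^*\omega_j$ lie below $p_0$. The partial isometries $V_j:=v\omega_j^*\in M$ have pairwise orthogonal sources $f_j$, pairwise orthogonal ranges $ve_jv^*$ (orthogonality following from $\omega_j^*\omega_k=\omega_j^*f_jf_k\omega_k=0$ for $j\neq k$), these ranges lie in $Q$ (each being the unit of the subalgebra $V_jPf_jV_j^*$ of $Q$), and $V_jPf_jV_j^*=v(\omega_j^*P\omega_j)v^*\subset vp_0Pp_0v^*\subset Q$; summing and extending to a unitary exactly as in the argument for (3) yields a unitary $u\in M$ with $uPz_0u^*\subset Q$. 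For (2), given $p'\in\mathcal F$ with witnessing unitary $u$ and central support $z'$ of $p'$ in $P'\cap pMp$, write $z'=\sum_jp_j'$ where each $p_j'\in P'\cap pMp$ is equivalent within $P'\cap pMp$ to some $e_j\le p'$ via a partial isometry $\beta_j$ that commutes with $P$; then $U_j:=u\beta_j^*$ satisfies $U_jPp_j'U_j^*=uPe_ju^*\subset uPp'u^*\subset Q$ with unit $ue_ju^*\in Q$, so upgrading each $U_j$ to a unitary via the center-valued-trace argument shows each $p_j'\in\mathcal F$, and (3) delivers $z'\in\mathcal F$. The sole delicate step throughout is the partial-isometry-to-unitary extension in $M$, which is precisely where the II$_1$ factor assumption on $Q$ is essential.
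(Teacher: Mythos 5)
Your argument for (3) is correct: rerouting the ranges $q_i$ to pairwise orthogonal projections $\tilde q_i\in Q$ of the same trace (this is exactly where the II$_1$ factor assumption on $Q$ enters), gluing, noting that the cross terms vanish because the $p_i'$ commute with $P$, and extending to a unitary via the center-valued trace in the finite algebra $M$ all go through; since the paper leaves the proof to the reader (pointing to \cite{IPP08}), this part is fine as a self-contained argument. In (1), however, the claimed pairwise orthogonality of the ranges $ve_jv^*$ is false: the sources $e_j\le p_0$ of the $\omega_j$ must overlap whenever $\tau(z_0)>\tau(p_0)$ (already for $P=\mathbb M_2(\mathbb C)$, $p_0=e_{11}$ one is forced to take $e_1=e_2=e_{11}$), and your parenthetical computation $\omega_j^*\omega_k=0$ only shows $V_jV_k^*=0$, i.e.\ orthogonality of the sources $f_j$, which was already known; orthogonality of the ranges would require $e_je_k=0$. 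Hence $\sum_jV_j$ is not a partial isometry and (1) as written breaks. It is repairable with your own device from (3): reroute the ranges $ve_jv^*\in Q$ to pairwise orthogonal projections of $Q$ of the same traces and glue; but then one must also observe that, unlike in (3), the cross terms do not vanish (the $f_j$ lie in $P$, not in $P'$), and instead lie in $Q$ because $\omega_j^*x\omega_k\in p_0Pp_0$, so $v\omega_j^*x\omega_kv^*\in vp_0Pp_0v^*\subset Q$. So "exactly as in (3)" is not accurate, though the statement can be saved.

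The argument for (2) has a genuine gap which cannot be fixed along your lines. The inclusion $uPe_ju^*\subset uPp'u^*$ is false: $e_j$ is a projection of $P'\cap pMp$ lying under $p'$, not an element of $Pp'$, so $Pe_j=(Pp')e_j$ is a further compression of $Pp'$ by a projection of its relative commutant, not a subalgebra of it; accordingly $ue_ju^*$ lies in $(uPp'u^*)'\cap (up'u^*)M(up'u^*)$ and there is no reason it belongs to $Q$. Worse, the intermediate statement you rely on, namely that each $p_j'$ (equivalently each $e_j\le p'$ with $e_j\in P'\cap pMp$) belongs to $\mathcal F$, is false in general: take $M=Q\overline{\otimes}L^\infty[0,1]$, $P=Q\otimes 1$, $p=1$, $p'=1$ (so $p'\in\mathcal F$ with $u=1$ and $z'=1$), and $e=1\otimes\chi_E$ with $0<|E|<1$. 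If $w(Q\otimes\mathbb C\chi_E)w^*\subset Q\otimes 1$ for some unitary $w\in M$, then applying the center-valued trace $\tau_Q\otimes\mathrm{id}$ (which is invariant under unitary conjugation) to an identity $w(x\otimes\chi_E)w^*=y\otimes 1$ yields $\tau(x)\chi_E=\tau(y)1$, which is impossible; hence $e\notin\mathcal F$. So (2) cannot be deduced by showing that the pieces of a decomposition of $z'$ lie in $\mathcal F$ and then invoking (3): the central support has to be treated globally, and the factoriality of $Q$, which your proof of (2) never actually uses, must enter in an essential way, as in the argument of \cite[Theorem 5.1]{IPP08} that the paper cites.
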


\begin{proof}[Proof of Theorem~\ref{uni_conj}] We assume that $n=1$, so that $P\subset pM_\Gamma p$.
Since $M_{\Lambda\cup\Lambda^\perp}$ is a II$_1$ factor, there is a maximal projection  $r\in\mathcal Z(P'\cap pM_\Gamma p)$ for which we can find a unitary $u\in M_\Gamma$ such that $uPru^*\subset M_{\Lambda\cup\Lambda^\perp}$. Indeed, using Fact \ref{conjugacy_fact}(3), we can take $r=\sum_{i\in I}r_i$, where $\{r_i\}_{i\in I}\subset\mathcal Z(P'\cap pM_\Gamma p)$ is a maximal, with respect to inclusion, family of pairwise orthogonal projections such that for every $i\in I$ there is a unitary $u_i\in M_\Gamma$ with $u_iPr_iu_i^*\subset M_{\Lambda\cup\Lambda^\perp}$. 

Let $r'=p-r\in\mathcal Z(P'\cap pM_\Gamma p)$ and assume by contradiction that $r'\not=0$. Then $Pr'\prec_{M_\Gamma}M_\Lambda$ and $Pr'\nprec_{M_\Gamma}M_{\Lambda'}$, for every full subgraph $\Lambda'\subsetneq\Lambda$. 

By Proposition \ref{results}(6), there are projections $p_1\in P, q\in M_{\Lambda}$, a nonzero partial isometry $v\in qM_\Gamma p_1r'$ and a $*$-homomorphism $\theta:p_1Pp_1r'\rightarrow qM_{\Lambda}q$ such that 
\begin{itemize}
\item $\theta(x)v=vx$, for every $x\in p_1Pp_1p'$, and \item $\theta(p_1Pp_1r')\nprec_{M_\Lambda}M_{\Lambda'}$, for every  full subgraph $\Lambda'\subsetneq\Lambda$.  
\end{itemize}
Since $v^*v\in (p_1Pp_1r')'\cap p_1r'M_\Gamma p_1r'=(P'\cap pM_\Gamma p)r'p_1$, we can find a projection $r''\in P'\cap pM_\Gamma p$ such that $r''\leq r'$ and $v^*v=p_1r''$. 
By Proposition \ref{normalizer}(1), we get that $\theta(p_1Pp_1r')'\cap qM_{\Gamma}q\subset qM_{\Lambda\cup\Lambda^\perp}q$ and thus $vv^*\in qM_{\Lambda\cup\Lambda^\perp}q$.
Hence we conclude that
\begin{equation}\label{corner}
vp_1Pp_1r''v^*=\theta(p_1Pp_1r')vv^*\subset M_{\Lambda\cup\Lambda^\perp}.
\end{equation}

Let $z'\in \mathcal Z(Pr'')=\mathcal Z(P)r''$ be the central support of $p_1r''$ in $Pr''$.  Then $z'\in P'\cap pM_\Gamma p$.  Let $z\in\mathcal Z(P'\cap pM_\Gamma p)$ be the central support of $z'$ in $P'\cap pM_\Gamma p$.
We have $p_1r''\leq r''\leq r'=p-r$. Since $z'=\vee_{u\in\mathcal{U}(Pr'')}up_1r''u^{\ast}$, and  $up_1r''u^*\leq p-r$ for every $u\in\mathcal{U}(Pr'')$, we have $z'\leq p-r$. Taking the central support in $P'\cap pM_\Gamma p$, we deduce that $z\leq p-r$.
Since $v^*v=p_1r''$ and $M_{\Lambda\cup\Lambda^\perp}$ is a factor, 
using Fact \ref{conjugacy_fact},  (1) and (2), we find a unitary $w\in M_\Gamma$ such that $wPzw^*\subset M_{\Lambda\cup\Lambda^\perp}$. 
By using again that $M_{\Lambda\cup\Lambda^\perp}$ is a factor and  Fact~\ref{conjugacy_fact}(3)  we find a unitary $\xi\in M_\Gamma$ such that $\xi P(r+z)\xi^*\subset M_{\Lambda\cup\Lambda^\perp}$. As $z\not=0$, this contradicts the maximality of $r$. Thus, $r'=0$, hence, $r=p$ and $uPu^*\subset qM_{\Lambda\cup\Lambda^\perp}q$, where $q=upu^*$.

Put $Q=uPu^*$.
Since $Q\subset qM_{\Lambda\cup\Lambda^\perp}q$ and $Q\prec_{M_\Gamma}^sM_\Lambda$,  Lemma \ref{descend} implies that
$Q\prec^s_{M_{\Lambda\cup\Lambda^\perp}}M_\Lambda$.

To prove the last assertion of the theorem, assume that $P$ and $P'\cap pM_\Gamma p$. 
Thus, $Q$ and $Q'\cap qM_\Gamma q$ are factors. Since $Q\nprec_{M_\Gamma}M_{\Lambda'}$, for every full subgraph $\Lambda'\subsetneq\Lambda$, by Proposition~\ref{normalizer}(1) we deduce that $Q'\cap qM_\Gamma q\subset M_{\Lambda\cup\Lambda^\perp}$. Thus, $Q'\cap qM_{\Lambda\cup\Lambda^\perp}q=Q'\cap qM_\Gamma q$ is a factor. 
Since $M_{\Lambda\cup\Lambda^\perp}=M_\Lambda\overline{\otimes}M_{\Lambda^\perp}$, Lemma~\ref{tensor_product}(1) gives the moreover assertion.
\end{proof}

\subsection{A criterion for collapsibility} We end this section with the following criterion for a subgraph to be collapsible 
(see Section~\ref{graph_notions} for the definition). This criterion will be needed in the proofs of Theorems \ref{amenable} and \ref{factors}.

\begin{proposition}\label{collapsible}
Let $\Gamma,\Lambda$ be finite simple graphs. Let $(M_v,\tau_v)_{v\in\Gamma}, (N_w,\tau_w)_{w\in\Lambda}$ be families of diffuse tracial von Neumann algebras such that the associated graph product von Neumann algebras $M_\Gamma=*_{v,\Gamma}M_v$ and $N_\Lambda=*_{w,\Lambda}N_w$ are II$_1$ factors. 
Let $v\in\Gamma$ and $C\subset\Lambda$ be a full subgraph. 
Let $\theta:M_\Gamma\rightarrow rPr$ be a $*$-isomorphism, where $P=\mathbb M_n(\mathbb C)\otimes N_\Lambda$, for some $n\geq 1$, and $r\in P$ is a projection.
Identify $M_\Gamma=rPr$, via $\theta$, and $N_\Lambda=sPs$, where $s=e_{1,1}\otimes 1$.

Assume that  $M_vp\prec_P^s N_{C\cup C^\perp}q$ and $N_Cq\prec_P^s M_vp$, for some nonzero projections $p\in \mathcal Z(M_v'\cap rPr)$ and $q\in \mathcal Z(N_C'\cap sPs)$.

Then $C$ is a collapsible subgraph of $\Lambda$. Moreover, $M_{\emph{st}(v)}p\prec_P^s N_{C\cup C^\perp}q$.
\end{proposition}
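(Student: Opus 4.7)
The plan is to apply the normalizer result Proposition~\ref{normalizer'} in both graph-product directions and combine it with the partition provided by Proposition~\ref{Zariski_hull}.

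Starting from $N_Cq\prec^s_PM_vp$, I would apply Proposition~\ref{normalizer'} in the $M_\Gamma$ graph-product structure with $\Lambda=\{v\}$ and $e=p$. The choice $e=p$ is valid because $p\in\mathcal{Z}(M_v'\cap rPr)$ and $M_{\mathrm{lk}(v)}\subset M_v'\cap rPr$ together force $p\in M_{\mathrm{st}(v)}'\cap rPr$. Since $N_{C\cup C^\perp}q\subset\mathcal{N}_{q(sPs)q}(N_Cq)''$ (because $N_{C^\perp}$ commutes with $N_C$ and $q\in N_{C\cup C^\perp}'$), this yields the auxiliary intertwining
\[
N_{C\cup C^\perp}q\prec^s_P M_{\mathrm{st}(v)}p.
\]

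Next, I would apply Proposition~\ref{Zariski_hull} to $M_vp\subset p(rPr)p$ in the $N_\Lambda$ graph-product structure on $P$, obtaining a partition $\{\pi_D\}_D$ of $p$ into projections in $\mathcal{Z}(M_v'\cap p(rPr)p)$ indexed by full subgraphs $D\subset\Lambda$, such that $M_v\pi_D\prec^s_P N_D$ is the minimal hull whenever $\pi_D\ne 0$. The first hypothesis together with Corollary~\ref{intersection} forces $D\subset C\cup C^\perp$ for each such $D$. The crux of the proof is to prove $C\subset D$ for every such $D$, so that $D\cup D^\perp\subset C\cup C^\perp$ and $q\in N_{D\cup D^\perp}'\cap sPs$. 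I would obtain this by decomposing $N_Cq\prec^s_P M_vp$ along the central partition $\{\pi_D\}$ of the target via iterated application of Remark~\ref{elementary_facts}\eqref{sum_1}, combined with a non-vanishing argument, to produce a matching partition $\{\sigma_D\}_D$ of $q$ with $\sigma_D\in\mathcal{Z}(N_C'\cap q(sPs)q)$ and $N_Cq\sigma_D\prec_P M_v\pi_D$ whenever $\pi_D\ne 0$. Composing with $M_v\pi_D\prec^s_P N_D$ via Proposition~\ref{results}(2) then gives $N_Cq\sigma_D\prec_P N_D$, whence $C\subset D$ by Lemma~\ref{basic}(2).

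To finish the ``moreover'' part, Corollary~\ref{right_corner} upgrades $M_v\pi_D\prec^s_P N_D$ to $M_v\pi_D\prec^s_P N_Dq$, and Proposition~\ref{normalizer'} with $\Lambda=D$ and $e=q$ then yields $M_{\mathrm{st}(v)}p\pi_D\prec^s_P N_{D\cup D^\perp}q\subset N_{C\cup C^\perp}q$ (using that $\pi_D$ commutes with $M_{\mathrm{st}(v)}$, which follows from $\pi_D\in\mathcal{Z}(M_v'\cap rPr)$ and $M_{\mathrm{lk}(v)}\subset M_v'\cap rPr$). Summing over $D$ via Remark~\ref{elementary_facts}\eqref{sum_2} gives $M_{\mathrm{st}(v)}p\prec^s_P N_{C\cup C^\perp}q$. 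For the collapsibility of $C$, I would argue by contradiction: if $u\in\mathrm{lk}(w)\setminus(C\cup C^\perp)$ for some $w\in C$, a careful analysis of the quasi-normalizer of $N_Cq$ inside $q(sPs)q$, which must lie in $N_{C\cup C^\perp}$ by Proposition~\ref{normalizer}(1) (noting that $C$ is the minimal hull of $N_Cq$ in $\Lambda$ by Lemma~\ref{basic}(2)), combined with the intertwinings produced above would yield an element involving $N_u$ lying in $\mathcal{QN}_{q(sPs)q}(N_Cq)''\setminus N_{C\cup C^\perp}$, a contradiction.

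The hard part will be the rigorous extraction of the matching partition $\{\sigma_D\}$ together with its non-vanishing, as this is what sustains the combinatorial conclusion $C\subset D$ for every $D$ in the partition, from which both stated conclusions derive.
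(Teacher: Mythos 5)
Your outline assembles the right toolkit (hull partition via Proposition~\ref{Zariski_hull}, Corollary~\ref{intersection}, Corollary~\ref{right_corner}, Proposition~\ref{normalizer'}, Lemma~\ref{basic}(2)), and your treatment of the ``moreover'' assertion, conditional on knowing $C\subset D$ for every hull piece $D$, matches the paper's. But the step you defer as ``the hard part'' is precisely the heart of the proposition, and it is not a technical extraction: the claim that every $D$ with $\pi_D\neq 0$ receives a nonzero $\sigma_D$ with $N_Cq\sigma_D\prec_P M_v\pi_D$ does not follow from Remark~\ref{elementary_facts}\eqref{sum_1} plus generalities. That remark only localizes an intertwining to \emph{some} piece of the target; nothing in general intertwining theory prevents $N_Cq\prec^s_P M_v\pi_{D_0}$ for a single $D_0$ while $N_Cq\nprec_P M_v\pi_D$ for all other $D$, and a maximality construction of $\{\sigma_D\}$ leaves you exactly with the non-vanishing problem you started with. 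The paper proves the needed statement (its claim~\eqref{all_projections}: $N_Cq\prec_P M_vp'$ for \emph{every} nonzero $p'\in\mathcal Z(M_{\st(v)}'\cap rPr)p$) by a specific contradiction scheme that crucially uses the \emph{other} hypothesis $M_vp\prec^s_P N_{C\cup C^\perp}q$: assuming $N_Cq\nprec_P M_vp'$, one gets $N_Cq\prec^s_P M_v(p-p')$, hence (via Proposition~\ref{normalizer'}, after conjugating trace-suitable corners of $N_Cq$ into $rPr$ by partial isometries) $N_{C\cup C^\perp}q\prec^s_P M_{\st(v)}(p-p')$; composing with $M_vp\prec^s_P N_{C\cup C^\perp}q$ gives $M_vp\prec^s_P M_{\st(v)}(p-p')$, then Corollary~\ref{right_corner} upgrades this to $M_vp\prec^s_P M_v(p-p')$, contradicting Corollary~\ref{center_corner}. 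You derive the unlocalized auxiliary intertwining $N_{C\cup C^\perp}q\prec^s_P M_{\st(v)}p$ in your first step but never use it; the missing idea is to run it with $p-p'$ and bounce back through the first hypothesis. (Also note that applying Proposition~\ref{normalizer'} to $N_Cq$ at all requires the partial-isometry reduction into $rPr=M_\Gamma$, since $N_Cq$ is not a subalgebra of a corner of $M_\Gamma$; you gloss over this, as well as the reduction to $n=1$ via Remark~\ref{amplify_results}.)

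The collapsibility step is also not proved. Your plan — that non-collapsibility would produce ``an element involving $N_u$'' inside $\mathcal{QN}_{q(sPs)q}(N_Cq)''\setminus N_{C\cup C^\perp}$ — has no identified mechanism: for $u\in\lk(w)\setminus(C\cup C^\perp)$, elements of $N_u$ commute with $N_w$ but not with $N_C$, so there is no reason they quasi-normalize $N_Cq$, and Proposition~\ref{normalizer}(1) does not directly apply to $N_Cq$ since $q\notin N_C$. The paper's argument is concrete and uses the part you have already (conditionally) established: for each $w\in C$, from $N_wq\prec^s_P M_vp$ one gets, again via Proposition~\ref{normalizer'} and the partial-isometry trick, $N_{\st(w)}q\prec^s_P M_{\st(v)}p$; composing with the ``moreover'' intertwining $M_{\st(v)}p\prec^s_P N_{C\cup C^\perp}q$ and Proposition~\ref{results}(2) yields $N_{\st(w)}\prec_P N_{C\cup C^\perp}$, hence $\st(w)\subset C\cup C^\perp$ by Lemma~\ref{basic}(2), which is exactly collapsibility. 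So both conclusions of the proposition rest on arguments your proposal does not contain.
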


\begin{proof} 
We will treat the case where $n=1$ (so in particular $s=1$). The general case can be handled using Remark~\ref{amplify_results}; note that the argument from Remark~\ref{amplify_results}, working with the graph $\tilde{\Lambda}=\Lambda\circ\{\ast\}$, introduces a vertex group which is not diffuse, but the following proof can be checked to work in this setting.

We first note that $\mathcal Z(M_v'\cap rPr)=\mathcal Z(M_{\text{st}(v)}'\cap rPr)$. Indeed, since $\text{st}(v)^\perp=\emptyset$, Lemma~\ref{basic}(1) implies that $\mathcal Z(M_v'\cap rPr)=\mathcal Z(M_{\text{st}(v)}'\cap rPr)=\mathcal Z(M_v)\overline{\otimes}\mathcal Z(M_B)$, where $B$ is the maximal clique factor of $\text{lk}(v)$. In particular $p\in\mathcal Z(M_{\text{st}(v)}'\cap rPr)$.

Next, we claim that 
\begin{equation}\label{all_projections}\text{$N_Cq\prec_P M_vp'$, for every nonzero projection $p'\in\mathcal Z(M_{\text{st}(v)}'\cap rPr)p$.}\end{equation}
 Assume by contradiction that $N_Cq\nprec_P M_vp'$, for some nonzero projection $p'\in\mathcal Z(M_{\text{st}(v)}'\cap rPr)p$. Since $N_Cq\prec_P^sM_vp$, we get that $N_Cq\prec_P^sM_v(p-p')$. 
Since $N_C$ is diffuse, we have $N_Cq\nprec M_\emptyset=\mathbb C1$. 

We now observe that \begin{equation}\label{Ccup1} N_{C\cup C^\perp}q\prec_P^sM_{\text{st}(v)}(p-p') \end{equation} 

To see this, let $r_0\in N_C$ be any projection with $\tau(r_0q)\leq \tau(r)$. Since $P$ is a II$_1$ factor, we can find a partial isometry $\delta\in P$ with $\delta^*\delta=r_0q$ and $\delta\delta^*\leq r$, such that $\delta r_0 N_Cr_0q\delta^*\subset rPr=M_\Gamma$. 
Since $N_Cq\prec_P^sM_v(p-p')$, we get that $\delta r_0 N_Cr_0q\delta^*\prec_{M_\Gamma}^sM_v(p-p')$.
Since $p-p'\in M_{\text{st}(v)}'\cap rPr$, Proposition \ref{normalizer'} gives that $\delta r_0 N_{C\cup C^\perp}r_0q\delta^*\prec_{M_\Gamma}^sM_{\text{st}(v)}(p-p')$. Since this holds for any projection $r_0\in N_C$ satisfying $\tau(r_0q)\leq \tau(r)$, \eqref{Ccup1} follows.

Since $M_vp\prec_P^s N_{C\cup C^\perp}q$, 
equation~\eqref{Ccup1} combined with Proposition~\ref{results}(2) implies that $M_vp\prec_P^s M_{\text{st}(v)}(p-p')$. Since obviously $M_vp\prec_P^s M_v$, by using Corollary \ref{right_corner} and that $p-p'\in M_{\text{st}(v)}'\cap rPr$, we get that $M_vp\prec_P^s M_v(p-p')$. Thus, $M_vp'\prec_P^s M_v(r-p')$,
which contradicts Corollary \ref{center_corner} as $p'\in M_{\text{st}(v)}'\cap rPr$ is a nonzero projection. This finishes the proof of \eqref{all_projections}.

We continue by proving the moreover assertion of the proposition: \begin{equation}\label{Ccup}M_{\text{st}(v)}p\prec_P^s N_{C\cup C^\perp}q.\end{equation}

To this end, let $p_0\in\mathcal Z(M_{\text{st}(v)}'\cap rPr)p$ be a nonzero projection. 
Since $M_vp_0\prec_P^s N_{C\cup C^\perp}$, our existence result for hulls (Proposition \ref{Zariski_hull}) ensures that we can find a nonzero projection $p_1\in\mathcal Z(M_{\text{st}(v)}'\cap P)p_0$ and a full subgraph $D\subset\Gamma$ such that $M_vp_1\prec_P^s N_{D}$ and $M_vp_1\nprec_P N_{D'}$, for every full subgraph $D'\subsetneq D$.  Corollary~\ref{intersection} then ensures that $D\subset C\cup C^\perp$.

Since $M_vp_1\prec_P^s N_{C\cup C^\perp}q$, $q\in\mathcal Z(N_C'\cap P)\subset N_{C\cup C^\perp}'\cap P$, $M_vp_1\prec_P^s N_D$ and $D\subset C\cup C^\perp$, Corollary~\ref{right_corner} implies that $M_vp_1\prec_P^s N_Dq$.
As $N_Cq\prec_P M_vp_1$ by \eqref{all_projections}, Proposition \ref{results}(2) gives that $N_Cq\prec_P N_Dq$. Hence, $N_C\prec_P N_D$, and Lemma~\ref{basic}(2) gives that $C\subset D$, thus $D^\perp\subset C^\perp$. Since $D\subset C\cup C^\perp$, we get that $D\cup D^\perp\subset C\cup C^\perp$. As $q\in N_{C\cup C^\perp}'\cap P$, we get that $q\in N_{D\cup D^\perp}'\cap P$. 

Next, since $M_vp_1\prec_P^s N_Dq$ and $M_vp_1\nprec_P N_{D'}$, for every full subgraph $D'\subsetneq D$, Proposition \ref{normalizer'} implies that $M_{\text{st}(v)}p_1\prec_P^sN_{D\cup D^\perp}q$.
Using this fact and again that $D\cup D^\perp\subset C\cup C^\perp$, we get that $M_{\text{st}(v)}p_1\prec_P^sN_{C\cup C^\perp}q$. Since $p_1\leq p_0$, we get that $M_{\text{st}(v)}p_0\prec_PN_{C\cup C^\perp}q$. 
Since the latter holds for every nonzero projection $p_0\in\mathcal{Z}(M_{\st(v)}'\cap rPr)$, Remark~\ref{elementary_facts}(5) ensures that \eqref{Ccup} holds.

Finally, let $w\in C$. Since $N_Cq\prec_P^s M_vp$, we get that $N_wq\prec_P^s M_vp$.  By applying Proposition \ref{normalizer'} 
(arguing as in the proof of Equation~\eqref{Ccup1}) we derive that
$N_{\text{st}(w)}q\prec_P^sM_{\text{st}(v)}p$. In combination with \eqref{Ccup} and Proposition \ref{results}(2) it follows that $N_{\text{st}{(w)}}q\prec_P N_{C\cup C^\perp}q$. By Lemma~\ref{basic}(2) this gives that $\text{st}(w)\subset C\cup C^\perp$. Hence, we have that $\text{st}(w)\cap (\Gamma\setminus C)=C^\perp$, for every $w\in C$, which implies that $C$ is collapsible.
\end{proof}

\section{Commuting subalgebras of graph product von Neumann algebras}\label{sec:commuting}

\subsection{A structural result}
The main goal of this section is to prove the following result concerning the structure of commuting subalgebras of graph product von Neumann algebras.

\begin{theorem}\label{commute}
Let $\Gamma$  be a finite simple graph, $(M_v,\tau_v)_{v\in\Gamma}$ a family of tracial von Neumann algebras, and   $M_\Gamma=*_{v,\Gamma}M_v$ the associated graph product von Neumann algebra. Let $P_1,P_2\subset pM_\Gamma p$ be commuting von Neumann subalgebras.

Then there exist four pairwise disjoint, possibly empty, full subgraphs $\Gamma_i\subset\Gamma$, for $0\leq i\leq 3$, and a nonzero projection $q\in \mathcal Z((P_1\vee P_2)'\cap pM_\Gamma p)$ such that the following conditions hold:
\begin{enumerate}
\item[(a)] $\Gamma_0$ is a clique;
\item[(b)] $\Gamma_i$ has an empty maximal clique factor, for every $1\leq i\leq 3$;
\item[(c)] $\Gamma_i\subset\Gamma_j^\perp$, for every $0\leq i<j\leq 3$;
\item[(d)] $P_iq\prec_{M_\Gamma}^sM_{\Gamma_0\cup\Gamma_i\cup\Gamma_3}$, for every $i\in\{1,2\}$;
\item[(e)] $(P_1\vee P_2)q\prec_{M_\Gamma}^sM_{\Gamma_0\cup\Gamma_1\cup\Gamma_2\cup\Gamma_3}$;
\item[(f)] $(P_1\vee P_2)q$ is amenable relative to $M_{\Gamma_0\cup\Gamma_1\cup\Gamma_2}$ inside $M_\Gamma$.

\end{enumerate}

Moreover, if $P_1\vee P_2\prec^s_{M_{\Gamma}}M_\Lambda$, for a full subgraph $\Lambda\subset\Gamma$, then we may take $\Gamma_i\subset\Lambda$, for every $0\leq i\leq 3$.
\end{theorem}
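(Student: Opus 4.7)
The plan is to combine the hull machinery from Section~\ref{intertwining_results} with the dichotomy theorems for commuting subalgebras in amalgamated free product (AFP) von Neumann algebras, due to Ioana \cite{Io15} and Vaes \cite{Va13}. The graph product structure provides an AFP splitting $M_\Gamma = M_{\Gamma\setminus\{v\}}*_{M_{\lk(v)}}M_{\st(v)}$ at every vertex $v$, which will drive the analysis. I begin by applying Proposition~\ref{Zariski_hull} to $P_1\vee P_2$, producing pairwise orthogonal projections $\{p_\Lambda\}\subset \mathcal{Z}((P_1\vee P_2)'\cap pM_\Gamma p)$, indexed by full subgraphs $\Lambda\subset\Gamma$, such that each nonzero $(P_1\vee P_2)p_\Lambda$ has hull $M_\Lambda$. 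Since only a single projection $q$ is required in the conclusion, I fix one nonzero $p_\Lambda$ and work inside $p_\Lambda M_\Gamma p_\Lambda$. I then decompose the hull as a join $\Lambda=\Lambda_0\circ\Omega_1\circ\cdots\circ\Omega_k$, where $\Lambda_0$ is the maximal clique factor of $\Lambda$ and each $\Omega_i$ is irreducible with $|\Omega_i|\ge 2$. Setting $\Gamma_0 := \Lambda_0$ produces condition~(a), and the tensor factorization $M_\Lambda=M_{\Gamma_0}\,\overline{\otimes}\,M_{\Omega_1}\,\overline{\otimes}\,\cdots\,\overline{\otimes}\,M_{\Omega_k}$ will be used throughout.

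The core step is to assign each irreducible component $\Omega_i$ one of three types, corresponding to which of $P_1$, $P_2$, or neither survives inside it. For each $i$, choose $v_i\in\Omega_i$ whose star in $\Omega_i$ is proper (possible because $\Omega_i$ is irreducible and has at least two vertices), and apply the AFP dichotomy for commuting subalgebras to $(P_1p_\Lambda,P_2p_\Lambda)$ at the splitting $M_\Gamma=M_{\Gamma\setminus\{v_i\}}*_{M_{\lk(v_i)}}M_{\st(v_i)}$. After refining by a central projection in $\mathcal Z((P_1\vee P_2)'\cap p_\Lambda M_\Gamma p_\Lambda)$, on that projection either one of $P_1$ or $P_2$ intertwines strongly into $M_{\Gamma\setminus\{v_i\}}$ or $M_{\st(v_i)}$, or else $(P_1\vee P_2)$ is amenable relative to $M_{\lk(v_i)}$. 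Using the minimality of the hull, together with Corollaries~\ref{intersection} and~\ref{right_corner}, the first alternative pushes the relevant $P_j$ into $M_{\Lambda\setminus\Omega_i}$; the second alternative, combined with transitivity of relative amenability \cite[Proposition~2.4(3)]{OP10} and the hull relation $(P_1\vee P_2)p_\Lambda\prec^s_{M_\Gamma} M_\Lambda$, produces relative amenability of $(P_1\vee P_2)$ over $M_{\Lambda\setminus\Omega_i}$. Applying this dichotomy once at one $v_i$ per $\Omega_i$ and intersecting the resulting finitely many central cuts yields a single nonzero $q\le p_\Lambda$ on which every $\Omega_i$ is labelled: type~(I) if $P_2q\prec^s_{M_\Gamma}M_{\Lambda\setminus\Omega_i}$, type~(II) symmetrically with $P_1$, and type~(III) if $(P_1\vee P_2)q$ is amenable relative to $M_{\Lambda\setminus\Omega_i}$.

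I then set $\Gamma_1$, $\Gamma_2$, $\Gamma_3$ to be the unions of the $\Omega_i$ of types~(I), (II), (III) respectively. The join decomposition of $\Lambda$ delivers (b) and (c); condition~(e) holds since the hull of $(P_1\vee P_2)q$ is contained in $M_\Lambda$; condition~(d) follows from the type definitions together with the containment $M_{\Lambda\setminus\Omega_i}\supseteq M_{\Gamma_0\cup\Gamma_j\cup\Gamma_3}$ whenever $\Omega_i$ has type opposite to $P_j$; and condition~(f) follows from transitivity of relative amenability applied across the type~(III) components, using the tensor factorization of $M_\Lambda$ to absorb one component at a time. The moreover clause is then immediate: if $P_1\vee P_2\prec^s_{M_\Gamma}M_{\Lambda'}$, then Corollary~\ref{intersection} forces the hull $\Lambda$ of $(P_1\vee P_2)p_\Lambda$ to satisfy $\Lambda\subset\Lambda'$, hence $\Gamma_i\subset\Lambda\subset\Lambda'$ for every $i$.

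The main obstacle I anticipate is in the classification step, especially in promoting the individual relative amenability statements on each type~(III) component to condition~(f). Going from ``$(P_1\vee P_2)q$ is amenable relative to $M_{\Lambda\setminus\Omega_i}$ for every type~(III) component $\Omega_i$'' to the desired statement about $M_{\Lambda\setminus\Gamma_3}=M_{\Gamma_0\cup\Gamma_1\cup\Gamma_2}$ requires composing these amenability statements inside the tensor product $M_\Lambda$ without losing nonzeroness of $q$. The other ingredients---preservation of the $\prec^s$ conditions under central restriction (Remark~\ref{elementary_facts}(3)) and survival of relative amenability under passage to direct summands---are routine, but the bookkeeping needed to guarantee a single $q$ satisfying all six conditions simultaneously, and to ensure the correct form of the trichotomy at each vertex $v_i$, is the technical heart of the argument.
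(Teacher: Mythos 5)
There is a genuine gap in the central classification step. You apply the AFP dichotomy (Theorem~\ref{relamen}) at a \emph{single} vertex $v_i$ of each irreducible component $\Omega_i$ of the joint hull and claim this assigns the component a type, with the intertwining alternatives yielding $P_jq\prec^s_{M_\Gamma}M_{\Lambda\setminus\Omega_i}$ and the amenability alternative yielding amenability relative to $M_{\Lambda\setminus\Omega_i}$. Neither promotion is justified: the dichotomy at $v_i$ only controls that one vertex. If, say, $P_2q\prec^s M_{\Gamma\setminus\{v_i\}}$, intersecting with the hull via Corollary~\ref{intersection} gives $P_2q\prec^s M_{\Lambda\setminus\{v_i\}}$, and if $P_2q\prec^s M_{\st(v_i)}$ it gives $P_2q\prec^s M_{\Lambda\cap\st(v_i)}$ --- both graphs still contain most of $\Omega_i$ (in the second case $v_i$ itself is not even removed). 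Likewise the amenability alternative gives relative amenability over $M_{\lk(v_i)}$, hence (after intersecting with $\Lambda$ via Theorem~\ref{intersection2}) over $M_{\lk_\Lambda(v_i)}$, which strictly contains $M_{\Lambda\setminus\Omega_i}$ because it retains $\lk_{\Omega_i}(v_i)$; so condition~(f) is not reached, as you yourself flag. Iterating over all vertices of $\Omega_i$ does not repair this, because the trichotomy may select a different alternative (and a different algebra among $P_1,P_2$) at different vertices of the same component, and the star and amenability alternatives never remove the vertex at which they are applied. Working only with the hull of $P_1\vee P_2$ is precisely what loses the information needed to force a uniform alternative.

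The paper's proof is organized differently and avoids this. It applies Proposition~\ref{Zariski_hull} to $P_1$ and $P_2$ \emph{separately} (their hull projections lie in $\mathcal Z((P_1\vee P_2)'\cap pM_\Gamma p)$ because $P_1\vee P_2$ normalizes each $P_i$), producing on a common nonzero projection $q$ two subgraphs $\Lambda_1,\Lambda_2$ with $P_iq\prec^s M_{\Lambda_i}$ and minimality; Proposition~\ref{normalizer}(2) then gives $\Lambda_1\subset\Lambda_2\cup\Lambda_2^\perp$ and $\Lambda_2\subset\Lambda_1\cup\Lambda_1^\perp$, and the four subgraphs are defined \emph{combinatorially} from $\Lambda_1,\Lambda_2$ ($\Gamma_3$ consists of the $v\in\Lambda_1\cap\Lambda_2$ with $\Lambda_i\not\subset\st(v)$ for both $i$, $\Gamma_0=(\Lambda_1\cap\Lambda_2)\setminus\Gamma_3$, $\Gamma_1=\Lambda_1\setminus\Lambda_2$, $\Gamma_2=\Lambda_1^\perp$), rather than via a join decomposition into irreducible components. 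The point is that at every $v\in\Gamma_3$ both intertwining alternatives of Theorem~\ref{relamen} are excluded outright by the moreover part of Proposition~\ref{Zariski_hull} (since $v\in\Lambda_i$ and $\Lambda_i\not\subset\st(v)$), so the amenability alternative holds at \emph{every} such vertex with no further cutting, and Theorem~\ref{intersection2} intersects these to give amenability over $M_{\Gamma_3^\perp}$ and then, using~(e), over $M_{\Gamma_0\cup\Gamma_1\cup\Gamma_2}$; condition~(b) is arranged at the end by moving the clique factors of $\Gamma_1,\Gamma_2,\Gamma_3$ into $\Gamma_0$. To repair your argument you would need to replace the per-component, single-vertex classification with this two-hull bookkeeping (or something equivalent that excludes the intertwining cases at all the relevant vertices simultaneously).
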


In order to prove Theorem \ref{commute}, we use \cite[Theorem A]{Va13} (generalizing \cite[Theorem 1.6]{Io15}) to control commuting subalgebras inside graph product von Neumann algebras.  First, we note that \cite[Theorem A]{Va13} readily implies the following.

\begin{theorem}\label{relamen}
Let $(M_1,\tau_1),(M_2,\tau_2)$ be tracial von Neumann algebras with a common von Neumann subalgebra $(B,\tau)$ such that ${\tau_1}_{|B}={\tau_2}_{|B}=\tau$
and denote $M=M_1*_BM_2$. Let $P_1,P_2\subset pMp$ be two commuting von Neumann subalgebras. Then at least one of the following conditions holds:
\begin{enumerate}
\item $P_1\prec_MM_1$ or $P_1\prec_MM_2$.
\item $P_2\prec_MM_1$ or $P_2\prec_MM_2$.
\item $P_1\vee P_2$ is amenable relative to $B$ inside $M$.
\end{enumerate}
\end{theorem}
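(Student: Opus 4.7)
The plan is to derive Theorem \ref{relamen} as a direct consequence of \cite[Theorem~A]{Va13}, which provides the following dichotomy: for any von Neumann subalgebra $Q \subset pMp$ of the amalgamated free product $M = M_1 *_B M_2$, either $Q \prec_M M_i$ for some $i \in \{1, 2\}$, or $\mathcal{N}_{pMp}(Q)''$ is amenable relative to $B$ inside $M$. This is the main tool, and everything else is bookkeeping.

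First I would assume that condition (1) fails, so that $P_1 \nprec_M M_1$ and $P_1 \nprec_M M_2$. My goal is then to prove that condition (3) must hold (condition (2) plays a symmetric role and is automatically covered). Applying the above dichotomy to $Q = P_1$, the first alternative is excluded by assumption, so $\mathcal{N}_{pMp}(P_1)''$ must be amenable relative to $B$ inside $M$.

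The second step is the bookkeeping observation that $P_1 \vee P_2$ sits inside this normalizer algebra. Indeed, since $P_2$ commutes with $P_1$, every unitary $u \in P_2$ satisfies $u P_1 u^* = P_1$, and therefore belongs to $\mathcal{N}_{pMp}(P_1)$. As $P_2$ is generated by its unitaries, we get $P_2 \subset \mathcal{N}_{pMp}(P_1)''$, and the inclusion $P_1 \subset \mathcal{N}_{pMp}(P_1)''$ is tautological. Hence $P_1 \vee P_2 \subset \mathcal{N}_{pMp}(P_1)''$. Since relative amenability with respect to $B$ passes to von Neumann subalgebras (as noted in the preliminaries, this follows from the local characterization via almost invariant vectors in $\text{L}^2(\langle M, e_B \rangle)$), the algebra $P_1 \vee P_2$ is amenable relative to $B$ inside $M$, which is exactly condition (3).

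I do not anticipate a serious obstacle here: the trichotomy is symmetric in $P_1$ and $P_2$, but only one application of Vaes's theorem is needed, because once the intertwinings for $P_1$ are ruled out, the normalizer inclusion $P_1 \vee P_2 \subset \mathcal{N}_{pMp}(P_1)''$ immediately upgrades the conclusion of \cite[Theorem~A]{Va13} to relative amenability of the join. The only minor point to be checked is that the amalgamated free product structure and the inclusion $P_1 \subset pMp$ needed to invoke Vaes's theorem are precisely the hypotheses supplied by Theorem \ref{relamen}.
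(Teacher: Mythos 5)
There is a genuine gap: the version of \cite[Theorem~A]{Va13} you invoke is not what Vaes proved. His dichotomy requires the subalgebra $A\subset pMp$ to be amenable relative to $B$ (in the refined form, relative to $M_1$ or $M_2$) inside $M$; it is false for arbitrary subalgebras. For instance, take $B=\mathbb{C}$, $M=\mathrm{L}(\mathbb{Z})\ast\mathrm{L}(\mathbb{Z})=\mathrm{L}(\mathbb{F}_2)$ and $Q=M$: then $Q\nprec_M M_1$, $Q\nprec_M M_2$ (a II$_1$ factor cannot intertwine into an abelian algebra), yet $\mathcal{N}_{M}(Q)''=M$ is not amenable relative to $\mathbb{C}$. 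So your single application of the dichotomy to $Q=P_1$, which carries no amenability hypothesis whatsoever, is not justified. A second, related error is the logical reduction: you assume only that (1) fails and claim (3) follows, asserting that (2) is "automatically covered." That strengthened implication is false: with $P_1=M=\mathrm{L}(\mathbb{F}_2)$ and $P_2=\mathbb{C}1$ (which commute), condition (1) fails and condition (3) fails, and the theorem survives only because (2) holds trivially. The correct proof must assume that both (1) and (2) fail.

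The paper's argument repairs exactly these two points. Assuming (1) and (2) both fail, one first uses \cite[Corollary~F.14]{BO08} to produce a diffuse \emph{abelian} subalgebra $A\subset P_1$ with $A\nprec_M M_1$ and $A\nprec_M M_2$; since $A$ is amenable, \cite[Theorem~A]{Va13} legitimately applies to $A$ and yields that $\mathcal{N}_{pMp}(A)''$ is amenable relative to $B$. Your normalizer observation is then used, but only to conclude that $P_2\subset\mathcal{N}_{pMp}(A)''$ (note $P_1$ itself need not normalize $A$), so $P_2$ is amenable relative to $B$. This newly acquired relative amenability of $P_2$, together with $P_2\nprec_M M_1,M_2$, licenses a \emph{second} application of \cite[Theorem~A]{Va13}, now to $P_2$, giving that $\mathcal{N}_{pMp}(P_2)''\supset P_1\vee P_2$ is amenable relative to $B$, which is (3). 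Your one-step shortcut cannot be salvaged without this two-step bootstrapping, because nothing in the hypotheses makes $P_1$ amenable relative to $B$.
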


\begin{proof}
Assuming that (1) and (2) fail, we show that (3) holds. Since $P_1\nprec_MM_1$ and $P_1\nprec_MM_2$ by \cite[Corollary F.14]{BO08} we can find a diffuse abelian von Neumann subalgebra $A\subset P_1$ such that $A\nprec_MM_1$ and $A\nprec_MM_2$. Since $A$ is amenable, by \cite[Theorem A]{Va13} we get that $\mathcal N_{pMp}(A)''$ is amenable relative to $B$ inside $M$. Since $P_2\subset\mathcal N_{pMp}(A)''$, we deduce that $P_2$ is amenable relative to $B$ inside $M$. By applying \cite[Theorem A]{Va13} again and using that $P_2\nprec_MM_1$ and $P_2\nprec_MM_2$, we derive that $\mathcal N_{pMp}(P_2)''$ is amenable relative to $B$ inside $M$. Since $P_1\vee P_2\subset\mathcal N_{pMp}(P_2)''$, it follows that 
$P_1\vee P_2$ is amenable relative to $B$ inside $M$, so (3) holds, as claimed.
\end{proof}

To prove Theorem \ref{commute} we will also need the following result from \cite{BCC24}.

\begin{theorem}[\!\!{\cite[Theorem 5.3]{BCC24}}]\label{intersection2}
Let $\Gamma$  be a finite simple graph, $(M_v,\tau_v)_{v\in\Gamma}$ a family of tracial von Neumann algebras, and   $M_\Gamma=*_{v,\Gamma}M_v$ the associated graph product von Neumann algebra. Let $P\subset pM_\Gamma p$ be a von Neumann subalgebra and $\Gamma_1,\Gamma_2\subset\Gamma$  full subgraphs 
such that $P$ is amenable relative to $M_{\Gamma_1}$ and $M_{\Gamma_2}$ inside $M_\Gamma$.
Then $P$ is amenable relative to $M_{\Gamma_1\cap\Gamma_2}$ inside $M_\Gamma$.
\end{theorem}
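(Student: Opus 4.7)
The natural approach is to exploit the amalgamated free product (AFP) decomposition $M_\Gamma = M_{\Gamma \setminus \{v\}} *_{M_{\lk(v)}} M_{\st(v)}$, available for every vertex $v \in \Gamma$. The two main inputs about relative amenability will be: \emph{transitivity} (Ozawa--Popa \cite{OP10}), namely that if $P$ is amenable relative to $Q$ and $Q$ is amenable relative to $R$, then $P$ is amenable relative to $R$; and the \emph{AFP intersection property}, namely that in an amalgamated free product $M = M_1 *_B M_2$, if $P \subset pMp$ is amenable relative to both $M_1$ and $M_2$, then $P$ is amenable relative to $B$ (a classical consequence of deformation/rigidity arguments, established via malleable deformations for AFPs).

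I plan to proceed by induction on $n := |\Gamma \setminus (\Gamma_1 \cap \Gamma_2)|$. The case $n = 0$ is trivial, as then $\Gamma_1 = \Gamma_2 = \Gamma$. For the inductive step, pick $v \in \Gamma \setminus (\Gamma_1 \cap \Gamma_2)$; without loss of generality $v \notin \Gamma_1$. Decompose $M_\Gamma$ as the AFP at $v$. Since $M_{\Gamma_1} \subset M_{\Gamma \setminus \{v\}}$, transitivity gives that $P$ is amenable relative to $M_{\Gamma \setminus \{v\}}$ in $M_\Gamma$. Now split on whether $v \in \Gamma_2$. If $v \notin \Gamma_2$, then $M_{\Gamma_2} \subset M_{\Gamma \setminus \{v\}}$ as well, and the goal becomes to apply the inductive hypothesis inside the graph product $M_{\Gamma \setminus \{v\}}$. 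If $v \in \Gamma_2 \setminus \Gamma_1$, the subalgebra $M_{\Gamma_2}$ itself admits a compatible AFP decomposition $M_{\Gamma_2} = M_{\Gamma_2 \setminus \{v\}} *_{M_{\lk(v) \cap \Gamma_2}} M_{\st(v) \cap \Gamma_2}$; combined with the AFP intersection property applied to the ambient AFP, this reduces to the previous case with strictly smaller~$n$.

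The main obstacle is the descent step in the first subcase: since $P \not\subset M_{\Gamma \setminus \{v\}}$, one cannot directly invoke the inductive hypothesis inside the graph product $M_{\Gamma \setminus \{v\}}$. To get around this, I would work with the strengthened inductive statement that tracks relative amenability inside the ambient algebra $M_\Gamma$, and combine the hypothesis $P \prec_{\mathrm{relamen}} M_{\Gamma_2}$ with the AFP decomposition at $v$ via the following observation: since $M_{\Gamma_2} \subset M_{\Gamma \setminus \{v\}}$ and since the basic construction $\langle M_\Gamma, e_{M_{\Gamma_2}}\rangle$ carries the AFP word structure, one can combine the almost $P$-central vectors coming from the two relative amenability hypotheses inside the $M_\Gamma$-$M_{\Gamma \setminus \{v\}}$ bimodule, and average them against the AFP structure to produce almost $P$-central vectors in $\langle M_\Gamma, e_{M_{\Gamma_1 \cap \Gamma_2}}\rangle$. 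The key algebraic fact that makes this possible is the commuting square identity $E_{M_{\Gamma_1}} \circ E_{M_{\Gamma_2}} = E_{M_{\Gamma_1 \cap \Gamma_2}}$, which is immediate from the orthogonal decomposition $L^2(M_\Gamma) = \bigoplus_{\mathbf{v} \in \mathcal{W}_\Gamma} \overline{\mathring{M}_{\mathbf{v}}}$ together with $\mathcal{W}_{\Gamma_1} \cap \mathcal{W}_{\Gamma_2} = \mathcal{W}_{\Gamma_1 \cap \Gamma_2}$. The subtle point is ensuring that this averaging preserves approximate $P$-centrality; this is where the induction will bear its weight.
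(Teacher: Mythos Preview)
The paper does not give its own proof of this statement; it is quoted from \cite[Theorem~5.3]{BCC24} and used as a black box. So there is no proof in the present paper to compare your proposal against.

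As for the proposal itself, the overall shape (vertex AFP decompositions, transitivity from \cite{OP10}, and the AFP intersection property) is natural, but both branches of your inductive step have gaps that you do not close.

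In the case $v\in\Gamma_2\setminus\Gamma_1$: to apply the AFP intersection property to $M_\Gamma=M_{\Gamma\setminus\{v\}}*_{M_{\lk(v)}}M_{\st(v)}$ you would need $P$ amenable relative to \emph{both} legs $M_{\Gamma\setminus\{v\}}$ and $M_{\st(v)}$. You have the first (from $\Gamma_1\subset\Gamma\setminus\{v\}$) but not the second: $P$ being amenable relative to $M_{\Gamma_2}$ gives no control over $M_{\st(v)}$ unless $\Gamma_2\subset\st(v)$. The internal AFP decomposition of $M_{\Gamma_2}$ you invoke is compatible with the ambient one, but the intersection property still demands amenability relative to both full legs of the \emph{ambient} AFP, so this does not actually reduce~$n$.

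In the case $v\notin\Gamma_1\cup\Gamma_2$: you correctly diagnose the descent obstacle (the inductive hypothesis is for subalgebras of $M_{\Gamma\setminus\{v\}}$, and $P$ is not one). The commuting-square identity $E_{M_{\Gamma_1}}\circ E_{M_{\Gamma_2}}=E_{M_{\Gamma_1\cap\Gamma_2}}$ is correct and relevant, but it is a statement about conditional expectations on $L^2(M_\Gamma)$, not about the basic-construction bimodules $L^2(\langle M_\Gamma,e_{M_{\Gamma_i}}\rangle)$ on which the almost-central vectors witnessing relative amenability live. You have not explained how ``averaging'' would transport approximate $P$-centrality across these two settings; that passage is precisely the content of the theorem, and your sketch does not supply it. You will need to consult \cite{BCC24} directly for the bimodule argument that makes this work.
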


Combining Theorems \ref{relamen} and \ref{intersection2} allows to derive the following corollary. 

\begin{corollary}\label{relamen2}
Let $\Gamma$  be a finite simple graph, $(M_v,\tau_v)_{v\in\Gamma}$ a family of tracial von Neumann algebras, and $M_\Gamma=*_{v,\Gamma}M_v$ the associated graph product von Neumann algebra. Let $P_1,P_2\subset pM_\Gamma p$ be commuting von Neumann subalgebras, and $\Gamma_1,\Gamma_2\subset\Gamma$ be full subgraphs. 
Assume that for every $i\in\{1,2\}$, we have  $P_i\prec_{M_\Gamma}^sM_{\Gamma_i}$ and $P_i\nprec_{M_\Gamma}M_{\Gamma_i'}$, for every full subgraph $\Gamma_i'\subsetneq\Gamma_i$.
Let $\Lambda\subset\Gamma$ be the full subgraph consisting of vertices $v\in\Gamma_1\cap\Gamma_2$ such that $\Gamma_1\not\subset\emph{st}(v)$ and $\Gamma_2\not\subset\emph{st}(v)$.

Then $P_1\vee P_2$ is amenable relative to $M_{\Lambda^\perp}$ inside $M_\Gamma$.
\end{corollary}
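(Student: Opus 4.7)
The plan is to apply Theorem~\ref{relamen} to the commuting pair $(P_1,P_2)$ at each vertex $v\in\Lambda$, using the amalgamated free product decomposition
\[
M_\Gamma = M_{\Gamma\setminus\{v\}} *_{M_{\lk(v)}} M_{\st(v)},
\]
and then to iterate Theorem~\ref{intersection2} to intersect the resulting relative amenability conclusions down to $M_{\Lambda^\perp}$.

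First I would show that for every $v\in\Lambda$, alternative (3) of Theorem~\ref{relamen} applied to the above AFP decomposition must hold. By the minimality hypothesis on $\Gamma_i$, the projection $p_{\Gamma_i}$ from Proposition~\ref{Zariski_hull} equals $p$, so the moreover assertion of that proposition yields $P_i\nprec_{M_\Gamma}M_{\Lambda'}$ for every full subgraph $\Lambda'\subset\Gamma$ with $\Gamma_i\not\subset\Lambda'$. Since $v\in\Lambda$ forces $v\in\Gamma_i$ and $\Gamma_i\not\subset\st(v)$ for $i\in\{1,2\}$, we have $\Gamma_i\not\subset\Gamma\setminus\{v\}$ and $\Gamma_i\not\subset\st(v)$, hence $P_i\nprec_{M_\Gamma}M_{\Gamma\setminus\{v\}}$ and $P_i\nprec_{M_\Gamma}M_{\st(v)}$ for $i\in\{1,2\}$. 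This rules out alternatives (1) and (2), so alternative (3) must hold, meaning $P_1\vee P_2$ is amenable relative to $M_{\lk(v)}$ inside $M_\Gamma$.

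Next I would iterate Theorem~\ref{intersection2} over the vertices of $\Lambda$: since $P_1\vee P_2$ is amenable relative to $M_{\lk(v)}$ inside $M_\Gamma$ for each $v\in\Lambda$, a finite induction shows that $P_1\vee P_2$ is amenable relative to $M_{\bigcap_{v\in\Lambda}\lk(v)}$ inside $M_\Gamma$. Unfolding the definition gives $\bigcap_{v\in\Lambda}\lk(v)=\Lambda^\perp$, which completes the argument. The boundary case $\Lambda=\emptyset$ is handled by the convention $\emptyset^\perp=\Gamma$, in which the conclusion reduces to the tautology that $P_1\vee P_2$ is amenable relative to $M_\Gamma$ inside itself.

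I do not expect a genuine obstacle here: the argument is a clean combination of the hull machinery of Proposition~\ref{Zariski_hull}, the AFP amenability dichotomy of Theorem~\ref{relamen}, and the intersection property of relative amenability in Theorem~\ref{intersection2}. The only point requiring care is the verification that the minimality hypotheses on $\Gamma_1,\Gamma_2$ are exactly what is needed to invoke the moreover part of Proposition~\ref{Zariski_hull} with $p_{\Gamma_i}=p$; once this is noticed, the combinatorial definition of $\Lambda$ is tailor-made to force the exclusion of alternatives (1) and (2).
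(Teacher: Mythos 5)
Your proposal is correct and follows essentially the same route as the paper: for each $v\in\Lambda$ you rule out the embedding alternatives of Theorem~\ref{relamen} for the amalgamated free product $M_\Gamma=M_{\Gamma\setminus\{v\}}*_{M_{\lk(v)}}M_{\st(v)}$ via the moreover part of Proposition~\ref{Zariski_hull} (since $M_{\Gamma_i}$ is the hull of $P_i$), obtaining relative amenability over $M_{\lk(v)}$, and then intersect over $v\in\Lambda$ with Theorem~\ref{intersection2} to reach $M_{\Lambda^\perp}$. This is precisely the paper's argument, with your remark on $p_{\Gamma_i}=p$ and the empty-$\Lambda$ convention being harmless additional bookkeeping.
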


\begin{proof}
Let $v\in \Lambda$ and $i\in\{1,2\}$. Then $\Gamma_i\not\subset\Gamma\setminus\{v\}$ and $\Gamma_i\not\subset\text{st}(v)$. Since by assumption, $M_{\Gamma_i}$ 
is the hull of $P_i$, the moreover part of Proposition \ref{Zariski_hull} implies that $P_i\nprec_{M_\Gamma}M_{\Gamma\setminus\{v\}}$ and $P_i\nprec_{M_\Gamma}M_{\text{st}(v)}$. Since $M_\Gamma=M_{\Gamma\setminus\{v\}}*_{M_{\mathrm{lk}(v)}}M_{\text{st}(v)}$, Theorem \ref{relamen} implies that $P_1\vee P_2$ is amenable relative to $M_{\text{lk}(v)}$ inside $M_\Gamma$, for every $v\in \Lambda$. By applying Theorem \ref{intersection2} finitely many times we conclude that $P_1\vee P_2$ is amenable relative to $M_{\cap_{v\in \Lambda}\text{lk}(v)}=M_{\Lambda^\perp}$ inside $M_\Gamma$.
\end{proof}

We are now ready to prove Theorem \ref{commute}.

\begin{proof}[\bf Proof of Theorem \ref{commute}]
Put $\mathcal Z= \mathcal Z((P_1\vee P_2)'\cap pM_\Gamma p)$. Since $P_1\vee P_2\subset\mathcal N_{pM_\Gamma p}(P_1)''\cap\mathcal N _{pM_\Gamma p}(P_2)''$, we get that $\mathcal N_{pM_\Gamma p}(P_i)'\cap pM_\Gamma p\subset \mathcal Z$, for every $i\in\{1,2\}$. Proposition \ref{Zariski_hull}  implies that we can find a nonzero projection $q\in\mathcal Z$, and, for every $i\in\{1,2\}$, a 
full subgraph $\Lambda_i\subset\Gamma$ such that
\begin{equation}\label{P1P2}
\text{$P_iq\prec^s_{M_\Gamma}M_{\Lambda_i}$ and $P_iq\nprec_{M_\Gamma}M_{\Lambda'_i}$, for every full subgraph $\Lambda_i'\subset\Gamma$ such that $\Lambda_i\not\subset\Lambda_i'$.}\end{equation}
By applying Proposition \ref{normalizer}(2) we get that 
\begin{equation}\label{P1veeP2}\text{$(P_1\vee P_2)q\prec^s_{M_\Gamma}M_{\Lambda_i\cup\Lambda_i^\perp}$, for every $i\in\{1,2\}$.}
\end{equation}
In particular $P_1q\prec^s_{M_\Gamma}M_{\Lambda_2\cup\Lambda_2^\perp}$ and $P_2q\prec^s_{M_\Gamma}M_{\Lambda_1\cup\Lambda_1^\perp}$. By combining with \eqref{P1P2} we get that 
\begin{equation}\label{Lambda12}
\text{$\Lambda_1\subset\Lambda_2\cup\Lambda_2^\perp$ \;\; and \;\; $\Lambda_2\subset\Lambda_1\cup\Lambda_1^\perp$.}
\end{equation}
Let $\Gamma_3$ be the set of $v\in\Lambda_1\cap\Lambda_2$ such that $\Lambda_1\not\subset\text{st}(v)$ and $\Lambda_2\not\subset\text{st}(v)$.
Define $\Gamma_0=(\Lambda_1\cap\Lambda_2)\setminus\Gamma_3$, $\Gamma_1=\Lambda_1\setminus\Lambda_2$ and $\Gamma_2=\Lambda_1^\perp$.
Then the graphs $\Gamma_i$, $0\leq i\leq 3$, are pairwise disjoint by definition.

If $v\in\Gamma_0$, then $\Lambda_1\subset\text{st}(v)$ or $\Lambda_2\subset\text{st}(v)$, therefore $\Lambda_1\cap\Lambda_2\subset\text{st}(v)$. This shows that $\Gamma_0$ is contained in the maximal clique factor of $\Lambda_1\cap\Lambda_2$. Thus, $\Gamma_0$ is a clique, which proves (a), and $\Gamma_0\subset\Gamma_3^\perp$. 

If $i\in\{0,1,3\}$, then $\Gamma_i\subset\Lambda_1$ and since $\Gamma_2=\Lambda_1^\perp$ we get that $\Gamma_i\subset\Gamma_2^\perp$. 
Recall that $\Gamma_1=\Lambda_1\setminus\Lambda_2$, so by \eqref{Lambda12} we get $\Gamma_1\subset\Lambda_2^\perp$.
If $i\in\{0,3\}$, then since $\Gamma_i\subset\Lambda_2$, we deduce that $\Gamma_i\subset\Gamma_1^\perp$, which altogether proves (c).

Since $\Gamma_0\cup\Gamma_3=\Lambda_1\cap\Lambda_2$, we get that $\Gamma_0\cup\Gamma_1\cup\Gamma_3=\Lambda_1$ and
$\Gamma_0\cup\Gamma_2\cup\Gamma_3=(\Lambda_1\cap\Lambda_2)\cup\Lambda_1^\perp$. The latter equality and \eqref{Lambda12} imply that $\Lambda_2\subset\Gamma_0\cup\Gamma_2\cup\Gamma_3$. By \eqref{P1P2} we deduce that (d) holds.

Since we also have that $\Gamma_0\cup\Gamma_1\cup\Gamma_2\cup\Gamma_3=\Lambda_1\cup\Lambda_1^\perp$, (e) holds by \eqref{P1veeP2}.

Towards proving (f), we note first that \eqref{P1P2} together with Corollary \ref{relamen2} gives that $(P_1\vee P_2)q$ is amenable relative to $M_{\Gamma_3^\perp}$ inside $M_\Gamma$. By (e), Proposition \ref{results}(4) implies that $(P_1\vee P_2)q$ is amenable relative to $M_{\Gamma_0\cup\Gamma_1\cup\Gamma_2\cup\Gamma_3}$ inside $M_\Gamma$. Theorem \ref{intersection2} implies that $(P_1\vee P_2)q$ is amenable relative to $M_{\Gamma_3^\perp\cap(\Gamma_0\cup\Gamma_1\cup\Gamma_2\cup\Gamma_3)}=M_{\Gamma_0\cup\Gamma_1\cup\Gamma_2}$ inside $M_\Gamma$, which proves (f). Altogether, we proved (a) and (c)-(f).

Finally, to justify (b), let $C_i$ be the maximal clique factor of $\Gamma_i$, for every $1\leq i\leq 3$. Then it is clear that conditions (a) and (c)-(f) still hold if we replace the quadruple $(\Gamma_0,\Gamma_1,\Gamma_2,\Gamma_3)$ by $(\Gamma_0',\Gamma_1',\Gamma_2',\Gamma_3')=(\Gamma_0\cup C_1\cup C_2\cup C_3,\Gamma_1\setminus C_1,\Gamma_2\setminus C_2,\Gamma_3\setminus C_3)$. Additionally, since $\Gamma_i'$ has an empty maximal clique factor, for every $1\leq i\leq 3$,  (b) is also satisfied.

To prove the moreover assertion, assume that $P_1\vee P_2\prec^s_{M_{\Gamma}}M_\Lambda$, for a  full subgraph $\Lambda\subset\Gamma$. By Proposition \ref{results}(4) we get that $P_1\vee P_2$ is amenable relative to $M_{\Lambda}$ inside $M_\Gamma$. Using Corollary~\ref{intersection} and Theorem \ref{intersection2} it follows that conditions (a) and (c)-(f) hold after we replace $\Gamma_i$ by $\Gamma_i\cap\Lambda$, for every $0\leq i\leq 3$. 
Thus, we may find $\Gamma_i\subset\Lambda$, for $0\leq i\leq 3$, so that conditions (a) and (c)-(f) hold.
By repeating the previous paragraph we can also ensure that (b) is satisfied.
\end{proof}

\subsection{Corollaries of Theorem \ref{commute}}
As a consequence of Theorem \ref{commute}, we derive two useful results. 
The reader is referred to Section~\ref{graph_notions} for the notion of a join subgraph.

\begin{corollary}\label{join_embed}
Let $\Gamma$  be a finite simple graph, $(M_v,\tau_v)_{v\in\Gamma}$ a family of tracial von Neumann algebras, and  $M_\Gamma=*_{v,\Gamma}M_v$ the associated graph product von Neumann algebra. Let $P_1,P_2\subset p(M_\Gamma\otimes \mathbb{M}_n(\mathbb{C})) p$ be diffuse commuting von Neumann subalgebras, for some $n\ge 1$. Assume that $P_1\vee P_2$ has no amenable direct summand and $P_1\vee P_2\nprec_{M_\Gamma\otimes\mathbb{M}_n(\mathbb{C})} M_v\otimes\mathbb{M}_n(\mathbb{C})$, for every isolated vertex $v\in\Gamma$.

Then there exists a join full subgraph $\Lambda\subset\Gamma$ such that 
$P_1\vee P_2\prec_{M_\Gamma\otimes \mathbb{M}_n(\mathbb{C})}M_{\Lambda}\otimes\mathbb{M}_n(\mathbb{C})$.
\end{corollary}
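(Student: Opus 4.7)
The plan is to apply Theorem~\ref{commute} to the commuting pair $P_1,P_2\subset p(M_\Gamma\otimes\mathbb M_n(\mathbb C))p$, via the amplification trick of Remark~\ref{amplify_results} (treating $M_\Gamma\otimes\mathbb M_n(\mathbb C)$ as a graph product over $\Gamma\circ\{\ast\}$ with $M_\ast=\mathbb M_n(\mathbb C)$, and intersecting the resulting subgraphs with $\Gamma$). This yields pairwise disjoint full subgraphs $\Gamma_0,\Gamma_1,\Gamma_2,\Gamma_3\subset\Gamma$ and a nonzero projection $q\in\mathcal Z((P_1\vee P_2)'\cap p(M_\Gamma\otimes\mathbb M_n(\mathbb C))p)$ satisfying conditions (a)--(f), with each $M_{\Gamma_i}$ replaced by $M_{\Gamma_i}\otimes\mathbb M_n(\mathbb C)$. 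Setting $\Lambda=\Gamma_0\cup\Gamma_1\cup\Gamma_2\cup\Gamma_3$, condition~(e) yields $(P_1\vee P_2)q\prec^s_{M_\Gamma\otimes\mathbb M_n(\mathbb C)}M_\Lambda\otimes\mathbb M_n(\mathbb C)$, and hence $P_1\vee P_2\prec_{M_\Gamma\otimes\mathbb M_n(\mathbb C)}M_\Lambda\otimes\mathbb M_n(\mathbb C)$ by Remark~\ref{elementary_facts}(2). It therefore suffices to exhibit a join full subgraph that intertwines as required, either $\Lambda$ itself or a suitable enlargement of it.

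I would then observe that $\Lambda$ is already a join whenever at least two of the $\Gamma_i$ are nonempty, since condition (c) allows one to split $\Lambda=\Gamma_i\circ(\Lambda\setminus\Gamma_i)$ for any such $\Gamma_i$; likewise, if only $\Gamma_0$ is nonempty with $|\Gamma_0|\geq 2$, then $\Gamma_0$ is a clique by (a) and splits as $\{w\}\circ(\Gamma_0\setminus\{w\})$ for any $w\in\Gamma_0$. The configurations left to analyse are: (i) $\Gamma_0=\Gamma_1=\Gamma_2=\Gamma_3=\emptyset$; (ii) only $\Gamma_1$ or only $\Gamma_2$ is nonempty; (iii) only $\Gamma_3$ is nonempty; (iv) $\Gamma_0=\{v\}$ with $\Gamma_1=\Gamma_2=\Gamma_3=\emptyset$.

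In cases (i) and (ii), conditions (e) and (d) respectively give $P_iq\prec^s_{M_\Gamma\otimes\mathbb M_n(\mathbb C)}\mathbb M_n(\mathbb C)$ for some $i\in\{1,2\}$. Since $q\in P_i'$, the algebra $P_iq$ is isomorphic to $P_iz_i$ for the central support $z_i\in\mathcal Z(P_i)$ of $q$ in $P_i$, and hence remains diffuse; but a diffuse tracial von Neumann algebra admits a net of unitaries converging weakly to $0$, which, combined with the normality of $\text{E}_{\mathbb M_n(\mathbb C)}$ and the finite-dimensionality of $\mathbb M_n(\mathbb C)$, gives $P_iq\nprec_{M_\Gamma\otimes\mathbb M_n(\mathbb C)}\mathbb M_n(\mathbb C)$ through Proposition~\ref{results}(5), yielding the desired contradiction. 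In case~(iii), condition (f) forces $(P_1\vee P_2)q$ to be amenable; letting $z\in\mathcal Z(P_1\vee P_2)$ be the smallest central projection with $zq=q$, one checks that $x\mapsto xq$ restricts to a normal $*$-isomorphism $(P_1\vee P_2)z\to(P_1\vee P_2)q$, so $(P_1\vee P_2)z$ is an amenable direct summand of $P_1\vee P_2$, contradicting the hypothesis. Finally, in case~(iv), if $v$ is isolated then $P_1\vee P_2\prec_{M_\Gamma\otimes\mathbb M_n(\mathbb C)}M_v\otimes\mathbb M_n(\mathbb C)$ contradicts the assumption; otherwise $\st(v)=\{v\}\circ\lk(v)$ is a nontrivial join, and using $M_v\subset M_{\st(v)}$ one replaces $\Lambda$ by $\st(v)$ to complete the argument. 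The subtlest step is case~(iii), where $(P_1\vee P_2)q$ being amenable must be promoted to a direct summand of $P_1\vee P_2$ being amenable, which relies crucially on $q$ lying in the commutant of $P_1\vee P_2$.
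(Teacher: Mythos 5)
Your proof is correct and takes essentially the same route as the paper: apply Theorem~\ref{commute} through the amplification of Remark~\ref{amplify_results}, then rule out the configurations that do not already produce a join subgraph using diffuseness of $P_1,P_2$, the absence of an amenable direct summand, and the isolated-vertex hypothesis (the paper phrases this by contradiction and uses $\{v,v'\}$ where you use $\st(v)$, which is cosmetic). The only slip is a citation: to conclude $P_iq\nprec_{M_\Gamma\otimes\mathbb{M}_n(\mathbb{C})}\mathbb{M}_n(\mathbb{C})$ from a net of unitaries tending weakly to $0$, you should invoke Theorem~\ref{intertwine}(2) rather than Proposition~\ref{results}(5), which goes in the opposite direction.
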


\begin{proof}
As in Remark~\ref{amplify_results}, we let $\tilde\Gamma=\Gamma\circ\{v_0\}$, and view $M_\Gamma\otimes\mathbb{M}_n(\mathbb{C})$ as a graph product $M_{\tilde\Gamma}$ where the vertex algebra on $v_0$ is $\mathbb{M}_n(\mathbb{C})$.

Assume by contradiction that the conclusion fails. Theorem \ref{commute} provides pairwise disjoint full subgraphs $\tilde\Gamma_i\subset\tilde\Gamma$, $0\leq i\leq 3$, and a nonzero projection $q\in\mathcal Z((P_1\vee P_2)'\cap pM_{\tilde\Gamma} p))$ such that conditions (a)-(f) hold. For every $i\in\{0,1,2,3\}$, we let $\Gamma_i:=\tilde\Gamma_i\cap\Gamma$. Since the conclusion fails, conditions (c) and (e) imply that at most one of the graphs $\Gamma_i$, $0\leq i\leq 3$, is non-empty 
(in fact exactly one, using that $P_1,P_2$ are diffuse). If $\Gamma_0=\Gamma_1=\Gamma_2=\emptyset$, then (f) would imply that $(P_1\vee P_2)q$ is amenable, contradicting that $P_1\vee P_2$ has no amenable direct summand. If $i\in\{1,2\}$ and $\Gamma_0=\Gamma_i=\Gamma_3=\emptyset$, then (d) would imply that $P_iq\prec_{M_{\tilde\Gamma}}\mathbb{M}_n(\mathbb C)$, contradicting that $P_i$ is diffuse. Altogether, we must have that $\Gamma_1=\Gamma_2=\Gamma_3=\emptyset$. By (e) we get that $(P_1\vee P_2)q\prec_{M_{\tilde\Gamma}}^sM_{\Gamma_0}\otimes\mathbb{M}_n(\mathbb{C})$. Since $\Gamma_0$ is a clique by (a) and the conclusion is assumed to fail, we conclude that $\Gamma_0=\{v\}$, for some $v\in\Gamma$. 
Hence,  $P_1\vee P_2\prec_{M_{\tilde\Gamma}} M_v\otimes\mathbb{M}_n(\mathbb{C})$.
By the hypothesis $v\in\Gamma$ is not isolated, so there is $v'\in\text{lk}(v)$. But then $P_1\vee P_2\prec_{M_{\tilde\Gamma}}M_{\{v,v'\}}\otimes\mathbb{M}_n(\mathbb{C})$, which also gives a contradiction as $\{v,v'\}$ is a join graph.    
\end{proof}

\begin{corollary}\label{tensor_dec}
Let $\Gamma$  be a finite simple graph, $(M_v,\tau_v)_{v\in\Gamma}$ a family of diffuse tracial von Neumann algebras, and  $M_\Gamma=*_{v,\Gamma}M_v$ the associated graph product von Neumann algebra. Let $P_1,P_2\subset pM_\Gamma p$ be diffuse commuting von Neumann subalgebras. Put $P=P_1\vee P_2$. Let $\Lambda\subset \Gamma$  be a full subgraph, and $q\in\mathcal Z(M_\Lambda'\cap M_\Gamma)$ a nonzero projection.

 Assume that $P\prec_{M_\Gamma}^s M_\Lambda q$ and $M_\Lambda q\prec_{M_\Gamma}^s P$. Assume also that $Pp'\nprec_{M_\Gamma}P(p-p')$, for every nonzero projection $p'\in\mathcal Z(P'\cap pM_\Gamma p)$. 
Let $p_0\in\mathcal Z(P'\cap pM_\Gamma p)$ be a nonzero projection.

Then there exist a nonzero projection $p'\in\mathcal Z(P'\cap pM_\Gamma p)$ with $p'\leq p_0$,  and disjoint full subgraphs $\Lambda_1,\Lambda_2\subset\Lambda$ such that 
\begin{itemize}
\item $\Lambda\setminus (\Lambda_1\cup\Lambda_2)$ is the maximal clique factor $C$ 
of $\Lambda$, 
\item $\Lambda_1\subset\Lambda_2^\perp$, and \item $P_ip'\prec_{M_\Gamma}M_{C\cup\Lambda_{i}}q$, for every $i\in\{1,2\}$.
\end{itemize}
\end{corollary}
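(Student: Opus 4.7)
The strategy is to apply Theorem~\ref{commute} to the commuting pair $(P_1, P_2)$ inside $pM_\Gamma p$ and then use the two-sided intertwining hypothesis together with the indecomposability condition $Pp' \nprec P(p-p')$ to align the output with a join decomposition of $\Lambda$. Since $P = P_1 \vee P_2 \prec^s_{M_\Gamma} M_\Lambda q \subset M_\Lambda$, the moreover assertion of Theorem~\ref{commute} yields pairwise disjoint full subgraphs $\Gamma_0, \Gamma_1, \Gamma_2, \Gamma_3 \subset \Lambda$ and a nonzero projection $q' \in \mathcal{Z}(P' \cap pM_\Gamma p)$ satisfying (a)--(f). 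Iterating Theorem~\ref{commute} via Zorn's lemma on orthogonal complements partitions $p = \sum_\alpha q'_\alpha$ in $\mathcal{Z}(P' \cap pM_\Gamma p)$, with associated data $(\Gamma_0^\alpha, \dots, \Gamma_3^\alpha)$ and $\Sigma^\alpha := \bigcup_i \Gamma_i^\alpha \subset \Lambda$.

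\paragraph{Forcing $\Sigma = \Lambda$ and $\Gamma_0 = C$.}
From $M_\Lambda q \prec_{M_\Gamma} P$, Proposition~\ref{approx} combined with Remark~\ref{elementary_facts}(\ref{sum_1}) applied to the partition gives some $\alpha$ with $M_\Lambda q \prec Pq'_\alpha$. Combining with condition (e) via Proposition~\ref{results}(2) yields $M_\Lambda q \prec M_{\Sigma^\alpha}$; since $q \in M_\Lambda' \cap M_\Gamma$, $M_\Lambda q$ is a corner of $M_\Lambda$, so Remark~\ref{elementary_facts}(\ref{inherit_to_subalg}) and Lemma~\ref{basic}(2) force $\Lambda \subset \Sigma^\alpha$, hence $\Sigma^\alpha = \Lambda$. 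Let $p_\Lambda := \sum_{\beta : \Sigma^\beta = \Lambda} q'_\beta$. Assuming $p_0 p_\Lambda = 0$ for contradiction, the above argument applied with arbitrary $r \in (M_\Lambda q)' \cap qM_\Gamma q$, and the fact that only $\Sigma^\beta = \Lambda$ can accommodate $M_\Lambda q$, gives $M_\Lambda q \prec^s Pp_\Lambda$. Combining with $Pp_0 \prec^s M_\Lambda q$ (first hypothesis) via Proposition~\ref{results}(2) then yields $Pp_0 \prec Pp_\Lambda \subset P(p - p_0)$, contradicting the indecomposability. Hence we may take $q' := q'_\alpha p_0 \neq 0$ for some $\alpha$ with $\Sigma^\alpha = \Lambda$. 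Once $\Sigma = \Lambda$, conditions (a)--(c) together with the empty-maximal-clique-factor property (b) force $\Gamma_0$ to equal the maximal clique factor $C$ of $\Lambda$.

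\paragraph{Choosing $\Lambda_1, \Lambda_2$ and the $\Gamma_3$ obstacle.}
Set $\Lambda_1 = \Gamma_1 \cup \Gamma_3$ and $\Lambda_2 = \Gamma_2$; then $\Lambda \setminus (\Lambda_1 \cup \Lambda_2) = C$ and $\Lambda_1 \subset \Lambda_2^\perp$ by (c). From (d) for $i=1$ and Corollary~\ref{right_corner} applied with $P_1 q' \prec^s_{M_\Gamma} M_\Lambda q$, one obtains $P_1 q' \prec_{M_\Gamma} M_{C \cup \Lambda_1} q$ as desired. The main obstacle is the analog for $P_2$: condition (d) for $i=2$ only gives $P_2 q' \prec^s M_{\Gamma_0 \cup \Gamma_2 \cup \Gamma_3}$, whereas the conclusion requires $P_2 p' \prec M_{\Gamma_0 \cup \Gamma_2} q$ for some nonzero $p' \leq q'$ in $\mathcal{Z}(P' \cap pM_\Gamma p)$. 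To resolve this, apply Proposition~\ref{Zariski_hull} to $P_2 q'$ inside $q'M_\Gamma q'$, obtaining a partition $q' = \sum_D r_D$ with $P_2 r_D$ of hull $M_D$ for $D \subset \Gamma_0 \cup \Gamma_2 \cup \Gamma_3$. Since $P_1 \subset \mathcal{N}_{M_\Gamma}(P_2)''$ by commutation, each $r_D$ lies in $(P_1 q')' \cap q'M_\Gamma q'$ as well as in the center of $(P_2 q')' \cap q'M_\Gamma q'$, hence in $\mathcal{Z}(P' \cap q'M_\Gamma q') = \mathcal{Z}(P' \cap pM_\Gamma p) q'$. Setting $p' := \sum_{D \subset \Gamma_0 \cup \Gamma_2} r_D$ and applying Corollary~\ref{right_corner} yields $P_2 p' \prec_{M_\Gamma} M_{\Gamma_0 \cup \Gamma_2} q$, provided $p' \neq 0$. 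If instead every such $D$ meets $\Gamma_3$, the symmetric reassignment $\Lambda_1 = \Gamma_1$, $\Lambda_2 = \Gamma_2 \cup \Gamma_3$ together with the analogous hull analysis for $P_1 q'$ handles the case; the indecomposability hypothesis excludes the simultaneous failure of both analyses. This centrality-and-nonvanishing step, which crucially exploits the indecomposability, is the principal technical difficulty of the proof.
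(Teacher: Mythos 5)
Your overall architecture follows the paper's: apply the moreover part of Theorem~\ref{commute} to get subgraphs $\Gamma_0,\dots,\Gamma_3\subset\Lambda$ below $p_0$, use the hypothesis $Pp'\nprec_{M_\Gamma}P(p-p')$ together with $M_\Lambda q\prec^s_{M_\Gamma}P$ to force the reverse intertwining onto the chosen central piece, deduce $\Gamma_0\cup\dots\cup\Gamma_3=\Lambda$ and $\Gamma_0=C$ via Lemma~\ref{basic}(2), and finish with Corollary~\ref{right_corner} to insert the projection $q$. Up to that point your argument is essentially the paper's (if somewhat more convoluted, with the Zorn-type partition being unnecessary).

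The genuine gap is your treatment of $\Gamma_3$, which you yourself flag as the principal difficulty. The paper does not absorb $\Gamma_3$ into $\Lambda_1$ or $\Lambda_2$: it proves that $\Gamma_3=\emptyset$. The mechanism is precisely the output (f) of Theorem~\ref{commute}, which you never invoke: from $M_\Lambda q\prec_{M_\Gamma}Pp'$ one gets (via Proposition~\ref{results}(3)--(4)) a corner $M_\Lambda q'$ that is amenable relative to $Pp'\oplus\mathbb C(1-p')$, hence, by (f) and transitivity of relative amenability, amenable relative to $M_{\Gamma_0\cup\Gamma_1\cup\Gamma_2}$; then Theorem~\ref{intersection2} (intersection of the two relative amenability positions, using $\Gamma_3\cap(\Gamma_0\cup\Gamma_1\cup\Gamma_2)=\emptyset$) forces $M_{\Gamma_3}q'$ to be amenable, contradicting Lemma~\ref{basic}(3) because $\Gamma_3$ has empty clique factor and the vertex algebras are diffuse. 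Your substitute — a hull decomposition of $P_2q'$ (and symmetrically $P_1q'$) plus the assertion that ``the indecomposability hypothesis excludes the simultaneous failure of both analyses'' — is exactly the point that needs a proof, and indecomposability alone does not supply one: in the paper the indecomposability hypothesis is used only to transfer $M_\Lambda q$ onto the piece $Pp'$, not to control $\Gamma_3$. The tell is that your argument never uses the diffuseness of the vertex algebras nor the relative amenability conclusion of Theorem~\ref{commute}, both of which are indispensable for killing $\Gamma_3$; as written, the proof is incomplete at its crux.
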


\begin{remark}
    The corollary remains true for subalgebras $P_1,P_2\subset p(M_\Gamma\otimes\mathbb{M}_n(\mathbb{C}))p$, for some $n\ge 1$. Again this can be derived using Remark~\ref{amplify_results}.
\end{remark}

\begin{proof}
Since $Pp_0\prec_{M_\Gamma} M_\Lambda q$, the moreover assertion of Theorem \ref{commute} provides pairwise disjoint full subgraphs $\Lambda_i\subset\Lambda$ (with $0\le i\le 3$), and a nonzero projection $p'\in\mathcal Z(P'\cap pM_\Gamma p)$ such that $p'\leq p_0$ and the following hold:
\begin{enumerate}
\item[(a)] $\Lambda_0$ is a clique;
\item[(b)] $\Lambda_i$ has an empty maximal clique factor, for every $1\leq i\leq 3$;
\item[(c)] $\Lambda_i\subset\Lambda_j^\perp$, for every $0\leq i<j\leq 3$;
\item[(d)] $P_ip'\prec_{M_\Gamma}^s M_{\Lambda_0\cup\Lambda_i\cup\Lambda_3}$, for every $i\in\{1,2\}$;
\item[(e)] $Pp'\prec_{M_\Gamma}^sM_{\Lambda_0\cup\Lambda_1\cup\Lambda_2\cup\Lambda_3}$;
\item[(f)] $Pp'$ is amenable relative to $M_{\Lambda_0\cup\Lambda_1\cup\Lambda_2}$ inside $M_\Gamma$.
\end{enumerate}
We first claim that $M_\Lambda q\prec_{M_\Gamma}Pp'$. Otherwise, since $M_\Lambda q\prec_{M_\Gamma}^sP$, we get $M_\Lambda q\prec_{M_\Gamma}^sP(p-p')$.
Since $Pp'\prec_{M_\Gamma}M_\Lambda q$, Proposition \ref{results}(2) implies that $Pp'\prec_{M_\Gamma}P(p-p')$, which is a contradiction.

Second, we claim that $\Lambda_0\cup\Lambda_1\cup\Lambda_2\cup\Lambda_3=\Lambda$.  If $v\in\Lambda$, then since $M_\Lambda q\prec_{M_\Gamma}Pp'$, we derive that $M_v\prec_{M_\Gamma}Pp'$. This fact, (e) and Proposition \ref{results}(2) together imply that $M_v\prec_{M_\Gamma}M_{\Lambda_0\cup\Lambda_1\cup\Lambda_2\cup\Lambda_3}$. By Lemma~\ref{basic}(2), we get that $v\in \Lambda_0\cup\Lambda_1\cup\Lambda_2\cup\Lambda_3$, which proves our claim.

Third, we claim that $\Lambda_0$ is the maximal clique factor $C$ of $\Lambda$. The previous claim together with (b) ensures that $C\subset\Lambda_0$, and together with (c), it ensures that $\Lambda\subset \Lambda_0\cup\Lambda_0^{\perp}$, so $\Lambda_0$ is a union of join factors of $\Lambda$. Since by (a) $\Lambda_0$ is a clique, it follows that $\Lambda_0\subset C$, which proves our claim.

Fourth, we claim that $\Lambda_3=\emptyset$.  
Assume by contradiction that $\Lambda_3\not=\emptyset$. Then (b) implies that $\Lambda_3$ is not a clique.
Since  $M_\Lambda q\prec_{M_\Gamma}Pp'$, by Proposition \ref{results}(3), we can find a nonzero projection $q'\in\mathcal Z(M_\Lambda'\cap M_\Gamma)$ such that $q'\leq q$ and $M_\Lambda q'\prec_{M_\Gamma}^s Pp'$. Thus, we have $M_\Lambda q'\prec_{M_\Gamma}^s Pp'\oplus\mathbb C(1-p')$, and hence that $M_\Lambda q'$ is amenable relative to $Pp'\oplus\mathbb C(1-p')$ inside $M_\Gamma$ by Proposition \ref{results}(4). By (f), we also get that $Pp'\oplus\mathbb C(1-p')$ is  amenable relative to $M_{\Lambda_0\cup\Lambda_1\cup\Lambda_2}$ inside $M_\Gamma$. 
The last two facts and \cite[Proposition 2.4(3)]{OP10} together imply that $M_\Lambda q'$ is amenable relative to $M_{\Lambda_0\cup\Lambda_1\cup\Lambda_2}$ inside $M_\Gamma$. In particular, $M_{\Lambda_3}q'$ is amenable relative to $M_{\Lambda_0\cup\Lambda_1\cup\Lambda_2}$ inside $M_\Gamma$. Since  $M_{\Lambda_3}q'$ is also amenable relative to $M_{\Lambda_3}$ inside $M_\Gamma$ and $\Lambda_3\cap (\Lambda_0\cup\Lambda_1\cup\Lambda_2)=\emptyset$, Theorem \ref{intersection2}  
implies that $M_{\Lambda_3}q'$ is amenable. 
By Lemma~\ref{basic}(3), this contradicts the fact that $\Lambda_3$ is not a clique.

Thus, we have that $\Lambda\setminus (\Lambda_1\cup\Lambda_2)=\Lambda_0$, which is the clique factor of $\Lambda$, and $\Lambda_1\subset\Lambda_2^\perp$. If $i\in\{1,2\}$, then (d) gives that $P_ip'\prec_{M_\Gamma}^sM_{\Lambda_0\cup\Lambda_i}$. Since $P\prec_{M_\Gamma}^sM_\Lambda q$, we also have that $P_ip'\prec_{M_\Gamma}^s M_\Lambda q$. By combining the last two facts and using Corollary \ref{right_corner} we get that $P_ip'\prec_{M_\Gamma}^s M_{\Lambda_0\cup\Lambda_i}q$, which finishes the proof.
\end{proof}

\section{Strong primeness: Proof of Theorem \ref{prime_factorization}}
In this section, we use Corollary \ref{tensor_dec} to prove Theorem \ref{prime_factorization}.
It was shown in \cite[Theorem B]{BCC24} that if $\Gamma$ is an irreducible graph with $|\Gamma|\geq 2$, then $M_\Gamma$ is a prime II$_1$ factor, whenever all the vertex algebras $(M_v)_{v\in\Gamma}$ are II$_1$ factors. Towards proving Theorem \ref{prime_factorization}, we first use Corollary \ref{tensor_dec} to strengthen this fact  by showing that $M_\Gamma$ is strongly prime in the following sense defined by Isono. 

\begin{definition}[\!\!{\cite[Section~5]{Is14}}] A II$_1$ factor $M$ is called {\it strongly prime} if for any II$_1$ factors $N,P_1,P_2$ with $M\overline{\otimes}N=P_1\overline{\otimes}P_2$, there exist $u\in\mathcal U(M\overline{\otimes}N)$ and $t>0$ such that after identifying $P_1\overline{\otimes}P_2=P_1^t\overline{\otimes}P_2^{1/t}$, we have that $uMu^*\subset P_1^t$ or $uMu^*\subset P_2^{1/t}$.
\end{definition}

\begin{theorem}\label{strongly_prime}
Let $\Gamma$  be an irreducible finite simple graph with $|\Gamma|\geq 2$, 
$(M_v,\tau_v)_{v\in\Gamma}$ be a family of diffuse tracial von Neumann algebras, and $M_\Gamma=*_{v,\Gamma}M_v$ the associated graph product von Neumann algebra.

Then $M_\Gamma$ is a strongly prime II$_1$ factor. 
\end{theorem}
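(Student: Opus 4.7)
The plan is to deduce strong primeness from an application of Corollary~\ref{tensor_dec} to a suitably chosen pair of commuting subalgebras, followed by Lemma~\ref{tensor_product}(1) to promote the resulting intertwining into an honest tensor factorization. Let $N$, $P_1$, $P_2$ be II$_1$ factors with $M_\Gamma \overline{\otimes} N = P_1 \overline{\otimes} P_2$. First, check that $M_\Gamma$ is itself a II$_1$ factor: since $\Gamma$ is irreducible with $|\Gamma| \geq 2$, it cannot be a clique (any clique on at least two vertices is a nontrivial join), so its maximal clique factor is empty, and Lemma~\ref{basic}(1) gives $\mathcal{Z}(M_\Gamma) = \mathbb{C}$. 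Next, realize $M_\Gamma \overline{\otimes} N$ as the graph product $M_{\tilde{\Gamma}}$ where $\tilde{\Gamma} := \Gamma \circ \{v_0\}$ with $M_{v_0} := N$; since $v_0$ is adjacent to every other vertex, the maximal clique factor of $\tilde{\Gamma}$ is precisely $\{v_0\}$, so $M_{\tilde{\Gamma}}$ is again a II$_1$ factor.

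The central step is to invoke Corollary~\ref{tensor_dec}, in the form allowed by Remark~\ref{amplify_results}, for the commuting diffuse subalgebras $P_1, P_2 \subset M_{\tilde{\Gamma}}$ with $\Lambda = \tilde{\Gamma}$ and $p = q = 1$. All hypotheses are trivially satisfied: $P_1 \vee P_2 = M_{\tilde{\Gamma}}$, so the two strong intertwinings are tautological, and the diagonal non-embedding condition $Pp' \nprec P(p - p')$ becomes vacuous because $\mathcal{Z}(P' \cap M_{\tilde{\Gamma}}) = \mathbb{C}$. The corollary delivers disjoint full subgraphs $\tilde{\Lambda}_1, \tilde{\Lambda}_2 \subset \tilde{\Gamma}$ with $\tilde{\Gamma} \setminus (\tilde{\Lambda}_1 \cup \tilde{\Lambda}_2) = \{v_0\}$, $\tilde{\Lambda}_1 \subset \tilde{\Lambda}_2^{\perp}$, and $P_i \prec_{M_{\tilde{\Gamma}}} M_{\{v_0\} \cup \tilde{\Lambda}_i}$ for $i \in \{1, 2\}$.

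Interpreted inside $\Gamma$, this reads $\Gamma = \tilde{\Lambda}_1 \sqcup \tilde{\Lambda}_2$ with $\tilde{\Lambda}_1 \subset \tilde{\Lambda}_2^{\perp}$. Irreducibility of $\Gamma$ forces one of $\tilde{\Lambda}_1, \tilde{\Lambda}_2$ to be empty, since otherwise $\Gamma = \tilde{\Lambda}_1 \circ \tilde{\Lambda}_2$ would be a nontrivial join. Swapping the roles of $P_1$ and $P_2$ if necessary, one may assume $\tilde{\Lambda}_1 = \emptyset$, which yields $P_1 \prec_{M_\Gamma \overline{\otimes} N} 1 \otimes N$. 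Applying Lemma~\ref{tensor_product}(1) to the two tensor decompositions $M_\Gamma \overline{\otimes} N = P_1 \overline{\otimes} P_2 = (M_\Gamma \otimes 1) \overline{\otimes} (1 \otimes N)$ then produces $t > 0$ and a unitary $v \in M_\Gamma \overline{\otimes} N$ such that, after identifying $P_1 \overline{\otimes} P_2 = P_1^t \overline{\otimes} P_2^{1/t}$, one has $v P_1^t v^* \subset 1 \otimes N$. Taking relative commutants in $M_\Gamma \overline{\otimes} N$ yields $M_\Gamma \otimes 1 = (1 \otimes N)' \cap (M_\Gamma \overline{\otimes} N) \subset v P_2^{1/t} v^*$, so setting $u := v^*$ gives $u M_\Gamma u^* \subset P_2^{1/t}$, which is exactly the strong primeness conclusion.

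The main obstacle is the opening amplification setup: one must justify that Corollary~\ref{tensor_dec} applies when the auxiliary algebra $N$ is a general II$_1$ factor (rather than a matrix algebra) via Remark~\ref{amplify_results}, and pin down the maximal clique factor of $\tilde{\Gamma}$ so that the subgraphs $\tilde{\Lambda}_i$ it produces sit inside the copy of $\Gamma$. Once these points are settled, irreducibility of $\Gamma$ cleanly concludes the combinatorial reduction, and the tensor matching via Lemma~\ref{tensor_product}(1) is immediate.
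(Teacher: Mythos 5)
Your proposal is correct and follows essentially the same route as the paper: realize $M_\Gamma\overline{\otimes}N$ as the graph product over $\widetilde\Gamma=\Gamma\circ\{v_0\}$ with vertex algebra $N$, apply Corollary~\ref{tensor_dec} (whose hypotheses are trivial here since $\mathcal Z(M_{\widetilde\Gamma})=\mathbb C1$), use irreducibility of $\Gamma$ to force one of the two subgraphs to be empty, and conclude with Lemma~\ref{tensor_product}(1). The only cosmetic difference is at the end, where you take relative commutants of the inclusion $vP_1^tv^*\subset 1\otimes N$ instead of first flipping the intertwining to $M_\Gamma\prec_P P_j$ via Proposition~\ref{results}(1) as the paper does; also note that Remark~\ref{amplify_results} raises no real issue since $N$ is diffuse, so the corollary applies directly to $M_{\widetilde\Gamma}$.
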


\begin{proof} 
Since $\Gamma$ is irreducible, it follows from Lemma~\ref{basic}(1) that $M_\Gamma$ is a II$_{1}$ factor.

Let $N,P_1,P_2$ be II$_1$ factors such that $M_\Gamma\overline{\otimes}N=P$, where $P=P_1\overline{\otimes}P_2$.
Let $v$ be a vertex which does not belong to $\Gamma$ and define $\widetilde{\Gamma}=\Gamma\circ\{v\}$. Put $M_v=N$ and notice that $M_{\widetilde{\Gamma}}=M_{\Gamma}\overline{\otimes}N=P$. Since $\Gamma$ is irreducible and $|\Gamma|\geq 2$, the maximal clique factor of $\widetilde\Gamma$ is equal to $\{v\}$.

Note that since $M_\Gamma$ and $N$ are II$_1$ factors, we have $\mathcal{Z}(P)=\mathbb{C}1$. Therefore, by applying Corollary~\ref{tensor_dec} 
(with $\Lambda=\tilde{\Gamma}$) we can find a decomposition of $\Gamma=\widetilde\Gamma\setminus\{v\}=\Gamma_1\cup\Gamma_2$ such that $\Gamma_1\subset\Gamma_2^\perp$ and $P_i\prec_{P}^sM_{\{v\}\cup\Gamma_i}$, for every $i\in\{1,2\}$. 
Since $\Gamma$ is irreducible, we have $\Gamma_i=\emptyset$, for some $i\in\{1,2\}$. Let $j\in\{1,2\}\setminus\{i\}$. Since $P_i\prec_P M_v$, and since $M_\Gamma\subset M'_v\cap P$ and $P_i'\cap P=P_j$, by Proposition~\ref{results}(1) we get that $M_\Gamma\prec_P P_j$.  By Lemma~\ref{tensor_product}(1), 
we find $u\in\mathcal U(P)$ and $t>0$ such that after identifying $P=P_i^t\overline{\otimes}P_j^{1/t}$, we have $uM_\Gamma u^*\subset P_j^{1/t}$. This shows that $M_\Gamma$ is strongly prime. \end{proof}

As an immediate consequence of Theorem \ref{strongly_prime} we can now derive Theorem \ref{prime_factorization}.

\begin{proof}[\bf Proof of Theorem \ref{prime_factorization}]
Since $\Gamma$ has an empty maximal clique factor, we can decompose it as $\Gamma=\Gamma_1\circ\cdots\circ\Gamma_n$, where $\Gamma_i$ is an irreducible graph with 
$|\Gamma_i|\geq 2$, for every $1\leq i\leq n$. 
Since $\Gamma_i$ is an irreducible graph, Theorem \ref{strongly_prime} implies that $M_{\Gamma_i}$ is a strongly prime II$_1$ factor, for every $1\leq i\leq n$. 
Since $M_\Gamma=M_{\Gamma_1}\overline{\otimes}\cdots\overline{\otimes}M_{\Gamma_n}$, this fact gives the conclusion of Theorem \ref{prime_factorization} via standard arguments, see, e.g., \cite[Section 4]{Is20}. 

For instance, to justify (1), assume that $M_\Gamma=P_1\overline{\otimes}P_2$, where $P_1,P_2$ are II$_1$ factors. If $1\leq i\leq n$, then since  $M_{\Gamma_i}$ is strongly prime and $M_\Gamma=M_{\Gamma_i}\overline{\otimes}M_{\Gamma\setminus\Gamma_i}$, \cite[Lemma 4.1]{Is20} provides $j_i\in\{1,2\}$ such that $M_{\Gamma_i}\prec_{M_\Gamma}P_{j_i}$. For $j\in\{1,2\}$, put $I_j=\{1\leq i\leq n\mid j_i=j\}$. Then by using the last fact and applying \cite[Lemma 4.3]{Is20} repeatedly we get that $\overline{\otimes}_{i\in I_j}M_{\Gamma_i}\prec_{M_\Gamma}P_j$, for every $j\in\{1,2\}$. Since $I_1\cup I_2=\{1,\dots,n\}$ by taking relative commutants via Proposition \ref{results}(1) we also get that $P_j\prec_{M_\Gamma}\overline{\otimes}_{i\in I_j}M_{\Gamma_i}$, for every $j\in\{1,2\}$. Using Lemma \ref{tensor_product}(2), assertion (1) follows readily.
\end{proof}

We end this section by recording a result concerning strongly prime II$_1$ factors which will be needed in later sections.

\begin{lemma}\label{str_prime}
Let $M_0,N_0$ be II$_1$ factors and $M_1,\dots,M_m,N_1,\dots,N_n$ be strongly prime II$_1$ factors such that $P=\overline{\otimes}_{i=0}^mM_i=\overline{\otimes}_{j=0}^nN_j$. Suppose that  $M_i\nprec_P N_0$, for every $1\leq i\leq m$, and $N_j\nprec_PM_0$, for every $1\leq j\leq n$. Then there exist a decomposition $P=N_0^t\overline{\otimes}(\overline{\otimes}_{j=1}^nN_i)^{1/t}$, for some $t>0$, and a unitary $u\in P$ such that $uM_0u^*=N_0^t$.
\end{lemma}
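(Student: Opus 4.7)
The plan is to reduce the statement to a direct application of Lemma~\ref{tensor_product}(2). The hypotheses of that lemma require two intertwinings: $M_0\prec_PN_0$ and $N_0\prec_PM_0$. The data we are given is only that none of the $M_i$ ($1\le i\le m$) intertwines into $N_0$ and none of the $N_j$ ($1\le j\le n$) intertwines into $M_0$, so the work is in upgrading these ``vertex-level'' non-intertwinings into the two global intertwinings above.

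First, I would use strong primeness to build a cascade of tensor product intertwinings. For each $1\le i\le m$, we have $P=M_i\overline{\otimes}(\overline{\otimes}_{k\neq i}M_k)=N_0\overline{\otimes}(\overline{\otimes}_{j=1}^nN_j)$ and $M_i$ is strongly prime, so by the very definition of strong primeness there are a unitary in $P$ and a scalar under which $M_i$ is sent inside either a corner of $N_0$ or a corner of $\overline{\otimes}_{j=1}^nN_j$. In particular, $M_i\prec_P N_0$ or $M_i\prec_P\overline{\otimes}_{j=1}^nN_j$, and the assumption $M_i\nprec_P N_0$ forces the second alternative. I would then apply \cite[Lemma~4.3]{Is20} repeatedly (exactly as in the proof of Theorem~\ref{prime_factorization}(1)) to lift these individual intertwinings to the joint intertwining $\overline{\otimes}_{i=1}^mM_i\prec_P\overline{\otimes}_{j=1}^nN_j$. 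Taking relative commutants via Proposition~\ref{results}(1) then yields $N_0\prec_PM_0$. A symmetric argument, swapping the roles of the $M$'s and $N$'s and starting from the hypothesis $N_j\nprec_PM_0$, gives $M_0\prec_PN_0$.

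Finally, I would apply Lemma~\ref{tensor_product}(2) to the two tensor product decompositions $P=N_0\overline{\otimes}(\overline{\otimes}_{j=1}^nN_j)=M_0\overline{\otimes}(\overline{\otimes}_{i=1}^mM_i)$ with the roles $(M_1,M_2,N_1,N_2):=(N_0,\overline{\otimes}_{j=1}^nN_j,M_0,\overline{\otimes}_{i=1}^mM_i)$. Both hypotheses ``$M_1\prec_PN_1$'' and ``$N_1\prec_PM_1$'' of that lemma are supplied by the previous step. It produces $t>0$, a decomposition $P=N_0^t\overline{\otimes}(\overline{\otimes}_{j=1}^nN_j)^{1/t}$ and a unitary $w\in P$ with $wN_0^tw^*=M_0$. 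Setting $u=w^*$ then gives $uM_0u^*=N_0^t$, which is the required conclusion.

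No step presents a real obstacle, since the structural work is absorbed into Lemma~\ref{tensor_product}(2) and the cascade argument via \cite[Lemma~4.3]{Is20}. The only point requiring care is the bookkeeping of roles when invoking Lemma~\ref{tensor_product}(2), so as to end up with $M_0$ (rather than $N_0$) on the left-hand side of the final conjugation, which is the reason I invert the unitary $w$ at the very end.
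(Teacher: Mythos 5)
Your proof is correct and follows essentially the same route as the paper: strong primeness plus the hypotheses give $M_i\prec_P\overline{\otimes}_{j=1}^nN_j$, then \cite[Lemma 4.3]{Is20} yields $\overline{\otimes}_{i=1}^mM_i\prec_P\overline{\otimes}_{j=1}^nN_j$ and symmetrically the reverse intertwining, after which Lemma~\ref{tensor_product}(2) finishes. The only (inessential) difference is bookkeeping: you pass to relative commutants via Proposition~\ref{results}(1) before invoking Lemma~\ref{tensor_product}(2), whereas the paper applies that lemma to the algebras $\overline{\otimes}_{i=1}^mM_i$ and $\overline{\otimes}_{j=1}^nN_j$ and takes relative commutants at the end.
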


\begin{proof} Put $Q=\overline{\otimes}_{i=1}^mM_i$ and $R=\overline{\otimes}_{j=1}^nN_j$.
For $1\le i\le m$, since $M_i\nprec_P N_0$ and $M_i$ is strongly prime, we deduce that $M_i\prec_P R$. Since this holds for every $1\leq i\leq m$, \cite[Lemma 4.3]{Is20} implies that $Q\prec_P R$. Similarly, we get that $R\prec_PQ$.  Lemma \ref{tensor_product} gives a decomposition $P=N_0^t\overline{\otimes}R^{1/t}$, for some $t>0$, and a unitary $u\in P$ such that $uQu^*=R^{1/t}$. By taking relative commutants, we get that $uM_0u^*=N_0^t$.
\end{proof}

\section{Classification/Rigidity theorems: Strategy and tools} \label{sec:tools}

\begin{notation}\label{notation} We start by introducing the following setting which we keep throughout the section:
\begin{enumerate}
\item $\Gamma$ and $\Lambda$ are nonempty finite simple graphs. 
\item $(M_v,\tau_v)_{v\in\Gamma}$ and $(N_w,\tau_w)_{w\in\Lambda}$ are families of diffuse tracial von Neumann algebras.
\item  $M_\Gamma=*_{v,\Gamma}M_v$ and $N_\Lambda=*_{w,\Lambda}N_w$ are the associated graph product von Neumann algebras.
\item Assume that $M_\Gamma$ and $N_\Lambda$ are II$_1$ factors.
\item Let $\theta:M_\Gamma\rightarrow pPp$ be a $*$-isomorphism, where $P=\mathbb M_n(\mathbb C)\otimes N_\Lambda$ for some $n\geq 1$, and $p\in P$ is a projection. We identify $M_\Gamma=pPp$, via $\theta$, and $N_\Lambda=qPq$, where $q=e_{1,1}\otimes 1\in P$.
\end{enumerate}
\end{notation}

\begin{remark}
    In view of Lemma~\ref{basic}(1), condition (4) is automatically satisfied if neither $\Gamma$ nor $\Lambda$ is contained in the star of a single vertex. This is satisfied in particular if $\Gamma$ and $\Lambda$ are transvection-free, which is the case in all statements from the introduction.
\end{remark}

We also introduce the following notation. 
\begin{notation}\label{nota:j}
For a finite simple graph $\Omega$, we denote by $\mathscr J(\Omega)$  the set of maximal join full subgraphs $\Sigma\subset\Omega$, and by $\mathscr C(\Omega)$ the set of complete subgraphs $C\subset\Omega$.
\end{notation}

\subsection{Locating von Neumann subalgebras of maximal join subgraphs}
In preparation for the proofs of Theorems \ref{arbitrary} to \ref{factors}, we establish that any $*$-isomorphism $\theta: M_\Gamma\rightarrow p(\mathbb M_n(\mathbb C)\otimes N_\Lambda)p$ must identify (up to ``piecewise" $\prec^s$) subalgebras corresponding to maximal join full subgraphs of $\Gamma$ with subalgebras corresponding to maximal join full subgraphs of $\Lambda$.

\begin{proposition}\label{structure_of_aut}
In the setting from Notation \ref{notation}, let $p_\Sigma\in\mathcal Z(M_\Sigma)$ and $q_\Delta\in\mathcal Z(N_\Delta)$ be the maximal projections such that $M_\Sigma p_\Sigma$ and $N_\Delta	q_\Delta$ have no amenable direct summands, for every $\Sigma\in\mathscr J(\Gamma)$ and $\Delta\in\mathscr J(\Lambda)$. 
Assume that $M_\Sigma p_\Sigma\nprec_P N_w$ and $N_\Delta q_\Delta\nprec_P M_v$, for every $\Sigma\in\mathscr J(\Gamma)$ and $\Delta\in\mathscr J(\Lambda)$ with $p_\Sigma\not=0$ and $q_\Delta\not=0$, and every isolated vertices $v\in\Gamma$ and $w\in\Lambda$. 

Then  we can find projections $p_{\Sigma,\Delta}\in\mathcal Z(M_\Sigma'\cap pPp)$ and $q_{\Delta,\Sigma}\in\mathcal Z(N_\Delta'\cap qPq)$,  for  every $\Sigma\in\mathscr J(\Gamma)$ and $\Delta\in\mathscr J(\Lambda)$, such that the following conditions hold: 
\begin{enumerate}
\item $\sum_{\Delta\in\mathscr J(\Lambda)}p_{\Sigma,\Delta}=p_\Sigma$, for every $\Sigma\in\mathscr J(\Gamma)$, and $\sum_{\Sigma\in\mathscr J(\Gamma)}q_{\Delta,\Sigma}=q_\Delta$, for every $\Delta\in\mathscr J(\Lambda)$;
\item $M_\Sigma p_{\Sigma,\Delta}\prec_P^sN_\Delta q_{\Delta,\Sigma}$ and $N_\Delta q_{\Delta,\Sigma}\prec_P^s M_\Sigma p_{\Sigma,\Delta}$, for every $\Sigma\in\mathscr J(\Gamma)$ and $\Delta\in\mathscr J(\Lambda)$. 

\end{enumerate}
\end{proposition}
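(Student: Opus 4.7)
The plan is to construct the projections $p_{\Sigma,\Delta}$ by an inductive maximality procedure on the $\Gamma$-side, perform the symmetric construction on the $\Lambda$-side to obtain $q_{\Delta,\Sigma}$, and then verify that the two partitions match in the sense of condition~(2). The three main ingredients I will use are Corollary~\ref{join_embed} (to locate $M_\Sigma p_\Sigma$ in some $N_\Delta$ with $\Delta\in\mathscr{J}(\Lambda)$), Proposition~\ref{results}(3) together with Remark~\ref{elementary_facts}(9) (to assemble maximal $\prec^s$ projections in the appropriate center), and the maximality built into the definitions of $\mathscr{J}(\Gamma)$ and $\mathscr{J}(\Lambda)$.

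First, for each $\Sigma\in\mathscr{J}(\Gamma)$, I would fix a join decomposition $\Sigma=\Sigma_1\circ\Sigma_2$ with both components nonempty and show that, for every nonzero central projection $p'\in\mathcal{Z}(M_\Sigma'\cap pPp)$ with $p'\leq p_\Sigma$, one has $M_\Sigma p'\prec_P N_\Delta$ for some $\Delta\in\mathscr{J}(\Lambda)$. Indeed, $M_{\Sigma_1}p'$ and $M_{\Sigma_2}p'$ are diffuse and commute; a central-support argument (relying on $p'\in M_\Sigma'$) produces a $\ast$-isomorphism $M_\Sigma p'\cong M_\Sigma z$ for some $0\neq z\in\mathcal{Z}(M_\Sigma)$ with $z\leq p_\Sigma$, so $M_\Sigma p'$ inherits the absence of an amenable direct summand; and Remark~\ref{elementary_facts}(2) together with the hypothesis forces $M_\Sigma p'\nprec_P N_w$ for every isolated $w\in\Lambda$. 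Corollary~\ref{join_embed}, applied via Remark~\ref{amplify_results} in the amplified graph product, then delivers a join subgraph of $\Lambda$ into which $M_\Sigma p'$ intertwines, and I can enlarge it to some $\Delta\in\mathscr{J}(\Lambda)$. Enumerating $\mathscr{J}(\Lambda)=\{\Delta_1,\ldots,\Delta_m\}$, I would inductively take $p_{\Sigma,\Delta_i}$ to be the maximal projection in $\mathcal{Z}(M_\Sigma'\cap pPp)$ below $p_\Sigma-\sum_{j<i}p_{\Sigma,\Delta_j}$ with $M_\Sigma p_{\Sigma,\Delta_i}\prec^s_P N_{\Delta_i}$; existence is provided by Remark~\ref{rk:maximal-projections}, and the inclusion $\mathcal{N}_{pPp}(M_\Sigma)'\cap pPp\subset\mathcal{Z}(M_\Sigma'\cap pPp)$ ensures centrality. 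If the remainder $p_0=p_\Sigma-\sum_i p_{\Sigma,\Delta_i}$ were nonzero, the previous paragraph combined with Proposition~\ref{results}(3) would yield a nonzero central $z\leq p_0$ and an index $j$ with $M_\Sigma z\prec^s_P N_{\Delta_j}$, and Remark~\ref{elementary_facts}(9) would let me enlarge $p_{\Sigma,\Delta_j}$ to $p_{\Sigma,\Delta_j}+z$, contradicting maximality. The symmetric construction on the $\Lambda$-side produces $q_{\Delta,\Sigma}\in\mathcal{Z}(N_\Delta'\cap qPq)$ with $\sum_\Sigma q_{\Delta,\Sigma}=q_\Delta$ and $N_\Delta q_{\Delta,\Sigma}\prec^s_P M_\Sigma$ for every $\Sigma\in\mathscr{J}(\Gamma)$.

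For the mutual intertwining, given $p_{\Sigma,\Delta}\neq 0$ and any nonzero central $p'\leq p_{\Sigma,\Delta}$, I would first refine $M_\Sigma p'\prec_P N_\Delta$ to $M_\Sigma p'\prec_P N_\Delta q_\Delta$: otherwise Remark~\ref{elementary_facts}(1) would give $M_\Sigma p'\prec_P N_\Delta(q-q_\Delta)$; upgrading to $\prec^s$ on a central subprojection via Proposition~\ref{results}(3), invoking Proposition~\ref{results}(4), and exploiting that the amenable algebra $N_\Delta(q-q_\Delta)$ makes relative amenability equivalent to amenability, I would obtain an amenable corner of $M_\Sigma p'$, which through the central-support isomorphism $M_\Sigma p'\cong M_\Sigma\tilde z$ (with $\tilde z\leq p_\Sigma$ in $\mathcal{Z}(M_\Sigma)$) would contradict the definition of $p_\Sigma$. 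Iterating Remark~\ref{elementary_facts}(8) along the decomposition $q_\Delta=\sum_{\Sigma'}q_{\Delta,\Sigma'}$ then yields some $\Sigma'\in\mathscr{J}(\Gamma)$ with $M_\Sigma p'\prec_P N_\Delta q_{\Delta,\Sigma'}$. Combined with $N_\Delta q_{\Delta,\Sigma'}\prec^s_P M_{\Sigma'}$ and Proposition~\ref{results}(2), this gives $M_\Sigma p'\prec_P M_{\Sigma'}$, whence $\Sigma\subset\Sigma'$ by Remark~\ref{elementary_facts}(2) and Lemma~\ref{basic}(2), and finally $\Sigma=\Sigma'$ by the maximality of $\Sigma$ in $\mathscr{J}(\Gamma)$. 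This proves $M_\Sigma p_{\Sigma,\Delta}\prec^s_P N_\Delta q_{\Delta,\Sigma}$, and the reverse intertwining follows by symmetry.

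The hardest step will be this matching: the partitions $\{p_{\Sigma,\Delta}\}_\Delta$ and $\{q_{\Delta,\Sigma}\}_\Sigma$ are built independently on the two sides of $\theta$, so verifying that $M_\Sigma p_{\Sigma,\Delta}$ intertwines into the \emph{correct} piece $N_\Delta q_{\Delta,\Sigma}$---rather than into some $N_\Delta q_{\Delta,\Sigma'}$ with $\Sigma'\neq\Sigma$---is precisely where the maximality condition defining $\mathscr{J}(\Gamma)$ (forcing $\Sigma\subset\Sigma'\Rightarrow\Sigma=\Sigma'$) does its essential work.
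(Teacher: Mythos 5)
Your proposal is correct and follows essentially the same route as the paper's proof: locate $M_\Sigma p'$ inside some maximal join subalgebra $N_\Delta$ via Corollary~\ref{join_embed}, exhaust $p_\Sigma$ by maximal central projections with the $\prec^s$ property (Proposition~\ref{results}(3) and Remark~\ref{elementary_facts}(9)), and match the two sides by first refining to $N_\Delta q_\Delta$ through the amenability dichotomy, then to some $N_\Delta q_{\Delta,\Sigma'}$ via Remark~\ref{elementary_facts}(8), and finally forcing $\Sigma'=\Sigma$ using the reverse intertwining, Proposition~\ref{results}(2), Lemma~\ref{basic}(2) and maximality of $\Sigma$ in $\mathscr J(\Gamma)$. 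The only organizational difference is that you build the orthogonality of the $p_{\Sigma,\Delta}$ into a greedy, enumeration-dependent construction, whereas the paper takes the globally maximal projections $p_{\Sigma,\Delta}$ and deduces their pairwise orthogonality afterwards from Corollary~\ref{intersection} and Lemma~\ref{proof_of_3}; both variants work.
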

\begin{remark}\label{a_few_remarks}
We make a few remarks on the statement. 
\begin{enumerate} 
\item The existence of the maximal projections $p_{\Sigma}$ and $q_{\Delta}$ is proven as in Remark~\ref{rk:maximal-projections}.
\item Condition (2) in particular implies that $p_{\Sigma,\Delta}=0$ if and only if $q_{\Delta,\Sigma}=0$. 
\item Under the extra assumption that no connected component of either  $\Gamma$ or $\Lambda$ is a complete subgraph, the assumptions from the proposition are automatically satisfied with $p_\Sigma=p$ and $q_\Delta=q$. Indeed in this case, a subgraph $\Sigma\in\mathscr{J}(\Gamma)$ cannot be complete, so Lemma~\ref{basic}(3) ensures that $M_\Sigma$ has no amenable direct summand (and likewise for $\Lambda$).
\item In the particular case where all vertex algebras are II$_{1}$ factors,  Lemma~\ref{basic}(1) implies that $\mathcal{Z}(M_\Sigma)=\mathbb{C}p$ and $\mathcal{Z}(M'_\Sigma\cap pPp)=\mathbb{C}p$. So for every $\Sigma\in\mathscr{J}(\Gamma)$, we have $p_\Sigma=p$, and there is a unique $\Delta$ such that $p_{\Sigma,\Delta}$ is non-zero (so $p_{\Sigma,\Delta}=p$). Likewise for every $\Delta\in\mathscr{J}(\Lambda)$, we have $q_{\Delta}=q$, and there is a unique $\Sigma$ such that $q_{\Delta,\Sigma}$ is non-zero (so $q_{\Delta,\Sigma}=q$). 
\end{enumerate}
\end{remark}
\begin{proof}
For $\Sigma\in\mathscr J(\Gamma)$ and $\Delta\in\mathscr J(\Lambda)$, let $p_{\Sigma,\Delta}\in\mathcal Z(M_\Sigma'\cap pPp)p_\Sigma$ and $q_{\Delta,\Sigma}\in\mathcal Z(N_\Delta'\cap qPq)q_\Delta$ be the maximal projections with $M_\Sigma p_{\Sigma,\Delta}\prec_P^sN_\Delta$ and $N_\Delta q_{\Delta,\Sigma}\prec_P^s M_\Sigma$, respectively 
(their existence is proved as in Remark~\ref{rk:maximal-projections}).
We claim that 
\begin{equation}\label{p_sigma}\text{$\bigvee_{\Delta\in\mathscr J(\Lambda)}p_{\Sigma,\Delta}=p_\Sigma$, for every $\Sigma\in\mathscr J(\Gamma)$}.
\end{equation}
Fix $\Sigma\in\mathscr J(\Gamma)$ and let $p_0=p_\Sigma-\bigvee_{\Delta\in\mathscr J(\Lambda)}p_{\Sigma,\Delta}\in\mathcal Z(M_\Sigma'\cap pPp)p_\Sigma$. 
Assume by contradiction that $p_0\not=0$.
Write $\Sigma=\Sigma_1\circ\Sigma_2$, where $\Sigma_1,\Sigma_2\subset\Sigma$ are nonempty full subgraphs.
Since $M_\Sigma=M_{\Sigma_1}\overline{\otimes}M_{\Sigma_2}$, $M_\Sigma p_0$
has no amenable direct summand and $M_\Sigma p_0\nprec_P N_w$, for every isolated vertex $w\in \Lambda$,
 Corollary~\ref{join_embed} implies that $M_\Sigma p_0\prec_PN_{\Delta}$ for some join full subgraph $\Delta\subset\Lambda$. After possibly replacing $\Delta$ with a larger subgraph, we may assume that $\Delta\in\mathscr J(\Lambda)$.
 By Proposition \ref{results}(3) we can find a nonzero projection $p'\in\mathcal Z(M_\Sigma'\cap M_\Gamma)p_0$ such that  $M_\Sigma p'\prec^s_P N_\Delta$. 
 Since $p'\leq  p_0\leq p_\Sigma-p_{\Sigma,\Delta}$, this contradicts the definition of $p_{\Sigma,\Delta}$ and proves \eqref{p_sigma}. Similarly, we get
\begin{equation}\label{q_delta}\text{$\bigvee_{\Sigma\in\mathscr J(\Gamma)}q_{\Delta,\Sigma}=q_\Delta$, for every $\Delta\in\mathscr J(\Lambda)$}.
\end{equation}

Next, let  $\Sigma\in\mathscr J(\Gamma)$ and $\Delta\in\mathscr J(\Lambda)$ with $p_{\Sigma,\Delta}\not=0$.  Assume that $M_\Sigma p_{\Sigma,\Delta}\not\prec_P^sN_\Delta q_{\Delta,\Sigma}$. Since $M_\Sigma p_{\Sigma,\Delta}\prec_P^sN_\Delta$,  it follows that
$M_\Sigma p_{\Sigma,\Delta}\prec_PN_\Delta(q-q_{\Delta,\Sigma})$, where we recall from Notation~\ref{notation} that $q$ is the unit of $N_\Lambda$. Since $M_\Sigma p_{\Sigma,\Delta}$ has no amenable direct summand, while $N_\Delta(q-q_\Delta)$ is amenable, we get that $M_\Sigma p_{\Sigma,\Delta}\nprec_PN_\Delta(q-q_\Delta)$. Combining the last two facts gives that $M_\Sigma p_{\Sigma,\Delta}\prec_PN_\Delta(q_\Delta-q_{\Delta,\Sigma})$. Since $q_\Delta-q_{\Delta,\Sigma}\leq \bigvee_{\Sigma'\in\mathscr J(\Gamma)\setminus\{\Sigma\}}q_{\Delta,\Sigma'}$ by \eqref{q_delta}, it follows that $M_\Sigma p_{\Sigma,\Delta}\prec_P N_{\Delta}q_{\Delta,\Sigma'}$, for some $\Sigma'\in\mathscr{J}(\Gamma)\setminus\{\Sigma\}$. By combining this fact with $N_{\Delta}q_{\Delta,\Sigma'}\prec_P^sM_{\Sigma'}$ and  
 Proposition \ref{results}(2), we get that $M_\Sigma p_{\Sigma,\Delta}\prec_P M_{\Sigma'}$.  By Lemma~\ref{basic}(2) we further get that $\Sigma\subset\Sigma'$.
 Since $\Sigma,\Sigma'\in\mathscr J(\Gamma)$, this  gives $\Sigma=\Sigma'$, a contradiction. 
 
 This proves that $M_\Sigma p_{\Sigma,\Delta}\prec_P^sN_\Delta q_{\Delta,\Sigma}$. Similarly,  $N_\Delta q_{\Delta,\Sigma}\prec_P^s M_\Sigma p_{\Sigma,\Delta}$, which proves (2).
 
 Finally, by \eqref{p_sigma} and symmetry, in order to prove (1), it suffices to show that $p_{\Sigma,\Delta}p_{\Sigma,\Delta'}=0$, for every $\Sigma\in\mathscr{J}(\Gamma)$ and $\Delta,\Delta'\in\mathscr{J}(\Lambda)$ with $\Delta\not=\Delta'$. Assume by contradiction that $p''=p_{\Sigma,\Delta}p_{\Sigma,\Delta'}\not=0$.
 Then $M_{\Sigma}p''\prec^s_P N_{\Delta}$ and $M_{\Sigma}p''\prec_P^s N_{\Delta'}$. By Corollary \ref{intersection} we get that $M_\Sigma p''\prec_P^s N_{\Delta\cap\Delta'}$.
  On the other hand, Lemma \ref{proof_of_3} below gives that 
   $N_\Delta q_{\Delta,\Sigma}\prec_P M_\Sigma p''$ and $N_{\Delta'} q_{\Delta',\Sigma}\prec_P M_\Sigma p''$. By using Proposition \ref{results}(2) we derive that $N_\Delta q_{\Delta,\Sigma}\prec_P N_{\Delta\cap\Delta'}$ and $N_{\Delta'} q_{\Delta',\Sigma}\prec_P N_{\Delta\cap\Delta'}$, 
   so in particular $N_\Delta\prec_P N_{\Delta\cap\Delta'}$ and $N_{\Delta'}\prec_P N_{\Delta\cap\Delta'}$. By Lemma~\ref{basic}(2) we deduce that $\Delta\subset\Delta\cap\Delta'$ and $\Delta'\subset\Delta\cap\Delta'$, which contradicts that $\Delta\not=\Delta'$ and thus proves (1). 
\end{proof}

\begin{lemma}\label{proof_of_3}
In the setting from Notation \ref{notation}, assume that $\Sigma\subset\Gamma, \Delta\subset\Lambda$ are full subgraphs and $p'\in\mathcal Z(M_\Sigma'\cap pPp), q'\in\mathcal Z(N_\Delta'\cap qPq)$ are nonzero projections such that $M_\Sigma p'\prec_P^s N_\Delta q'$ and $N_\Delta q'\prec_P^s M_\Sigma p'$. 

Then $M_\Sigma p'\prec_P N_\Delta q''$, for every nonzero projection $q''\in (N_\Delta'\cap qPq)q'$.
\end{lemma}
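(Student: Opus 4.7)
The plan is to argue by contradiction: assume $M_\Sigma p' \nprec_P N_\Delta q''$. Since $M_\Sigma p' \prec_P N_\Delta q'$ follows from the strong intertwining hypothesis and $q''$ is a projection in $(N_\Delta q')'\cap q'Pq' = (N_\Delta'\cap qPq)q'$ with $q''\leq q'$, Remark~\ref{elementary_facts}(1) applied inside $q'Pq'$ (with $Q=N_\Delta q'$ and $q''\in (N_\Delta q')'\cap q'Pq'$) yields
\[
M_\Sigma p' \;\prec_P\; N_\Delta(q'-q'').
\]
Meanwhile, the symmetric hypothesis $N_\Delta q'\prec^s_P M_\Sigma p'$, applied to the nonzero projection $q''\in (N_\Delta q')'\cap q'Pq'$, gives $N_\Delta q''\prec_P M_\Sigma p'$.

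The strategy is to chain these two intertwinings into the impossible conclusion $N_\Delta q''\prec_P N_\Delta(q'-q'')$. For Proposition~\ref{results}(2) to apply, one first upgrades $M_\Sigma p'\prec_P N_\Delta(q'-q'')$ to a strong intertwining on a nonzero normalizer-invariant cut $M_\Sigma p'z$ via Proposition~\ref{results}(3). Here the centrality of $p'$ in $M_\Sigma'\cap pPp$ is essential: it guarantees that normalizer-invariant projections $z\in\mathcal{N}_{pPp}(M_\Sigma p')'\cap pPp$ beneath $p'$ admit a clean description, and allows one to verify (using Remark~\ref{elementary_facts}(5) applied to the central support) that $N_\Delta q''\prec_P M_\Sigma p'z$ still holds. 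Transitivity then produces $N_\Delta q''\prec_P N_\Delta(q'-q'')$.

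The final step, which I expect to be the main obstacle, is to derive a contradiction from $N_\Delta q''\prec_P N_\Delta(q'-q'')$. Here the centrality of $q'$ in $N_\Delta'\cap qPq$ plays the decisive role. Applying Proposition~\ref{results}(1) to this intertwining yields
\[
(N_\Delta'\cap qPq)(q'-q'') \;\prec_P\; (N_\Delta'\cap qPq)q'',
\]
an intertwining between orthogonal reductions of the same algebra by projections $q''$ and $q'-q''$ that sum to the central projection $q'\in\mathcal{Z}(N_\Delta'\cap qPq)$. A partial-isometry analysis inside the corner $q'Pq'$, together with Remark~\ref{elementary_facts}(5)--(6) on central supports and a further application of the two strong mutual intertwining hypotheses, should rule this out. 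The delicate point is that $q''$ is only assumed to lie in $(N_\Delta'\cap qPq)q'$ rather than being central in it, so the contradiction must handle arbitrary (non-central) projections within the central corner determined by $q'$; the plan is to reduce to the central support of $q''$ via the symmetric strong intertwining and then use the analogue of Corollary~\ref{center_corner} in this non-graph setting to close the argument.
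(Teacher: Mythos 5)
Your overall skeleton (argue by contradiction, chain the two intertwinings into a statement of the form $N_\Delta(\text{piece})\prec_P N_\Delta(\text{complementary piece})$, and contradict a \emph{center-corner} type result) is the same as the paper's, and the first half can even be streamlined: since $M_\Sigma p'\nprec_P N_\Delta q''$ passes to all cuts $M_\Sigma p'e$ by Remark~\ref{elementary_facts}(2), Remark~\ref{elementary_facts}(1) already gives $M_\Sigma p'\prec^s_P N_\Delta(q'-q'')$ directly, so the detour through Proposition~\ref{results}(3) and the unjustified claim that $N_\Delta q''\prec_P M_\Sigma p'z$ for the resulting cut $z$ (Remark~\ref{elementary_facts}(5) cuts the \emph{source} by a central support and says nothing about cutting the \emph{target}) is both unnecessary and, as written, circular. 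But the decisive gap is the endgame. Having reached $N_\Delta q''\prec_P N_\Delta(q'-q'')$, there is no contradiction to be had at the level of $q''$ itself: for a projection $q''\in(N_\Delta'\cap qPq)q'$ that is \emph{not} central in $N_\Delta'\cap qPq$, this relation can perfectly well hold -- if $q''$ is equivalent in $N_\Delta'\cap qPq$ to a subprojection of $q'-q''$, the implementing partial isometry conjugates $N_\Delta q''$ into $N_\Delta(q'-q'')$. Corollary~\ref{center_corner} does not apply because it requires the cutting projection to commute with $N_{\Delta\cup\Delta^\perp}$ (equivalently, to be central in $N_\Delta'\cap qPq$), and its proof genuinely uses the graph-product structure via Proposition~\ref{normalizer}(1); there is no ``non-graph analogue'' that rules out intertwining between corners of an algebra cut by non-central projections, and likewise the relation $(N_\Delta'\cap qPq)(q'-q'')\prec_P(N_\Delta'\cap qPq)q''$ you extract from Proposition~\ref{results}(1) is not absurd for the same comparison-of-projections reason.

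You sense this (``reduce to the central support of $q''$''), but you give no mechanism, and that reduction is precisely the nontrivial content of the paper's proof. The paper passes to the central support $q_0$ of $q''$ in $N_\Delta'\cap qPq$ at the very start and proves the key claim that $M_\Sigma p'\nprec_P N_\Delta q''$ forces $M_\Sigma p'\nprec_P N_\Delta q_0$: assuming $M_\Sigma p'\prec_P N_\Delta q_0$, one invokes \cite[Lemma~2.4(4)]{DHI19} to get a nonzero central projection $z\le q_0$ with $M_\Sigma p'\prec_P N_\Delta r$ for \emph{every} nonzero $r\in(N_\Delta'\cap qPq)z$, then uses $q_0=\bigvee_u uq''u^*$ to find a unitary conjugate of $q''$ meeting $z$, and comparison of projections to transport the intertwining into $N_\Delta q''$ -- a contradiction. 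Only after this reduction does the chaining argument (now with the central projections $q_0$ and $q'-q_0$) produce $N_\Delta q_0\prec_P N_\Delta(q'-q_0)$, which contradicts Corollary~\ref{center_corner} together with Lemma~\ref{basic}(2). The symmetric strong intertwining hypothesis alone, which is all your sketch appeals to, does not yield this central-support reduction, so as it stands the proposal has a genuine gap at its crucial step.
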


\begin{proof}
Assume by contradiction that there is a nonzero projection $q''\in (N_\Delta'\cap qPq)q'$ such that $M_\Sigma p'\nprec_P N_\Delta q''$. Let $q_0\in\mathcal Z(N_\Delta'\cap qPq)q'$ be the central support of $q''$ in $N_\Delta'\cap qPq$.

We claim that $M_\Sigma p'\nprec_P N_\Delta q_0$. Indeed, assume towards a contradiction that $M_\Sigma p'\prec_P N_\Delta q_0$. By \cite[Lemma~2.4(4)]{DHI19}, we can find a non-zero projection $z\in\mathcal Z(N_{\Delta}'\cap qPq)q_0$ such that $M_\Sigma p'\prec_P N_\Delta r$ for every non-zero projection $r\in (N_\Delta'\cap qPq)z$. Since $q_0=\bigvee_{u\in\mathcal U(N_\Delta'\cap qPq)}uq''u^*$, we can find $u\in\mathcal U(N_\Delta'\cap qPq)$ such that $z(uq''u^*)\neq 0$. This implies the existence of non-zero equivalent projections $z_1,q_1\in N_\Delta'\cap qPq$ such that $z_1\leq z$ and $q_1\leq uq''u^*$. By definition of $z$, we have $M_\Sigma p'\prec_P N_\Delta z_1$, hence $M_\Sigma p'\prec_P N_\Delta q_1$. Since $N_\Delta q_1\subset u(N_\Delta q'')u^*$, it follows that $M_\Sigma p'\prec_P N_\Delta q''$, a contradiction. This proves our claim.

Since $M_\Sigma p'\prec_P^s N_\Delta q'$, we get that $M_\Sigma p'\prec_P^s N_\Delta (q'-q_0)$. Since $q_0\leq q'$, we also have $N_\Delta q_0\prec_P^s M_\Sigma p'$. Using Proposition \ref{results}(2), we derive that $N_\Delta q_0\prec_P N_\Delta (q'-q_0)$. 
Notice also that $N_\Delta \not\prec_P N_{\Delta'}$ for any proper full subgraph $\Delta'\subsetneq\Delta$, in view of Lemma~\ref{basic}(2). Since $q',q_0\in\mathcal Z(N_\Delta'\cap qPq)$, this contradicts Corollary \ref{center_corner} and finishes the proof.
\end{proof}

\subsection{A combinatorial lemma.}\label{sec:combinatorial}

In preparation for the next subsection, we now establish a combinatorial lemma.
Given two vertices $v,v'\in \Gamma$, following \cite{CV09}, we write $v\le v'$ if $\lk(v)\subset\st(v')$. This is a partial order on the vertex set of $\Gamma$. We let $\sim$ be the associated equivalence relation (i.e.,\ $v\sim v'$ if $v\le v'$ and $v'\le v$), and we denote by $[v]$ the $\sim$-class of a vertex $v$. It turns out that every equivalence class of vertices is either a collapsible clique, or an edgeless collapsible  subgraph \cite[Lemma~2.3]{CV09}. We let $\mathcal{E}(\Gamma)$ be the graph whose vertices are $\sim$-equivalence classes of vertices of $\Gamma$, where two distinct classes $[v]$ and $[w]$ are joined by an edge if $v$ and $w$ are adjacent (this does not depend on the choice of representatives in the class). We denote by $\calp(\Gamma)$ (resp.\ $\calp(\Lambda)$) the set of all nonempty subsets of vertices of $\Gamma$ (resp.\ $\Lambda$).

\begin{lemma}\label{lemma:combinatorial2}
Let $\Gamma,\Lambda$ be clique-reduced finite simple graphs. Let $\alpha:\Gamma\to\calp(\Lambda)$ and $\beta:\Lambda\to\calp(\Gamma)$ be maps such that
    \begin{enumerate}
        \item if two distinct vertices $v,v'\in \Gamma$ are adjacent, then $\alpha(v')\subset\alpha(v)^{\perp}$;
        \item if two distinct vertices $w,w'\in \Lambda$ are adjacent, then $\beta(w')\subset\beta(w)^{\perp}$;
        \item for any vertices $v\in\Gamma$ and $w\in\Lambda$, we have $w\in\alpha(v)$ if and only if $v\in\beta(w)$.  \end{enumerate}
Then for every $v\in \Gamma$, the set $\alpha([v])$ is a $\sim$-class of vertices of $\Lambda$, and for every $w\in \Lambda$, the set $\beta([w])$ is a $\sim$-class of vertices of $\Gamma$. In addition the induced maps $\bar\alpha:\mathcal{E}(\Gamma)\to\mathcal{E}(\Lambda)$ and $\bar\beta:\mathcal{E}(\Lambda)\to\mathcal{E}(\Gamma)$ are graph isomorphisms, which are inverses of each other.         
\end{lemma}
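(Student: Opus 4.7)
I would prove the lemma in two main phases. First, I would establish that for every $v\in\Gamma$, all vertices of $\alpha(v)$ are pairwise $\sim$-equivalent in $\Lambda$ (and symmetrically for $\beta$). Second, I would show that this descends to a well-defined graph isomorphism $\bar\alpha:\mathcal{E}(\Gamma)\to\mathcal{E}(\Lambda)$ with inverse $\bar\beta$.

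\textbf{Step 1: vertices of $\alpha(v)$ are $\sim$-equivalent.} Given distinct $w_1,w_2\in\alpha(v)$, condition~(3) gives $v\in\beta(w_1)\cap\beta(w_2)$. If $w_1,w_2$ were adjacent, condition~(2) would force $\beta(w_2)\subset\beta(w_1)^{\perp}$, making $v$ adjacent to itself; so $w_1,w_2$ are non-adjacent. Now for any $w_0\in\lk(w_1)\setminus\{w_2\}$, pick $v_0\in\beta(w_0)$. Since $w_1,w_0$ are distinct and adjacent, condition~(2) yields $v_0\in\beta(w_1)^{\perp}$, so $v_0$ is adjacent to $v$ with $v_0\neq v$; then condition~(1) applied to $(v,v_0)$ gives $w_0\in\alpha(v_0)\subset\alpha(v)^{\perp}$, so $w_0$ is adjacent to $w_2$. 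Hence $\lk(w_1)\subset\st(w_2)$; symmetrically $\lk(w_2)\subset\st(w_1)$, and $w_1\sim w_2$.

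\textbf{Step 2: descent to equivalence classes.} For distinct $v_1,v_2\in\Gamma$ with $v_1\sim v_2$, I would show that $\alpha(v_1)$ and $\alpha(v_2)$ lie in the same $\sim$-class of $\Lambda$. By the classification of $\sim$-classes into collapsible cliques and edgeless collapsible subgraphs, combined with the clique-reduced hypothesis, any clique $\sim$-class of size $\geq 2$ must coincide with $\Gamma$ itself (a case handled directly); so the essential case is $[v_1]$ edgeless, where $v_1,v_2$ are non-adjacent and $\lk(v_1)=\lk(v_2)$. Take $w_1\in\alpha(v_1)$ and $w_2\in\alpha(v_2)$; for $w_0\in\lk(w_1)\setminus\{w_2\}$ and any $v_0\in\beta(w_0)$, the Step~1 mediator argument shows that $v_0\in\lk(v_1)=\lk(v_2)$ with $v_0\neq v_2$. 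Condition~(1) on $(v_2,v_0)$ then gives $w_0\in\alpha(v_0)\subset\alpha(v_2)^{\perp}$, so $w_0$ is adjacent to $w_2$; hence $\lk(w_1)\subset\st(w_2)$, and symmetrically $w_1\sim w_2$.

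\textbf{Conclusion.} Define $\bar\alpha([v])$ to be the $\sim$-class of $\Lambda$ containing $\alpha(v')$ for any $v'\in[v]$, well-defined by the previous steps; similarly $\bar\beta$. Condition~(3) gives $\bar\beta\circ\bar\alpha=\mathrm{id}$ and $\bar\alpha\circ\bar\beta=\mathrm{id}$: for $w\in\bar\alpha([v])$, any $v'\in\beta(w)$ satisfies $w\in\alpha(v')$, which forces $[v']=[v]$, so $\beta(w)\subset[v]$ and $\bar\beta([w])=[v]$. Condition~(1) ensures adjacency is preserved: for adjacent $[v],[v']$ in $\mathcal{E}(\Gamma)$ with adjacent representatives, $\alpha(v')\subset\alpha(v)^{\perp}$ implies that vertices of $\bar\alpha([v])$ and $\bar\alpha([v'])$ are pairwise adjacent in $\Lambda$, making the classes adjacent in $\mathcal{E}(\Lambda)$. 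Finally, $\alpha([v])=\bar\alpha([v])$: every $w\in\bar\alpha([v])$ has some $v'\in\beta(w)\subset[v]$ with $w\in\alpha(v')\subset\alpha([v])$. The main obstacle will be Step~2 --- descent to equivalence classes --- where the clique-reduced hypothesis plays a crucial role in ruling out pathological clique configurations, allowing the mediator argument of Step~1 to extend to the edgeless case.
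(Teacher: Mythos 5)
Your main line is essentially the paper's own argument: the same link-chasing computation (from $w_0\in\lk(w_1)$ pass through $\beta(w_0)$ into $\lk(v_1)=\lk(v_2)$ and return via condition (1) to get $w_0\in\alpha(v_2)^\perp$), with clique-reducedness entering exactly as in the paper, namely to guarantee that a $\sim$-class with at least two vertices is edgeless, and the same endgame using (3) to make $\bar\alpha$ and $\bar\beta$ mutually inverse and (1),(2) for adjacency. Your split into Step 1 (two vertices of the same $\alpha(v)$, which in fact needs no clique-reducedness at all) and Step 2 is only organizational; the paper treats the whole class $[v]$ in one pass and proves the saturation $\alpha([v])=\bar\alpha([v])$ before the inverse property, while you do it in the opposite order.

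Two points need repair. First, the parenthetical in Step 2 that a clique $\sim$-class of size at least $2$ "must coincide with $\Gamma$ itself (a case handled directly)" is not handled and cannot be: if such a configuration were admitted, the statement would be false. Concretely, $\Gamma=K_2$ has its whole vertex set as a $2$-element clique class and contains no \emph{proper} collapsible clique on two vertices; taking $\Lambda=C_4$, letting $\alpha$ send the two vertices of $K_2$ to the two diagonal (non-adjacent) pairs of $C_4$ and defining $\beta$ accordingly, conditions (1)--(3) hold, yet $\alpha([v])$ is all of $C_4$ and $\mathcal{E}(K_2)\not\cong\mathcal{E}(C_4)$. The hypothesis must therefore be used the way the paper uses it, namely that in a clique-reduced graph no $\sim$-class with at least two vertices is a clique, so this case is excluded rather than "handled"; you should say this (and it forces the reading of the definition under which the whole vertex set being a $2$-clique is also disallowed). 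Second, in your concluding paragraph the inference "for $w\in\bar\alpha([v])$, any $v'\in\beta(w)$ satisfies $w\in\alpha(v')$, which forces $[v']=[v]$" is circular as written: it amounts to injectivity of $\bar\alpha$, which is what you are proving. The fix is immediate from what you already have: compute $\bar\beta(\bar\alpha([v]))$ on the representative $w\in\alpha(v)$, where $v\in\beta(w)$ by (3) and Step 1 for $\beta$ pins down the class; and for the saturation $\alpha([v])=\bar\alpha([v])$ invoke the symmetric (class-level) version of Step 2 for $\beta$ (which you never state but which holds by symmetry since $\Lambda$ is clique-reduced), so that $\beta(w)\subset[v]$ for every $w\in\bar\alpha([v])$. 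With those two repairs the proof is complete and coincides with the paper's.
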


\begin{proof}
    Let $v\in \Gamma$. We first prove that $\alpha([v])$ is contained in a single $\sim$-class. So assume towards a contradiction that $\alpha([v])$ contains two inequivalent vertices $w_1,w_2$. Then up to swapping $w_1$ and $w_2$, we can find $w'\in \Lambda$ that is adjacent to $w_1$ but not to $w_2$. It follows from (2) that $\beta(w')\subset\beta(w_1)^{\perp}$. Let $v_1,v_2\in [v]$ be such that $w_1\in\alpha(v_1)$ and $w_2\in\alpha(v_2)$. By (3) we have $v_1\in\beta(w_1)$, so $\beta(w')\subset\lk(v_1)$. Let $v'\in\beta(w')$. Then $v'\in\lk(v_1)$. Since $\Gamma$ is clique-reduced, the equivalence class $[v]$ corresponds to an edgeless subgraph, so $v'\in\lk(v_2)$. Then (1) implies that $\alpha(v')\subset\alpha(v_2)^\perp$, in particular $w'\in\lk(w_2)$, a contradiction. 

    By symmetry, for every $w\in \Lambda$, the set $\beta([w])$ is contained in a single $\sim$-class.

We now prove that $\alpha([v])$ is equal to a $\sim$-class. So let $w_1\in\alpha([v])$, and let $w_2\in \Lambda$ with $w_1\sim w_2$. We aim to prove that $w_2\in\alpha([v])$. Let $v_1\in [v]$ be such that $w_1\in\alpha(v_1)$, then (3) implies that $v_1\in\beta([w_1])$. Since by the above $\beta([w_1])$ is contained in a single $\sim$-class, it follows that $\beta(w_2)\subset [v]$. Let $v_2\in\beta(w_2)$, then by (3) we have $w_2\in\alpha(v_2)$, so $w_2\in\alpha([v])$, as desired.

By symmetry, for every $w\in \Lambda$, the set $\beta([w])$ is a single $\sim$-class. The induced maps $\bar\alpha:\mathcal{E}(\Gamma)\to\mathcal{E}(\Lambda)$ and $\bar\beta:\mathcal{E}(\Lambda)\to\mathcal{E}(\Gamma)$ preserve adjacency (by (1) and (2)) and are inverse to each other (by (3)), so the lemma follows. 
\end{proof}

\subsection{A conjugacy criterion} We next prove the following conjugacy criterion which will guarantee that the conclusions of Theorems~\ref{arbitrary},~\ref{amenable} and~\ref{factors} hold. In order to cover more cases, we state a general version that does not require the graphs $\Gamma$ and $\Lambda$ to be transvection-free, see Remark~\ref{rk:conjugacy-criterion} below. We insist that in the statement below, even if the vertices $v,w$ belong to the untransvectable subgraphs $\Gamma^u,\Lambda^u$, their stars $\st(v)$ and $\st(w)$ are always considered in the ambient graphs $\Gamma,\Lambda$.

\begin{proposition}\label{conjugacy_criterion}
In the setting from Notation \ref{notation}, assume that for every $v\in\Gamma^u$ and $w\in\Lambda^u$, there exist
projections $p_{v,w}\in\mathcal Z(M_{\emph{st}(v)}'\cap pPp)$ and $q_{w,v}\in\mathcal Z(N_{\emph{st}(w)}'\cap qPq)$, such that the following conditions hold: 
\begin{itemize}
\item[(a)] $\sum_{w\in\Lambda^u}p_{v,w}=p$, for every $v\in\Gamma^u$, and $\sum_{v\in\Gamma^u}q_{w,v}=q$, for every $w\in\Lambda^u$.
\item[(b)] $M_{v}p_{v,w}\prec_P^s N_{w}q_{w,v}$ and $N_{w}q_{w,v}\prec_P^s M_{v}p_{v,w}$, for every $v\in\Gamma^u$ and $w\in\Lambda^u$.
\end{itemize}
Then the following hold.
\begin{enumerate}
\item If $\Gamma^u$ and $\Lambda^u$ are clique-reduced, then there exists a graph isomorphism $\bar\alpha:\mathcal E(\Gamma^u)\rightarrow\mathcal E(\Lambda^u)$ such that for every $v\in\Gamma^u$ and $w\in\Lambda^u$, we have $M_{v}\prec_P^sN_{\bar\alpha([v])}$ and $N_{w}\prec_P^sM_{\bar\alpha^{-1}([w])}$ (where $[v]$ denotes the $\sim$-equivalence class of $v$ in $\Gamma^u$, and $[w]$ denotes the $\sim$-equivalence class of $w$ in $\Lambda^u$).
\item If all vertex algebras are II$_1$ factors, then there exists a graph isomorphism $\alpha:\Gamma^u\rightarrow\Lambda^u$ such that for every $v\in\Gamma^u$ and $w\in\Lambda^u$, we have $M_{v}\prec_P^sN_{\alpha(v)}$ and $N_{w}\prec_P^sM_{\alpha^{-1}(w)}$.
\end{enumerate}
\end{proposition}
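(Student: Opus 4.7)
The plan is to define maps $\alpha:\Gamma^u\to\calp(\Lambda^u)$ and $\beta:\Lambda^u\to\calp(\Gamma^u)$ by setting $\alpha(v)=\{w\in\Lambda^u:p_{v,w}\neq 0\}$ and $\beta(w)=\{v\in\Gamma^u:q_{w,v}\neq 0\}$, and then either apply Lemma~\ref{lemma:combinatorial2} (for part~(1)) or argue directly (for part~(2)). Hypothesis~(a) guarantees that $\alpha(v)$ and $\beta(w)$ are nonempty. The symmetry condition~(3) of Lemma~\ref{lemma:combinatorial2}, namely $w\in\alpha(v)\iff v\in\beta(w)$, reduces to showing $p_{v,w}\neq 0\iff q_{w,v}\neq 0$: if $p_{v,w}\neq 0$ but $q_{w,v}=0$, then hypothesis~(b) would force the diffuse algebra $M_vp_{v,w}$ to intertwine into the zero algebra, which is impossible.

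Next, I would extend the vertex-level intertwining of hypothesis~(b) to a star-level intertwining. Since $p_{v,w}\in\mathcal{Z}(M_{\st(v)}'\cap pPp)$, every unitary in $M_{\st(v)}$ commutes with $p_{v,w}$ and normalizes $M_vp_{v,w}$ inside $p_{v,w}Pp_{v,w}$. Applying Proposition~\ref{normalizer'} with $\Lambda=\{w\}$ (so $\Lambda\cup\Lambda^\perp=\st(w)$), and with $q_{w,v}\in N_{\st(w)}'\cap qPq$ playing the role of the projection~$e$, I obtain $M_{\st(v)}p_{v,w}\prec^s_P N_{\st(w)}q_{w,v}$; the symmetric application yields $N_{\st(w)}q_{w,v}\prec^s_P M_{\st(v)}p_{v,w}$. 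Combining the intertwinings $M_vp_{v,w}\prec^s_P N_w$ over $w\in\alpha(v)$ via Remark~\ref{elementary_facts}(9), I deduce $M_v\prec^s_P N_{\alpha(v)}$, and similarly $N_w\prec^s_P M_{\beta(w)}$, yielding the intertwining assertions of both~(1) and~(2).

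For the adjacency argument in part~(2), Lemma~\ref{basic}(1) together with the II$_1$ factor assumption forces $\mathcal{Z}(M_{\st(v)}'\cap pPp)=\mathbb{C}p$ and $\mathcal{Z}(N_{\st(w)}'\cap qPq)=\mathbb{C}q$, so each $p_{v,w}$ is $0$ or $p$, and $\alpha,\beta$ become mutually inverse bijections. Since now $p_{v,w}=p$ is central in $M_v'\cap pPp$ and $q_{w,v}=q$ is central in $N_w'\cap qPq$, Proposition~\ref{collapsible} applies with $C=\{\alpha(v)\}$ to yield $M_{\st(v)}\prec^s_P N_{\st(\alpha(v))}$. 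For $v,v'\in\Gamma^u$ distinct and adjacent, $v'\in\st(v)$ gives $M_{v'}\prec^s_P N_{\st(\alpha(v))}$, which combined with $M_{v'}\prec^s_P N_{\alpha(v')}$ and Corollary~\ref{intersection} yields $M_{v'}\prec^s_P N_{\st(\alpha(v))\cap\{\alpha(v')\}}$. Diffuseness of $M_{v'}$ forces the intersection to be nonempty, so $\alpha(v')\in\st(\alpha(v))$, and injectivity of $\alpha$ gives $\alpha(v')\in\lk(\alpha(v))$, proving adjacency.

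For part~(1), the main task is to verify conditions~(1) and~(2) of Lemma~\ref{lemma:combinatorial2}: namely, for $v,v'\in\Gamma^u$ distinct adjacent, $\alpha(v')\subset\alpha(v)^\perp$ (and symmetrically for $\beta$). This is the hard part: since $p_{v,w}$ need not be central in $M_v'\cap pPp$, Proposition~\ref{collapsible} does not apply directly. The plan is to invoke Proposition~\ref{Zariski_hull} to produce a hull partition $\{r_\Delta\}_\Delta$ of $p$ for $M_{v'}$, lying in $\mathcal{N}_{pPp}(M_{v'})'\cap pPp\subset\mathcal{Z}(M_{v'}'\cap pPp)$, and hence commuting with both $p_{v,w}$ and $p_{v',w'}$ (each of which lies in $M_{v'}'\cap pPp$ since $v'\in\st(v)$ and $v\in\st(v')$). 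The star extension combined with Corollary~\ref{intersection} restricts the hull of $M_{v'}p_{v,w}$ to $\alpha(v')\cap\st(w)$ and that of $M_{v'}p_{v',w'}$ to $\{w'\}$. A careful combinatorial analysis of which $r_\Delta$'s can support $p_{v,w}$ and $p_{v',w'}$, together with the partition identities $\sum_wp_{v,w}=p=\sum_{w'}p_{v',w'}$ and their orthogonality, should force every $w\in\alpha(v)$ and $w'\in\alpha(v')$ to be distinct and adjacent in $\Lambda$. Once condition~(1) is verified, Lemma~\ref{lemma:combinatorial2} produces the graph isomorphism $\bar\alpha:\mathcal{E}(\Gamma^u)\to\mathcal{E}(\Lambda^u)$, and combined with $M_v\prec^s_P N_{\alpha(v)}\subset N_{\bar\alpha([v])}$ this completes the proof.
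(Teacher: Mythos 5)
Your overall architecture is the same as the paper's: define $E_v=\{w\in\Lambda^u\mid p_{v,w}\neq 0\}$ and $F_w=\{v\in\Gamma^u\mid q_{w,v}\neq 0\}$, upgrade the vertex-level hypothesis (b) to star-level intertwining via Proposition~\ref{normalizer'}, feed the adjacency properties of these set-valued maps into Lemma~\ref{lemma:combinatorial2}, and sum the projections to get $M_v\prec^s_P N_{E_v}$. Your part (2) is correct (the direct adjacency argument via Corollary~\ref{intersection} is a fine variant of the paper's), and two of your side remarks can be repaired easily: not every unitary of $M_{\st(v)}$ normalizes $M_vp_{v,w}$, but the unitaries of $M_v$ and of $M_{\lk(v)}$ do and generate $M_{\st(v)}$, which is all Proposition~\ref{normalizer'} needs; and by Lemma~\ref{rel_comm} (untransvectability of $v$) one has $\mathcal Z(M_{\st(v)}'\cap pPp)=\mathcal Z(M_v'\cap pPp)=\mathcal Z(M_v)$, so your worry that $p_{v,w}$ might fail to be central in $M_v'\cap pPp$ is unfounded; this is also what justifies your appeal to Remark~\ref{elementary_facts}(9).

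The genuine gap is exactly at the step you label ``the hard part'' of part (1): proving $\alpha(v')\subset\alpha(v)^{\perp}$ for adjacent $v,v'$, which is the core of the proposition, and your hull-partition plan does not close as described. All the information your scheme extracts is corner-level: if $r_\Delta$ is a hull projection for $M_{v'}$ and $r_\Delta p_{v,w}\neq 0$, the corner $M_{v'}r_\Delta p_{v,w}$ embeds into $N_\Delta$ and into $N_{\st(w)}$, which (since the hull of a corner may be strictly smaller than $\Delta$) only gives $\Delta\cap\st(w)\neq\emptyset$; the pair-level version only controls pairs $(w,w')$ with $p_{v,w}p_{v',w'}\neq 0$. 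Bookkeeping with the partition identities cannot by itself exclude a configuration such as $\alpha(v)=\{w_1,w_2\}$, $\alpha(v')=\{w_1',w_2'\}$ with $p_{v,w_i}$ supported exactly where $p_{v',w_i'}$ is, $w_i'\in\lk(w_i)$, but $w_1'\notin\st(w_2)$. What rules this out, and what is absent from your sketch, is an independence input coming from the graph product structure: since $v\neq v'$, $\mathrm{E}_{M_{v'}}(x)=\tau(x)p$ for $x\in M_v$. The paper's Claim~\ref{perp} uses precisely this to upgrade $M_{v'}p_{v,w}\prec^s_P N_{\st(w)}q_{w,v}$ to $M_{v'}\prec^s_P N_{\st(w)}q_{w,v}$: any projection $z\in\mathcal Z(M_{v'}'\cap pPp)=\mathcal Z(M_{v'})$ with $z\geq p_{v,w}$ satisfies $z\geq \mathrm{E}_{\mathcal Z(M_{v'})}(p_{v,w})=\tau(p_{v,w})p$, hence $z=p$; then every $w'\in E_{v'}$ gives $N_{w'}\prec_P M_{v'}\prec^s_P N_{\st(w)}$, so $w'\in\st(w)$, and the possibility $w'=w$ is excluded by a further step you also omit, using Corollary~\ref{right_corner} (combine $M_{v'}p_{v',w}\prec^s_P N_w$ with $M_{v'}p_{v',w}\prec^s_P N_{\st(w)}q_{w,v}$ to get $\prec^s_P N_wq_{w,v}$, then chain with $N_wq_{w,v}\prec^s_P M_vp_{v,w}$ and Lemma~\ref{basic}(2)). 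Alternatively, the same independence gives $\tau(p_{v,w}p_{v',w'})=\tau(p_{v,w})\tau(p_{v',w'})>0$ for all pairs, after which your pair-level corner argument (plus the Corollary~\ref{right_corner} step) would suffice; but some such analytic use of the independence of $M_v$ and $M_{v'}$ inside $M_\Gamma$ is indispensable, and ``a careful combinatorial analysis'' without it will not produce the bipartite adjacency you need.
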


\begin{remark}\label{rk:conjugacy-criterion}
The first statement applies in particular when $\Gamma$ and $\Lambda$ are transvection-free, with no additional assumptions on the vertex algebras other than they are diffuse tracial von Neumann algebras. In this case $\Gamma^u=\Gamma$ and $\Lambda^u=\Lambda$ are clique-reduced, and the conclusion is that there is a graph isomorphism $\alpha:\Gamma\to \Lambda$ such that for every $v\in\Gamma$, we have $M_v\prec^s_PN_{\alpha(v)}$ and $N_{\alpha(v)}\prec^s_PM_v$.
\end{remark}

Before proving Proposition~\ref{conjugacy_criterion}, let us start with a lemma that will be useful in the proof.

\begin{lemma}\label{rel_comm}
   In the setting from Notation~\ref{notation}, let $v\in\Gamma$ be an untransvectable vertex. Then \[\mathcal{Z}(M'_{\st(v)}\cap pPp)=\mathcal{Z}(M'_v\cap pPp)=\mathcal{Z}(M_v).\] 
\end{lemma}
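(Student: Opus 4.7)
Plan: Under the identification $pPp=M_\Gamma$ given by $\theta$, the task reduces to computing $\mathcal{Z}(M'_v\cap M_\Gamma)$ and $\mathcal{Z}(M'_{\st(v)}\cap M_\Gamma)$ inside $M_\Gamma$. By Lemma~\ref{basic}(1), for any full subgraph $\Lambda\subset\Gamma$ we have
\[
M'_\Lambda\cap M_\Gamma=\mathcal{Z}(M_\Lambda)\,\overline{\otimes}\,M_{\Lambda^\perp}.
\]
Since $\mathcal{Z}(M_\Lambda)$ is abelian, taking centers gives
\[
\mathcal{Z}(M'_\Lambda\cap M_\Gamma)=\mathcal{Z}(M_\Lambda)\,\overline{\otimes}\,\mathcal{Z}(M_{\Lambda^\perp}),
\]
and both factors decompose, by Lemma~\ref{basic}(1), as tensor products of $\mathcal{Z}(M_u)$ over the vertices $u$ in the maximal clique factors of $\Lambda$ and of $\Lambda^\perp$ respectively.

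The key combinatorial step is to deduce from the untransvectability of $v$ the following two facts:
\begin{itemize}
\item[(i)] $\st(v)^\perp=\emptyset$: indeed, any $u\in\st(v)^\perp$ would be adjacent to $v$ (so $u\in\lk(v)$, in particular $u\neq v$) and to every other vertex of $\lk(v)$, yielding $\lk(v)\subset\st(u)$, which contradicts untransvectability.
\item[(ii)] The maximal clique factor of $\lk(v)$ is empty: any vertex $u$ in it satisfies $u\in\lk(v)$ (so $u\neq v$) and $\lk(v)\subset\st_{\lk(v)}(u)\subset\st(u)$, again contradicting untransvectability.
\end{itemize}

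Applying the displayed formulas to $\Lambda=\{v\}$ (using $\{v\}^\perp=\lk(v)$), (ii) gives $\mathcal{Z}(M_{\lk(v)})=\mathbb{C}$, hence
\[
\mathcal{Z}(M'_v\cap M_\Gamma)=\mathcal{Z}(M_v)\,\overline{\otimes}\,\mathcal{Z}(M_{\lk(v)})=\mathcal{Z}(M_v).
\]
For $\Lambda=\st(v)$, fact (i) gives $M_{\st(v)^\perp}=\mathbb{C}$, so $M'_{\st(v)}\cap M_\Gamma=\mathcal{Z}(M_{\st(v)})$ is already abelian and equal to its own center. Since $M_{\st(v)}=M_v\,\overline{\otimes}\,M_{\lk(v)}$, another application of Lemma~\ref{basic}(1) together with (ii) yields
\[
\mathcal{Z}(M'_{\st(v)}\cap M_\Gamma)=\mathcal{Z}(M_v)\,\overline{\otimes}\,\mathcal{Z}(M_{\lk(v)})=\mathcal{Z}(M_v),
\]
which completes the proof. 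There is no genuine obstacle here: the argument is a direct computation using Lemma~\ref{basic}(1), and the only nontrivial input is the combinatorial observation (i)--(ii), which falls immediately out of the definition of untransvectability.
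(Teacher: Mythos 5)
Your proof is correct and follows essentially the same route as the paper: both reduce to Lemma~\ref{basic}(1) together with the two combinatorial observations that $\st(v)^\perp=\emptyset$ and that the maximal clique factor of $\lk(v)$ is empty for an untransvectable vertex $v$. (Minor remark: your step (i) does not actually need untransvectability, since a vertex of $\st(v)^\perp$ would have to be adjacent to itself, which is impossible in a simple graph.)
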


\begin{proof}
 Since $\text{st}(v)^\perp=\emptyset$,  Lemma~\ref{basic}(1) implies that $$\mathcal Z(M_{\text{st}(v)}'\cap pPp)=\mathcal Z(M_v'\cap pPp)=\mathcal Z(M_v)\overline{\otimes}\mathcal Z(M_C),$$ where $C$ is the maximal clique factor of $\text{lk}(v)$. If $v'\in C$, then $\text{st}(v)\subset\text{st}(v')$ which contradicts the fact that $v\in\Gamma$ is untransvectable since $v'\not=v$. Thus,  $C=\emptyset$, which implies the lemma.    
\end{proof}

\begin{proof}[Proof of Proposition~\ref{conjugacy_criterion}] 
We first note that by combining (b) with Proposition \ref{normalizer'} it follows that
\begin{equation}\label{star}
\text{$M_{\text{st}(v)}p_{v,w}\prec_P^s N_{\text{st}(w)}q_{w,v}$ and $N_{\text{st}(w)}q_{w,v}\prec_P^s M_{\text{st}(v)}p_{v,w}$, for every $v\in\Gamma^u$ and $w\in\Lambda^u$.}
\end{equation}
We denote $E_v=\{w\in\Lambda^u\mid p_{v,w}\not=0\}$, for every $v\in\Gamma^u$, and $F_w=\{v\in\Gamma^u\mid q_{w,v}\not=0\}$, for every $w\in\Lambda^u$. Condition (b) guarantees that $w\in E_v$ if and only if $v\in F_w$, for every $v\in\Gamma^u$ and $w\in\Lambda^u$.

Next, we prove the following claim.

\begin{claim}\label{perp}  If $v,v'\in \Gamma^u$ are adjacent, then $E_{v'}\subset E_v^\perp$.
\end{claim}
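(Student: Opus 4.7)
\emph{Plan.} Fix $w\in E_v$ and $w'\in E_{v'}$; the goal is to show $w\ne w'$ and that $w,w'$ are adjacent in $\Lambda$. The cornerstone of my approach is to produce a single nonzero projection witnessing both intertwining relations simultaneously, namely
\[
r:=p_{v,w}\,p_{v',w'}.
\]
Since $v,v'\in\Gamma^u$, Lemma~\ref{rel_comm} gives $p_{v,w}\in\mathcal Z(M_v)$ and $p_{v',w'}\in\mathcal Z(M_{v'})$; since $v,v'$ are adjacent, $M_v\vee M_{v'}=M_v\overline{\otimes}M_{v'}$, so $r$ identifies with the elementary tensor $p_{v,w}\otimes p_{v',w'}$, which is nonzero because both factors are. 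In particular $r$ commutes with both $M_v$ and $M_{v'}$, so $M_vr$ and $M_{v'}r$ are commuting subalgebras of $rPr$, and Remark~\ref{elementary_facts} upgrades hypothesis~(b) to $M_vr\prec^s_P N_wq_{w,v}$ and $M_{v'}r\prec^s_P N_{w'}q_{w',v'}$.

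I would then show $w'\in\mathrm{st}(w)$ (and symmetrically $w\in\mathrm{st}(w')$). Since $M_{v'}r$ commutes with $M_vr$, it lies in $\mathcal N_{rPr}(M_vr)''$. Apply Proposition~\ref{normalizer'} to $M_vr\prec^s_P N_wq_{w,v}$ with $\Lambda=\{w\}$ and $e=q_{w,v}\in N_{\mathrm{st}(w)}'\cap P$ (note $M_vr$ is diffuse, so $M_vr\nprec_P\mathbb C$); the normalizer then satisfies $\mathcal N_{rPr}(M_vr)''\prec^s_P N_{\mathrm{st}(w)}q_{w,v}$, whence $M_{v'}r\prec^s_P N_{\mathrm{st}(w)}$. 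Combined with $M_{v'}r\prec^s_P N_{w'}$ through Corollary~\ref{intersection}, this forces $M_{v'}r\prec^s_P N_{\mathrm{st}(w)\cap\{w'\}}$; were $w'\notin\mathrm{st}(w)$, the right-hand side would be $\mathbb C$, contradicting diffuseness of $M_{v'}r$.

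Finally I would rule out $w=w'$. The same normalizer argument with the roles of $(v,w)$ and $(v',w')$ swapped gives $M_vr\prec^s_P N_{\mathrm{st}(w')}q_{w',v'}$, which under $w=w'$ reads $M_vr\prec^s_P N_{\mathrm{st}(w)}q_{w,v'}$. Together with $M_vr\prec^s_P N_w$, Corollary~\ref{right_corner} (applicable since $q_{w,v'}\in N_{\mathrm{st}(w)}'\cap P$) promotes this to $M_vr\prec^s_P N_wq_{w,v'}$. However, hypothesis~(a) at $w$ expresses $q$ as a sum of projections in the commutative algebra $\mathcal Z(N_w)$, so distinct summands are orthogonal; since $v\ne v'$, this yields $q_{w,v}q_{w,v'}=0$. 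Corollary~\ref{center_corner} applied to $M_vr\prec^s_P N_wq_{w,v}$ (from (b)) therefore gives $M_vr\nprec_P N_w(q-q_{w,v})$, contradicting $M_vr\prec^s_P N_wq_{w,v'}\subset N_w(q-q_{w,v})$. The main subtlety that this plan surmounts is the non-factor case, where the $p_{v,w}$ and $q_{w,v}$ are nontrivial central projections: it is crucial that $p_{v,w}p_{v',w'}\ne 0$ \emph{automatically}, and this relies squarely on the tensor-product structure of $M_v\vee M_{v'}$ coming from the adjacency $v\sim v'$ in $\Gamma$.
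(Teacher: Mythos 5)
Your argument is correct, but it follows a genuinely different route from the paper's. The paper fixes $w\in E_v$ and first upgrades the corner intertwining $M_{v'}p_{v,w}\prec^s_P N_{\st(w)}q_{w,v}$ (coming from \eqref{star}) to the global statement $M_{v'}\prec^s_P N_{\st(w)}q_{w,v}$: it invokes Proposition~\ref{results}(3) and then exploits adjacency through the fact that $\mathrm{E}_{M_{v'}}|_{M_v}=\tau(\cdot)p$, so applying $\mathrm{E}_{\mathcal Z(M_{v'})}$ to $p_{v,w}\leq z$ forces $z=p$; it then gets $E_{v'}\subset\st(w)$ by composing $N_{w'}\prec_P M_{v'}$ with this global statement via transitivity and Lemma~\ref{basic}(2), and excludes $w\in E_{v'}$ by Corollary~\ref{right_corner} plus transitivity through $N_wq_{w,v}\prec^s_P M_vp_{v,w}$. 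You avoid the corner-to-global upgrade entirely by working on the common corner $r=p_{v,w}p_{v',w'}$, whose nonvanishing is automatic since $p_{v,w}\in\mathcal Z(M_v)$, $p_{v',w'}\in\mathcal Z(M_{v'})$ (Lemma~\ref{rel_comm}) and $M_v\vee M_{v'}=M_v\overline{\otimes}M_{v'}$; this lets you combine two strong intertwinings with the \emph{same} source ($M_{v'}r\prec^s_P N_{w'}$ and, via Proposition~\ref{normalizer'}, $M_{v'}r\prec^s_P N_{\st(w)}$) through Corollary~\ref{intersection}, so no transitivity through a middle algebra is needed and the reverse intertwining $N_{w'}\prec^s_P M_{v'}$ plays no role in the adjacency step. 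For excluding $w=w'$ you use a different mechanism as well: the pairwise orthogonality of $\{q_{w,\cdot}\}$ coming from hypothesis~(a) together with Corollaries~\ref{right_corner} and~\ref{center_corner}, whereas the paper's proof of this claim never uses (a) and instead contradicts Lemma~\ref{basic}(2). Each approach has its merits: the paper's conditional-expectation trick produces the global statement \eqref{eq:mv}, which it reuses, while your corner argument is more symmetric in $(v,w)$ and $(v',w')$ and isolates the tensor-position input cleanly; on the other hand it consumes hypothesis~(a) and one extra tool (Corollary~\ref{center_corner}). Your applications of Proposition~\ref{normalizer'} and Corollaries~\ref{right_corner}, \ref{center_corner} with projections such as $q_{w,v}\in\mathcal Z(N_{\st(w)}'\cap qPq)$ in the amplified setting $P=\mathbb M_n(\mathbb C)\otimes N_\Lambda$ are at exactly the same level of rigor as the paper's own use of these results (via Remark~\ref{amplify_results}), so I do not regard that as a gap.
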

\begin{proof}
Let $w\in E_v$. 
We first show that 
\begin{equation}\label{eq:mv}
    M_{v'}\prec^s_P N_{\st(w)}q_{w,v}.
\end{equation}
To this end, note that 
$M_{\text{st}(v)}p_{v,w}\prec_P^sN_{\text{st}(w)}q_{w,v}$, thus 
\begin{equation}\label{v'_to_st(w)}M_{v'}p_{v,w}\prec_P^sN_{\text{st}(w)}q_{w,v}.\end{equation}
 By Proposition \ref{results}(3) we find a projection $z\in\mathcal Z(M_{v'}'\cap pPp)$ with $p_{v,w}\leq z$ and $M_{v'}z\prec_P^sN_{\text{st}(w)}q_{w,v}$. By Lemma~\ref{rel_comm}, we have $\mathcal Z(M_{v'}'\cap pPp)=\mathcal Z(M_{v'})$, so
$z\in \mathcal Z(M_{v'})$. Lemma~\ref{rel_comm} also gives $\mathcal Z(M_{\text{st}(v)}'\cap pPp)=\mathcal Z(M_v)$, so 
 $p_{v,w}\in\mathcal Z(M_v)$. Moreover, if $x\in M_v$, then $\text{E}_{M_{v'}}(x)=\tau(x)p$, where we consider the trace $\tau$ on $M_\Gamma=pPp$ normalized so that $\tau(p)=1$.
  Using these facts and applying $\text{E}_{\mathcal Z(M_{v'})}$ to the inequality $p_{v,w}\leq z$ gives $$\tau(p_{v,w})p=\text{E}_{\mathcal Z(M_{v'})}(p_{v,w})\leq \text{E}_{\mathcal Z(M_{v'})}(z)=z.$$ Since $z\leq p$ and $p_{v,w}\not=0$, this forces $z=p$. Therefore, $M_{v'}\prec_P^s N_{\text{st}(w)}q_{w,v}$, which proves \eqref{eq:mv}.  

We next prove that 
\begin{equation}\label{E_v}E_{v'}\subset\text{st}(w).\end{equation}
 If $w'\in E_{v'}$, then $N_{w'}\prec_P M_{v'}$. Combining this fact with \eqref{eq:mv} and using Proposition \ref{results}(2), we get $N_{w'}\prec_PN_{\text{st}(w)}q_{w,v}$, and hence $N_{w'}\prec_PN_{\st(w)}$. By Lemma~\ref{basic}(2) we get that $w'\in\text{st}(w)$. This altogether proves \eqref{E_v}.

Finally, we show that \begin{equation}\label{Ev'} w\not\in E_{v'}.\end{equation}

 Assume by contradiction that $w\in E_{v'}$. Then we have  $p_{v',w}\not=0$ and $M_{v'}p_{v',w}\prec_P^sN_w$. Since  $M_{v'}\prec_P^s N_{\text{st}(w)}q_{w,v}$ by equation~\eqref{eq:mv}, we also have $M_{v'}p_{v',w}\prec_P^s N_{\text{st}(w)}q_{w,v}$. By combining these facts, Corollary \ref{right_corner} implies that $M_{v'}p_{v',w}\prec_P^sN_wq_{w,v}$. Since $N_wq_{w,v}\prec_P^sM_vp_{v,w}$, applying Proposition \ref{results}(2) gives $M_{v'}p_{v',w}\prec_PM_vp_{v,w}$. However, Lemma~\ref{basic}(2) implies that $v'=v$, which gives a contradiction. This proves \eqref{Ev'}. 
 
 By combining \eqref{E_v} and \eqref{Ev'}, we conclude that $E_{v'}\subset\text{lk}(w)$. Since this holds for every $w\in E_v$, we deduce that $E_{v'}\subset\cap_{w\in E_v}\text{lk}(w)=E_v^\perp$, which finishes the proof of the claim. \end{proof}

By symmetry, repeating the proof of Claim \ref{perp} implies that
\begin{equation}
\label{F_w}\text{$F_{w'}\subset F_w^\perp$, for every $w,w'\in\Lambda^u$ with $w'\in\text{lk}(w)$.}
\end{equation}

Let $\alpha:\Gamma^u\to\calp(\Lambda^u)$ and $\beta:\Lambda^u\to\calp(\Gamma^u)$ be given by $\alpha(v)=E_v$ and $\beta(w)=F_w$.  

To prove (1), we now assume that $\Gamma^u$ and $\Lambda^u$ are clique-reduced. It follows from the above that $\alpha$ and $\beta$ satisfy the assumptions from Lemma~\ref{lemma:combinatorial2} (applied to the graphs $\Gamma^u$ and $\Lambda^u$). It follows from that lemma that $\alpha$ induces an isomorphism $\bar\alpha:\mathcal{E}(\Gamma^u)\to\mathcal{E}(\Lambda^u)$. Let now $v\in\Gamma^u$. Then $E_v\subset\bar\alpha([v])$, where $[v]$ denotes the $\sim$-equivalence class of $v$ in $\Gamma^u$. For every $w\in\bar\alpha([v])$, we have $M_v p_{v,w}\prec^s_P N_w$, so $M_v p_{v,w}\prec^s_P N_{\bar\alpha([v])}$. Since $\sum_{w\in\bar\alpha([v])} p_{v,w}=p$, it follows that $M_v\prec^s_P N_{\bar\alpha([v])}$. The same argument shows that for every $w\in\Lambda^u$, one has $N_w\prec^s_P M_{\bar\alpha^{-1}([w])}$, which completes the proof of (1).

To prove (2), we now assume that all vertex algebras are II$_1$ factors. Then the sets $E_v$ and $F_w$ are singletons (see Remark \ref{a_few_remarks}(4)) and it immediately follows that $\alpha:\Gamma^u\rightarrow\Lambda^u$ and $\beta:\Lambda^u\rightarrow\Gamma^u$ are graph isomorphisms, which are inverse of each other. The conclusion follows.
 \end{proof}

\section{Arbitrary vertex algebras: Proof of Theorem \ref{arbitrary}} \label{sec:arbitrary}

We are now in position to prove the following result which generalizes Theorem~\ref{arbitrary}. 
The basic consequences of the double intertwining obtained in the conclusion will be clarified in Section~\ref{sec:intertwining}.

\begin{theorem}\label{arbitrary_gen}
In the setting from Notation \ref{notation}, assume additionally that $\Gamma$ and $\Lambda$ are transvection-free, square-free and not reduced to a vertex. 

Then the graphs $\Gamma$ and  $\Lambda$ are isomorphic, and there is a graph isomorphism $\alpha:\Gamma\rightarrow\Lambda$ such that $M_v\prec_P^sN_{\alpha(v)}$ and $N_{\alpha(v)}\prec_P^sM_v$, for every $v\in\Gamma$.
\end{theorem}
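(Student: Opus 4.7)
The plan is to follow the strategy sketched at the end of Section~\ref{rigidity_results}: locate subalgebras associated to maximal join subgraphs, then descend to individual vertices, then apply the conjugacy criterion.

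\textbf{First, a combinatorial input.} I would begin by showing that under the assumptions on $\Gamma$ (transvection-free, square-free, not a vertex), the maximal join full subgraphs of $\Gamma$ are exactly the stars $\st(v)$ for $v\in\Gamma$, with distinct vertices giving distinct stars, and no connected component being a clique (and similarly for $\Lambda$). Writing a non-trivial join $\Sigma=C\circ\Delta_1\circ\cdots\circ\Delta_m$ with $C$ the maximal clique factor and $\Delta_i$ irreducible of size $\geq 2$, square-freeness of $\Gamma$ forces $m\leq 1$ (otherwise non-adjacent pairs in $\Delta_1,\Delta_2$ produce a square), so some vertex $v$ lies in $C$ and is adjacent to all of $\Sigma\setminus\{v\}$; maximality then forces $\Sigma=\st(v)$. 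Injectivity of $v\mapsto\st(v)$ and the absence of clique components are standard consequences of transvection-freeness (a clique component or equality $\st(v)=\st(v')$ with $v\neq v'$ produces a transvection, and an isolated vertex is transvected to any other vertex). Moreover, transvection-freeness implies that $\Gamma$ is clique-reduced and that the $\sim$-classes (in the sense of Section~\ref{sec:combinatorial}) are singletons, so that $\mathcal{E}(\Gamma)=\Gamma$.

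\textbf{Applying the structure result for maximal joins.} Since no component is a clique, Remark~\ref{a_few_remarks}(3) ensures that Proposition~\ref{structure_of_aut} applies with $p_\Sigma=p$ and $q_\Delta=q$ for every $\Sigma\in\mathscr J(\Gamma)$, $\Delta\in\mathscr J(\Lambda)$. Using the bijection $v\leftrightarrow\st(v)$ from the previous step, I would obtain projections $p_{v,w}:=p_{\st(v),\st(w)}$ and $q_{w,v}:=q_{\st(w),\st(v)}$ living in $\mathcal{Z}(M'_{\st(v)}\cap pPp)=\mathcal{Z}(M_v)$ and $\mathcal{Z}(N'_{\st(w)}\cap qPq)=\mathcal{Z}(N_w)$ respectively (the centers collapse because $\st(v)^\perp=\st(w)^\perp=\emptyset$ by transvection-freeness), with $\sum_wp_{v,w}=p$, $\sum_vq_{w,v}=q$, and the mutual star-level intertwinings $M_{\st(v)}p_{v,w}\prec^s_PN_{\st(w)}q_{w,v}$ together with its reverse.

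\textbf{Descent to vertex algebras.} The technical heart of the argument is to upgrade the star-level intertwinings to vertex-level ones: $M_vp_{v,w}\prec^s_PN_wq_{w,v}$ and $N_wq_{w,v}\prec^s_PM_vp_{v,w}$. The key structural input is the tensor decomposition $M_{\st(v)}=M_v\overline\otimes M_{\lk(v)}$ together with the fact that $\{v\}$ is the maximal clique factor of $\st(v)$ (again a consequence of transvection-freeness, as a second vertex in that clique factor would produce a transvection). Using Proposition~\ref{Zariski_hull}, I would partition $p_{v,w}=\sum_\Omega r_\Omega$ according to the hulls of $M_vp_{v,w}$, with each non-zero $r_\Omega$ forcing $M_vr_\Omega$ to have hull $N_\Omega$ for some $\Omega\subset\st(w)$ (the inclusion follows from Corollary~\ref{intersection}). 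Since $M_{\lk(v)}$ commutes with $M_v$, it is contained in $\mathcal{N}_{pPp}(M_v)''$; hence $M_{\st(v)}r_\Omega\subset\mathcal{N}(M_vr_\Omega)''$, and Proposition~\ref{normalizer'} applied to $M_vr_\Omega$ yields $M_{\st(v)}r_\Omega\prec^s_PN_{\Omega\cup\Omega^\perp}q_{w,v}$ (after combining with the star-level intertwining via Corollary~\ref{right_corner}). Since the hull of $M_{\st(v)}r_\Omega$ is precisely $N_{\st(w)}$ by maximality of $p_{v,w}$, the moreover part of Proposition~\ref{Zariski_hull} forces $\st(w)\subset\Omega\cup\Omega^\perp$; combined with $\Omega\subset\st(w)$ this produces a non-trivial join decomposition $\st(w)=(\st(w)\cap\Omega)\circ(\st(w)\cap\Omega^\perp)$. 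Since $\{w\}$ is the maximal clique factor of $\st(w)$, very few $\Omega$ satisfy all constraints. Running the symmetric analysis on $N_wq_{w,v}$ (partitioning $q_{w,v}$ by hulls of $N_wq_{w,v}$, yielding candidate hulls $M_\Xi$ with $\Xi\subset\st(v)$) and using the mutual intertwining to tie the two partitions together pins down $\Omega=\{w\}$ and $\Xi=\{v\}$ on every non-zero piece, giving the desired vertex-level intertwinings.

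\textbf{Conclusion and obstacle.} Once the vertex-level intertwinings are in place, the hypotheses of Proposition~\ref{conjugacy_criterion} hold with $\Gamma^u=\Gamma$ and $\Lambda^u=\Lambda$, and its first assertion (with $\sim$-classes trivial) produces directly the graph isomorphism $\alpha:\Gamma\to\Lambda$ with $M_v\prec^s_PN_{\alpha(v)}$ and $N_{\alpha(v)}\prec^s_PM_v$ for every $v\in\Gamma$, finishing the proof. I expect the main obstacle to lie in the descent step: while the normalizer/hull machinery quickly shows that each possible hull $\Omega$ is part of a join decomposition of $\st(w)$, in the fully general (non-factorial) setting the proliferation of central projections in $M_v'\cap pPp$ complicates the bookkeeping, and ruling out exotic hulls $\Omega\subset\lk(w)$ (disjoint from $\{w\}$) requires combining both the $M_v\to N_{\st(w)}$ and the $N_w\to M_{\st(v)}$ directions of intertwining at the same time, matching up the central partitions on both sides.
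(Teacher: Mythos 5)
Your skeleton coincides with the paper's: your combinatorial step is Lemma~\ref{classA} (under the hypotheses the maximal join subgraphs are exactly the stars, with maximal clique factor $\{v\}$ and $\lk(v)$ irreducible), your star-level matching is exactly Proposition~\ref{structure_of_aut} with $p_\Sigma=p$, $q_\Delta=q$, and your final step is Proposition~\ref{conjugacy_criterion}(1). The gap is in the descent from stars to vertices, which is the technical core of the theorem (Claim~\ref{vertices} in the paper). Your hull bookkeeping only yields, on each nonzero piece $r_\Omega$ of $p_{v,w}$, that the hull $N_\Omega$ of $M_vr_\Omega$ satisfies $\Omega\subset\st(w)$ and $\st(w)\subset\Omega\cup\Omega^\perp$, hence (by irreducibility of $\lk(w)$) $\Omega\in\{\{w\},\lk(w),\st(w)\}$. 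Nothing in the intertwining/hull calculus excludes $\Omega=\lk(w)$ or $\Omega=\st(w)$: since $\lk(w)^\perp=\{w\}$ and $\lk(v)^\perp=\{v\}$ under the standing assumptions, the normalizer results (Proposition~\ref{normalizer'}) applied to such a piece only return the star-level information $M_{\st(v)}\prec^s_P N_{\st(w)}$, and the ``bad'' assignment $M_v\prec^s_P N_{\lk(w)}$, $N_w\prec^s_P M_{\lk(v)}$ (together with $M_{\lk(v)}\prec^s_P N_{\st(w)}$ and $N_{\lk(w)}\prec^s_P M_{\st(v)}$) is formally consistent with transitivity, with Corollary~\ref{intersection}, and with the moreover part of Proposition~\ref{Zariski_hull}. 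Running the symmetric analysis on $N_wq_{w,v}$ merely produces the mirror-image constraints; your assertion that ``matching up the central partitions on both sides'' pins down $\Omega=\{w\}$ is exactly the missing argument, and I do not see how to supply it with the tools you invoke.

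The paper closes this gap with an input of a different nature: Corollary~\ref{tensor_dec}, which rests on the commuting-subalgebras structure Theorem~\ref{commute} (relative amenability plus Vaes' theorem for amalgamated free products), applied to the commuting pair $M_vp_{v,w}$ and $M_{\lk(v)}p_{v,w}$ generating $M_{\st(v)}p_{v,w}$. This locates the two tensor factors \emph{simultaneously}, on a common central piece $p'$, into complementary join parts: $M_vp'\prec^s_P N_{\{w\}\cup\Lambda_1}q_{w,v}$ and $M_{\lk(v)}p'\prec^s_P N_{\{w\}\cup\Lambda_2}q_{w,v}$ with $\lk(w)=\Lambda_1\circ\Lambda_2$. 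Irreducibility of $\lk(w)$ forces $\Lambda_1=\emptyset$ or $\Lambda_2=\emptyset$, and the remaining bad case $M_{\lk(v)}p'\prec^s_P N_wq_{w,v}$ is killed by transvection-freeness: for $v'\in\lk(v)$ one gets $M_{\st(v')}p'\prec^s_P N_{\st(w)}q_{w,v}\prec^s_P M_{\st(v)}p_{v,w}$, hence $\st(v')\subset\st(v)$ with $v'\neq v$, a contradiction. It is precisely this simultaneous control of both commuting factors relative to the join structure of $\st(w)$ that the hull formalism alone cannot deliver, so as written your descent step does not go through. (A secondary, repairable point: your application of Proposition~\ref{normalizer'} with $e=q_{w,v}$ needs $q_{w,v}\in N_{\Omega\cup\Omega^\perp}'\cap qPq$, which is not automatic before the join constraint on $\Omega$ is established.)
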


 In preparation for the proof of Theorem \ref{arbitrary_gen}, we prove the following lemma. 
 We recall first from Notation~\ref{nota:j} that for a graph $\Omega$, we denote by $\mathscr{J}(\Omega)$ the set of maximal join subgraphs of $\Omega$.

\begin{lemma}\label{classA}
Let $\Omega$ be a finite simple graph which is transvection-free, square-free and not reduced to a vertex. Then $\emph{st}(v)\in\mathscr J(\Omega)$, the maximal clique factor of $\emph{st}(v)$ is $\{v\}$, and $\emph{lk}(v)$ is an irreducible graph, for every $v\in\Omega$.
Moreover, every graph $\Sigma\in\mathscr J(\Omega)$ is equal to $\emph{st}(v)$, for some $v\in\Omega$.
\end{lemma}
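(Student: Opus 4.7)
I would prove this lemma in four steps, the harder ones being the verification that $\text{st}(v)$ is maximal and the irreducibility of $\text{lk}(v)$, both of which rely essentially on combining the square-free and transvection-free hypotheses.

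\textbf{Step 1 (Nonempty links).} First I would check that every vertex $v\in\Omega$ has nonempty link. Indeed, if $\text{lk}(v)=\emptyset$ then $v$ is isolated, and since $\Omega$ is not a single vertex, any other vertex $v'$ would satisfy $\text{lk}(v)=\emptyset\subset\text{st}(v')$, contradicting transvection-freeness. Consequently $\text{st}(v)=\{v\}\circ\text{lk}(v)$ is always a join.

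\textbf{Step 2 ($\text{st}(v)$ is a maximal join).} Suppose $\text{st}(v)\subsetneq\Sigma$ for some join subgraph $\Sigma=\Sigma_1\circ\Sigma_2$, and pick $w\in\Sigma\setminus\text{st}(v)$. Since $v\not\sim w$, they lie in the same factor, say $v,w\in\Sigma_1$, with $\Sigma_2$ nonempty. I would then show $\text{lk}(v)\subset\text{st}(u)$ for any $u\in\Sigma_2$, producing a transvection. The key point: for $u'\in\text{lk}(v)\subset\Sigma_1\cup\Sigma_2$, if $u'\in\Sigma_1$ then $u'\sim u$ by the join, so $u'\in\text{st}(u)$; if $u'\in\Sigma_2$ with $u'\neq u$, square-freeness forces $u'\sim u$ — otherwise the 4-cycle $v,u,w,u'$ (which is a square since $v\not\sim w$ and $u\not\sim u'$) would contradict the hypothesis.

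\textbf{Step 3 (Maximal clique factor is $\{v\}$).} The vertex $v$ is adjacent to every other vertex of $\text{st}(v)$, so lies in the maximal clique factor. Conversely, if $u\in\text{st}(v)$ with $u\neq v$ satisfied $\text{st}(v)\subset\text{st}(u)$, then in particular $\text{lk}(v)\subset\text{st}(u)$, contradicting transvection-freeness.

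\textbf{Step 4 ($\text{lk}(v)$ is irreducible).} Suppose $\text{lk}(v)=A\circ B$ with $A,B$ nonempty. If $|A|=1$ with $A=\{a\}$, then $a$ is adjacent to every vertex of $\text{st}(v)\setminus\{a\}=\{v\}\cup B$, so $a$ would lie in the maximal clique factor of $\text{st}(v)$, contradicting Step 3. Hence $|A|,|B|\geq 2$, and square-freeness (applied to any $a_1,a_2\in A$ and $b_1,b_2\in B$) forces one of $A,B$ to be a clique, say $A$. But then any $a\in A$ is adjacent to all of $\text{st}(v)\setminus\{a\}$, again contradicting Step 3.

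\textbf{Step 5 (Moreover part).} Let $\Sigma\in\mathscr J(\Omega)$ with $\Sigma=\Sigma_1\circ\Sigma_2$ nontrivial. If some factor is a singleton $\{v\}$, then $\Sigma\subset\text{st}(v)$, and maximality of $\Sigma$ together with Step 2 gives $\Sigma=\text{st}(v)$. Otherwise $|\Sigma_1|,|\Sigma_2|\geq 2$, and the same square-freeness argument as in Step 4 forces one factor, say $\Sigma_2$, to be a clique; but then any $u\in\Sigma_2$ satisfies $\Sigma\subset\text{st}(u)$, and maximality again yields $\Sigma=\text{st}(u)$.

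The main obstacle is Step 2, where one has to rule out the possibility of a strictly larger join containing $\text{st}(v)$: the argument requires extracting a transvection from the joint use of square-freeness (to control adjacency inside $\Sigma_2$) and the join structure (to control adjacency across factors). Once that step is in place, Steps 3--5 follow from similar but simpler case analyses.
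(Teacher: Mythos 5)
Your proof is correct; every step checks out, including the key Step 2 (the square $v,u,w,u'$ is indeed a square in the paper's sense, and $u\in\Sigma_2$ is adjacent to $v$, hence distinct from it, so $\lk(v)\subset\st(u)$ really is a forbidden transvection). The route differs from the paper's mainly in how maximality of $\st(v)$ is obtained. The paper first proves the ``moreover'' part: it decomposes an arbitrary $\Sigma\in\mathscr J(\Omega)$ as $C\circ\Sigma_1\circ\cdots\circ\Sigma_n$ with $C$ the maximal clique factor and the $\Sigma_i$ irreducible on at least two vertices, uses square-freeness to force $n\leq 1$ (hence $C\neq\emptyset$ and $\Sigma=\st(w)$ for $w\in C$), and only then deduces that $\st(v)$ is maximal by embedding it in some maximal join $\st(w)$ and invoking transvection-freeness to get $w=v$; irreducibility of $\lk(v)$ drops out of the same decomposition. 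You instead prove maximality of $\st(v)$ directly and independently, by extracting a transvection from any hypothetical strictly larger join, and then handle irreducibility and the ``moreover'' part by the same non-adjacent-pairs-give-a-square argument the paper uses. Both arguments rest on exactly the same interplay of square-freeness (no two non-complete join factors) and transvection-freeness (pinning the clique factor to $\{v\}$); the paper's organization is slightly more economical, since the single decomposition of $\Sigma$ yields the classification of maximal joins, the maximality of $\st(v)$, and the irreducibility of $\lk(v)$ in one stroke, whereas your Step 2 gives a self-contained direct proof of maximality that does not pass through the classification.
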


\begin{proof}
Let $v\in\Omega$. Since $v$ is untransvectable, it is not isolated. Thus, $\text{st}(v)\not=\{v\}$ and $\text{st}(v)$ is a join graph. If $v'$ belongs to the maximal clique factor of $\text{st}(v)$, then 
$\text{st}(v)\subset\text{st}(v')$, which forces $v'=v$. Thus, the maximal clique factor of $\text{st}(v)$ is equal to $\{v\}$. 

Let $\Sigma\in\mathscr J(\Omega)$. Decompose $\Sigma=C\circ\Sigma_1\circ\cdots\circ\Sigma_n$, where $C$ is the maximal clique factor of $\Sigma$ and $\Sigma_1,\dots,\Sigma_n$ are irreducible graphs with at least two vertices. If $n\geq 2$, then since $\Sigma_i$  is not a clique, we can find vertices $v_i,v_i' \in\Sigma_i$ which are not adjacent, for every $i\in\{1,2\}$. Then the vertices $v_1,v_2,v_1',v_2'$ would form a square, which is a contradiction. This implies that  $n\leq 1$. Hence, $C\not=\emptyset$, since $\Sigma$ is a join graph. 
If $w\in C$, then $\Sigma\subset\text{st}(w)$ and the maximality of $\Sigma$ implies that $\Sigma=\text{st}(w)$.

Finally, let $v\in\Omega$. Since $\text{st}(v)\subset\Omega$ is a join full subgraph, we can find $\Sigma\in\mathscr J(\Omega)$ containing $\text{st}(v)$. By the previous paragraph, we have that $\Sigma=\text{st}(w)$, for some $w\in\Omega$. Since $\text{st}(v)\subset\text{st}(w)$, we get that $w=v$, thus $\text{st}(v)=\Sigma$ is a maximal join full subgraph. 
Since $\{v\}$ is the maximal clique factor of $\text{st}(v)$, the previous paragraph implies that $\text{lk}(v)$ is an irreducible graph.
This finishes the proof.
\end{proof}

\begin{proof}[\bf Proof of Theorem \ref{arbitrary_gen}]
By Lemma \ref{classA}, $\mathscr J(\Gamma)=\{\text{st}(v)\mid v\in\Gamma\}$ and $\mathscr J(\Lambda)=\{\text{st}(w)\mid w\in\Lambda\}$.
If $v\in\Gamma$ and $w\in\Lambda$, then $\text{st}(v)$ and $\text{st}(w)$ are not cliques by Lemma \ref{classA},  thus $M_{\text{st}(v)}$ and $N_{\text{st}(w)}$ have no amenable direct summands by Lemma~\ref{basic}(3). Moreover, since $\Gamma$ and $\Lambda$ are transvection-free and are not reduced to a vertex, they have no isolated vertices.

By applying Proposition \ref{structure_of_aut} 
(see also Remark~\ref{a_few_remarks}(3)) we 
find projections $p_{v,w}\in\mathcal Z(M_{\text{st}(v)}'\cap pPp)$ and $q_{w,v}\in\mathcal Z(N_{\text{st}(w)}'\cap qPq)$, for  every $v\in\Gamma$ and $w\in\Lambda$, such that the following conditions hold: 
\begin{enumerate}
\item $\sum_{w\in\Lambda}p_{v,w}=p$, for every $v\in\Gamma$, and $\sum_{v\in\Gamma}q_{w,v}=q$, for every $w\in\Lambda$;
\item $M_{\text{st}(v)}p_{v,w}\prec_P^s N_{\text{st}(w)}q_{w,v}$ and $N_{\text{st}(w)}q_{w,v}\prec_P^s M_{\text{st}(v)}p_{v,w}$, for every $v\in\Gamma$ and $w\in\Lambda$.
\end{enumerate}

Our next goal is to show the following claim.

\begin{claim}\label{vertices}
$M_vp_{v,w}\prec_P^s N_wq_{w,v}$ and $N_wq_{w,v}\prec_P^s M_vp_{v,w}$, for every $v\in\Gamma$ and $w\in\Lambda$.
\end{claim}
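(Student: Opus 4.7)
The proof exploits the tensor product structure of the star algebras. By Lemma~\ref{classA}, for every $v\in\Gamma$ the link $\mathrm{lk}(v)$ is an irreducible graph with at least two vertices, so by Theorem~\ref{strongly_prime} the algebra $M_{\mathrm{lk}(v)}$ is a strongly prime II$_1$ factor, and the same holds for $N_{\mathrm{lk}(w)}$ for every $w\in\Lambda$. Lemma~\ref{rel_comm} gives $p_{v,w}\in\mathcal{Z}(M_v)$ and $q_{w,v}\in\mathcal{Z}(N_w)$, yielding the tensor product decompositions $M_{\mathrm{st}(v)} p_{v,w} = M_v p_{v,w}\,\overline\otimes\,M_{\mathrm{lk}(v)}$ and $N_{\mathrm{st}(w)} q_{w,v} = N_w q_{w,v}\,\overline\otimes\,N_{\mathrm{lk}(w)}$.

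The strategy is to apply Corollary~\ref{tensor_dec} to the diffuse commuting subalgebras $P_1=M_v p_{v,w}$ and $P_2=M_{\mathrm{lk}(v)} p_{v,w}$ of $p_{v,w}Pp_{v,w}$, using the bi-intertwinings in~(2) and target subgraph $\mathrm{st}(w)$ with central projection $q_{w,v}$. After a preliminary refinement of $p_{v,w}$ (via Proposition~\ref{Zariski_hull} and Remark~\ref{elementary_facts}(9)) to verify the technical non-embedding hypothesis, the corollary produces, for every nonzero central projection $p_0$, a subprojection $p'\leq p_0$ and disjoint subgraphs $\Lambda_1,\Lambda_2$ of $\mathrm{st}(w)$ with $\mathrm{st}(w)\setminus(\Lambda_1\cup\Lambda_2)=\{w\}$ (the maximal clique factor, by Lemma~\ref{classA}), $\Lambda_1\subset\Lambda_2^\perp$, and $M_v p'\prec_P N_{\{w\}\cup\Lambda_1}q_{w,v}$, $M_{\mathrm{lk}(v)} p'\prec_P N_{\{w\}\cup\Lambda_2}q_{w,v}$. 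Since $\mathrm{lk}(w)=\Lambda_1\cup\Lambda_2$ is irreducible, one of $\Lambda_1,\Lambda_2$ must be empty.

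If $\Lambda_1=\emptyset$ we directly obtain $M_v p'\prec_P N_w q_{w,v}$, as desired. The remaining cross case $\Lambda_2=\emptyset$ gives $M_{\mathrm{lk}(v)} p'\prec_P N_w q_{w,v}$; I would rule it out by taking relative commutants via Proposition~\ref{results}(1), using that $(M_{\mathrm{lk}(v)})'\cap M_\Gamma=M_v$ (since $\mathrm{lk}(v)^\perp=\{v\}$, a consequence of $v$ being untransvectable in the square-free graph $\Gamma$, combined with Lemma~\ref{basic}(1)) and $(N_w)'\cap N_\Lambda=\mathcal{Z}(N_w)\,\overline\otimes\,N_{\mathrm{lk}(w)}$ to deduce $N_{\mathrm{lk}(w)}\prec_P M_v p'$. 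A symmetric application of Corollary~\ref{tensor_dec} to $N_w q_{w,v}$ and $N_{\mathrm{lk}(w)}$ on the $q_{w,v}$ side, combined with this conclusion and the strong primeness of both link factors, yields through Theorem~\ref{prime_factorization} (applied fiberwise over the centers $\mathcal{Z}(M_v)p'$ and $\mathcal{Z}(N_w)q_{w,v}$) a contradictory pair of tensor-factor matchings of $M_{\mathrm{st}(v)} p'$ into $N_{\mathrm{st}(w)}q_{w,v}$.

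After ruling out the cross case on every piece and assembling via Remark~\ref{elementary_facts}(9), one obtains $M_v p_{v,w}\prec_P^s N_w q_{w,v}$. The reverse intertwining $N_w q_{w,v}\prec_P^s M_v p_{v,w}$ follows by the completely symmetric argument starting from $N_{\mathrm{st}(w)}q_{w,v}\prec_P^s M_{\mathrm{st}(v)}p_{v,w}$ and applying Corollary~\ref{tensor_dec} to the commuting pair $N_w q_{w,v}$ and $N_{\mathrm{lk}(w)}$. The main obstacle is ruling out the cross case in this non-factorial setting: since $M_v$ and $N_w$ need not be factors, uniqueness of tensor-prime factorization cannot be applied directly, and one is forced into a fiberwise argument over the centers that carefully combines Theorem~\ref{prime_factorization} with the strong primeness of the link algebras.
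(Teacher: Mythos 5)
Your setup coincides with the paper's argument up to the key dichotomy: applying Corollary~\ref{tensor_dec} to the commuting pair $M_vp_{v,w}$, $M_{\lk(v)}p_{v,w}$ (the non-embedding hypothesis holds outright by Corollary~\ref{center_corner} and Lemma~\ref{basic}(2); no refinement of $p_{v,w}$ is needed), and using irreducibility of $\lk(w)$ to conclude that either $M_vp'\prec_P N_wq_{w,v}$ or the cross case $M_{\lk(v)}p'\prec_P N_wq_{w,v}$ occurs. The gap is in how you rule out the cross case. Your proposed contradiction — take relative commutants to get $N_{\lk(w)}q_{w,v}\prec_P M_vp'$, run Corollary~\ref{tensor_dec} symmetrically on the $q_{w,v}$ side, and then invoke strong primeness of the link factors together with Theorem~\ref{prime_factorization} to produce ``a contradictory pair of tensor-factor matchings'' of $M_{\st(v)}p'$ into $N_{\st(w)}q_{w,v}$ — cannot work, because the cross-matching is perfectly consistent at the level of the star algebras and their tensor decompositions. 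Concretely, take $\Gamma=\Lambda=C_5$, $M_v=N_{\alpha(v)}=\text{L}(\mathbb F_2)$ and $M_{v'}=N_{w'}=\text{L}(\mathbb Z)$ for the neighbours; then $M_{\st(v)}=\text{L}(\mathbb F_2)\overline{\otimes}\text{L}(\mathbb F_2)=N_{\st(w)}$ and the ``crossed'' identification $M_{\lk(v)}\leftrightarrow N_w$, $M_v\leftrightarrow N_{\lk(w)}$ is realized by an honest isomorphism of the stars (swap the two tensor factors). Unique prime factorization, even applied fibrewise over the centers, cannot distinguish which tensor factor of the star is ``the vertex'' and which is ``the link''; no von Neumann algebraic invariant of the star algebras does. (There is also a smaller technical slip: Proposition~\ref{results}(1) gives intertwining of the full relative commutant $(\mathcal Z(N_w)\overline{\otimes}N_{\lk(w)})q_{w,v}$ into $M_vp'$, and passing to the subalgebra $N_{\lk(w)}q_{w,v}$ requires the strong form $\prec^s$, cf.\ Remark~\ref{elementary_facts}(7) — but this is repairable, whereas the main step is not.)

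The obstruction to the cross case is graph-combinatorial, not tensor-algebraic, and this is exactly what the paper uses: if $M_{\lk(v)}p'\prec_P^s N_wq_{w,v}$, pick any $v'\in\lk(v)$; then $M_{v'}p'\prec_P^s N_wq_{w,v}$, so Proposition~\ref{normalizer'} (normalizer control) upgrades this to $M_{\st(v')}p'\prec_P^s N_{\st(w)}q_{w,v}$, and composing with $N_{\st(w)}q_{w,v}\prec_P^s M_{\st(v)}p_{v,w}$ via Proposition~\ref{results}(2) gives $M_{\st(v')}\prec_P M_{\st(v)}$, hence $\st(v')\subset\st(v)$ by Lemma~\ref{basic}(2), contradicting transvection-freeness since $v'\neq v$. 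Note that your argument never uses transvection-freeness at this point, which is a strong signal it cannot close: without that hypothesis the conclusion of Claim~\ref{vertices} is actually false (cf.\ Remark~\ref{obstruct}(2)). So the missing idea is to exploit a neighbouring vertex $v'$ and the normalizer/star machinery rather than primeness of the links.
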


\begin{proof}
Let $v\in\Gamma,w\in\Lambda$ with $p_{v,w}\not=0$. By symmetry,  it suffices to argue that $M_vp_{v,w}\prec_P^s N_wq_{w,v}$. Let $p_0\in\mathcal Z(M_v'\cap pPp)p_{v,w}$ be a nonzero projection. Since $\Gamma$ is transvection-free, Lemma~\ref{rel_comm} implies that $\mathcal Z(M_{\text{st}(v)}'\cap pPp)=\mathcal Z(M_v)=\mathcal Z(M_v'\cap pPp)$. Thus, $p_0\in\mathcal Z(M_{\text{st}(v)}'\cap pPp)p_{v,w}$.

Corollary \ref{center_corner} and Lemma~\ref{basic}(2) give that $M_{\text{st}(v)}z\nprec_PM_{\text{st}(v)}(p-z)$, for every nonzero projection $z\in M_{\text{st}(v)\cup\text{st}(v)^\perp}'\cap pPp=M_{\text{st}(v)}'\cap pPp$. By Lemma \ref{classA}, the maximal clique factor of $\text{st}(w)$ is equal to $\{w\}$.
By using (2) and applying Corollary \ref{tensor_dec} to the decomposition $M_{\text{st}(v)}p_{v,w}=M_vp_{v,w}\vee M_{\text{lk}(v)}p_{v,w}$ we find a nonzero projection $p'\in\mathcal Z(M_{\text{st}(v)}'\cap pPp)p_0$ and a decomposition $\text{lk}(w)=\Lambda_1\circ\Lambda_2$ such that \begin{equation}\label{split}\text{$M_vp'\prec_P^s N_{\{w\}\cup\Lambda_1}q_{w,v}$ \;\;\; and \;\;\; $M_{\text{lk}(v)}p'\prec_P^s N_{\{w\}\cup\Lambda_2}q_{w,v}$.}\end{equation}

However, since $\text{lk}(w)$ is an irreducible graph according to Lemma \ref{classA}, $\Lambda_1$ or $\Lambda_2$ is empty. 
We claim that $\Lambda_1=\emptyset$. Assuming by contradiction that $\Lambda_1\not=\emptyset$, we get that
$\Lambda_2=\emptyset$ and therefore $M_{\text{lk}(v)}p'\prec_P^s N_wq_{w,v}$ by \eqref{split}. Let $v'\in\text{lk}(v)$. Then $M_{v'}p'\prec_P^s N_wq_{w,v}$. Proposition \ref{normalizer'} gives $M_{\text{st}(v')}p'\prec_P^s N_{\text{st}(w)}q_{w,v}$. Since (2) gives $N_{\text{st}(w)}q_{w,v}\prec_P^s M_{\text{st}(v)}p_{v,w}$, Proposition \ref{results}(2) implies that $M_{\text{st}(v')}p'\prec_P M_{\text{st}(v)}p_{v,w}$. By Lemma~\ref{basic}(2) we get $\text{st}(v')\subset \text{st}(v)$. Since $v'\not=v$, this contradicts the fact that $\Gamma$ is transvection-free, which proves our claim that $\Lambda_1=\emptyset$.

Since $\Lambda_1=\emptyset$, by Equation~\eqref{split} we get $M_vp'\prec_P^s N_wq_{w,v}$. In conclusion, for every nonzero projection $p_0\in\mathcal Z(M_v'\cap pPp)p_{v,w}$, we found a nonzero projection $p'\in\mathcal Z(M_v'\cap pPp)p_0$ such that $M_vp'\prec_P^s N_wq_{w,v}$.
This implies that $M_vp_{v,w}\prec_P^sN_wq_{w,v}$, which finishes the proof of the claim.  \end{proof}

By using Claim \ref{vertices} and applying Proposition \ref{conjugacy_criterion}(1) (with $\Gamma=\Gamma^u$ and $\Lambda=\Lambda^u$), the conclusion of Theorem \ref{arbitrary_gen} follows.
\end{proof}

\section{Amenable vertex algebras: Proof of Theorem \ref{amenable}}\label{sec:amenable}

In this section we prove Theorem~\ref{amenable}, dealing with the case where all vertex algebras are amenable. The consequences of this result to the $W^*$-classification of right-angled Artin groups, and to the classification of graph products of hyperfinite II$_1$ factors, will be recorded in Section~\ref{applications}.

\subsection{Proof of Theorem~\ref{amenable}}

Let $\Gamma$ be a finite simple graph. Recall that the untransvectable subgraph $\Gamma^u$ of $\Gamma$ is the full subgraph of $\Gamma$ spanned by its untransvectable vertices. We also recall the definition of the graph $\mathcal{E}(\Gamma^u)$ from Section~\ref{sec:combinatorial}. We finally recall that $\Gamma$ is clique-reduced if it does not contain any collapsible complete subgraph on at least $2$ vertices.

The goal of the present section is to prove the following result which generalizes Theorem \ref{amenable}.

\begin{theorem}\label{amenable_gen}
In the setting from Notation \ref{notation}, assume additionally that the graphs $\Gamma$ and $\Lambda$ are clique-reduced, and that
$(M_v,\tau_v)$ and $(N_w,\tau_w)$ are amenable, for every $v\in \Gamma$ and $w\in\Lambda$. 
\begin{enumerate}
\item If $\Gamma^u,\Lambda^u$ are clique-reduced, then the graphs $\mathcal{E}(\Gamma^u)$ and $\mathcal{E}(\Lambda^u)$ are isomorphic, and there exists a graph isomorphism $\bar\alpha:\mathcal{E}(\Gamma^u)\rightarrow\mathcal{E}(\Lambda^u)$ such that $M_v\prec_P^sN_{\bar\alpha([v])}$ and $N_{w}\prec_P^sM_{\bar\alpha^{-1}([w])}$, for every $v\in\Gamma^u$ and $w\in\Lambda^u$.
\item If $M_v$ and $N_w$ are II$_1$ factors, for every $v\in\Gamma$ and $w\in\Lambda$, then the graphs $\Gamma^u$ and $\Lambda^u$ are isomorphic, and there exists a graph isomorphism $\alpha:\Gamma^u\to\Lambda^u$ such that $M_v\prec_P^sN_{\alpha(v)}$ and $N_{\alpha(v)}\prec_P^s M_v$, for every $v\in \Gamma^u$.
\end{enumerate}
\end{theorem}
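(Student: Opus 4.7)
The plan is to apply the conjugacy criterion of Proposition \ref{conjugacy_criterion}, which will yield (1) and (2) simultaneously. To that end, I need to produce, for each $v\in\Gamma^u$ and $w\in\Lambda^u$, projections $p_{v,w}\in\mathcal{Z}(M_{\st(v)}'\cap pPp)$ and $q_{w,v}\in\mathcal{Z}(N_{\st(w)}'\cap qPq)$ satisfying $\sum_w p_{v,w}=p$, $\sum_v q_{w,v}=q$, and $M_v p_{v,w}\prec_P^s N_w q_{w,v}$ together with the symmetric intertwining. My strategy is to first establish a matching at the level of maximal join subgraphs via Proposition \ref{structure_of_aut}, and then descend to the vertex level by exploiting amenability of the vertex algebras together with unique prime factorization (Theorem \ref{prime_factorization}).

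First, I will observe that for any $v\in\Gamma^u$ the star $\st(v)$ lies in $\mathscr{J}(\Gamma)$ with maximal clique factor exactly $\{v\}$; indeed if $v'$ belonged to the clique factor of $\lk(v)$ then $\lk(v)\subset\st(v')$, contradicting untransvectability of $v$. In particular $\st(v)$ is not a clique, so $M_{\st(v)}$ has no amenable direct summand by Lemma \ref{basic}(3), and the maximal projection $p_{\st(v)}$ from Proposition \ref{structure_of_aut} equals $p$. To invoke that proposition I must also verify that $M_\Sigma p_\Sigma\nprec_P N_w$ for $\Sigma\in\mathscr{J}(\Gamma)$ and every isolated $w\in\Lambda$, and symmetrically; this follows by a standard propagation-of-amenability argument, passing to a strong intertwining on a normalizer-invariant corner via Proposition \ref{results}(3) and concluding from Proposition \ref{results}(4) that such a corner would be amenable relative to the amenable $N_w$, hence amenable, contradicting the no-amenable-direct-summand property. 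Proposition \ref{structure_of_aut} will then furnish projections $\{p_{\st(v),\Delta}\}\subset\mathcal{Z}(M_{\st(v)}'\cap pPp)$ and $\{q_{\Delta,\st(v)}\}\subset\mathcal{Z}(N_\Delta'\cap qPq)$ implementing the double intertwining at the level of maximal joins.

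The central step is then to show that whenever $p_{\st(v),\Delta}\neq 0$, the subgraph $\Delta$ has the form $\st(w)$ for some $w\in\Lambda^u$, and the matching can be pushed down to $M_v\sim N_w$. Decomposing $\Delta=C\circ\Delta_1\circ\cdots\circ\Delta_n$ with $C$ the maximal clique factor and each $\Delta_i$ irreducible with at least two vertices, Theorem \ref{prime_factorization} applied internally to $N_\Delta$ ensures that each $N_{\Delta_i}$ is a strongly prime II$_1$ factor. On the $M$-side, $M_{\st(v)}=M_v\overline{\otimes}M_{\lk(v)}$, with $M_v$ amenable and $M_{\lk(v)}$ having no amenable direct summand (since $\lk(v)$ is not a clique, again by untransvectability of $v$). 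Applying Corollary \ref{tensor_dec} to the commuting diffuse subalgebras $M_v$ and $M_{\lk(v)}$ (whose non-splitting hypothesis is verified using Corollary \ref{center_corner} and Lemma \ref{basic}(2)), combined with Lemma \ref{str_prime} to match the strongly prime factors, should produce a nonzero projection $p'\leq p_{\st(v),\Delta}$ and a corresponding $q'\leq q_{\Delta,\st(v)}$ such that $M_v p'\prec_P^s N_C q'$ and $N_C q'\prec_P^s M_v p'$. Proposition \ref{collapsible} will then force $C$ to be a collapsible subgraph of $\Lambda$, and clique-reducedness of $\Lambda$ yields $|C|=1$, so $C=\{w\}$ and $\Delta=\st(w)$. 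The symmetric argument, running the above with the roles of $\Gamma$ and $\Lambda$ reversed, provides both the reverse matching and the fact that $w\in\Lambda^u$. The resulting projections $p_{v,w}$ and $q_{w,v}$ will satisfy the hypotheses of Proposition \ref{conjugacy_criterion}, giving (1); statement (2) follows since in the factorial case each $\sim$-equivalence class in $\Gamma^u$ and $\Lambda^u$ is a single vertex.

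The main obstacle I anticipate is the descent step: carefully aligning the central projections on both sides so that the amenable tensor factor $M_v$ matches the amenable clique factor $N_C$ rather than some more intricate combination, especially when vertex algebras are not factors and the relevant centers are genuinely non-trivial. To handle this, I expect to proceed by induction on $|\Gamma|+|\Lambda|$: after producing an initial matching on some corner through Corollary \ref{tensor_dec}, the complementary analysis of $M_{\lk(v)}\sim N_{\lk(w)}$-type components, together with the clique-reducedness of the graphs, will reduce to smaller instances to which the inductive hypothesis applies, thereby covering all cases left open by the first-round matching.
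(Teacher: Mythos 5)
Your overall skeleton (match maximal join subgraphs via Proposition~\ref{structure_of_aut}, use amenability to single out clique factors, apply Proposition~\ref{collapsible} plus clique-reducedness to force single untransvectable vertices, and conclude with Proposition~\ref{conjugacy_criterion}) is the same as the paper's, but there is a genuine gap at the very first step. You assert that for $v\in\Gamma^u$ the star $\st(v)$ lies in $\mathscr{J}(\Gamma)$; this is true under square-freeness (Lemma~\ref{classA}), but it is exactly what fails in the amenable setting, where squares are allowed. For instance, if $\Gamma$ is the join of two $5$-cycles, then $\Gamma$ is transvection-free and clique-reduced, yet for every vertex $v$ the star $\st(v)$ is properly contained in the join subgraph $\Gamma$ itself, so $\st(v)\notin\mathscr{J}(\Gamma)$ and Proposition~\ref{structure_of_aut} cannot be invoked with $\Sigma=\st(v)$; the projections $p_{\st(v),\Delta}$ you work with do not exist. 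Consequently your descent step also breaks: in this situation the maximal join $\Sigma\supset\st(v)$ has \emph{empty} clique factor, the matched $\Delta\subset\Lambda$ then has empty clique factor as well (this is what Proposition~\ref{clique_factor} shows), and $M_v$ is not a tensor factor of $M_\Sigma$ visible to the prime factorization --- it sits strictly inside one of the strongly prime irreducible components $M_{\Sigma_1}$. So there is no amenable clique factor $N_C$ of $N_\Delta$ to match $M_v$ with, and the combination of Corollary~\ref{tensor_dec} and Lemma~\ref{str_prime} you propose cannot produce $M_vp'\prec^s_P N_Cq'$ in this case.

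The paper fills precisely this hole (Proposition~\ref{vertex_to_clique}, Case~2) by first using Theorem~\ref{uni_conj} to conjugate $M_\Sigma$ unitarily onto an amplification of $N_\Delta$, then applying Theorem~\ref{prime_factorization} to match the irreducible join components up to amplification and unitary conjugacy, and finally running an induction on $|\Gamma|$ \emph{inside} the smaller graph product $M_{\Sigma_1}\cong N_{\Delta_1}^{\kappa}$ (checking that $v$ remains untransvectable in $\Sigma_1$ and, via Lemma~\ref{lemma:untransvectable-clique}, that untransvectable cliques of $\Delta_1$ are untransvectable in $\Lambda$). Your closing induction on $|\Gamma|+|\Lambda|$ is aimed at a different issue (aligning central projections, recursing on links) and does not supply this recursion on irreducible join components, so the non-square-free case is not covered. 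Two further points are glossed but are more routine: the target of the descent should a priori be an untransvectable \emph{clique} $C$ (only afterwards collapsed to a vertex by Proposition~\ref{collapsible} and clique-reducedness), and the verification of condition (a) of Proposition~\ref{conjugacy_criterion} on the $\Lambda$-side ($\sum_{v\in\Gamma^u}q_{w,v}=q$, together with the untransvectability of the matched vertex $w$) requires an actual comparison of the two families of projections obtained from the two symmetric runs of the argument, as in the paper's proof of Theorem~\ref{amenable_gen}, not just an appeal to symmetry.
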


\begin{remark}
We make a few comments on the statement.
\begin{enumerate}
 \item   If $\Gamma$ is transvection-free, then $\Gamma^u=\Gamma$ and $\Gamma$ is clique-reduced, 
 and $\mathcal{E}(\Gamma)=\Gamma$. Therefore, if $\Gamma$ and $\Lambda$ are both transvection-free, then the first part of Theorem~\ref{amenable_gen} provides an isomorphism between $\Gamma$ and $\Lambda$ which satisfies the conclusion of Theorem~\ref{amenable}.

 \item   
 Our assumption that $\Gamma$ and $\Lambda$ are clique-reduced is mild: if $\Gamma$ contains a collapsible clique $C$, then we can collapse this clique to a single vertex, and get a new graph product structure for $M_\Gamma$ where the new vertex algebra is $\overline\otimes_{v\in C}M_v$, and therefore still amenable.
    
 \item   The following question remains open: in the setting of Notation~\ref{notation}, if $\Gamma$ and $\Lambda$ contain no collapsible subgraph on at least two vertices that is either complete or edgeless, and if all vertex algebras are amenable, must $\Gamma$ and $\Lambda$ be isomorphic? Theorem~\ref{amenable_gen}(1) shows, on the other hand, that if we further assume the untransvectable subgraphs $\Gamma^u,\Lambda^u$ to have no collapsible subgraph on at least two vertices that is either complete or edgeless, then we have an isomorphism between $\Gamma^u$ and $\Lambda^u$.
 \end{enumerate}
\end{remark}

The proof of Theorem \ref{amenable_gen} relies on a few auxiliary results.

\begin{proposition}\label{clique_factor}
In the setting from Notation \ref{notation}, assume additionally that $(M_v,\tau_v)$ and $(N_w,\tau_w)$ are amenable, for every $v\in \Gamma$ and $w\in\Lambda$. 
Let $\Sigma\in\mathscr J(\Gamma)$ and $\Delta\in\mathscr J(\Lambda)$. Let $p'\in \mathcal Z(M_\Sigma'\cap pPp)$ and $q'\in \mathcal Z(N_\Delta'\cap qPq)$ be nonzero projections such that $M_\Sigma p'\prec_P^s N_\Delta q'$ and $N_\Delta q'\prec_P^s M_\Sigma p'$. Let $\Sigma_0$ and $\Delta_0$ be the maximal clique factors of $\Sigma$ and $\Delta$, respectively.
Assume that $\Sigma_0\not=\emptyset$.

Then $\Delta_0\not=\emptyset$ and we have $M_{\Sigma_0}p'\prec_P^s N_{\Delta_0}q'$ and $N_{\Delta_0}q'\prec_P^s M_{\Sigma_0}p'$.

\end{proposition}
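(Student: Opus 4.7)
The plan is to apply Corollary~\ref{tensor_dec} to the tensor decomposition of $M_\Sigma$ into its clique factor and its irreducible non-clique components, then exploit the strong primeness of the non-clique factors (Theorem~\ref{strongly_prime}) to force the amenable $M_{\Sigma_0}$ to intertwine into the (also amenable) clique factor of $N_\Delta$.

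First I would decompose $\Sigma=\Sigma_0\circ\Sigma_1\circ\cdots\circ\Sigma_n$ with $\Sigma_0$ the maximal clique factor (nonempty by assumption) and each $\Sigma_i$ (for $i\ge 1$) an irreducible subgraph with $|\Sigma_i|\ge 2$. Correspondingly $M_\Sigma=M_{\Sigma_0}\overline{\otimes}M_{\Sigma_1}\overline{\otimes}\cdots\overline{\otimes}M_{\Sigma_n}$, where $M_{\Sigma_0}$ is amenable (tensor product of amenable algebras) and, by Theorem~\ref{strongly_prime}, each $M_{\Sigma_i}$ (for $i\ge 1$) is a strongly prime II$_1$ factor. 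I would decompose $\Delta=\Delta_0\circ\Delta_1\circ\cdots\circ\Delta_m$ analogously; a priori $\Delta_0$ could be empty.

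Next I would apply Corollary~\ref{tensor_dec}, amplified via Remark~\ref{amplify_results} (viewing $P=\mathbb{M}_n(\mathbb{C})\otimes N_\Lambda$ as a graph product over $\tilde\Lambda=\Lambda\circ\{\ast\}$ with vertex algebra $\mathbb{M}_n(\mathbb{C})$ on $\ast$), to the commuting diffuse subalgebras $P_1=M_{\Sigma_0}p'$ and $P_2=M_{\Sigma\setminus\Sigma_0}p'$ of $p'Pp'$, with target $N_\Delta q'$. The two strong-intertwining hypotheses are given, and the self-intertwining condition $M_\Sigma p_0\nprec_P M_\Sigma(p-p_0)$ for nonzero $p_0\in\mathcal{Z}(M_\Sigma'\cap pPp)p'$ follows from Corollary~\ref{center_corner} combined with Lemma~\ref{basic}(2), after noting that such $p_0$ commutes with $M_{\Sigma^\perp}$ since $M_{\Sigma^\perp}\subset M_\Sigma'\cap pPp$ and $p_0$ is central there. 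The corollary then yields, for every nonzero $p_0$, a nonzero $p''\le p_0$ and a partition $\Delta\setminus\Delta_0=\Delta'\sqcup\Delta''$ with $\Delta'\subset(\Delta'')^\perp$, such that $M_{\Sigma_0}p''\prec_P N_{\Delta_0\cup\Delta'}q'$ and $M_{\Sigma\setminus\Sigma_0}p''\prec_P N_{\Delta_0\cup\Delta''}q'$. By irreducibility of each $\Delta_j$, this partition descends to one of the index set $\{1,\dots,m\}$.

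The crux, and the main obstacle, is to show $\Delta'=\emptyset$. I would argue by contradiction: if some $\Delta_j\subset\Delta'$ with $j\ge 1$, then $N_{\Delta_j}$ is a strongly prime non-amenable II$_1$ factor. Applying Corollary~\ref{tensor_dec} symmetrically with source $N_\Delta q'$ and the commuting decomposition $N_\Delta q'=(N_{\Delta_0\cup\Delta'}q')\vee(N_{\Delta''}q')$, using the reverse strong intertwining $N_\Delta q'\prec^s_P M_\Sigma p'$, I would locate the image of $N_{\Delta_j}$ inside a tensor factor of $M_\Sigma p'$. Since $N_{\Delta_j}$ is non-amenable, it cannot embed into the amenable $M_{\Sigma_0}$, so it must intertwine into $M_{\Sigma\setminus\Sigma_0}$; combined with the Step~1 intertwining $M_{\Sigma_0}p''\prec_P N_{\Delta_0\cup\Delta'}q'$ and Lemma~\ref{str_prime} applied to finite-index corners extracted from the two-way intertwinings, the compatibility of the two decompositions fails. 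This would force $\Delta'=\emptyset$, giving $M_{\Sigma_0}p''\prec_P N_{\Delta_0}q'$, and in particular $\Delta_0\neq\emptyset$ since $M_{\Sigma_0}$ is diffuse. Iterating over $p_0$ and passing to the supremum of such $p''$ yields $M_{\Sigma_0}p'\prec^s_P N_{\Delta_0}q'$, and the entirely symmetric argument establishes the reverse strong intertwining.
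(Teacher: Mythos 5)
Your setup coincides with the paper's: decompose $\Sigma$ into its clique factor and the rest, verify the non-self-intertwining hypothesis of Corollary~\ref{tensor_dec} via Corollary~\ref{center_corner} and Lemma~\ref{basic}(2), and obtain, below each $p_0$, a projection $p''$ and a splitting $\Delta\setminus\Delta_0=\Delta'\circ\Delta''$ with $M_{\Sigma_0}p''\prec_P N_{\Delta_0\cup\Delta'}q'$ and $M_{\Sigma\setminus\Sigma_0}p''\prec_P N_{\Delta_0\cup\Delta''}q'$. The genuine gap is at the decisive step, proving $\Delta'=\emptyset$. First, the inference ``$N_{\Delta_j}$ is nonamenable, so it cannot embed into the amenable $M_{\Sigma_0}$, hence it must intertwine into $M_{\Sigma\setminus\Sigma_0}$'' is not valid: an intertwining into $M_{\Sigma_0\cup\Sigma'}=M_{\Sigma_0}\overline{\otimes}M_{\Sigma'}$ with amenable first factor does not let you strip that factor from the target (a nonamenable subalgebra of the form $A_0\overline{\otimes}B_0$ with $A_0\subset M_{\Sigma_0}$ diffuse intertwines into the tensor product but not into $M_{\Sigma'}$). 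Second, Lemma~\ref{str_prime} is inapplicable here: it concerns a single II$_1$ factor carrying two tensor product decompositions, whereas in this proposition the vertex algebras are only amenable, so $M_\Sigma$ and $N_\Delta$ are generally non-factorial and no unitary-conjugacy identification (hence no ``finite-index corners'' in the sense you need) is available; and the final contradiction is never actually articulated. What the paper does instead at this point is a short relative-commutant argument: from $M_{\Sigma\setminus\Sigma_0}p''\prec^s_P N_{\Delta_0\cup\Delta''}q'$ and $N_{\Delta'}q'\subset(N_{\Delta_0\cup\Delta''}q')'\cap q'Pq'$, Proposition~\ref{results}(1) gives $N_{\Delta'}q'\prec_P M_{\Sigma\setminus\Sigma_0}'\cap pPp=M_{(\Sigma\setminus\Sigma_0)^{\perp}}=M_{\Sigma_0}$ (using Lemma~\ref{basic}(1), that $\Sigma\setminus\Sigma_0$ has empty clique factor, and maximality of $\Sigma$ to get $(\Sigma\setminus\Sigma_0)^{\perp}=\Sigma_0$); since $M_{\Sigma_0}$ is amenable, a corner of $N_{\Delta'}$ is amenable, so $\Delta'$ is a clique by Lemma~\ref{basic}(3), hence $\Delta'=\emptyset$. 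If you wanted to stay closer to your route, the usable version would be to first get $N_{\Delta_j}\prec_P M_{\Sigma\setminus\Sigma_0}$ honestly and then contradict Lemma~\ref{basic}(2) by transitivity with $M_{\Sigma\setminus\Sigma_0}\prec^s_P N_{\Delta_0\cup\Delta''}$, but that localization is exactly what your sketch does not provide.

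Two smaller omissions: you never treat the case where $\Sigma$ is a clique (then $M_{\Sigma\setminus\Sigma_0}$ is trivial, so $P_2$ is not diffuse and Corollary~\ref{tensor_dec} does not apply; the paper disposes of this case directly, since then $N_\Delta q'\prec^s_P M_\Sigma p'$ forces $N_\Delta q'$ amenable and $\Delta$ a clique). And the last passage from ``for every $p_0\in\mathcal Z(M_\Sigma'\cap pPp)p'$ there is $p''\le p_0$ with $M_{\Sigma_0}p''\prec_P N_{\Delta_0}q'$'' to the strong intertwining $M_{\Sigma_0}p'\prec^s_P N_{\Delta_0}q'$ requires the identification $\mathcal Z(M_{\Sigma_0}'\cap pPp)=\mathcal Z(M_\Sigma'\cap pPp)$ (which holds because $\Sigma^{\perp}=\emptyset$ and $\Sigma_0^{\perp}=\Sigma\setminus\Sigma_0$, via Lemma~\ref{basic}(1) and Remark~\ref{elementary_facts}(5)); ``passing to the supremum'' alone does not give the quantification over all projections in $\mathcal Z(M_{\Sigma_0}'\cap pPp)$.
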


\begin{proof} 
If $\Sigma$ is a clique, then $\Sigma_0=\Sigma$ and $M_\Sigma$ is amenable. Since $N_\Delta q'\prec_P^s M_\Sigma p'$, we deduce that $N_\Delta q'$ is amenable. Hence, $\Delta$ is a clique by Lemma~\ref{basic}(3). Thus, $\Delta_0=\Delta\not=\emptyset$ and the conclusion holds.

We may therefore assume that $\Sigma$ is not a clique.
Let $\Sigma_1=\Sigma\setminus \Sigma_0\not=\emptyset$. 
Let $p_0\in\mathcal Z(M_\Sigma'\cap pPp)p'$ be a nonzero projection.
Corollary \ref{center_corner} and Lemma~\ref{basic}(2) give that $M_{\Sigma}z\nprec_PM_{\Sigma}(p-z)$, for every nonzero projection $z\in\mathcal Z(M_{\Sigma}'\cap pPp)$. Using that $M_\Sigma p'\prec_P^s N_\Delta q'$ and $N_\Delta q'\prec_P^s M_\Sigma p'$,  by applying Corollary \ref{tensor_dec} to the decomposition $M_\Sigma p'=M_{\Sigma_0}p'\vee M_{\Sigma_1}p'$ we find a nonzero projection $p''\in\mathcal Z(M_\Sigma'\cap pPp)p_0$ and a join decomposition $\Delta\setminus\Delta_0=\Delta_1\circ\Delta_2$ such that
\begin{equation}\label{D_12}\text{$M_{\Sigma_0}p''\prec^s_P N_{\Delta_0\cup\Delta_1}q'$\;\; and\;\; $M_{\Sigma_1}p''\prec^s_P N_{\Delta_0\cup\Delta_2}q'$.}\end{equation}

We claim that $\Delta_1=\emptyset$. Since the maximal clique factor of $\Sigma_1$ is empty, Lemma~\ref{basic}(1) gives $M_{\Sigma_1}'\cap pPp=M_{\Sigma_1^\perp}$.
Since $M_{\Sigma_1}p''\prec^s_P N_{\Delta_0\cup\Delta_2}q'$, we have
$M_{\Sigma_1}\prec_P N_{\Delta_0\cup\Delta_2}q'$. Since we also have $N_{\Delta_1}q'\subset (N_{\Delta_0\cup\Delta_2}q')'\cap q'Pq'$,   these facts and Proposition \ref{results}(1) imply that $N_{\Delta_1}q'\prec_P M_{\Sigma_1^\perp}$. By Proposition \ref{results}(3) we find a nonzero projection $q''\in\mathcal Z(N_{\Delta_1}'\cap qPq)q'$ such that $N_{\Delta_1}q''\prec_P^s M_{\Sigma_1^\perp}$. Since $\Sigma\subset\Sigma_1\cup\Sigma_1^{\perp}$, by maximality we have $\Sigma=\Sigma_1\cup\Sigma_1^{\perp}$, and therefore $\Sigma_1^\perp=\Sigma_0$. So we have $N_{\Delta_1}q''\prec_P^s M_{\Sigma_0}$. Since $\Sigma_0$ is a clique, $M_{\Sigma_0}$ is amenable and so $N_{\Delta_1}q''$ is amenable.
By Lemma~\ref{basic}(3), this forces $\Delta_1$ to be a clique. Since the maximal clique factor of $\Delta_1$ is trivial, we conclude that indeed $\Delta_1=\emptyset$.

Further, by \eqref{D_12}, we derive that $M_{\Sigma_0}p''\prec_P^s N_{\Delta_0}q'$. In particular, since $\Sigma_0\not=\emptyset$, we get that $\Delta_0\not=\emptyset$. Moreover, we showed that for every nonzero projection $p_0\in\mathcal Z(M_\Sigma'\cap pPp)p'$  there is a nonzero projection $p''\in\mathcal Z(M_\Sigma'\cap pPp)p_0$  such that $M_{\Sigma_0}p''\prec_P^s N_{\Delta_0}q'$. 

Finally, since $\Sigma\in\mathscr J(\Gamma)$ and $\Sigma_0\subset\Sigma$ is its maximal clique factor, we have $\Sigma^\perp=\emptyset$ and $\Sigma_0^\perp=\Sigma\setminus\Sigma_0$. Lemma~\ref{basic}(1) implies that $\mathcal Z(M_\Sigma'\cap pMp)=\overline{\otimes}_{v\in\Sigma_0}\mathcal Z(M_v)=\mathcal Z(M_{\Sigma_0}'\cap pPp)$.
The previous paragraph (combined with Remark~\ref{elementary_facts}(5)) thus implies that $M_{\Sigma_0}p'\prec_P^sN_{\Delta_0}q'$. By symmetry, we also get $N_{\Delta_0}q'\prec_P^s M_{\Sigma_0}p'$, which finishes the proof.
\end{proof}

We say that a complete subgraph $C\subset\Gamma$ is \emph{untransvectable} if there is no vertex $w\notin C$ such that $C^\perp\subset\st(w)$. We recall from Notation~\ref{nota:j} that  $\mathscr{C}(\Gamma)$ denotes the set of all complete subgraphs of $\Gamma$. We let $\mathscr{C}^u(\Gamma)$ be the set of all untransvectable complete subgraphs of $\Gamma$.

\begin{lemma}\label{lemma:untransvectable-clique}
Let $\Lambda$ be a finite simple graph, and let $\Delta\in\mathscr{J}(\Lambda)$ be a maximal join subgraph, and write $\Delta=C\circ\Delta_1\circ\dots\circ\Delta_n$, where $C\subset\Delta$ is its maximal clique factor, and the subgraphs $\Delta_i$ are irreducible with at least two vertices.    
\begin{enumerate}
    \item If  $C\neq\emptyset$ and $\Delta\not= C$ (i.e., $\Delta$ is not a clique), 
    then $C\in\mathscr{C}^u(\Lambda)$.
    \item If $C=\emptyset$, and if $D\subset\Delta_1$ is a complete subgraph such that $D\in\mathscr{C}^u(\Delta_1)$, then $D\in\mathscr{C}^u(\Lambda)$.
\end{enumerate}
\end{lemma}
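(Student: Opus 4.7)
The plan is to argue both parts by contradiction, using a case analysis on where a hypothetical transvecting vertex $w$ lies with respect to the join decomposition $\Delta=C\circ\Delta_1\circ\cdots\circ\Delta_n$. The contradictions will come from exactly two combinatorial facts: first, that each $\Delta_i$ is irreducible with at least two vertices (so no vertex of $\Delta_i$ is adjacent to all the others inside $\Delta_i$); and second, that $\Delta$ is a maximal element of $\mathscr{J}(\Lambda)$ (so that we cannot enlarge it to a strictly larger join full subgraph).

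For part~(1), I would assume there exists $w\in\Lambda\setminus C$ with $C^{\perp}\subset\st(w)$. Since $C$ is the maximal clique factor of $\Delta$, the vertices of $\Delta\setminus C=\Delta_1\cup\cdots\cup\Delta_n$ are all adjacent to every vertex of $C$, hence $\Delta_1\cup\cdots\cup\Delta_n\subset C^{\perp}\subset\st(w)$. If $w\in\Delta_i$ for some $i$, then $\Delta_i\setminus\{w\}\subset\lk(w)$, which would say $w$ belongs to the maximal clique factor of the irreducible graph $\Delta_i$, a contradiction (this uses $|\Delta_i|\ge 2$). Otherwise $w\notin\Delta$, and since $\Delta\ne C$ the set $\Delta_1\circ\cdots\circ\Delta_n$ is nonempty, so the decomposition $\Delta\cup\{w\}=(C\cup\{w\})\circ(\Delta_1\circ\cdots\circ\Delta_n)$ exhibits $\Delta\cup\{w\}$ as a join full subgraph of $\Lambda$ strictly containing $\Delta$, contradicting maximality of $\Delta$.

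For part~(2), I would assume there exists $w\in\Lambda\setminus D$ with $D^{\perp}_{\Lambda}\subset\st_{\Lambda}(w)$, where I write subscripts to distinguish perps taken in $\Lambda$ from those in $\Delta_1$. Since the $\Delta_j$ for $j\ge 2$ are in join with $\Delta_1$, they lie in $D^{\perp}_{\Lambda}$, and also $D^{\perp}_{\Delta_1}=D^{\perp}_{\Lambda}\cap\Delta_1$. I then distinguish three cases. If $w\in\Delta_1$, intersecting $D^{\perp}_{\Lambda}\subset\st_{\Lambda}(w)$ with $\Delta_1$ gives $D^{\perp}_{\Delta_1}\subset\st_{\Delta_1}(w)$, contradicting $D\in\mathscr{C}^u(\Delta_1)$. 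If $w\in\Delta_j$ for some $j\ge 2$, then $\Delta_j\setminus\{w\}\subset D^{\perp}_{\Lambda}\subset\st_{\Lambda}(w)$ contradicts the irreducibility of $\Delta_j$. Finally if $w\notin\Delta$, then $w$ is adjacent to every vertex of $\Delta_2\cup\cdots\cup\Delta_n$; here the hypothesis $C=\emptyset$ together with $\Delta\in\mathscr{J}(\Lambda)$ forces $n\ge 2$, so that $\Delta\cup\{w\}=(\Delta_1\cup\{w\})\circ\Delta_2\circ\cdots\circ\Delta_n$ is again a strictly larger join subgraph, contradicting maximality.

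I do not foresee a serious obstacle in this lemma: the argument is purely combinatorial, and the only point that requires care is to track whether perps and stars are taken in $\Lambda$ or in the subgraph $\Delta_1$ in part~(2), and to keep in mind why the hypotheses $\Delta\ne C$ (in (1)) and $C=\emptyset$ (in (2)) are exactly what ensures that the join $\Delta\cup\{w\}$ produced in the ``$w\notin\Delta$'' case is genuinely a strict enlargement of $\Delta$ with at least two join factors.
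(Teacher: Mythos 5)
Your proof is correct and follows essentially the same route as the paper's: in both parts you locate the hypothetical transvecting vertex $w$, rule out $w$ lying in some $\Delta_i$ via irreducibility (and, in (2), $w\in\Delta_1$ via $D\in\mathscr{C}^u(\Delta_1)$), and then contradict the maximality of $\Delta$ by exhibiting the strictly larger join $(C\cup\{w\})\circ\Delta_1\circ\dots\circ\Delta_n$, resp. $(\Delta_1\cup\{w\})\circ\Delta_2\circ\dots\circ\Delta_n$. Your explicit remarks that $\Delta\neq C$, resp. $C=\emptyset$ forcing $n\ge 2$, guarantee these joins are genuine are exactly the (implicit) points in the paper's argument.
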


\begin{proof}
    For (1), assume towards a contradiction that $C^\perp\subset\st(w)$ for some vertex $w\notin C$. Then $w$ cannot belong to one of the subgraphs $\Delta_i$ because $\Delta_i$ is not contained in $\st(w)$. So $w\notin \Delta$. Since $\Delta_1\circ\dots\circ\Delta_n=\Delta\setminus C\not=\emptyset$, it  follows that $(C\cup\{w\})\circ\Delta_1\circ\dots\circ\Delta_n$ is a join subgraph, contradicting the maximality of $\Delta$.

    For (2), assume towards a contradiction that $D^\perp\subset\st(w)$ for some $w\notin D$. Since $D\in\mathscr{C}^u(\Delta_1)$, we have $w\notin \Delta_1$. We also have $\Delta_2\cup\dots\cup\Delta_m\subset\st(w)$. As the subgraphs $\Delta_i$ are irreducible, we cannot have $w\in\Delta_i$, for every $2\leq i\leq n$. It follows that $(\Delta_1\cup\{w\})\circ\Delta_2\circ\dots\circ\Delta_m$ is a join subgraph, contradicting the maximality of $\Delta$. 
\end{proof}

\begin{proposition}\label{vertex_to_clique}
In the setting from Notation \ref{notation}, assume additionally that $(M_v,\tau_v)$ and $(N_w,\tau_w)$ are amenable, for every $v\in \Gamma$ and $w\in\Lambda$. 
Let  $v\in\Gamma^u$.  

Then we can find projections $r_{C}\in\mathcal Z(M_v'\cap pPp), s_C\in\mathcal Z(N_C'\cap qPq)$, for every $C\in\mathscr C^u(\Lambda)$, such that $M_vr_C\prec_P^s N_Cs_C$ and $N_Cs_C\prec_P^s M_vr_C$, for every $C\in\mathscr C^u(\Lambda)$, and $\sum_{C\in\mathscr C^u(\Lambda)}r_C=p$.
\end{proposition}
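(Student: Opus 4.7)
The plan is to analyze how $M_v$ sits inside $P$ via its parabolic hull and then identify the admissible hulls with untransvectable cliques of $\Lambda$.

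First, since $v\in\Gamma^u$, Lemma~\ref{rel_comm} gives $\mathcal{Z}(M_v'\cap pPp)=\mathcal{Z}(M_v)$, and Lemma~\ref{basic}(1) gives $M_v'\cap pPp=\mathcal{Z}(M_v)\overline\otimes M_{\text{lk}(v)}$, whence $M_{\text{st}(v)}\subset\mathcal{N}_{pPp}(M_v)''$. I would then apply Proposition~\ref{Zariski_hull} to $M_v$ inside $P$: this yields a partition $\{r_D\}_{D\subset\Lambda}$ of $p$ by projections in $\mathcal{Z}(M_v)$, indexed by full subgraphs $D\subset\Lambda$, with $M_v r_D\prec^s_P N_D$ and $M_v r_D\nprec_P N_{D'}$ for every proper $D'\subsetneq D$. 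Proposition~\ref{normalizer'} would then upgrade each intertwining to $M_{\text{st}(v)}r_D\prec^s_P N_{D\cup D^\perp}$.

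The heart of the plan is to show that every nonzero $r_D$ corresponds to $D\in\mathscr{C}^u(\Lambda)$, together with a projection $s_D\in\mathcal{Z}(N_D'\cap qPq)$ such that $N_D s_D\prec^s_P M_v r_D$. For the converse intertwining I would run the hull construction symmetrically: for each $w\in\Lambda$, Proposition~\ref{Zariski_hull} provides a partition $\{t_{w,E}\}_{E\subset\Gamma}$ of $q$ in $\mathcal{Z}(N_w)$ with $N_w t_{w,E}\prec^s_P M_E$ and $N_w t_{w,E}\nprec_P M_{E'}$ for every proper $E'\subsetneq E$. Combining these two families via Corollary~\ref{intersection} (to intersect hulls), Corollary~\ref{right_corner} (to localize on corners indexed by central projections), and Proposition~\ref{results}(2) (to chain intertwinings), I would show that for each $w\in D$ the piece $t_{w,\{v\}}$ has a nonzero component compatible with $r_D$, and then aggregate these components into $s_D$.

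With both intertwinings in hand, Proposition~\ref{collapsible} forces $D$ to be a collapsible subgraph of $\Lambda$. The converse intertwining $N_D s_D\prec^s_P M_v r_D$ forces $N_D s_D$ to be amenable (because $M_v r_D$ is), so $D$ must be a clique by Lemma~\ref{basic}(3). Finally, if some $w\notin D$ satisfied $D^\perp\subset\st(w)$, then combining $N_w t_{w,\{v\}}\prec^s_P M_v$ with Lemma~\ref{basic}(2) would force $N_w\prec_P N_D$ and hence $w\in D$, a contradiction. Hence $D\in\mathscr{C}^u(\Lambda)$. The main obstacle is the construction of the converse intertwining $N_D s_D\prec^s_P M_v r_D$: in the non-factorial amenable setting Lemma~\ref{tensor_product} is unavailable, so one must combine the hull decompositions of $M_v$ and of all the $N_w$'s by carefully tracking central projections across the two graphs.
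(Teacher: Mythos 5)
There is a genuine gap, and it is exactly the point you flag as ``the main obstacle'': the converse intertwining $N_Ds_D\prec_P^sM_vr_D$. Nothing in your plan produces it. The hull decomposition of $M_v$ (Proposition~\ref{Zariski_hull}) only gives $M_vr_D\prec^s_PN_D$, and the hull decompositions of the $N_w$'s give $N_wt_{w,E}\prec^s_PM_E$ for \emph{some} subgraphs $E\subset\Gamma$ --- but there is no reason any piece $t_{w,\{v\}}$ is nonzero, i.e.\ no reason the hull of any corner of $N_w$ is $\{v\}$. Intertwining is not symmetric, and Corollary~\ref{intersection}, Corollary~\ref{right_corner} and Proposition~\ref{results}(2) can only intersect, localize and chain intertwinings that you already have; they cannot reverse one. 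Since $M_v$ is amenable, you also cannot invoke the commuting-subalgebra machinery (Theorem~\ref{commute}, Corollary~\ref{join_embed}, Corollary~\ref{tensor_dec}) for $M_v$ itself, and that machinery is precisely what the paper uses to get two-sided control: it works with the \emph{nonamenable} algebra $M_\Sigma$, where $\Sigma$ is a maximal join subgraph containing $\st(v)$, and Proposition~\ref{structure_of_aut} then gives both $M_\Sigma r_\Delta\prec^s_PN_\Delta s_\Delta$ and $N_\Delta s_\Delta\prec^s_PM_\Sigma r_\Delta$ for maximal join subgraphs $\Delta\subset\Lambda$, after which Proposition~\ref{clique_factor} descends this two-sided relation to the clique factors. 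Your subsequent steps (amenability forces a clique; untransvectability of $D$) all presuppose the missing converse, so the argument cannot be completed as written.

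A second structural omission: the paper's proof is an induction on $|\Gamma|$, and the induction is not cosmetic. When the maximal clique factor of $\Sigma$ is empty (which happens, e.g., when $\st(v)$ sits inside a join of two non-clique irreducible pieces), no clique of $\Lambda$ is visible at the top level at all; the paper then conjugates $M_\Sigma$ onto $\mathbb M_n(\mathbb C)\otimes N_\Delta$ by a unitary (Theorem~\ref{uni_conj} plus Proposition~\ref{normalizer}), matches the irreducible tensor factors using the unique prime factorization theorem (Theorem~\ref{prime_factorization}), checks that $v$ remains untransvectable in its irreducible component $\Sigma_1$, and applies the induction hypothesis to the resulting isomorphism $M_{\Sigma_1}\cong N_{\Delta_1}^{\kappa}$, finally promoting cliques of $\mathscr C^u(\Delta_1)$ to $\mathscr C^u(\Lambda)$ via Lemma~\ref{lemma:untransvectable-clique}. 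Your proposal has no mechanism for this case: a direct hull analysis of $M_v$ inside $P$ gives no handle on it, because the relevant clique only appears after recursing into a strictly smaller graph product.
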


\begin{proof} We will prove the  conclusion by induction on $|\Gamma|$. 
If $|\Gamma|=1$, then $\Gamma=\{v\}$. Since $pPp=M_v$ is amenable we deduce that $P$ and thus $N_\Lambda$ has an amenable direct summand. Lemma~\ref{basic}(3) gives that $\Lambda$ is a clique. Since by Notation \ref{notation}, $M_\Gamma=M_v$ and $N_\Lambda$ are II$_1$ factors,  the statement holds trivially by letting $r_\Lambda=p$ and $r_{\Lambda'}=0$, for every full subgraph $\Lambda'\subsetneq\Lambda$. 
To prove the inductive step, suppose that the statement holds whenever $|\Gamma|\leq d$, for a fixed $d\geq 1$.
Assume that $|\Gamma|=d+1\geq 2$.

Since $v$ is untransvectable, $\text{lk}(v)\not=\emptyset$, for otherwise $\text{lk}(v)\subset\text{st}(v')$, for every $v'\in\Gamma\setminus\{v\}$. Thus, $\text{st}(v)\subset\Gamma$ is a join subgraph.
Let $\Sigma\subset\Gamma$ be a maximal join full subgraph which contains $\text{st}(v)$.  
If $v'\in\text{lk}(v)$, then $\text{lk}(v)\not\subset\text{st}(v')$,  and thus $\text{st}(v)$ is not a clique. 
Hence,  $\Sigma$ is not a clique and so $M_\Sigma$ has no amenable direct summand by Lemma~\ref{basic}(3). 
In particular,  $M_\Sigma\nprec_P N_w$, for every $w\in\Lambda$.

Moreover, if $\Sigma'\subset\Gamma$  is a full subgraph such that $M_{\Sigma'}$  is not amenable, then $\Sigma'$ is not a clique, hence $M_{\Sigma'}$ has no amenable direct summand and thus $M_{\Sigma'}\nprec_PN_w$ for every $w\in\Lambda$. Similarly, if $\Delta'\subset\Lambda$ is a full subgraph such that $N_{\Delta'}$ is not amenable, then $\Delta'$ is not a clique, hence $N_{\Delta'}$ has no amenable direct summand and thus $N_{\Delta'}\nprec_PM_{v'}$ for every $v'\in\Gamma$.

By applying Proposition \ref{structure_of_aut} we find projections $r_\Delta\in\mathcal Z(M_\Sigma'\cap pPp)$, $s_\Delta\in\mathcal Z(N_\Delta'\cap qPq)$ such that $M_\Sigma r_\Delta\prec_P^s N_\Delta s_\Delta$ and $N_\Delta s_\Delta\prec_P^s M_\Sigma r_\Delta$, for every $\Delta\in\mathscr J(\Lambda)$, and $\sum_{\Delta\in\mathscr J(\Lambda)}r_\Delta=p$.
Specifically, in the context of Proposition \ref{structure_of_aut}, we note that $p_\Sigma=p$ and denote $r_\Delta:=p_{\Sigma,\Delta}$ and $s_\Delta:=q_{\Delta,\Sigma}$. Note that $r_\Delta=0$ if and only if $s_\Delta=0$.

Let $\Sigma_0$ be the maximal clique factor of $\Sigma$. We continue by treating separately two cases.

{\bf Case 1.} $\Sigma_0\not=\emptyset$.

Let $\Delta\in\mathscr J(\Lambda)$ such that $r_\Delta\not=0$. 
Since $M_\Sigma$ has no amenable direct summand and $M_\Sigma r_\Delta\prec_P^s N_\Delta s_\Delta$, we get that $N_\Delta$ is not amenable, thus $\Delta$ is not a clique.
Let $C_\Delta\in\mathscr C(\Lambda)$ be the maximal clique factor of $\Delta$. Then $C_\Delta\in\mathscr{C}^u(\Lambda)$ by Lemma~\ref{lemma:untransvectable-clique}(1).
Since $M_\Sigma r_\Delta\prec_P^s N_\Delta s_\Delta$ and $N_\Delta s_\Delta\prec_P^s M_\Sigma r_\Delta$, applying  Proposition~\ref{clique_factor} gives that $C_\Delta\not=\emptyset$, 
$M_{\Sigma_0} r_\Delta\prec_P^s N_{C_\Delta} s_\Delta$ and $N_{C_\Delta} s_\Delta\prec_P^s M_{\Sigma_0} r_\Delta$. 
As $\Sigma\subset\Gamma$ is a maximal join full subgraph, we have $\Sigma^\perp=\emptyset$ and $\Sigma_0^\perp=\Sigma\setminus\Sigma_0$. By Lemma~\ref{basic}(1) we get \[\mathcal Z(M_\Sigma'\cap pPp)=\overline{\otimes}_{v'\in\Sigma_0}\mathcal Z(M_{v'})=\mathcal Z(M_{\Sigma_0}'\cap pPp).\] Similarly, $\mathcal Z(N_\Delta'\cap qPq)=\mathcal Z(N_{C_\Delta}'\cap qPq)$. 
Hence, 
$r_\Delta\in\mathcal Z(M_{\Sigma_0}'\cap pPp)$ and $s_\Delta\in\mathcal Z(N_{C_\Delta}'\cap qPq)$.

Finally, for every $v'\in \Sigma_0$, we have $\text{st}(v)\subset\Sigma\subset\text{st}(v')$. Since $v$ is untransvectable, we get that $v'=v$, and thus $\Sigma_0=\{v\}$.
Note that if $\Delta,\Delta'\in\mathscr{J}(\Lambda)$ satisfy $\Delta\not=\Delta'$, then $C_\Delta\not=C_{\Delta'}$. Indeed, if $C_\Delta=C_{\Delta'}$, then  $\Delta\cup\Delta'=C_\Delta\circ ((\Delta\cup\Delta')\setminus C_\Delta)$ would be a join subgraph of $\Lambda$ which strictly contains $\Delta$, contradicting that $\Delta\in\mathscr J(\Lambda)$.

If $C\in\mathscr{C}^u(\Lambda)$, let  $r_C=r_\Delta$, if $C=C_{\Delta}$, for some $\Delta\in\mathscr J(\Lambda)$ (such a $\Delta$ is unique by the previous paragraph), and let $r_C=0$, if $C$ is not equal to any $C_{\Delta}$.
Since $\sum_{\Delta\in\mathscr J(\Lambda)}r_\Delta=p$, we get that $\sum_{C\in\mathscr C^u(\Lambda)}r_C=p$, 
and the above analysis ensures that the conclusion of the proposition holds.

{\bf Case 2.} $\Sigma_0=\emptyset$.

Let $\Delta\in\mathscr J(\Lambda)$ with $r_\Delta\not=0$. Since $\Sigma_0=\emptyset$, $M_\Sigma r_\Delta\prec_P^s N_\Delta s_\Delta$ and $N_\Delta s_\Delta\prec_P^s M_\Sigma r_\Delta$, Proposition~\ref{clique_factor} implies that $\Delta$ has a trivial maximal clique factor. Since $\Sigma\subset\Gamma$ and $\Delta\subset\Lambda$ are maximal join full subgraphs, $\Sigma^\perp=\emptyset$ and $\Delta^\perp=\emptyset$. Lemma~\ref{basic}(1) implies that $M_\Sigma'\cap pPp=\mathbb Cp$ and $N_\Delta'\cap qPq=\mathbb Cq$. 
In particular, $M_\Sigma$ and $N_\Delta$  are II$_1$ factors.
Moreover, we derive that $r_\Delta=p$ and $s_\Delta=q$. Hence, $M_\Sigma\prec_P^s N_\Delta$ and $N_\Delta\prec_P^s M_\Sigma$. By combining Proposition \ref{results}(2)  
and Lemma~\ref{basic}(2), this gives that $M_\Sigma\nprec_PN_{\Delta'}$ and $N_{\Delta}\nprec_PM_{\Sigma'}$, for any full subgraphs $\Sigma'\subsetneq\Sigma$ and $\Delta'\subsetneq\Delta$. 

Let $P_0\subset P$ be a subfactor such that $p\in P_0$ and $pP_0p=M_\Sigma$. Then we can identify $P_0=M_\Sigma^t$, where $t=(\text{Tr}\otimes\tau)(p)^{-1}$ and $\text{Tr}:\mathbb M_n(\mathbb C)\rightarrow\mathbb C$ is the non-normalized trace.
By Theorem \ref{uni_conj} we get unitaries $a,b\in P$ such that $aP_0a^*\subset \mathbb M_n(\mathbb C)\otimes N_\Delta$ and $b(\mathbb M_n(\mathbb C)\otimes N_\Delta)b^* \subset P_0$. Thus,  $(ab)(\mathbb M_n(\mathbb C)\otimes N_\Delta)(ab)^*\subset \mathbb M_n(\mathbb C)\otimes N_\Delta$, which by Proposition \ref{normalizer} implies that $ab\in\mathbb M_n(\mathbb C)\otimes N_\Delta$. This gives that $(ab)(\mathbb M_n(\mathbb C)\otimes N_\Delta)(ab)^*=\mathbb M_n(\mathbb C)\otimes N_\Delta$, which implies that $aP_0a^*=\mathbb M_n(\mathbb C)\otimes N_\Delta$. 

Since $\Sigma$ and $\Delta$ are join graphs with trivial maximal clique factors, we can decompose them as $\Sigma=\Sigma_1\circ\cdots\circ \Sigma_k$ and $\Delta=\Delta_1\circ\cdots\circ\Delta_m$, where $\Sigma_1,\dots,\Sigma_k,\Delta_1,\dots,\Delta_m$ are irreducible graphs, not reduced to a vertex, for some $k,m\geq 2$. 
Thus, $M_\Sigma=\overline{\otimes}_{i=1}^kM_{\Sigma_i}$ and $N_\Delta=\overline{\otimes}_{j=1}^m N_{\Delta_j}$. 

Since $P_0=M_\Sigma^t$ and $aP_0a^*=\mathbb M_n(\mathbb C)\otimes N_\Delta$,  applying Theorem \ref{prime_factorization} gives that $k=m$ and there exist a unitary $\xi\in\mathbb M_n(\mathbb C)\otimes N_\Delta$ and decompositions $P_0=\overline{\otimes}_{i=1}^k M_{\Sigma_i}^{s_i}$ and $\mathbb M_n(\mathbb C)\otimes N_\Delta=\overline{\otimes}_{j=1}^kN_{\Delta_j}^{t_j}$, for some $s_1,\dots,s_k,t_1,\dots,t_k>0$ with $s_1\cdots s_k=t$ and $t_1\cdots t_k=n$, such that modulo a permutation of $\{1,\dots,k\}$ we have  
\begin{equation}\label{conjugacy}\text{$(\xi a)M_{\Sigma_i}^{s_i}(\xi a)^*=N_{\Delta_i}^{t_i}$, for every $1\leq i\leq k$.}\end{equation}

Next,  assume without loss of generality that $v\in \Sigma_1$. 
We claim that $v$ is untransvectable in $\Sigma_1$. Indeed, assume that $\text{lk}(v)\cap\Sigma_1\subset\text{st}(v')\cap\Sigma_1$, for some $v'\in\Sigma_1$. 
Since $\text{st}(v)\subset\Sigma$, we have $\text{lk}(v)=(\text{lk}(v)\cap\Sigma_1)\cup(\Sigma\setminus\Sigma_1)$. Since we also  have $(\text{st}(v')\cap\Sigma_1)\cup(\Sigma\setminus\Sigma_1)\subset\st(v')$, it follows that $\text{lk}(v)\subset\text{st}(v')$. Since $v$ is untransvectable in $\Gamma$, we get $v'=v$, which proves our claim.

Since $\Sigma_1\subsetneq\Sigma$, we have $\Sigma_1\subsetneq\Gamma$ and thus $|\Sigma_1|\leq d$. Let $\kappa=t_1/s_1>0$. By the induction assumption, the conclusion of the proposition holds for the $*$-isomorphism $M_{\Sigma_1}\cong N_{\Delta_1}^\kappa$ induced by the equality $(\xi a)M_{\Sigma_1}^{s_1}(\xi a)^*=N_{\Delta_1}^{t_1}$ given by \eqref{conjugacy}.
This implies that we can find projections $r_C\in\mathcal Z(M_v'\cap M_{\Sigma_1}), s_C\in\mathcal Z(N_C'\cap N_{\Delta_1})$ such that $M_vr_C\prec^s_P N_Cs_C$ and $N_Cs_C\prec_P^sM_vr_C$, for every $C\in\mathscr C^u(\Delta_1)$, and $\sum_{C\in\mathscr C^u(\Delta_1)}r_C=p$. Notice also that whenever $C\in\mathscr{C}^u(\Delta_1)$, we also have $C\in\mathscr{C}^u(\Lambda)$ by Lemma~\ref{lemma:untransvectable-clique}(2).

Finally, since  $v$ is untransvectable in $\Gamma$, we get that $\text{lk}(v)$ has an empty maximal clique factor. Lemma~\ref{basic}(1)  implies that $\mathcal Z(M_v'\cap M_\Gamma)=\mathcal Z(M_v)$. Since $v$ is untransvectable in $\Sigma_1$, we also have that  $\mathcal Z(M_v'\cap M_{\Sigma_1})=\mathcal Z(M_v)$. This allows us to derive that $r_C\in\mathcal Z(M_v'\cap M_\Gamma)$, for every $C\in\mathscr C^u(\Delta_1)$. In the same way, since the clique $C$ is untransvectable in $\Lambda$, the subgraph $C^\perp$ has an empty maximal clique factor, so $\mathcal Z(N_C'\cap N_\Lambda)=\bar\otimes_{w\in C}\mathcal Z(N_w)$. Likewise, since $C$ is untransvectable in $\Delta_1$ we have $\mathcal Z(N_C'\cap N_{\Delta_1})=\overline\otimes_{w\in C}\mathcal Z(N_w)$. So $s_C\in\mathcal{Z}(N_C'\cap N_\Lambda)$, for every $C\in\mathscr{C}^u(\Delta_1)$, which completes our proof.
\end{proof}

\begin{proof}[\bf Proof of Theorem \ref{amenable_gen}]
Let $v\in\Gamma$ be an untransvectable vertex. By applying Proposition \ref{vertex_to_clique}  we find projections  $r_{v,C}\in\mathcal Z(M_v'\cap pPp), s_{C,v}\in\mathcal Z(N_C'\cap qPq)$ such that $M_vr_{v,C}\prec_P^s N_Cs_{C,v}$ and $N_Cs_{C,v}\prec_P^s M_vr_{v,C}$, for every $C\in\mathscr C^u(\Lambda)$, and $\sum_{C\in\mathscr C^u(\Lambda)}r_{v,C}=p$. 

If $C\in\mathscr C^u(\Lambda)$ is such that $r_{v,C}\not=0$, then $C$ is collapsible by Proposition \ref{collapsible}. Since $\Lambda$ is clique-reduced, $C$ must consist of a single vertex. Since $C$ is untransvectable, this vertex is untransvectable. Thus, we have projections $r_{v,w}\in\mathcal Z(M_v'\cap pPp), s_{w,v}\in\mathcal Z(N_w'\cap qPq)$ such that $M_vr_{v,w}\prec_P^s N_ws_{w,v}$ and $N_ws_{w,v}\prec_P^s M_vr_{v,w}$, for every $w\in\Lambda^u$, and $\sum_{w\in\Lambda^u} r_{v,w}=p$.

Assume that we are in case (1), that is, $\Gamma^u$ and $\Lambda^u$ are clique-reduced. Let $v\in\Gamma^u$. In order to apply Proposition \ref{conjugacy_criterion} and derive the conclusion, it will be enough to argue that $\sum_{v\in\Gamma^u}s_{w,v}=q$, for every $w\in\Lambda^u$.
Towards this goal, we first note that by symmetry, for every $w\in\Lambda^u$, we find projections $r_{v,w}'\in\mathcal Z(M_v'\cap pPp), s_{w,v}'\in\mathcal Z(N_w'\cap qPq)$ such that $M_vr_{v,w}'\prec_P^s N_ws_{w,v}'$ and $N_ws_{w,v}'\prec_P^s M_vr_{v,w}'$, for every $v\in\Gamma^u$, and $\sum_{v\in\Gamma^u} s_{w,v}'=q$.

We claim that $r_{v,w}'\leq r_{v,w}$, for every $v\in\Gamma^u$ and $w\in\Lambda^u$. Let $v\in\Gamma^u$ and $w\in\Lambda^u$. Since $r_{v,w}'\leq p$ and $p=\sum_{w\in\Lambda^u}r_{v,w}$, in order to justify the claim it suffices to show that $r_{v,w}'r_{v,w'}=0$, for every $w'\in\Lambda^u\setminus\{w\}$. Assume by contradiction that $r:=r_{v,w}'r_{v,w'}\not=0$, for some $w'\in\Lambda^u\setminus\{w\}$. Since $r\leq r_{v,w}'$, Lemma \ref{proof_of_3} implies that $N_ws_{w,v}'\prec_P M_vr$. Since $r\leq r_{v,w'}$, we also have  $M_vr\prec_P^s N_{w'}s_{w',v}$, 
and $s_{w',v}\neq 0$ because $r_{v,w'}\neq 0$. Combining these facts and Proposition \ref{results}(2) gives $N_ws_{w,v}'\prec_P N_{w'}s_{w',v}$. By Lemma~\ref{basic}(2), this implies that $\{w\}\subset\{w'\}$, which contradicts the fact that $w'\not=w$, and proves the claim.

Similarly, we deduce that $s_{w,v}\leq s_{w,v}'$, for every $v\in\Gamma^u$ and $w\in\Lambda^u$.

Let $v\in\Gamma^u$ and $w\in\Lambda^u$.
Since $N_ws_{w,v}'\prec_P^s M_vr_{v,w}'$, $r_{v,w}'\leq r_{v,w}$ and thus  $M_vr_{v,w}'\prec_P^s M_vr_{v,w}$, and $M_vr_{v,w}\prec_P^sN_ws_{w,v}$, by applying Proposition \ref{results}(2) we deduce that $N_ws_{w,v}'\prec_P^sN_ws_{w,v}$. By Corollary \ref{center_corner} we derive that $s_{w,v}'\leq s_{w,v}$. Since $s_{w,v}\leq s_{w,v}'$, we conclude that $s_{w,v}=s_{w,v}'$. 
Therefore, $\sum_{v\in\Gamma^u}s_{w,v}=\sum_{v\in\Gamma^u}s_{w,v}'=q$, for every $w\in\Lambda^u$,
and
 Proposition \ref{conjugacy_criterion} gives the conclusion.

Now, assume that we are in case (2), that is, $M_v$ and $N_w$ are II$_1$ factors, for every $v\in\Gamma$ and $w\in\Lambda$. Then Lemma~\ref{basic}(1) implies that $\mathcal Z(M_v'\cap pPp)=\mathbb Cp$ and $\mathcal Z(N_w'\cap qPq)=\mathbb Cq$, and therefore $r_{v,w}\in\{0,p\}$ and $s_{w,v}\in\{0,q\}$, for every $v\in\Gamma^u$ and $w\in\Lambda^u$. So for every $v\in \Gamma^u$ there exists a (unique) vertex $\alpha(v)\in\Lambda^u$ such that $M_v\prec_P^sN_{\alpha(v)}$ and $N_{\alpha(v)}\prec_P^s M_v$. By symmetry, for every $w\in\Lambda^u$ there exists a unique vertex $\beta(w)\in\Gamma^u$ such that $N_w\prec_P^s M_{\beta(w)}$ and $M_{\beta(w)}\prec_P^s N_w$. It then follows from Lemma~\ref{basic}(2) that for every $v\in\Gamma^u$, we have $\beta\circ\alpha(v)=v$, and that for every $w\in\Lambda^u$, we have $\beta\circ\alpha(w)=w$. By Proposition~\ref{conjugacy_criterion}(2), the conclusion now follows in case (2).
\end{proof}

\section{II$_1$ factor vertex algebras: Proof of Theorem \ref{factors}}\label{sec:factors}

In this section, we prove the following theorem, which is the main part of Theorem~\ref{factors} (the moreover part of Theorem~\ref{factors} will be a consequence of Lemma \ref{interwining_to_isomorphism}(1) below).

\begin{theorem}\label{factors1}
    Let $\Gamma,\Lambda$ be finite simple graphs, and assume that every connected component of $\Gamma$ and $\Lambda$ is strongly reduced, transvection-free,
    and not reduced to one vertex. Let $(M_v)_{v\in\Gamma}$ and $(N_w)_{w\in\Lambda}$ be families of II$_1$ factors, and let $M_\Gamma=\ast_{v,\Gamma}M_v$ and $N_\Lambda=\ast_{w,\Lambda}N_w$. Let $t>0$.

    If $\theta:M_\Gamma\to N_\Lambda^t$ is any $*$-isomorphism,
    then  $\Gamma$ and $\Lambda$ are isomorphic and there exists a graph isomorphism $\alpha:\Gamma\to\Lambda$ such that $\theta(M_v)\prec_{N_\Lambda^t}N_{\alpha(v)}^t$ and $N_{\alpha(v)}^t\prec_{N_\Lambda ^t}^s\theta(M_v)$, for every $v\in\Gamma$. 
\end{theorem}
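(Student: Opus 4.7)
The plan is to reduce, via Proposition~\ref{conjugacy_criterion}(2), to proving that for every vertex $v\in\Gamma$ there is a unique $\alpha(v)\in\Lambda$ with $M_v\prec^s_P N_{\alpha(v)}$ and $N_{\alpha(v)}\prec^s_P M_v$. Since every component of $\Gamma$ and $\Lambda$ is transvection-free, we have $\Gamma^u=\Gamma$ and $\Lambda^u=\Lambda$; and because every vertex algebra is a II$_1$ factor, Lemma~\ref{rel_comm} gives $\mathcal{Z}(M_v'\cap pPp)=\mathbb{C}p$ (and similarly $\mathcal{Z}(N_w'\cap qPq)=\mathbb{C}q$), so the projections $p_{v,w}, q_{w,v}$ appearing in the criterion are automatically $\{0,p\}$- and $\{0,q\}$-valued.

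The first step is to invoke Proposition~\ref{structure_of_aut} in its simplified factor form (Remark~\ref{a_few_remarks}(4)), which yields a canonical bijection $\Sigma\mapsto\Delta_\Sigma$ between $\mathscr{J}(\Gamma)$ and $\mathscr{J}(\Lambda)$ with $M_\Sigma\prec^s_P N_{\Delta_\Sigma}$ and $N_{\Delta_\Sigma}\prec^s_P M_\Sigma$. For each $v\in\Gamma$, I would pick some $\Sigma_v\in\mathscr{J}(\Gamma)$ containing $\st(v)$ (such a $\Sigma_v$ exists since the standing assumptions force $v$ to be non-isolated), and set $\Delta_v=\Delta_{\Sigma_v}$. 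To descend from $\Sigma_v$ to $v$, I would exploit the tensor decomposition $M_{\Sigma_v}=M_v\,\overline{\otimes}\,M_{\Sigma_v\setminus\{v\}}$, apply Corollary~\ref{tensor_dec} (via Remark~\ref{amplify_results}) to the commuting pair $(M_v,M_{\Sigma_v\setminus\{v\}})$ with target $\Delta_v$ on the $\Lambda$ side, and combine this with the uniqueness of the prime factorizations of the stably isomorphic factors $M_{\Sigma_v}$ and $N_{\Delta_v}$ over their irreducible-component decompositions (Theorem~\ref{prime_factorization}) to produce a subgraph $D_v\subset\Delta_v$ enjoying the double intertwining $M_v\prec^s_P N_{D_v}$ and $N_{D_v}\prec^s_P M_v$.

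The final step is to show $|D_v|=1$. By Proposition~\ref{collapsible}, $D_v$ is a collapsible subgraph of $\Lambda$, and its moreover part gives $M_{\st(v)}=M_v\,\overline{\otimes}\,M_{\lk(v)}\prec^s_P N_{D_v\cup D_v^\perp}$. Under the strongly reduced assumption, Lemma~\ref{lemma:collapsible-combinatorial} forces $|D_v|\ge 2$ to imply that $D_v$ is a union of at least two connected components of $\Lambda$; hence $D_v^\perp=\emptyset$ and $N_{D_v}=N_{\Omega_1}*\cdots*N_{\Omega_\ell}$ is a free product of $\ell\ge 2$ diffuse II$_1$ factors. Transferring the relation $M_{\st(v)}\prec_P N_{D_v}$ to a genuine $*$-homomorphism on a corner via Proposition~\ref{cutting_down}, and then invoking Theorem~\ref{relamen} on the free-product decomposition $N_{D_v}=N_{\Omega_i}*N_{D_v\setminus\Omega_i}$ applied to the commuting images of $M_v$ and $M_{\lk(v)}$, one concludes that either $M_v$ (respectively $M_{\lk(v)}$) intertwines into a proper free-product factor $N_{\Omega_i}$, contradicting the hull-minimality of $D_v$ (via Proposition~\ref{normalizer}(2) to pass from a normalizer statement for $M_{\lk(v)}$ back to $M_v$ itself), or else $M_{\st(v)}$ is amenable, which contradicts the non-amenability of $N_{D_v}$ together with its stable isomorphism with $M_v$.

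The main obstacle is twofold: establishing the reverse intertwining $N_{D_v}\prec^s_P M_v$ in the construction of $D_v$ requires delicate bookkeeping matching prime tensor factors of $M_{\Sigma_v}$ and $N_{\Delta_v}$ across the $*$-isomorphism; and the free-product rigidity step ruling out $|D_v|\ge 2$ requires a careful interplay between Theorem~\ref{relamen}, hull-minimality from Proposition~\ref{Zariski_hull}, and the moreover assertions of Propositions~\ref{collapsible} and~\ref{normalizer}.
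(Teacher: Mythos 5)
Your overall scaffolding (locating $M_{\Sigma}$ for maximal join subgraphs via Proposition~\ref{structure_of_aut}, descending to vertices, then invoking the collapsibility criterion and Proposition~\ref{conjugacy_criterion}(2)) matches the paper's strategy, but the key descent step has a genuine gap. You propose to pass from $M_{\Sigma_v}\sim N_{\Delta_v}$ to a subgraph $D_v\subset\Delta_v$ with the \emph{two-sided} intertwining $M_v\prec^s_P N_{D_v}$ and $N_{D_v}\prec^s_P M_v$ by a single application of Corollary~\ref{tensor_dec} together with Theorem~\ref{prime_factorization}. This works only when $v$ lies in the maximal clique factor of $\Sigma_v$. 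In the case where $v$ sits strictly inside an irreducible component $\Sigma_1$ of $\Sigma_v$ with $|\Sigma_1|\geq 2$ (which Theorem~\ref{factors} must allow, since the graphs may contain squares), prime factorization only matches the strongly prime factor $M_{\Sigma_1}$ with some $N_{\Delta_j}^s$ up to unitary conjugacy; it says nothing about where the proper subfactor $M_v\subset M_{\Sigma_1}$ sits inside the graph product $N_{\Delta_j}$, and the reverse intertwining $N_{D\cup\Lambda_1}\prec M_v$ is simply false there (one has $N_{\Delta_j}\sim M_{\Sigma_1}\not\prec M_v$). The only way forward is to restart the entire argument for the induced isomorphism $M_{\Sigma_1}\cong N_{\Delta_j}^s$, i.e., an induction on $|\Gamma|$ — and since $\Sigma_1$ and $\Delta_j$ need not be transvection-free or strongly reduced, one cannot just "apply the theorem again"; the inductive statement has to be reformulated with weaker, inheritable hypotheses (untransvectability of the single vertex $v$ inside $\Sigma_1$, and the two collapsibility-type obstructions), which is exactly what the paper's Proposition~\ref{induction_on_graphs} does. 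This recursive mechanism, and the verification that its hypotheses pass to the restricted isomorphism, is the heart of the proof and is missing from your proposal.

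There is also a smaller but real error in your final step: Lemma~\ref{lemma:collapsible-combinatorial} only says a collapsible $D_v$ with $|D_v|\geq 2$ is a union of connected components of $\Lambda$ — it may well be a \emph{single} component, in which case $N_{D_v}$ is not a free product of two diffuse algebras and your appeal to Theorem~\ref{relamen} does not apply. Fortunately the elaborate free-product argument is unnecessary: once $D_v$ is a union of components one has $D_v^{\perp}=\emptyset$, so the moreover part of Proposition~\ref{collapsible} gives $M_{\mathrm{st}(v)}\prec^s_P N_{D_v}$, and combined with $N_{D_v}\prec^s_P M_v$ and Proposition~\ref{results}(2) this yields $M_{\mathrm{st}(v)}\prec_P M_v$, contradicting Lemma~\ref{basic}(2) since $v$ is not isolated. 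This is the paper's argument (Claim~\ref{no_collapsible_subgraph}), and note that it is needed not only at the end but \emph{before} the prime-factorization matching, to secure the non-intertwining hypotheses of Lemma~\ref{str_prime}.
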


Here and after, given II$_1$ factors $R\subset Q$ and $t>0$, we consider the usual inclusion $R^t\subset Q^t$.

The proof of Theorem \ref{factors1} relies on the following technical result.

\begin{proposition}\label{induction_on_graphs}
Let $\Gamma,\Lambda$  be two finite simple graphs, and $(M_v)_{v\in\Gamma}$, $(N_w)_{w\in\Lambda}$ be families of II$_1$ factors. Let $M_\Gamma=*_{v,\Gamma}M_v$ and  $N_\Lambda=*_{w,\Lambda}N_w$ be the associated graph product II$_1$ factors, and $\theta:M_\Gamma\rightarrow N_\Lambda^t$ a $*$-isomorphism, for some $t>0$. Let $v\in\Gamma$ be a vertex.
Assume that the following three conditions hold:
\begin{enumerate}
\item $v$ is untransvectable.
\item There is no full subgraph $\Delta\subset\Lambda$ such that $|\Delta|\geq 2$, $N_\Delta^t\prec_{N_\Lambda^t} \theta(M_v)$ and $\theta(M_v)\prec_{N_\Lambda^t} N_{\Delta\cup\Delta^\perp}^t$. 
\item There are no vertex $w\in\Lambda$ and full subgraph $\Sigma\subset\Gamma$ such that $|\Sigma|\geq 2$, $\theta(M_{\Sigma})\prec_{N_\Lambda^t}N_w^t$ and $N_w^t\prec_{N_\Lambda^t}\theta(M_{\Sigma\cup\Sigma^\perp})$.
\end{enumerate}
Then there exists a vertex $w\in\Lambda$ such that $\theta(M_v)\prec_{N_\Lambda^t} N_w^t$.
\end{proposition}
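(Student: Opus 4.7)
The strategy is to argue by contradiction. Since $v$ is untransvectable, the maximal clique factor of $\lk(v)$ is empty: indeed if $v_0\in\lk(v)$ were adjacent to every other vertex of $\lk(v)$, then $\lk(v)\subset\st(v_0)$, contradicting untransvectability. Hence by Lemma~\ref{basic}(1) the algebra $M_{\lk(v)}$ is a II$_1$ factor, so $\theta(M_v)'\cap N_\Lambda^t=\theta(M_{\lk(v)})$ has trivial center. Proposition~\ref{Zariski_hull} then produces a unique minimal full subgraph $\Delta\subset\Lambda$ (the \emph{hull}) with $\theta(M_v)\prec^s_{N_\Lambda^t}N_\Delta^t$. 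The goal is to show $|\Delta|=1$; I suppose for contradiction $|\Delta|\geq 2$, and will derive a violation of condition~(2).

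Since $M_v'\cap M_\Gamma=M_{\lk(v)}$ is a factor, $\mathcal N_{M_\Gamma}(M_v)''=M_{\st(v)}$, and Proposition~\ref{normalizer'} yields $\theta(M_{\st(v)})\prec^s_{N_\Lambda^t}N_{\Delta\cup\Delta^\perp}^t$. Both $\theta(M_v)$ and $\theta(M_{\lk(v)})$ are II$_1$ factors, and $N_{\Delta\cup\Delta^\perp}$ is a II$_1$ factor, so the third bullet of Theorem~\ref{uni_conj} applies: after composing $\theta$ with an inner automorphism of $\mathbb M_k(\mathbb C)\otimes N_\Lambda$, there are $s>0$ and a decomposition $N_{\Delta\cup\Delta^\perp}=N_\Delta^s\overline{\otimes} N_{\Delta^\perp}^{1/s}$ with $\theta(M_v)\subset\mathbb M_k(\mathbb C)\otimes N_\Delta^s$. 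Computing commutants then gives the subalgebra inclusion $p(1\otimes N_{\Delta^\perp}^{1/s})p\subset\theta(M_{\lk(v)})$.

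The core remaining step is to establish the reverse intertwining $N_\Delta^t\prec_{N_\Lambda^t}\theta(M_v)$; once this is done, together with the trivial $\theta(M_v)\prec_{N_\Lambda^t}N_{\Delta\cup\Delta^\perp}^t$, it contradicts condition~(2), since $|\Delta|\geq 2$. To achieve this, I analyze the structure of $\Delta$. When $\Delta$ is irreducible with $|\Delta|\geq 2$, Theorem~\ref{strongly_prime} ensures that $N_\Delta$ is strongly prime, and Lemma~\ref{tensor_product}(2) applied to the tensor factorization $\theta(M_{\st(v)})=\theta(M_v)\overline{\otimes}\theta(M_{\lk(v)})$ together with the corner inclusion above forces $\theta(M_v)$ to exhaust an amplification of $N_\Delta$, yielding $N_\Delta^t\prec\theta(M_v)$. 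When $\Delta$ is not irreducible, decompose $\Delta=C\circ\Delta_1\circ\dots\circ\Delta_m$ into its maximal clique factor $C$ and irreducible components $\Delta_i$ of cardinality at least two, so that $N_\Delta=N_C\overline{\otimes} N_{\Delta_1}\overline{\otimes}\dots\overline{\otimes} N_{\Delta_m}$. Applying Theorem~\ref{prime_factorization} and Lemma~\ref{str_prime} reduces the task to identifying each tensor factor $N_{\Delta_i}$ with a subfactor of $\theta(M_v)$ (which follows from the irreducible case); condition~(3) enters here to control the clique vertices $w\in C$, ruling out configurations where the hull of $N_w$ in $\theta(M_\Gamma)$ has cardinality at least $2$ with matching intertwining back.

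The main obstacle is establishing the reverse intertwining $N_\Delta^t\prec\theta(M_v)$: passing from the one-sided inclusion $\theta(M_v)\subset\mathbb M_k(\mathbb C)\otimes N_\Delta^s$ to the reverse intertwining is not automatic and requires a fine understanding of the interaction between the commuting pair $(\theta(M_v),\theta(M_{\lk(v)}))$ and the tensor decomposition $N_{\Delta\cup\Delta^\perp}=N_\Delta^s\overline{\otimes} N_{\Delta^\perp}^{1/s}$, synthesizing Lemma~\ref{tensor_product}, unique prime factorization, and the rigidity imposed by conditions~(2) and~(3).
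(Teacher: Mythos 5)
Your preliminary steps are fine (untransvectability of $v$ forces the maximal clique factor of $\lk(v)$ to be empty, so $\theta(M_v)'\cap N_\Lambda^t=\theta(M_{\lk(v)})$ is a factor and Proposition~\ref{Zariski_hull} gives a hull $\Delta$, and Theorem~\ref{uni_conj} then places $\theta(M_v)$ inside a corner of an amplification of $N_\Delta$). But the step you yourself flag as the ``core remaining step'' --- deducing $N_\Delta^t\prec_{N_\Lambda^t}\theta(M_v)$ --- is a genuine gap, and the tools you invoke cannot close it. Strong primeness of $N_\Delta$ and Lemma~\ref{tensor_product}(2) (or Theorem~\ref{prime_factorization}, Lemma~\ref{str_prime}) all require a \emph{common ambient II$_1$ factor carrying two tensor product decompositions}, i.e.\ some two-sided comparability between $\theta(M_{\st(v)})$ and $N_{\Delta\cup\Delta^\perp}^t$. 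All you have is the one-sided relation $\theta(M_{\st(v)})\prec^s N_{\Delta\cup\Delta^\perp}^t$ from Proposition~\ref{normalizer'}, and an inclusion of $\theta(M_v)$ into a corner of $\mathbb M_n(\mathbb C)\otimes N_\Delta^s$; a subfactor inclusion, possibly of infinite index, never forces the reverse intertwining, so nothing prevents a priori $\theta(M_v)$ from being a ``small'' subfactor of that corner. In other words, your contradiction argument needs exactly the two-sided control that is the whole difficulty, and your route offers no mechanism to produce it. (Your handling of reducible $\Delta$ --- ``identifying each tensor factor $N_{\Delta_i}$ with a subfactor of $\theta(M_v)$'' --- also cannot be right: a single vertex algebra should match a single vertex or clique factor, not several irreducible components.)

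The paper's proof gets the missing reverse direction by working one level up and in both directions: it takes a \emph{maximal join} subgraph $\Sigma\supset\st(v)$ of $\Gamma$, uses the commuting-subalgebras machinery (Theorem~\ref{commute} via Corollary~\ref{join_embed}) first to locate $M_\Sigma$ into some maximal join $N_\Delta^t$, and then applies the same machinery \emph{to $N_\Delta$} (which is a tensor product of two diffuse nonamenable pieces since $\Delta$ is a join) to locate it back into a join subgraph of $\Gamma$, which by maximality must be $\Sigma$. This yields $M_\Sigma\prec^s N_\Delta^t$ and $N_\Delta^t\prec^s M_\Sigma$ simultaneously, which Theorem~\ref{uni_conj} upgrades to an actual unitary conjugacy $M_\Sigma=N_\Delta^t$. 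Only then do unique prime factorization and Lemma~\ref{str_prime} apply (inside this common algebra), with conditions (2) and (3) ruling out the bad matchings of clique factors; and when $v$ lies in an irreducible component $\Sigma_1$ of $\Sigma$ that gets matched to some $N_{\Delta_j}^s$, the argument must recurse --- hence the induction on $|\Gamma|$, which your non-inductive sketch has no substitute for. To repair your proposal you would essentially have to import this two-sided join-level analysis and the induction, i.e.\ reproduce the paper's argument.
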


Before proving Proposition \ref{induction_on_graphs}, we first show that it implies Theorem \ref{factors1}.

\begin{proof}[\bf Proof of Theorem \ref{factors1}] We note first that by Lemma~\ref{basic}(1) we have $\mathcal Z(M_\Sigma'\cap M_\Gamma)=\mathbb C1$ and $\mathcal Z(N_\Delta'\cap N_\Lambda)=\mathbb C1$, for any nonempty full subgraphs $\Sigma\subset\Gamma$ and $\Delta\subset\Lambda$. 

\begin{claim}\label{no_collapsible_subgraph}
There are no vertex $v\in\Gamma$ and full subgraph $\Delta\subset\Lambda$ such that $|\Delta|\geq 2$, $N_\Delta^t\prec_{N_\Lambda^t}\theta(M_v)$ and $\theta(M_v)\prec_{N_{\Lambda}^t}N_{\Delta\cup\Delta^\perp}^t$.
\end{claim}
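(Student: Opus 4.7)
The plan is to argue by contradiction: assuming that such $v\in\Gamma$ and $\Delta\subset\Lambda$ exist, I would apply Proposition~\ref{collapsible} to conclude that $\Delta$ is collapsible in $\Lambda$, then combine this with Lemma~\ref{lemma:collapsible-combinatorial} and the hypothesis that no connected component of $\Gamma$ is reduced to a single vertex to reach a contradiction.

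First I would upgrade both hypotheses from $\prec$ to $\prec^s$. Since all vertex algebras are II$_1$ factors, Lemma~\ref{basic}(1) gives $\mathcal{Z}(M_\Sigma)=\mathbb{C}1$ for every full subgraph $\Sigma\subset\Gamma$, and analogously for $\Lambda$. The inclusion $\mathcal{N}_{M_\Gamma}(M_v)''\supset M_{\mathrm{st}(v)}$ combined with the simple-graph identity $\mathrm{st}(v)^\perp=\emptyset$ (no vertex is adjacent to itself) yields $\mathcal{N}_{M_\Gamma}(M_v)'\cap M_\Gamma\subset M_{\mathrm{st}(v)^\perp}=\mathbb{C}1$, and a parallel computation using $(\Delta\cup\Delta^\perp)^\perp=\emptyset$ yields $\mathcal{N}_{N_\Lambda}(N_\Delta)'\cap N_\Lambda=\mathbb{C}1$. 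Proposition~\ref{results}(3) then upgrades both hypotheses to $\prec^s$. Embedding $N_\Lambda^t$ as $rPr\subset P=\mathbb{M}_n(\mathbb{C})\otimes N_\Lambda$ and identifying $M_\Gamma=rPr$ via $\theta$ and $N_\Lambda=sPs$ with $s=e_{11}\otimes 1$ as in Notation~\ref{notation}, Remark~\ref{elementary_facts}(4) transfers the strong intertwinings to the ambient $P$; under the amplification equivalence between $N_\Delta^t$ and $N_\Delta$ (both corners of $\mathbb{M}_n(\mathbb{C})\otimes N_\Delta\subset P$), they become $M_vr\prec^s_P N_{\Delta\cup\Delta^\perp}s$ and $N_\Delta s\prec^s_P M_vr$.

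Since $\mathcal{Z}(M_v'\cap rPr)=\mathbb{C}r$ and $\mathcal{Z}(N_\Delta'\cap sPs)=\mathbb{C}s$ by factoriality, I would take $p=r$ and $q=s$ in Proposition~\ref{collapsible} (with $C=\Delta$), obtaining both that $\Delta$ is collapsible in $\Lambda$ and that $M_{\mathrm{st}(v)}\prec^s_P N_{\Delta\cup\Delta^\perp}s$. Since every connected component of $\Lambda$ is strongly reduced and $|\Delta|\ge 2$, Lemma~\ref{lemma:collapsible-combinatorial} shows that $\Delta$ is a union of connected components of $\Lambda$; consequently $\Delta^\perp=\emptyset$ (any vertex adjacent to all of $\Delta$ would have to lie in $\Delta$, contradicting $w\notin\mathrm{lk}(w)$), and $N_{\Delta\cup\Delta^\perp}^t=N_\Delta^t$. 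Combining $M_{\mathrm{st}(v)}\prec^s N_\Delta^t$ with $N_\Delta^t\prec^s\theta(M_v)$ via Proposition~\ref{results}(2) gives $M_{\mathrm{st}(v)}\prec_{M_\Gamma}M_v$, and Lemma~\ref{basic}(2) forces $\mathrm{st}(v)\subset\{v\}$. Then $v$ is isolated in $\Gamma$, so $\{v\}$ is a connected component reduced to a single vertex, contradicting the hypothesis. The main technical step will be the careful amplification bookkeeping needed to reconcile the intertwinings stated in $N_\Lambda^t$ with the unamplified targets required by Proposition~\ref{collapsible}.
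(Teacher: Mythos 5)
Your proposal is correct and follows essentially the same route as the paper's own proof: upgrade both intertwinings to $\prec^s$ via triviality of the relevant relative commutants and Proposition~\ref{results}(3), feed them into Proposition~\ref{collapsible} to get collapsibility of $\Delta$ together with $\theta(M_{\st(v)})\prec^s N_{\Delta\cup\Delta^\perp}^t$, invoke Lemma~\ref{lemma:collapsible-combinatorial} to conclude $\Delta^\perp=\emptyset$, and then combine with $N_\Delta^t\prec^s\theta(M_v)$ via Proposition~\ref{results}(2) and Lemma~\ref{basic}(2) to force $v$ isolated, contradicting the hypothesis. The only difference is that you spell out the amplification bookkeeping and the justification that $\Delta^\perp=\emptyset$, which the paper leaves implicit.
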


\begin{proof}
Assume by contradiction that there exist $v\in\Gamma$ and a full subgraph $\Delta\subset\Lambda$ such that $|\Delta|\geq 2$, $N_\Delta^t\prec_{N_\Lambda^t}\theta(M_v)$ and $\theta(M_v)\prec_{N_{\Lambda}^t}N_{\Delta\cup\Delta^\perp}^t$.
Since $\mathcal Z(\theta(M_v)'\cap N_\Lambda^t)=\mathbb C1$ and $\mathcal Z((N_\Delta^t)'\cap N_\Lambda^t)=\mathbb C1$, Proposition \ref{results}(3) gives $N_\Delta^t\prec_{N_\Lambda^t}^s\theta(M_v)$ and $\theta(M_v)\prec^s_{N_\Lambda^t}N_{\Delta\cup\Delta^\perp}^t$. By  Proposition \ref{collapsible} we deduce that $\Delta\subset\Lambda$ is a collapsible subgraph and $\theta(M_{\text{st}(v)})\prec^s_{N_\Lambda^t}N_{\Delta\cup\Delta^\perp}^t$.

Since $\Delta$ is collapsible and the connected components of $\Lambda$ are strongly reduced, Lemma~\ref{lemma:collapsible-combinatorial} ensures that $\Delta$ is a union of connected components of $\Lambda$. 
In particular, $\Delta^\perp=\emptyset$. Then $N_\Delta^t\prec_{N_\Lambda^t}^s\theta(M_v)$ and $\theta(M_{\text{st}(v)})\prec^s_{N_\Lambda^t}N_{\Delta}^t$. By Proposition \ref{results}(2) we get that $\theta(M_{\text{st}(v)})\prec_{N_\Lambda^t}\theta(M_v)$, thus $M_{\text{st}(v)}\prec_{M_\Gamma}M_v$. However, by Lemma~\ref{basic}(2) this implies that $\text{st}(v)\subset \{v\}$. Hence, $\text{lk}(v)=\emptyset$, so $v$ is isolated, contradicting our assumption that no connected component of $\Gamma$ is reduced to one vertex.
\end{proof}

By symmetry, the analogue of Claim \ref{no_collapsible_subgraph} holds if we swap the roles of $\Gamma$ and $\Lambda$. Thus,  there are no vertex $w\in\Lambda$ and full subgraph $\Sigma\subset\Gamma$ such that $|\Sigma|\geq 2$, $\theta(M_\Sigma)\prec_{N_\Lambda^t}N_w^t$ and $N_w^t\prec_{N_\Lambda^t}\theta(M_{\Sigma\cup\Sigma^\perp})$.

Since by assumption all vertices of $\Gamma$ and $\Lambda$ are untransvectable, we can apply Proposition \ref{induction_on_graphs} and get maps $\alpha:\Gamma\rightarrow\Lambda$ and $\beta:\Lambda\rightarrow\Gamma$ such that $\theta(M_v)\prec_{N_\Lambda^t}N_{\alpha(v)}^t$ and $N_w^t\prec_{N_\Lambda^t}\theta(M_{\beta(w)})$, for every $v\in\Gamma$ and $w\in\Lambda$. Since $\mathcal Z(\theta(M_v)'\cap N_\Delta^t)=\mathcal Z((N_w^t)'\cap N_\Lambda^t)=\mathbb C1$, Proposition \ref{results}(3) further gives that
\begin{equation}\label{bijections}
\text{$\theta(M_v)\prec_{N_\Lambda^t}^sN_{\alpha(v)}^t$ and $N_w^t\prec_{N_\Lambda^t}^s\theta(M_{\beta(w)})$, for every $v\in\Gamma$ and $w\in\Lambda$.}
\end{equation}
Combining \eqref{bijections} with Proposition \ref{results}(2) gives that $\theta(M_v)\prec_{N_\Lambda^t}\theta(M_{\beta(\alpha(v))})$, hence $M_v\prec_{M_\Gamma}M_{\beta(\alpha(v))}$, for every $v\in\Gamma$. By Lemma~\ref{basic}(2) we deduce that $\beta(\alpha(v))=v$, for every $v\in\Gamma$. Similarly, we deduce that $\alpha(\beta(w))=w$, for every $w\in\Lambda$. Therefore, $\alpha:\Gamma\rightarrow\Lambda$ is a bijection and $\beta=\alpha^{-1}$.

The fact that $\alpha$ is a graph isomorphism is now a direct consequence of Proposition~\ref{conjugacy_criterion}(2), but let us also provide a short direct argument. Let $v\in\Gamma$ and $v'\in\text{lk}(v)$. Since $\theta(M_v)\prec_{N_\Lambda^t}^sN_{\alpha(v)}^t$, Proposition \ref{normalizer}(2) implies that $\theta(M_{\text{st}(v)})\prec_{N_\Lambda^t}^sN_{\text{st}(\alpha(v))}^t$. In particular, $\theta(M_{v'})\prec_{N_\Lambda^t}^sN_{\text{st}(\alpha(v))}^t$. Since $N_{\alpha(v')}^t\prec_{N_\Lambda^t}^s\theta(M_{v'})$, Proposition \ref{results}(2) gives that $N_{\alpha(v')}^t\prec_{N_\Lambda^t}^sN_{\text{st}(\alpha(v))}^t$. By Lemma~\ref{basic}(2) we deduce that $\alpha(v')\in\text{st}(\alpha(v))$. Since $\alpha(v')\not=\alpha(v)$, we get that $\alpha(v')\in\text{lk}(\alpha(v))$. Similarly, if $w\in\Lambda$ and $w'\in\text{lk}(w)$, then $\alpha^{-1}(w')\in\text{lk}(\alpha^{-1}(w)))$. This shows that $\alpha$ is a graph isomorphism, which finishes the proof.
\end{proof}

\begin{proof}[\bf Proof of Proposition \ref{induction_on_graphs}]
Denote $L=N_\Lambda^t$ and identify $M_\Gamma=L$, via $\theta$.  By Lemma~\ref{basic}(1) we have $\mathcal Z(M_\Sigma'\cap M_\Gamma)=\mathbb C1$ and $\mathcal Z(N_\Delta'\cap N_\Lambda)=\mathbb C1$, for any nonempty full subgraphs $\Sigma\subset\Gamma$ and $\Delta\subset\Lambda$. This implies that if $M_\Sigma\prec_LN_\Delta^t$, then $M_\Sigma\prec_L^sN_\Delta^t$, and if $N_\Delta^t\prec_LM_\Sigma$, then $N_\Delta^t\prec_L^sM_\Sigma$.

We will prove the conclusion by induction on $|\Gamma|$. If $|\Gamma|=1$, then  $\Gamma=\{v\}$ and $L=M_v$. Condition (2) of the hypothesis implies that $|\Lambda|=1$, and the conclusion follows trivially.
To prove the inductive step, suppose that the conclusion holds if $|\Gamma|\leq d$, for a fixed integer $d\geq 1$. 
We will prove that the conclusion holds if $|\Gamma|=d+1\geq 2$.

We claim that since $v$ is untransvectable, $\text{st}(v)\subset\Gamma$ is a join subgraph that is not a clique. 
Indeed,  $\text{lk}(v)\not=\emptyset$, since otherwise $\text{lk}(v)\subset\text{st}(v')$, for every $v'\in\Gamma\setminus\{v\}$.  Thus, $\text{st}(v)=\{v\}\circ\text{lk}(v)$ is a join graph.
Moreover, $\text{st}(v)$ is not a clique because otherwise $\text{lk}(v)\subset\text{st}(v)\subset\text{st}(v')$, for every $v'\in\text{lk}(v)$.

Let $\Sigma\subset\Gamma$ be a maximal join full subgraph containing $\text{st}(v)$, i.e., a join full subgraph which contains $\text{st}(v)$ and is not a proper subgraph of any join full subgraph of $\Gamma$. Then $\Sigma$ is not a clique, and thus $M_\Sigma$ is nonamenable by Lemma~\ref{basic}(3). 
If $M_\Sigma\prec_P N_w^t$, for some vertex $w\in\Lambda$, then $M_v\prec_P N_w^t$ and the conclusion holds.
We may therefore assume that $M_\Sigma\nprec_P N_w^t$, for every vertex $w\in\Lambda$. 

By Corollary \ref{join_embed}, we can find a join full subgraph $\Delta\subset\Lambda$ such that  $M_\Sigma\prec_L N_\Delta^t$ and thus $M_\Sigma\prec_L^s N_\Delta^t$. Moreover, we may assume that $\Delta\subset\Lambda$ is a maximal join full subgraph. 
 We next claim that $N_\Delta^t\nprec_L M_{v'}$, for every $v'\in\Gamma$. If $N_\Delta^t\prec_L M_{v'}$, for some $v'\in\Gamma$, then $N_\Delta^t\prec_L^sM_{v'}$. Since $M_\Sigma\prec_LN_\Delta^t$, Proposition \ref{results}(2) implies that $M_\Sigma\prec_L M_{v'}$. By Lemma~\ref{basic}(2) this entails that $\Sigma\subset\{v'\}$, which gives a contradiction and proves the claim. 

 Since $M_\Sigma\prec_L^s N_\Delta^t$ and $M_\Sigma$ is nonamenable,  $N_\Delta$ is  nonamenable.
 Since $N_\Delta$ is nonamenable, the claim from the previous paragraph and Corollary \ref{join_embed} together yield a join full subgraph $\Sigma'\subset\Gamma$ such that $N_\Delta^t\prec_L M_{\Sigma'}$. By reasoning as above, we get that $N_\Delta^t\prec_L^s M_{\Sigma'}$ and $\Sigma\subset\Sigma'$. The maximality of $\Sigma$ implies  that $\Sigma=\Sigma'$.
In conclusion, we found a maximal join full subgraph $\Delta\subset\Lambda$ with
\begin{equation}\label{bi_embed}\text{$M_\Sigma\prec_L^s N_\Delta^t$ \;\;\; and \;\;\; $N_\Delta^t\prec_L^s M_\Sigma$.}\end{equation}

We continue by showing that $M_\Sigma\nprec_L N_{\Delta'}^t$, for every full subgraph $\Delta'\subsetneq\Delta$. Indeed, if we have $M_\Sigma\prec_L N_{\Delta'}^t$, 
for a full subgraph $\Delta'\subsetneq\Delta$, then $M_\Sigma\prec_L^s N_{\Delta'}^t$. Since $N_\Delta^t\prec_L M_\Sigma$, Proposition~\ref{results}(2)
implies that $N_\Delta^t\prec_L N_{\Delta'}^t$, which by Lemma~\ref{basic}(2) gives that $\Delta\subset\Delta'$, a contradiction. Similarly, we get $N_\Delta^t\nprec_L M_{\Sigma'}$, for every full subgraph $\Sigma'\subsetneq\Sigma$.

Since $\Sigma^\perp=\Delta^\perp=\emptyset$, by combining \eqref{bi_embed}, the last paragraph and  Theorem \ref{uni_conj} we find unitaries $u,v\in L$ such that $uM_\Sigma u^*\subset N_\Delta^t$ and $vN_\Delta^t v^*\subset M_\Sigma$. Thus, $(vu)M_\Sigma(vu)^*\subset  vN_\Delta^t v^*\subset M_\Sigma$. By Proposition \ref{normalizer}(1) we get $vu\in M_\Sigma$ and hence $vuM_\Sigma (vu)^*=M_\Sigma$. Therefore, $vN_\Delta^t v^*=M_\Sigma$. Hence, if we denote $Q=N_\Delta^t$, then, after unitary conjugacy, we may assume that $M_\Sigma=Q$.

Decompose $\Sigma=C\circ\Sigma_1\circ\cdots\circ\Sigma_m$, where $C$ is the maximal clique factor of $\Sigma$ and $\Sigma_i$ is an irreducible graph with $|\Sigma_i|\geq 2$, for every $1\leq i\leq m$.
Similarly, decompose $\Delta=D\circ\Delta_1\circ\cdots\circ\Delta_n$, where $D$ is the maximal clique factor of $\Delta$ and $\Delta_j$ is an irreducible graph with $|\Delta_j|\geq 2$, for every $1\leq j\leq n$. By Theorem \ref{strongly_prime} we have that $M_{\Sigma_i}$ and $N_{\Delta_j}$ are strongly prime II$_1$ factors, for every $1\leq i\leq m$ and $1\leq j\leq n$.

We claim that   
\begin{equation}\label{SigmaiD}\text{$M_{\Sigma_i}\nprec_QN_D^t$, \;\;\; for every $1\leq i\leq m$.}\end{equation} This is clear if $D=\emptyset$, so we will assume otherwise. Assume by contradiction that $M_{\Sigma_i}\prec_Q N_D^t$, for some $1\leq i\leq m$. Since $Q=M_{\Sigma_i}\overline{\otimes}M_{\Sigma\setminus\Sigma_i}=N_D^t\overline{\otimes} N_{\Delta\setminus D}$, \cite[Proposition 12]{OP04} (see Lemma~\ref{tensor_product}(1)) gives a decomposition $Q=N_D^s\overline{\otimes}N_{\Delta\setminus D}^{t/s}$, for some $s>0$, such that after conjugacy with a unitary from $Q$ we have $M_{\Sigma_i}\subset N_D^s$. Moreover, if we put $R=M_{\Sigma_i}'\cap N_D^s$, then $N_D^s=M_{\Sigma_i}\overline{\otimes}R$ by \cite[Lemma~3.3]{Is20}. Since $N_D=\overline{\otimes}_{w\in D}N_w$, using the fact that $M_{\Sigma_i}$ is strongly prime and \cite[Lemma~4.1]{Is20}, we find $w\in D$ such that $M_{\Sigma_i}\prec_{N_D^s}N_w^s$. Thus, $M_{\Sigma_i}\prec_Q N_w^t$, contradicting condition (3) from the hypothesis since $|\Sigma_i|\geq 2$, and $\Sigma=\Sigma_i\cup\Sigma_i^\perp$ and thus $N_w^t\subset Q=M_{\Sigma_i\cup\Sigma_i^\perp}$. This proves \eqref{SigmaiD}.

We continue by treating separately two cases.

{\bf Case 1.} $C\not=\emptyset$.

If $v'\in C$, then $\text{st}(v)\subset\Sigma\subset\text{st}(v')$. 
As $v$ is untransvectable, we get $v'=v$ and thus $C=\{v\}$. 

Since $M_v\subset Q=N_\Delta^t=(N_{\Delta_j\cup\Delta_j^\perp})^t$ and $|\Delta_j|\geq 2$, condition (2) from the hypothesis implies that $N_{\Delta_j}^t\nprec_Q M_v$, for every $1\leq j\leq n$.
Using this fact, that $Q=M_v\overline{\otimes}(\overline{\otimes}_{i=1}^mM_{\Sigma_i})=N_D^t\overline{\otimes}(\overline{\otimes}_{j=1}^n N_{\Delta_j})$ and  claim \eqref{SigmaiD},   Lemma 
\ref{str_prime}  implies that we can find a decomposition $Q=N_D^s\overline{\otimes}(\overline{\otimes}_{j=1}^n N_{\Delta_j})^{t/s}$, for some $s>0$, and a unitary $u\in Q$ such that $uM_vu^*=N_D^s$. In particular, $N_D^t\prec_Q M_v$. Since $M_v\subset Q=N_{D\cup D^\perp}^t$, condition (2) from the hypothesis implies that $|D|=1$. Thus, $D=\{w\}$, for some $w\in \Lambda$, which implies that $M_v\prec_P N_w^t$. This finishes the proof in Case 1.

{\bf Case 2.} $C=\emptyset$.

Then $\Sigma=\Sigma_1\circ\cdots\circ\Sigma_m$ and
 we may assume that $v\in\Sigma_1$. We claim that $v$ is untransvectable in $\Sigma_1$. Indeed, assume that $\text{lk}(v)\cap\Sigma_1\subset\text{st}(v')\cap\Sigma_1$, for $v'\in\Sigma_1$. 
Since $\text{st}(v)\subset\Sigma$, we have that $\text{lk}(v)=(\text{lk}(v)\cap\Sigma_1)\cup(\Sigma\setminus\Sigma_1)$. Since we also  have $\text{st}(v')\supset (\text{st}(v')\cap\Sigma_1)\cup(\Sigma\setminus\Sigma_1)$, it follows that $\text{lk}(v)\subset\text{st}(v')$. Since $v$ is untransvectable in $\Gamma$, we get $v'=v$, which proves our claim.

We now claim that $D=\emptyset$. Indeed, $Q=N_D^t\overline\otimes (\overline\otimes_{j=1}^n N_{\Delta_j})$, so Theorem~\ref{prime_factorization}(1) ensures that there exist $s>0$ and $I\subset\{1,\dots,m\}$ such that $N_D^s$ is unitarily conjugate to $\overline\otimes_{i\in I}M_{\Sigma_i}$ inside $Q$. In particular, $M_{\Sigma_i}\prec_Q N_D^s$, for every $i\in I$, which contradicts \eqref{SigmaiD}. This contradiction proves that $D=\emptyset$.

Since  $M_{\Sigma_1}$ is strongly prime and $Q=M_{\Sigma_1}\overline{\otimes} M_{\Sigma\setminus\Sigma_1}=(\overline{\otimes}_{j=1}^nN_{\Delta_j})^t$, we deduce that $M_{\Sigma_1}\prec_Q N_{\Delta_j}^t$, for some $1\leq j\leq n$ (see \cite[Lemma 4.1]{Is20}).

Since $N_{\Delta_j}$ is prime, by applying Lemma \ref{tensor_product} we can find a decomposition $Q=N_{\Delta_j}^s\overline{\otimes}N_{\Delta\setminus\Delta_j}^{t/s}$, for some $s>0$, and a unitary $\eta\in Q$ such that $\eta M_{\Sigma_1}\eta^*=N_{\Delta_j}^s$.
Therefore, we have a $*$-isomorphism $\theta_0:M_{\Sigma_1}\rightarrow N_{\Delta_j}^s$ given by $\theta_0(x)=\eta x\eta^*$. 
Since $\Sigma_1\subsetneq\Sigma\subset\Gamma$, we get $|\Sigma_1|\leq |\Gamma|-1=d$. 

We now observe that conditions (2) and (3) from the hypothesis are inherited by $\theta_0$ from $\theta$. To see that $\theta_0$ satisfies condition (2), assume by contradiction that there is a full subgraph $F\subset\Delta_j$ such that $|F|\geq 2$, $N_F^s\prec_{N_{\Delta_j}^s} \theta_0(M_v)$ and $\theta_0(M_v)\prec_{N_{\Delta_j}^s} (N_{F\cup (F^\perp\cap\Delta_j)})^s$. 
Since $F\cup (F^\perp\cap\Delta_j)\subset F\cup F^\perp$ and $\theta_0(x)=\eta x\eta^*$ for $x\in M_{\Sigma_1}$, we get that $N_F^t\prec_{L} M_v$ and $M_v\prec_{L} (N_{F\cup F^{\perp}})^t$, which contradicts condition (2) for $\theta$. Similarly, $\theta_0$ also satisfies condition (3).

Since $v$ is untransvectable in $\Sigma_1$, we can apply the induction hypothesis to $\theta_0$ to find a vertex $w\in\Delta_j$ such that $\theta_0(M_v)\prec_{N_{\Delta_j}}N_w^s$. We have $M_v\prec_Q N_w^t$, which finishes the proof of Case 2.
\end{proof}

\section{From strong intertwining to stable isomorphism}\label{sec:intertwining}

Assume the setting of Notation \ref{notation}. Theorems \ref{arbitrary_gen}, \ref{amenable_gen} and~\ref{factors1} provide instances when there exists a graph isomorphism $\alpha:\Gamma\rightarrow\Lambda$ such that $M_v\prec_P^sN_{\alpha(v)}$ and $N_{\alpha(v)}\prec_P^sM_v$, for every $v\in\Gamma$. The following result clarifies the relationship between the algebras $M_v$  and $N_{\alpha(v)}$ entailed by the last condition.

\begin{lemma}\label{interwining_to_isomorphism} In the setting from Notation \ref{notation}, assume that $v\in\Gamma$ and $w\in\Lambda$ are such that $M_v\prec_P^sN_w$ and $N_w\prec_P^sM_v$. Then the following hold:

\begin{enumerate}
\item    Assume that $M_{\emph{st}(v)}$, $N_{\emph{st}(w)}$ are II$_1$ factors, and  the projection $p\in P$ from Notation \ref{notation} belongs to $\mathbb M_n(\mathbb C)\otimes N_{\emph{st}(w)}$. Then there exist a unitary  $u\in pPp$ and a decomposition $p(\mathbb M_n(\mathbb C)\otimes N_{\emph{st}(w)})p=N_w^s\overline{\otimes}N_{\emph{lk}(w)}^t$, for $s,t>0$, such that $uM_{\emph{st}(v)}u^*=p(\mathbb M_n(\mathbb C)\otimes N_{\emph{st}(w)})p$ and $uM_vu^*=N_w^s$. In particular, $M_v$ and $N_w$ are stably isomorphic II$_1$ factors.
\item In general, we can find nonzero projections 
$p\in M_v,q\in N_w$ and a $*$-homomorphism $\psi:qN_wq\rightarrow pM_vp$ such that $pM_vp\prec_{pM_vp}\psi(qN_wq)$. Moreover, there exist nonzero projections $s\in M_v$ and $t\in N_w$ and a unital embedding $tN_wt\subset sM_vs$ which has finite index.
\end{enumerate}

\end{lemma}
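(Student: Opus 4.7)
My strategy is to upgrade the vertex-level intertwinings to the star level via Proposition~\ref{normalizer'}, then apply Theorem~\ref{uni_conj} to turn these into unitary conjugacies in part~(1); for part~(2), I extract partial isometries and $*$-homomorphisms via Proposition~\ref{cutting_down} and combine them to produce a self-intertwining of a corner of $M_v$.

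For part~(1), I would first apply Proposition~\ref{normalizer'} in both directions to obtain $M_{\text{st}(v)}\prec^s_PN_{\text{st}(w)}$ and $N_{\text{st}(w)}\prec^s_PM_{\text{st}(v)}$. Combined with Proposition~\ref{results}(2) and Lemma~\ref{basic}(2), this rules out $M_{\text{st}(v)}\prec_PN_{\Delta'}$ for every proper subgraph $\Delta'\subsetneq\text{st}(w)$. Theorem~\ref{uni_conj} (applied with $\Lambda=\text{st}(w)$, the factoriality hypotheses being verified via Lemma~\ref{basic}(1), since both $M_{\text{st}(v)}$ and $M'_{\text{st}(v)}\cap pPp=M_{\text{st}(v)^\perp}$ are factors) then produces a unitary $U\in P$ with $UM_{\text{st}(v)}U^*\subset\mathbb{M}_n\otimes N_{\text{st}(w)}$. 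Since the unit $UpU^*$ of $UM_{\text{st}(v)}U^*$ automatically lies in $\mathbb{M}_n\otimes N_{\text{st}(w)}$, a further unitary from this II$_1$ factor sends $UpU^*$ back to $p$, producing a $u\in pPp$ with $uM_{\text{st}(v)}u^*\subset Q_2:=p(\mathbb{M}_n\otimes N_{\text{st}(w)})p$.

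The central difficulty is then to upgrade this inclusion to an equality. Using the reverse intertwining $N_{\text{st}(w)}\prec^s_PM_{\text{st}(v)}$ amplified to $Q_2\prec_PuM_{\text{st}(v)}u^*$, combined with $uM_{\text{st}(v)}u^*\subset Q_2$ and the irreducibility $(uM_{\text{st}(v)}u^*)'\cap Q_2=\mathbb{C}p$ coming from Lemma~\ref{basic}(1) and the factorial setup, Remark~\ref{finite_index} forces the inclusion $uM_{\text{st}(v)}u^*\subset Q_2$ to have finite index, and two-sided strong intertwining pushes this index down to one. Finally, Lemma~\ref{tensor_product}(2), applied to the decompositions $uM_{\text{st}(v)}u^*=uM_vu^*\overline{\otimes}uM_{\text{lk}(v)}u^*$ and $Q_2=N_w^a\overline{\otimes}N_{\text{lk}(w)}^b$ (the latter coming from $N_{\text{st}(w)}=N_w\overline{\otimes}N_{\text{lk}(w)}$) and exploiting $M_v\prec^s_PN_w$ and $N_w\prec^s_PM_v$, yields a tensor decomposition $Q_2=N_w^s\overline{\otimes}N_{\text{lk}(w)}^t$ with $uM_vu^*=N_w^s$ after one further conjugation inside $Q_2$; the stable isomorphism of $M_v$ and $N_w$ follows immediately.

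For part~(2), Proposition~\ref{cutting_down}(1) applied to $M_v\prec_PN_w$ and $N_w\prec_PM_v$ extracts $*$-homomorphisms $\phi:p_0M_vp_0\to q_0N_wq_0$ and $\psi_0:q_1N_wq_1\to p_1M_vp_1$ with intertwining partial isometries $\eta\in q_0Pp_0$ and $\zeta\in p_1Pq_1$. After aligning projections and refining via Proposition~\ref{cutting_down}(2) so that $\zeta\eta\neq 0$, the composition $\psi_0\circ\phi$ intertwines a corner of $M_v$ into itself via $\zeta\eta$; taking the $M_v$-conditional expectation of $\zeta\eta$ (nonzero after suitable refinement, exploiting strong intertwining via Proposition~\ref{results}(5)) descends the intertwining to within $M_v$, yielding the first conclusion $pM_vp\prec_{pM_vp}\psi(qN_wq)$ for $\psi$ an appropriate restriction of $\psi_0$. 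The moreover assertion then follows by applying Remark~\ref{finite_index} to the unital inclusion $\psi(qN_wq)\subset pM_vp$: combined with $pM_vp\prec_{pM_vp}\psi(qN_wq)$, it yields a unital inclusion $tN_wt\subset sM_vs$ of finite index. The main obstacle across both parts is passing from $P$-ambient intertwinings (which are all the hypothesis delivers) to intertwinings inside the smaller ambient algebras $Q_2$ in part~(1) and $pM_vp$ in part~(2); this descent hinges on the irreducibility afforded by the factorial setup together with the flexibility of strong intertwining to modify partial isometries.
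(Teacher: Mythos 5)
Your overall skeleton (upgrade to the stars via the normalizer results, apply Theorem~\ref{uni_conj}, then split tensorially via Lemma~\ref{tensor_product} for (1); compose two intertwinings and take a conditional expectation for (2)) matches the paper, but both crucial steps have genuine gaps. In part (1), your mechanism for upgrading the inclusion $uM_{\st(v)}u^*\subset Q_2$ to an equality does not work. First, Remark~\ref{finite_index} requires $Q_2\prec_{Q_2}uM_{\st(v)}u^*$, i.e.\ intertwining \emph{inside} $Q_2$, whereas you only have $Q_2\prec_P uM_{\st(v)}u^*$; this descent of the ambient algebra is not formal, and Lemma~\ref{descend} does not apply here because the target $uM_{\st(v)}u^*$ is not a graph-product subalgebra on the $\Lambda$-side. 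Second, and more seriously, the claim that ``two-sided strong intertwining pushes this index down to one'' is false in general: if $G$ is a finite group acting outerly on a II$_1$ factor $Q_2$, then $Q_2^G\subset Q_2$ is irreducible, of finite index, and satisfies two-sided (strong) intertwining, yet is proper. The paper gets equality differently: it applies Theorem~\ref{uni_conj} in \emph{both} directions to get unitaries $\xi,\eta$ with $\mathrm{Ad}(\xi)(M_{\st(v)})\subset Q_2$ and $\mathrm{Ad}(\eta)(Q_2)\subset M_{\st(v)}$, observes that $\mathrm{Ad}(\eta\xi)(M_{\st(v)})\subset M_{\st(v)}$, and then uses Proposition~\ref{normalizer}(1) (with $\st(v)^{\perp}=\emptyset$) to force $\eta\xi\in M_{\st(v)}$, which turns both inclusions into equalities. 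You also gloss over the descent needed before invoking Lemma~\ref{tensor_product} (the paper uses Lemma~\ref{descend} to replace $\prec^s_P$ by $\prec^s_{\mathbb M_n(\mathbb C)\otimes N_{\st(w)}}$), though you at least flag this issue.

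In part (2), the two technical mechanisms that make the composition work are missing. To guarantee that the product of the two intertwiners is nonzero, the paper does not just ``align projections'': writing $\zeta_2^*\zeta_2=rr'$ with $r'\in N_w'\cap N_\Lambda$, it first proves $M_v\prec_P rN_wrr'$, and this is exactly where the two-sided hypothesis enters, through Lemma~\ref{proof_of_3} (via Remark~\ref{elementary_facts}(6)); Proposition~\ref{cutting_down}(2) alone does not give you control of the support of the second partial isometry relative to the first. More importantly, your conditional-expectation step is unjustified: from $\rho(x)\zeta=\zeta x$ one can only conclude that $\mathrm{E}_{M_v}(\zeta)$ intertwines, and this may well be zero; multiplying $\zeta$ by a general element of $P$ before projecting destroys the intertwining relation, and Proposition~\ref{results}(5) is not the relevant tool. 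The paper's fix is to first apply Proposition~\ref{normalizer}(1) to conclude $\zeta\in M_{\st(v)}=M_v\overline{\otimes}M_{\lk(v)}$, and then extract a nonzero ``Fourier coefficient'' $\mathrm{E}_{M_v}(\zeta(1\otimes\delta))$ with $\delta\in M_{\lk(v)}$ — multiplication by $1\otimes\delta$ is harmless precisely because it commutes with $M_v$. Without this localization of the intertwiner inside $M_{\st(v)}$, the passage from a $P$-ambient intertwining to the conclusion $pM_vp\prec_{pM_vp}\psi(qN_wq)$ does not go through.
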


\begin{proof}
    (1) Since $M_v\prec_P^sN_w$ and $N_w\prec_P^sM_v$, Proposition \ref{normalizer} implies that $M_{\text{st}(v)}\prec_P^sN_{\text{st}(w)}$ and $N_{\text{st}(w)}\prec_P^sM_{\text{st}(v)}$. 
    Since $M_{\text{st}(v)}, N_{\text{st}(w)}$ are II$_1$ factors and $\text{st}(v)^\perp=\text{st}(w)^\perp=\emptyset$, Lemma \ref{basic}(2) implies that $M_{\text{st}(v)}'\cap pPp=\mathbb Cp$ and $N_{\text{st}(w)}'\cap qPq=\mathbb Cq$.
    These facts together with Proposition \ref{results}, (2) and (3), imply that $M_{\text{st}(v)}\nprec_PN_{\Lambda_0}$ and $N_{\text{st}(w)}\nprec_P M_{\Gamma_0}$, for every full subgraphs $\Gamma_0\subsetneq\text{st}(v)$ and $\Lambda_0\subsetneq\text{st}(w)$. By using again that $\text{st}(v)^\perp=\text{st}(w)^\perp=\emptyset$, and $M_{\text{st}(v)}$, $N_{\text{st}(w)}$ are II$_1$ factors,  Theorem \ref{uni_conj} gives unitaries $\xi,\eta\in pPp$ such that $$\text{$\text{Ad}(\xi)(M_{\text{st}(v)})\subset p(\mathbb M_n(\mathbb C)\otimes N_{\text{st}(w)})p$ \;\;\;and\;\;\; $\text{Ad}(\eta)(p(\mathbb M_n(\mathbb C)\otimes N_{\text{st}(w)})p)\subset M_{\text{st}(v)}$.}$$
    
    Thus, $\text{Ad}(\eta\xi)(M_{\text{st}(v)})\subset \text{Ad}(\eta)(p(\mathbb M_n(\mathbb C)\otimes N_{\text{st}(w)})p)\subset M_{\text{st}(v)}$. Proposition \ref{normalizer}(1) gives $\eta\xi\in M_{\text{st}(v)}$, hence $\text{Ad}(\eta\xi)(M_{\text{st}(v)})=M_{\text{st}(v)}$. This implies that $\text{Ad}(\xi)(M_{\text{st}(v)})=p(\mathbb M_n(\mathbb C)\otimes N_{\text{st}(w)})p$.

    Since $M_v\prec_P^sN_w$ and $N_w\prec_P^sM_v$, Lemma \ref{descend} implies that $\text{Ad}(\xi)(M_v)\prec^s_{\mathbb M_n(\mathbb C)\otimes N_{\text{st}(w)}}N_w$ and $N_w\prec^s_{\mathbb M_n(\mathbb C)\otimes N_{\text{st}(w)}}\text{Ad}(\xi)(M_v)$. 
    Since we have that $M_{\text{st}(v)}=M_v\overline{\otimes}M_{\text{lk}(v)}$, $N_{\text{st}(w)}=N_w\overline{\otimes}N_{\text{lk}(w)}$, and $M_v,M_{\text{lk}(v)},N_w,N_{\text{lk}(w)}$ are II$_1$ factors, the conclusion follows from Lemma \ref{tensor_product}.

    (2) Since $N_w\prec_PM_v$, we can find projections $r\in N_w,q\in M_v$ a nonzero partial isometry $\zeta_2\in qPr$
    and a $*$-homomorphism $\theta_2:rN_wr\rightarrow qM_vq$ such that $\theta_2(y)\zeta_2=\zeta_2y$, for every $y\in rN_wr$.
    Since $\zeta_2^*\zeta_2\in (rN_wr)'\cap rN_\Lambda r=(N_w'\cap N_\Lambda)r$, we can write $\zeta_2^*\zeta_2=rr'$, for a projection $r'\in N_w'\cap N_\Lambda$.

    We claim that $M_v\prec_P rN_wrr'$. Otherwise, if $M_v\nprec_P rN_wrr'=(rr')(N_wr')(rr')$, Remark \ref{elementary_facts}(6) would imply that $M_v\nprec_PN_ws$, where $s\in\mathcal Z(N_wr')=\mathcal Z(N_w)r'$ is the central support of $rr'$. On the other hand, since $M_v\prec_P^sN_w$, $N_w\prec_P^sM_v$, and $s\in N_w'\cap N_\Lambda$, Lemma \ref{proof_of_3} gives that $M_v\prec_PN_ws$. This gives a contradiction, and finishes the proof of the claim.

    Next, since $M_v\prec_P rN_wrr'$, by Proposition \ref{cutting_down}(1) we can find projections $e\in M_v,f\in rN_wr$, a partial isometry $\zeta_1\in fr'Pe$ and a $*$-homomorphism $\theta_1: eM_ve\rightarrow fN_wf$ such that $\theta_1(x)\zeta_1=\zeta_1x$, for every $x\in eM_ve$.

Let $q_0=\theta_2(f)=\theta_2(\theta_1(e))$,
 $\rho:eM_ve\rightarrow q_0M_vq_0$  the $*$-homomorphism given by $\rho(x)=\theta_2(\theta_1(x))$ and $\zeta=\zeta_2\zeta_1\in qPe$. Then $\rho(x)\zeta=\zeta x$, for every $x\in eM_ve$.
 Since $\zeta_2^*\zeta_2=rr'$ and $\zeta_1\in rr'P$ is nonzero we deduce that $\zeta^*\zeta=\zeta_1^*(\zeta_2^*\zeta_2)\zeta_1=\zeta_1^*\zeta_1\not=0$, and therefore $\zeta\not=0$.
 Moreover, $q_0\zeta=\rho(e)\zeta=\zeta e=\zeta$, hence $\zeta\in q_0Pe$.
 Since $\zeta (eM_ve)\subset (q_0M_vq_0)\zeta$, by Proposition \ref{normalizer}(1) we also deduce that $\zeta\in M_{\text{st}(v)}$ and thus $\zeta\in q_0M_{\text{st}(v)}e=q_0M_ve\overline{\otimes}M_{\text{lk}(v)}$.

 Note that for every $\delta\in M_{\text{lk}(v)}$ we have $\rho(x)\text{E}_{M_v}(\zeta (1\otimes\delta))=\text{E}_{M_v}(\zeta(1\otimes\delta))x$, for every $x\in eM_ve.$
 Consequently, we can find a nonzero $\gamma\in q_0M_ve$ such that $\rho(x)\gamma=\gamma x$, for every $x\in eM_ve$. This in particular implies that $q_0M_vq_0\prec_{q_0M_vq_0}\rho(eM_ve)$. Since $\rho(eM_ve)\subset\theta_2(fN_wf)$, we deduce that $q_0M_vq_0\prec_{q_0M_vq_0}\theta_2(fN_wf)$. This proves the main assertion of part (2).
 
 Since $\theta_2(fN_wf)\subset q_0M_vq_0$, Remark \ref{finite_index} provides a nonzero projection
 $s\in \theta_2(fN_wf)'\cap q_0M_vq_0$ such that the inclusion 
 $\theta_2(fN_wf)s\subset sM_vs$ has finite index.
  Since $\theta_2(fN_wf)s$ is isomorphic to $tN_wt$, for some projection $t\in N_w$, this finishes the proof.
\end{proof}

\section{From stable isomorphism to isomorphism: Proof of Theorem \ref{factors'}}

In this section we complete the proof of Theorem \ref{factors'}. 

\begin{theorem}\label{factors2}
Let $\Gamma$ and $\Lambda$  be two finite simple graphs. Assume  that $\Gamma$ has no isolated vertices, no leaves and girth at least $5$.
Let $(M_v)_{v\in\Gamma}$, $(N_w)_{w\in\Lambda}$ be families of II$_1$ factors. Denote by $M_\Gamma=*_{v,\Gamma}M_v$ and  $N_\Lambda=*_{w,\Lambda}N_w$ the associated graph product II$_1$ factors.
Let $\theta:M_\Gamma^t\rightarrow N_\Lambda$ be a $*$-isomorphism, for some $t>0$, and let $\alpha:\Gamma\to \Lambda$ be a graph isomorphism such that $\theta(M_v^t)\prec^s_{N_{\Lambda}}N_{\alpha(v)}$ and $N_{\alpha(v)}\prec_{N_\Lambda}^s\theta(M_v^t)$, for every vertex $v\in\Gamma$.

Then $t=1$ and for every $v\in\Gamma$  there exists a unitary $u_v\in N_\Lambda$ such that $\theta(M_v)=u_vN_{\alpha(v)}u_v^*$.

Moreover, if in addition $\Gamma$ has no separating star, then there exists a unitary $u\in N_\Lambda$ such that $\theta(M_v)=uN_{\alpha(v)}u^*$, for every $v\in\Gamma$.

\end{theorem}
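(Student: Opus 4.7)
\textbf{Proof plan for Theorem~\ref{factors2}.} The plan is to first produce, vertex by vertex, a unitary that conjugates $\theta(M_v^t)$ to an amplification of $N_{\alpha(v)}$, and then to trivialize those amplifications by a cycle argument. Identify $M_\Gamma^t$ with $N_\Lambda$ through $\theta$ and call the common algebra $P$. Since every vertex has positive valence and $\Gamma$ is not reduced to a single vertex, both $M_{\st(v)}$ and $N_{\st(\alpha(v))}$ are II$_1$ factors by Lemma~\ref{basic}(1), and Lemma~\ref{interwining_to_isomorphism}(1) applies (after an initial application of Theorem~\ref{uni_conj} to place the ambient projection inside $\mathbb M_n(\mathbb C)\otimes N_{\st(\alpha(v))}$). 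At each $v\in\Gamma$ this yields a unitary $u_v\in P$, a scalar $t_v>0$ and a tensor decomposition $N_{\st(\alpha(v))}=N_{\alpha(v)}^{t_v}\overline{\otimes}N_{\lk(\alpha(v))}^{1/t_v}$ such that $u_v\theta(M_{\st(v)}^t)u_v^{\ast}=N_{\st(\alpha(v))}$ and $u_v\theta(M_v^t)u_v^{\ast}=N_{\alpha(v)}^{t_v}$.

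The second step analyzes $u_v^{\ast}u_{v'}$ for adjacent $v,v'\in\Gamma$. The girth-$\geq 5$ hypothesis forbids common neighbors, so $\st(v)\cap\st(v')=\{v,v'\}$ and the enveloping algebra $M_{\st(v)\cup\st(v')}$ decomposes as $(M_{\lk(v)\setminus\{v'\}}\ast M_{\lk(v')\setminus\{v\}})\overline{\otimes}M_v\overline{\otimes}M_{v'}$, with a parallel structure on the $N_\Lambda$ side via $\alpha$. The images $u_v\theta(M_{\st(v)}^t)u_v^{\ast}$ and $u_{v'}\theta(M_{\st(v')}^t)u_{v'}^{\ast}$ then sit inside this enveloping free-product--tensor algebra and share the amalgamating factor $N_{\alpha(v)}\overline{\otimes}N_{\alpha(v')}$; combining this with Corollary~\ref{splitting} (and Lemma~\ref{partial_isometry}) yields a rigid decomposition of $u_v^{\ast}u_{v'}$ into link-side and $\{v,v'\}$-side pieces. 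Through this decomposition, the scalars $t_v$ and $t_{v'}$ are linked by a specific multiplicative relation, recorded as the trace of the $M_v\overline{\otimes}M_{v'}$-component.

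Third, since $\Gamma$ has minimum degree at least two, each vertex lies on a cycle $v_0,v_1,\dots,v_n=v_0$. Substituting the factorizations of step two into the telescoping identity $(u_{v_0}u_{v_1}^{\ast})(u_{v_1}u_{v_2}^{\ast})\cdots(u_{v_{n-1}}u_{v_n}^{\ast})=1$, the alternating free-product pieces must collapse by Lemma~\ref{partial_isometry}, forcing the multiplicative relations recorded along each edge to be individually trivial, hence $t_v=1$ for every cyclic vertex, and thus for every $v\in\Gamma$. A trace comparison in step one then gives $t=1$. From $u_v\theta(M_v)u_v^{\ast}=N_{\alpha(v)}$ we obtain the main statement (after relabelling $u_v$ by $u_v^{\ast}$ to match the direction of the statement). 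I expect the principal technical difficulty to lie in the interplay between free-product and tensor decompositions when organizing the factorization of step two so that the cycle telescoping is genuinely exploitable.

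For the moreover statement, assume in addition that $\Gamma$ has no separating star. Fix a basepoint $v_0\in\Gamma$. For any other $v\in\Gamma$ the subgraph $\Gamma\setminus\st(v)$ is connected and contains $v_0$, so choose a path $v_0=w_0,w_1,\dots,w_k$ in $\Gamma\setminus\st(v)$ with $w_k$ adjacent to $v$. Reading $N_\Lambda$ as the amalgamated free product $N_{\Lambda\setminus\{\alpha(v)\}}\ast_{N_{\lk(\alpha(v))}}N_{\st(\alpha(v))}$ and iterating the edge factorizations of step two along $w_0,\dots,w_k,v$, the free-product portion of each successive $u_{w_i}^{\ast}u_{w_{i+1}}$ stays inside $N_{\Lambda\setminus\{\alpha(v)\}}$ because no $\alpha(w_i)$ lies in $\st(\alpha(v))$, so the total product $u_{v_0}^{\ast}u_v$ collapses into $\beta_v\gamma_v$ with $\beta_v\in\mathcal{U}(\theta(M_v))$ and $\gamma_v\in\mathcal{U}(\theta(M_{\lk(v)}))$. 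Setting $u=u_{v_0}^{\ast}$ and using that $\beta_v\in\theta(M_v)$ while $\gamma_v\in\theta(M_{\lk(v)})$ commutes with $\theta(M_v)$, we conclude that $u N_{\alpha(v)} u^{\ast}=\theta(M_v)$ for every $v\in\Gamma$, completing the proof.
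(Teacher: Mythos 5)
Your plan has the same overall shape as the paper's proof (local unitaries at each vertex, a factorization of $u_{v'}^*u_v$ along edges, a telescoping identity along cycles, and the no-separating-star hypothesis for the global statement), but the two technical pivots are not established and one rests on a false structural claim. In your step two, for adjacent $v,v'$ in a graph of girth at least $5$, the algebra $M_{\st(v)\cup\st(v')}$ is \emph{not} isomorphic to $(M_{\lk(v)\setminus\{v'\}}\ast M_{\lk(v')\setminus\{v\}})\overline{\otimes}M_v\overline{\otimes}M_{v'}$: a vertex $a\in\lk(v)\setminus\{v'\}$ is not adjacent to $v'$ (that would create a triangle), so $M_a$ is in free, not tensor, position with respect to $M_{v'}$; the correct description is the amalgamated free product $M_{\st(v)}\ast_{M_v\overline{\otimes}M_{v'}}M_{\st(v')}$. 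In particular Corollary~\ref{splitting} cannot be invoked the way you propose: it requires knowing beforehand that the relevant product lies in $(M_1\ast M_2)\otimes 1$, and in the paper this information only becomes available in the moreover part, after one has shown $\delta_v\in M_{\st(v)}$ using the no-separating-star hypothesis. The edge factorization (the analogue of your ``rigid decomposition of $u_v^*u_{v'}$'') is genuinely the hard step: it is obtained by conjugating the commutation of $N_{\alpha(v)}$ and $N_{\alpha(v')}$ back, computing relative commutants, intertwining the resulting corner into $M_v$, applying Theorem~\ref{uni_conj}, and splitting a partial isometry as an elementary tensor via Lemma~\ref{conjugacy_in_tensors}, all while carrying matrix amplifications because the scalars $t_v$ can exceed $1$. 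None of this is replaced by your appeal to the (incorrect) tensor--free-product picture.

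Your cycle step has a second gap. You telescope along an arbitrary cycle through each vertex, but the collapse ``by Lemma~\ref{partial_isometry}'' needs the cycle to be of \emph{minimal} length: only then does one get $\lk(v_0)\cap(\lk(v_1)\cup\dots\cup\lk(v_{n-1}))=\{v_1,v_{n-1}\}$, which, through the parabolic-intersection property of the Coxeter group $\mathcal W_\Gamma$, shows that the conditional expectation onto $M_{\lk(v_0)}$ of the telescoped word lies in $\overline{\mathrm{sp}}(M_{v_1}M_{v_{n-1}})$; that is the missing mechanism that makes Lemma~\ref{partial_isometry} applicable and forces the amplification constants along the cycle to be $1$ (hence $t=1$). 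One then propagates to all vertices by connectedness and the trace-swapping relation along edges, not by putting each vertex on a cycle (a vertex need not lie on any minimal cycle). Finally, in the moreover part your path ``in $\Gamma\setminus\st(v)$ with $w_k$ adjacent to $v$'' is self-contradictory as written, and the jump from the telescoped product to a factorization $\beta_v\gamma_v$ with a single basepoint unitary skips the scalar bookkeeping (showing that the various link-side pieces agree up to scalars along paths and cycles, which is where Corollary~\ref{splitting} is legitimately used after $\delta_v\in M_{\st(v)}$ is known); without it one only gets the conjugating unitaries up to phases that need not cohere into a single $u$.
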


Before proving Theorem~\ref{factors2}, we first explain how Theorem~\ref{factors'} from the introduction follows.

\begin{proof}[\bf Proof of Theorem~\ref{factors'}]
    Let $\Gamma,\Lambda$ be finite simple graphs of girth at least $5$ with no vertices of valence $0$ or $1$, and let $M_\Gamma$ and $N_\Lambda$ be graph products over $\Gamma,\Lambda$, with II$_1$ factor vertex algebras. Let $t>0$, and let $\theta:M_\Gamma\to N_\Lambda^t$ be an isomorphism. Remark~\ref{basic_graph_facts} ensures that $\Gamma,\Lambda$ are transvection-free and strongly reduced. Therefore,  Theorem~\ref{factors} guarantees the existence of a graph isomorphism $\alpha:\Gamma\to\Lambda$ such that $\theta(M_v)\prec^s_{N_\Lambda^t}N_{\alpha(v)}^t$ and $N_{\alpha(v)}^t\prec^s_{N_\Lambda^t}\theta(M_v)$, for every $v\in\Gamma$. 
    Let $s=1/t>0$ and $\theta_0:M_\Gamma^s\rightarrow N_\Lambda$ the $*$-isomorphism obtained by amplifying $\theta$. Then we have $\theta_0(M_v^s)\prec_{N_\Lambda}^sN_{\alpha(v)}$
 and $N_{\alpha(v)}\prec_{N_\Lambda}^s\theta_0(M_v^s)$, for every $v\in\Gamma$.    
    The conclusion now follows readily from Theorem~\ref{factors2}.
\end{proof}

The rest of the section is devoted to the proof of Theorem~\ref{factors2}.

\begin{proof}[\bf Proof of Theorem \ref{factors2}]
By symmetry, we may assume that $t\leq 1$. Put $P=M_\Gamma$. We identify $P^t=pPp$, for a projection $p\in P$ with $\tau(p)=t$, and $pPp=N_\Lambda$, via $\theta$. For integers $\ell\geq k\geq 1$, we consider the usual non-unital embedding $\mathbb M_k(\mathbb C)\subset\mathbb M_{\ell}(\mathbb C)$. We denote by $\text{Tr}$  the non-normalized trace on $\mathbb M_k(\mathbb C)$ and by $e_{i,j}\in\mathbb M_k(\mathbb C), 1\leq i,j\leq k,$ the elementary matrices, so that $\text{Tr}(e_{i,i})=1$, for every $1\leq i\leq k$.

The proof is divided between several claims. We start with the following claim.

\begin{claim}\label{delta_v}
For every $v\in\Gamma$, we can find integers $k_v,\ell_v\geq 1$ with $1\in\{k_v,\ell_v\}$, projections $p_v\in \mathbb M_{k_v}(\mathbb C)\otimes M_v$ and $q_v\in \mathbb M_{\ell_v}(\mathbb C)\otimes M_{\emph{lk}(v)}$, and a partial  isometry $\delta_v\in \mathbb M_{k_v\ell_v}(\mathbb C)\otimes P$ such that, identifying 
$\mathbb M_{k_v\ell_v}(\mathbb C)\otimes M_{\emph{st}(v)}$ with $(\mathbb M_{k_v}\otimes M_v)\overline{\otimes}(\mathbb 
M_{\ell_v}(\mathbb C)\otimes M_{\emph{lk}(v)})$ in the natural way, we have
\begin{equation}\label{delta_vv}
\begin{array}{lr}
\delta_v\delta_v^*=e_{1,1}\otimes p, \\
\delta_v^*\delta_v=p_v\otimes q_v,\\ \delta_v(p_v(\mathbb M_{k_v}(\mathbb C)\otimes M_v)p_v\otimes q_v)\delta_v^*=e_{1,1}\otimes N_{\alpha(v)}.
\end{array}
\end{equation} 
Moreover, we may take $k_v,\ell_v$ as the smallest integers with $(\emph{Tr}\otimes\tau)(p_v)\leq k_v$ and $(\emph{Tr}\otimes\tau)(q_v)\leq \ell_v$. 
\end{claim}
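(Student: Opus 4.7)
Fix $v\in\Gamma$ and let $w=\alpha(v)$. Since each vertex algebra is a II$_1$ factor, so are $M_{\st(v)}=M_v\overline\otimes M_{\lk(v)}$ and $N_{\st(w)}=N_w\overline\otimes N_{\lk(w)}$. The plan is to apply Lemma~\ref{interwining_to_isomorphism}(1), but with the roles of $M_v$ and $N_w$ swapped: I will view $\theta$ as realizing $N_\Lambda=pM_\Gamma p$ as a corner of $M_\Gamma$, so that $M_v$ plays the role of a vertex subalgebra of the ambient graph product and $N_{\alpha(v)}$ plays the role of the algebra embedded into the corner. In this swapped form, the intertwining hypotheses $\theta(M_v^t)\prec^s N_{\alpha(v)}$ and $N_{\alpha(v)}\prec^s\theta(M_v^t)$ become the mutual intertwining required by the lemma.

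A subtle point is that the swapped form of Lemma~\ref{interwining_to_isomorphism}(1) requires the projection defining the corner to lie in $\mathbb M_n(\mathbb C)\otimes M_{\st(v)}$, whereas our $p$ is only known to lie in $M_\Gamma$. This is precisely what the amplification by $k_v\ell_v$ in the statement is meant to handle: I will choose integers $k_v,\ell_v\geq 1$ and projections $p_v\in\mathbb M_{k_v}(\mathbb C)\otimes M_v$, $q_v\in\mathbb M_{\ell_v}(\mathbb C)\otimes M_{\lk(v)}$ with $(\mathrm{Tr}\otimes\tau)$-traces $s_v,t_v$ satisfying $s_v t_v=t$, so that $p_v\otimes q_v\in\mathbb M_{k_v\ell_v}(\mathbb C)\otimes M_{\st(v)}$ has $(\mathrm{Tr}\otimes\tau)$-trace $t$. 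In the II$_1$ factor $\mathbb M_{k_v\ell_v}(\mathbb C)\otimes M_\Gamma$, the projections $p_v\otimes q_v$ and $e_{1,1}\otimes p$ are then equivalent, yielding a partial isometry $\eta_v$ with $\eta_v^*\eta_v=p_v\otimes q_v$ and $\eta_v\eta_v^*=e_{1,1}\otimes p$. Conjugation by $\eta_v$ transports the isomorphism $\theta$ to a realization of $N_\Lambda$ as $(p_v\otimes q_v)(\mathbb M_{k_v\ell_v}(\mathbb C)\otimes M_\Gamma)(p_v\otimes q_v)$, now defined by a projection in $\mathbb M_{k_v\ell_v}(\mathbb C)\otimes M_{\st(v)}$ as required by the swapped setup of Notation~\ref{notation}.

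The swapped Lemma~\ref{interwining_to_isomorphism}(1) will then produce a unitary $u$ in that corner and a decomposition of $(p_v\otimes q_v)(\mathbb M_{k_v\ell_v}(\mathbb C)\otimes M_{\st(v)})(p_v\otimes q_v)$ as $M_v^s\overline\otimes M_{\lk(v)}^{t/s}$, such that $u$ conjugates the image of $N_{\alpha(v)}$ onto $M_v^s\otimes 1$. Here the positive number $s$ is determined by $\theta$ up to the fundamental group $\mathcal F(M_v)$. I will then set $s_v=s$ and $t_v=t/s$, and take $k_v,\ell_v$ to be the smallest integers with $s_v\le k_v$ and $t_v\le\ell_v$; since $s_v t_v=t\leq 1$, at least one of $s_v,t_v$ is $\leq 1$, so $1\in\{k_v,\ell_v\}$. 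This alignment forces the decomposition produced by the lemma to coincide with the natural one $p_v(\mathbb M_{k_v}(\mathbb C)\otimes M_v)p_v\overline\otimes q_v(\mathbb M_{\ell_v}(\mathbb C)\otimes M_{\lk(v)})q_v$ up to an inner automorphism, which Lemma~\ref{tensor_product} realizes as conjugation by a further unitary. Absorbing that unitary and $u$ into $\eta_v$ will produce the partial isometry $\delta_v$ satisfying the three required equations. The main obstacle will be executing this alignment cleanly, because $s$ is only intrinsically determined modulo $\mathcal F(M_v)$ and so the choice of $p_v$ and $q_v$ must be adjusted after a preliminary application of the lemma; the rigidity of tensor factorizations of II$_1$ factors (Lemma~\ref{tensor_product}) is the key ingredient that makes this step work.
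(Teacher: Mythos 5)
Your overall route is the paper's route in all essentials: the paper proves Claim~\ref{delta_v} by re-running, in this swapped/corner setting, exactly the argument that proves Lemma~\ref{interwining_to_isomorphism}(1) (Proposition~\ref{normalizer'} to pass to stars, Theorem~\ref{uni_conj} in both directions plus Proposition~\ref{normalizer} to get $\mathrm{Ad}(\eta)(eM_{\st(v)}e)=N_{\st(\alpha(v))}$ for a partial isometry built from $e\in M_v$ with $\tau(e)=t$, then Lemma~\ref{descend} and Lemma~\ref{tensor_product} to align the tensor factors), rather than citing the lemma after an amplification trick as you do. Your swapped invocation of Lemma~\ref{interwining_to_isomorphism}(1) is legitimate (its proof only uses the graph product structure of the ambient algebra, so the roles of $\Gamma$ and $\Lambda$ in Notation~\ref{notation} can be exchanged), and your transport of $\theta$ to the corner by $p_v\otimes q_v$ via equivalence of projections of trace $t$ is fine, though unnecessary: once one assumes $t\le 1$, taking $e\in M_v$ with $\tau(e)=t$ already places the corner projection inside $M_{\st(v)}$ and avoids the preliminary amplification altogether.

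The one step that does not work as you describe it is the final ``alignment''. After the first pass you have a decomposition of the corner as $M_v^{s}\overline{\otimes}M_{\lk(v)}^{t/s}$ with the image of $N_{\alpha(v)}$ unitarily conjugate to the $M_v^{s}$ factor; you then preassign $p_v,q_v$ with traces $s,t/s$ and claim Lemma~\ref{tensor_product} conjugates the produced decomposition onto the natural one $p_v(\mathbb{M}_{k_v}(\mathbb{C})\otimes M_v)p_v\overline{\otimes}q_v(\mathbb{M}_{\ell_v}(\mathbb{C})\otimes M_{\lk(v)})q_v$. But Lemma~\ref{tensor_product}(2) applied to two such decompositions only yields a unitary carrying $(M_v^{s})^{r}$ onto the natural factor for some $r>0$ (a priori only constrained to lie in $\mathcal{F}(M_v)$), and $(M_v^{s})^{r}$ is not unitarily conjugate to $M_v^{s}$ inside the corner when $r\neq 1$; so this does not force the natural factor to be conjugate to the image of $N_{\alpha(v)}$, and re-adjusting $p_v,q_v$ and repeating could loop without terminating. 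The fix — and what the paper actually does — is to never preassign $p_v,q_v$: the decomposition $eM_{\st(v)}e=M_v^{t_1}\overline{\otimes}M_{\lk(v)}^{t_2}$ delivered by Lemma~\ref{tensor_product} is, by the meaning of such decompositions, in standard position relative to $M_v\overline{\otimes}M_{\lk(v)}$, so it can be written directly as $p_v(\mathbb{M}_{k_v}(\mathbb{C})\otimes M_v)p_v\overline{\otimes}q_v(\mathbb{M}_{\ell_v}(\mathbb{C})\otimes M_{\lk(v)})q_v$ for projections $p_v,q_v$ of traces $t_1,t_2$ and the smallest admissible $k_v,\ell_v$; one then reads off $\delta_v$, and $1\in\{k_v,\ell_v\}$ because $t_1t_2=t\le 1$. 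With that correction your argument closes and coincides with the paper's.
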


\begin{proof}[Proof of Claim \ref{delta_v}]
Let $v\in \Gamma$ and $e\in M_v$ be a projection with $\tau(e)=\tau(p)$.
Since $M_v\prec_P^sN_{\alpha(v)}$ and $N_{\alpha(v)}\prec_P^sM_v$, Proposition \ref{normalizer'} implies that $M_{\text{st}(v)}\prec_P^sN_{\text{st}(\alpha(v))}$ and $N_{\text{st}(\alpha(v))}\prec_P^sM_{\text{st}(v)}$. By Lemma~\ref{basic} we get that $M_{\text{st}(v)}'\cap P=\mathbb C$ and $N_{\text{st}(\alpha(v))}'\cap pPp=\mathbb Cp$. 

We observe that for every proper subgraph $\Lambda'\subsetneq\st(\alpha(v))$, we have $M_{\st(v)}\not\prec_P N_{\Lambda'}$. Otherwise, as a consequence of Lemma~\ref{basic}(1) and our assumption that all vertex algebras are factors, $N_{\Lambda'}$ is a factor. Proposition~\ref{results}(3) would then imply that $M_{\st(v)}\prec_P^s N_{\Lambda'}$, and Proposition~\ref{results}(2) would further give that $N_{\st(\alpha(v))}\prec_P^s N_{\Lambda'}$, contradicting Lemma~\ref{basic}(2). Likewise, for every proper subgraph $\Gamma'\subsetneq\st(v)$, we have $N_{\st(\alpha(v))}\not\prec_P M_{\Gamma'}$.

Next, we apply Theorem~\ref{uni_conj}. We have $eM_{\st(v)}e\subset ePe$ and $N_{\st(\alpha(v))}\subset pPp$. Since $\tau(p)=\tau(e)$ and $P$ is a II$_1$ factor, we can find a partial isometry $\delta\in P$ such that $\delta\delta^*=p$ and $\delta^*\delta=e$. Thus, we have $\delta eM_{\st(v)}e\delta^*\subset pPp=N_\Lambda$. By applying Theorem~\ref{uni_conj} to this inclusion we get a unitary $u\in\mathcal{U}(N_\Lambda)$ such that $u(\delta eM_{\st(v)}e\delta^*)u^*\subset N_{\st(\alpha(v))}$. Thus, the partial isometry $\eta=u\delta\in P$ satisfies $\eta\eta^*=p$, $\eta^*\eta=e$, and \[\mathrm{Ad}(\eta)(eM_{\text{st}(v)}e)\subset N_{\text{st}(\alpha(v))}.\]
Similarly, we also find a partial isometry $\xi\in P$ such that $\xi\xi^*=e, \xi^*\xi=p$, and \[\text{Ad}(\xi)(N_{\text{st}(\alpha(v))})\subset eM_{\text{st}(v)}e.\]
Thus, we derive that \begin{equation}\label{etaxi}\text{Ad}(\eta\xi)(N_{\text{st}(\alpha(v))})\subset \text{Ad}(\eta)(eM_{\text{st}(v)}e)\subset N_{\text{st}(\alpha(v))}.\end{equation}
By using \eqref{etaxi}, Proposition \ref{normalizer} gives that $\eta\xi\in\mathcal U(N_{\text{st}(\alpha(v))})$. Hence,  $\text{Ad}(\eta\xi)(N_{\text{st}(\alpha(v))})=N_{\text{st}(\alpha(v))}$ and  $\text{Ad}(\eta)(eM_{\text{st}(v)}e)=N_{\text{st}(\alpha(v))}$. Further, since  $M_v\prec_P^sN_{\alpha(v)}$ and $N_{\alpha(v)}\prec_P^sM_v$,  Lemma \ref{descend} implies that $\text{Ad}(\eta)(eM_ve)\prec^s_{N_{\text{st}(\alpha(v))}}N_{\alpha(v)}$ and $N_{\alpha(v)}\prec^s_{N_{\text{st}(\alpha(v))}}\text{Ad}(\eta)(eM_ve)$.

Since $N_{\text{st}(\alpha(v))}=N_{\alpha(v)}\overline{\otimes}N_{\text{lk}(\alpha(v))}$ and $eM_{\text{st}(v)}e=eM_ve\overline{\otimes}M_{\text{lk}(v)}$, 
Lemma \ref{tensor_product} provides a unitary $\zeta\in N_{\text{st}(\alpha(v))}$ and a decomposition $eM_{\text{st}(v)}e=M_{v}^{t_1}\overline{\otimes}M_{\text{lk}(v)}^{t_2}$, for some $t_1,t_2>0$ with $t_1t_2=t$, such that $\text{Ad}(\zeta\eta)(M_v^{t_1})=N_{\alpha(v)}$. 
Let $k,\ell$ be any positive integers such that $k\geq t_1$ and $\ell\geq t_2$. Let $p_0\in\mathbb M_k(\mathbb C)\otimes M_v$ and  $q_0\in\mathbb M_\ell(\mathbb C)\otimes M_{\text{lk}(v)}$ be projections such that $(\text{Tr}\otimes\tau)(p_0)=t_1$ and $(\text{Tr}\otimes\tau)(q_0)=t_2$. If we identify $(\mathbb M_k(\mathbb C)\otimes M_v)\overline{\otimes}(\mathbb M_\ell(\mathbb C)\otimes M_{\text{lk}(v)})=\mathbb M_{k\ell}(\mathbb C)\otimes M_{\text{st}(v)}$, then $(\text{Tr}\otimes\tau)(p_0\otimes q_0)=t_1t_2=t$. Therefore, $M_{\text{st}(v)}^t\cong eM_{\text{st}(v)}e$ can be identified with $(p_0(\mathbb M_k(\mathbb C)\otimes M_v)p_0)\overline{\otimes}(q_0(\mathbb M_\ell(\mathbb C)\otimes M_{\text{lk}(v)})q_0)$ in such a way that we have $M_v^{t_1}=p_0(\mathbb M_k(\mathbb C)\otimes M_v)p_0$ and $M_{\text{lk}(v)}^{t_2}=q_0(\mathbb M_\ell(\mathbb C)\otimes M_{\text{lk}(v)})q_0$. Thus, the claim in particular holds for the smallest integers $k_v,\ell_v$ such that  $k_v\geq t_1$ and $\ell_v\geq t_2$. Since $t_1t_2=t\leq 1$, we have $t_1\leq 1$ or $t_2\leq 1$ and hence $1\in\{k_v,\ell_v\}$.
\end{proof}

From now on we fix $k_v,\ell_v,p_v,q_v,\delta_v$ given by Claim~\ref{delta_v}, for every vertex $v\in\Gamma$. We define
$$\text{$A=\{v\in\Gamma\mid (\text{Tr}\otimes\tau)(p_v)\leq 1\}$\;\;\; and \;\;\; $B=\{v\in\Gamma\mid(\text{Tr}\otimes\tau)(q_v)\leq 1\}$.}$$
Since by Claim~\ref{delta_v} we have \[(\text{Tr}\otimes\tau)(p_v)(\text{Tr}\otimes\tau)(q_v)=(\mathrm{Tr}\otimes\tau)(\delta_v^*\delta_v)=(\mathrm{Tr}\otimes\tau)(\delta_v\delta_v^*)=t\leq 1,\] it follows that $A\cup B=\Gamma$.
The moreover assertion of Claim \ref{delta_v} 
also gives that $v\in A$ if and only if $k_v=1$ (in which case $p_v\in M_v$), and that $v\in B$ if and only if  $\ell_v=1$ (in which case $q_v\in M_{\text{lk}(v)}$).

\begin{claim}\label{p_vq_v'} If $v\in\Gamma$ and $v'\in\emph{lk}(v)$, then $(\emph{Tr}\otimes\tau)(p_v)=(\emph{Tr}\otimes\tau)(q_{v'})$ and  $(\emph{Tr}\otimes\tau)(q_v)=(\emph{Tr}\otimes\tau)(p_{v'})$.
In particular, $v\in A$ if and only if $v'\in B$, and $(k_v,\ell_v)=(\ell_{v'},k_{v'})$.
\end{claim}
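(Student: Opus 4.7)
The plan is to combine the partial isometries $\delta_v,\delta_{v'}$ from Claim~\ref{delta_v} with the commutation $[N_{\alpha(v)},N_{\alpha(v')}]=0$, which holds because $\alpha(v),\alpha(v')$ are adjacent in $\Lambda$ (as $\alpha$ is a graph isomorphism and $v\sim v'$). First, the girth $\geq 5$ hypothesis on $\Gamma$ implies $\lk(v)\cap\lk(v')=\emptyset$, yielding the tensor decompositions $M_{\lk(v)}=M_{v'}\overline{\otimes}M_{\lk(v)\setminus\{v'\}}$ and $M_{\lk(v')}=M_v\overline{\otimes}M_{\lk(v')\setminus\{v\}}$ (all tensor factors being II$_1$), and analogously on the $\Lambda$ side. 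By Lemma~\ref{basic}(1), $M_{\st(v)}'\cap M_\Gamma=\mathbb{C}$ since $\st(v)^\perp=\emptyset$.

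The main construction is the partial isometry $\eta:=\delta_v^*\delta_{v'}$, formed in a common matrix amplification of $P$ into which both $\delta_v,\delta_{v'}$ embed. Using $\delta_v\delta_v^*=e_{1,1}\otimes p=\delta_{v'}\delta_{v'}^*$, a direct computation yields $\eta\eta^*=p_v\otimes q_v$ and $\eta^*\eta=p_{v'}\otimes q_{v'}$, so in particular $(\text{Tr}\otimes\tau)(p_v\otimes q_v)=(\text{Tr}\otimes\tau)(p_{v'}\otimes q_{v'})=t$. Setting $A_w:=p_w(\mathbb{M}_{k_w}\otimes M_w)p_w\otimes q_w$, so Claim~\ref{delta_v} gives $\delta_wA_w\delta_w^*=e_{1,1}\otimes N_{\alpha(w)}$, the commutation $[N_{\alpha(v)},N_{\alpha(v')}]=0$ translates via pullback through $\delta_v^*$ into the relation $[A_v,\,\eta A_{v'}\eta^*]=0$ inside the corner $C_v:=(p_v\otimes q_v)(\mathbb{M}\otimes P)(p_v\otimes q_v)\cong M_\Gamma^t$.

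The key identification step is that $\eta A_{v'}\eta^*$ lies inside $M_{\lk(v)}^{t_2(v)}$, the tensor-complementary factor of $A_v$ inside $M_{\st(v)}^t$. Indeed, the amplification computation $(pM_vp)'\cap pM_\Gamma p=pM_{\lk(v)}$ (for $p\in M_v$ with $\tau(p)=t$) combined with the irreducibility $M_{\st(v)}'\cap M_\Gamma=\mathbb{C}$ forces $A_v'\cap C_v\subset M_{\st(v)}^t$, hence $A_v'\cap C_v=M_{\lk(v)}^{t_2(v)}$. Thus $\eta A_{v'}\eta^*$ is a unital subfactor of $M_{\lk(v)}^{t_2(v)}$, abstractly isomorphic to $A_{v'}\cong N_{\alpha(v')}$. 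Using $M_{v'}\prec_P^sN_{\alpha(v')}$ and $N_{\alpha(v')}\prec_P^sM_{v'}$ (descended via Lemma~\ref{descend} to the II$_1$ factor $M_{\lk(v)}^{t_2(v)}$), and applying Lemma~\ref{tensor_product}(2) to the decomposition $M_{\lk(v)}^{t_2(v)}=M_{v'}^{a}\overline{\otimes}M_{\lk(v)\setminus\{v'\}}^{t_2(v)/a}$, one conjugates $\eta A_{v'}\eta^*$ by a unitary in $M_{\lk(v)}^{t_2(v)}$ onto the tensor factor $M_{v'}^{a}$ for a specific $a$. Running the symmetric argument with $\eta^*=\delta_{v'}^*\delta_v$ (which places $\eta^*A_v\eta$ as a tensor factor of $M_{\lk(v')}^{t_2(v')}$ with amplification $b$ on the $M_v$-side) yields matching constraints forcing $a=t_2(v')$, so that $(\text{Tr}\otimes\tau)(p_v)=t_1(v)$ equals $(\text{Tr}\otimes\tau)(q_{v'})=t_2(v')$. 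The second identity $(\text{Tr}\otimes\tau)(q_v)=(\text{Tr}\otimes\tau)(p_{v'})$ then follows from the relation $t_1(w)t_2(w)=t$ established in Claim~\ref{delta_v}, and the ``in particular'' statement on the integers $k_w,\ell_w$ is immediate from their minimality.

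The main difficulty will be pinning down the amplification parameter $a$ as exactly $t_2(v')$, rather than only up to the fundamental group $\mathcal{F}(M_{v'})$ (which is what a naive comparison of abstract isomorphism classes gives, since $M_{v'}^a\cong\eta A_{v'}\eta^*\cong M_{v'}^{t_1(v')}$). Resolving this requires exploiting simultaneously the $v$-side data (via $\eta$) and the $v'$-side data (via $\eta^*$) carried by the same partial isometry, together with the canonical choice of $k_w,\ell_w$ as minimal integers from the ``moreover'' clause of Claim~\ref{delta_v}; this trace-bookkeeping is the technical heart of the proof.
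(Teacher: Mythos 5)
Your setup (common matrix amplification, the partial isometry $\eta=\delta_v^*\delta_{v'}$, the commutation of $N_{\alpha(v)}$ and $N_{\alpha(v')}$, and the relative commutant computation $A_v'\cap C_v\cong M_{\lk(v)}^{t_2(v)}$ via $M_v'\cap M_\Gamma=M_{\lk(v)}$) is sound, and in fact the $\eta$-trick is exactly what the paper uses in the \emph{next} step (Claim~\ref{deco}). But as a proof of Claim~\ref{p_vq_v'} there is a genuine gap: the exact trace identity $(\mathrm{Tr}\otimes\tau)(p_v)=(\mathrm{Tr}\otimes\tau)(q_{v'})$ \emph{is} the entire content of the claim, and your argument stops precisely where it should start. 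As you yourself note, locating $\eta A_{v'}\eta^*$ as a unital subfactor of $M_{\lk(v)}^{t_2(v)}$ and comparing abstract isomorphism types only determines the amplification parameter up to $\mathcal F(M_{v'})$, which can be all of $\mathbb R_+^*$ since the vertex factors are arbitrary; the proposed remedies do not close this. The minimality of $k_w,\ell_w$ is a pure normalization of matrix sizes (its only role in the paper is to guarantee $1\in\{k_w,\ell_w\}$ for the later $A/B$ dichotomy) and carries no trace information, and the ``matching constraints'' from running $\eta$ and $\eta^*$ symmetrically are never made concrete: from the two sides you only get unital embeddings $A_{v'}\hookrightarrow M_{\lk(v)}^{t_2(v)}$ and $A_v\hookrightarrow M_{\lk(v')}^{t_2(v')}$, and unital embeddings of amplifications never force equality of the amplification constants by themselves. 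There is also a secondary flaw: Lemma~\ref{tensor_product}(2) requires two genuine tensor-product decompositions of the same II$_1$ factor, and $\eta A_{v'}\eta^*$ is not known to be a tensor factor of $M_{\lk(v)}^{t_2(v)}$ (its relative commutant is only computed inside the larger corner $C_v$), so the step ``conjugate $\eta A_{v'}\eta^*$ onto $M_{v'}^a$'' is not licensed as stated.

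What is missing is a mechanism that converts a conjugation implemented by a partial isometry \emph{lying inside the relevant tensor product} into an exact trace equality; this is how the paper avoids the fundamental-group ambiguity. Concretely, the paper first conjugates the pair simultaneously: using Corollary~\ref{intersection} and triangle-freeness one gets $M_{\{v,v'\}}\prec^s_P N_{\{\alpha(v),\alpha(v')\}}$ and conversely, then Theorem~\ref{uni_conj} together with Lemma~\ref{tensor_product}(2) produces a single unitary carrying $M_v^{s_1}$ onto $N_{\alpha(v)}$ and $M_{v'}^{s_2}$ onto $N_{\alpha(v')}$ with $s_1s_2=t$; this yields common reference projections $r\in\mathbb M_k(\mathbb C)\otimes M_v$, $r'\in\mathbb M_\ell(\mathbb C)\otimes M_{v'}$ of traces $s_1,s_2$ against which \emph{both} sides are measured. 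Then the comparison partial isometries $\omega=w^*\delta_v$ and $\omega'=w^*\delta_{v'}$ are shown, via Proposition~\ref{normalizer}(1), to lie in $\mathbb M_{k\ell}(\mathbb C)\otimes M_{\st(v)}$ and $\mathbb M_{k\ell}(\mathbb C)\otimes M_{\st(v')}$ respectively, and Lemma~\ref{conjugacy_in_tensors} (applied directly for $\omega$, and after a flip unitary exchanging the two matrix legs, which is why $k=\ell$, for $\omega'$) gives the exact equalities $(\mathrm{Tr}\otimes\tau)(p_v)=s_1=(\mathrm{Tr}\otimes\tau)(q_{v'})$ and $(\mathrm{Tr}\otimes\tau)(q_v)=s_2=(\mathrm{Tr}\otimes\tau)(p_{v'})$. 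Your proposal contains no analogue of Lemma~\ref{conjugacy_in_tensors}, no control on where the conjugating partial isometry lives, and no common reference pair tying the $v$-side to the $v'$-side, so the ``trace-bookkeeping'' you defer is not a technicality but the actual proof.
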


\begin{proof}[Proof of Claim \ref{p_vq_v'}] Let $v\in\Gamma$ and $v'\in\text{lk}(v)$. Since $M_{\text{st}(v)}\prec_P^sN_{\text{st}(\alpha(v))}$ and  $M_{\text{st}(v')}\prec_P^sN_{\text{st}(\alpha(v'))}$, we get that  $M_{\{v,v'\}}\prec_P^sN_{\text{st}(\alpha(v))}$ and  $M_{\{v,v'\}}\prec_P^sN_{\text{st}(\alpha(v'))}$.
By applying Corollary \ref{intersection} we derive that $M_{\{v,v'\}}\prec_P^s N_{\text{st}(\alpha(v))\cap\text{st}(\alpha(v'))}$.
Since $\Lambda$ is triangle-free, we get that $\text{st}(\alpha(v))\cap\text{st}(\alpha(v'))=\{\alpha(v),\alpha(v')\}$ and thus $M_{\{v,v'\}}\prec_P^s N_{\{\alpha(v),\alpha(v')\}}$. Similarly, we get that $N_{\{\alpha(v),\alpha(v')\}}\prec_P^s M_{\{v,v'\}}$. 

We also observe that $M_{\{v,v'\}}\not\prec_P N_{\Lambda'}$, for every proper full subgraph $\Lambda'\subsetneq\{\alpha(v),\alpha(v')\}$, and that $N_{\{\alpha(v),\alpha(v')\}}\not\prec_P M_{\Gamma'}$, for every proper full subgraph $\Gamma'\subsetneq\{v,v'\}$, see the argument in the second paragraph of the proof of Claim~\ref{delta_v}.

Since $\Gamma$ and $\Lambda$ are triangle-free, we also derive that $\{v,v'\}^\perp=\{\alpha(v),\alpha(v')\}^\perp=\emptyset$. By Lemma~\ref{basic}(1) we get that $M_{\{v,v'\}}'\cap P=\mathbb C$ and $N_{\{\alpha(v),\alpha(v')\}}'\cap pPp=\mathbb Cp$. Let $e\in M_v$ be a projection with $\tau(e)=\tau(p)$.
By reasoning as in the proof of Claim \ref{delta_v} we find a partial isometry $\rho\in P$ such that $\rho^*\rho=e$, $\rho\rho^*=p$ and $\text{Ad}(\rho)(eM_{\{v,v'\}}e)=N_{\{\alpha(v),\alpha(v')\}}$. Since $M_v\prec_P^s N_{\alpha(v)}$ and $N_{\alpha(v)}\prec_P^sM_v$, Lemma \ref{descend} implies that $\text{Ad}(\rho)(eM_ve)\prec^s_{N_{\{\alpha(v),\alpha(v')\}}}N_{\alpha(v)}$ and $N_{\alpha(v)}\prec^s_{N_{\{\alpha(v),\alpha(v')\}}}\text{Ad}(\rho)(eM_ve)$. 

Since $M_{\{v,v'\}}=M_v\overline{\otimes}M_{v'}$ and $N_{\{\alpha(v),\alpha(v')\}}=N_{\alpha(v)}\overline{\otimes}N_{\alpha(v')}$, by applying Lemma \ref{tensor_product} we can find a unitary $\gamma\in N_{\{\alpha(v),\alpha(v')\}}$  and a decomposition $eM_{\{v,v'\}}e=M_v^{s_1}\overline{\otimes}M_{v'}^{s_2}$, for some $s_1,s_2>0$ with $s_1s_2=t$,  such that $\text{Ad}(\gamma\rho)(M_v^{s_1})=N_{\alpha(v)}$ and $\text{Ad}(\gamma\rho)(M_{v'}^{s_2})=N_{\alpha(v')}$.
 
 Let  $k\geq\max\{s_1,k_v,k_{v'}\}$ and $\ell\geq\max\{s_2,\ell_v,\ell_{v'} \}$ be integers, chosen with $k=\ell$. Note that even though $k=\ell$, it will be useful to keep two different letters in order to better distinguish the embeddings of $\mathbb{M}_k(\mathbb{C})$ and $\mathbb{M}_{\ell}(\mathbb{C})$ in $\mathbb{M}_{k\ell}(\mathbb{C})$ through the identification $\mathbb{M}_k(\mathbb{C})\otimes\mathbb{M}_{\ell}(\mathbb{C})\cong\mathbb{M}_{k\ell}(\mathbb{C})$. Since $k\geq s_1$ and $\ell\geq s_2$, there are projections $r\in \mathbb M_k(\mathbb C)\otimes M_v$, $r'\in\mathbb M_\ell(\mathbb C)\otimes M_{v'}$ such that $(\text{Tr}\otimes\tau)(r)=s_1$ and $(\text{Tr}\otimes\tau)(r')=s_2$. By using the previous paragraph and arguing as in the end of the proof of Claim \ref{delta_v} we derive the existence of a unitary $w\in\mathbb M_{k\ell}(\mathbb C)\otimes P$ such that 
 \begin{equation}\label{w}\begin{split} & w(r(\mathbb M_k(\mathbb C)\otimes M_v)r\otimes r')w^*=e_{1,1}\otimes N_{\alpha(v)} \;\;\; \text{and}\\& w(r\otimes r'(\mathbb M_\ell(\mathbb C)\otimes M_{v'})r')w^*=e_{1,1}\otimes N_{\alpha(v')}.\end{split}\end{equation}

Using that $k\geq k_v$ and $\ell\geq\ell_v$, we consider the non-unital embedding $\mathbb M_{k_v\ell_v}(\mathbb C)\subset\mathbb M_{k\ell}(\mathbb C)$ induced from the non-unital embeddings $\mathbb M_{k_v}(\mathbb C)\subset\mathbb M_{k}(\mathbb C)$ and $\mathbb M_{\ell_v}(\mathbb C)\subset\mathbb M_\ell(\mathbb C)$. Similarly, we consider the embedding $\mathbb M_{k_{v'}\ell_{v'}}(\mathbb C)\subset\mathbb M_{k\ell}(\mathbb C)$. Altogether, this allows us to view $\delta_v,\delta_{v'}\in\mathbb M_{k\ell}(\mathbb C)\otimes P$.
 Let $\omega=w^*\delta_v,\omega'=w^*\delta_{v'}\in \mathbb M_{k\ell}(\mathbb C)\otimes P$. Claim \ref{delta_v} and Equation \eqref{w} together imply that  $\omega,\omega'$ are partial isometries such that $\omega^*\omega=p_v\otimes q_v,\omega\omega^*=r\otimes r', \omega'^*\omega'=p_{v'}\otimes q_{v'}, \omega'\omega'^*=r\otimes r'$, 
\begin{equation}\label{omega}\omega(p_v(\mathbb M_k(\mathbb C)\otimes M_v)p_v\otimes q_v)\omega^*=r(\mathbb M_k(\mathbb C)\otimes M_v)r\otimes r'\end{equation} and \begin{equation}\label{omega'}\omega'(p_{v'}(\mathbb M_k(\mathbb C)\otimes M_{v'})p_{v'}\otimes q_{v'})\omega'^*=r\otimes r'(\mathbb M_\ell(\mathbb C)\otimes M_{v'})r'.\end{equation}
By using Proposition \ref{normalizer}(1), we get that $\omega\in \mathbb M_{k\ell}(\mathbb C)\otimes M_{\text{st}(v)}$ and $\omega'\in\mathbb M_{k\ell}(\mathbb C)\otimes M_{\text{st}(v')}$. 
Since $\omega\in \mathbb M_{k\ell}(\mathbb C)\otimes M_{\text{st}(v)}$, by combining \eqref{omega} and Lemma \ref{conjugacy_in_tensors}, it follows that  
\begin{equation}\label{traces1}\text{$(\text{Tr}\otimes\tau)(p_v)=(\text{Tr}\otimes\tau)(r)$\;\;\; and \;\;\; $(\text{Tr}\otimes\tau)(q_v)=(\text{Tr}\otimes\tau)(r')$.}\end{equation}
We next claim that
\begin{equation}\label{traces2}\text{($\text{Tr}\otimes\tau)(p_{v'})=(\text{Tr}\otimes\tau)(r')$\;\;\; and \;\;\; $(\text{Tr}\otimes\tau)(q_{v'})=(\text{Tr}\otimes\tau)(r)$}.\end{equation}

Since $k=\ell$, there is  $y\in\mathcal U(\mathbb M_{k\ell}(\mathbb C))$ such that $\text{Ad}(y)(\mathbb M_k(\mathbb C)\otimes 1)=1\otimes\mathbb M_\ell(\mathbb C)$.
By taking relative commutants, we also have $\text{Ad}(y)(1\otimes\mathbb M_\ell(\mathbb C))=\mathbb M_k(\mathbb C)\otimes 1$. Put $z=y\otimes 1\in\mathcal U(\mathbb M_{k\ell}(\mathbb C)\otimes P)$.  Since $r\in \mathbb M_k(\mathbb C)\otimes M_v$ and  $r'\in\mathbb M_\ell(\mathbb C)\otimes M_{v'}$, we get that $s=\text{Ad}(z)(r)\in\mathbb M_\ell(\mathbb C)\otimes M_v$ and $s'=\text{Ad}(z)(r')\in\mathbb M_k(\mathbb C)\otimes M_{v'}$.
Moreover, $\text{Ad}(z)(r\otimes r'(\mathbb M_\ell(\mathbb C)\otimes M_{v'})r')=s\otimes s'(\mathbb M_k(\mathbb C)\otimes M_{v'})s'$, which in combination with \eqref{omega'} gives that 
$$(z\omega')(p_{v'}(\mathbb M_k(\mathbb C)\otimes M_{v'})p_{v'}\otimes q_{v'})(z\omega')^*=s\otimes s'(\mathbb M_k(\mathbb C)\otimes M_{v'})s'.$$
Since $z\omega'\in \mathbb M_{k\ell}(\mathbb C)\otimes M_{\text{st}(v')}$, we can apply Lemma \ref{conjugacy_in_tensors} to conclude that 
$$\text{$(\text{Tr}\otimes\tau)(p_{v'})=(\text{Tr}\otimes\tau)(s')=(\text{Tr}\otimes\tau)(r')$\;\;\; and \;\;\; $(\text{Tr}\otimes\tau)(q_{v'})=(\text{Tr}\otimes \tau)(s)=(\text{Tr}\otimes\tau)(r)$}.$$
This proves \eqref{traces2} which together with \eqref{traces1} implies the claim. \end{proof}

 Our next goal is to prove that for adjacent vertices $v,v'\in\Gamma$, the partial isometry $\delta_{v'}^*\delta_v$ admits a special decomposition given by the following claim.

\begin{claim}\label{deco} Let $v\in\Gamma$, $v'\in\emph{lk}(v)$, and let $m\geq \max\{k_v,\ell_v,k_{v'},\ell_{v'}\}$ be an integer. 
Then there exist partial isometries $\alpha_{v',v}, \alpha_{v,v'}\in\mathbb M_m(\mathbb C)\otimes P$  such that   
\begin{enumerate}
\item $\delta_{v'}^*\delta_v=\alpha_{v',v}^*\alpha_{v,v'}$.
\item $\alpha_{v',v}\in  1\otimes M_{\emph{lk}(v')}$ and $\alpha_{v,v'}\in \mathbb M_m(\mathbb C)\otimes M_{\emph{lk}(v)}$ if $v\in A$.

\item $\alpha_{v',v}\in \mathbb M_m(\mathbb C)\otimes M_{\emph{lk}(v')}$ and $\alpha_{v,v'}\in 1\otimes M_{\emph{lk}(v)}$ if $v\in B$.

\item $\alpha_{v',v}\alpha_{v',v}^*=p_v$ and $\alpha_{v',v}^*\alpha_{v',v}=q_{v'}$.
\item $\alpha_{v,v'}\alpha_{v,v'}^*=p_{v'}$ and $\alpha_{v,v'}^*\alpha_{v,v'}=q_v$.
\end{enumerate}
\end{claim}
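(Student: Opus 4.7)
By Claim~\ref{p_vq_v'}, we have $(k_v,\ell_v)=(\ell_{v'},k_{v'})$ and $v\in A$ if and only if $v'\in B$; assertions (2) and (3) of the claim are therefore exchanged by swapping the roles of $v,v'$ (and taking adjoints). My plan is first to reduce to the case $v\in A$, in which $k_v=1$, $p_v\in M_v$, $\ell_{v'}=1$, and $q_{v'}\in M_{\lk(v')}$. Setting $N=\ell_v=k_{v'}$, both $\delta_v$ and $\delta_{v'}$ live in $\mathbb{M}_N\otimes P\subset\mathbb{M}_m\otimes P$.

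I would then follow the construction from the proof of Claim~\ref{p_vq_v'}. For a suitable integer $k=\ell\geq m$, produce the common unitary $w\in\mathbb{M}_{k\ell}\otimes P$ and, by applying Lemma~\ref{conjugacy_in_tensors} to the tensor decompositions $M_{\st(v)}=M_v\overline{\otimes}M_{\lk(v)}$ and $M_{\st(v')}=M_{v'}\overline{\otimes}M_{\lk(v')}$, obtain
\[w^*\delta_v=\omega_1\otimes\omega_2,\qquad z\cdot w^*\delta_{v'}=\omega_1'\otimes\omega_2',\]
where $z=y\otimes 1\in\mathbb{M}_{k\ell}\otimes\mathbb{C}$ is the coordinate-swap unitary used in that proof, and $\omega_1\in\mathbb{M}_k\otimes M_v$, $\omega_2\in\mathbb{M}_\ell\otimes M_{\lk(v)}$, $\omega_1'\in\mathbb{M}_k\otimes M_{v'}$, $\omega_2'\in\mathbb{M}_\ell\otimes M_{\lk(v')}$ are partial isometries between the prescribed source and range projections $p_v,q_v,p_{v'},q_{v'}$ and the auxiliary projections $r,r',s',s$. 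This yields
\[\delta_{v'}^*\delta_v=(\omega_1'\otimes\omega_2')^*\,z\,(\omega_1\otimes\omega_2).\]
Because $v\in\lk(v')$ and $v'\in\lk(v)$, we have $M_v\subset M_{\lk(v')}$ and $M_{v'}\subset M_{\lk(v)}$. My plan is to exploit this, together with the commutation of $M_v$ and $M_{v'}$, to regroup the above product so that the $\omega_1,\omega_2'$ factors collapse into an element of $1\otimes M_{\lk(v')}$ (which will be $\alpha_{v',v}^*$), while the $\omega_1',\omega_2$ factors, after rerouting the matrix indices through $z$, collapse into an element of $\mathbb{M}_m\otimes M_{\lk(v)}$ (which will be $\alpha_{v,v'}$). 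The identity $\alpha_{v',v}^*\alpha_{v,v'}=\delta_{v'}^*\delta_v$ will then hold by construction, and the source/range identities $\alpha_{v',v}\alpha_{v',v}^*=p_v$, $\alpha_{v',v}^*\alpha_{v',v}=q_{v'}$, $\alpha_{v,v'}\alpha_{v,v'}^*=p_{v'}$, $\alpha_{v,v'}^*\alpha_{v,v'}=q_v$ will follow from the corresponding identities for the $\omega_i,\omega_i'$ together with the trace matchings from Claim~\ref{p_vq_v'}.

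The main technical obstacle I anticipate is the careful bookkeeping of the matrix indices carried by $\mathbb{M}_k$ and $\mathbb{M}_\ell$ across the swap unitary $z$. One must verify that, after regrouping, exactly one of the two $\alpha$'s absorbs all remaining matrix amplification: concretely, that $\alpha_{v',v}$ lies in $1\otimes M_{\lk(v')}$ rather than merely $\mathbb{M}_m\otimes M_{\lk(v')}$. This is precisely where the hypothesis $k_v=\ell_{v'}=1$ (i.e.\ $v\in A$) is essential, since it forces $\omega_1$ (supported at $p_v$) and $\omega_2'$ (with source $q_{v'}$) to carry their matrix parts in the $\mathbb{M}_k$ resp.\ $\mathbb{M}_\ell$ components that get swapped out by $z$. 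The girth $\geq 5$ hypothesis enters at two levels: ensuring $\lk(v)\cap\lk(v')=\emptyset$ so that $M_v,M_{v'}$ lie in the correct link algebras, and providing the amalgamated free product description $M_{\st(v)\cup\st(v')}=M_{\st(v)}\ast_{M_v\overline{\otimes}M_{v'}}M_{\st(v')}$ which underlies the commutation relations used in the regrouping.
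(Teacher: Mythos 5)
Your reduction to the case $v\in A$ and your use of the moreover part of Lemma~\ref{conjugacy_in_tensors} do produce, after regrouping (using that $\omega_1'$ commutes with both $\omega_2'$ and $\mathrm{Ad}(z)(\omega_1)$), a factorization $\delta_{v'}^*\delta_v=\gamma\rho$ with $\gamma\in\mathbb M_\ell(\mathbb C)\otimes M_{\lk(v')}$, $\rho\in\mathbb M_{k\ell}(\mathbb C)\otimes M_{\lk(v)}$ and the correct source and range projections. But this misses the actual content of the claim, namely conditions (2)--(3): one of the two factors must lie in $1\otimes M_{\lk(v')}$, with \emph{no} matrix amplification. Your justification --- that $k_v=\ell_{v'}=1$ forces the matrix parts of $\omega_1$ and $\omega_2'$ to be ``swapped out by $z$'' --- does not work: although the sources $p_v$ and $q_{v'}$ of these partial isometries have scalar matrix part, their ranges are the auxiliary projections $r,s$, which have genuinely nontrivial matrix dependence, so the product $\omega_2'^*\,\mathrm{Ad}(z)(\omega_1)$ is merely a partial isometry in $\mathbb M_\ell(\mathbb C)\otimes M_{\lk(v')}$ between two scalar-matrix-part projections, and such an element need not lie in $1\otimes M_{\lk(v')}$. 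This is not a cosmetic defect: the matrix-freeness of $\alpha_{v',v}$ is exactly what is used later (in Claim~\ref{t=1}, where $\zeta=\alpha_{0,1}\alpha_{0,n-1}^*$ must lie in $\overline{\mathrm{sp}}(M_{v_1}M_{v_{n-1}})$ so that Lemma~\ref{partial_isometry} forces $p_{v_1}=1$ and hence $t=1$), so a version of Claim~\ref{deco} with both factors amplified would not suffice.

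The paper obtains the matrix-free factor by a mechanism that is absent from your plan. It first uses the commutation $[N_{\alpha(v)},N_{\alpha(v')}]=0$ together with a relative commutant computation (triangle-freeness of $\Lambda$, giving $N_{\alpha(v)}'\cap N_{\lk(\alpha(v'))}=\mathbb C$) to show that $\mathrm{Ad}(\delta_{v'}^*\delta_v)$ carries $p_vM_vp_v\otimes q_v$ onto $p_{v'}\otimes T$ for a subfactor $T\subset q_{v'}M_{\lk(v')}q_{v'}$ with trivial relative commutant; it then proves the intertwining $T\prec_{M_{\lk(v')}}M_v$ and applies the unitary conjugacy theorem (Theorem~\ref{uni_conj}) \emph{inside the unamplified algebra} $M_{\lk(v')}$ (using $\lk(v)\cap\lk(v')=\emptyset$) to produce a partial isometry $\lambda\in M_{\lk(v')}$, with no matrix leg, satisfying $\lambda T\lambda^*\subset p_vM_vp_v$; finally, Proposition~\ref{normalizer}(1) places $(p_{v'}\otimes\lambda)\delta_{v'}^*\delta_v$ in $\mathbb M_m(\mathbb C)\otimes M_{\st(v)}$, and only then is Lemma~\ref{conjugacy_in_tensors} invoked to split this element as an elementary tensor $\beta\otimes\sigma$, yielding $\alpha_{v',v}=\sigma^*\lambda\in M_{\lk(v')}$ and $\alpha_{v,v'}=\beta$. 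The intertwining-plus-conjugacy step within $M_{\lk(v')}$ is the heart of the proof and cannot be replaced by bookkeeping of matrix indices; without it (or an equivalent argument) your proposal does not establish the claim.
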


As $1\in\{k_v,\ell_v\}$ and $1\in\{k_{v'},\ell_{v'}\}$, we have $m\geq\max\{k_v\ell_v,k_{v'}\ell_{v'}\}$. This allows in Claim \eqref{deco} to view $\delta_v,\delta_{v'}\in\mathbb M_m(\mathbb C)\otimes P$. Also, Claim \eqref{delta_v} gives that $\delta_v\delta_v^*=e_{1,1}\otimes p=\delta_{v'}\delta_{v'}^*$ and that $\delta_{v'}^*\delta_v\in\mathbb M_m(\mathbb C)\otimes P$ is a partial isometry with left and right supports  $p_{v'}\otimes q_{v'}$ and $p_v\otimes q_v$, respectively.

\begin{proof}[Proof of Claim \ref{deco}] Let $v\in\Gamma$ and $v'\in\text{lk}(v)$.
If $v\in B$, then $v'\in A$ by Claim \ref{p_vq_v'}.
Thus, in order to prove the claim, it suffices to show that if $v\in A$, then there are partial isometries $\alpha_{v',v}, \alpha_{v,v'}\in\mathbb M_m(\mathbb C)\otimes P$  satisfying   (1), (2), 
 (4) and (5). 

Since $v\in A$, Claim \ref{p_vq_v'} gives that $v'\in B$. Hence  $k_v=\ell_{v'}=1$. Since $m\geq\ell_v$ and $m\geq k_{v'}$, by Claim \ref{delta_v} we can view $\delta_v,\delta_{v'}\in\mathbb M_m(\mathbb C)\otimes P$. Moreover, recall that $\delta_v^*\delta_v=p_v\otimes q_v$ and $\delta_{v'}^*\delta_{v'}=p_{v'}\otimes q_{v'}$, where  $p_v\in M_v, q_v\in\mathbb M_m(\mathbb C)\otimes M_{\text{lk}(v)}, p_{v'}\in\mathbb M_m(\mathbb C)\otimes M_{v'}$, $q_{v'}\in M_{\text{lk}(v')}$ and we identify $\mathbb M_m(\mathbb C)\otimes M_{\text{st}(v)}=M_v\overline{\otimes}(\mathbb M_m(\mathbb C)\otimes M_{\text{lk}(v)})$ and $\mathbb M_m(\mathbb C)\otimes M_{\text{st}(v')}=(\mathbb M_m(\mathbb C)\otimes M_{v'})\overline{\otimes}M_{\text{lk}(v')}$.

Denote
$Q=p_v M_vp_v\otimes q_v$, $R=p_{v'}(\mathbb M_m(\mathbb C)\otimes M_{v'})p_{v'}\otimes q_{v'}$ and $S=p_{v'}\otimes q_{v'}M_{\text{lk}(v')}q_{v'}$. 
By using Claim \ref{delta_v} we deduce that
 \begin{equation}\label{Q_v}\begin{split}[\text{Ad}(\delta_{v'}^*\delta_v)(Q),R]&=\text{Ad}(\delta_{v'}^*)([\text{Ad}(\delta_v)(Q),\text{Ad}(\delta_{v'})(R)])\\&=\text{Ad}(\delta_{v'}^*)\big([e_{1,1}\otimes N_{\alpha(v)},e_{1,1}\otimes N_{\alpha(v')}]\big)=\{0\}.\end{split}\end{equation}
Since $R'\cap (p_{v'}\otimes q_{v'})(\mathbb M_{m}(\mathbb C)\otimes P)(p_{v'}\otimes q_{v'})=S$ by Lemma~\ref{basic}(1), using \eqref{Q_v}  we derive that \begin{equation}\text{Ad}(\delta_{v'}^*\delta_v)(Q)\subset S.\end{equation}
Write $\text{Ad}(\delta_{v'}^*\delta_v)(Q)=p_{v'}\otimes T$, for a von Neumann subalgebra $T\subset q_{v'}M_{\text{lk}(v')}q_{v'}$.
We claim that
\begin{equation}\label{T}\text{$T\prec_{M_{\text{lk}(v')}} M_v$ \;\;\; and\;\;\; $T'\cap  q_{v'}M_{\text{lk}(v')}q_{v'}=\mathbb Cq_{v'}$.}
\end{equation}

Assume by contradiction that $T\nprec_{M_{\text{lk}(v')}} M_v$. Then
 Theorem \ref{intertwine} provides a net $(u_i)\subset\mathcal U(T)$ such that $\|\text{E}_{M_v}(a^*u_ib)\|_2\rightarrow 0$, for every $a,b\in M_{\text{lk}(v')}$. The proof of Lemma \ref{descend}  implies that $\|\text{E}_{M_v}(a^*u_ib)\|_2\rightarrow 0$, for every $a,b\in P$. 
Recalling that $Q=p_v M_vp_v\otimes q_v$, it follows that 
$\|\text{E}_Q(\text{Ad}(\delta_v^*\delta_{v'})(p_{v'}\otimes u_i))\|_2\rightarrow 0$.
 This contradicts the fact that $\text{Ad}(\delta_v^*\delta_{v'})(p_{v'}\otimes u_i)\in Q$ and $\|\text{Ad}(\delta_v^*\delta_{v'})(p_{v'}\otimes u_i)\|_2=\|p_{v'}\otimes u_i\|_2=\|p_{v'}\|_2>0$, for every $i$, proving the first assertion of \eqref{T}.

To prove the second assertion from \eqref{T},  note that Claim \ref{delta_v} gives that $\text{Ad}(\delta_v)(Q)=e_{1,1}\otimes N_{\alpha(v)}$ and $\text{Ad}(\delta_{v'})(R)=e_{1,1}\otimes N_{\alpha(v')}$.  By using Lemma~\ref{basic}(1) to take relative commutants in the latter equality, we get that $\text{Ad}(\delta_{v'})(S)=e_{1,1}\otimes N_{\text{lk}(\alpha(v'))}$.
Since $\text{lk}(\alpha(v))\cap\text{lk}(\alpha(v'))=\emptyset$, Lemma~\ref{basic}(1) further implies that $N_{\alpha(v)}'\cap N_{\text{lk}(\alpha(v'))}=\mathbb Cp$.
By combining the last two facts we get that 
 \begin{equation}\begin{split}\text{Ad}(\delta_{v'}^*\delta_v)(Q)'\cap S&=\text{Ad}(\delta_{v'}^*)(\text{Ad}(\delta_v)(Q)'\cap \text{Ad}(\delta_{v'})(S))
\\&=\text{Ad}(\delta_{v'}^*)(e_{1,1}\otimes (N_{\alpha(v)}'\cap N_{\text{lk}(\alpha(v'))}))\\&=\mathbb C\text{Ad}(\delta_{v'}^*)(e_{1,1}\otimes p)=\mathbb C(p_{v'}\otimes q_{v'}).\end{split}\end{equation} 
Since $\text{Ad}(\delta_{v'}^*\delta_v)(Q)'\cap S=p_{v'}\otimes (T'\cap q_{v'}M_{\text{lk}(v')}q_{v'})$, it follows that $T'\cap  q_{v'}M_{\text{lk}(v')}q_{v'}=\mathbb Cq_{v'}$, which finishes the proof of \eqref{T}. 

Since $M_v$ is a factor, $Q$ and $T$ are also factors. By using \eqref{T} and that $\text{lk}(v)\cap\text{lk}(v')=\emptyset$, Theorem \ref{uni_conj} implies the existence of a partial isometry $\lambda\in M_{\text{lk}(v')}$ such that  $\lambda^*\lambda=q_{v'}$ and $\text{Ad}(\lambda)(T)\subset M_v$. Since  $\tau(q_{v'})=\tau(p_v)$ and $M_{\text{lk}(v')}$ is a factor, we may assume that $\lambda\lambda^*=p_v$ and hence $\text{Ad}(\lambda)(T)\subset p_vM_vp_v$. Denote $z=(p_{v'}\otimes \lambda)\delta_{v'}^*\delta_v\in\mathbb M_m(\mathbb C)\otimes P$. Then
$z$ is a partial isometry such that $zz^*=p_{v'}\otimes p_v, z^*z=p_v\otimes q_v$, and using that $\text{Ad}(\delta_{v'}^*\delta_v)(Q)=p_{v'}\otimes T$ implies that
\begin{equation}\label{tensor_inclusion}\text{Ad}(z)(Q)=\text{Ad}(p_{v'}\otimes\lambda)(p_{v'}\otimes T)=p_{v'}\otimes\text{Ad}(\lambda)(T)\subset p_{v'}\otimes p_vM_vp_v.\end{equation}
Since $Q=p_vM_vp_v\otimes q_v$, combining \eqref{tensor_inclusion} and  Proposition \ref{normalizer}(1) implies that $z\in \mathbb M_m(\mathbb C){\otimes}M_{\text{st}(v)}$. Moreover, note that $\mathbb M_m(\mathbb C){\otimes}M_{\text{st}(v)}=(\mathbb M_m(\mathbb C)\otimes M_{\text{lk}(v)})\overline{\otimes}M_v$, $p_v\in M_v$, $q_v,p_{v'}\in\mathbb M_m(\mathbb C)\otimes M_{\text{lk}(v)}$, $zz^*=p_{v'}\otimes p_v$ and $z^*z=q_v\otimes p_v$. By applying Lemma \ref{conjugacy_in_tensors} we find partial isometries $\sigma\in M_v$ and $\beta\in \mathbb M_m(\mathbb C)\otimes M_{\text{lk}(v)}$ such that $\sigma\sigma^*=\sigma^*\sigma=p_v$, $\beta\beta^*=p_{v'},\beta^*\beta=q_v$ and $z=\beta\otimes\sigma$.

Since $z=(p_{v'}\otimes \lambda)\delta_{v'}^*\delta_v$, we get that $$\delta_{v'}^*\delta_v=(p_{v'}\otimes\lambda^*)(\beta\otimes\sigma)=(1\otimes\lambda^*)(p_{v'}\beta\otimes 1)(1\otimes \sigma)=(1\otimes\lambda^*\sigma)(\beta\otimes 1).$$ Since $\lambda\in M_{\text{lk}(v')}$, $\sigma\in M_v$, $\lambda^*\lambda=q_{v'}$ and $\lambda\lambda^*=\sigma\sigma^*=\sigma^*\sigma=p_v$, we get that
$\alpha=\lambda^*\sigma\in M_{\text{lk}(v')}$ is a partial isometry with $\alpha\alpha^*=q_{v'}$ and $\alpha^*\alpha=p_v$. This finishes the proof of Claim \ref{deco}.
\end{proof}

We are ready to prove that $t=1$. We will achieve this by analyzing cycles of minimal length in $\Gamma$.

\begin{claim}\label{t=1} We have $t=1$. 

Moreover, let $\Gamma_0\subset\Gamma$ be a connected component of $\Gamma$ and
$v_0,v_1,\dots,v_{n-1},v_n=v_0$ be a cycle of minimal length in $\Gamma_0$.
Then $(\emph{Tr}\otimes\tau)(p_{v_i})=(\emph{Tr}\otimes\tau)(q_{v_i})=1$, thus $k_{v_i}=\ell_{v_i}=1$ and $p_{v_i}=q_{v_i}=1$, for every $0\leq i\leq n-1$. Additionally,  $\alpha_{v_i,v_{i-1}}\alpha_{v_i,v_{i+1}}^*\in\mathcal U(M_{v_{i-1}})\mathcal U(M_{v_{i+1}})$, for every $0\leq i\leq n-1$.

\end{claim}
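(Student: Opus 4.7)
The plan is to start from the fundamental identity $p_{v_0}\otimes q_{v_0}=\delta_{v_0}^*\delta_{v_0}$ supplied by Claim~\ref{delta_v}. Using that $\delta_{v_i}\delta_{v_i}^*=e_{1,1}\otimes p$ for each $i$, one telescopes
$$p_{v_0}\otimes q_{v_0}=\delta_{v_0}^*\delta_{v_1}\,\delta_{v_1}^*\delta_{v_2}\cdots\delta_{v_{n-1}}^*\delta_{v_0}=\prod_{i=0}^{n-1}\delta_{v_i}^*\delta_{v_{i+1}}.$$
After fixing an integer $m$ large enough to simultaneously apply Claim~\ref{deco} to every consecutive pair in the cycle, one substitutes $\delta_{v_i}^*\delta_{v_{i+1}}=\alpha_{v_i,v_{i+1}}^*\alpha_{v_{i+1},v_i}$ and regroups adjacent terms to obtain
$$p_{v_0}\otimes q_{v_0}=\alpha_{v_0,v_1}^*\Bigl(\prod_{i=1}^{n-1}\alpha_{v_i,v_{i-1}}\,\alpha_{v_i,v_{i+1}}^*\Bigr)\alpha_{v_0,v_{n-1}},$$
where each middle block lies in $\mathbb{M}_m(\mathbb{C})\otimes M_{\mathrm{lk}(v_i)}$ and is a partial isometry with left support $p_{v_{i-1}}$ and right support $p_{v_{i+1}}$.

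The key structural input is the girth hypothesis. Since $\Gamma$ has girth at least $5$ and no leaves, every minimal cycle has length $n\geq 5$ and the vertices at cyclic distance $\geq 2$ are pairwise non-adjacent; moreover, each link $\mathrm{lk}(v_i)$ is edgeless, so
$$M_{\mathrm{lk}(v_i)}=(M_{v_{i-1}}\ast M_{v_{i+1}})\ast L_i,\qquad L_i:=\ast_{w\in \mathrm{lk}(v_i)\setminus\{v_{i-1},v_{i+1}\}}M_w,$$
with $M_{v_{i-1}}$ and $M_{v_{i+1}}$ distinct free factors. The analogous decomposition holds on the $\Lambda$-side for the image cycle $\alpha(v_0),\ldots,\alpha(v_{n-1})$.

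The core task is to show $p_{v_i}=q_{v_i}=1$ for every $i$, which immediately gives $t=1$ because $\delta_{v_0}$ then becomes an isometry whose range projection $e_{1,1}\otimes p$ must have trace $1$. My plan is to pass to the corner $(e_{1,1}\otimes p)(\mathbb{M}_m(\mathbb{C})\otimes P)(e_{1,1}\otimes p)\simeq N_\Lambda$ by introducing the unitaries $u_i:=\delta_{v_{i+1}}\delta_{v_i}^*\in\mathcal U(N_\Lambda)$; the cyclic identity then reads $u_0u_1\cdots u_{n-1}=1$. Each consecutive product $u_iu_{i+1}$ can be arranged to lie inside $N_{\mathrm{st}(\alpha(v_{i+1}))}=N_{\alpha(v_{i+1})}\overline\otimes N_{\mathrm{lk}(\alpha(v_{i+1}))}$, and $N_{\alpha(v_i)}$ and $N_{\alpha(v_{i+2})}$ are freely independent inside $N_{\mathrm{lk}(\alpha(v_{i+1}))}$. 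Iterated applications of Corollary~\ref{splitting} to these pairs, combined with Claim~\ref{p_vq_v'} propagating $(\mathrm{Tr}\otimes\tau)(p_{v_i})=(\mathrm{Tr}\otimes\tau)(q_{v_{i+1}})$ along edges, force $(\mathrm{Tr}\otimes\tau)(p_{v_i})=(\mathrm{Tr}\otimes\tau)(q_{v_i})=1$ for every $i$, hence $p_{v_i}=q_{v_i}=1$.

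Once this is in hand, each $\alpha_{v_i,v_{i\pm 1}}$ is a genuine unitary in $M_{\mathrm{lk}(v_i)}$, so each $\alpha_{v_i,v_{i-1}}\alpha_{v_i,v_{i+1}}^*$ is a unitary in $(M_{v_{i-1}}\ast M_{v_{i+1}})\ast L_i$. One more application of Corollary~\ref{splitting} (using the cyclic identity to force cancellation of the $L_i$-components) yields the factorization $\alpha_{v_i,v_{i-1}}\alpha_{v_i,v_{i+1}}^*\in\mathcal U(M_{v_{i-1}})\mathcal U(M_{v_{i+1}})$. The main obstacle throughout is the free-product reduction argument in the third paragraph: one must carefully align the cyclic identity $u_0\cdots u_{n-1}=1$ with the tensor and free-product splitting conditions of Corollary~\ref{splitting}, while tracking the matrix amplifications and the support projections before it is known that the $\alpha$'s are unitaries.
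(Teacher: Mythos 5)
Your setup (the telescoped identity $p_{v_0}\otimes q_{v_0}=\prod_i\delta_{v_i}^*\delta_{v_{i+1}}$, the substitution via Claim~\ref{deco}, and the observation that the links are edgeless so that $M_{v_{i-1}}$ and $M_{v_{i+1}}$ sit as free factors of $M_{\mathrm{lk}(v_i)}$) matches the paper, but the core step of your argument has a genuine gap and, as written, is circular. You introduce $u_i:=\delta_{v_{i+1}}\delta_{v_i}^*$ and assert they are unitaries in $N_\Lambda$ satisfying $u_0u_1\cdots u_{n-1}=1$. Before the claim is proved, the $\delta_{v_i}$ are only partial isometries with $\delta_{v_i}^*\delta_{v_i}=p_{v_i}\otimes q_{v_i}$, so the $u_i$ are not unitaries (indeed not even partial isometries in general), and the product does not telescope to $1$ -- it telescopes to $p_{v_0}\otimes q_{v_0}$ precisely because the intermediate supports are $e_{1,1}\otimes p$ only on one side. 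Moreover $u_iu_{i+1}$ involves $\delta_{v_i}^*\delta_{v_{i+2}}$ with $v_i,v_{i+2}$ non-adjacent, for which Claim~\ref{deco} gives no decomposition, so there is no reason these products lie in $N_{\mathrm{st}(\alpha(v_{i+1}))}$. Finally, Corollary~\ref{splitting} requires genuine unitaries, so it cannot be applied at this stage; and Claim~\ref{p_vq_v'} by itself only yields consistency of the traces along edges (e.g.\ alternating values $s,t/s$ on an even cycle, or constant $\sqrt t$ on an odd cycle), so nothing in your third paragraph actually forces the traces to equal $1$. You acknowledge this as ``the main obstacle'' but do not resolve it.

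The paper's resolution, which is absent from your proposal, is the following. One rearranges the telescoped identity into the form $p_{v_0}(\alpha_{v_0,v_1}\alpha_{v_0,v_{n-1}}^*)=\alpha_{v_1,v_0}\alpha_{v_1,v_2}^*\cdots\alpha_{v_{n-1},v_{n-2}}\alpha_{v_{n-1},v_0}^*$, whose right-hand side lies in the product of the (amplified) link algebras $M_{\mathrm{lk}(v_1)}\cdots M_{\mathrm{lk}(v_{n-1})}$. The key input is then combinatorial: using minimality of the cycle, girth at least $5$ and the absence of leaves, together with the parabolic intersection theorem for Coxeter groups, one shows $\mathcal W_{\mathrm{lk}(v_0)}\cap\bigl(\mathcal W_{\mathrm{lk}(v_1)}\cdots\mathcal W_{\mathrm{lk}(v_{n-1})}\bigr)\subset\mathcal W_{v_1}\mathcal W_{v_{n-1}}$, so that by Lemma~\ref{intersections} the conditional expectation $\mathrm{E}_{M_{\mathrm{lk}(v_0)}}$ of the right-hand side lands in $\overline{\mathrm{sp}}(M_{v_1}M_{v_{n-1}})$, while on the left it simply scales $\zeta:=\alpha_{v_0,v_1}\alpha_{v_0,v_{n-1}}^*$ by $(\mathrm{Tr}\otimes\tau)(p_{v_0})$. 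Since $\zeta$ is a partial isometry with $\zeta\zeta^*=p_{v_1}\in M_{v_1}$ and $\zeta^*\zeta=p_{v_{n-1}}\in M_{v_{n-1}}$ and $v_1,v_{n-1}$ are non-adjacent, Lemma~\ref{partial_isometry} (not Corollary~\ref{splitting}) applies and shows $\zeta$ is a unitary in $\mathcal U(M_{v_1})\mathcal U(M_{v_{n-1}})$; hence $p_{v_1}=1$, which combined with the trace relations of Claim~\ref{p_vq_v'} forces $t=1$ and all $p_{v_i}=q_{v_i}=1$, and repeating the argument gives the last assertion. The careful parity bookkeeping (the cases $n$ even and $n$ odd, and membership in the sets $A,B$), which governs which side of each $\delta_{v_i}$ carries the matrix amplification, is also needed and is glossed over in your sketch.
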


\begin{proof}[Proof of Claim \ref{t=1}] 
We will prove the moreover assertion, and along the way show that $t=1$.
Put $m=\max\{k_{v_i},\ell_{v_i}\mid 0\leq i\leq n-1\}$. 
By Claim \ref{p_vq_v'},  $v_0\in B$ or $v_1\in B$. Thus, after possibly relabeling the cycle, we may assume that $v_0\in B$.
We treat two cases, depending on whether $n$ is even or odd.
The case when $n$ is even is the more challenging of the two cases because, unlike in the case when $n$ is odd, we cannot easily show that $k_{v_i}$ and $\ell_{v_i}$ are equal to $1$. As such, when $n$ is even, we will need to carefully keep track of various matrix amplifications.

{\bf Case 1.}  $n$ is even.

Since $v_0\in B$, Claim \ref{p_vq_v'} implies by induction that $v_i\in B$, if $i$ is even, and $v_i\in A$, if $i$ is odd. 
We identify $\mathbb M_m(\mathbb C)\otimes M_{\text{st}(v_i)}$ with $(\mathbb M_m(\mathbb C)\otimes M_{v_i})\overline{\otimes}M_{\text{lk}(v_i)}$, if $i$ is even, and $M_{v_i}\overline{\otimes}(\mathbb M_m(\mathbb C)\otimes M_{\text{lk}(v_i)})$, if $i$ is odd. 
We use these identifications  to view $\mathbb M_m(\mathbb C)\otimes M_{v_i}$ and $M_{\text{lk}(v_i)}$, if $i$ is even, and respectively, $M_{v_i}$ and $\mathbb M_m(\mathbb C)\otimes M_{\text{lk}(v_i)}$, if $i$ is odd, as unital subalgebras of $\mathbb M_m(\mathbb C)\otimes P$.
These identifications give rise to unital embeddings of the form $Q\otimes R\subset\mathbb M_m(\mathbb C)\otimes P$. In order to avoid confusion over which tensor product is being used we will write $xy$ instead of $x\otimes y$, for $x\in Q$ and $y\in R$.

By applying Claim \ref{deco}, for every $0\leq i\leq n-1$, we  find partial isometries $\alpha_{i-1,i}:=\alpha_{v_{i-1},v_{i}}$ and $\alpha_{i,i-1}:=\alpha_{v_{i},v_{i-1}}$ 
(with indices considered modulo $n$, i.e.,\ $-1$ is identified to $n-1$) such that the following conditions hold:
\begin{enumerate}
\item $\delta_{v_{i-1}}^*\delta_{v_{i}}=\alpha_{{i-1},i}^*\alpha_{i,{i-1}}$.
\item $\alpha_{i-1,i}\in M_{\text{lk}(v_{i-1})}$ and $\alpha_{i,i-1}\in\mathbb M_m(\mathbb C)\otimes M_{\text{lk}(v_{i})}$, if $i$ is odd.
\item $\alpha_{{i-1},i}\in \mathbb M_m(\mathbb C)\otimes M_{\text{lk}(v_{i-1})}$ and $\alpha_{i,{i-1}}\in M_{\text{lk}(v_{i})}$, if $i$ is even.
\item $\alpha_{{i-1},i}\alpha_{{i-1},i}^*=p_{v_i}$ and $\alpha_{{i-1},i}^*\alpha_{{i-1},i}=q_{v_{i-1}}$.
\item $\alpha_{i,{i-1}}\alpha_{i,{i-1}}^*=p_{v_{i-1}}$ and $\alpha_{i,{i-1}}^*\alpha_{i,{i-1}}=q_{v_{i}}$.
\end{enumerate}
With our convention, we have $\alpha_{n,i}=\alpha_{0,i}$ and $\alpha_{i,n}=\alpha_{i,0}$, whenever $i\in\{1,n-1\}$.

Further, by combining (1) with \eqref{delta_vv}, we get that \begin{equation}\label{product}\begin{split}\alpha_{0,1}^*\alpha_{1,0}\alpha_{1,2}^*\alpha_{2,1}\cdots\alpha_{{n-1},0}^*\alpha_{0,{n-1}}&=(\delta_{v_0}^*\delta_{v_1})(\delta_{v_1}^*\delta_{v_2})\dots (\delta_{v_{n-1}}^*\delta_{v_n})\\&=\delta_{v_0}^*(\delta_{v_1}\delta_{v_1}^*)\dots(\delta_{v_{n-1}}\delta_{v_{n-1}}^*)\delta_{v_n}\\&=\delta_{v_0}^*\delta_{v_0}=p_{v_0} q_{v_0}.\end{split}\end{equation}
By combining (4), (5) and \eqref{product} we get that \begin{equation}\label{product2}\begin{split}&\alpha_{0,1}(p_{v_0}q_{v_0})\alpha_{0,{n-1}}^*\\&=
(\alpha_{0,1}\alpha_{0,1}^*)(\alpha_{1,0}\cdots\alpha_{{n-1},0}^*)(\alpha_{0,{n-1}}\alpha_{0,{n-1}}^*)\\&=p_{v_1}(\alpha_{1,0}\alpha_{1,2}^*\alpha_{2,1}\cdots \alpha_{{n-2},{n-1}}^*\alpha_{{n-1},{n-2}}\alpha_{{n-1},0}^*)p_{v_{n-1}}\\&=(p_{v_1}\alpha_{1,0}\alpha_{1,2}^*\alpha_{2,1})(\alpha_{2,3}^*\alpha_{3,2}\cdots \alpha_{{n-3},{n-2}}^*\alpha_{{n-2},{n-3}})(\alpha_{{n-2},{n-1}}^*\alpha_{{n-1},{n-2}}\alpha_{{n-1},n}^*p_{v_{n-1}})
\end{split}\end{equation}

Since $\alpha_{0,1}\in M_{\text{lk}(v_0)}q_{v_0}$ and $\alpha_{0,n-1}^*\in q_{v_0}M_{\text{lk}(v_0)}$ $n$ (as $n$ is even) 
by (2)-(5), and $p_{v_0}\in \mathbb M_m(\mathbb C)\otimes M_{v_0}$, we get $\alpha_{0,1}(p_{v_0}q_{v_0})\alpha_{0,n-1}^*=p_{v_0}(\alpha_{0,1}q_{v_0}\alpha_{0,n-1}^*)=p_{v_0}(\alpha_{0,1}\alpha_{0,n-1}^*)$.
Since $p_{v_1}\in M_{v_1}, \alpha_{1,0},\alpha_{1,2}^*\in\mathbb M_m(\mathbb C)\otimes M_{\text{lk}(v_1)}$ and $\alpha_{2,1}\in p_{v_1}M_{\text{lk}(v_2)}$, we also get $p_{v_1}\alpha_{1,0}\alpha_{1,2}^*\alpha_{2,1}=\alpha_{1,0}\alpha_{1,2}^* p_{v_1}\alpha_{2,1}=\alpha_{1,0}\alpha_{1,2}^*\alpha_{2,1}$. Similarly, we deduce that $\alpha_{{n-2},{n-1}}^*\alpha_{{n-1},{n-2}}\alpha_{{n-1},n}^*p_{v_{n-1}}=\alpha_{{n-2},{n-1}}^*\alpha_{{n-1},{n-2}}\alpha_{{n-1},n}^*$. By combining these three facts, \eqref{product2} rewrites as \begin{equation}\label{product3} p_{v_0}(\alpha_{0,1}\alpha_{0,n-1}^*) =\alpha_{1,0}\alpha_{1,2}^*\alpha_{2,1}\alpha_{2,3}^*\alpha_{3,2}\cdots \alpha_{{n-3},{n-2}}^*\alpha_{{n-2},{n-3}}\alpha_{{n-2},{n-1}}^*\alpha_{{n-1},{n-2}}\alpha_{{n-1},n}^*.\end{equation}

Next, we claim that \begin{equation}\label{Winclusion} \mathcal W_{\text{lk}(v_0)}\cap\left(\mathcal W_{\text{lk}(v_1)}\cdots\mathcal W_{\text{lk}(v_{n-1})}\right)\subset\mathcal W_{v_1}\mathcal W_{v_{n-1}}. \end{equation}
To justify \eqref{Winclusion}, we note that since the cycle $\{v_0,v_1,\dots,v_{n-1},v_n=v_0\}$ has minimal length in its connected component, and $\Gamma$ has no leaves, no isolated vertices and girth at least $5$, we get that $n\geq 5$ and $\text{lk}(v_0)\cap (\cup_{i=1}^{n-1}\text{lk}(v_i))=\{v_1,v_{n-1}\}$. From this we deduce that
\begin{equation}\label{inclusion} \mathcal W_{\text{lk}(v_0)}\cap\left(\mathcal W_{\text{lk}(v_1)}\cdots\mathcal W_{\text{lk}(v_{n-1})}\right) \subset\mathcal W_{\text{lk}(v_0)}\cap\mathcal W_{\cup_{i=1}^{n-1}\text{lk}(v_i)}=\mathcal W_{\text{lk}(v_0)\cap(\cup_{i=1}^{n-1}\text{lk}(v_i))}=\mathcal W_{\{v_1,v_{n-1}\}},\end{equation}
where the first equality is a classical fact about intersections of parabolic subgroups in Coxeter groups \cite{Tit74,Sol76}. Let $g\in\mathcal W_{\{v_1,v_{n-1}\}}\setminus\mathcal W_{v_1}\mathcal W_{v_{n-1}}$. Then $v_{n-1}$ appears at least once before $v_1$ in the reduced form $g=g_1\cdots g_d$ of $g$, i.e.,  there exist $1\leq i<j\leq d$ such that $g_i=v_{n-1}$ and $g_j=v_1$. On the other hand, we have that $v_1\in\text{lk}(v_2)$, $v_1\not\in\text{lk}(v_b)$, for every $b\in\{1,\dots,n-1\}\setminus\{2\}$, $v_{n-1}\in \text{lk}(v_{n-2})$, and $v_{n-1}\not\in\text{lk}(v_c)$, for every $c\in\{1,\dots,n-1\}\setminus\{n-2\}$. This implies that $g\not\in \mathcal W_{\text{lk}(v_1)}\cdots\mathcal W_{\text{lk}(v_{n-1})}$. Together with \eqref{inclusion}, we get  $\mathcal W_{\text{lk}(v_0)}\cap\left(\mathcal W_{\text{lk}(v_1)}\cdots\mathcal W_{\text{lk}(v_{n-1})}\right)\subset\mathcal W_{v_1}\mathcal W_{v_{n-1}}$, which proves \eqref{Winclusion}

By combining \eqref{Winclusion} with Lemma \ref{intersections}, we get that $$\text{E}_{M_{\text{lk}(v_0)}}(M_{\text{lk}(v_1)}\cdots M_{\text{lk}(v_{n-1})})\subset\overline{\text{sp}}(\{\mathring{M}_{\bf v}\mid v\in\mathcal W_{v_1}\mathcal W_{v_{n-1}}\})\subset\overline{\text{sp}}(M_{v_1}M_{v_{n-1}}).$$ Since $\alpha_{i,i-1}\alpha_{i,i+1}^*$ belongs to $M_{\text{lk}(v_i)}$ or $\mathbb M_m(\mathbb C)\otimes M_{\text{lk}(v_i)}$, for every $1\leq i\leq n-1$, we deduce that 
\begin{equation}\label{project1}
\text{$\text{E}_{M_{\text{lk}(v_0)}}(\alpha_{1,0}\alpha_{1,2}^*\alpha_{2,1}\cdots \alpha_{{n-2},{n-1}}^*\alpha_{{n-1},{n-2}}\alpha_{{n-1},n}^*)\in\overline{\text{sp}}(M_{v_1}M_{v_{n-1}})$.}
\end{equation}
On other hand, since $p_{v_0}\in \mathbb M_m(\mathbb C)\otimes M_{v_0}$ and $\alpha_{0,1}\alpha_{0,n-1}^*\in M_{\text{lk}(v_0)}$, we get that 
\begin{equation}\label{project2}
\text{$\text{E}_{M_{\text{lk}(v_0)}}(p_{v_0}(\alpha_{0,1}\alpha_{0,n-1}^*))=(\text{Tr}\otimes\tau)(p_{v_0})\alpha_{0,1}\alpha_{0,n-1}^*.$}
\end{equation}

By combining  \eqref{product3}, \eqref{project1} and \eqref{project2}, we conclude that $\zeta:=\alpha_{0,1}\alpha_{0,n-1}^*$ belongs to $ \overline{\text{sp}}(M_{v_1}M_{v_{n-1}})$. Moreover, $\zeta$ is a partial isometry such that $\zeta\zeta^*=p_{v_1}\in M_{v_1}$ and $\zeta^*\zeta=p_{v_{n-1}}\in M_{v_{n-1}}$. Since $\Gamma$ is triangle-free, $v_1$ and $v_{n-1}$ are not adjacent, hence $M_{\{v_1,v_{n-1}\}}=M_{v_1}*M_{v_{n-1}}$. Applying Lemma~\ref{partial_isometry} we deduce that $\zeta$ is a unitary, $\zeta\in\mathcal U(M_{v_1})\mathcal U(M_{v_{n-1}})$, thus $\alpha_{0,n-1}\alpha_{0,1}^*=\zeta^*\in\mathcal U(M_{v_{n-1}})\mathcal U(M_{v_1})$.

Since $\zeta$ is a unitary, $p_{v_1}=1$. As $v_1\in A$,  we have $(\text{Tr}\otimes\tau)(p_{v_1})\leq (\text{Tr}\otimes \tau)(q_{v_1})$. Since $p_{v_1}=1$ and $(\text{Tr}\otimes\tau)(p_{v_1})(\text{Tr}\otimes \tau)(q_{v_1})=t\leq 1$, we conclude that $t=1$ and $(\text{Tr}\otimes\tau)(p_{v_1})=(\text{Tr}\otimes\tau)(q_{v_1})=1$. By using Claim \ref{p_vq_v'} and induction, we get $(\text{Tr}\otimes\tau)(p_{v_i})=(\text{Tr}\otimes\tau)(q_{v_i})=1$, for every $0\leq i\leq n-1$. Thus, $k_{v_i}=\ell_{v_i}=1$ and  $p_{v_i}=q_{v_i}=1$, for every $0\leq i\leq n-1$.  In particular, $v_i\in B$ and repeating the argument above gives that $\alpha_{i,{i-1}}\alpha_{i,{i+1}}^*\in\mathcal U(M_{v_{i-1}})\mathcal U(M_{v_{i+1}})$, for every $0\leq i\leq n-1$. This proves the claim in the case $n$ is even.

{\bf Case 2.} $n$ is odd.

Since $n$ is odd,   Claim \ref{p_vq_v'} and induction imply that $(\text{Tr}\otimes\tau)(p_{v_0})=(\text{Tr}\otimes\tau)(p_{v_{n-1}})=(\text{Tr}\otimes\tau)(q_{v_0})$.
Thus,
 $(\text{Tr}\otimes\tau)(p_{v_0})=(\text{Tr}\otimes \tau)(q_{v_0})=\sqrt{t}$. Similarly, we get $(\text{Tr}\otimes\tau)(p_{v_i})=(\text{Tr}\otimes \tau)(q_{v_i})=\sqrt{t}\leq 1$. Therefore, $k_{v_i}=\ell_{v_i}=1$ and $v_i\in A\cap B$, for every $0\leq i\leq n-1$. 
 By   Claim \ref{deco} for every $0\leq i\leq n-1$ we can find partial isometries $\alpha_{i-1,i}:=\alpha_{v_{i-1},v_{i}}\in M_{\text{lk}(v_{i-1})}$ and $\alpha_{i,i-1}:=\alpha_{v_{i},v_{i-1}}\in M_{\text{lk}(v_i)}$ such that (1) and (4)-(5) hold. The conclusion now follows by repeating the argument used in Case 1.
\end{proof}

Next, we put together the main findings of this proof so far in the following claim.

\begin{claim}\label{u_v}
For every $v\in\Gamma$,  we have $k_v=\ell_v=1$ and $p_v=q_v=1$. Thus, $\delta_v\in P$ is a unitary such that  $\delta_vM_v\delta_v^*=N_{\alpha(v)}$. Moreover, for every $v\in\Gamma$ and $v'\in\emph{lk}(v)$, $\delta_{v'}^*\delta_v=\alpha_{v',v}^*\alpha_{v,v'}$, for unitaries $\alpha_{v',v}\in M_{\emph{lk}(v')}$ and $\alpha_{v,v'}\in M_{\emph{lk}(v)}$.
\end{claim}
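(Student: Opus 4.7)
The plan is to show that the conclusion of Claim \ref{t=1} propagates from a minimal cycle to every vertex of $\Gamma$ via the constraints imposed by Claim \ref{p_vq_v'}. First, since the statement has already been verified for vertices on a minimal cycle of any connected component of $\Gamma$, I will argue that the assumption on $\Gamma$ (no isolated vertices, no leaves, girth at least $5$) ensures that such cycles exist and that we can reach every vertex from them.

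\textbf{Propagation step.} Fix a connected component $\Gamma_0$ of $\Gamma$. Since $\Gamma$ has no isolated vertices and no leaves, every vertex of $\Gamma_0$ has valence at least $2$. This forces $\Gamma_0$ to contain a cycle, and in particular a cycle $v_0,v_1,\dots,v_{n-1},v_n=v_0$ of minimal length in $\Gamma_0$. Claim \ref{t=1} then gives $t=1$ (so $p=1\in P$) and $(\text{Tr}\otimes\tau)(p_{v_i})=(\text{Tr}\otimes\tau)(q_{v_i})=1$, hence $k_{v_i}=\ell_{v_i}=1$ and $p_{v_i}=q_{v_i}=1$ for every $0\le i\le n-1$. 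Now let $w\in\Gamma_0$ be any vertex adjacent to some vertex $v$ already known to satisfy $(\text{Tr}\otimes\tau)(p_v)=(\text{Tr}\otimes\tau)(q_v)=1$. By Claim \ref{p_vq_v'}, one has $(\text{Tr}\otimes\tau)(p_w)=(\text{Tr}\otimes\tau)(q_v)=1$ and $(\text{Tr}\otimes\tau)(q_w)=(\text{Tr}\otimes\tau)(p_v)=1$. By the moreover clause of Claim \ref{delta_v} (minimality of $k_v,\ell_v$), this forces $k_w=\ell_w=1$ and $p_w=q_w=1$. Since $\Gamma_0$ is connected, an obvious induction on graph distance to the chosen cycle propagates the conclusion to every vertex of $\Gamma_0$, and hence to every vertex of $\Gamma$.

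\textbf{Conclusion about $\delta_v$.} With $k_v=\ell_v=1$ and $p_v=q_v=1$ for all $v\in\Gamma$, the element $\delta_v$ lives in $P$ and satisfies $\delta_v\delta_v^*=p=1$ and $\delta_v^*\delta_v=p_v\otimes q_v=1$; thus $\delta_v$ is a unitary in $P$. The third identity in \eqref{delta_vv} then collapses to $\delta_v M_v\delta_v^*=N_{\alpha(v)}$, as desired.

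\textbf{Decomposition of $\delta_{v'}^*\delta_v$.} Finally, for adjacent $v,v'\in\Gamma$, I apply Claim \ref{deco} with $m=1$, which is allowed since $k_v=\ell_v=k_{v'}=\ell_{v'}=1$. This yields $\delta_{v'}^*\delta_v=\alpha_{v',v}^*\alpha_{v,v'}$, where $\alpha_{v',v}\in M_{\lk(v')}$ and $\alpha_{v,v'}\in M_{\lk(v)}$ (cases (2) and (3) of Claim \ref{deco} coincide when $m=1$), and the relations (4)--(5) of Claim \ref{deco} become $\alpha_{v',v}\alpha_{v',v}^*=p_v=1$, $\alpha_{v',v}^*\alpha_{v',v}=q_{v'}=1$, $\alpha_{v,v'}\alpha_{v,v'}^*=p_{v'}=1$, $\alpha_{v,v'}^*\alpha_{v,v'}=q_v=1$, so that both $\alpha_{v',v}$ and $\alpha_{v,v'}$ are unitaries. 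The main point here is purely bookkeeping: the hard work was already done in Claim \ref{t=1}, and the only real obstacle is checking that the propagation reaches every vertex, which is guaranteed by the hypotheses on $\Gamma$.
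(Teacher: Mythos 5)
Your proposal is correct and follows essentially the same route as the paper: Claim \ref{t=1} handles a minimal cycle in the connected component, Claim \ref{p_vq_v'} propagates the trace identities $(\mathrm{Tr}\otimes\tau)(p_w)=(\mathrm{Tr}\otimes\tau)(q_w)=1$ to neighbouring vertices, connectivity carries this to every vertex, and Claims \ref{delta_v} and \ref{deco} then give the unitarity of $\delta_v$ and the decomposition of $\delta_{v'}^*\delta_v$. Your extra remarks (existence of a cycle from the no-leaf/no-isolated-vertex hypothesis, and the minimality clause of Claim \ref{delta_v} forcing $k_w=\ell_w=1$ and $p_w=q_w=1$) simply spell out details the paper leaves implicit.
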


\begin{proof}[Proof of Claim \ref{u_v}]
Let  $\Gamma_0$ be the connected component of $v$ in $\Gamma$. Let $v_0,v_1,\dots,v_{n-1},v_n=v_0$ be a cycle of minimal length in $\Gamma_0$. Claim \ref{t=1} gives that  $(\text{Tr}\otimes\tau)(p_{v_0})=(\text{Tr}\otimes\tau)(q_{v_0})=1$. We have $v_0\in\Gamma_0$, $\Gamma_0$ is connected,  and by Claim~\ref{p_vq_v'} the set $\{w\in\Gamma\mid (\text{Tr}\otimes\tau)(p_w)=(\text{Tr}\otimes\tau)(q_w)=1\}$ has the property that if $w$ belong to this set, then any neighbour of $w$ belongs to the set. We thus get $(\text{Tr}\otimes\tau)(p_{v})=(\text{Tr}\otimes\tau)(q_{v})=1$.  Claims \ref{delta_v} and \ref{deco} now give the conclusion.
\end{proof}

Claims \ref{t=1} and \ref{u_v} imply the main assertion of Theorem \ref{factors2}. To prove the moreover assertion, assume additionally that $\Gamma$ has no separating star (in particular, $\Gamma$ is connected). By Claim \ref{u_v}, $\delta_v M_v\delta_v^*=N_{\alpha(v)}$, for every $v\in\Gamma$. The moreover assertion amounts to showing that we can take the unitary $\delta_v\in P$ to be independent of $v$.
We will first show that this is the case along minimal cycles, see Claim \ref{almost_the_end}.

Towards this goal, we fix for the rest of the proof a cycle 
$v_0,v_1,\dots, v_{n-1},v_n=v_0$  of minimal length in $\Gamma$. 
 By applying Claim \ref{t=1},  we have $\alpha_{v_i,v_{i-1}}\alpha_{v_i,v_{i+1}}^*\in\mathcal U(M_{v_{i-1}})\mathcal U(M_{v_{i+1}})$, for every $0\leq i\leq n-1$.
Hence, there exist $\alpha_{v_i,v_{i-1},v_{i+1}}\in\mathcal U(M_{v_{i-1}})$ and $\alpha_{v_i,v_{i+1},v_{i-1}}\in\mathcal U(M_{v_{i+1}})$ such that \begin{equation}\label{alpha1}\text{$\alpha_{v_i,v_{i-1}}\alpha_{v_i,v_{i+1}}^*=\alpha_{v_i,v_{i-1},v_{i+1}}\alpha_{v_i,v_{i+1},v_{i-1}}^*$, \;\;\; for every $0\leq i\leq n-1$.}\end{equation}

\begin{claim}\label{invariance}
 $\alpha_{v_{i-1},v_i,v_{i-2}}^*\alpha_{v_{i+1},v_i,v_{i+2}}\in\mathbb C1$, for every $0\leq i\leq n-1$.
\end{claim}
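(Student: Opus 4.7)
Introduce the shorthand $L_j := \alpha_{v_j, v_{j-1}, v_{j+1}} \in \mathcal{U}(M_{v_{j-1}})$ and $R_j := \alpha_{v_j, v_{j+1}, v_{j-1}} \in \mathcal{U}(M_{v_{j+1}})$ for each $j$ modulo $n$, so that \eqref{alpha1} becomes $\alpha_{v_j, v_{j-1}}\alpha_{v_j, v_{j+1}}^* = L_j R_j^*$. In this notation the two quantities in the claim are $\alpha_{v_{i-1}, v_i, v_{i-2}} = R_{i-1}$ and $\alpha_{v_{i+1}, v_i, v_{i+2}} = L_{i+1}$, both unitaries in $M_{v_i}$, and the claim reduces to $R_{i-1}^* L_{i+1} \in \mathbb{C}1$. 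By cyclic symmetry it suffices to prove the case $i = 0$.

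Starting from \eqref{product3} with $p_{v_0} = 1$, and combining it with \eqref{alpha1} at $v_0$ (which yields $\alpha_{v_0,v_1}\alpha_{v_0,v_{n-1}}^* = R_0 L_0^*$), one rewrites the cyclic product as
\[
L_1 R_1^* L_2 R_2^* \cdots L_{n-1} R_{n-1}^* \;=\; R_0 L_0^*.
\]
The girth $\geq 5$ hypothesis implies that $\mathrm{lk}(v_0) \cap \{v_1, \dots, v_{n-1}\} = \{v_1, v_{n-1}\}$, so the only factors in $M_{v_0}$ on the left-hand side are $L_1$ (the leftmost) and $R_{n-1}$ (the rightmost), while $L_1^*$ commutes with $R_0 \in M_{v_1}$ and with $L_0^* \in M_{v_{n-1}}$. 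Moving $L_1$ to the right and $R_{n-1}^*$ to the right, then amalgamating them into a single unitary $Y := L_1^* R_{n-1} \in M_{v_0}$ (which commutes with $R_0 L_0^*$), one obtains
\[
R_1^* L_2 R_2^* \cdots L_{n-2} R_{n-2}^* L_{n-1} \;=\; Y \cdot R_0 L_0^*.
\]

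The left-hand side belongs to $M_{\Gamma \setminus \{v_0\}}$ (all its factors lie in vertex algebras $M_{v_k}$ for $k \neq 0$), while the right-hand side lies in $M_{v_0} \cdot M_{v_1} \cdot M_{v_{n-1}} \subset M_{\mathrm{st}(v_0)}$. Thus both sides belong to $M_{\mathrm{st}(v_0)} \cap M_{\Gamma \setminus \{v_0\}} = M_{\mathrm{lk}(v_0)}$, where the intersection identity follows from the parabolic intersection combinatorics in $\mathcal{W}_\Gamma$ (cf.\ Lemma~\ref{intersections}). Using the tensor factorization $M_{\mathrm{st}(v_0)} = M_{v_0}\,\overline{\otimes}\,M_{\mathrm{lk}(v_0)}$, the element $Y R_0 L_0^* = Y \otimes (R_0 L_0^*)$ lies in $1 \otimes M_{\mathrm{lk}(v_0)}$, and since $R_0 L_0^*$ is a nonzero unitary this forces $Y \in \mathbb{C}1$. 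Hence $L_1^* R_{n-1} \in \mathbb{C}1$, equivalently $R_{n-1}^* L_1 \in \mathbb{C}1$, which is the claim for $i = 0$.

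\textbf{Main obstacle.} The hard part is essentially a careful bookkeeping of commutation relations: the crux is ensuring that $L_1$ and $R_{n-1}$ are the \emph{only} $M_{v_0}$-factors in the cyclic product, and that they can be ``pulled out'' past the neighboring factors $R_0$ and $L_0^*$ using the adjacencies $v_0 \sim v_1$ and $v_0 \sim v_{n-1}$. The girth~$\geq 5$ hypothesis is indispensable here: any further adjacency between $v_0$ and another cycle vertex would introduce extra $M_{v_0}$-factors into the product and invalidate the identification of $Y$ as a single element of $M_{v_0}$ commuting with the remaining right-hand side.
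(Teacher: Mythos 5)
Your proof is correct, and it reaches the conclusion by a slightly different mechanism than the paper. Both arguments start from the same cyclic identity (the telescoping product of the $\delta_{v_i}$'s together with \eqref{alpha1}), but they diverge at the final step. The paper pushes the commutations all the way around the cycle so as to rewrite the identity as $\rho_0\rho_1\cdots\rho_{n-1}=1$ with $\rho_i=\alpha_{v_{i-1},v_i,v_{i-2}}^*\alpha_{v_{i+1},v_i,v_{i+2}}\in M_{v_i}$, and then concludes that each $\rho_i$ is scalar from the graph product structure along the cycle (this uses that non-consecutive cycle vertices are non-adjacent, so that a product of one unitary per cycle vertex can equal $1$ only if all are scalars); this treats all indices $i$ simultaneously. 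You instead fix $i=0$ (legitimately, since the data and hypotheses are invariant under cyclic relabelling of the minimal cycle), strip off only the two $M_{v_0}$-factors $L_1$ and $R_{n-1}$, and observe that the resulting identity equates an element of $M_{\Gamma\setminus\{v_0\}}$ with $Y\otimes(R_0L_0^*)\in M_{v_0}\overline{\otimes}M_{\mathrm{lk}(v_0)}$; the parabolic intersection $M_{\mathrm{st}(v_0)}\cap M_{\Gamma\setminus\{v_0\}}=M_{\mathrm{lk}(v_0)}$ and the tensor splitting then force $Y=L_1^*R_{n-1}\in\mathbb{C}1$. This buys a concluding step that needs no freeness-type lemma about products along the cycle (only the trivially-true intersection of stars and the tensor decomposition of $M_{\mathrm{st}(v_0)}$), at the cost of an extra cyclic-symmetry reduction; the paper's version avoids relabelling but relies on the (implicit) fact that the cycle word is reduced and supported on pairwise distinct, mostly non-adjacent vertices. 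Two minor remarks: the phrase ``moving $L_1$ to the right'' should be read as multiplying the identity by $L_1^*$ on the left and $R_{n-1}$ on the right and then commuting $L_1^*,R_{n-1}$ past $R_0,L_0^*$ only — $L_1\in M_{v_0}$ does not commute with the middle factors such as $R_1\in M_{v_2}$ — but the displayed equation you obtain is exactly the one produced by the valid manipulation; and the appeal to girth for the statement that only $L_1$ and $R_{n-1}$ lie in $M_{v_0}$ is superfluous (it follows from $L_j\in M_{v_{j-1}}$, $R_j\in M_{v_{j+1}}$ and the distinctness of the cycle vertices), girth having already done its work in establishing \eqref{alpha1} via Claim~\ref{t=1}.
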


\begin{proof}[Proof of Claim \ref{invariance}] 
Using that $\delta_{v_i}^*\delta_{v_{i+1}}=\alpha_{v_i,v_{i+1}}^*\alpha_{v_{i+1},v_i}$, for every $0\leq i\leq n-1$, we get that 
$$1=(\delta_{v_0}^*\delta_{v_1})(\delta_{v_1}^*\delta_{v_2})\cdots(\delta_{v_{n-1}}^*\delta_{v_n})
=(\alpha_{v_0,v_1}^*\alpha_{v_1,v_0})(\alpha_{v_1,v_2}^*\alpha_{v_2,v_1})\cdots (\alpha_{v_{n-1},v_0}^*\alpha_{v_0,v_{n-1}}),
$$
and thus
\begin{equation}\label{alpha2}
(\alpha_{v_0,v_{n-1}}\alpha_{v_0,v_1}^*)(\alpha_{v_1,v_0}\alpha_{v_1,v_2}^*)\cdots (\alpha_{v_{n-1},v_{n-2}}\alpha_{v_{n-1},v_0}^*)=1.
\end{equation}

By combining \eqref{alpha1} and \eqref{alpha2} we further get that 
$$
(\alpha_{v_0,v_{n-1},v_1}\alpha_{v_0,v_1,v_{n-1}}^*)(\alpha_{v_1,v_0,v_2}\alpha_{v_1,v_2,v_0}^*)\cdots (\alpha_{v_{n-1},v_{n-2},v_0}\alpha_{v_{n-1},v_0,v_{n-2}}^*)=1.
$$
Since $\alpha_{v_{n-1},v_0,v_{n-2}}\in M_{v_0}$ and  $\alpha_{v_0,v_{n-1},v_1}\in M_{v_{n-1}}$ commute, by applying $\text{Ad}(\alpha_{v_{n-1},v_0,v_{n-2}}^*\alpha_{v_0,v_{n-1},v_1}^*)$ to the last identity we derive that 
$$\alpha_{v_{n-1},v_0,v_{n-2}}^*\alpha_{v_0,v_1,v_{n-1}}^*(\alpha_{v_1,v_0,v_2}\alpha_{v_1,v_2,v_0}^*)\cdots (\alpha_{v_{n-2},v_{n-3},v_{n-1}}\alpha_{v_{n-2},v_{n-1},v_{n-3}}^*)\alpha_{v_{n-1},v_{n-2},v_0}\alpha_{v_0,v_{n-1},v_1}=1.$$

Since $\alpha_{v,v',v''}\in M_{v'}$, and $M_v$ commutes with $M_{v'}$ whenever $v,v'\in\Gamma$ are adjacent, the last displayed identity rewrites as
\begin{equation}\label{alpha4}
(\alpha_{v_{n-1},v_0,v_{n-2}}^*\alpha_{v_1,v_0,v_2})(\alpha_{v_0,v_1,v_{n-1}}^*\alpha_{v_2,v_1,v_3})\cdots (\alpha_{v_{n-2},v_{n-1},v_{n-3}}^*\alpha_{v_0,v_{n-1},v_1})=1.
\end{equation}
Thus, if we denote $\rho_i=\alpha_{v_{i-1},v_i,v_{i-2}}^*\alpha_{v_{i+1},v_i,v_{i+2}}$, for every $0\leq i\leq n-1$, then $\rho_0\rho_1\cdots\rho_{n-1}=1$. Since $\rho_i\in M_{v_i}$, we deduce that $\rho_i\in\mathbb C1$, for every $0\leq i\leq n-1$, which proves the claim.
\end{proof}

Next, we prove the following claim.

\begin{claim}\label{almost_the_end}
There exists a unitary $\mu\in P$ such that $\mu M_{v_i}\mu^*=N_{\alpha(v_i)}$, for every $0\leq i\leq n-1$.
\end{claim}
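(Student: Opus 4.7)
The strategy is to build $\mu$ inductively along the cycle, correcting $\delta_{v_0}$ by successive right multiplications to simultaneously handle more and more vertices. More precisely, I plan to construct a sequence of unitaries $\mu_0,\mu_1,\dots,\mu_{n-1}\in P$ such that for every $0\le i\le n-1$,
\[
\mu_i M_{v_j}\mu_i^* = N_{\alpha(v_j)}\quad\text{for all }0\le j\le i,
\]
and then take $\mu:=\mu_{n-1}$.

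Base step. Set $\mu_0:=\delta_{v_0}$. For the first correction, note that $\mu_1:=\delta_{v_0}\alpha_{v_0,v_1}^*=\delta_{v_1}\alpha_{v_1,v_0}^*$, where the second equality uses Claim~\ref{u_v} (item (1) of Claim~\ref{deco}: $\delta_{v_0}^*\delta_{v_1}=\alpha_{v_0,v_1}^*\alpha_{v_1,v_0}$). Since $\alpha_{v_0,v_1}\in M_{\mathrm{lk}(v_0)}$ commutes with $M_{v_0}$ and $\alpha_{v_1,v_0}\in M_{\mathrm{lk}(v_1)}$ commutes with $M_{v_1}$, we get $\mu_1 M_{v_j}\mu_1^*=N_{\alpha(v_j)}$ for $j\in\{0,1\}$.

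Inductive step. Assume $\mu_i$ is constructed. The plan is to find a correction $\psi_i$ for which $\mu_{i+1}:=\mu_i\psi_i$ satisfies the required properties at level $i+1$. Using Claim~\ref{u_v} and iterating Claim~\ref{deco}(1), I would expand
\[
\delta_{v_{i+1}}^*\mu_i \;=\; \alpha_{v_{i+1},v_i}^*\alpha_{v_i,v_{i+1}}\,(\delta_{v_i}^*\mu_i),
\]
then apply the decomposition \eqref{alpha1} at the vertex $v_{i+1}$, namely
\[
\alpha_{v_{i+1},v_i}\alpha_{v_{i+1},v_{i+2}}^*=\alpha_{v_{i+1},v_i,v_{i+2}}\alpha_{v_{i+1},v_{i+2},v_i}^*,
\]
to rewrite the expression in a form that isolates a factor in $M_{v_{i+1}}$ on the left (to be absorbed into the ``canonical pivot'' $\alpha_{v_i,v_{i+1},v_{i-1}}$) and a factor in $M_{\mathrm{lk}(v_{i+1})}$. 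At this stage I would invoke Claim~\ref{invariance}, specifically the scalar identity $\alpha_{v_{i-1},v_i,v_{i-2}}^*\alpha_{v_{i+1},v_i,v_{i+2}}\in\mathbb C1$, to collapse the ``pivot coming from the left'' and the ``pivot coming from the right'' at $v_i$ to a single constant. This is what allows the right-multiplication correction $\psi_i$ to be taken in a vertex algebra $M_{v_{i-1}}$ (or in a small product of such), hence guaranteeing that $\psi_i$ commutes with $M_{v_i}$ (since $v_i\in\mathrm{lk}(v_{i-1})$) and normalizes $M_{v_{i-1}}$, so that the conjugations already achieved for $v_{i-1}$ and $v_i$ are preserved while the conjugation for $v_{i+1}$ is achieved.

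The main obstacle. The difficulty is to ensure that the correction $\psi_i$ preserves the conjugations for all $v_j$ with $j\le i$, not only for the nearest neighbours on the cycle. Since the cycle $v_0,\dots,v_{n-1}$ has girth $n\ge 5$ and no chords, an element of $M_{v_{i-1}}$ does not commute with $M_{v_j}$ when $v_j$ is at cyclic distance $\ge 2$ from $v_{i-1}$. The plan to resolve this is that the explicit form of $\mu_i$ obtained through the induction carries additional scalar coincidences (via Claim~\ref{invariance} applied at each vertex of the cycle traversed so far), which, combined with the fact that $\mu_i$ itself encodes the ``left-to-right transport'' along the cycle, forces the a priori troublesome non-commutation terms to cancel. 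Concretely, one checks by induction that $\mu_i$ admits the form $\mu_i=\delta_{v_i}\,\alpha_{v_{i-1},v_i,v_{i-2}}\,\gamma_i$ with $\gamma_i\in M_{\mathrm{lk}(v_i)}$, and that after right-multiplication by $\psi_i$, the analogous expression at level $i+1$ is obtained; verifying that far-away conjugations are undisturbed amounts to showing that certain explicit products telescope down to scalars by repeated use of Claim~\ref{invariance}. Once $\mu_{n-1}$ is built, setting $\mu:=\mu_{n-1}$ completes the proof of the claim.
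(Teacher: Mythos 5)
Your base step is correct, and your guessed final form $\mu_i\in\mathbb C\,\delta_{v_i}\,\alpha_{v_{i-1},v_i,v_{i-2}}\,\gamma_i$ with $\gamma_i\in\mathcal U(M_{\mathrm{lk}(v_i)})$ is indeed the right target (it matches what the paper ultimately establishes, with $\beta_i:=\alpha_{v_{i-1},v_i,v_{i-2}}\in\mathcal U(M_{v_i})$). However, there is a genuine gap: the inductive step, which you yourself identify as ``the main obstacle,'' is never carried out. A correction $\psi_i\in\mathcal U(M_{v_{i-1}})$ (or a product of unitaries in nearby vertex algebras) preserves the conjugation property only at $v_{i-1}$ and at vertices adjacent to the support of $\psi_i$; for $j\le i-2$ it neither commutes with nor normalizes $M_{v_j}$ (no chords, girth $\ge 5$), so the previously achieved identities $\mu_i M_{v_j}\mu_i^*=N_{\alpha(v_j)}$ are a priori destroyed. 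Your proposed resolution --- that ``scalar coincidences'' from Claim~\ref{invariance} make ``certain explicit products telescope down to scalars'' --- is precisely the mathematical content of the claim, and it is asserted rather than proved. Equivalently, what is missing is the verification that the candidate unitaries $\delta_{v_i}(\beta_i^*\gamma_i)^*$ coincide up to scalar as $i$ varies; without that, the induction hypothesis (conjugation at all levels $j\le i$ plus the special form at level $i$) is not shown to propagate.

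For comparison, the paper's proof is not inductive and avoids the far-away commutation issue entirely. It sets $\beta_i:=\alpha_{v_{i-1},v_i,v_{i-2}}\in\mathcal U(M_{v_i})$, uses Claim~\ref{invariance} to get $\mathbb C\beta_i=\mathbb C\alpha_{v_{i+1},v_i,v_{i+2}}$, defines $\gamma_i:=\beta_{i-1}^*\alpha_{v_i,v_{i-1}}\in\mathcal U(M_{\mathrm{lk}(v_i)})$, deduces $\mathbb C\alpha_{v_i,v_{i-1}}=\mathbb C\beta_{i-1}\gamma_i$ and $\mathbb C\alpha_{v_i,v_{i+1}}=\mathbb C\beta_{i+1}\gamma_i$, and then computes, using $[\beta_i,\beta_{i+1}]=0$,
\[
\mathbb C\,\delta_{v_i}^*\delta_{v_{i+1}}=\mathbb C\,\alpha_{v_i,v_{i+1}}^*\alpha_{v_{i+1},v_i}=\mathbb C\,(\beta_i^*\gamma_i)^*(\beta_{i+1}^*\gamma_{i+1}),
\]
so that $\mathbb C\,\delta_{v_i}(\beta_i^*\gamma_i)^*$ is independent of $i$; taking $\mu$ in this common line gives $\mathrm{Ad}(\mu)(M_{v_i})=\mathrm{Ad}(\delta_{v_i})(M_{v_i})=N_{\alpha(v_i)}$ for every $i$, since $\beta_i\in M_{v_i}$ and $\gamma_i$ commutes with $M_{v_i}$. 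If you want to keep your inductive framing, the fix is to strengthen the induction hypothesis to ``$\mu_i\in\mathbb C\,\delta_{v_j}\,\mathcal U(M_{v_j})\,\mathcal U(M_{\mathrm{lk}(v_j)})$ for every $j\le i$'' and prove the consecutive scalar-coincidence identity above; as written, your proposal stops exactly where this work begins.
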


{\it Proof of Claim \ref{almost_the_end}.}
For $0\leq i\leq n-1$, put $\beta_i:=\alpha_{v_{i-1},v_i,v_{i-2}}\in\mathcal U(M_{v_i})$. Then Claim \ref{invariance} gives 
\begin{equation}\label{alpha6}
\text{$\mathbb{C}\beta_i=\mathbb C\alpha_{v_{i-1},v_i,v_{i-2}}=\mathbb C\alpha_{v_{i+1},v_i,v_{i+2}}$,\;\;\; for every $0\leq i\leq n-1$.}
\end{equation}

Combining \eqref{alpha1} and \eqref{alpha6} 
gives that
$\mathbb C\alpha_{v_i,v_{i-1}}\alpha_{v_i,v_{i+1}}^*=\mathbb C\alpha_{v_i,v_{i-1},v_{i+1}}\alpha_{v_i,v_{i+1},v_{i-1}}^*=\mathbb C\beta_{i-1}\beta_{i+1}^*$ and therefore
\begin{equation}\label{alpha7}
    \text{$\mathbb C\beta_{i-1}^*\alpha_{v_i,v_{i-1}}=\mathbb C\beta_{i+1}^*\alpha_{v_i,v_{i+1}}$, \;\;\; for every $0\leq i\leq n-1.$}
\end{equation}

For $0\leq i\leq n-1$, put $\gamma_i=\beta_{i-1}^*\alpha_{v_i,v_{i-1}}$. Since $\beta_{i-1}\in\mathcal U(M_{v_{i-1}})$ and $\alpha_{v_i,v_{i-1}}\in\mathcal U(M_{\text{lk}(v_i)})$, we get that $\gamma_i\in\mathcal U(M_{\text{lk}(v_i)})$. Moreover, 
using the definition of $\gamma_i$ for the first equality, and Equation~\eqref{alpha7} for the second, we get 
\begin{equation}\label{alpha8}
    \text{$\mathbb C\alpha_{v_i,v_{i-1}}=\mathbb C \beta_{i-1}\gamma_i$ \;\;\; and \;\;\; $\mathbb C\alpha_{v_i,v_{i+1}}=\mathbb C\beta_{i+1}\gamma_i$, \;\;\; for every $0\leq i\leq n-1$.}
\end{equation}

By using Claim \ref{deco},
and that $\beta_{i+1}$ and $\beta_i$ commute we get that for  $0\leq i\leq n-1$ we have

\begin{equation}
\begin{split}
\mathbb C\delta_{v_i}^*\delta_{v_{i+1}}=\mathbb C\alpha_{v_i,v_{i+1}}^*\alpha_{v_{i+1},v_i}&=\mathbb C(\beta_{i+1}\gamma_i)^*(\beta_i\gamma_{i+1})\\&=\mathbb C(\gamma_i^*\beta_{i+1}^*\beta_i\gamma_{i+1})=\mathbb C((\gamma_i^*\beta_i)(\beta_{i+1}^*\gamma_{i+1}))\\&=\mathbb C((\beta_i^*\gamma_i)^*(\beta_{i+1}^*\gamma_{i+1})).
\end{split}
\end{equation}

Hence, $\mathbb C\delta_{v_i}(\beta_i^*\gamma_i)^*=\mathbb C\delta_{v_{i+1}}(\beta_{i+1}^*\gamma_{i+1})^*$, for every $0\leq i\leq n-1$. Let $\mu=\delta_{v_0}(\beta_0^*\gamma_0)^*\in\mathcal U(P)$. Then for every $0\leq i\leq n-1$, we have $\mathbb C\delta_{v_i}(\beta_i^*\gamma_i)^*=\mathbb C\mu$ and hence $\delta_{v_i}\in\mathbb C\mu\beta_i^*\gamma_i$. Since $\beta_i\in\mathcal U(M_{v_i})$ and $\gamma_i\in\mathcal U(M_{\text{lk}(v_i)})$, by Claim \ref{delta_v} we deduce that 
$$\text{$N_{\alpha(v_i)}=\text{Ad}(\delta_{v_i})(M_{v_i})=\text{Ad}(\mu\beta_i^*\gamma_i)(M_{v_i})=\text{Ad}(\mu)(M_{v_i})$,\;\;\;for every $0\leq i\leq n-1$.}$$

This finishes the proof of the claim. 
\end{proof}

In order to finish the proof, it will suffice to prove the following claim.

\begin{claim}
 There exists a unitary $u\in P$ such that $uM_vu^*=N_{\alpha(v)}$, for every $v\in\Gamma$.  
\end{claim}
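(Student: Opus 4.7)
The plan is to take $u := \mu$, the unitary provided by Claim \ref{almost_the_end}, and to prove that $u M_v u^* = N_{\alpha(v)}$ for every vertex $v\in\Gamma$, not only for those on the fixed minimal cycle $C=\{v_0,\dots,v_{n-1}\}$. By Claim \ref{u_v} we already have $\delta_v M_v \delta_v^* = N_{\alpha(v)}$ for every $v$, and since $M_v$ is a II$_1$ factor with $M_v'\cap P = M_{\lk(v)}$, Proposition \ref{normalizer}(1) implies that the condition $u M_v u^* = N_{\alpha(v)}$ is equivalent to $u^* \delta_v \in \mathcal{N}_P(M_v) \subset M_{\st(v)}$. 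So the task reduces to proving $u^*\delta_v \in M_{\st(v)}$ for every $v\in\Gamma$.

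I would define $T := \{v \in \Gamma : u^*\delta_v \in M_{\st(v)}\}$. By Claim \ref{almost_the_end}, $T\supset C$, and the no-separating-star hypothesis (together with the absence of leaves and isolated vertices) forces $\Gamma$ to be connected, so it suffices to prove that $T$ is closed under taking neighbors. Given $v\in T$ and $w\in\lk(v)$, I would exploit the identity $\delta_v^*\delta_w=\alpha_{v,w}^*\alpha_{w,v}$ from Claim \ref{u_v}, which yields
\[
u^*\delta_w=(u^*\delta_v)\,\alpha_{v,w}^*\,\alpha_{w,v}.
\]
The first two factors lie in $M_{\st(v)}$ (since $\alpha_{v,w}\in M_{\lk(v)}\subset M_{\st(v)}$), while $\alpha_{w,v}\in M_{\lk(w)}$. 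Hence $u^*\delta_w \in M_{\st(v)}\cdot M_{\lk(w)}$, but this membership alone is too weak to place $u^*\delta_w$ in $M_{\st(w)}$.

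To upgrade the containment, I would invoke the no-separating-star condition at $w$: since $\Gamma\setminus\st(w)$ is connected, for every neighbor $v$ of $w$ there is a second path back to $C$ through $\Gamma\setminus\st(w)$, producing a second cycle of minimal length through $w$ (exploiting girth $\ge 5$) and hence, by repeating the argument leading to Claim \ref{almost_the_end}, a second unitary $\mu'$ with $\mu' M_x\mu'^*=N_{\alpha(x)}$ for all $x$ on that cycle. The unitary $\mu^{-1}\mu'$ lies in $\mathcal{N}_P(M_a)\cap\mathcal{N}_P(M_b)\subset M_{\st(a)}\cap M_{\st(b)}=M_{\{a,b\}}$ for any shared adjacent pair $\{a,b\}\subset C$, because $\Gamma$ is triangle-free. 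Combining the resulting expressions for $u^*\delta_w$ coming from two different neighbors $v,v'\in\lk(w)$ of $w$ (which are necessarily non-adjacent by girth $\ge 5$), and using the amalgamated decomposition $M_\Gamma = M_{\Gamma\setminus\{w\}}*_{M_{\lk(w)}}M_{\st(w)}$ together with Corollary \ref{splitting} applied to the free product $M_v*M_{v'}$ inside $M_{\lk(w)}$, I would conclude that $u^*\delta_w\in M_{\st(w)}$, i.e.\ $w\in T$. Propagation along edges then forces $T=V(\Gamma)$.

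The main obstacle is precisely this upgrading step: extracting the strong containment $u^*\delta_w\in M_{\st(w)}$ from the weak containment $u^*\delta_w \in M_{\st(v)}\cdot M_{\lk(w)}$. This is the stage where the no-separating-star assumption is indispensable: without it, the graph product admits partial conjugations, which translate into extra elements obstructing the global decomposition of $u^*\delta_w$. The key technical ingredient is an application of Corollary \ref{splitting} (more generally, of Lemma \ref{partial_isometry}) to a carefully chosen tensor-over-free-product decomposition associated to the star of $w$, using that $\Gamma\setminus\st(w)$ is connected to identify the relevant free-product factor and to produce a partner neighbor through which the splitting argument closes up.
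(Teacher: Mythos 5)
Your reduction at the very first step is not correct, and this is a genuine gap rather than a presentational issue. The equality $uM_vu^*=N_{\alpha(v)}=\delta_vM_v\delta_v^*$ is equivalent to $u^*\delta_v\in\mathcal N_P(M_v)$, and while Proposition \ref{normalizer}(1) gives $\mathcal N_P(M_v)\subset\mathcal U(M_{\st(v)})$, the converse containment fails badly: a generic unitary of $M_{\st(v)}=M_v\overline{\otimes}M_{\lk(v)}$ does not normalize $M_v$ (only unitaries of the form $a\otimes b$ do, up to the obvious corrections). So your set $T=\{v:\ u^*\delta_v\in M_{\st(v)}\}$ does not encode the desired conclusion, and even a complete proof that $T=\Gamma$ would not yield the claim. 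This is visible in the paper's own proof: after conjugating by $\mu$ so that $\delta_{v_i}=1$ on the minimal cycle, it is shown (equation \eqref{support}, via the Servatius-style path argument which is where the no-separating-star hypothesis enters exactly as you anticipate) that $\delta_v\in\mathcal U(M_{\st(v)})$ for \emph{every} $v$ --- i.e.\ precisely your condition with $u=\mu$ --- and yet the proof is far from over at that point. Relatedly, your choice $u:=\mu$ is unwarranted: $\mathrm{Ad}(\mu)(M_w)=N_{\alpha(w)}$ would force $\delta_w\in\mathcal N_P(M_w)$ after the normalization, which nothing guarantees; the correct unitary is built only at the end and differs from $\mu$ by a nontrivial correction.

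What is actually needed after \eqref{support} is a factorization statement: one must show that, up to scalars, $\delta_v=u\,\alpha_v^*\sigma_v$ for a single unitary $u$ and unitaries $\alpha_v\in\mathcal U(M_v)$, $\sigma_v\in\mathcal U(M_{\lk(v)})$, since then $\mathrm{Ad}(\delta_v)(M_v)=\mathrm{Ad}(u)(M_v)$. The paper obtains this by first proving $\alpha_{v,v'}\alpha_{v,v''}^*\in\mathcal U(M_{v'})\mathcal U(M_{v''})$ for $v',v''\in\lk(v)$ (using $\delta_v\alpha_{v,v'}^*\in M_{\{v,v'\}}$, triangle-freeness, and Corollary \ref{splitting}/Lemma \ref{partial_isometry}), and then running a scalar-valued cocycle argument: along a closed path through $v$ whose intermediate vertices avoid $\st(v)$ (this is where no-separating-star is used a second time), a telescoping identity forces the $M_v$-components to be scalar (\eqref{ecu3}), which pins down the unitaries $\alpha_v$, $\sigma_v$ and finally the global $u$. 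Your proposed ``upgrading'' step --- producing a second minimal cycle through $w$ and a second unitary $\mu'$, then comparing --- does not supply this mechanism: the no-separating-star condition gives connectivity of $\Gamma\setminus\st(w)$, not minimal cycles through $w$, and comparing the unitaries attached to two different cycles is exactly the difficulty that the factorization/scalar analysis is designed to resolve. So while you correctly identified the two places where no-separating-star must be used and the relevance of Corollary \ref{splitting}, the core of the argument (the normalizer-versus-star distinction and the construction of the corrected global unitary) is missing.
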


\begin{proof}
 Since $\mu M_{v_i}\mu^*=N_{\alpha(v_i)}$ by Claim \ref{almost_the_end}, after replacing $\theta$ by $\text{Ad}(\mu)\circ\theta$, we have $M_{v_i}=N_{\alpha(v_i)}$, for every $0\leq i\leq n-1$. Thus, 
 we may assume from now on that the unitaries $(\delta_v)_{v\in\Gamma}\subset P$ provided by Claim \ref{delta_v} satisfy $\delta_{v_i}=1$, for every $0\leq i\leq n-1$, in addition to satisfying Claim \ref{u_v}. 
 
We next use  an idea from the proof of \cite[Theorem 6]{Se89} to show that
\begin{equation}\label{support}\text{$\delta_v\in\mathcal U(M_{\text{st}(v)})$,\;\;\; for every $v\in\Gamma$}.
\end{equation} Let $v\in\Gamma$ and $v'\in\Gamma\setminus\text{st}(v)$.
We claim that there exists $0\leq i\leq n-1$ such that $v_i\not\in\text{st}(v')$. Otherwise, if all $v_i$ belong to $\st(v')$, then since $n\geq 4$, we would find $0\leq j< n-1$ such that $v_j,v_{j+1}\in\text{lk}(v')$. Then $v', v_j$ and $v_{j+1}$ form a triangle, a contradiction. 

Since $v_i,v\in\Gamma\setminus\text{st}(v')$ and $\Gamma$ contains no separating stars,  $v_i$ and $v$ are connected in $\Gamma\setminus\text{st}(v')$. Let $w_0,w_1,\dots,w_k$ be a path in $\Gamma\setminus\text{st}(v')$ such that $w_0=v_i$ and $w_k=v$. 
Then $\delta_{w_0}=1$, $\delta_{w_k}=\delta_v$, thus
\begin{equation}
   \begin{split} \delta_v=\delta_{w_0}^*\delta_{w_k}&=(\delta_{w_0}^*\delta_{w_1})\cdots (\delta_{w_{k-1}}^*\delta_{w_k})\\&=(\alpha_{w_0,w_1}^*\alpha_{w_1,w_0})(\alpha_{w_1,w_2}^*\alpha_{w_2,w_1})\cdots(\alpha_{w_{k-1},w_k}^*\alpha_{w_k,w_{k-1}})\\&=\alpha_{w_0,w_1}^*(\alpha_{w_1,w_0}\alpha_{w_1,w_2}^*)\cdots (\alpha_{w_{k-1},w_{k-2}}\alpha_{w_{k-1},w_k}^*)\alpha_{w_k,w_{k-1}}
   \end{split}
\end{equation}
Since $\alpha_{w,w'}\in M_{\text{lk}(w)}$, for every $w,w'\in\Gamma$ with $w'\in\text{lk}(w)$, we get that $\delta_v\in M_{\text{lk}(w_0)}M_{\text{lk}(w_1)}\cdots M_{\text{lk}(w_k)}$. Hence, $\delta_v\in M_{\cup_{j=0}^k\text{lk}(w_j)}$. Since $w_j\not\in\text{st}(v')$, we get that $v'\not\in\text{lk}(w_j)$, for every $0\leq j\leq k$. Thus, $\cup_{j=0}^k\text{lk}(w_j)\subset\Gamma\setminus\{v'\}$ and $\delta_v\in M_{\Gamma\setminus\{v'\}}$. Since this holds for every $v'\in\Gamma\setminus\text{st}(v)$, we deduce that $$\delta_v\in\bigcap_{v'\in\Gamma\setminus\text{st}(v)}M_{\Gamma\setminus\{v'\}}=M_{\text{st}(v)},$$ which proves \eqref{support}.

We continue by proving that
\begin{equation}\label{ecu2}
    \text{$\alpha_{v,v'}\alpha_{v,v''}^*\in\mathcal U(M_{v'})\mathcal U(M_{v''}),$ \;\;\;for every $v\in\Gamma$ and $v',v''\in\text{lk}(v)$.}
\end{equation}

Let $v\in\Gamma$ and $v',v''\in\text{lk}(v)$. By Claim \ref{deco} we get $\alpha_{v',v}^*\alpha_{v,v'}=\delta_{v'}^*\delta_v$, thus $\delta_v\alpha_{v,v'}^*=\delta_{v'}\alpha_{v',v}^*$. 
On the other hand, since $\alpha_{v,v'}\in M_{\text{lk}(v)}$ and $\alpha_{v',v}\in M_{\text{lk}(v')}$, by using \eqref{support} we get $\delta_v\alpha_{v,v'}^*\in M_{\text{st}(v)}$ and $\delta_{v'}\alpha_{v',v}^*\in M_{\text{st}(v')}$. Since $\Gamma$ contains no triangles, we also have $\text{st}(v)\cap \text{st}(v')=\{v,v'\}$, hence $M_{\text{st}(v)}\cap M_{\text{st}(v')}=M_{\{v,v'\}}.$ Altogether, we deduce that
$z':=\delta_v\alpha_{v,v'}^*\in M_{\{v,v'\}}$.
Similarly, we have 
$z'':=\delta_v\alpha_{v,v''}^*\in M_{\{v,v''\}}$.
These facts together give that
$$\alpha_{v,v'}\alpha_{v,v''}^*=z'^*z''\in M_{\{v,v'\}}M_{\{v,v''\}}\subset M_{\{v,v',v''\}}.$$
Since  $\alpha_{v,v'}\alpha_{v,v''}^*\in M_{\text{lk}(v)}$ and $\{v,v',v''\}\cap\text{lk}(v)=\{v',v''\}$, we conclude that
$$\alpha_{v,v'}\alpha_{v,v''}^*=z'^*z''\in M_{\{v',v''\}}.$$
Hence, the unitaries $z',z''\in M_{\{v,v',v''\}}=(M_{v'}*M_{v''})\overline{\otimes}M_v$ satisfy $z'\in M_{v'}\overline{\otimes}M_v,z''\in M_{v''}\overline{\otimes}M_v$ and $z'^*z''\in (M_{v'}*M_{v''})\overline{\otimes}\mathbb C1$. We  can thus apply Corollary \ref{splitting} to derive that $z'^*z''\in\mathcal U(M_{v'})\mathcal U(M_{v''})$ and thus $\alpha_{v,v'}\alpha_{v,v''}^*\in\mathcal U(M_{v'})\mathcal U(M_{v''})$. This finishes the proof of \eqref{ecu2}.

Next, \eqref{ecu2} implies that for every $v\in\Gamma$ and $v',v''\in\text{lk}(v)$, there exist $\alpha_{v,v',v''}\in\mathcal U(M_{v'})$ and $\alpha_{v,v'',v'}\in\mathcal U(M_{v''})$ such that \begin{equation}\label{ecu4}
\text{$\alpha_{v,v'}\alpha_{v,v''}^*=\alpha_{v,v',v''}\alpha_{v,v'',v'}^*$, for every $v\in\Gamma$ and $v',v''\in\lk(v)$}.
\end{equation} 

Fix $v\in\Gamma$.
We claim that 
\begin{equation}\label{ecu3}
    \text{$\alpha_{v',v,w'}^*\alpha_{v'',v,w''}\in\mathbb C1$, for every $v',v''\in\text{lk}(v),w'\in\text{lk}(v')\setminus\{v\}$ and $w''\in\text{lk}(v'')\setminus\{v\}$.}
\end{equation}

Since $\Gamma$ is triangle-free we have that $w',w''\not\in\text{st}(v)$. Since $\Gamma$ has no separating star, $\Gamma\setminus\text{st}(v)$ is connected and therefore $w'$ and $w''$ are connected by a path in $\Gamma\setminus\text{st}(v)$. In particular, there is a sequence
 $t_0,t_1,\cdots,t_{l-1},t_l=t_0$ in $\Gamma$ in which any two consecutive terms are adjacent such that $t_{l-2}=w',t_{l-1}=v',t_0=v,t_1=v''$, $t_2=w''$ and $t_i\not=v$, for every $1\leq i\leq k-1$. By \eqref{ecu4} we have that 
$\alpha_{t_i,t_{i-1}}\alpha_{t_i,t_{i+1}}^*=\alpha_{t_i,t_{i-1},t_{i+1}}\alpha_{t_i,t_{i+1},t_{i-1}}^*$, for every $0\leq i\leq l-1$. By repeating the calculation from the proof of Claim \ref{invariance} we get that \begin{equation}\label{ecu5}
(\alpha_{t_{l-1},t_0,t_{l-2}}^*\alpha_{t_1,t_0,t_2})(\alpha_{t_0,t_1,t_{l-1}}^*\alpha_{t_2,t_1,t_3})\cdots (\alpha_{t_{l-2},t_{l-1},t_{l-3}}^*\alpha_{t_0,t_{l-1},t_1})=1.
\end{equation}
Thus, if we denote $\zeta_i=\alpha_{t_{i-1},t_i,t_{i-2}}^*\alpha_{t_{i+1},t_i,t_{i+2}}$, for every $0\leq i\leq l-1$, then $\zeta_0\zeta_1\cdots\zeta_{l-1}=1$. 
Since $\zeta_i\in M_{t_i}$, for every $0\leq i\leq l-1$ and $t_0\not\in\{t_1,\cdots,t_{l-1}\}$, we deduce that $\zeta_0\in\mathbb C1$. Hence, we have $\alpha_{v',v,w'}^*\alpha_{v'',v,w''}=\alpha_{t_{l-1},t_0,t_{l-2}}^*\alpha_{t_{1},t_0,t_{2}}=\zeta_0\in\mathbb C1$, 
which proves \eqref{ecu3}.

Next, \eqref{ecu3} implies that for every $v\in\Gamma$, we can find $\alpha_v\in \mathcal U(M_v)$ such that 
\begin{equation}\label{ecu6}\text{$\alpha_{v',v,w'}\in\mathbb C\alpha_v$, for every $v'\in\text{lk}(v)$ and $w'\in\text{lk}(v')\setminus\{v\}$.}\end{equation}

If $v\in\Gamma$, then combining \eqref{ecu6} and \eqref{ecu4} gives that for every  $v',v''\in\text{lk}(v)$ with $v'\not=v''$ we have
$$\text{$\alpha_{v,v'}\alpha_{v,v''}^*=\alpha_{v,v',v''}\alpha_{v,v'',v'}^*\in\mathbb C\alpha_{v'}\alpha_{v''}^*$,}$$
hence $\mathbb C\alpha_{v'}^*\alpha_{v,v'}=\mathbb C\alpha_{v''}^*\alpha_{v,v''}$. Let $\sigma_v:=\alpha_{v''}^*\alpha_{v,v''}$, for some $v''\in\text{lk}(v)$. 
Then $\mathbb C\alpha_{v'}^*\alpha_{v,v'}=\mathbb C\sigma_v$, thus 
\begin{equation}\label{ecu7}
\text{$\alpha_{v,v'}\in\mathbb C\alpha_{v'}\sigma_v$, for every $v\in\Gamma$ and $v'\in\text{lk}(v)$.}
\end{equation}
Moreover, since $\alpha_{v''}\in \mathcal U(M_{v''})\subset \mathcal U(M_{\text{lk}(v)})$ and $\alpha_{v,v''}\in \mathcal U(M_{\text{lk}(v)})$, we get that $\sigma_v\in\mathcal U(M_{\text{lk}(v)})$.

By combining Claim \ref{deco} and \eqref{ecu7}, we deduce that for any two adjacent vertices $v,v'\in\Gamma$ we have
\begin{equation}\label{alpha8}\begin{split}
\mathbb C\delta_{v'}^*\delta_v=\mathbb C\alpha_{v',v}^*\alpha_{v,v'}&=\mathbb C (\alpha_v\sigma_{v'})^*(\alpha_{v'}\sigma_v)\\&=\mathbb C(\sigma_{v'}^*\alpha_v^*\alpha_{v'}\sigma_v)=\mathbb C(\sigma_{v'}^*\alpha_{v'}\alpha_{v}^*\sigma_v)\\&=\mathbb C((\alpha_{v'}^*\sigma_{v'})^*(\alpha_v^*\sigma_v)). \end{split}
\end{equation}
Therefore, $\mathbb C\delta_v(\alpha_v^*\sigma_v)^*=\mathbb C\delta_{v'}(\alpha_{v'}^*\sigma_{v'})^*$, for every adjacent vertices $v,v'\in\Gamma$. Since $\Gamma$ is connected, we conclude that 
$\mathbb C\delta_v(\alpha_v^*\sigma_v)^*=\mathbb C\delta_{v'}(\alpha_{v'}^*\sigma_{v'})^*$, for every  $v,v'\in\Gamma$. Put $u=\delta_{v'}(\alpha_{v'}^*\sigma_{v'})\in\mathcal U(P)$, for some $v'\in\Gamma$. Then $\mathbb C\delta_v(\alpha_v^*\sigma_v)^*=\mathbb Cu$ and thus $\delta_v\in \mathbb Cu\alpha_v^*\sigma_v$, for every $v\in\Gamma$.

Finally, since $\alpha_v\in M_v$ and $\sigma_v\in\mathcal U(M_{\text{lk}(v)})$, using Claim \ref{delta_v} we deduce that 
$$N_{\alpha(v)}=\text{Ad}(\delta_v)(M_v)=\text{Ad}(u\alpha_v^*\sigma_v)(M_v)=\text{Ad}(u)(M_v),$$
for every $v\in\Gamma$. 
This finishes the proof of the claim and of the theorem. 
\end{proof}

\section{Applications: classification results and calculations of symmetries}\label{applications}

In this section, we use our main results to derive Corollaries \ref{RAAGs}-\ref{symmetry_groups} stated in the introduction.

\subsection{$W^*$-classification of right-angled Artin groups}

As a consequence of Theorem \ref{amenable}, we derive the following statement.

\newtheorem*{cor:raags}{Corollary~\ref{RAAGs}}

\begin{cor:raags}
Let $\Gamma$ and $\Lambda$ be  two finite simple graphs which are transvection-free. 

Then the right-angled Artin groups $A_\Gamma$ and $A_\Lambda$ are W$^*$-equivalent if and only if they are isomorphic.
\end{cor:raags}

\begin{proof}
    Clearly if $A_\Gamma$ and $A_\Lambda$ are isomorphic, then they are W$^*$-equivalent. Conversely, if $A_\Gamma$ and $A_\Lambda$ are W$^*$-equivalent, then $\text{L}(A_\Gamma)\cong \text{L}(A_\Lambda)$.
Hence, if we put $\mathcal A=\text{L}(\mathbb Z)$, then $$*_{v,\Gamma}\mathcal A\cong \text{L}(A_\Gamma)\cong\text{L}(A_\Lambda)\cong *_{w,\Lambda}\mathcal A.$$ Since $\mathcal A$ is a diffuse abelian (thus, amenable) tracial von Neumann algebra,  Theorem \ref{amenable} implies that the graphs $\Gamma$ and $\Lambda$ are isomorphic.
\end{proof}

The results established in Section~\ref{sec:amenable} also allow to cover some right-angled Artin groups beyond the transvection-free case. Here is an example.

\begin{example}\label{ex:raag}
    If we only assume that $\Gamma,\Lambda$ and their untransvectable subgraphs $\Gamma^u$ and $\Lambda^u$ are clique-reduced, and $\text{L}(A_\Gamma)\cong \text{L}(A_\Lambda)$, then Theorem~\ref{amenable_gen} still ensures that $\mathcal{E}(\Gamma^u)$ and $\mathcal{E}(\Lambda^u)$ are isomorphic. This provides classification results for right-angled Artin groups beyond the transvection-free case. For instance, for $n\ge 1$, if $P_n$ denotes the path with $n$ edges and $n+1$ vertices, then $P_n^u$ is the path with $n-1$ vertices (the two leaves are the only transvectable vertices). Noticing that $\mathcal E(P_n^u)\cong P_n^u$, it follows that for $m,n\ge 1$ we have $\text{L}(A_{P_m})\cong \text{L}(A_{P_n})$ if and only if $m=n$. We mention that the measure equivalence (or orbit equivalence) classification of these right-angled Artin groups is still open. We also mention that  for $m\neq n$, $A_{P_m}$ and $A_{P_n}$ are not commensurable unless $\{m,n\}=\{3,4\}$ by \cite{CRKZ21}, and all right-angled Artin groups $A_{P_n}$ with $n\ge 3$ are quasi-isometric by \cite{BN08}).
\end{example}

\subsection{Classification of graph products of amenable II$_1$ factors}

Our next corollary follows from our main theorem in the case of II$_1$ factor vertex algebras, Theorem \ref{amenable_gen}. The following statement provides a generalization of Corollary~\ref{hyperfinite} from the introduction (Corollary~\ref{hyperfinite} specializes Corollary~\ref{hyperfinite2} to the case where the graphs $\Gamma$ and $\Lambda$ are transvection-free). 

\begin{corollary}\label{hyperfinite2}
    Let $\Gamma$ and $\Lambda$ be two 
    clique-reduced finite simple graphs. For every $v\in\Gamma$ and $w\in\Lambda$, let $M_v=N_w=R$, where $R$ denotes the hyperfinite II$_1$ factor.

    If $M_\Gamma$ and $N_\Lambda$ are isomorphic, then the untransvectable subgraphs $\Gamma^u$ and $\Lambda^u$ are isomorphic.
\end{corollary}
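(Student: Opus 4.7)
The plan is to obtain Corollary \ref{hyperfinite2} as a direct specialization of Theorem \ref{amenable_gen}(2). To begin, I would dispense with the trivial case where one of $\Gamma,\Lambda$ is empty: since each vertex algebra is diffuse and an isomorphism $M_\Gamma\cong N_\Lambda$ preserves the property of being isomorphic to $\mathbb{C}$, one of the graphs is empty if and only if the other is, in which case both untransvectable subgraphs are empty and the conclusion is immediate. So from now on I assume $\Gamma$ and $\Lambda$ are nonempty.

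Next I would check that the hypotheses of Theorem \ref{amenable_gen}(2) are met, working within the framework of Notation~\ref{notation}. The vertex algebras $M_v = N_w = R$ are diffuse, amenable, and II$_1$ factors. Since every vertex algebra is a II$_1$ factor, Lemma~\ref{basic}(1) guarantees that $M_\Gamma$ and $N_\Lambda$ are II$_1$ factors, verifying condition (4) of Notation~\ref{notation}. The given isomorphism $\theta:M_\Gamma\to N_\Lambda$ fits the setup of (5) by taking $n=1$ and $p=1$, so that $P = N_\Lambda$ and $pPp = N_\Lambda$. The additional assumptions of Theorem \ref{amenable_gen}, namely that $\Gamma,\Lambda$ are clique-reduced and that all vertex algebras are amenable, hold by hypothesis (the former directly, the latter because $R$ is amenable). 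Finally, the II$_1$ factor hypothesis specific to part (2) holds trivially since each vertex algebra equals $R$.

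Applying Theorem \ref{amenable_gen}(2), we obtain a graph isomorphism $\alpha:\Gamma^u \to \Lambda^u$, which is exactly the conclusion of Corollary \ref{hyperfinite2}. There is no significant obstacle here, as the entire work is absorbed by Theorem \ref{amenable_gen}; only the routine verification of hypotheses is needed. I would note in passing that the same argument also recovers Corollary \ref{hyperfinite} when $\Gamma$ and $\Lambda$ are additionally assumed transvection-free, since in that case $\Gamma^u = \Gamma$ and $\Lambda^u = \Lambda$, yielding an isomorphism between $\Gamma$ and $\Lambda$ themselves (with the converse being obvious).
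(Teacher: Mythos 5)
Your proposal is correct and coincides with the paper's own proof, which simply observes that the corollary is a direct consequence of Theorem~\ref{amenable_gen}(2). The hypothesis checks you carry out (non-emptiness, factoriality of $M_\Gamma$ and $N_\Lambda$ via Lemma~\ref{basic}(1), taking $n=1$ and $p=1$ in Notation~\ref{notation}) are exactly the routine verifications implicit in the paper's one-line argument.
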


\begin{proof}
This is a direct consequence of Theorem~\ref{amenable_gen}(2).    
\end{proof}

\begin{remark}
Our assumption that the graphs are clique-reduced is mild: if $\Gamma$ contains a collapsible clique $C$, then by collapsing $C$ to a vertex, we get a new graph product structure for $M_\Gamma$, where the new vertex algebra is $\overline\otimes_{v\in C}R\cong R$, where $R$ is the hyperfinite II$_1$ factor.
\end{remark}

\begin{remark}
Corollary~\ref{hyperfinite2} recovers \cite[Theorem~4.10]{CC25} in the case when $\Gamma$ and $\Lambda$ are finite. The latter result asserts that if $\Gamma$, $\Lambda$ are H-rigid and $M_\Gamma\cong N_\Lambda$, then $\text{Int}(\Gamma)\cong\text{Int}(\Lambda)$.
Recall from \cite[Definition~4.1]{CC25} that a nonempty set $\Gamma_0\subset\Gamma$ is called an internal set
if $\Gamma_0^\perp$ is not a clique; a vertex $v\in\Gamma$ is called an internal vertex if $\{v\}$ is an internal set. When all the internal sets of $\Gamma$ are vertices, the internal graph $\mathrm{Int}(\Gamma)$ is the full subgraph of $\Gamma$ whose vertices are the internal vertices of $\Gamma$. By \cite[Definition~4.6]{CC25}, $\Gamma$ is called H-rigid if all of its internal sets are vertices, and every connected component of every full subgraph $\Gamma_0\subset\Gamma$ with $\Gamma_0^\perp\not=\emptyset$ is a clique. 

To justify the above claim, we first observe that we do not lose any generality by assuming that $\Gamma$ and $\Lambda$ are clique-reduced. Indeed, if $\Gamma$ is H-rigid and contains a collapsible clique $C$ on at least two vertices, then the definition of H-rigid graphs ensures that $C^{\perp}$ is a clique. In particular the link in $\Gamma$ of every vertex $w$ of $C$ is a clique, so $w$ is not internal, and not untransvectable. In addition, if $\bar\Gamma$ is the graph obtained by collapsing $C$ to a vertex $v$, then $v$ is neither internal nor untransevctable. It follows that $\mathrm{Int}(\Gamma)=\mathrm{Int}(\bar\Gamma)$ and $\Gamma^u=\bar\Gamma^u$.

To recover \cite[Theorem~4.10]{CC25}, it is thus enough to prove that if $\Gamma$ is H-rigid and not reduced to a vertex, then the internal graph $\mathrm{Int}(\Gamma)$ coincides with our untransvectable subgraph $\Gamma^u$. To prove this, note if $v\in\Gamma$ is untransvectable, then $\text{lk}(v)$ is not a clique, which means that $v\in\text{Int}(\Gamma)$. Conversely, if $v$ is transvectable, say $\lk(v)\subset\st(w)$ for some vertex $w\in\Gamma\setminus\{v\}$, then the definition of H-rigid graphs implies that $\{v,w\}^\perp$  is a clique. Since $\lk(v)$ is equal to either $\{v,w\}^\perp$ (if $v$ and $w$ are non-adjacent) or to $\{w\}\cup \{v,w\}^\perp$ (if $v$ and $w$ are adjacent) it follows that $\lk(v)$ is a complete graph, so $v\notin\mathrm{Int}(\Gamma)$. 

Corollary \ref{hyperfinite2} applies to new classes of graphs, not covered by \cite[Theorem~4.10]{CC25}. Indeed, the latter result excludes the existence of full subgraphs $\Gamma_0\subset\Gamma$ such that $|\Gamma_0|\geq 2$ and $\Gamma_0^\perp$ is not a clique, which Corollary \ref{hyperfinite2} allows.
\end{remark}

\subsection{$W^*$-classification of graph products of ICC groups}

\newtheorem*{cor:icc}{Corollary~\ref{ICC}}

\begin{cor:icc}\label{ICC2}
Let $\Gamma,\Lambda$ be two finite simple graphs of girth at least $5$, which contain no vertices of valence $0$ or $1$. Let $(G_v)_{v\in\Gamma}$ and $(H_w)_{w\in\Lambda}$ be families of ICC groups. Let  $G_\Gamma=*_{v,\Gamma}G_v$ and $H_\Lambda=*_{w,\Lambda}H_w$ be the associated graph product groups.
Then the following conditions are equivalent:
\begin{enumerate}
    \item  $G_\Gamma$ and $H_\Lambda$  are W$^*$-equivalent.
    \item  $G_\Gamma$ and $H_\Lambda$  are stably W$^*$-equivalent.
    \item There exists a graph isomorphism $\alpha:\Gamma\rightarrow\Lambda$ such that $G_v$ is W$^*$-equivalent to $H_{\alpha(v)}$, for every $v\in\Gamma$.
\end{enumerate} 
\end{cor:icc}

\begin{proof}
We clearly have (1) $\Rightarrow$ (2) and (3) $\Rightarrow$ (1). To prove (2) $\Rightarrow$ (3), assume that 
 $G_\Gamma$ and $H_\Lambda$ are stably W$^*$-equivalent, i.e., there is a $*$-isomorphism $\theta:\text{L}(G_\Gamma)\rightarrow\text{L}(H_\Lambda)^t$, for some $t>0$. Since $\text{L}(G_\Gamma)=*_{v,\Gamma}\text{L}(G_v)$, $\text{L}(H_\Lambda)=*_{w,\Lambda}\text{L}(H_w)$, and $\text{L}(G_v),\text{L}(H_w)$ are II$_1$ factors, for every $v\in\Gamma,w\in\Lambda$, Theorem~\ref{factors'} implies that $t=1$ and there exist a graph isomorphism $\alpha:\Gamma\rightarrow\Lambda$ and for every $v\in\Gamma$ a unitary $u_v\in \text{L}(H_\Lambda)$ such that $\theta(\text{L}(G_v))=u_v\text{L}(H_{\alpha(v)})u_v^*$. Hence, $G_v$ is W$^*$-equivalent to $H_{\alpha(v)}$, for every $v\in\Gamma$, which proves that (3) holds.
\end{proof}

\subsection{Computations of symmetry groups}

\newtheorem*{cor:symmetry}{Corollary~\ref{symmetry_groups}}

\begin{cor:symmetry}\label{cor:symmetry_groups}

Let $\Gamma$ be a finite simple graph of girth at least $5$, which contains no vertices of valence $0$ or $1$. Let $(M_v)_{v\in\Gamma}$ be a family of II$_1$ factors.
Then $\mathcal F(M_\Gamma)=\{1\}$.

Moreover, if $\Gamma$ has no separating stars, then  \[\mathrm{Out}(M_\Gamma)\cong \left(\bigoplus_{v\in\Gamma}\mathrm{Aut}(M_v)\right)\rtimes\mathrm{Aut}(\Gamma;M_\Gamma).\]

\end{cor:symmetry}

The reader is referred to the introduction, in particular Equation~\eqref{aut_loc}, for the semi-direct product structure.

\begin{proof}
   Applying Theorem \ref{factors'} to $N_\Lambda=M_\Gamma$ implies that $\mathcal F(M_\Gamma)=\{1\}$. 
   
   Assume additionally that $\Gamma$ has no separating stars. Then the moreover assertion of Theorem \ref{factors'} implies that $\text{Aut}(M_\Gamma)=\text{Aut}_{\text{loc}}(M_\Gamma)\text{Inn}(M_\Gamma)$. Hence we have 
    \begin{equation}\label{isomorphism}\text{Out}(M_\Gamma)=\text{Aut}_{\text{loc}}(M_\Gamma)\text{Inn}(M_\Gamma)/\text{Inn}(M_\Gamma)\cong\text{Aut}_{\text{loc}}(M_\Gamma)/(\text{Inn}(M_\Gamma)\cap\text{Aut}_{\text{loc}}(M_\Gamma)).\end{equation}
We claim that $\text{Inn}(M_\Gamma)\cap\text{Aut}_{\text{loc}}(M_\Gamma)=\{\text{Id}\}$.
Let $\theta\in \text{Inn}(M_\Gamma)\cap\text{Aut}_{\text{loc}}(M_\Gamma)$. Then $\theta=\text{Ad}(u)$, for some $u\in\mathcal U(M_\Gamma)$ and there exists $\sigma\in\text{Aut}(\Gamma)$ such that $\theta(M_v)=M_{\sigma(v)}$, for every $v\in\Gamma$. If $v\in\Gamma$, then $M_{\sigma(v)}=\theta(M_v)=uM_vu^*$ and Lemma~\ref{basic}(2) implies that $\sigma(v)=v$. Hence, $uM_vu^*=M_v$, and therefore $u\in M_{\text{st}(v)}$, by Proposition \ref{normalizer}(1). In conclusion $u\in\cap_{v\in\Gamma}M_{\text{st}(v)}=M_{\cap_{v\in\Gamma}\text{st}(v)}$. 
Let us show that $\cap_{v\in\Gamma}\text{st}(v)=\emptyset$. Indeed, if $w\in \cap_{v\in\Gamma}\text{st}(v)$, then $\Gamma=\text{st}(w)$.
Let $w'\in\text{lk}(w)$ and $w''\in\text{lk}(w')\setminus\{w\}$. Since $w''\in\text{st}(w)$, we get that $w''\in\text{lk}(w)$, and thus $\{w,w',w''\}$ forms a triangle, which is a contradiction. Hence, we have shown that $\cap_{v\in\Gamma}\text{st}(v)=\emptyset$ and therefore $u\in\mathbb C1$. This shows that $\theta=\text{Ad}(u)=\text{Id}$, which finishes the proof of the claim.

   The claim, \eqref{isomorphism} and \eqref{aut_loc} give that $\text{Out}(M_\Gamma)\cong\mathrm{Aut}_{\mathrm{loc}}(M_\Gamma)\cong(\bigoplus_{v\in\Gamma}\mathrm{Aut}(M_v))\rtimes\mathrm{Aut}(\Gamma;M_\Gamma)$.
\end{proof}

\section{Obstructions to rigidity results}
Theorems \ref{arbitrary}-\ref{factors'} provide classes of graph product II$_1$ factors  $M_\Gamma$ which satisfy the following rigidity statement: for any automorphism $\theta:M_\Gamma\rightarrow M_\Gamma$, there exists an automorphism $\alpha$ of $\Gamma$ such that $\theta(M_v)\prec_{M_\Gamma}^sM_{\alpha(v)}$, for every $v\in\Gamma$. In all of these results, the graph $\Gamma$ is assumed to be transvection-free. The purpose of this section is to show that, conversely, the presence of transvectable vertices is in general an obstruction to such a rigidity statement. We start with the following result.

\begin{proposition}\label{obstruction1}
Let $\Gamma$ be a finite simple graph, $(M_v,\tau_v)_{v\in\Gamma}$  a family of tracial von Neumann algebras, and $M_\Gamma=*_{v,\Gamma}M_v$  the associated graph product von Neumann algebra.
Assume that $v,v'\in\Gamma$ are such that $\emph{lk}(v)\subset\emph{st}(v')$ and the following condition holds ($\star$)  there is  $\rho\in\emph{Aut}(M_{\{v,v'\}})$ satisfying $\rho(M_v)\nprec_{M_{\{v,v'\}}}M_v$ and ${\rho}(M_{v'})=M_{v'}$.

Then there exists  $\theta\in\emph{Aut}(M_\Gamma)$ such that $\theta(M_v)\nprec_{M_\Gamma}M_{v''}$, for every $v''\in\Gamma$.
\end{proposition}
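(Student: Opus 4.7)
My approach is to extend $\rho$ to an automorphism $\theta$ of $M_\Gamma$ whose restriction to $M_{\{v,v'\}}$ equals $\rho$, and then apply Proposition~\ref{intertwiners} to the subalgebra $\theta(M_v)=\rho(M_v)\subset M_{\{v,v'\}}$ with $\Gamma_1=\{v,v'\}$ in order to rule out intertwining into any $M_{v''}$.

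\textbf{Normalizing $\rho$ on $M_{v'}$.} I first replace $\rho$ by an automorphism of $M_{\{v,v'\}}$ that acts as the identity on $M_{v'}$ without destroying condition~($\star$). If $v,v'$ are adjacent we have $M_{\{v,v'\}}=M_v\overline{\otimes}M_{v'}$ and I set $\tilde\sigma=\mathrm{id}_{M_v}\otimes(\rho|_{M_{v'}})\in\mathrm{Aut}(M_{\{v,v'\}})$; if they are non-adjacent then $M_{\{v,v'\}}=M_v\ast M_{v'}$ and I set $\tilde\sigma=\mathrm{id}_{M_v}\ast(\rho|_{M_{v'}})$ (well-defined by the universal property of the free product). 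In either case $\rho\circ\tilde\sigma^{-1}$ fixes $M_{v'}$ pointwise and still coincides with $\rho$ on $M_v$, so one may assume $\rho|_{M_{v'}}=\mathrm{id}$.

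\textbf{Extending $\rho$ to $\theta\in\mathrm{Aut}(M_\Gamma)$.} I use the amalgamated free product decomposition $M_\Gamma=M_{\Gamma\setminus\{v\}}\ast_{M_{\mathrm{lk}(v)}}M_{\mathrm{st}(v)}$ and define $\theta$ to be the identity on $M_{\Gamma\setminus\{v\}}$. The task is then to define $\theta$ on $M_{\mathrm{st}(v)}$ so that it restricts to the identity on $M_{\mathrm{lk}(v)}$ and agrees with $\rho$ on $M_v$. The hypothesis $\mathrm{lk}(v)\subset\mathrm{st}(v')$ is what makes this possible, but it has to be used differently in two cases. If $v'\in\mathrm{lk}(v)$, then every $w\in\mathrm{lk}(v)\setminus\{v'\}$ lies in $\mathrm{lk}(v')$, whence $M_{\mathrm{lk}(v)}=M_{v'}\overline{\otimes}M_{\mathrm{lk}(v)\setminus\{v'\}}$ and $M_{\mathrm{st}(v)}=M_{\{v,v'\}}\overline{\otimes}M_{\mathrm{lk}(v)\setminus\{v'\}}$; I then set $\theta|_{M_{\mathrm{st}(v)}}=\rho\otimes\mathrm{id}$, which reduces to the identity on $M_{\mathrm{lk}(v)}$ thanks to $\rho|_{M_{v'}}=\mathrm{id}$. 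If $v'\notin\mathrm{lk}(v)$, then $\mathrm{lk}(v)\subset\mathrm{lk}(v')$, so $M_{\mathrm{lk}(v)}$ commutes with both $M_v$ and $M_{v'}$ inside $M_\Gamma$, hence with $M_{\{v,v'\}}$; I then define $\theta$ on $M_{\mathrm{st}(v)}=M_v\overline{\otimes}M_{\mathrm{lk}(v)}$ by $\theta(x\otimes y)=\rho(x)\,y$, which is a $\ast$-homomorphism into $M_\Gamma$ restricting to the identity on $M_{\mathrm{lk}(v)}$. The two pieces agree on $M_{\mathrm{lk}(v)}$, so by the universal property of amalgamated free products $\theta$ extends to a $\ast$-endomorphism of $M_\Gamma$; applying the same recipe to $\rho^{-1}$ yields a two-sided inverse, so $\theta\in\mathrm{Aut}(M_\Gamma)$. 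By construction $\theta|_{M_{\{v,v'\}}}=\rho$ (immediate in the adjacent case; in the non-adjacent case it follows from the universal property of $M_v\ast M_{v'}$ since both $\theta$ and $\rho$ restrict to $\rho$ on $M_v$ and to $\mathrm{id}=\rho|_{M_{v'}}$ on $M_{v'}$).

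\textbf{Non-intertwining of $\theta(M_v)$.} Let $P=\theta(M_v)=\rho(M_v)\subset M_{\{v,v'\}}$. I check the assumptions of Proposition~\ref{intertwiners}(1) with $\Gamma_1=\{v,v'\}$: (i) $P\nprec_{M_{\{v,v'\}}}\mathbb{C}$ because $M_v$ is diffuse; (ii) $P\nprec_{M_{\{v,v'\}}}M_v$ is exactly~($\star$); (iii) if one had $P\prec_{M_{\{v,v'\}}}M_{v'}$, then applying $\rho^{-1}$ (which preserves $M_{v'}$) would yield $M_v\prec_{M_{\{v,v'\}}}M_{v'}$, contradicting a direct computation: choose a net $(u_n)$ of unitaries in $M_v$ tending weakly to $0$, and a standard calculation using either the tensor product formula $\mathrm{E}_{M_{v'}}(a_1a_2^\ast u_n b_1b_2)=\tau(a_1^\ast u_n b_1)\,a_2^\ast b_2$ (adjacent case) or freeness (non-adjacent case) shows $\|\mathrm{E}_{M_{v'}}(a^\ast u_n b)\|_2\to 0$ for all $a,b$. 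Now if $\theta(M_v)\prec_{M_\Gamma}M_{v''}$ for some $v''\in\Gamma$, then Proposition~\ref{intertwiners}(1), applied with $\Gamma_2=\{v''\}$, forces $\{v,v'\}\subset\{v''\}$, which is impossible since $v\neq v'$. Hence $\theta(M_v)\nprec_{M_\Gamma}M_{v''}$ for every $v''\in\Gamma$. The main technical obstacle is the construction of $\theta$: both the reduction to $\rho|_{M_{v'}}=\mathrm{id}$ and the actual definition on $M_{\mathrm{st}(v)}$ must be handled separately in the adjacent and non-adjacent cases, since the internal structure of $M_{\{v,v'\}}$ (tensor product vs.\ free product) and of $M_{\mathrm{lk}(v)}$ is genuinely different.
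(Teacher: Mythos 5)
Your overall strategy coincides with the paper's (extend $\rho$ to $\theta\in\mathrm{Aut}(M_\Gamma)$ acting trivially on the other vertex algebras, then rule out intertwining), and your endgame is a valid variant: where the paper combines Corollary~\ref{intersection}, Lemma~\ref{descend} and Lemma~\ref{basic}(2), you apply Proposition~\ref{intertwiners}(1) with $\Gamma_1=\{v,v'\}$, which works. The genuine gap is in the construction of $\theta$ in the non-adjacent case. There you define a map on the leg $M_{\st(v)}=M_v\overline{\otimes}M_{\lk(v)}$ by $x\otimes y\mapsto\rho(x)y$, whose image is \emph{not} contained in $M_{\st(v)}$ (it involves $M_{v'}$), and you then invoke ``the universal property of amalgamated free products'' to glue it with the identity on $M_{\Gamma\setminus\{v\}}$. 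Von Neumann algebraic (amalgamated) free products have no such universal property: a pair of normal $*$-homomorphisms on the two legs agreeing on the amalgam need not extend to the amalgamated free product. For instance, the two inclusions $\mathrm{L}(\mathbb Z)\to \mathrm{L}(\mathbb Z)\overline{\otimes}\mathrm{L}(\mathbb Z)$ do not induce a normal unital $*$-homomorphism $\mathrm{L}(\mathbb F_2)=\mathrm{L}(\mathbb Z)*\mathrm{L}(\mathbb Z)\to \mathrm{L}(\mathbb Z^2)$, since such a map would be injective (the domain is a factor) and would force $\mathrm{L}(\mathbb F_2)$ to be abelian. To glue your two maps one needs, in addition, that the images $M_{\Gamma\setminus\{v\}}$ and $\rho(M_v)\vee M_{\lk(v)}$ are free with amalgamation over $M_{\lk(v)}$ with respect to the trace-preserving expectations and that your map intertwines these expectations; establishing this freeness is exactly the nontrivial content that your argument skips (and it also uses that $\rho$ preserves the trace — a hypothesis you already need, silently, for the free-product automorphism $\mathrm{id}*\rho|_{M_{v'}}$ in your normalization step; the paper's own proof makes the same implicit trace-preservation assumption).

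The gap is fixable without changing the outcome. In the non-adjacent case, since $\lk(v)\subset\lk(v')$, set $\Sigma=\{v,v'\}\cup\lk(v)$, so that $M_\Sigma=M_{\{v,v'\}}\overline{\otimes}M_{\lk(v)}$ and $M_\Gamma=M_{\Gamma\setminus\{v\}}*_{M_{\{v'\}\cup\lk(v)}}M_\Sigma$ (no vertex of $\Gamma\setminus\Sigma$ is adjacent to $v$). On the leg $M_\Sigma$ the map $\rho\otimes\mathrm{id}$ is a genuine (trace-preserving) automorphism restricting to the identity on the amalgam $M_{\{v'\}\cup\lk(v)}$ — this is where your normalization $\rho|_{M_{v'}}=\mathrm{id}$ is used — and gluing two trace-preserving automorphisms of the legs of an amalgamated free product which agree on, and globally preserve, the amalgam is a correct standard fact; this also subsumes your adjacent case. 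Alternatively, the paper sidesteps the issue entirely (and needs no normalization of $\rho$ on $M_{v'}$) by adjoining the vertices of $\Gamma\setminus\{v,v'\}$ one at a time, at each step only extending by the identity on the new vertex algebra over an amalgam $B_k$ that is globally preserved, the hypothesis $\lk(v)\subset\st(v')$ being what guarantees $\theta_k(B_k)=B_k$.
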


\begin{proof}
We first show the existence of an automorphism $\theta$ of  $M_\Gamma$ with $\theta_{|M_{\{v,v'\}}}=\rho$ and $\theta_{|M_w}=\text{Id}_{M_w}$,  for every $w\in\Gamma\setminus\{v,v'\}$. Let $n=|\Gamma|$ and enumerate $\Gamma=\{v_1=v,v_2=v',\dots,v_n\}$.
For $k\geq 2$, let $\Gamma_k$ be the full subgraph of $\Gamma$ with vertex set $\{v_1,v_2,\dots,v_k\}$. We claim that for every $2\leq k\leq n$, there is an automorphism $\theta_{k}$ of $M_{\Gamma_k}$
such that ${\theta_k}_{|M_{\{v,v'\}}}=\rho$ and ${\theta_k}_{|M_w}=\text{Id}_{M_w}$, for every $w\in\Gamma_k\setminus\{v,v'\}$. Once this claim is proven, $\theta:=\theta_{n}$ is the desired automorphism of $M_\Gamma$.

We prove the claim by induction on $k$. For $k=2$, the claim holds for $\theta_2=\rho$.
Assume that we have constructed $\theta_k\in\text{Aut}(M_{\Gamma_k})$, for some $2\leq k<n$.
In order to construct $\theta_{k+1}\in\text{Aut}(M_{\Gamma_{k+1}})$, note that $M_{\Gamma_{k+1}}=M_{\Gamma_k}*_{B_k}(B_k\overline{\otimes}M_{v_{k+1}})$, where $B_k=M_{\Gamma_{k}\cap\text{lk}(v_{k+1})}$.

If $v\not\in\text{lk}(v_{k+1})$, then 
$\Gamma_k\cap \text{lk}(v_{k+1})\subset\Gamma_k\setminus\{v\}$.
Since $\theta_k(M_{w})=M_{w}$, for every $w\in\Gamma_k\setminus\{v\}$, and $B_k$ is generated by $\{M_w\mid w\in\Gamma_k\cap\text{lk}(v_{k+1})\}$, we get that
${\theta_k}(B_k)=B_k$. If $v\in\text{lk}(v_{k+1})$, then since $\text{lk}(v)\subset\text{st}(v')$ and $v'\not=v_{k+1}$, we get that $v'\in\text{lk}(v_{k+1})$.
Thus, $B_k$ is generated by $M_{\{v,v'\}}$ and $\{M_w\mid w\in (\Gamma_k\cap\text{lk}(v_{k+1}))\setminus\{v,v'\}\}$, which again gives that $\theta_k(B_k)=B_k$.
In either case,  $\theta_k(B_k)=B_k$. This implies that there is $\theta_{k+1}\in\text{Aut}(M_{\Gamma_{k+1}})$ such that ${\theta_{k+1}}_{|M_{\Gamma_k}}=\theta_k$ and ${\theta_{k+1}}_{|M_{v_{k+1}}}=\text{Id}_{M_{v_{k+1}}}$, which proves the induction step and the claim.

We finish the proof by showing that $\theta$ has the desired property. Assume by contradiction that $\rho(M_v)=\theta(M_v)\prec_{M_\Gamma}M_{v''}$, for some $v''\in\Gamma$. Since $M_v$ is diffuse and $\rho(M_v)\subset M_{\{v,v'\}}$, applying Corollary \ref{intersection} implies that $v''\in\{v,v'\}$. Using again that $\rho(M_v)\subset M_{\{v,v'\}}$ and  Lemma \ref{descend}, we derive that $\rho(M_v)\prec_{M_{\{v,v'\}}}M_{v''}$. If $v''=v$, this contradicts our assumption that $\rho(M_v)\nprec_{M_{\{v,v'\}}}M_v$. If $v''=v'$, then $\rho(M_v)\prec_{M_{v,v'}}M_{v'}=\rho(M_{v'})$, hence $M_v\prec_{M_{v,v'}}M_{v'}$. By Lemma~\ref{basic}(2), this again gives a contradiction.
\end{proof}

\begin{remark}
In order to apply Proposition \ref{obstruction1} we need to exhibit examples of pairs of vertices $v,v'\in\Gamma$ with $\text{lk}(v)\subset\text{st}(v')$ which satisfy condition $(\star)$. Before proving two general results in this direction (Proposition \ref{local_auts} and Corollary \ref{obstruction2}), let us point out that $(\star)$ holds if $M_v=M_{v'}=\text{L}(\mathbb Z)$. In this case, we have $M_{\{v,v'\}}=\text{L}(A)$, where $A$ is a group generated by elements $v,v'$ such that $M_v=\text{L}(\langle v\rangle)$ and $M_{v'}=\text{L}(\langle v'\rangle)$. Moreover, we have $A\cong \mathbb Z^2$, if $v'\in\text{lk}(v)$, and $A\cong\mathbb F_2$, if $v'\not\in\text{lk}(v)$. Thus, we can define $\rho_0\in\text{Aut}(A)$ by letting $\rho(v)=vv'$ and $\rho_0(v')=v'$. Then the automorphism $\rho$ of $M_{\{v,v'\}}=\text{L}(A)$ associated to $\rho_0$ (i.e., given by $\rho(u_g)=u_{\rho_0(g)}$, for every $g\in A$) satisfies $(\star)$. 

If $v$ and $v'$ are adjacent, the above construction works more generally if $M_v=\text{L}(G_v)$, $M_{v'}=\text{L}(G_{v'})$ and there is a homomorphism with infinite image $\theta:G_v\to Z(G_{v'})$. In this case, the map which sends $g\in G_v$ to $g\theta(g)$ and is the identity on $G_{v'}$, gives an automorphism of $G_v\times G_{v'}$ and thus of $M_{\{v,v'\}}=\text{L}(G_v\times G_{v'})$ which satisfies $(\star)$.

Assume now that $v$ and $v'$ are not adjacent, and that $M_v=\text{L}(G_v)$ and $M_{v'}=\text{L}(G_{v'})$, where $G_v$ splits as a free product $A\ast B$, and $G_{v'}$ contains an infinite order element, $h$. Then the map which is identity on $A$ and on $G_{v'}$, and is the conjugation by $h$ on $B$ gives an automorphism of $G_v\ast G_{v'}$ and thus of $M_{\{v,v'\}}=\text{L}(G_v*G_{v'})$ which satisfies $(\star)$.

In this section, we will extend these constructions to von Neumann algebras that do not necessarily come from groups.
\end{remark}

\begin{proposition}\label{local_auts}
    Let $(M_1,\tau_1)$ and $(M_2,\tau_2)$ be tracial von Neumann algebras. Then the following hold:

    \begin{enumerate}
        \item Assume that there exists a $*$-homomorphism $\sigma:M_1\rightarrow M_1\overline{\otimes}\mathcal Z(M_2)$ such that 
        \begin{enumerate}
        \item $\emph{E}_{\mathcal Z(M_2)}(\sigma(x))=\tau(x)1$, for every $x\in M_1$,
       \item     $\sigma(M_1)\nprec_{M_1\overline{\otimes}\mathcal Z(M_2)}M_1$, and
       \item $\sigma(M_1)$ and $\mathcal Z(M_2)$ generate $M_1\overline{\otimes}\mathcal Z(M_2)$.
        \end{enumerate}
Then there exists $\rho\in\emph{Aut}(M_1\overline{\otimes}M_2)$ such that $\rho(M_1)\nprec_{M_1\overline{\otimes}M_2}M_1$ and $\rho(M_2)=M_2$. 
        \item Assume that one of the following two conditions holds: (i) $M_1=\emph{L}(\mathbb Z)$ and $M_2$ is diffuse, or (ii)
        $M_1=P*Q$, for diffuse von Neumann algebras $P,Q$, and $M_2$ admits a trace zero unitary. 
        Then there exists $\rho\in\emph{Aut}(M_1*M_2)$ such that $\rho(M_1)\nprec_{M_1*M_2}M_1$ and $\rho(M_2)=M_2$.
    \end{enumerate}
\end{proposition}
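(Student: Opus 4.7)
For part (1), the plan is to glue $\sigma$ with $\text{id}_{M_2}$ into an automorphism $\rho$ of $M_1\overline{\otimes}M_2$. Since $\sigma(M_1)\subset M_1\overline{\otimes}\mathcal Z(M_2)$ commutes with $1\otimes M_2$, the formula $\rho(x\otimes y)=\sigma(x)(1\otimes y)$ will define a $*$-homomorphism on the algebraic tensor product; condition (a) gives trace-preservation via $\tau(\sigma(x)(1\otimes y))=\tau_2(\text{E}_{\mathcal Z(M_2)}(\sigma(x))\,y)=\tau(x)\tau_2(y)$, so $\rho$ extends to an injective endomorphism with $\rho(M_1)=\sigma(M_1)$ and $\rho(M_2)=1\otimes M_2$. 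Surjectivity is immediate from (c), since the image contains $\sigma(M_1)\vee(1\otimes\mathcal Z(M_2))=M_1\overline{\otimes}\mathcal Z(M_2)$ together with $1\otimes M_2$. To upgrade assumption (b) to $\sigma(M_1)\nprec_{M_1\overline{\otimes}M_2}M_1$, I take a net $(u_n)\subset\mathcal U(\sigma(M_1))$ witnessing non-intertwining inside $M_1\overline{\otimes}\mathcal Z(M_2)$; using the factorization $\text{E}_{M_1}=\text{E}_{M_1}\circ(\text{id}\otimes\text{E}_{\mathcal Z(M_2)})$ and the commutation $[u_n,1\otimes M_2]=0$, for elementary tensors I reduce
\[
\text{E}_{M_1}\bigl((a_1\otimes b_1)^*u_n(a_2\otimes b_2)\bigr)=\text{E}_{M_1}\bigl((a_1^*\otimes 1)u_n(a_2\otimes\text{E}_{\mathcal Z(M_2)}(b_1^*b_2))\bigr),
\]
whose norm tends to $0$ by choice of $(u_n)$; density of elementary tensors and uniform boundedness then extend this to all $a,b\in M_1\overline{\otimes}M_2$.

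For part (2), the plan is to imitate the classical partial-conjugation and Nielsen-style automorphisms of free-product groups. In case (ii), with $u\in M_2$ a trace-zero unitary, I define $\rho$ on $M_1*M_2=P*Q*M_2$ by $\rho|_P=\text{id}_P$, $\rho|_Q=\text{Ad}(u)|_Q$, and $\rho|_{M_2}=\text{id}_{M_2}$, with inverse given by replacing $u$ by $u^*$. To apply the universal property of the free product I must check that $P$, $uQu^*$, and $M_2$ are $\tau$-free in $M_1*M_2$: this is verified by substituting $uqu^*$ for each $uQu^*$-letter in an alternating centered word, and iteratively decomposing consecutive $M_2$-factors into their scalar and centered parts, so that every centered residue has trace $0$ by freeness of $P$, $Q$, $M_2$. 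In case (i), I fix a generating Haar unitary $u_1\in M_1=\text{L}(\mathbb Z)$ and a trace-zero unitary $u_2\in M_2$ (available since $M_2$ is diffuse), and set $\rho(u_1)=u_1u_2$, $\rho|_{M_2}=\text{id}$. Since $(u_1u_2)^n$ for $n\neq 0$ is a reduced alternating word of centered elements in $M_1,M_2$, freeness forces $\tau((u_1u_2)^n)=0$; hence $u_1u_2$ is a Haar unitary and $\rho|_{M_1}$ extends trace-preservingly, and freeness of $\text{L}\langle u_1u_2\rangle$ with $M_2$ in $M_1*M_2$ follows from the same substitution technique. Surjectivity of $\rho$ is witnessed by $u_1=(u_1u_2)u_2^*\in\rho(M_1*M_2)$ in (i), and by $q=\rho(u^*qu)$ for $q\in Q$ in (ii).

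To prove $\rho(M_1)\nprec_{M_1*M_2}M_1$ in both subcases, I exhibit explicit witnessing nets in $\rho(M_1)$ and apply Theorem~\ref{intertwine}: in case (i), take $w_n=(u_1u_2)^n$; in case (ii), since $uQu^*\subset\rho(M_1)$, it suffices to verify $uQu^*\nprec M_1$, for which I take $w_n=ua_nu^*$ where $(a_n)\subset\mathcal U(Q)$ converges weakly to $0$ (obtained from a Haar unitary in a diffuse abelian subalgebra of $Q$). In both cases the task is to show $\|\text{E}_{M_1}(x^*w_ny)\|_2\to 0$ for $x,y$ reduced words in $M_1*M_2$; the key tool is the Voiculescu decomposition of $\text{L}^2(M_1*M_2)$ into alternating tensor products of the centered subspaces $\text{L}^2(M_1)^\circ$ and $\text{L}^2(M_2)^\circ$. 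After handling any boundary merging between the endpoints of $x^*,y$ and the $u$-factors of $w_n$ via a scalar-plus-centered split, the reduced form of $x^*w_ny$ decomposes into finitely many terms, each either a reduced word containing a surviving $M_2$-letter (hence orthogonal to $\text{L}^2(M_1)$) or carrying a scalar coefficient given by $\tau(a_n)\to 0$ in case (ii), respectively by the trace of a nonzero-power centered alternating word in $u_1u_2$, which vanishes, in case (i). Density and uniform boundedness then extend the estimate to arbitrary $x,y\in M_1*M_2$. The main obstacle is precisely the uniform control of these boundary mergings and of the scalar coefficients produced by the decomposition steps, which requires a delicate case analysis on whether the extreme letters of $x^*$ and $y$ lie in $M_1$ or $M_2$; the trace-zero hypothesis on $u$ (resp.\ $u_2$) is essential at each merging step to guarantee that every new scalar either vanishes or is absorbed by the freeness relations, and that every new merged letter remains centered.
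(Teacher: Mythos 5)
Your part (1) and your part (2), case (i), are correct and follow essentially the paper's own route: in (1) the same gluing of $\sigma$ with $\mathrm{id}_{M_2}$ and the same conditional-expectation reduction $\text{E}_{M_1}\bigl((a_1\otimes b_1)^*u_n(a_2\otimes b_2)\bigr)=\text{E}_{M_1}\bigl((a_1^*\otimes 1)u_n(a_2\otimes \text{E}_{\mathcal Z(M_2)}(b_1^*b_2))\bigr)$ appear in the paper, and in (2)(i) the paper also sends a generating Haar unitary $u_1$ to $u_1w$ with $w\in M_2$ (the paper takes $w$ Haar; your trace-zero choice also works) and verifies the decay of $\|\text{E}_{M_1}(a(u_1w)^nb)\|_2$ by the same word analysis.

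There is, however, a genuine gap in part (2), case (ii). With your convention $\rho|_P=\mathrm{id}$, $\rho|_Q=\mathrm{Ad}(u)$, you have $\rho(M_1)=(P\cup uQu^*)''$, and you reduce the problem to the claim that $uQu^*\nprec_{M_1*M_2}M_1$. The reduction itself (a net of unitaries in a subalgebra of $\rho(M_1)$ witnessing the decay in Theorem~\ref{intertwine} suffices) is legitimate, but the claim is false: $uQu^*$ is unitarily conjugate to $Q\subset M_1$, and intertwining is invariant under unitary conjugation (take $q=1$, $\theta=\mathrm{Ad}(u^*):uQu^*\to Q\subset M_1$ and the partial isometry $v=u^*$, so that $\theta(x)v=vx$), hence $uQu^*\prec_{M_1*M_2}M_1$ always. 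Concretely, your witnessing net $w_n=ua_nu^*$ fails Popa's criterion at $x=y=u$: $\text{E}_{M_1}(u^*w_nu)=a_n$ and $\|a_n\|_2=1$, and no other net inside $\mathcal U(uQu^*)$ can do better. The boundary-merging analysis you invoke cannot repair this, because the scalar produced at the merging $u^*\cdot u$ is $1$, not $\tau(u)$; the trace-zero hypothesis on $u$ can only be exploited through the joint structure of $\rho(M_1)$, i.e., through words genuinely mixing $P$-letters and $uQu^*$-letters. The paper's proof does exactly that, by contradiction: if $\rho(M_1)\prec_{M_1*M_2}M_1$, there is a nonzero $v$ with $\rho(M_1)v\subset\sum_{i}w_iM_1$; restricting to the two generating pieces gives (in your convention) $Pv\subset\sum_iw_iM_1$ and $Q(u^*v)\subset\sum_i(u^*w_i)M_1$, and since $P,Q\subset M_1$ are diffuse, \cite[Theorem 1.1]{IPP08} forces both $v\in M_1$ and $u^*v\in M_1$; then $u^*v=\text{E}_{M_1}(u^*v)=\text{E}_{M_1}(u^*)v=\tau(u)v=0$, so $v=0$, a contradiction. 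To fix your argument you would need either this intertwiner-based contradiction or an explicit net of unitaries in $\rho(M_1)$ alternating between $P$ and $uQu^*$ together with a full word-decomposition estimate; as written, case (ii) does not go through.
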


\begin{proof}
    (1) By condition (a), for every $x\in M_1$ and $y\in M_2$ we have $$\tau(\sigma(x)y)=\tau(\sigma(x)\text{E}_{M_1\overline{\otimes}\mathcal Z(M_2)}(y))=\tau(\sigma(x)\text{E}_{\mathcal Z(M_2)}(y))=\tau(\text{E}_{\mathcal Z(M_2)}(\sigma(x))y)=\tau(x)\tau(y).$$
    Thus, we can define a trace-preserving $*$-homomorphism $\rho:M_1\overline{\otimes}M_2\rightarrow M_1\overline{\otimes}M_2$ by $\rho(x)=\sigma(x)$, for every $x\in M_1$, and $\rho(y)=y$, for every $y\in M_2$. 
    Since $\rho$ is trace-preserving, it is injective. Condition (c) ensures that $\rho$ is surjective, and thus an automorphism of $M_1\overline{\otimes}M_2$. By using condition (b), we find a net $(u_n)\subset\mathcal U(M_1)$ such that $\|\text{E}_{M_1}(a\sigma(u_n)b)\|_2\rightarrow 0$, for every $a,b\in M_1\overline{\otimes}\mathcal Z(M_2)$. 

We claim that $\|\text{E}_{M_1}(a\sigma(u_n)b)\|_2\rightarrow 0$, for every $a,b\in M_1\overline{\otimes}M_2$. To prove this claim, we may assume that $a=a_1\otimes a_2$ and $b=b_1\otimes b_2$, where $a_1,a_2\in M_1$ and $b_1,b_2\in M_2$. Let $\zeta\in M_1\overline{\otimes}\mathcal Z(M_2)$ of the form $\zeta=\zeta_1\otimes\zeta_2$, where $\zeta_1\in M_1$ and $\zeta_2\in\mathcal Z(M_2)$. Then, letting $c=\text{E}_{\mathcal Z(M_2)}(b_2a_2)\in\mathcal Z(M_2)$, we have $\tau(a_2\zeta_2b_2)=\tau(\zeta_2b_2a_2)=\tau(\zeta_2c)$ and thus
$$\text{E}_{M_1}(a\zeta b)
=\tau(a_2\zeta_2b_2)a_1\zeta_1 b_1=\tau(\zeta_2c)a_1\zeta_1b_1=\text{E}_{M_1}((a_1\otimes 1)\zeta(b_1\otimes c)).$$
From this we deduce by linearity that $\text{E}_{M_1}(a\zeta b)=\text{E}_{M_1}((a_1\otimes 1)\zeta(b_1\otimes c))$, for every $\zeta\in M_1\overline{\otimes}\mathcal Z(M_2)$.
Since $\sigma(u_n)\in M_1\overline{\otimes}\mathcal Z(M_2)$, for every $n$, and $a_1\otimes 1,b_1\otimes c\in M_1\overline{\otimes}\mathcal Z(M_2)$, we conclude that $$\|\text{E}_{M_1}(a\sigma(u_n)b)\|_2=\|\text{E}_{M_1}((a_1\otimes 1)\sigma(u_n)(b_1\otimes c))\|_2\rightarrow 0.$$

This proves the claim, which 
 gives that $\sigma(M_1)\nprec_{M_1\overline{\otimes}M_2}M_1$ and further that $\rho(M_1)\nprec_{M_1\overline{\otimes}M_2}M_1$.

    (2) First, assume that (i) holds. Let $v\in M_1$ be a Haar unitary which generates $M_1$. Since $M_2$ is diffuse, we can also find a Haar unitary $w\in M_2$. Then $vw$ is a Haar unitary which is freely independent from $M_2$ and thus we can define an automorphism $\rho$ of $M_1*M_2$ by letting $\rho(v)=vw$ and $\rho(y)=y$, for every $y\in M_2$. Then it is easy to check that $\|\text{E}_{M_1}(a\rho(v^n)b)\|_2=\|\text{E}_{M_1}(a(vw)^nb)\|_2\rightarrow 0$, for every $a,b\in M_1*M_2$.
This implies that $\rho(M_1)\nprec_{M_1*M_2}M_1.$

    Second, assume that (ii) holds. Let $u\in\mathcal U(M_2)$ with $\tau(u)=0$. Since $M_1*M_2=P*Q*M_2$, we can define an automorphism $\rho$ of $M_1*M_2$ by letting $\rho(x)=uxu^*$, for every $x\in P$, $\rho(y)=y$, for every $y\in Q*M_2$. Then clearly $\rho(M_2)=M_2$, so it remains to prove that $\rho(M_1)\nprec_{M_1*M_2}M_1$. 
    Assume by contradiction that $\rho(M_1)\prec_{M_1*M_2}M_1$. 
    
    Then we can find a nonzero $v\in M_1*M_2$ and $w_1,\dots,w_k\in M_1*M_2$ such that $\rho(M_1)v\subset\sum_{i=1}^kw_iM_1$ (see \cite[Proposition C.1]{Va07}). 
    In particular, $\rho(P)v\subset\sum_{i=1}^kw_iM_1$ and $\rho(Q)v\subset\sum_{i=1}^kw_iM_1$. Since $\rho(P)=uPu^*$ and $\rho(Q)=Q$, we get that $P(u^*v)\subset \sum_{i=1}^k(u^*w_i)M_1$ and $Qv\subset\sum_{i=1}^kw_iM_1$.
    Since $P,Q\subset M_1$ are diffuse, by applying \cite[Theorem 1.1]{IPP08} we deduce that $u^*v\in M_1$ and $v\in M_1$.
    Since $u\in M_2$ we have $\text{E}_{M_1}(u)=\tau(u)1=0$, and therefore
 $u^*v=\text{E}_{M_1}(u^*v)=\text{E}_{M_1}(u^*)v=0$. This implies that $v=0$ and gives a contradiction.    
\end{proof}

\begin{corollary}\label{obstruction2}
Let $\Gamma$ be a finite simple graph, $(M_v,\tau_v)_{v\in\Gamma}$  a family of tracial von Neumann algebras, and  $M_\Gamma=*_{v,\Gamma}M_v$  the associated graph product von Neumann algebra.
Let $v,v'\in\Gamma$ such that $\emph{lk}(v)\subset\emph{st}(v')$.
Assume that we are in one of the following two situations:
\begin{enumerate}
\item $v'\in\emph{lk}(v)$, $M_v=B\rtimes G$, for a trace preserving action $G\curvearrowright (B,\tau)$ of a discrete group $G$ admitting an infinite countable abelian quotient, and $\mathcal Z(M_{v'})$ is diffuse.
\item $v'\not\in\emph{lk}(v)$, and either (i) $M_v=\emph{L}(\mathbb Z)$ and $M_{v'}$ is diffuse, or (ii) $M_v=P*Q$, for some diffuse tracial von Neumann algebras $P,Q$, and $M_{v'}$ admits a trace zero unitary.     
\end{enumerate}

Then there exists  $\theta\in\emph{Aut}(M_\Gamma)$ such that $\theta(M_v)\nprec_{M_\Gamma}M_{v''}$, for every $v''\in\Gamma$.
\end{corollary}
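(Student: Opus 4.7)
The natural plan is to reduce Corollary~\ref{obstruction2} to Proposition~\ref{obstruction1}, so the task becomes constructing an automorphism $\rho$ of $M_{\{v,v'\}}$ with $\rho(M_v) \nprec_{M_{\{v,v'\}}} M_v$ and $\rho(M_{v'}) = M_{v'}$. In case (1), since $v' \in \lk(v)$ we have $M_{\{v,v'\}} = M_v \overline{\otimes} M_{v'}$, so I would apply Proposition~\ref{local_auts}(1); in case (2), $v' \notin \lk(v)$, so $M_{\{v,v'\}} = M_v * M_{v'}$, and Proposition~\ref{local_auts}(2) directly handles subcases (i) and (ii), leaving no further work.

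So the only substantive work is in case (1): I need to construct the $*$-homomorphism $\sigma \colon M_v \to M_v \overline{\otimes} \mathcal{Z}(M_{v'})$ required by Proposition~\ref{local_auts}(1). Fix a surjection $\pi \colon G \twoheadrightarrow A$ onto an infinite countable abelian group. Since $\mathrm{L}(A)$ is a separable diffuse abelian von Neumann algebra and $\mathcal{Z}(M_{v'})$ is diffuse, I can fix a trace-preserving embedding $\mathrm{L}(A) \hookrightarrow \mathcal{Z}(M_{v'})$, and denote by $w_\alpha \in \mathcal{Z}(M_{v'})$ the image of the canonical unitary associated with $\alpha \in A$. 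Set $w_g := w_{\pi(g)}$, so that $g \mapsto w_g$ is a group homomorphism from $G$ to $\mathcal{U}(\mathcal{Z}(M_{v'}))$. Define $\sigma$ by $\sigma(b) = b \otimes 1$ for $b \in B$ and $\sigma(u_g) = u_g \otimes w_g$ for $g \in G$; this is consistent with the crossed-product relations because $w_g$ is central.

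The three conditions of Proposition~\ref{local_auts}(1) are now easy: (a) holds because $w_g$ has trace $0$ unless $\pi(g) = 0$, making $\mathrm{E}_{\mathcal{Z}(M_{v'})}(\sigma(u_g))$ land on $\tau(u_g) \cdot 1$; (c) holds because $\sigma(u_g)(1 \otimes w_g^*) = u_g \otimes 1$ recovers $M_v \otimes \mathbb{C}$. The only point requiring a brief argument is (b): choosing $g_n \in G$ with $\pi(g_n)$ pairwise distinct, the unitaries $\{w_{g_n}\}$ form an orthonormal family in $L^2(\mathcal{Z}(M_{v'}))$, so Bessel's inequality gives $w_{g_n} \to 0$ weakly in $\mathcal{Z}(M_{v'})$. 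Since $\mathrm{E}_{M_v}\bigl((a_1 \otimes a_2)\sigma(u_{g_n})(b_1 \otimes b_2)\bigr) = \tau(a_2 w_{g_n} b_2)\, a_1 u_{g_n} b_1$ on elementary tensors, the $\|\cdot\|_2$-norm of this expression tends to $0$, which by density shows $\sigma(M_v) \nprec_{M_v \overline{\otimes} \mathcal{Z}(M_{v'})} M_v$.

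I do not expect any serious obstacle here: the orthogonality argument for (b) is the only nontrivial part, and it works in arbitrary diffuse $\mathcal{Z}(M_{v'})$ precisely because the trace-preserving embedding of $\mathrm{L}(A)$ preserves orthonormality, regardless of how ``exotic'' the ambient abelian algebra is. Once $\rho$ is produced in each case, Proposition~\ref{obstruction1} directly delivers the desired $\theta \in \mathrm{Aut}(M_\Gamma)$ with $\theta(M_v) \nprec_{M_\Gamma} M_{v''}$ for every $v'' \in \Gamma$.
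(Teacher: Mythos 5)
Your proposal is correct and follows essentially the same route as the paper: reduce to Proposition~\ref{obstruction1} via Proposition~\ref{local_auts}, with case (2) immediate and case (1) handled by the same homomorphism $\sigma(bu_g)=bu_g\otimes u_{\pi(g)}$ into $M_v\overline{\otimes}\mathcal Z(M_{v'})$ and the same weak-convergence argument for condition (b). One tiny slip that does not affect the argument: condition (a) holds because $\tau(bu_g)=0$ for $g\neq e$ in the crossed product (so $\mathrm{E}_{\mathcal Z(M_{v'})}(\sigma(bu_g))=\tau(bu_g)w_{\pi(g)}=\tau(bu_g)1$), not because $w_{\pi(g)}$ has trace zero.
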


Note that Corollary \ref{obstruction2} in particular applies if $M_v=\text{L}(\mathbb Z)$ and $\mathcal Z(M_{v'})$ is diffuse.

\begin{proof}
    Assume that (1) holds. Let $H$ be an infinite countable abelian quotient of $G$ and $\pi:G\rightarrow H$ be an onto homomorphism. Then $\text{L}(H)\cong\text{L}(\mathbb Z)$ and since $\mathcal Z(M_{v'})$
    is diffuse, we can view $\text{L}(H)$ as a unital subalgebra of $\mathcal Z(M_{v'})$. Using that $M_v=B\rtimes G$, we define a $*$-homomorphism  $\sigma:M_{v}\rightarrow M_{v}\overline{\otimes}\text{L}(H)\subset M_v\overline{\otimes}\mathcal Z(M_{v'})$ 
by letting $\sigma(bu_g)=bu_g\otimes u_{\pi(g)}$, for every $b\in B$ and $g\in G$. It is easy to see that $\sigma(M_v)$ and $\mathcal Z(M_{v'})$ generate $M_v\overline{\otimes}M_{v'}$, and $\text{E}_{\mathcal Z(M_{v'})}(\sigma(x))=\tau(x)1$, for every $x\in M_v$. Since $\pi(G)=H$ is infinite, we can find a sequence $(g_n)\subset G$ such that $\pi(g_n)\rightarrow\infty$ in $H$.
Then  $\|\text{E}_{M_v}(a\sigma(u_{g_n})b)\|_2\rightarrow 0$, for every $a,b\in M_v\overline{\otimes}\mathcal Z(M_{v'})$, hence
 $\sigma(M_v)\nprec_{M_v\overline{\otimes}\mathcal Z(M_{v'})}M_v$.
 Since $v'\in \text{lk}(v)$, we have that $M_{\{v,v'\}}=M_v\overline{\otimes}M_{v'}$. Thus, combining part (1) of Proposition \ref{local_auts} with Proposition \ref{obstruction1} gives the conclusion in this case.

 If (2) holds, then since $v'\not\in\text{lk}(v)$, we have that $M_{\{v,v'\}}=M_v*M_{v'}$. Then combining part (2) of Proposition \ref{local_auts} with Proposition \ref{obstruction1} finishes the proof in this case.
\end{proof}

\small

\begin{flushleft}
Camille Horbez\\ 
Universit\'e Paris-Saclay, CNRS,  Laboratoire de math\'ematiques d'Orsay, 91405, Orsay, France \\
\emph{e-mail:~}\texttt{camille.horbez@universite-paris-saclay.fr}\\[4mm]
\end{flushleft}

\begin{flushleft}
Adrian Ioana \\
AP\&M 5210, Department of Mathematics, UCSD, 9500 Gilman Drive, La Jolla CA 92093\\
\emph{e-mail:~}\texttt{aioana@ucsd.edu}
\end{flushleft}

\vfill

\noindent This work is openly licensed via Creative Commons CC-BY 4.0\\
\href{https://creativecommons.org/licenses/by/4.0/}{https://creativecommons.org/licenses/by/4.0/}.


\begin{thebibliography}{ABC99}

\bibitem[AG12]{AG12} A. Alvarez and D. Gaboriau: {\it Free products, orbit equivalence and measure equivalence rigidity}, 
Groups Geom. Dyn. {\bf 6} (2012), no. 1, 53--82.

\bibitem[AP]{AP} C. Anantharaman and S. Popa: {\it An introduction to II$_1$ factors}, book in preparation available at \url{https://www.math.ucla.edu/~popa/Books/IIun.pdf}.

\bibitem[AM15]{AM} Y. Antol\'in, Y. and A. Minasyan: {\it Tits alternatives for graph products}, J. Reine Angew. Math. {\bf 704} (2015), 55--83.

\bibitem[BN08]{BN08} J.A. Behrstock and W.D. Neumann: {\it Quasi-isometric classification of graph manifold groups}, 
Duke Math. J. {\bf 141} (2008), no. 2, 217–240.

\bibitem[BC24]{BC24} M. Borst and M. Caspers: {\it Classification of right-angled Coxeter groups with a strongly solid von Neumann algebra}, J. Math. Pures Appl. (9) {\bf 189} (2024), Paper No.\ 103591, 14 pp.

\bibitem[BCC24]{BCC24} M. Borst, M. Caspers and E. Chen: {\it Rigid Graph Products}, preprint arXiv:2408.06171.

\bibitem[BKS08]{BKS08} M. Bestvina, B. Kleiner, and M. Sageev: {\it The asymptotic geometry of right-angled Artin
groups. I.}, Geom. Topol., {\bf 12}(3):1653-1699, 2008.

\bibitem[Bo24]{Bo24} M. Borst: {\it The CCAP for graph products of operator algebras}, 
J. Funct. Anal. {\bf 286} (2024), no. 8, Paper No. 110350, 41 pp.

\bibitem[Bo25]{Bo25} M. Borst: {\it On Rigidity Theory, Strong Solidity, Coxeter Groups, Graph Products and Commutator Estimates}, PhD thesis. Delft University of Technology, 2025.

\bibitem[BO08]{BO08} N. P. Brown and N. Ozawa: {\it C$^*$-algebras and finite-dimensional approximations}, Grad. Stud. Math. 88, American
Mathematical Society, Providence 2008.

\bibitem[CRKZ21]{CRKZ21} M. Casals-Ruiz, I. Kazachkov and A. Zakharov: {\it On commensurability of right-angled Artin groups II: RAAGs defined by paths},
Math. Proc. Cambridge Philos. Soc. {\bf 170} (2021), no. 3, 559–608.

\bibitem[Ca20]{Ca20} M. Caspers: {\it Absence of Cartan subalgebras for right-angled Hecke von Neumann algebras},
Anal. PDE {\bf 13} (2020), no. 1, 1-28.

\bibitem[CC25]{CC25} M. Caspers and E. Chen: {\it Internal graphs of graph products of hyperfinite II$_1$-factors}, preprint arXiv:2505.05179.

\bibitem[CDD25a]{CDD25a} I. Chifan, M. Davis and D. Drimbe: {\it Rigidity for von Neumann algebras of graph product groups. I. Structure of automorphisms}, 
Anal. PDE {\bf 18} (2025), no. 5, 1119-1146.

\bibitem[CDD25b]{CDD25b} I. Chifan, M. Davis and D. Drimbe:
{\it Rigidity for von Neumann algebras of graph product groups II. Superrigidity results},
J. Inst. Math. Jussieu {\bf 24} (2025), no. 1, 117-156.

\bibitem[CdSHJKEN23]{CdSHJKEN23} I. Charlesworth, R. de Santiago, B. Hayes, D. Jekel, S. Kunnawalkam Elayavalli and B. Nelson: {\it Strong 1-boundedness, $L^2$-Betti numbers, algebraic soficity, and graph products}, preprint arXiv:2305.19463, to appear in Kyoto J. Math.


\bibitem[CdSHJKEN24]{CdSHJKEN24} I. Charlesworth, R. de Santiago, B. Hayes, D. Jekel, S. Kunnawalkam Elayavalli and B. Nelson: {\it On the structure of graph product von Neumann algebras}, preprint arXiv:2404.08150, to appear in Publ. Res. Inst. Math. Sci.



\bibitem[CdSS18]{CdSS18} I. Chifan, R. de Santiago and W. Sucpikarnon: {\it Tensor product decompositions of  II$_1$  factors arising from extensions of amalgamated free product groups},  Comm. Math. Phys. {\bf 364} (2018), no. 3, 1163-1194.

\bibitem[CF12]{CF12} R. Charney and M. Farber: {\it Random groups arising as graph products},
Algebr. Geom. Topol. {\bf 12} (2012), no. 2, 979–995.


\bibitem[CF17]{CF17} M. Caspers and P. Fima: {\it Graph products of operator algebras}, J. Noncommut. Geom. {\bf 11} (2017), no. 1, 367-411.


\bibitem[CH10]{CH10} I. Chifan and C. Houdayer: {\it Bass-Serre rigidity results in von Neumann algebras},
Duke Math. J. {\bf 153} (2010), no. 1, 23-54.

\bibitem[CKE24]{CKE24} I. Chifan and S. Kunnawalkam Elayavalli:
{\it Cartan subalgebras in von Neumann algebras associated with graph product groups},
Groups Geom. Dyn. {\bf 18} (2024), no. 2, 749-759.

\bibitem[CIOS23]{CIOS23} I. Chifan, A. Ioana, D. Osin and B. Sun:
{\it Wreath-like products of groups and their von Neumann algebras II: Outer automorphisms}, preprint arXiv:2304.07457, to appear in Duke Math. J.

\bibitem[Co76]{Co76}
A. Connes: {\it Classification of injective factors}, Ann. of Math. {\bf 104} (1976), 73-115.


\bibitem[CS13]{CS13} I. Chifan and T. Sinclair: {\it On the structural theory of II$_1$ factors of negatively curved groups}, Ann. Sci. \'{E}c.
Norm. Supér. (4) {\bf 46} (2013), no. 1, 1–33.


\bibitem[CV09]{CV09} R. Charney and K. Vogtmann: {\it Finiteness properties of automorphism groups of right-angled Artin groups},
Bull. Lond. Math. Soc. {\bf 41} (2009), no. 1, 94–102.


\bibitem[Dav98]{Dav98} M.W. Davis: {\it Buildings are $\mathrm{CAT}(0)$}, in {\it Geometry and cohomology in group theory (Durham, 1994)}, 108-123, London Math. Soc. Lecture Note Ser. \textbf{252},
Cambridge University Press, Cambridge, 1998.

\bibitem[Da12]{Da12} M. Day: {\it Finiteness of outer automorphism groups of random right-angled Artin groups}, Algebr. Geom. Topol. {\bf 12} (2012), no. 3, 
1553-1583.

\bibitem[DHI19]{DHI19} D. Drimbe, D. Hoff and A. Ioana: {\it Prime II$_1$ factors arising from irreducible lattices in products of rank one simple Lie groups},  J. Reine Angew. Math., \textbf{757} (2019), 197-246.

\bibitem[dHV95]{dHV95} P. de la Harpe and D. Voiculescu: {\it A problem on the II$_1$-factors of Fuchsian groups}, Recent
advances in operator algebras (Orl\'{e}ans, 1992) Ast\'{e}risque (1995), no.232, 155-158.

\bibitem[DKE24a]{DKE24a} C. Ding and S. Kunnawalkam Elayavalli:
{\it Structure of relatively biexact group von Neumann algebras}, 
Comm. Math. Phys. {\bf 405} (2024), no. 4, Paper No. 104, 19 pp.

\bibitem[DKE24b]{DKE24b} C. Ding and S. Kunnawalkam Elayavalli:
{\it Proper proximality among various families of groups},
Groups Geom. Dyn. {\bf 18} (2024), no. 3, 921–938.


\bibitem[Dr23]{Dr23} D. Drimbe: {\it Measure equivalence rigidity via s-malleable deformations},
Compos. Math. {\bf 159} (2023), no. 10, 2023-2050.

\bibitem[Dr87]{Dr87} C. Droms: {\it Isomorphisms of graph groups},
Proc. Amer. Math. Soc. {\bf 100} (1987), no. 3, 407–408.

\bibitem[DV25]{DV25} D. Drimbe and S. Vaes: {\it W$^*$-correlations of II$_1$ factors and rigidity of tensor products and graph products},
preprint  arXiv:2507.04691.

\bibitem[EH24]{EH24} A. Escalier and C. Horbez: {\it Graph products and measure equivalence: classification, rigidity, and quantitative aspects}, preprint arXiv:2401.04635.

\bibitem[Ge24]{Gen24} A. Genevois: {\it Automorphisms of graph products of groups and acylindrical hyperbolicity},
Mem. Amer. Math. Soc. {\bf 301} (2024), no. 1509.

\bibitem[GM19]{GM19} A. Genevois and A. Martin: {\it Automorphisms of graph products of groups from a geometric perspective}, Proc. Lond. Math. Soc. (3) {\bf 119} (2019), no. 6, 1745-1779.

\bibitem[Gr90]{Gr90} E.R. Green: {\it Graph products of groups}, PhD thesis, University of Leeds, 1990.

\bibitem[HH22]{HH22} C. Horbez and J. Huang: {\it Measure equivalence classification of transvection-free right-angled Artin groups}, J. \'Ec. polytech. Math. {\bf 9} (2022), 1021-1067.

\bibitem[HH23]{HH23} C. Horbez and J. Huang: {\it Integrable measure equivalence rigidity of right-angled Artin groups via quasi-isometry}, preprint
arXiv:2309.12147.

\bibitem[HHI23]{HHI23} C. Horbez, J. Huang and A. Ioana: {\it Orbit equivalence rigidity of irreducible actions of right-angled Artin groups}, Compos. Math. {\bf 159} (2023), no. 4, 860-887.

\bibitem[HH24]{HH24} C. Horbez and J. Huang: {\it Measure equivalence classification of right-angled Artin groups: the finite $\mathrm{Out}$ classes}, preprint arXiv:2412.08560, to appear in Tunis. J. Math.

\bibitem[HI17]{HI17} C. Houdayer and Y. Isono: {\it Unique prime factorization and bicentralizer problem for a class of type III factors},
Adv. Math. {\bf 305} (2017), 402-455.

\bibitem[HU16]{HU16} C. Houdayer and Y. Ueda: {\it Rigidity of free product von Neumann algebras}
Compos. Math. {\bf 152} (2016), no. 12, 2461-2492.

\bibitem[Hu17]{Hua17} J. Huang: {\it Quasi-isometric classification of right-angled Artin groups I: the finite out case},
Geom. Topol. {\bf 21} (2017), no. 6, 3467-3537.

\bibitem[Hu18]{Hua18} J. Huang: {\it Commensurability of groups quasi-isometric to RAAGs}, 
Invent. Math. {\bf 213} (2018), no. 3, 1179-1247.

 \bibitem[IM22]{IM22} Y. Isono and A. Marrakchi: {\it Tensor product decompositions and rigidity of full factors},
Ann. Sci. \'{E}c. Norm. Sup\'{e}r. (4) {\bf 55} (2022), no. 1, 109–139.

\bibitem[Io15]{Io15}  A. Ioana: {\it Cartan subalgebras of amalgamated free product ${\rm II}_1$ factors},
 Ann. Sci. \'{E}c. Norm. Sup\'{e}r. (4)  \textbf{48}  (2015),  no. 1, 71-130.

\bibitem[Io18]{Io18} A. Ioana: {\it Rigidity for von Neumann algebras}, Proc. Int. Cong. of Math. 2018, Rio de Janeiro, Vol. 2, 1635-1668.
 
\bibitem[IPP08]{IPP08}  A. Ioana, J. Peterson, and S. Popa: {\it Amalgamated free products of weakly rigid factors and calculation of their symmetry groups}, Acta Math. {\bf 200} (2008), 85-153.

\bibitem[Is14]{Is14} Y. Isono: {\it Some prime factorization results for free quantum group factors},
J. Reine Angew. Math. {\bf 722} (2017), 215–250.

\bibitem[Is19]{Is19} Y. Isono: {\it Unique prime factorization for infinite tensor product factors}, 
J. Funct. Anal. {\bf 276} (2019), no. 7, 2245-2278.
 
 \bibitem[Is20]{Is20} Y. Isono: {\it On fundamental groups of tensor product II$_1$ factors}, J. Inst. Math. Jussieu (2020) {\bf 19} (4), 1121-1139.

\bibitem[KEP25]{KEP25} S. Kunnawalkam Elayavalli and G. Patchell: {\it Sequential commutation in tracial von Neumann algebras},
J. Funct. Anal. {\bf 288} (2025), no. 4, Paper No. 110719, 28 pp.

\bibitem[La95]{La95} M.R. Laurence: {\it A generating set for the automorphism group of a graph group},
J. London Math. Soc. (2) {\bf 52} (1995), no. 2, 318-334.

\bibitem[Ml04]{Ml04} W. Mlotkowski: {\it $\Lambda$-Free Probability}, Infinite Dimensional Analysis, Quantum
Probability and Related Topics Vol. 7. No. 1 (2004), 27-41.

\bibitem[MvN43]{MvN43} F. J. Murray and J. von Neumann: {\it On rings of operators}, IV, Ann. of Math. (2) {\bf 44} (1943), 716-808.

\bibitem[MS06]{MS06} N. Monod and Y. Shalom, {\it Orbit equivalence rigidity and bounded cohomology},
Ann. of Math. (2) {\bf 164} (2006), no. 3, 825--878.

\bibitem[OP04]{OP04} N. Ozawa and S. Popa: {\it Some prime factorization results for type II$_1$ factors}, Invent.
Math. {\bf 156} (2004), 223-234.

\bibitem[OP10]{OP10} N. Ozawa and S. Popa: {\it On a class of II$_1$ factors with at most one Cartan subalgebra}, {\it Ann. of Math.} {\bf 172} (2010), no. 1, 713-749.

\bibitem[Oy24]{Oy24} K. Oyakawa: {\it Infinite graph product of groups II: Analytic properties}, preprint arXiv:2412.18313.

\bibitem[Oz06]{Oz06} N. Ozawa: {\it A Kurosh-type theorem for type II$_1$ factors}, Int. Math. Res. Not. 2006, Art. ID 97560, 21 pp.

\bibitem[Pe09]{Pe09} J. Peterson: {\it L$^2$-rigidity in von Neumann algebras}, Invent. Math. {\bf 175} (2009), no. 2, 417–433.

\bibitem[Po06a]{Po06a} S. Popa: {\it On a class of type II$_1$ factors with Betti numbers invariants}, Ann. of Math. {\bf 163} (2006), 809-889.

\bibitem[Po06b]{Po06b} S. Popa: {\it Strong rigidity of $\textrm{II}_1$ factors arising from malleable actions of $w$-rigid groups I}, Invent. Math. \textbf{165}  (2006), no. 2, 369-408.

\bibitem[Po07]{Po07} S. Popa: {\it Deformation and rigidity for group actions and von Neumann algebras}, International Congress of Mathematicians. Vol. I, 445-477, Eur. Math. Soc., Z\"{u}rich, 2007.

\bibitem[PV14]{PV14} S. Popa and S. Vaes: {\it Unique Cartan decomposition for II$_1$ factors arising from arbitrary actions
of free groups}, Acta Math. {\bf 212} (2014), no. 1, 141-198.

\bibitem[PV22]{PV22} S. Popa and S. Vaes: {\it W$^*$-rigidity paradigms for embeddings of II$_1$ factors}, Communications in
Mathematical Physics {\bf 395} (2022), 907-961.

\bibitem[R\u{a}94]{Ra94} F. R\u{a}dulescu: {\it Random matrices, amalgamated free products and subfactors of the von Neumann algebra of a free group, of noninteger index}, Invent. Math. {\bf 115}, no. 2, (1994), 347–389.

\bibitem[Sa09]{Sa09} H. Sako: {\it Measure equivalence rigidity and bi-exactness of groups},
J. Funct. Anal. {\bf 257} (2009), no. 10, 3167-3202.


\bibitem[Se89]{Se89} H. Servatius: {\it Automorphisms of graph groups}, Journal of Algebra, {\bf 126} (1989), no. 1, 34-60.

\bibitem[So76]{Sol76} L. Solomon: {\it A Mackey formula in the group ring of a Coxeter group},
J. Algebra {\bf 41} (1976), no. 2, 255-264. 

\bibitem[SW16]{SW16} R. Speicher and J. Wysocza\'{n}ski: {\it Mixtures of classical and free independence}, Archiv
der Mathematik {\bf 107} (2016), 445-453.

\bibitem[Ti74]{Tit74} J. Tits: {\it Buildings of spherical type and finite BN-pairs}, Lecture Notes in Math., Vol. 386,
Springer-Verlag, Berlin-New York, 1974.
    
\bibitem[Va07]{Va07} S. Vaes: {\it Rigidity results for Bernoulli actions and their von Neumann algebras} (after Sorin Popa). S\'{e}minaire Bourbaki, exp. no. 961. Ast\'{e}risque 311 (2007), 237-294.	
	
\bibitem[Va08]{Va08} S. Vaes: {\it Explicit computations of all finite index bimodules for a family of II$_1$ factors}, 
Ann. Sci. \'{E}c. Norm. Sup\'{e}r. (4) {\bf 41} (2008), no. 5, 743-788.

\bibitem[Va10]{Va10} S. Vaes: {\it Rigidity for von Neumann algebras and their invariants}, Proceedings of the International Congress of Mathematicians (Hyderabad, India, 2010), Vol. III, 1624–1650, Hindustan Book Agency, New Delhi, 2010.

\bibitem[Va13]{Va13} S. Vaes: {\it Normalizers inside amalgamated free product von Neumann algebras},
Publ. Res. Inst. Math. Sci. {\bf 50} (2014), no.4, 695-721.	

\bibitem[Va24]{Va24} S. Vaes: {\it Every locally compact group is the outer automorphism group of a II$_1$ factor}, preprint arXiv:2403.20299, to appear in Proceedings of the Royal Society of Edinburgh, Section A: Mathematics.

\bibitem[Zi84]{Zi84} R. Zimmer: Ergodic theory and semisimple groups, Monographs in Mathematics, 81. Birkh\"{a}user Verlag, Basel, 1984. x+209 pp.

	
\end{thebibliography}
\end{document}